\DeclareFontFamily{U}{wncy}{}
    \DeclareFontShape{U}{wncy}{m}{n}{<->wncyr10}{}
    \DeclareSymbolFont{mcy}{U}{wncy}{m}{n}
    \DeclareMathSymbol{\Sha}{\mathord}{mcy}{"58} % unused
    \DeclareMathSymbol{\Chu}{\mathord}{mcy}{"51}
    \DeclareMathSymbol{\Tsu}{\mathord}{mcy}{"43} % unused
    \DeclareMathSymbol{\Yu}{\mathord}{mcy}{"10}
    \DeclareMathSymbol{\yu}{\mathord}{mcy}{"18}  % unused
    \DeclareMathSymbol{\Zhu}{\mathord}{mcy}{"11} % unused
    \DeclareMathSymbol{\zhu}{\mathord}{mcy}{"19}
    \DeclareMathSymbol{\Ya}{\mathord}{mcy}{"17}
    \DeclareMathSymbol{\cyrl}{\mathord}{mcy}{"6C} % unused
    \DeclareMathSymbol{\cyrL}{\mathord}{mcy}{"4C}
\theoremstyle{plain}
\newtheorem{thm}{Theorem}[section]
\newtheorem{lem}[thm]{Lemma}
\newtheorem{prop}[thm]{Proposition}
\newtheorem{cor}[thm]{Corollary}
\newtheorem{defn}[thm]{Definition}
\newtheorem*{rmk}{Remark}
\numberwithin{equation}{section}
\DeclareMathAlphabet{\curly}{U}{rsfs}{m}{n}  %% curly font
\newcommand{\ZZ}{{\mathbb Z}}
\newcommand{\RR}{{\mathbb R}}
\newcommand{\CC}{{\mathbb C}}
\newcommand{\NN}{{\mathbb N}}
\renewcommand{\pmod}[1]{\allowbreak\mkern7mu({\operator@font mod}\,\,#1)}
\newcommand{\be}{\begin{equation}}
\newcommand{\ee}{\end{equation}}
\renewcommand{\a}{\ensuremath{\alpha}}
\renewcommand{\b}{\ensuremath{\beta}}
\newcommand{\eps}{\ensuremath{\varepsilon}}
\renewcommand{\le}{\leqslant}
\renewcommand{\ge}{\geqslant}
\newcommand{\ssum}[1]{\sum_{\substack{#1}}}  %%% stacked sum
\newcommand{\sprod}[1]{\prod_{\substack{#1}}}  %% stacked product
\newcommand{\mint}[1]{\idotsint\limits_{\substack{#1}}} %% mult integral with conditions
\newcommand{\fl}[1]{{\ensuremath{\left\lfloor {#1} \right\rfloor}}}
\newcommand{\cl}[1]{{\ensuremath{\left\lceil #1 \right\rceil}}}
\newcommand{\order}{\asymp}      %% order of magnitude
\renewcommand{\(}{\left(}
\renewcommand{\)}{\right)}
\newcommand{\pfrac}[2]{\left(\frac{#1}{#2}\right)}  %%% frac with paren
\newcommand{\emptyvec}{\varnothing}  %% empty vector
\newcommand{\balpha}{\ensuremath{\boldsymbol{\alpha}}}
\newcommand{\bbeta}{\ensuremath{\boldsymbol{\beta}}}
\newcommand{\bphi}{\ensuremath{\boldsymbol{\phi}}}
\newcommand{\bpsi}{\ensuremath{\boldsymbol{\psi}}}
\newcommand{\bxi}{\ensuremath{\boldsymbol{\xi}}}
\newcommand{\one}{\ensuremath{\mathbbm{1}}}  %% indicator function
\newcommand{\bone}{\ensuremath{\mathbf{1}}}  %% bold 1 - special vector function
\newcommand{\ssc}[2]{\ensuremath{{#1}_{#2}^{\phantom{2}}}}
\newcommand{\epszero}{\ssc{\eps}{0}}
\newcommand{\cA}{\ensuremath{\mathcal{A}}}
\newcommand{\cB}{\ensuremath{\mathcal{B}}}
\newcommand{\cC}{\ensuremath{\mathcal{C}}}
\newcommand{\cD}{\ensuremath{\mathcal{D}}}
\newcommand{\cE}{\ensuremath{\mathcal{E}}}
\newcommand{\cG}{\ensuremath{\mathcal{G}}}
\newcommand{\cH}{\ensuremath{\mathcal{H}}}
\newcommand{\cI}{\ensuremath{\mathcal{I}}}
\newcommand{\cJ}{\ensuremath{\mathcal{J}}}
\newcommand{\cK}{\ensuremath{\mathcal{K}}}
\newcommand{\cL}{\ensuremath{\mathcal{L}}}
\newcommand{\cM}{\ensuremath{\mathcal{M}}}
\newcommand{\cN}{\ensuremath{\mathcal{N}}}
\newcommand{\cP}{\ensuremath{\mathcal{P}}}
\newcommand{\cQ}{\ensuremath{\mathcal{Q}}}
\newcommand{\cR}{\ensuremath{\mathcal{R}}}
\newcommand{\cS}{\ensuremath{\mathcal{S}}}
\newcommand{\cU}{\ensuremath{\mathcal{U}}}
\newcommand{\cT}{\ensuremath{\mathcal{T}}}
\newcommand{\cV}{\ensuremath{\mathcal{V}}}
\newcommand{\cW}{\ensuremath{\mathcal{W}}}
\newcommand{\cZ}{\ensuremath{\mathcal{Z}}}
\newcommand{\sF}{\ensuremath{\mathscr{F}}}
\newcommand{\sG}{\ensuremath{\mathscr{G}}}
\newcommand{\sL}{\ensuremath{\mathscr{L}}}
\newcommand{\sM}{\ensuremath{\mathscr{M}}}
\newcommand{\bd}{\ensuremath{\mathbf{d}}}
\newcommand{\bmm}{\ensuremath{\mathbf{m}}}
\newcommand{\bn}{\ensuremath{\mathbf{n}}}
\newcommand{\bt}{\ensuremath{\mathbf{t}}}
\newcommand{\bu}{\ensuremath{\mathbf{u}}}
\newcommand{\bv}{\ensuremath{\mathbf{v}}}
\newcommand{\bx}{\ensuremath{\mathbf{x}}}
\newcommand{\by}{\ensuremath{\mathbf{y}}}
\newcommand{\bz}{\ensuremath{\mathbf{z}}}
\newcommand{\tf}{\widetilde{f}}
\newcommand{\tl}{\ensuremath{\widetilde{\lambda}}}
\newcommand{\ft}{\tf}  %%% I write it both ways
\newcommand{\two}{\ensuremath{[\theta,\theta+\nu]}}
\newcommand{\CB}{C_{\text{bd}}}
\begin{document}

\setlength{\extrarowheight}{3pt}  %% for tables
%%%%%%%%%%%%%%%%%%%%%%%%%%%%%%%%%%%%%%%%%%%%%%%%%%%%%%%%%%%%%%%%%%%%%%%%%%%%
%%%%%%%%%%%%%%%%%%%%%%%%%%%%%%%%%%%%%%%%%%%%%%%%%%%%%%%%%%%%%%%%%%%%%%%%%%%%
%%%%%%%%%%%%%%%%%%%%%%%%%%%%%%%%%%%%%%%%%%%%%%%%%%%%%%%%%%%%%%%%%%%%%%%%%%%%
%%%%%%%%%%%%%%%%%%%%%%%%%%%%%%%%%%%%%%%%%%%%%%%%%%%%%%%%%%%%%%%%%%%%%%%%%%%%

\title{On the theory of prime-producing sieves}
\author{Kevin Ford}
\address{KF: Department of Mathematics, University of Illinois at Urbana-Champaign}
\author{James Maynard}
\address{JM: Mathematical Institute, University of Oxford}
\email{james.alexander.maynard@gmail.com}
\date{\today}

\begin{abstract}
We develop the foundations of a general framework for producing optimal upper and
lower bounds on the sum $\sum_p a_p$ over primes $p$, where $(a_n)_{x/2<n\le x}$ is an arbitrary  non-negative sequence satisfying Type I and Type II estimates. 

Our lower bounds on $\sum_p a_p$ depend on a new
sieve method, which is non-iterative and uses all of the Type I and Type II information
at once.  We also give a complementary general procedure for constructing sequences $(a_n)$
satisfying the Type I and Type II estimates, which in many cases proves that our lower 
bounds on $\sum_p a_p$ are best possible.  A key role in both the sieve method and
the construction method is played by the geometry of special subsets 
of $\RR^k$.

This allows us to determine precisely the ranges of Type I and Type II estimates for which an asymptotic for $\sum_p a_p$ is guaranteed, that a substantial Type II range is always necessary to guarantee a non-trivial lower bound for $\sum_p a_p$, and to determine the optimal bounds in some naturally occurring families of parameters from the literature. We also demonstrate that the optimal upper and lower bounds for $\sum_p a_p$ exhibit many discontinuities with respect to the Type I and Type II ranges, ruling out the possibility of a particularly simple characterization.
\end{abstract}

\keywords{Sieves, primes, optimality, }
\subjclass[2010]{Primary 11N05, 11N35}

\maketitle

{\Large
\section{Introduction}\label{sec:Introduction}
}
The main technique for estimating the number of primes in a finite set which doesn't possess particular multiplicative structure is the method of Type I/Type II sums. This allows one to obtain an asymptotic estimate or a non-trivial lower bound for the number of primes in the set provided one has a suitably good understanding of the behaviour of the set in arithmetic progressions (a `Type I' estimate) and provided one has suitably good control of certain bilinear sums associated to the set (a `Type II' estimate). Unfortunately, the sieve process which one uses to translate these Type I and Type II estimates to arithmetic information about primes in the set is still poorly understood; there is limited understanding of how strong 
these estimates need to be in order to detect primes, or what are the best possible bounds on the number of primes in the set for given Type I and Type II estimates.

More generally, we consider a sequence $(a_n)_{x/2<n\le x}$ of non-negative weights, normalized to have average value about 1, and we wish to estimate the sum $\sum_p a_p$ over primes $p$. 
A typical example would be when $a_n$ is the normalized indicator function of a set 
of positive integers. 
It is often convenient to show that the size of $\sum_p a_p$ is similar to $\sum_p b_p$ for a simpler comparison sequence $(b_n)$ rather than attempt to directly estimate $\sum_p a_p$. In particular, $(b_n)$ should have similar distributional properties to $(a_n)$ but $\sum_p b_p$ should be relatively easy to bound via some version of the prime number theorem, and our goal is to bound $\sum_p a_p$ in terms
of $\sum_p b_p$.

The `Type I' and `Type II' estimates state that the sequence $w_n=a_n-b_n$ has average zero in the following strong sense. Assume that for some constants $\gamma,\theta,\nu$ with
 \begin{equation}\label{Q0}
 0<\gamma<1, \quad 0\le \theta < \tfrac12,  \quad 0 < \nu \le 1-\theta,
\end{equation} 
 and some large constant $B>0$, we have the following estimates:
\begin{itemize}
\item (Type I range $[0,1-\gamma]$): We have
\begin{equation}\label{eq:TypeI}
\sum_{m\le x^{\gamma}} \tau^B(m) \max_{\text{interval \cI}} \Big| \ssum{x/2<mn\le x \\ n\in \cI} w_{mn} \Big| \le \frac{x}{\log^{B}{x}}. \tag{I}
\end{equation}
\item (Type II range $[\theta,\theta+\nu]$): For any complex numbers $\xi_m, \kappa_n$ with $|\xi_m| \le \tau^B(m)$ and $|\kappa_n| \le \tau^B(n)$ for $m,n\in \NN$, we have
\begin{equation}\label{eq:TypeII}
\bigg|\sum_{\substack{ (x/2)^{\theta} < m\le x^{\theta+\nu} \\ x/2<mn \le x}} \xi_m \kappa_n w_{mn} \bigg| \le \frac{x}{\log^{B}{x}}.\tag{II}
\end{equation}
\end{itemize}

Under these hypotheses, an estimate such as Vaughan's identity can be used to show that if $\gamma+\nu>1$ and $B$ is sufficiently large then
\[
\sum_{x/2<n\le x}\Lambda(n)a_n=\sum_{x/2<n\le x}\Lambda(n)b_n+O\Bigl(\frac{x}{(\log{x})^{B/3}}\Bigr),
\]
so the condition $\gamma+\nu>1$ gives a sufficient condition to get a good estimate for $\sum_p a_p$ provided prime powers are negligible. Unfortunately, in many applications it is difficult (or impossible) to produce Type I and Type II ranges with $\gamma+\nu>1$ and so it is important to obtain non-trivial results with rather weaker assumptions. 
In some cases the Heath-Brown identity gives an asymptotic formula from \eqref{eq:TypeI} and \eqref{eq:TypeII} even when Vaughan's identity fails to do so. 
On the other hand, Selberg \cite{Selberg} showed that whenever $\nu=0$, there are examples of $a_n$ with $b_n=1$ for all $n$ which satisfy \eqref{eq:TypeI} for arbitrary $\gamma<1$ but with $\sum_p a_p=0$, so non-trivial Type II information is necessary to detect primes
(specifically, take $a_n=1+\lambda(n)$ and $b_n=1$ for all $n$, where $\lambda$ is the Liouville function). %, the completely multiplicative function that is $-1$ at all primes).
  Harman's sieve \cite{Harman} is a technique developed to get non-trivial lower bounds for $\sum_p a_p$ with weaker assumptions on the Type I and Type II ranges, since lower bounds are often sufficient for many applications. With this in mind, we let $C^+(\gamma,\theta,\nu)$ and $C^-(\gamma,\theta,\nu)$ be the smallest and largest constants such that
\[
\Bigl(C^-(\gamma,\theta,\nu)+o(1)\Big)\sum_{p}b_p\le\sum_{p}a_p\le \Bigl(C^+(\gamma,\theta,\nu)+o(1)\Bigr)\sum_{p}b_p
\]
for any non-negative sequence $a_n$ with $w_n=a_n-b_n$ satisfying \eqref{eq:TypeI} and \eqref{eq:TypeII}. 
(To make this precise we of course need some assumptions on $b_n$; 
see Subsections \ref{sec:bn-assumptions} and \ref{sec:C-def} for our precise setup.) 
Taking $a_n=b_n=1$ for all $n$ shows that
$C^-(\gamma,\theta,\nu)\le 1\le C^+(\gamma,\theta,\nu)$.
We have an asymptotic formula for $\sum_p a_p$ whenever $C^-(\gamma,\theta,\nu)=C^+(\gamma,\theta,\nu)=1$ and a non-trivial lower bound for primes whenever $C^-(\gamma,\theta,\nu)>0$. 

The main aim of this paper is to introduce a new method to analyze the constants $C^\pm(\gamma,\theta,\nu)$ for general parameters $\gamma,\theta,\nu$.  In contrast with
the largely ad hoc methods of many previous works, especially those relying on the iterative techniques of the Harman 
sieve, we argue directly, deploying \emph{all} of the Type I and 
Type II information at once.
As a consequence, we are able to determine the precise value of the constants in various regimes, thereby both improving previous estimates and demonstrating general limitations of the Type I/Type II setup. 

\medskip

\medskip

\subsection{Examples from the literature.}\label{sec:examples-from-literature}
Table 1 illustrates a few examples of parameters where the author(s) have been successful in counting primes in particular sets.  
In all but the first two examples, we have $\gamma+\nu<1$.
In the first, second and seventh example an asymptotic formula was proven, while in the other examples the author(s) found lower bounds on $\sum_p a_p$ of
the expected order of magnitude.\footnote{Strictly speaking, several of these results only established \eqref{eq:TypeII} for special coefficients $\xi_m,\kappa_n$, such as those that do not correlate with any character of conductor $(\log{x})^{O(1)}$, or coming out of an explicit sieve. A mild generalization of the underlying methods should allow one to establish \eqref{eq:TypeI} and \eqref{eq:TypeII} in full. 
In the interests of simplicity we will ignore this technicality.}

\begin{table}[ht]\label{table:examples}
\setlength{\extrarowheight}{4pt}  %% for tables
\begin{tabular}{|c|c|c|l|l|}
\hline
$\gamma$ & $\theta$ & $\nu$ & Reference & Property of the primes $p$  \\
\hline
$\frac34$ & $\frac14$ & $\frac12$ & Friedlander-Iwaniec \cite{FI98} & $p=x^2+y^4$\\
$\frac23$ & $\frac13$ & $\frac13$ & Heath-Brown \cite{HB01} & $p=x^3+2y^3$ \\
$\frac{19}{28}$ & $\frac{9}{28}$ & $\frac{1}{28}$ & Jia \cite{Jia} & $\{ \alpha p\} < p^{-9/28}$ \\
$\frac{16}{25}$ & $\frac{9}{25}$ & $\frac{1}{16}$ & Maynard \cite{MDP} & $p$ missing a digit in base 10\\
$\frac34$ & $\frac14$ & $\frac1{12}$ & Merikoski \cite{Merikoski}, Thm. 1 & $p=x^2+(y^2+1)^2$ \\
$\frac56$ & $\frac16$ & $\frac1{18}$ &  Merikoski \cite{Merikoski}, Thm. 2 & $p=x^2+(y^3+z^3)^2$ \\
$\frac12$ & $0$ & $\frac13$ & Duke-Friedlander-Iwaniec \cite{DFI} & $x^2\equiv a\pmod{p}$ for $x/p$ in a short interval  \\
$\frac12$ & $0$ & $\frac15$ & Sarnak-Ubis \cite{SU} & dynamical systems at prime times \\
\hline
\end{tabular}
\medskip
\caption{Examples from the literature (epsilons omitted)}
\end{table}

\bigskip

%%%%%%%%%%%%%%%%%%%%%%%%%%%%%%%%%%%%%%%%%%%
%
%
{\Large \section{Main Results}\label{sec:main-results}}
%
%
%%%%%%%%%%%%%%%%%%%%%%%%%%%%%%%%%%%%%%%%%%%

As mentioned in the introduction, the main contribution of this paper is to introduce a new framework to study the method of Type I/II sums, developed in Sections \ref{sec:constructions} and \ref{sec:sieving}. To demonstrate the benefits of this framework, we first give various consequences on the optimality and limitations of the method of Type I/II sums which follow from this approach.

Throughout the paper, we assume that the sequences $(a_n)$,
$(b_n)$ and $(w_n)$ are supported on integers in $(x/2,x]$.
In addition to the Type I bound \eqref{eq:TypeI}
and Type II bound \eqref{eq:TypeII}, we postulate mild growth
conditions on the sequence $(w_n)$ which depends on a parameter $\varpi$ and the quantity $\nu$ from \eqref{eq:TypeII}:
\begin{equation}\label{w}
\sum_n |w_n|\tau(n) \le x(\log x)^{\varpi}, \qquad w_n\ge -x^{\nu/10}\;\; (x/2<n\le x).
\tag{$w$}
\end{equation}

\subsection{Minimal Type II range}

Our first result shows that for any $\gamma<1$ there is a minimum amount of Type II information, measured by $\nu$,
required to detect primes.

\begin{thm}[Minimal Type II range]\label{thm:TypeII-minimum}
For all $\gamma<1$, there is a constant $\ssc{\nu}{0}(\gamma)>0$ such that the following holds. If $B>0$ and $(\gamma,\theta,\nu)$ satisfies \eqref{Q0} with $\nu\le \ssc{\nu}{0}(\gamma)$, then for all $x$ suffiicently large (in terms of $B,\gamma,\theta,\nu$), there is a bounded non-negative sequence $(a_n)$ such that
$w_n=a_n-1$ satisfies \eqref{eq:TypeI} and \eqref{eq:TypeII}, but $a_p=0$ for all primes $p$.
In particular,
\[
C^-(\gamma,\theta,\nu)=0.
\]
\end{thm}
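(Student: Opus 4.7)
The plan is to extend Selberg's boundary construction $a_n = 1 + \lambda(n)$, which already handles $\nu = 0$: the sequence is bounded in $[0,2]$, $a_p = 0$ for every prime (since $\lambda(p) = -1$), and \eqref{eq:TypeI} holds for all $\gamma < 1$ by quantitative equidistribution of $\lambda$ in arithmetic progressions. The sole obstruction at $\nu > 0$ is that $\lambda$ is completely multiplicative, so
\[
\sum_{m,n} \xi_m \kappa_n \lambda(mn) = \Bigl(\sum_m \xi_m \lambda(m)\Bigr)\Bigl(\sum_n \kappa_n \lambda(n)\Bigr)
\]
has size $\asymp x$ when $\xi_m = \lambda(m)$, $\kappa_n = \lambda(n)$. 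The aim is therefore to deform the Liouville example so that $a_n \in [0,2]$ and $a_p = 0$ are preserved, but multiplicative rigidity is broken on the Type II scales $[\theta, \theta+\nu]$.

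The approach I would take is to feed the construction framework of Section~\ref{sec:constructions} a specific density on a ``factorization simplex''. Fix $r = r(\gamma) \ge 2$ and parameterize each $n = p_1 \cdots p_r$ (with $p_1 < \cdots < p_r$ distinct) by its normalized log-factor vector
\[
\bv(n) = \Bigl(\frac{\log p_1}{\log x}, \ldots, \frac{\log p_r}{\log x}\Bigr) \in \Delta_r := \Bigl\{\bt \in \RR^r : 0 \le t_1 \le \cdots \le t_r,\; t_1 + \cdots + t_r = 1\Bigr\}.
\]
Set $a_n = 1 + \psi_r(\bv(n))$ for $n$ an $r$-almost prime, $a_n = 0$ for $n$ prime, and $a_n = 1$ otherwise, where $\psi_r : \Delta_r \to [-1,1]$ is to be chosen. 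With this shape, $a_n \in [0,2]$, non-negativity, and $a_p = 0$ are automatic.

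Via the Dickman-type distribution of prime-factor vectors and Heath--Brown's identity, axioms \eqref{eq:TypeI} and \eqref{eq:TypeII} translate into linear conditions on $\psi_r$. Condition \eqref{eq:TypeI} becomes a mean-against-Dickman-density condition that forces $\psi_r$ to compensate for the prime mass $\asymp x/\log x$ removed by setting $a_p = 0$, while \eqref{eq:TypeII} becomes the requirement that $\psi_r$ vanish on the ``Type II slabs'' $U_{\theta,\nu} = \bigcup_{i \le r} \{\bt \in \Delta_r : t_i \in [\theta,\theta+\nu]\}$, whose total measure in $\Delta_r$ is $O(r\nu)$. For $\nu \le \nu_0(\gamma)$ (with $\nu_0$ chosen small relative to $1/r$), the complement $\Delta_r \setminus U_{\theta,\nu}$ is still large enough to support a $\psi_r$ satisfying the mean condition and the pointwise bound $|\psi_r| \le 1$ simultaneously. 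This is a finite-dimensional convex feasibility problem on $\Delta_r$, and establishing its solvability — the main obstacle — is precisely the geometric question the framework of Section~\ref{sec:constructions} is designed to handle. Once such a $\psi_r$ is exhibited, the resulting sequence $(a_n)$ is the desired counterexample.
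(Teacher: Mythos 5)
Your plan translates \eqref{eq:TypeI} into ``a mean-against-Dickman-density condition,'' but this is not what \eqref{eq:TypeI} says. The left side of \eqref{eq:TypeI} is a sum of non-negative terms over $m \le x^\gamma$, so \emph{each} term must be small. The $m=1$ term gives the prime-compensation mean condition $\int_{\Delta_r} \psi_r \, \frac{d\mu}{t_1\cdots t_r} = C > 0$. But for every $m$ with $\Omega(m)=j$ and $m \le x^\gamma$, you get an extra constraint: the codimension-$j$ slice integral of $\psi_r$ with $j$ coordinates pinned to the prime exponents of $m$ must (essentially) vanish. In particular, taking $m$ with $\Omega(m)=r-1$ pins down $r-1$ coordinates summing to $\le \gamma$ and leaves the last coordinate essentially fixed; as $x\to\infty$ this forces $\psi_r(\bv)=0$ (pointwise) whenever $\max_i t_i \ge 1-\gamma$. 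Meanwhile, disintegrating the $m=1$ integral using the $m=p$ (codim-1) constraints shows that all of the positive mass $C$ must come from the region $\{\exists i:\, t_i > \gamma\}$. For $\gamma \ge \tfrac12$ we have $\gamma \ge 1-\gamma$, so the mass region is contained in the forced-vanishing region and $C=0$ — a contradiction. Your finite-dimensional feasibility problem on a single simplex $\Delta_r$ therefore has \emph{no} solution when $\gamma \ge \tfrac12$, no matter how small $\nu$ is.

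This infeasibility is exactly what the paper's framework is built to avoid. The correct translation of \eqref{eq:TypeI} is \eqref{TypeI-f}, which is a \emph{sum over all dimensions} $k$: after fixing $j$ coordinates, one sums the slice integrals over all $k \ge j+1$, not a single $k=r$. This coupling between dimensions lets mass flow between them; it is encoded in the fragmentation relation \eqref{fsl}, which defines $f$ on vectors with a component $\ge 1-\gamma$ in terms of $f$ on vectors with all components small, across a \emph{range} of dimensions. Your construction $a_n = 1 + \psi_r(\bv(n))$ on $r$-almost primes only, with $w_n=0$ elsewhere, does not satisfy \eqref{fsl} and hence cannot satisfy \eqref{eq:TypeI}. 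The content the proposal waves away as ``the geometric question the framework is designed to handle'' is actually the heart of the proof: Section~\ref{sec:TypeII-minimum} constructs the base function explicitly from modified Liouville functions $\tl^{(c,\eta)}$ (Lemmas \ref{lem:TypeI}, \ref{lem:M-bounds}, \ref{lem:lambda-Fdelgam}), carefully tuned so that $f(1) < -1$ \emph{strictly} and $f(\bbeta) \ge -1$ for $\dim\bbeta \ge 2$, giving the slack needed to absorb the perturbation from zeroing $f$ on vectors with a subsum in $\two$ (Theorem~\ref{thm:fsl}). Separately, a smaller issue: the Type II vanishing set is $\{\bv : \text{some \emph{subsum} of } \bv \text{ lies in } [\theta,\theta+\nu]\}$, not the slabs $\{t_i \in [\theta,\theta+\nu]\}$ you wrote; divisors like $p_1 p_2$ matter.
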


Theorem \ref{thm:TypeII-minimum} gives the first examples (at least with $\gamma \ge 1/2$ and $\nu>0$) of sequences satisfying non-trivial Type I and Type II estimates but not containing primes, thereby putting limitations on when a non-trivial lower bound can be obtained using the method of Type I/II sums. In Theorem \ref{thm:TypeII-minimum} the comparison sequence is simply $b_n=1$.

The principal tool in the proof is the construction of a family of functions
which resemble the Liouville function and may be of independent interest.  These are fed into our general method of constructing example sequences.  Theorem \ref{thm:TypeII-minimum} will
be proven in Section \ref{sec:TypeII-minimum}.

\bigskip

\subsection{Asymptotic for primes}
Our second result gives a simple criterion (both necessary and sufficient) for when  an asymptotic formula is guaranteed from Type I/II estimates. It turns out that after certain relatively straightforward reductions, it suffices to consider $(\gamma,\theta,\nu)\in\cQ$, where
\begin{equation}\label{Q2}
\cQ := \Bigg\{ (\gamma,\theta,\nu) : \;\;
\begin{matrix}
\tfrac12 \le \gamma \le 1-\theta-\nu \;\; \text{ or } \;\; 1-\theta\le \gamma<1, \\
0 \le \theta < \theta+\nu \le \tfrac12 \;\; \text{ or } \;\; 0\le \theta<\tfrac12, \theta+\nu=1-\theta
\end{matrix}
\Bigg \}.
\end{equation}
This reduction is explained in Section \ref{sec:Setup} alongside the formal definition of $C^\pm (\gamma,\theta,\nu)$. 
The proof of Theorem \ref{thm:asymptotic} will be given in Section \ref{sec:asymptotic}.

\begin{thm}[Asymptotic]\label{thm:asymptotic}
Suppose that $(\gamma,\theta,\nu) \in \cQ$ and  define $M=\fl{1/(1-\gamma)}$. Let \eqref{eq:A1} and \eqref{eq:A2} be the claims
\begin{align}
\text{For all integers } n&\ge M+1, \exists a\in \NN  \text{ so that } \tfrac{a}{n}\in [\theta,\theta+\nu].\tag{A1}\label{eq:A1} \\
\text{For some positive integer } h &, h(1-\gamma) \in [\theta,\theta+\nu] \cup [1-\theta-\nu,1-\theta]. \tag{A2}\label{eq:A2}
\end{align}
Then:
\begin{itemize}
\item[(a)]
If both \eqref{eq:A1} and \eqref{eq:A2} hold, then for any $A>1$ and $\varpi\ge 1$, whenever $B$ is large enough in terms of 
$A,\varpi,\gamma,\theta,\nu$ and $(w_n)$ satisfies  \eqref{eq:TypeI}, \eqref{eq:TypeII} and \eqref{w}, we have
 \[
 \sum_{p} w_p \; \ssc{\ll}{A}\; \frac{x}{(\log x)^A}.
 \]
 In particular, $C^-(\gamma,\theta,\nu)=C^+(\gamma,\theta,\nu)=1$.
\item[(b)] If either \eqref{eq:A1} or \eqref{eq:A2} fails, then
 there is a constant $\delta>0$ such that for any $B>0$ and all $x$ sufficiently large in terms of $B,\gamma,\theta,\nu$, there 
 are bounded, non-negative sequences $(a_n^\pm)_{x/2<n\le x}$ with $w_n^\pm = a_n^\pm-1$ satisfying
 \eqref{eq:TypeI} and \eqref{eq:TypeII} and with
 \[
 \sum_p a_p^- \le (1-\delta) \sum_{x/2<p\le x} 1, \qquad \sum_p a_p^+ \ge (1+\delta)
 \sum_{x/2<p\le x}1.
 \] 
 In particular, $C^-(\gamma,\theta,\nu)<1<C^+(\gamma,\theta,\nu)$.
\end{itemize}
\end{thm}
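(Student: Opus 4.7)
My plan for part (a) is to apply the Heath--Brown identity of order $K = M+1$ to $\sum_n \Lambda(n) w_n$. Since $1/K \le 1-\gamma$, this expresses the sum as a bounded number of $2K$-fold convolutions in which each of $K$ Möbius-type variables has size $\le x^{1-\gamma}$, with one remaining variable carrying a $\log$ factor. After dyadic decomposition, the task reduces to bounding $(\log x)^{O(1)}$ subsums $S_\balpha$ indexed by dyadic exponent vectors $\balpha = (\alpha_1, \dots, \alpha_r)$ with $\sum_i \alpha_i = 1 + o(1)$ and $\alpha_i \le 1-\gamma$ for each $i$ in the Möbius block. I aim to control each $S_\balpha$ using either \eqref{eq:TypeI} or \eqref{eq:TypeII} after partitioning the index set into two blocks.

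The combinatorial core will be a decomposition lemma stating that \eqref{eq:A1} and \eqref{eq:A2} jointly furnish, for each admissible $\balpha$, a partition realising a Type I or Type II configuration. If some $\alpha_i \ge 1-\gamma$ (necessarily a free variable), I take that variable as the ``$n$''-side of a Type I sum, with the remaining variables---of total size $\le \gamma$ and $\tau^B$-bounded coefficients---as the ``$m$''-side; the boundary values $\alpha_i = h(1-\gamma)$ are precisely those where this partition degenerates, and \eqref{eq:A2} is invoked to redistribute into a Type II sum (or its complementary $[1-\theta-\nu, 1-\theta]$ version). If instead all $\alpha_i < 1-\gamma$, then $r \ge M+1$, and a geometric argument on the simplex $\{\balpha : \sum \alpha_i = 1,\, 0 \le \alpha_i \le 1-\gamma\}$ should reduce the existence of a subset sum in $[\theta, \theta+\nu]$ to \eqref{eq:A1} applied at $n = r$: the vertices of this simplex are the equidistributed configurations $(1/n,\dots,1/n)$, which is exactly the scenario that \eqref{eq:A1} handles, with generic points reducible to vertices by a convexity/rearrangement argument.

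For part (b), I plan to invoke the general construction method of Section \ref{sec:constructions} to build explicit counterexamples matched to each failure mode. If \eqref{eq:A1} fails at some $n_0 \ge M+1$, I construct $a_n^\pm$ as a bounded perturbation of $1$ whose non-trivial component is supported on integers factoring into exactly $n_0$ primes of common logarithmic size $1/n_0$, weighted by a completely multiplicative Liouville-type function $\lambda^\star$ analogous to the one underlying Theorem \ref{thm:TypeII-minimum}. Because no sub-product of the $n_0$ primes has logarithmic size in $[\theta,\theta+\nu]$, the Type II bound \eqref{eq:TypeII} is satisfied trivially on this support; the Type I bound \eqref{eq:TypeI} follows from cancellation in $\lambda^\star$ combined with the size constraint on the Möbius variables. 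The twist $\lambda^\star$ is arranged so that $\sum_p a_p^\pm$ deviates from $\sum_p 1$ by a positive amount. A parallel construction, with integers containing a block of $h$ primes of size $x^{1-\gamma}$, handles the failure of \eqref{eq:A2}.

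The principal obstacle I anticipate is the combinatorial/geometric analysis in part (a): verifying that the decomposition lemma covers every admissible dyadic configuration with no residual cases, and in particular that the boundary conditions embodied in \eqref{eq:A2} are exactly what is needed at the transition between the ``some large $\alpha_i$'' and ``all small $\alpha_i$'' regimes. Proving tightness will require that the constructions in part (b) match these obstructions exactly, so that \eqref{eq:A1} and \eqref{eq:A2} are simultaneously necessary and sufficient; establishing this correspondence rigorously, rather than merely up to an $\eps$-loss, will be the main technical challenge.
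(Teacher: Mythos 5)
Your overall architecture is sound and matches the paper at a high level: decompose $\sum_p w_p$ via a combinatorial prime identity, reduce to the question of whether every admissible dyadic exponent vector permits a Type I or Type II split, and show that $(A1)$ and $(A2)$ encode precisely when this is possible, while supplying counterexamples via the Section~5 construction machinery when $\cR$ is nonempty. Part (b) is essentially correct: the paper's argument (Lemma~\ref{lem:R1-nbhd-Q} plus Theorem~\ref{thm:constructions}) picks a small polytope $\cD\subseteq\cR$ around a point $\bz$ where $\cyrL_{1-\gamma}$ is nonzero, then builds $f\in\sF_\eta$ supported near $\cD$; your proposed support classes (vectors $(\frac1{n_0},\dots,\frac1{n_0})$ when $(A1)$ fails, $(1-\gamma,\dots,1-\gamma,1-M(1-\gamma))$ when $(A2)$ fails) are exactly the special vectors the paper isolates, so that works.

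The gap is in part (a), at the step where you dispose of configurations with all $\alpha_i < 1-\gamma$. Your claim that ``the vertices of this simplex are the equidistributed configurations $(1/n,\dots,1/n)$'' is simply false: the vertices of $\{\balpha : \sum_i\alpha_i=1,\ 0\le\alpha_i\le 1-\gamma\}$ are points with as many coordinates as possible pinned to $\{0,\,1-\gamma\}$, and the equi\-distributed point is an interior point of a face. More fundamentally, the property ``some subsum lies in $[\theta,\theta+\nu]$'' is a union of slabs, hence neither open, nor convex, nor concave, so no ``convexity/rearrangement argument'' will let you check it only at vertices. The paper's Theorem~\ref{thm:asymptotic-R1-A1A2}, via Lemma~\ref{lem:BDEF}, Proposition~\ref{prop:A123} and Proposition~\ref{prop:ratio}, replaces your one-line reduction with a quite long structural argument: first coagulate pairs with small sum so that (ii) of Hypothesis (A3) holds, then average nearby coordinates so that (iii) holds, then show the remaining vectors force $(1/n,\dots,1/n)$ or $(1-M(1-\gamma),1-\gamma,\dots,1-\gamma)$ into $\cR$, and finally verify $(A1)\wedge(A2)\Rightarrow(A3)$ by an extensive case analysis in the parameters $(M,a,k)$ with exceptional triples enumerated. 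Your proposal substitutes a nonexistent shortcut for this entire chain, so the combinatorial core is not established. Relatedly, your handling of the boundary case $\alpha_i = h(1-\gamma)$ is not quite right: $(A2)$ only provides \emph{some} integer $h$ landing in $\two\cup[1-\theta-\nu,1-\theta]$, not that every degenerate $h$ can be ``redistributed into a Type II sum''; what $(A2)$ actually excludes is the specific configuration $(1-\gamma,\dots,1-\gamma,1-M(1-\gamma))$ from $\cR$, which then feeds into the case analysis just described.
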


For a given triple $(\gamma,\theta,\nu)\in \cQ$, conditions
\eqref{eq:A1} and \eqref{eq:A2} are simple to check, since the conclusion of \eqref{eq:A1} is always true
when $n \ge 1/\nu$, and we need only check $h\le M+1$ in \eqref{eq:A2}
as $(M+1)(1-\gamma)>1$. For part (b), we note that a bounded sequence $(w_n)$ always satisfies \eqref{w}
for $\varpi=2$ when $x$ is large enough.
From (a), if \eqref{eq:A1} and \eqref{eq:A2} both hold then
 we obtain the asymptotic $\sum_p a_p\sim \sum_p b_p$, provided that
$\sum_p b_p \gg x(\log x)^{-D}$ for a constant $D$, and $B$ is large enough. 
Informally, Theorem \ref{thm:asymptotic} can be thought of as showing that one can obtain an asymptotic formula for primes precisely when the Heath-Brown identity would give one, and a simple combinatorial classification of when this is the case.

In Table \ref{table:examples}, an asymptotic formula was obtained in the first, second and seventh examples. If we ignore technicalities related to whether the results are obtained with $\epsilon$ losses in the parameters (which is addressed below), a short computation shows that in these three cases both \eqref{eq:A1} and \eqref{eq:A2} hold, whereas in each of the remaining examples \eqref{eq:A1} fails but \eqref{eq:A2} holds. See Subsection \ref{subsec:A1A2} for further examples and discussion.

\bigskip

\subsection{Continuity and discontinuity of $C^\pm(\gamma,\theta,\nu)$ near the asymptotic region.}

Typically, the Type I  bound (I) and Type II bound (II) are proven
for the set of triples $P_\eps := (\gamma-\eps,\theta+\eps,\nu-2\eps)$ for some
fixed $P=(\gamma,\theta,\nu)$, where $\eps>0$ is arbitrary
and $x$ is large enough as a function of $\eps$.
In many cases we have $\lim_{\eps\to 0^+} C^\pm (P_\eps) = C^{\pm} (P)$.
However, we have discovered that there are many points $P$
where the functions $C^{\pm}$ are \emph{discontinuous}.

The nature of the situation is slightly different when $\gamma=1/2$, so first we restrict to $\gamma>1/2$ and define
\begin{align*}
\cA&:=\{P=(\gamma,\theta,\nu)\in \cQ:\,\gamma>1/2,\, C^{\pm}(\gamma,\theta,\nu)=1\},\\
\cA^*&:=\{P\in \cA:\,P_\eps\notin \cA\;\forall \eps \in (0,\nu/2)\}.
\end{align*}
Thus $\cA$ is the set of triples (with $\gamma>1/2$) for which the asymptotic holds and $\cA^*$ is the set of boundary points of $\cA$. Clearly the functions $C^{\pm}$ are continuous on the interior of $\cA$ since they are identically 1 there, so any discontinuities on $\cA$ must occur on $\cA^*$.
\begin{thm}[Continuity-discontinuity for $\gamma>\frac12$]\label{thm:continuity}
Suppose that $P=(\gamma,\theta,\nu) \in \cA^*$. Let \eqref{eq:B} be the claim
\[
\exists \, h\in \NN\, : \,  h(1-\gamma) \in [\theta,\theta+\nu) \cup [1-\theta-\nu,1-\theta).
\tag{B}\label{eq:B}
\]
Then we have the following.
\begin{itemize}
\item[(a)] If either $\theta=0$ or if \eqref{eq:B} holds, then
\begin{equation}\label{C-continuous}
C^{\pm}(P_\eps) = 1 + \ssc{O}{P}(\eps).
\end{equation}
\item[(b)] If $\theta>0$ and \eqref{eq:B} fails, then
\begin{equation}\label{C-discontinuous}
\sup_{\eps>0}\, C^-(P_\eps) < 1 < \inf_{\eps>0}\, C^+(P_\eps).
\end{equation}
\end{itemize}
\end{thm}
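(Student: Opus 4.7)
My plan is to treat (a) and (b) separately, both leveraging the construction procedure of Section \ref{sec:constructions} and the sieving framework of Section \ref{sec:sieving}. For part (a), the key observation is that \eqref{eq:B} is exactly the condition that \eqref{eq:A2} persists under small perturbation: if $h(1-\gamma)\in[\theta,\theta+\nu)$ witnesses \eqref{eq:B}, then $h(1-\gamma+\eps)\in[\theta+\eps,\theta+\nu-\eps]$ for all sufficiently small $\eps>0$ (the strict right inequality is crucial; a witness sitting at $\theta+\nu$ would exit the shrunken window under any positive $\eps$), and symmetrically for the conjugate interval. Since $P\in \cA^*$, Theorem \ref{thm:asymptotic}(a) is unavailable at $P_\eps$, so \eqref{eq:A1} must fail at $P_\eps$; but the failure only occurs at finitely many integers $n\in\{M+1,\ldots,\cl{1/(\nu-2\eps)}\}$, an $\eps$-uniform set for small $\eps$. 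I then apply the general sieve of Section \ref{sec:sieving} level-by-level to a Heath-Brown-type decomposition of $\Lambda$: on each level the relevant simplex of exponent vectors is covered by Type II regions (using the stable witness from \eqref{eq:B}) except for a residual near the perturbed boundary of total Euclidean measure $O_P(\eps)$, coming from the finitely many failed instances of \eqref{eq:A1}. This residual contributes $O_P(\eps)\sum_p b_p$ via the Type I estimate \eqref{eq:TypeI} together with \eqref{w}, giving \eqref{C-continuous}. The case $\theta=0$ is handled by a simpler direct argument: the Type II window $[\eps,\nu-\eps]$ still reaches down to arbitrarily small exponents, which removes the right-endpoint obstruction and allows the sieve to close without an \eqref{eq:B} witness.

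For part (b), the plan is to construct explicit sequences exhibiting the uniform gap in \eqref{C-discontinuous}, combining the construction method of Section \ref{sec:constructions} with a Liouville-type perturbation in the spirit of Theorem \ref{thm:TypeII-minimum}. Since $\theta>0$ and \eqref{eq:B} fails, every witness for \eqref{eq:A2} at $P$ must sit at a right endpoint, so $h(1-\gamma)\in\{\theta+\nu,1-\theta\}$ for every witness $h$. Under any perturbation $P_\eps$ with $\eps>0$, these witnesses leave the shrunken Type II window entirely, and the combinatorial support of Heath-Brown contains a region $R\subset \RR^k$, depending only on $P$ and of positive measure $\mu(R)>0$, that is not covered by the Type I or Type II regions available at $P_\eps$. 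Using the construction procedure of Section \ref{sec:constructions} at the critical scale $x^{1-\gamma}$, I build bounded non-negative sequences $a_n^\pm$ whose weights are concentrated on $R$; the verification of \eqref{eq:TypeI} on $[0,1-\gamma+\eps]$ and of \eqref{eq:TypeII} on $[\theta+\eps,\theta+\nu-\eps]$ follows the same lines as in Theorem \ref{thm:TypeII-minimum}, with $\eps$ entering only as a benign parameter. This produces $\sum_p a_p^-=(1-\delta_P+o(1))\sum_p 1$ and $\sum_p a_p^+=(1+\delta_P+o(1))\sum_p 1$ for an absolute constant $\delta_P>0$ depending only on $P$, establishing \eqref{C-discontinuous} uniformly in small $\eps$.

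The main obstacle is the quantitative $\eps$-uniform construction in (b): one must produce $(a_n^\pm)$ whose \eqref{eq:TypeII} bound remains valid on the shrunken interval $[\theta+\eps,\theta+\nu-\eps]$ and its conjugate $[1-\theta-\nu+\eps,1-\theta-\eps]$ uniformly in $\eps$, yet whose mass on the obstruction region $R$ does not leak into the detectable regime. The hypothesis $\theta>0$ is essential here: at $\theta=0$, a degenerate Type II pairing at scale $x^\eps$ (with one factor of nearly full size) would recover precisely the information the construction is trying to suppress, collapsing the gap. Meshing the primal and conjugate Type II constraints simultaneously with the shape of $R$, and verifying that $\mu(R)$ is uniformly bounded below as $\eps\to 0$, is where the geometric language developed in Sections \ref{sec:constructions}--\ref{sec:sieving} carries the argument.
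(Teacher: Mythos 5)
Your proposal identifies the right geometric heuristics and the right tool set (Sections \ref{sec:constructions} and \ref{sec:sieving}), but it has a genuine gap in part (a) that would make the sieve step fail, and the part (b) argument, while conceptually parallel to the paper's, is missing the hard quantitative verification.

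For part (a), the claim that ``the residual near the perturbed boundary'' has ``total Euclidean measure $O_P(\eps)$'' and can be discarded via \eqref{eq:TypeI} and \eqref{w} is not correct as stated, and this is the crux of the matter. The integrals one needs to bound are of the form $\sum_k \int_{\cH_k^\eps} (\bone\star g)(\bx)/(x_1\cdots x_k)\, d\bx$ with $\cH^\eps = \cH(P_\eps)$; the set $\cH^\eps$ is \emph{not} simply an $\eps$-thickening of a hyperplane arrangement. The paper's Proposition \ref{prop:ClaimA} shows that vectors in $\cR_{\text{cl}}(P_\eps)$ have all components within $O(\eps)$ of rationals $a/m$ for a finite $\sM$ -- \emph{except} that case (3) allows two components to sit anywhere in an interval $\cK_\ell$ whose length is bounded away from zero, so the two-dimensional slice $\cH_2^\eps$ can have measure $\gg 1$ uniformly in $\eps$. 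The trivial sieve $g^+(\emptyvec)=1$, $g^+\equiv 0$ elsewhere, would then give $C^+(P_\eps)\ge 1+\Omega(1)$, not $1+O(\eps)$. The paper has to build a non-trivial $g_1^+$ with $(\bone\star g_1^+)=0$ identically on the bulk of these type-(ii) vectors (using $\cK^*=\cK_\ell\setminus\bigcup\cI_m$) to make the integral $O(\eps)$. Your proposal does not identify this obstruction, and without the structural classification (Propositions \ref{prop:ClaimA} and \ref{prop:ClaimB}, built on a long induction via Lemma \ref{lmm:Sizes}) the argument has no way to know what the residual region actually looks like.

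For part (b), your high-level outline is close to what the paper does: the paper's Lemma \ref{lem:discontinuity-bigmass} realizes your ``region $R$ of positive measure independent of $\eps$'' as $\cV_\eps\subseteq\cR(P_\eps)$, and your observation that failure of \eqref{eq:B} forces witnesses onto the right endpoints $\{\theta+\nu, 1-\theta\}$ is exactly the starting point of that lemma. However, the paper does not use a Liouville-type perturbation in the style of Theorem \ref{thm:TypeII-minimum}; it instead defines $f(\bu,\bxi)=K(-1)^{k+M}$ on $\cV_\eps$, extends via \eqref{fsl}, and then performs a delicate combinatorial computation (the decomposition of components into intervals $J_1,\dots,J_M$, the $W_i$-bounds, and the choice $K=2^{-M}(M+1)^{-M}$) to verify $|f(\by)|\le 1$ uniformly while $|f(1)|\gg_P K$. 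This uniform two-sided bound is precisely what gives the $\eps$-independent gap; your sketch acknowledges the need for it as ``the main obstacle'' but does not indicate how it would be established. The Liouville route from Section \ref{sec:TypeII-minimum} gives $C^-=0$ when $\nu$ is small, but does not obviously yield the sharp, $\eps$-uniform \emph{two-sided} separation $C^-(P_\eps)<1<C^+(P_\eps)$ near a boundary point of $\cA$ where $\nu$ may be large.
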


Theorem \ref{thm:continuity} will be proven in Section \ref{sec:continuity}.
A simple consequence of Theorem \ref{thm:continuity} is that the functions  $C^\pm(P)$
are continuous at $P\in \cA^*$ if and only if $(B)$ holds or $\theta=0$. We see that in the first and second rows of Table \ref{table:examples} when an asymptotic is obtained and $\gamma>1/2$, condition \eqref{eq:B} holds, and so an asymptotic formula follows from the Type I and Type II estimates for $P_\epsilon$ (for all $\epsilon>0$); this is essentially what was actually established in these papers.

When $\nu=0$ the value of $C^+(\gamma,\theta,\nu)$ is closely related to the upper bound function of the linear sieve, which satisfies a delay-differential equation. One might have hoped that a full theory of prime detecting sieves would correspondingly produce constants $C^{\pm}$ which satisfy similar relations. The presence of many discontinuities in the constants means that there is no simple generalization of the delay-differential equation when one incorporates Type II information. 

As far as we are aware, this is the first time that genuine discontinuities have been shown to exist (at least when $\gamma>1/2$). It has been observed in the past that Harman's sieve can produce bounds which are discontinuous when extra arithmetic information is also included (see, for example, the comments at the end of \cite{BI17}), but typically the sieve bounds of Harman's sieve are continuous when just Type I and Type II information is involved.

\begin{figure}[ht!]\label{figure-1}
\includegraphics[height=3.5in]{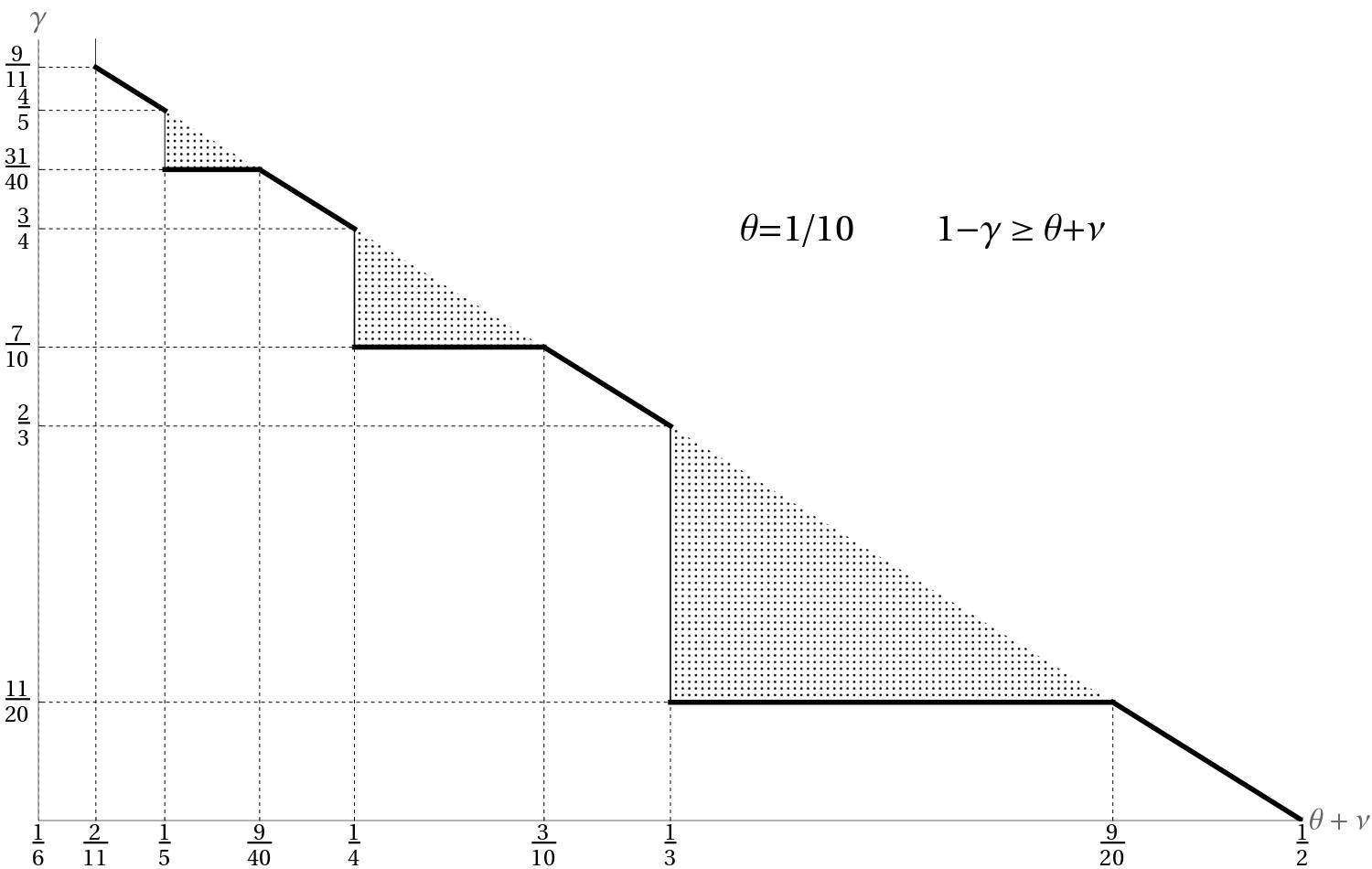}
\caption{Part of the set $\cA$ and $\cA^*$.}
\end{figure}

To illustrate Theorem \ref{thm:asymptotic} and Theorem \ref{thm:continuity}, Figure 1 shows $\cA$
and $\cA^*$ for the case $\theta=\frac{1}{10}$ and $1-\gamma \ge \theta+\nu$.
The dotted region is $\cA$ (thus $C^\pm=1$), the thick line segments are places where
$P\in \cA^*$ and (B) fails (thus $C^\pm$ are discontinuous there)
and the vertical boundary segments are where $P\in \cA^*$
and (B) holds (thus $C^\pm$ are continuous there).
At the corners such as $\gamma=\frac{11}{20}$, $\theta+\nu=\frac13$,
we also have (B) failing, while at corners such as $\gamma=\frac23$, $\theta+\nu=\frac13$, (B) holds.
Figure 1 illustrates a general principle, proven later in
Lemma \ref{lem:B-alternative}, that discontinuities occur 
at $P\in \cA$ if and only if one leaves $\cA$ by decreasing $\gamma$ while keeping $\theta,\nu$ fixed.

\medskip

Now we consider the case $\gamma=1/2$. Let $P=(1/2,\theta,\nu)\in \cQ$ with $C^\pm(P)=1$. If $\theta+\nu<1/2$, then condition \eqref{eq:A2} forces $\theta=0$ and so \eqref{eq:A1} forces $\nu\ge 1/3$. If instead $\theta+\nu=1/2$ then \eqref{eq:A2} holds automatically and \eqref{eq:A1} holds provided $\theta\le 1/3$. Thus we are interested in the set
\[
\cA_2^*:=\bigl\{(\tfrac{1}{2},0,\nu):\,\tfrac13\le \nu<\tfrac12\bigr\}\cup\bigl\{(\tfrac{1}{2},\theta,\tfrac12-\theta):\,0\le \theta\le \tfrac13\bigr\}.
\]

\begin{thm}[Continuity-discontinuity when $\gamma=1/2$]\label{thm:continuity-1/2}
Let $P\in \cA_2^*$.  Then, for $0<\eps \le \frac{1}{10}$, $C^-(P_\eps)=0$.
Furthermore, if $\theta=0$ then $C^+(P_\eps)=1+O(\eps)$.
\end{thm}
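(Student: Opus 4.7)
The plan is to handle the two claims separately, both using the observation that for any $P \in \cA_2^*$ the perturbed triple $P_\eps = (1/2 - \eps, \theta+\eps, \nu - 2\eps)$ satisfies $\theta_\eps + \nu_\eps \le 1/2 - \eps$ and $\fl{1/(1-\gamma_\eps)} = 1$. In particular, \eqref{eq:A1} at $P_\eps$ fails at $n = 2$ (no integer $a \ge 1$ has $a/2 \in [\theta_\eps, \theta_\eps+\nu_\eps]$). Geometrically this means integers $n = pq$ with both prime factors near $x^{1/2}$ are invisible to the Type I/II data at $P_\eps$, and this is what drives both the collapse of the lower bound and the $O(\eps)$ perturbation in the upper bound.

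For part (1), I would apply the construction framework of Section \ref{sec:constructions}, together with the Liouville-like functions built in the proof of Theorem \ref{thm:TypeII-minimum}, to produce a bounded non-negative sequence $(a_n)$ vanishing on primes and such that $w_n = a_n - 1$ satisfies \eqref{eq:TypeI} and \eqref{eq:TypeII} at $P_\eps$. Concretely, the prime mass is redistributed onto an ensemble of balanced semiprimes $n = pq$ with both prime factors in a short interval around $x^{1/2}$ chosen to avoid the Type II range $[\theta_\eps, \theta_\eps + \nu_\eps]$, and a Liouville-type sign is attached to the prime factors to provide bilinear cancellation against arbitrary coefficients $\xi_m, \kappa_n$. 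The main technical hurdle is securing \eqref{eq:TypeII} when the Type II range is wide (for instance close to $[\eps, 1/2 - \eps]$ in the second component of $\cA_2^*$), where a bare Liouville-sign construction is insufficient; one must use the geometric decomposition of the bilinear space supplied by Section \ref{sec:constructions} to reduce to pieces each controlled by an appropriate averaging.

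For part (2), when $\theta = 0$ the limit point $P = (1/2, 0, \nu)$ (with $\nu \ge 1/3$, or $\nu = 1/2$ in the second component of $\cA_2^*$) satisfies Theorem \ref{thm:asymptotic}(a): \eqref{eq:A1} holds using $a = 1$ for $n = 3$ since $1/3 \le \nu$, and \eqref{eq:A2} holds with $h = 2$ since $2(1-\gamma) = 1 \in [1-\nu, 1]$. I would therefore apply Heath-Brown's identity of degree $K = 3$ to $\sum_n \Lambda(n) w_n$ and follow the combinatorial scheme underlying Theorem \ref{thm:asymptotic}(a): after dyadic decomposition, every exponent vector $(e_1, \ldots, e_{2K})$ in the simplex admits a Type I or Type II grouping at $P$. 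At $P_\eps$ each range shrinks by $O(\eps)$, so the set of exponent vectors for which no grouping survives forms a boundary slab of Lebesgue measure $O(\eps)$. Bounding these newly inaccessible dyadic pieces by combining \eqref{w} with the $m = 1$ case of the Type I estimate contributes $O(\eps \cdot x / \log x)$ to $\sum_p a_p$, yielding $C^+(P_\eps) \le 1 + O(\eps)$. The main obstacle here is the perturbative combinatorial argument confirming that the newly inaccessible exponent vectors lie in an $O(\eps)$-slab; this should follow from inspection of the finitely many linear constraints defining the grouping scheme at $P$.
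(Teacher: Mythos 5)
Your part (1) does land on the paper's actual construction (the proof of Theorem \ref{thm:gamma<1/2}): move the prime mass onto semiprimes $pq$ whose two prime factors both have exponent in a window inside $(\frac12-\eps,\frac12)$, i.e.\ strictly between the Type I level $\frac12-\eps$ and $\frac12$. But your account of why \eqref{eq:TypeII} holds is confused. For such semiprimes every divisor $1,p,q,pq$ has exponent outside $[\theta+\eps,\theta+\nu-\eps]\subseteq[0,\frac12-\eps]$, so the bilinear sum in \eqref{eq:TypeII} is \emph{empty} --- that is the entire point. Attaching Liouville-type signs buys nothing: against \emph{arbitrary} coefficients $\xi_m,\kappa_n$ no deterministic sign pattern can produce bilinear cancellation (the adversary simply chooses $\xi_m\kappa_n$ to match the sign of $w_{mn}$), so if the support of $w$ genuinely met the Type II range, no ``geometric decomposition of the bilinear space'' would rescue it. The ``hurdle'' you identify for wide Type II ranges is a phantom and the fix you sketch is not an argument. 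What does need checking, and what you omit, is \eqref{eq:TypeI}: the $m>1$ terms vanish because both prime factors exceed $x^{1/2-\eps}$, and the $m=1$ term is the prime number theorem, which forces the normalizing constant $K\asymp\eps$ and hence $a_{pq}=1+1/K\gg 1/\eps$ (still nonnegative, so the lower-bound construction survives).

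Part (2) has a genuine gap. You propose to bound the ``newly inaccessible'' pieces of the Heath--Brown decomposition by $O(\eps\, x/\log x)$ using \eqref{w} and the $m=1$ case of \eqref{eq:TypeI}. That bound is false: the inaccessible slab has measure $O(\eps)$ in exponent space, but $|w_n|$ may be of size $1/\eps$ there, and indeed your own part (1) construction places total mass $\sum_{pq\ \mathrm{balanced}}|w_{pq}|\asymp x/\log x$ --- not $O(\eps\,x/\log x)$ --- on exactly this set. (This is the point the paper makes right after the theorem about extremal sequences taking values $\gg 1/\eps$, and why $\CB^{\pm}$ behaves differently from $C^{\pm}$.) Neither \eqref{w}, which only gives $x(\log x)^{\varpi}$, nor the $m=1$ Type I bound, which sees only the unrestricted sum $\sum_n w_n$, controls $w_n$ restricted to balanced semiprimes or balanced products of three primes. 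The only usable information there is the one-sided bound $w_n\ge -b_n$, i.e.\ $a_n\ge 0$, which your argument never invokes; and even with it, the resulting inequalities point the wrong way on decomposition pieces with positive combinatorial coefficient, so a direct truncation of a combinatorial identity cannot yield an upper bound. The paper instead reduces to $\nu=\frac13$ by monotonicity, observes that $\cH(P_\eps)$ consists only of vectors with components $\frac12+O(\eps)$, or $\frac13+O(\eps)$ together with $\frac23+O(\eps)$, or three components $\frac13+O(\eps)$, and applies the upper-bound sieve of Theorem \ref{thm: Main sieving}(b) with the trivial choice $g=\one_{\emptyvec}$: positivity converts the inaccessible contribution into $\sum_{n\in\cN}b_nH(n)=O(\eps)\sum_p b_p$, which is small because $b_n$ is tame on that set, not because $w_n$ is.
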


The first claim in Theorem \ref{thm:continuity-1/2} will be proven in Section \ref{sec:Setup} as part of Theorem \ref{thm:gamma<1/2},
while the second claim will be proven in Section \ref{sec:families}.

In some applications there is a natural barrier to obtaining \eqref{eq:TypeI} with 
$\gamma\ge\frac12$, and Theorem \ref{thm:continuity-1/2} (and the more general
Theorem \ref{thm:gamma<1/2}) shows that one cannot hope to obtain non-trivial lower bounds on primes without some additional assumptions (but one can still obtain strong upper bounds with sufficient Type II estimates when $\theta=0$). 
The proof that $C^-(P_\eps)=0$ for all $\eps>0$ utilizes constructions of sequences
which depend strongly on $\eps$ and take values $\gg 1/\eps$.
If, however, we impose the additional constraint that $w_n$ is divisor bounded,
 then we are able to recover good lower bounds for primes
 when $\eps \to 0$. This was exploited in Duke-Friedlander-Iwaniec \cite{DFI} in the case $\theta=0$, 
$\nu=\frac13$ and  Sarnak and Ubis \cite{SU} for the case $\nu=\frac15$.
To this end, we let $\CB^+(P;\varrho)$ and $\CB^-(P;\varrho)$ be the smallest and largest constants such that
\[
\Bigl(\CB^-(P,\varrho)+o(1)\Big)\sum_{p}b_p\le\sum_{p}a_p\le \Bigl(\CB^+(P,\varrho)+o(1)\Big)\sum_{p}b_p
\]
for any non-negative sequence $a_n$ with $w_n=a_n-b_n$ satisfying \eqref{eq:TypeI} and \eqref{eq:TypeII} with the added condition that $|w_n|\le \tau(n)^\varrho$.
Observe that this extra hypothesis automatically implies \eqref{w} if $\varpi$ is large enough in terms of $\varrho$ and $x$ is large enough.
 Precise technical definitions will be given in Section \ref{sec:Setup}.

\begin{thm}[Continuity for bounded sequences with $\gamma=1/2$]\label{thm:continuity-less-1/2}
Suppose $P=(\frac12,0,\nu)$ with $\frac13\le \nu < \frac12$, $0\le \eps \le \frac1{10}$
and $\varrho\ge 1$.  Then we have $\CB^-(P_\eps;\varrho)=1+O_\varrho(\eps)$ and 
$\CB^+(P_\eps;\varrho)=1+O_\varrho(\eps)$.
\end{thm}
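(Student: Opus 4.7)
The upper bound is immediate. Any sequence $(w_n)$ with $|w_n|\le\tau(n)^\varrho$ satisfies \eqref{w} for $\varpi$ sufficiently large in terms of $\varrho$ and $x$ large, so the admissible class for $\CB^+$ is contained in that for $C^+$. Hence
\[
1 \le \CB^+(P_\eps;\varrho)\le C^+(P_\eps)\le 1+O(\eps)
\]
by the second assertion of Theorem~\ref{thm:continuity-1/2} combined with the trivial lower bound from $a_n=b_n$.

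The substance lies in the lower bound $\CB^-(P_\eps;\varrho)\ge 1-O_\varrho(\eps)$, for which Theorem~\ref{thm:continuity-1/2} gives only $C^-(P_\eps)=0$, making the divisor-bounded hypothesis essential. The plan is to bound $\sum_{x/2<n\le x}\Lambda(n)w_n$ directly by $O_\varrho(\eps)\cdot x$, yielding the claim after standard removal of negligible prime powers. Apply Heath--Brown's identity of length $K$ sufficiently large in terms of $\eps$ and $\nu$ (say $K\ge 1/(\nu-2\eps)$), followed by dyadic decomposition at scale $1/\log x$. Each resulting cell is indexed by a size vector $\mathbf{y}=(y_1,\ldots,y_{2k})$ summing to $1+O(1/\log x)$ with $y_1,\ldots,y_k\le 1/K$.

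Classify each cell by the geometry of $\mathbf{y}$. A \emph{Type II cell} has some subset of sizes in $[\eps,\nu-\eps]$, so \eqref{eq:TypeII} gives a bound $\ll x(\log x)^{-A}$. A \emph{Type I cell} has some free factor of size $\ge 1/2-\eps$; absorbing the remainder into a divisor of size $\le x^{1/2+\eps}$ with $\tau^B$-bounded coefficient, \eqref{eq:TypeI} applies. A brief combinatorial analysis shows the remaining \emph{bad cells} contain exactly three ``large'' factors of size in $(\nu-\eps,1/2-\eps)$ together with small factors of total size $<\eps$: fewer than three large factors cannot sum to $\ge 1-\eps$ (each being $<1/2-\eps$), while four or more is ruled out by $4(\nu-\eps)>1$ since $\nu\ge 1/3$. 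Such cells arise only when $\nu\le 1/3+O(\eps)$.

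The main obstacle is the bad case, where a trivial bound using $|w_n|\le\tau(n)^\varrho$ only yields $O(x(\log x)^{O_\varrho(1)})$ per cell, too weak after summing over polylogarithmically many cells. The plan is to write each bad piece as
\[
\sum_{d,m_1,m_2,m_3} f(d)\,g(m_1)h(m_2)\log(m_3)\,w_{d m_1 m_2 m_3}
\]
with $d\le x^\eps$ and each $m_i$ of size $\approx x^{1/3}$, apply Cauchy--Schwarz in $m_3$, and split the resulting square into a diagonal part (dominated by $\sum_n|w_n|^2\ll_\varrho x(\log x)^{O_\varrho(1)}$) and an off-diagonal part. After reindexing, the off-diagonal reduces to a bilinear form in $w$ whose variable scales lie at the boundary of the Type II range $[x^{1-\nu+\eps},x^{1-\eps}]$, where \eqref{eq:TypeII} at $P_\eps$ is applicable; the $O_\varrho(\eps)$ saving then emerges from the dyadic width of the bad region combined with the quantitative strength of \eqref{eq:TypeII}. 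Aligning the off-diagonal bilinear form with the permitted Type II range and verifying that no worse than $O_\varrho(\eps)$ losses arise in the process is the hardest step of the argument.
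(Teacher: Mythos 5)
Your upper bound is correct and is exactly the paper's route: $\CB^+(P_\eps;\varrho)\le C^+(P_\eps)=1+O(\eps)$ by Theorem~\ref{thm:continuity-1/2}, and the lower bound $\ge 1$ is trivial.

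For the lower bound $\CB^-(P_\eps;\varrho)\ge 1-O_\varrho(\eps)$, you take a genuinely different route from the paper, and there is a gap at the crucial step. Your plan is Heath--Brown $\to$ dyadic cells $\to$ Type~I / Type~II / bad-cell classification, then Cauchy--Schwarz on the bad cells. The difficulty is that after Cauchy--Schwarz in $m_3$ you have $\sum_{m_3}\sum_{a,a'}\alpha(a)\overline{\alpha(a')}\,w_{am_3}\overline{w_{a'm_3}}$, which is \emph{quadratic} in $w$. The hypotheses \eqref{eq:TypeI} and \eqref{eq:TypeII} control only expressions \emph{linear} in $w$ (a sum of $w_{mn}$ weighted by divisor-bounded coefficients), so the off-diagonal $a\ne a'$ is not a ``bilinear form in $w$'' in the sense that Type~II can touch --- it is a bilinear form in the pair $(w,w)$, with two independent values $w_{am_3}$ and $w_{a'm_3}$ and no linear functional of $w$ to feed into \eqref{eq:TypeII}. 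Bounding one $w$-factor by $\tau^\varrho$ to linearize also fails: the off-diagonal is then as large as $x^{5/3}$ up to logs, which after Cauchy--Schwarz returns a bound of size $x(\log x)^{O(1)}$, not $O_\varrho(\eps x)$. So the ``hardest step'' of your argument is not merely hard but does not go through with the listed hypotheses.

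The paper avoids this entirely by working through the sieve of Section~\ref{sec:sieving} rather than through $\sum_n\Lambda(n)w_n$ directly. Concretely, Theorem~\ref{thm:CB-lower} and Theorem~\ref{thm:CB-lower-theta=0} bound $\CB^-(P_\eps;\varrho)$ from below by inserting the sieve weight $H(n)=\sum_{d|n}G(d;n)$ with $g(\emptyvec)=1$ and $g=0$ otherwise. The sieve weight vanishes unless $P^-(n)\ge n^{\nu}$, so the $n$ contributing to the ``bad'' set $\cN_2$ (corresponding to your bad cells, where $\bv(n)\in\cH(P_\eps)\setminus\cH(P)$) have $\Omega(n)\le 1/\nu$, hence $\tau(n)^\varrho\ll_{\varrho,\nu}1$; then the trivial bound $|\sum_{n\in\cN_2}w_nH(n)|\ll_{\varrho,\nu}\#\cN_2$ combined with the fact that $\cH(P_\eps)\setminus\cH(P)$ has logarithmic measure $O(\eps)$ (via Lemma~\ref{lem:h-sum-to-integral}) gives the $O_\varrho(\eps)\cdot\sum_p b_p$ error. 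At $\nu=\tfrac13$ one has $\cH(P)=\emptyset$ (Lemma~\ref{lem:gamma=1/2-H}), so the main sieve integral is zero and $\CB^-(P_\eps;\varrho)\ge 1-O_\varrho(\eps)$ follows; the case $\nu>\tfrac13$ reduces to $\nu=\tfrac13$ by monotonicity (Proposition~\ref{prop:C-monotonicity}). The divisor-bounded hypothesis is thus used only on the small bad region where trivial counting suffices, not in a Cauchy--Schwarz diagonal estimate. If you want to repair your argument, you would need an analogous mechanism that confines the bad-cell contribution to $n$ with a bounded number of prime factors before applying the trivial bound; the naive Heath--Brown expansion of $\Lambda(n)$ does not do this.

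Two smaller points: the ruling-out of four or more ``large'' factors by $4(\nu-\eps)>1$ fails for $\nu=\tfrac13$ and $\eps$ near $\tfrac1{10}$ (though $\eps$ in that range is trivial since $\CB^-\in[0,1]$, so this is cosmetic); and when checking that only $\nu\le\tfrac13+O(\eps)$ produces bad cells, the right comparison is $3(\nu-\eps)>1$.
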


Theorem \ref{thm:continuity-less-1/2} will be proven in Section \ref{sec:families}.
 Duke-Friedlander-Iwaniec \cite{DFI} essentially showed a version of Theorem
 \ref{thm:continuity-less-1/2} when $\rho=1$.

\medskip

\subsection{Analysis of two special families}

We consider two families of parameters $P=(\gamma,\theta,\nu)$
where $C^-(P)<1<C^+(P)$ which have been prominent in the literature and demonstrate that our framework (particularly Theorems \ref{thm:constructions} and
 \ref{thm: Main sieving}) is capable of establishing \textit{exact} formulas for
 $C^\pm(P)$ in some ranges.

The first family of parameters we consider is $P_\theta=(1-\theta,\theta,1-3\theta)$.
If $a_n$ is the normalized indicator function of a set $\cJ \subseteq (x/2,x]$ containing $x^{1-c}$ elements, then one can only hope for \eqref{eq:TypeI} to hold for $\gamma< 1-c$ and \eqref{eq:TypeII} for $\theta> c$.
Moreover, there is a natural barrier in attempting to establish \eqref{eq:TypeII} with $\theta+\nu\ge 1-2c.$\footnote{Showing one can take $\theta+\nu\ge 1-2c$ is closely related to estimating $\#\{n:nm_1,nm_2\in \cJ\}$ with an error term better than $O(1)$ on average over $m_1,m_2\sim x^{1-2c+\epsilon}$ (i.e. to show bilinear cancellation in the error term), which is typically very difficult outside of special situations.}
Thus the parameters $P_\theta$ often represent the best Type I and Type II estimates we can hope for when considering a set with $x^{1-\theta+o(1)}$ elements. 

This parameter family was intensively investigated by Harman \cite{Harman} in developing his sieve method. In particular, these ranges played a fundamental role in finding small fractional parts of $\alpha p$
where $\alpha$ is a given real number (see, e.g., Jia \cite{Jia} and Harman \cite[Chapters 3 and 5]{Harman}) and in work on prime values of incomplete norm forms (see \cite{NormForms}) because these cases handled sets containing $x^{1-\theta+o(1)}$ elements.

 When $\theta\le \frac14$, we have $C^\pm (P_\theta)=1$, 
which follows from Theorem \ref{thm:asymptotic} and was a critical part of
the work of Friedlander-Iwaniec \cite{FI98} on primes of the form $x^2+y^4$. As $\theta\rightarrow 1/3$ the Type II range becomes arbitrarily small, and so it follows from Theorem \ref{thm:TypeII-minimum} that $C^-(P_\theta)=0$ when $\theta$ is sufficiently close to 1/3. Thus the main region of interest is $1/4<\theta$ with $\theta$ not too close to $1/3$.

\begin{thm}\label{thm:1-parm theta family}
For $1/4 < \theta \le 2/7$, we have
\[
C^-(1-\theta,\theta,1-3\theta) = 1 - 2\mint{1-3\theta \le \beta_1 \le \beta_2 \le \theta \\ \b_1+\b_2 \ge 1/2}
\frac{d \b_1 d\b_2}{\b_1 \b_2 (1-\beta_1-\beta_2)} = 1 - 2\int_{1-2\theta}^{1/2}
\; \frac{\log \big(\frac{\theta}{1-\theta-\a}\big)}{\a(1-\a)}\, d\a.
\]
\end{thm}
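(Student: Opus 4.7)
The plan is to deduce Theorem \ref{thm:1-parm theta family} by specializing the general sieving theorem (Theorem \ref{thm: Main sieving}) and the general construction theorem (Theorem \ref{thm:constructions}) to the parameter family $(\gamma,\theta,\nu)=(1-\theta,\theta,1-3\theta)$ with $1/4<\theta\le 2/7$. Since $M=\fl{1/(1-\gamma)}=\fl{1/\theta}=3$ in this range, all decompositions of $\sum_p a_p$ that are not already controlled by Type I/II for trivial reasons involve exactly three primes $n=p_1 p_2 p_3$ with $p_1\le p_2\le p_3$; write $\beta_i=\log p_i/\log x$, so $\beta_3=1-\beta_1-\beta_2$.

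The first main step is a direct case analysis identifying the bad triples, namely those whose sub-products $\prod_{i\in S}p_i$ all fall outside the Type II window $[\theta,1-2\theta]$ and for which the complementary decomposition admits no Type I handle. A short check shows that $(\beta_1,\beta_2,\beta_3)$ is bad precisely when $\beta_1,\beta_2\le \theta$ and $\beta_1+\beta_2\ge 1/2$: each singleton is then outside $[\theta,1-2\theta]$; the pair $\{p_1,p_2\}$ has exponent $\ge 1/2>1-2\theta$ (using $\theta>1/4$); and the pairs $\{p_i,p_3\}$ for $i=1,2$ have exponents $1-\beta_j\ge 1-\theta$, above the Type I window. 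The lower constraint $\beta_1\ge 1-3\theta$ is automatic in this range (it is implied by $\beta_1\ge 1/2-\theta$ when $\theta>1/4$) but is retained for uniformity. The upper bound $\theta\le 2/7$ ensures $M=3$ and that no configuration of four or more primes contributes; at $\theta>2/7$ further bad configurations appear and would alter the formula.

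Given this identification, Theorem \ref{thm: Main sieving} produces the lower bound
\[
\sum_p a_p\;\ge\; \sum_p b_p\,\Bigl(1-2\iint_R\frac{d\beta_1\,d\beta_2}{\beta_1\beta_2(1-\beta_1-\beta_2)}\Bigr)+o\Bigl(\frac{x}{\log x}\Bigr),
\]
with integrand equal to the PNT density of three-prime products and the factor $2$ arising from the structure of the extremal weight in the sieving theorem. Theorem \ref{thm:constructions} applied on the same region $R$ yields a non-negative sequence $(a_n)$ satisfying \eqref{eq:TypeI} and \eqref{eq:TypeII} that saturates this lower bound, so $C^-(1-\theta,\theta,1-3\theta)$ equals the common value. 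Finally, the single-integral form follows from the substitution $\alpha=1-\beta_1-\beta_2$ (Jacobian one): the region becomes $\alpha\in[1-2\theta,1/2]$ with $\beta_1\in[1-\alpha-\theta,(1-\alpha)/2]$, and the inner $\beta_1$-integral evaluates via partial fractions to $\frac{1}{1-\alpha}\log\frac{\theta}{1-\alpha-\theta}$.

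The main obstacle is the case analysis in the second step: identifying $R$ as the \emph{exact} bad region, and verifying that both theorems produce matching factors of $2$ on $R$. This requires careful bookkeeping of the extremal weights, of boundary effects where sub-product exponents lie on endpoints of the Type I or Type II windows, and of potential residual contributions from four-or-more-prime configurations (ruled out by $\theta\le 2/7$). The remainder of the argument is a direct specialization of the general framework developed in the paper.
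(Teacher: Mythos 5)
Your high-level plan—specializing Theorem \ref{thm: Main sieving} for the lower bound and Theorem \ref{thm:constructions} for a matching construction, then observing the two agree—is exactly the strategy the paper follows (packaged there as Theorem \ref{thm:duality}). However, there is a real gap: you have misidentified where the difficulty lies and glossed over the step that makes the proof work.

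First, a smaller issue: the geometry of $\cH(P_\theta)$ is not captured by three-prime decompositions alone. After the paper's reduction, $\cH$ contains vectors with 2, 3, \emph{and} 4 components (types (a), (b), (c), (d) in the paper's case analysis). The final integral does come from the 3-component piece, but that is because the sieve weight $g$ must be chosen so that $(\bone\star g)$ vanishes a.e.\ on the 2- and 4-component parts. Your proposal never constructs $g$ and never checks the sign condition $(\bone\star g)\le 0$ on all of $\cH$, so the factor of $2$ and the domain of integration are asserted, not derived.

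The more serious gap is the construction side. You write that Theorem \ref{thm:constructions} ``yields a non-negative sequence $(a_n)$\ldots that saturates this lower bound'' as if this were automatic. It is not: to invoke Theorem \ref{thm:duality} (a), one must exhibit an explicit $f\in\sF_\eta$ with $f\ge -1$ everywhere and $(1+f(\bx))(\bone\star g)(\bx)=0$ a.e.\ on $\cH$. In the paper, producing such an $f$ is the bulk of the proof: one posits $f_{3,0}\equiv -1$ on $\cR$, leaves $f_{4,0}$ free, and then demands $f_{2,1}(\beta_1,\beta_2,\alpha)=-1$ on the region where $(\bone\star g)<0$. The fragmentation relation \eqref{fsl} turns this into a Volterra integral equation of the first kind for a one-variable unknown $h$ inside $f_{4,0}$, solved via Theorem \ref{thm:Volterra}. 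One must then verify $0\le h\le K/2$ (so $f_{4,0}\ge 0$, hence $f_{0,2},f_{1,1}\ge 0$) and $f_{2,1}\ge -1$ on the complementary region $\alpha>\tfrac12$, both of which require explicit estimates that depend on $\theta\le \tfrac27$. Your proposal contains none of this and attributes the ``main obstacle'' to the case analysis, which is actually the routine part. Without the Volterra step and the positivity bound on $h$, the claimed matching construction is unsupported and the upper bound on $C^-$ does not follow.
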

For $\theta$ close to $\frac14$ we find
\[
1-C^-(1-\theta,\theta,1-3\theta) = 64 (\theta-1/4)^2 + O(\theta-1/4)^3.
\] 
Numerically,
\[
C^-(5/7,2/7,1/7) = 0.9214823\ldots.
\]
This improves upon the result of Harman \cite[Ch. 3,5]{Harman},
who showed weaker lower bounds on $C^-(P_\theta)$, in particular
\[
1-C^-(1-\theta,\theta,1-3\theta) \le 80(\theta-1/4)^2+O((\theta-1/4)^3),
\]
and  $C^-(5/7,2/7,1/7) \ge 0.9041$.

We have also determined the exact value of $C^-(P_\theta)$
in the wider range $\frac27 \le \theta \le \frac{3}{10}$, 
and established lower bounds on $C^-(P_\theta)$ for $\theta>\frac3{10}$,
but the details are much longer and this will appear in a future work.

\medskip

The second parameter family is $\gamma=\frac12$, $\theta=0$ and $0 \le \nu < \frac13$,
which played an important  role in the works of Duke, Friedlander and Iwaniec
\cite{DFI} and of Sarnak and Ubis \cite{SU}. 
Theorem \ref{thm:continuity-less-1/2} gives an asymptotic formula when $\nu \ge \frac13$, and  Theorem \ref{thm:TypeII-minimum} implies $C^-(P)=0$ if $\nu$ is small enough, so the main interest is in $\nu<1/3$ and $\nu$ not too small.
We state results for both $C^{\pm}(P)$ and $\CB^{\pm}(P_\eps;\varrho)$, as the methods we use to bound  $C^{\pm}(P)$ work equally well to bound $\CB^{\pm}(P_\eps;\varrho)$.  We do not have a proof that $C^{\pm}(P)=\lim_{\eps\to 0} \CB^{\pm}(P_\eps;\varrho)$ for all $\nu$, but we believe this is the case
for this special family. By some simple monotonicity principles (see Lemma \ref{prop:C-monotonicity}), we always have
\[
\sup_{\eps>0} \CB^-(P_\eps;\varrho) = \lim_{\eps\to 0^+} \CB^-(P_\eps;\varrho).
\]

\begin{thm}\label{thm:theta=0 gamma=1/2}
Fix $\varrho \ge 1$.  We have
\begin{itemize}
\item[(a)] For $\frac15 \le \nu < \frac13$, 
\[
\lim_{\eps\to 0^+} \CB^-(\tfrac12-\eps,\eps,\nu-2\eps;\varrho) = C^-(\tfrac12,0,\nu) = 1 - 2 \mint{\nu \le \beta_1 \le \beta_2 \le \beta_3 \le \frac12 \\
\beta_1+\beta_2+\beta_3=1} \frac{d\beta_1 d\beta_2}{\beta_1\beta_2\beta_3}.
\]
\item[(b)] For $\nu \ge 0.1663$, $C^-(\frac12,0,\nu)>0$ and
$\lim_{\eps\to 0^+} \CB^-(\frac12-\eps,\eps,\nu-2\eps;\varrho)>0$.
\item[(c)] For $\nu=0.1616$, $C^-(\frac12,0,\nu)=0=\CB^-(\frac12,0,\nu)$.
\end{itemize}
\end{thm}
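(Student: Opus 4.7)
The plan is to apply the sieving framework of Theorem \ref{thm: Main sieving} to obtain lower bounds on $C^-(\tfrac12, 0, \nu)$ and on $\liminf_{\eps \to 0^+} \CB^-(\tfrac12 - \eps, \eps, \nu - 2\eps;\varrho)$, and to apply the construction method of Theorem \ref{thm:constructions} to produce matching upper bounds. With $(\gamma,\theta) = (\tfrac12, 0)$, Type I supplies information up to modulus $x^{1/2}$ and Type II covers bilinear sums with one factor in $[1, x^\nu]$. At the heart of the argument is a sieving decomposition of $\sum_p a_p$ which separates out terms evaluable via Type I and Type II from residuals that, geometrically, correspond to ordered prime tuples $(p_1, \ldots, p_k)$ with each $p_i \in [x^\nu, x^{1/2}]$ and product in $(x/2, x]$; the lower bound on $p_i$ forces $k \le \fl{1/\nu}$, so only finitely many residual layers can arise.

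For part (a), when $\nu \ge \tfrac15$, Theorem \ref{thm: Main sieving} reduces the non-evaluable part to the $k = 3$ case: three-prime products with $\beta_i := (\log p_i)/\log x$ satisfying $\nu \le \beta_1 \le \beta_2 \le \beta_3 \le \tfrac12$ and $\beta_1 + \beta_2 + \beta_3 = 1$. After evaluating the prime-indexed sums via the prime number theorem, the total loss is
\[
2 \iint_{\substack{\nu \le \beta_1 \le \beta_2 \le \beta_3 \le 1/2 \\ \beta_1 + \beta_2 + \beta_3 = 1}} \frac{d\beta_1\, d\beta_2}{\beta_1 \beta_2 \beta_3} \cdot \sum_p b_p + o\!\left(\frac{x}{\log x}\right),
\]
yielding the claimed lower bound on $C^-$. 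For a matching upper bound, we apply Theorem \ref{thm:constructions} to the geometric configuration consisting of this three-prime region; the resulting non-negative sequence can be made bounded (hence divisor-bounded for any $\varrho \ge 1$), so the same value is attained by $\CB^-(P_\eps;\varrho)$ as $\eps \to 0^+$, proving the equality of the two infima and the explicit formula.

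For parts (b) and (c), we are in the regime $\nu < \tfrac15$, so the sieving decomposition must include residual contributions indexed by tuples with $k \ge 4$, supported inside the higher-dimensional simplex $\{\beta_i \in [\nu, \tfrac12],\, \sum_i \beta_i = 1\}$. For (b), the cumulative residual integral produced by Theorem \ref{thm: Main sieving} is evaluated numerically and shown to be strictly less than $1$ for all $\nu \ge 0.1663$, establishing positivity of both $C^-(\tfrac12, 0, \nu)$ and $\liminf_{\eps \to 0^+} \CB^-$. For (c), we apply Theorem \ref{thm:constructions} to a Liouville-type comparison sequence (in the spirit of the construction used in Theorem \ref{thm:TypeII-minimum}) carefully tuned so that the Type I and Type II hypotheses hold at $P = (\tfrac12, 0, 0.1616)$ while the resulting bounded non-negative $(a_n)$ satisfies $a_p = 0$ for every prime $p$, giving $C^-(P) = \CB^-(P) = 0$.

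The principal obstacle is verifying that the extremal constructions in parts (a) and (c) satisfy the Type II bound \eqref{eq:TypeII} for the \emph{full} space of admissible coefficient pairs $(\xi_m,\kappa_n)$, and not merely for restricted families that arise naturally in specific applications; this is the geometric content of Theorem \ref{thm:constructions} and is where the numerical thresholds are pinned down. The gap between $\nu = 0.1616$ (construction) and $\nu = 0.1663$ (sieving) reflects the current reach of this construction--sieving duality for this family, rather than any fundamental obstruction.
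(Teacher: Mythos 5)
Your proposal is on the right track for parts (a) and (b) — the paper indeed lower-bounds $C^-$ and $\CB^-$ via Theorem~\ref{thm: Main sieving} and Theorem~\ref{thm:CB-lower-theta=0} with a well-chosen $g\in\sG_1$, and matches this with a construction $f\in\sF_\eta$ via Theorem~\ref{thm:constructions}(b). The mechanism the paper uses for the exact equality in (a), however, is the complementary-slackness criterion of Theorem~\ref{thm:duality}: one must exhibit a pair $(f,g)$ with $(1+f(\bx))(\bone\star g)(\bx)=0$ for a.e.\ $\bx\in\cH$; stating this explicitly is what turns ``matching upper and lower bounds'' into a proof. The paper's $g$ is not the trivial one — it involves values $\pm 1$ on singletons and pairs keyed to $\cG_1$ — and the $f$ takes $f_{3,0}=-1$, $f_{4,0}=0$ on $\cR$.

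The genuine gap is in part (c). You propose to use a Liouville-type construction ``in the spirit of Theorem~\ref{thm:TypeII-minimum},'' but that approach cannot reach $\nu=0.1616$. The Liouville-type construction of Section~\ref{sec:TypeII-minimum} (via Theorem~\ref{thm:fsl}, the modified Liouville functions $\tl^{(c,\eta)}$, and the perturbation argument in the proof) produces a gap $\delta = -1-f(1)$ of size roughly $1/(4\cdot 2^{1/\eps})$, and the allowable $\nu_0$ must then be small relative to $\delta$; so that argument only yields an exponentially small $\nu_0$ for $\gamma=\tfrac12$, far below $0.1616$. The paper's actual construction for (c) is bespoke, not Liouville-type: it takes $f_{3,0}=f_{4,0}=-1$, $f_{6,0}=0$, and a non-multiplicative $f_{5,0}(\bx)=x_1\cdots x_5\,\one(x_4+x_5<\tfrac12)\,h(x_3+x_4+x_5)$ with an auxiliary decreasing function $h$ chosen via explicit bounds (the role of $f_{5,0}$ is to keep $f_{2,1}(\beta_1,\beta_2,\alpha)\ge -1$ precisely where the naive Liouville choice would overshoot). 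The verification that $f(\bx)\ge -1$ on higher-dimensional $\bx$ and that $f(1)<-1$ then uses explicit estimates and a numerical computation. Your proposal would not produce this, so part (c) does not go through as written.
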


In particular, $C^{-}(\frac12,0,\frac15) = \lim_{\eps\to 0} \CB^-(\frac12-\eps,\eps,\frac15-2\eps;\varrho) = 0.362\ldots$.  For comparison,
Duke, Friedlander and Iwaniec \cite{DFI} showed $\CB^-(\frac12-\eps,\eps,\frac15-2\eps;1) \ge 0.23$ for small enough $\eps>0$
and posed the problem to determine the infimum of $\nu$ with 
$\lim_{\eps\to 0^+} \CB^-(\frac12-\eps,\eps,\nu-2\eps;\varrho)>0$.
By Theorem \ref{thm:theta=0 gamma=1/2}, this threshold lies in $[0.1616,0.1663]$.
The main theorem of Sarnak and Ubis \cite{SU} uses the fact that 
$\CB^-(\frac12-\eps,\eps,\frac15-2\eps)>0$ for small $\eps>0$, 
however the authors' proof that the Type II bound \eqref{eq:TypeII} holds
is conditional on the Ramanujan/Selberg conjectures for $GL_2$. 
Sarnak and Ubis also showed unconditionally that \eqref{eq:TypeII}
holds for $x^{o(1)} \le m\le x^{3/19-o(1)}$.  This is not good enough
for an unconditional proof of their main result, however, since Theorem \ref{thm:theta=0 gamma=1/2} (c) implies $\CB^-(\frac12,0,\frac{3}{19})=0$.

Theorems \ref{thm:1-parm theta family} and  \ref{thm:theta=0 gamma=1/2} 
will be proven in Section \ref{sec:families}.

In future work we will apply our general framework in practice to obtain reasonable lower bounds on $C^-(\gamma,\theta,\nu)$ for fairly general range of parameters $\gamma,\theta,\nu$.

\bigskip

%%%%%%%%%%%%%%%%%%%%%%%%%%%%%%%%%%%%%%%%%%%%%%%%%%%%%%%%%%%%%%%%%%%%%%
%
%
%
{\Large \section{Outline}\label{sec:outline}}
%
%
%
%%%%%%%%%%%%%%%%%%%%%%%%%%%%%%%%%%%%%%%%%%%%%%%%%%%%%%%%%%%%%%%%%%%%%%

Before introducing our technical setup, we give a rough sketch of some of the key ideas in the paper, suppressing various technical issues. 

We wish to understand the sum $\sum_{p}a_p=\sum_{p}b_p+\sum_{p}w_p$ over primes $p$,
where we recall that our sequences are supported on $(x/2,x]$.
 Since we have a prime number theorem for $b_n$, it suffices to understand $\sum_{p}w_p$. By a combinatorial decomposition of primes such as the Heath-Brown identity, we have
\[
\sum_{p}w_p\approx \sum_{j} c_j \sum_{n=m_1\cdots m_j}w_{n}\beta_1(m_1)\cdots \beta_j(m_j)
\]
for some bounded constants $c_j$ and some coefficients $\beta_j$ with $\beta_j(m_j)\approx 1$ if $m_j>x^{1-\gamma}$. 
We can truncate all the variables $m_i$ to $m_i <x^{1-\gamma}$, since if $m_i \ge x^{1-\gamma}$ then $n/m_i \le x^{\gamma}$ and the contribution of such terms is small by our Type I assumption \eqref{eq:TypeI} and the simple behaviour of $\beta_i(m_i)$ in this range. 
Similarly, we can further restrict the summation to terms where $n$ has no divisor
in the Type II range $((x/2)^\theta,x^{\theta+\nu}]$ by our Type II assumption \eqref{eq:TypeII} and regularity properties of the $\beta_j$, e.g. in practice most of the
functions $\beta_j$ are multiplicative.

 If we assume that the $\beta_i$ are also concentrated on integers with no prime factors less than $x^\eps$, then by factorizing the $m_i$ we obtain an expression of the form
\begin{equation}
\sum_{p}w_p\approx \sum_{1/(1-\gamma)\le j<1/\eps}  \; \sum_{\substack{n=p_1\cdots p_j \\ x^{\eps} \le p_1\le \cdots \le p_j \\ \bv(n)\in \cR}}w_{n} \beta'(p_1,\ldots,p_j),
\label{eq:R1Decomposition}
\end{equation}
where $\bv(n)\in \mathbb{R}^{\Omega(n)}$ is the vector 
$\bigl(\frac{\log{p_1}}{\log{n}},\cdots,\frac{\log{p_j}}{\log{n}}\bigr)$, $\beta'$ is an explicit function of $p_1,\ldots,p_j$ and the set $\cR=\cR(\gamma,\theta,\nu)$ is given by
\[
\cR:=\bigcup_{j\ge 1}\Bigl\{\bx\in(0,1-\gamma)^j:\,\text{no subsums in }\two,\,\sum_{i=1}^jx_i=1\Bigr\}.
\]
By \emph{subsum} we mean a sum of a subset of the coordinates of $\bx$.

An immediate consequence of \eqref{eq:R1Decomposition} is that we obtain an asymptotic formula for $\sum_p a_p$ whenever $\cR$ is empty. It is introducing a general framework for studying $\sum_p w_p$ based on analysing the set $\cR$ and the decomposition \eqref{eq:R1Decomposition} which is the key contribution of the paper. 

An important observation is that the process of obtaining \eqref{eq:R1Decomposition} can also be reversed: if we specify $w_n$ for $n$ with $\mathbf{v}(n)\in \cR$ and set $w_n=0$ if $\bv(n)$ has a subsum in $\two$, then there is an essentially unique way of extending the definition of $w_n$ to all $n\in(x/2,x]$ such that \eqref{eq:TypeI} and \eqref{eq:TypeII} hold (which is given by combinatorial identities similar to above). This leads to a natural means of \textit{constructing} examples of sequences satisfying \eqref{eq:TypeI} and \eqref{eq:TypeII} and containing either many primes or few primes (provided one can understand the complicated combinatorial factors arising from this process).

With some understanding of the combinatorial structure of $\cR$, this allows us to show that whenever $\cR$ is non-empty, there are two sequences each satisfying the Type I and Type II estimates \eqref{eq:TypeI} and \eqref{eq:TypeII} but with different average weight on the primes,
thus demonstrating that
 the Type I and Type II estimates are not sufficient to deduce an asymptotic formula. 
In this way we see that $\cR$ being empty is a necessary and sufficient condition for an asymptotic formula.  Considerable combinatorial analysis shows that $\cR$ empty is
equivalent to both \eqref{eq:A1} and \eqref{eq:A2} holding, and thus we obtain 
Theorem \ref{thm:asymptotic}.

One application of our general construction method is to provide examples of sequences satisfying \eqref{eq:TypeI} and \eqref{eq:TypeII} but containing no primes at all (that is, $a_p=0$ on primes $p$) when the length $\nu$ of the Type II interval is small, thus giving
  Theorem \ref{thm:TypeII-minimum}. 
Given a weight $W_n$ for $n$ satisfying $\bv(n)\in\bigcup_{j\ge 1} [\epsilon,1-\gamma]^j$, we can extend $W_n$ as above to a sequence satisfying \eqref{eq:TypeI}, but this extension does not necessarily satisfy \eqref{eq:TypeII} since the support of $W_n$ may include $n$ with $\bv(n)$ having a subsum in $\two$.
 If we set $w_n=W_n$ whenever $\bv(n)\in\cR$, and $w_n=0$ when $\bv(n)$ has a subsum in $\two$, then our extension of $w_n$ will satisfy \eqref{eq:TypeI} and \eqref{eq:TypeII}, and moreover $w_n$ will have a similar size to $W_n$ for most integers $n$,
 provided that the Type II interval is short (i.e. if $\nu$ is sufficiently small compared with $\epsilon,1-\gamma$). 
If our original weight $W_n$ is a bit below $-1$ on primes and at least $-1$ on 
all other integers, then (again, if $\nu$ is small enough) by a slight rescaling
we can obtain a sequence $w_n$ with is $-1$ on primes, $\ge -1$ on all integers,
and satisfies \eqref{eq:TypeI} and \eqref{eq:TypeII}.
Taking $b_n=1$ for all $n$, this produces the required sequence $a_n=w_n+1$.
 Our choice of $W_n$ is based on variants of the Liouville function which are completely multiplicative, $-1$ on primes $p\in [x^\epsilon,x^{1-\gamma}]$, and $0$ on primes $p<x^\epsilon$.

To complement our constructions we also wish to use \eqref{eq:R1Decomposition} to produce matching sieve bounds. The right side of \eqref{eq:R1Decomposition} is difficult to estimate directly for a general sequence $w_n$ since we have no good control over the sign of the coefficients $\beta'(p_1,\ldots,p_j)$. 
However, with this decomposition in mind, we can keep control over positivity by introducing sieves $H^{\pm}(n)=\sum_{d|n}\lambda^\pm_d$ which are only required to have the correct sign on integers $n$ which can be non-trivially decomposed into vectors coming from $\cR$. More specifically, let $\cC(\cR)$ denote the set of all vectors whose components are subsums of a vector $\bx\in\cR$ according to some partition of the coordinates (we'll make this notion precise in the next section). 
Only for the composite integers $n$ satisfying $\bv(n)\in \cC(\cR)$ do we require the usual sieve inequalities
\[
H^-(n)\le 0\le H^+(n), \qquad H^{\pm}(1)=1.
\]
We then find that
\begin{align*}
\sum_{p}a_p\ge \sum_{\substack{\bv(n)\in \cC(\cR)}}a_n H^-(n)&= \sum_{\substack{\bv(n)\in \cC(\cR)}}b_n H^-(n)+ \sum_{\substack{\bv(n)\in \cC(\cR)}}w_n H^-(n)\\
&=\sum_{\substack{\bv(n)\in \cC(\cR)}}b_n H^-(n)+ \sum_{n}w_n H^-(n)- \sum_{\substack{\bv(n)\notin \cC(\cR)}}w_n H^-(n).
\end{align*}
Since $H^-$ is a short divisor sum, $\sum_{n}w_n H^-(n)\approx 0$ by our Type I assumption. By decompositions like \eqref{eq:R1Decomposition} we also find $\sum_{\substack{\bv(n)\notin \cC(\cR)}}w_n H^-(n)\approx 0$. We therefore obtain a lower bound
\begin{equation}
\sum_{p}a_p\gtrsim \sum_{\substack{p}}b_p+\ssum{n\text{ composite} \\ \substack{\bv(n)\in C(\cR)}}b_n H^-(n),
\label{eq:SieveBound}
\end{equation}
and we can evaluate the right hand side precisely since we have a generalized prime number theorem for $b_n$. This gives a general method for producing lower bounds (and a similar method produces upper bounds) for any sequence satisfying \eqref{eq:TypeI} and \eqref{eq:TypeII}.

In some ranges of parameters $P=(\gamma,\theta,\nu)$ there is a phase change where increasing the Type I or Type II ranges slightly causes $\cR(P)$ to suddenly gain a large amount of `mass', whereas in other ranges the `mass' of $\cR(P)$ varies smoothly. 
Utilizing our general sieve bounds and general constructions,
this behaviour of $\cR(P)$ allows us to establish the continuity or discontinuity of
the functions $C^{\pm}(\gamma,\theta,\nu)$ in particular ranges and gives
 Theorem \ref{thm:continuity} and Theorem \ref{thm:continuity-1/2}.
In particular, this shows that any general theory of primes in sequences satisfying
Type I/II sums will need to be sensitive to these arithmetic discontinuities.

We believe that both the constructions and the sieve bounds described above should be capable of producing essentially optimal results. 
In various common families of parameters $(\gamma,\theta,\nu)$ we are able to demonstrate a sieve bound which matches the constants obtained by a construction, thereby showing that both are best-possible (see Theorem \ref{thm:1-parm theta family} and Theorem \ref{thm:theta=0 gamma=1/2}). 
This typically shows that when we are not in the asymptotic region, previous techniques (such as those based on Harman's sieve) do not produce optimal constants since they do not fully exploit the arithmetic information available 
(this is reflected by the fact that there are certain `hidden symmetries' which allow for improvements in Harman's sieve). 

\medskip

\subsection{Schematic of the paper.}

Section \ref{sec:Setup} gives precise definitions of $C^-(\gamma,\theta,\nu)$ and $C^+(\gamma,\theta,\nu)$,
the specific hypotheses we require on the comparison sequence 
$(b_n)$, and the definitions of $\CB^-(\gamma,\theta,\nu)$
and $\CB^+(\gamma,\theta,\nu)$.

Section \ref{sec:Notation} contains notational conventions and basic results we need from prime number theory, prime decompositions and the geometry of certain regions of $\RR^k$.

Section \ref{sec:constructions} is devoted to a new general method of constructing sequences $(a_n)$, with $b_n=1$ for all $n$, which satisfy the Type I bounds 
\eqref{eq:TypeI} and Type II bounds \eqref{eq:TypeII} with $w_n=a_n-b_n$, and so give general upper bounds on $C^-(\gamma,\theta,\nu)$ and lower bounds on $C^+(\gamma,\theta,\nu)$.  The main result of this section is Theorem \ref{thm:constructions}.

A new sieve procedure is established in Section \ref{sec:sieving} which provides general lower bounds on $C^-(\gamma,\theta,\nu)$ and upper bounds on $C^+(\gamma,\theta,\nu)$.  The main result of this section is Theorem  \ref{thm: Main sieving}. We also give a criterion for when a construction in Section \ref{sec:constructions} and a sieve argument in Section \ref{sec:sieving}
are optimal, thus providing an exact value of $C^{\pm}(\gamma,\theta,\nu)$.

Section \ref{sec:families} analyzes $C^{\pm}(\gamma,\theta,\nu)$ for
two special 1-parameter families of triples $(\gamma,\theta,\nu)$ which have 
appeared in the literature. We establish Theorems \ref{thm:continuity-1/2}, \ref{thm:continuity-less-1/2}, \ref{thm:1-parm theta family} and \ref{thm:theta=0 gamma=1/2} here.

 In Section \ref{sec:TypeII-minimum}, we use the method
 from Section \ref{sec:constructions} to show that for
any $\gamma<1$ there is a positive $\nu_0$ so that whenever $\nu\le \nu_0$, and for 
any $\theta$, there are examples of sequences $a_n,b_n,w_n$ satisfying \eqref{eq:TypeI}
and \eqref{eq:TypeII} but with $a_p=0$ for all primes $p$, thus establishing Theorem \ref{thm:TypeII-minimum}.

In Section \ref{sec:asymptotic}, we determine precisely which triples $(\gamma,\theta,\nu)$ guarantee an asymptotic for $\sum_p a_p$ for any sequences satisfying
 \eqref{eq:TypeI} and \eqref{eq:TypeII}, and give a simple criterion
 for determining whether a given triple has this property. This gives Theorem \ref{thm:asymptotic}.
 
Section \ref{sec:continuity} is devoted to the study of the continuity and discontinuity of the 
functions $C^\pm$.  In particular, we show that there are many points $(\gamma,\theta,\nu)$ where these functions are discontinuous and establish Theorem \ref{thm:continuity}.

Sections \ref{sec:families}, \ref{sec:TypeII-minimum} and \ref{sec:asymptotic}
may be read independently of one another, while Section \ref{sec:continuity}
depends on the results of Section \ref{sec:asymptotic}.
Section \ref{sec:TypeII-minimum} requires Section \ref{sec:constructions}
and doesn't use Section \ref{sec:sieving}, while Sections \ref{sec:families},
\ref{sec:asymptotic} and \ref{sec:continuity} require the results of both
Sections \ref{sec:constructions} and \ref{sec:sieving}.

\bigskip

%\newpage

%%%%%%%%%%%%%%%%%%%%%%%%%%%%%%%%%%%%%%%%%%%%%%%%%%%%%%%%%%%%%%%%%%%%%%
%
%
%
{\Large \section{Technical setup and reductions}\label{sec:Setup}}
%
%
%
%%%%%%%%%%%%%%%%%%%%%%%%%%%%%%%%%%%%%%%%%%%%%%%%%%%%%%%%%%%%%%%%%%%%%%

\subsection{Notation for vectors}

As the mapping $n\to \bv(n)$ will be central to our analysis, we next
list some notational conventions for vectors which we use in this paper,
two of the most important being the concepts of \emph{fragmentations} 
and \emph{coagulations} of a given vector. 

\begin{defn}[Vector sizes, sums, concatenations]
For any vector $\bx=(\ssc{x}{1},\ldots,\ssc{x}{k})$, let $|\bx|$
be the sum of the components of $\bx$ (in our work, all components are non-negative,
so this is the $\ell_1$-norm), and let $\dim(\bx)=k$.
The empty vector is denoted $\emptyvec$, and we have
$\dim \emptyvec = 0$ and $|\emptyvec|=0$.
If $\bx=(\ssc{x}{1},\ldots,\ssc{x}{k})$ and $\by=(y_1,\ldots,y_\ell)$, we define
$(\bx,\by)=(\ssc{x}{1},\ldots,\ssc{x}{k},y_1,\ldots,y_\ell)$, in other words $(\bx,\by)$ is the
concatenation of $\bx$ and $\by$.
\end{defn}

\begin{defn}[Subvectors, subsums]
For $A\subseteq [k]:=\{1,2,\ldots,k\}$, $\ssc{\bx}{A} = (x_i:i\in A)$ is called
a \emph{subvector} of $\bx$, where the ordering of the $x_i$
is preserved, i.e. if $A=\{c_1,\ldots,c_m\}$ with 
$c_1<\cdots < c_m$, then $\ssc{\bx}{A} = ( x_{c_1},\ldots,x_{c_m} )$.  In particular, $\emptyvec$ is a subvector of any vector $\bx$.
We use the notation $\by \subseteq \bx$ to denote that $\by$ 
is a subvector of $\bx$.
For any $A$, $|\ssc{\bx}{A}|$
is called a \emph{subsum} of $\bx$, and if $0<|A|<k$ then $|\ssc{\bx}{A}|$
is a \emph{proper subsum} of $\bx$.
\end{defn}

We restate the formal definition of the set $\cR(\gamma,\theta,\nu)$ from the previous section.

\smallskip

\begin{defn}[The fundamental region $\cR$]\label{defn:R1}
For $P=(\gamma,\theta,\nu)$ satisfying \eqref{Q0}, let $\cR(P)$
be the set of all vectors $\bx$, or arbitrary dimension,
with components in $(0,1-\gamma)$ that sum to 1 and have no proper subsum in $[\theta,\theta+\nu]$.
\end{defn}

\begin{defn}[Decompositions]
Suppose that $\bx=(\ssc{x}{1},\ldots,\ssc{x}{k})$.
The notation $\bx_1 \sqcup \cdots \sqcup \bx_m = \bx$ means
that for some disjoint union $A_1 \sqcup \cdots \sqcup A_m = [k]$, $\bx_i=\bx_{A_i}$
for all $i$.  We call this a \emph{decomposition} of $\bx$.
A summation condition $\bx_1 \sqcup \cdots \sqcup \bx_m = \bx$
indicates a sum over all of the $m^k$ decompositions of $\bx$ into $m$ subvectors.
\end{defn}

\begin{defn}[Coagulations and fragmentations]\label{defn:Coagulations}
Given an vector $\bx$, we say that $\by$ is a \emph{coagulation} of $\bx$
if $\by$ is formed by joining together some of the components of $\bx$.  More specifically, there is a decomposition $\bx_1 \sqcup \cdots \sqcup \bx_h$ of $\bx$ 
with each vector $\bx_i$ nonempty and
such that
\[
\by  = \big( |\bx_1|,\ldots,|\bx_h| \big).
\]
Conversely, $\bx$ is called a \emph{fragmentation} of $\by$.

Given a set $\cR$ of vectors (of varying lengths), denote by $\cC(\cR)$  the set of all coagulations of all vectors in $\cR$.
\end{defn}

The set $\cC(\cR(P))$ will play a prominent role in our analysis,
and in particular appears in the hypotheses on $(b_n)$, which we list in 
the next subsection.

\subsection{Hypotheses on the sequence $(b_n)$ when $\cR(P)$ is nonempty}
\label{sec:bn-assumptions}
In order to have wide applicability, we impose very general conditions on the sequence
$(b_n)$.  Given a real number $\varpi \ge 1$, to ensure that the count of primes is larger than various error terms, we require 

\begin{align}
\sum_p b_p &\ge \frac{x}{(\log x)^{\varpi}}. \tag{b.1}\label{bp-sum}
\end{align}
We also require that $b_n$ satisfies a generalized prime number theorem, with an error term controlled by a constant $B$ (which will be taken to be the same constant appearing in \eqref{eq:TypeI} and \eqref{eq:TypeII}, and can be assumed to be sufficiently large in terms of the parameters $\gamma,\theta,\nu,\varpi$):
\begin{equation}\label{bn-f-sum}
\begin{split}
&\text{For every } 2\le k\le 1/\nu, \text{ convex set } \cT \subseteq \{\bx \in \RR^k \cap \cC(\cR(P)): \nu \le \ssc{x}{1}\le \cdots \le \ssc{x}{k} \}\text{ and} \\
&\text{Lipschitz continuous function } f \text{ on } \cT  \text{ with }
|f(\bx)|\le 1 \text{ and Lipschitz constant } \le 1,   \\
&\text{we have}\\
&\ssum{n=p_1\cdots p_k \\ p_1\le \cdots \le p_k} b_n f\Big(\frac{\log p_1}{\log n},\ldots, \frac{\log p_k}{\log n} \Big) = \bigg(  \sum_p b_p \bigg)
\Bigg(\! \mint{\cT} \frac{f(\bu)\, d\bu}{u_1\cdots u_k} + E \Bigg), \; \text{ with }\, |E| \le \frac{1}{B}.
\end{split}\tag{b.2}
\end{equation}

\begin{rmk}
In the present paper, the hypothesis 
\eqref{bn-f-sum} is used only in the proof of Lemma \ref{lem:h-sum-to-integral},
which is used to prove Theorem \ref{thm: Main sieving} and
Theorem \ref{thm:CB-lower}, the latter providing lower bounds on $\CB^-(P;\varrho)$.
 Although \eqref{bp-sum} and
\eqref{bn-f-sum} suffice for all of the results in the present paper, 
we anticipate that for certain ranges of $(\gamma,\theta,\nu)$, future applications of our methods may require
additional regularity conditions on $b_n$; this situation occurs when the set $\cG_2$, defined in \eqref{eq:VHZGdefs}, is nonempty; see Section \ref{sec:sieving}
for more commentary.  Any additional conditions will be satisfied for the constant
sequence $b_n=1$, which is what we use for our constructions in Section \ref{sec:constructions}, as well as for the sequences in Lemma \ref{lem:bn-PNT-examples}
below.
\end{rmk}

\textbf{Notational convention.}
Integrals over subsets of a hyperplane $\{\bx \in \RR^k : \ssc{x}{1}+\cdots+\ssc{x}{k}=z\}$, where $z\in \RR$,
 are with respect to the projection measure of the set onto the
first $k-1$ coordinates.  For such integrals, the choice of which $k-1$ coordinates
to project onto does not matter, as all projection measures are equivalent
for these special hyperplanes.
Likewise, when we refer to the \emph{measure} of such sets, we also mean
the projection measure.

These hypotheses \eqref{bp-sum} and \eqref{bn-f-sum} are satisfied for many natural sequences $(b_n)$ which have appeared in the literature, including  the constant sequence $b_n=1$
and for scaled indicator functions of $n$ in a short interval that are
coprime to a given $q$.

\begin{lem}\label{lem:bn-PNT-examples}
For any $\varpi,B>1$,
 if $x$ is large enough (in terms of $\gamma,\theta,\nu,\varpi,B$), $y\in [x^{1-\nu/11},x/2]$ and $q\le x^2$ is a positive integer,
 then the hypotheses \eqref{bp-sum} and \eqref{bn-f-sum} are satisfied for
\[
b_n=\frac{xq/2}{y\phi(q)}\one \big( \substack{x-y<n\le x\\ (n,q)=1} \big).
\]
\end{lem}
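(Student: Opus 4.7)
The plan is to reduce both \eqref{bp-sum} and \eqref{bn-f-sum} to classical prime number theorems in short intervals, respectively for single primes and for $k$-almost primes whose prime factors are all at least $x^{\nu-o(1)}$. The coprimality condition $(n,q)=1$ will be handled throughout as a negligible correction, since a prime $p$ fails coprimality only by dividing $q$, and $\omega(q) \ll \log q \ll \log x$.

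For \eqref{bp-sum}, I would directly compute
\[
\sum_p b_p = \frac{xq/2}{y\phi(q)} \bigl( \pi(x) - \pi(x-y) + O(\log x) \bigr).
\]
Since $\nu \le 1-\theta \le 1$, the hypothesis $y \ge x^{1-\nu/11}$ gives $y \ge x^{10/11}$, well within Huxley's range for PNT in short intervals, so $\pi(x)-\pi(x-y)\sim y/\log x$. Combined with the elementary bound $q/\phi(q) \ge 1$, this yields $\sum_p b_p \gg x/\log x$, comfortably exceeding the threshold $x/(\log x)^\varpi$.

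For \eqref{bn-f-sum}, I would substitute the definition of $b_n$, noting that tuples with some $p_i \mid q$ form an $o(1)$-fraction of the total (bounded by $\sum_{r\mid q,\, r\ge x^\nu}$ of a $(k-1)$-almost-prime count in a short interval, which is $\ll y (\log\log x)^{k-2} x^{-\nu}\log x$ and thus $o(y/\log x)$). The claim then reduces to
\[
\sum_{\substack{p_1 \le \cdots \le p_k \\ x-y < p_1 \cdots p_k \le x}} f\!\left(\tfrac{\log p_1}{\log n},\ldots,\tfrac{\log p_k}{\log n}\right) = \frac{y}{\log x} \mint{\cT} \frac{f(\bu)\, d\bu}{u_1 \cdots u_k} + O\!\left(\frac{y}{(\log x)^A}\right)
\]
for any fixed $A$. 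The strategy is an iterated short-interval PNT: fixing $p_1 \le \cdots \le p_{k-1}$ and setting $P = p_1\cdots p_{k-1}$, the innermost sum over $p_k$ is a prime count in the interval $((x-y)/P, x/P]$ of length $y' = y/P$ with right endpoint $x' = x/P$. Because all $p_i\ge x^{\nu-o(1)}$ (as $\cT \subseteq \{\bx : \nu \le x_1 \le \cdots \le x_k\}$), one finds $x' \ge x^{1/k} \ge x^\nu$ while $y'/x' = y/x \ge x^{-\nu/11}$; the numerical inequality $\tfrac{5}{12} - \tfrac{1}{11} = \tfrac{43}{132} > 0$ then forces $y' \ge (x')^{7/12 + \delta}$ for some $\delta(\nu) > 0$, putting us in the Huxley regime. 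The Lipschitz condition on $f$ allows $f$ to be pulled out of the innermost sum with an error $O(x^{-\nu/11}/\log x)$, since $\log p_k/\log n$ varies by this amount over the inner interval. Iterating for $p_{k-1},\ldots,p_1$ (or equivalently applying a standard multi-dimensional short-interval PNT for $k$-almost primes all $\ge x^\nu$) converts the discrete sum into the stated integral against the density $\prod_i 1/(u_i \log x)$.

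The main obstacle is propagating errors cleanly through the $k \le 1/\nu$ iterations and confirming that the cumulative relative error is $\le 1/B$. Since each iteration uses Huxley's PNT with a de la Vall\'ee Poussin-type log-saving error $O(h/(\log u)^A)$ and the Lipschitz loss per step is $O(x^{-\nu/11}/\log x)$, choosing $A$ large in terms of $k, \varpi, B$ and then $x$ large in terms of $A$ makes the total error negligible against $1/B$. A secondary technicality is the Riemann-sum approximation of the outer iterated prime sum by the integral, which is handled by standard partial summation using the same short-interval PNT; convexity of $\cT$ ensures the resulting Riemann sums converge uniformly.
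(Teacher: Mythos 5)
Your proposal is correct and follows the paper's intended strategy of reducing everything to the short-interval prime number theorem (Lemma~\ref{PNT}). The only substantive difference is organizational: your iterated application of the short-interval PNT, pulling $f$ out of the inner $p_k$-sum via Lipschitz continuity and then converting the remaining $p_1,\ldots,p_{k-1}$ sums to integrals, re-derives essentially the same content as the paper's Lemma~\ref{lem:sum_to_int} (which accomplishes the sum-to-integral conversion in one shot via a box decomposition combined with Lemma~\ref{PNT}). Invoking that lemma directly with $m=1$, $t=0$, $c=(x-y)/x$, $d=1$ would yield the main term immediately, leaving only the coprimality condition $(n,q)=1$ and the normalization to check, which you handle correctly. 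Your numerical verification that the inner interval stays in the Huxley range is sound: since $k\nu \le 1$ and $y/x \ge x^{-\nu/11}$, one gets $\log y'/\log x' \ge 1 - k\nu/11 \ge 10/11 > 7/12$, i.e.\ $5/12 - 1/11 > 0$ as you state. The one minor slip is the error bound in the coprimality step (the $\log x$ should be in the denominator, not the numerator), but this does not affect the conclusion that the contribution of $n$ with $(n,q)>1$ is negligible, since $q\le x^2$ has $O(1/\nu)$ prime factors of size $\ge (x/2)^\nu$.
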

Lemma \ref{lem:bn-PNT-examples} follows quickly from the prime number theorem
for short intervals (e.g., Lemma \ref{PNT} below) and results connecting sums to
integrals (e.g., Lemma \ref{lem:sum_to_int} below).
We omit the proof.

%%%%%%%%%%%%%%%%%%%%%%%%%%%%%%%%%%%
%
%  Main definition of C^{\pm}(P)
%
%%%%%%%%%%%%%%%%%%%%%%%%%%%%%%%%%%%

\subsection{Definition of $C^-(\gamma,\theta,\nu)$ and $C^+(\gamma,\theta,\nu)$}
\label{sec:C-def}

\begin{defn}[Sequences under consideration]\label{def:Psi}
Let $(\gamma,\theta,\nu)$ satisfy \eqref{Q0}.
Let $\Psi(\gamma,\theta,\nu;B,\varpi,x)$ be the set of pairs of sequences
$((a_n),(b_n))$, each supported on $x/2<n\le x$, such that $a_n\ge 0$ and $b_n\ge 0$
for all $n$, \eqref{bp-sum} and \eqref{bn-f-sum} hold
and $w_n=a_n-b_n$ satisfies  \eqref{eq:TypeI}, \eqref{eq:TypeII} and \eqref{w}.
\end{defn}

\begin{defn}[The constants $C^\pm(\gamma,\theta,\nu)$]\label{def:Constants}
Define $C^-(\gamma,\theta,\nu)$ to be the supremum of of all constants $C$ such
that for any $\varpi>1$ there exists $B>0$ (depending on $\gamma,\theta,\nu,\varpi,C$) so that, whenever
$x$ is sufficiently large (in terms of $\gamma,\theta,\nu,B,C$) and 
$((a_n),(b_n))\in \Psi(\gamma,\theta,\nu;B,\varpi,x)$, we have
\[
\sum_{p} a_p  \ge C \sum_{p} b_p.
\]

Likewise, let $C^+(\gamma,\theta,\nu)$ be the infimum of of all constants $C$ such
that for any $\varpi>1$ there exists $B>0$ (depending on $\gamma,\theta,\nu,\varpi,C$) so that, whenever
$x$ is sufficiently large (in terms of $\gamma,\theta,\nu,B,C$) and 
$((a_n),(b_n))\in \Psi(\gamma,\theta,\nu;B,\varpi,x)$, we have
\[
\sum_{p} a_p  \le C \sum_{p} b_p.
\]
\end{defn}

For Theorems \ref{thm:continuity-less-1/2} and \ref{thm:theta=0 gamma=1/2},
 we also need a precise definition of $\CB^{\pm}(\gamma,\theta,\nu;\varrho)$.
 The definition is the same as that of $C^{\pm}(\gamma,\theta,\nu)$, except that
 the sequences under consideration are restricted to those satisfying 
 \begin{equation}\label{CBD}
 |w_n| \le \tau(n)^\varrho \;\; (x/2<n \le x), \qquad
 \sum_p b_p \ge \frac{x}{\varrho \log x}.
 \end{equation}
 Here we think of $\varrho$ as large and fixed.

\begin{defn}[The constants $\CB^\pm(\gamma,\theta,\nu,\varrho)$]\label{def:CB}
Fix $\varrho \ge 1$.
Let $\CB^-(\gamma,\theta,\nu;\varrho)$ be the the supremum of numbers $C$ so that 
 for any $\varpi>1$  there exists $B>0$  (depending on $\gamma,\theta,\nu,\varpi,C$, $\varrho$) so that, whenever
$x$ is sufficiently large (in terms of $\gamma,\theta,\nu,B,C,\varrho$), \eqref{CBD} holds and
$((a_n),(b_n))\in \Psi(\gamma,\theta,\nu;B,\varpi,x)$, 
we have
\[
\sum_{p} a_p \ge C  \sum_{p} b_p.
\]
Let $\CB^+(\gamma,\theta,\nu)$ be the the infimum of numbers $C$ so that  
 for any $\varpi>1$  there exists $B>0$  (depending on $\gamma,\theta,\nu,\varpi,C$, 
 $\varrho$) so that, whenever
$x$ is sufficiently large (in terms of $\gamma,\theta,\nu,B,C,\varrho$), \eqref{CBD} holds and
$((a_n),(b_n))\in \Psi(\gamma,\theta,\nu;B,\varpi,x)$, 
we have
\[
\sum_{p} a_p \le C  \sum_{p} b_p.
\]
\end{defn}

In the definition of $C^+(\gamma,\theta,\nu)$ it is possible that there is no such constant $C$; in this case we define $C^+(\gamma,\theta,\nu)=\infty$.
The same remark applies to $\CB^+(\gamma,\theta,\nu;\varrho)$.
We will show that $C^+(\gamma,\theta,\nu)<\infty$ whenever $\gamma\ge \frac12$
(see Corollary \ref{cor:C+finite}).
With additional hypotheses on the sequence $(b_n)$, similar to those in the
small sieve, one can use the small sieve to obtain finiteness of the
ratio $\sum_p a_p$ to $\sum_p b_p$; this is a consequence of the small sieve
and does not require any Type II information.

Taking $a_n=b_n=1$ for all $x/2<n\le x$,  we see from Lemma \ref{lem:bn-PNT-examples}
that for any $\varrho$,

\begin{equation}\label{eq:C-pm-trivial}
C^-(\gamma,\theta,\nu) \le  \CB^-(\gamma,\theta,\nu;\varrho) \le 1 
\le  \CB^+(\gamma,\theta,\nu;\varrho) \le C^+(\gamma,\theta,\nu).
\end{equation}

We have the expected monotonicity of the functions $C^{\pm}$ and $\CB^\pm$.

\begin{prop}[Monotonicity of $C^{\pm}$]\label{prop:C-monotonicity}
If $\varrho\ge 1$, $\gamma' \le \gamma$,
$[\theta',\theta'+\nu'] \subseteq [\theta,\theta+\nu]$, $P=(\gamma,\theta,\nu)$
and  $P'=(\gamma',\theta',\nu')$, then
\begin{align*}
C^-(P') &\le C^-(P), & \CB^-(P';\varrho) &\le \CB^-(P;\varrho),\\
C^+(P') &\ge C^+(P), & \CB^+(P';\varrho) &\ge \CB^+(P;\varrho).
\end{align*}
\end{prop}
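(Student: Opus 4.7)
The plan is to establish all four monotonicity relations simultaneously via the set inclusion
\[
\Psi(P;B,\varpi,x) \subseteq \Psi(P';B,\varpi,x)
\]
valid for every sufficiently large $x$. Once this inclusion is proved, the four inequalities follow immediately from the definitions of $C^\pm$ and $\CB^\pm$: if $C<C^-(P')$, then by the definition of $C^-(P')$ there is, for each $\varpi>1$, a $B$ so that $\sum_p a_p\ge C\sum_p b_p$ whenever $((a_n),(b_n))\in \Psi(P';B,\varpi,x)$; combined with the inclusion, this gives the same conclusion for every $((a_n),(b_n))\in \Psi(P;B,\varpi,x)$, hence $C\le C^-(P)$. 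The other three inequalities are proved identically, with the added observation that the divisor-bounded hypothesis \eqref{CBD} is itself independent of $(\gamma,\theta,\nu)$.

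The inclusion reduces to checking the conditions defining $\Psi$ one at a time. For the Type I estimate \eqref{eq:TypeI}, the hypothesis $\gamma'\le \gamma$ makes the sum over $m \le x^{\gamma'}$ a non-negative partial sum of the one over $m \le x^\gamma$, so the $P$-version of \eqref{eq:TypeI} forces the $P'$-version with the same $B$. For the Type II estimate \eqref{eq:TypeII}, the hypothesis $[\theta',\theta'+\nu']\subseteq[\theta,\theta+\nu]$ together with $x\ge 2$ yields $\bigl((x/2)^{\theta'}, x^{\theta'+\nu'}\bigr]\subseteq\bigl((x/2)^\theta, x^{\theta+\nu}\bigr]$; given $P'$-admissible coefficients $\xi_m,\kappa_n$, the restricted sequence $\tilde\xi_m = \xi_m\,\one\bigl(m\in ((x/2)^{\theta'}, x^{\theta'+\nu'}]\bigr)$ still satisfies $|\tilde\xi_m|\le \tau^B(m)$, and applying the $P$-version of \eqref{eq:TypeII} to $(\tilde\xi_m,\kappa_n)$ produces precisely the required $P'$-bound.

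The remaining conditions require only minor bookkeeping. The non-negativity of $(a_n),(b_n)$, the lower bound \eqref{bp-sum}, and the sum clause of \eqref{w} all depend only on the sequences themselves, not on $(\gamma,\theta,\nu)$, and so are preserved verbatim. The lower-bound clause $w_n\ge -x^{\nu'/10}$ required for $P'$ is nominally more stringent than $w_n\ge -x^{\nu/10}$ since $\nu'\le \nu$, but the freedom to take $B$ and $x$ as large as we wish (depending on $P$) in the definitions of $C^\pm,\CB^\pm$ easily absorbs the difference. Similarly, the condition \eqref{bn-f-sum} at $P'$ nominally involves a larger class of test sets via the inclusion $\cR(P)\subseteq \cR(P')$ and hence $\cC(\cR(P))\subseteq \cC(\cR(P'))$; by partitioning any convex $\cT\subseteq \cC(\cR(P'))$ into finite pieces well-approximated by convex subsets within $\cC(\cR(P))$ and extending $f$ Lipschitz-continuously, the identity is transferred with an error absorbed into $1/B$ upon enlarging $B$. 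This final bookkeeping is the main, essentially routine, obstacle; the conceptual heart of the proof is the Type I/II inclusion, which is immediate.
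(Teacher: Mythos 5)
Your overall plan---establishing the inclusion $\Psi(P;B,\varpi,x)\subseteq\Psi(P';B,\varpi,x)$ and reading off all four inequalities from it---is exactly the route the paper takes, and your treatment of \eqref{eq:TypeI}, \eqref{eq:TypeII} and \eqref{bp-sum} matches the paper's one-line argument. The substance is in the "remaining bookkeeping," which is where your proposal goes wrong; it happens to be exactly what the paper also sweeps past in a single clause (the paper's proof of this proposition is three sentences, the verification for \eqref{w} is not mentioned at all, and the justification given for \eqref{bn-f-sum} is only that $\cR(P')\supseteq\cR(P)$, with the direction of the resulting implication left for the reader to untangle). So you noticed the right soft spots---good---but both of your proposed patches are unsound.

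First, the lower-bound clause of \eqref{w}. Since $\nu'\le\nu$ we have $x^{\nu'/10}\le x^{\nu/10}$, so $w_n\ge -x^{\nu'/10}$ is a \emph{genuinely stronger} pointwise constraint on the same sequence $(w_n)$ over the same range $(x/2,x]$. There is no $B$-dependence here at all, and making $x$ larger only replaces the sequence by a different one (supported on a different interval), so it does not turn a sequence that fails $w_n\ge -x^{\nu'/10}$ into one that satisfies it. A sequence $(w_n)$ with, say, a single very negative value $w_{n_0}\in(-x^{\nu/10},-x^{\nu'/10})$ belongs to $\Psi(P;B,\varpi,x)$ but not to $\Psi(P';B',\varpi',x)$ for \emph{any} choice of $B',\varpi'$. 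Your claim that this is "easily absorbed" is not justified; if the inclusion is to hold this clause must actually be addressed.

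Second, \eqref{bn-f-sum}. You correctly observe that $\cC(\cR(P))\subseteq\cC(\cR(P'))$, so the $P'$-version constrains $b_n$ over a \emph{larger} family of test sets $\cT$; your proposed fix is to approximate $\cT\subseteq\cC(\cR(P'))$ by convex pieces inside $\cC(\cR(P))$ and to let $B$ absorb the defect. This fails whenever $\cT$ has substantial measure outside $\cC(\cR(P))$; it fails catastrophically when $\cR(P)=\emptyset$ (which is precisely the interesting regime where Theorem~\ref{thm:asymptotic} gives an asymptotic formula for $P$), since then $\cC(\cR(P))=\emptyset$ and the $P$-version of \eqref{bn-f-sum} is vacuous while the $P'$-version is not. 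There is simply no convex piece of $\cC(\cR(P))$ to approximate with. So this transfer cannot be made to work in the direction you need as a routine approximation, and a different idea is required (or one accepts, as the paper implicitly does, that the hypotheses on $(b_n)$ are regularity conditions one should think of as imposed uniformly and not as sequence-selecting constraints).
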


\begin{proof}
Fix $\varpi,x,B$ and assume $(a_n),(b_n)$ satisfy (I), (II), \eqref{w},
\eqref{bp-sum} and \eqref{bn-f-sum} for $P'$.
It is clear that (I), (II) and \eqref{bp-sum} hold for $P$, since
$\nu'\le \nu$.  Since $\nu'\le \nu$ and $\cR(P') \supseteq \cR(P)$, \eqref{bn-f-sum} 
also holds for $P$.  Therefore,
\[
\Psi(P;B,\varpi,x) \subseteq  \Psi(P';B,\varpi,x)
\]
and Proposition \ref{prop:C-monotonicity} follows immediately.
\end{proof}

\begin{rmk}\noindent
\begin{itemize}
\item We write the strict inequality $(x/2)^{\theta} < m$ in (II) 
in order to have a meaningful statement when $\theta=0$, where the
term $m=1$ must be excluded; if the $m=1$ term is included in (II) then
(II) implies $\sum |w_n| \ll x/\log^B x$, and the analysis is trivial.

\item The standard ``small sieve'' setup has only \eqref{eq:TypeI}
as the main input, and it is known by Selberg's
examples \cite{Selberg} and Bombieri's work \cite{Bombieri} 
that there are sequences satisfying \eqref{eq:TypeI} for all $\gamma<1$ (and $x$ large depending
on $\gamma$), but still with $a_p=0$ on primes.

\item If $((a_n),(b_n))\in \Psi(P;B,\varpi,x)$, and 
each sequence $a_n,b_n$ is multiplied by $(\log x)^D$ for a constant $D$,
the new pair of sequences is in $\Psi(P;B-D,\varpi+|D|,x)$.  Thus,
our hypotheses for $C^{\pm}$ are not sensitive to logarithmic-sized rescalings of the sequences.
In contrast, the hypotheses for $\CB^{\pm}$ are very sensitive to unbounded rescalings.

\item For convenience we have used non-negative sequences $a_n$ in our setup, but we expect that there is no difference if one specializes attention to $a_n$ being the normalized indicator function of sets.  Trivially any bounds on $\sum_p a_p$ for general sequences apply to sets. All of our constructions in Section \ref{sec:constructions} produce extremal sequences with $a_n$ bounded, and so a random sampling argument would allow us to show the same properties hold for a sequence $a'_n$ which is the normalized indicator function of some set $\cA$.
\end{itemize}
\end{rmk}

The Type II bound \eqref{eq:TypeII} implies the same bound in
a complementary range by reversing the roles of $m$ and $n$, and the Type II
bound \eqref{eq:TypeII} implies \eqref{eq:TypeI} with the same range of $m$.
Specifically, we have the following.

\begin{prop}\label{prop:first-reduction}
Assume  \eqref{eq:TypeII}, where $x\ge 1000$.  Then
\begin{itemize}
\item[(a)] For any complex numbers $\xi_m, \kappa_n$ with $|\xi_m| \le \tau^B(m)$ and $|\kappa_n| \le \tau^B(n)$ for $m,n\in \NN$, we have
\[
\bigg|\sum_{\substack{ x^{1-\theta-\nu} \le m\le (x/2)^{1-\theta} \\ x/2<mn \le x}} \xi_m \kappa_n w_{mn} \bigg| \le \frac{x}{\log^{B}{x}}.
\]
\item[(b)] We have
\[
\sum_{m\in ((x/2)^\theta, x^{\theta+\nu}] \cup [x^{1-\theta-\nu},(x/2)^{1-\theta}]} \tau^B(m) \max_{\text{interval \cI}} \Big| \ssum{x/2<mn\le x \\ n\in \cI} w_{mn} \Big| \le \frac{2x}{\log^{B-1}{x}}.
\]
\end{itemize}
\end{prop}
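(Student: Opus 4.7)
\textbf{Part (a)} follows by a relabeling of the summation variables. In the bilinear form $\sum \xi_m \kappa_n w_{mn}$, swap the roles of $m$ and $n$: the hypothesis $m\in [x^{1-\theta-\nu},(x/2)^{1-\theta}]$ combined with $x/2<mn\le x$ translates, in the new labeling, to the condition $m\in ((x/2)^\theta,x^{\theta+\nu}]$, since $m=mn/n>(x/2)/((x/2)^{1-\theta})=(x/2)^\theta$ and $m\le x/x^{1-\theta-\nu}=x^{\theta+\nu}$. Setting $\xi'_m:=\kappa_m$ and $\kappa'_n:=\xi_n\one_{n\in[x^{1-\theta-\nu},(x/2)^{1-\theta}]}$ (which satisfy $|\xi'_m|\le\tau^B(m)$ and $|\kappa'_n|\le\tau^B(n)$) and applying \eqref{eq:TypeII} gives the claim.

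\textbf{Part (b)} is obtained from \eqref{eq:TypeII} and part (a) by a Perron-type contour argument that converts the ``maximum over intervals'' into a Type II bilinear form. For each $m\in M:=((x/2)^\theta,x^{\theta+\nu}]\cup [x^{1-\theta-\nu},(x/2)^{1-\theta}]$ choose a sign $\sigma_m\in\{\pm 1\}$ and an interval $(a_m,b_m]$ attaining the inner maximum. Using $\one_{n\in(a_m,b_m]}=\one_{n\le b_m}-\one_{n\le a_m}$, it suffices to establish
\[
\bigg|\sum_{m\in M}\tau^B(m)\sigma_m\sum_{\substack{x/2<mn\le x\\ n\le N_m}}w_{mn}\bigg|\ll \frac{x}{(\log x)^{B-1}}
\]
uniformly in any real sequence $(N_m)$ (which we may assume satisfies $1\le N_m\le x/m$). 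Inserting the truncated Perron formula
\[
\one_{n< N_m}=\frac{1}{2\pi i}\int_{c-iT}^{c+iT}\Big(\frac{N_m}{n}\Big)^s \frac{ds}{s}+O\bigg(\frac{(N_m/n)^c}{T|\log(N_m/n)|}\bigg)
\]
with $c=1/\log x$ and $T=x^{100}$, the main term becomes $\frac{1}{2\pi i}\int(ds/s)\sum_m\tau^B(m)\sigma_m N_m^s\sum_n n^{-s}w_{mn}$. For each fixed $s=c+it$ the inner bilinear form has coefficients satisfying $|\tau^B(m)\sigma_m N_m^s|\le e\tau^B(m)$ (using $N_m^c\le x^{1/\log x}=e$) and $|n^{-s}|\le 1\le\tau^B(n)$; we apply \eqref{eq:TypeII} for $m$ in the Type II range and part (a) for $m$ in the complementary range, each of which gives a bound $\ll x/(\log x)^B$. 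Multiplying by $\int_{-T}^{T}dt/\sqrt{c^2+t^2}\ll \log(T/c)\ll \log x$ yields the desired bound on the main term.

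\textbf{Main obstacle.} The chief technical point is controlling the contribution of the Perron truncation error near $n=N_m$, where $|\log(N_m/n)|$ is small. For $n$ with $|n-N_m|\ge 1$ one uses $|\log(N_m/n)|\gg |n-N_m|/N_m$ and then a divisor-weighted bound of the form $\sum_k|w_k|\tau^C(k)\ll x(\log x)^{O_{C,\varpi}(1)}$ (obtainable from \eqref{w} by standard moment arguments, or by preliminarily smoothing $\one_{n\le N_m}$ at scale $(\log x)^{-C}$ for large $C$ and absorbing the smoothing error via \eqref{w}); taking $T$ to be a sufficiently large power of $x$ then makes the total truncation error negligible compared to $x/(\log x)^{B-1}$. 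This reduction of a Type I bound on the Type II range to \eqref{eq:TypeII} is a standard maneuver in sieve theory (cf.~Harman's book \cite{Harman}).
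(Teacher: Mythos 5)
Part (a) is correct and is exactly the paper's argument: swap $m$ and $n$ and note that $x^{1-\theta-\nu}\le m\le (x/2)^{1-\theta}$ with $x/2<mn\le x$ forces $n\in((x/2)^\theta,x^{\theta+\nu}]$.

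For part (b) you take a genuinely different route (Perron contour integral) from the paper's (Fourier analysis), and in doing so you hit a genuine gap. The paper applies Lemma 7.3 of Graham--Kolesnik, which gives, for the positive kernel $K(t)=\min(x+1,(\pi|t|)^{-1},(\pi t)^{-2})$, the \emph{exact} inequality
$\bigl|\sum_{n\in\cI}w_{mn}\bigr|\le\int_{-\infty}^{\infty}K(t)\bigl|\sum_{\ell}w_{\ell m}e(\ell t)\bigr|\,dt$
with no error term. Summing over $m$ with the phases $\phi_{m,t}$ absorbed into $\xi_m$, each fixed $t$ yields a bilinear form covered by \eqref{eq:TypeII} and part (a), and the bound follows at once from $\int K(t)\,dt\le\log x$. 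Your Perron version instead introduces the truncation error $\ll (N_m/n)^c/(T|\log(N_m/n)|)$, and you correctly identify that controlling it is the obstacle. But your proposed fix requires a bound on $\sum_k|w_k|\tau^C(k)$ which you derive from hypothesis \eqref{w} --- and the Proposition assumes only \eqref{eq:TypeII} and $x\ge 1000$, not \eqref{w}. Hypothesis \eqref{eq:TypeII} by itself supplies no pointwise or $L^1$ control on $(w_n)$, so the truncation error cannot be discharged within the stated hypotheses. (A secondary issue: \eqref{w} bounds $\sum|w_n|\tau(n)$ only, not $\sum|w_n|\tau^C(n)$ for $C>1$, so even granting \eqref{w} the ``standard moment argument'' needs to be fleshed out, e.g.\ via the pointwise bound $|w_n|\le x(\log x)^\varpi$ implied by the first part of \eqref{w}.) The lesson here is that the Graham--Kolesnik/Vaaler-type kernel is tailored for exactly this kind of unconditional reduction, since it converts the sharp interval cutoff into an absolutely integrable average of twisted sums with no remainder, whereas truncated Perron always leaves a tail requiring independent control of the sequence.
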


\begin{proof}
If $x^{1-\theta-\nu} \le m\le (x/2)^{1-\theta}$ and $x/2<mn\le x$ then
$(x/2)^\theta < n\le x^{\theta+\nu}$ and hence part (a) follows from \eqref{eq:TypeII}.

Part (b) is proven using Fourier analysis.  For real $t$, let $K(t)=\min(x+1,(\pi |t|)^{-1},(\pi t)^{-2})$.  By Lemma 7.3 of Graham and Kolesnik \cite{GK},
for any interval $\cI \in [1,x]$ and any $m\in \NN$,
\[
\Big| \ssum{x/2<mn\le x \\ n\in \cI} w_{mn} \Big| \le \int_{-\infty}^\infty
K(t) \Big| \sum_\ell w_{\ell m} e(\ell t) \Big|\, dt,
\]
where $e(z)=e^{2\pi i z}$.
Thus, for $\cM =  ((x/2)^\theta, x^{\theta+\nu}] \cup [x^{1-\theta-\nu},(x/2)^{1-\theta}]$, we have
\[
\sum_{m\in \cM} \tau(m)^B  \max_{\text{interval \cI}} \Big| \ssum{x/2<mn\le x \\ n\in \cI} w_{mn} \Big| \le \int_{-\infty}^\infty K(t) \sum_{m\in \cM}
\tau(m)^B \phi_{m,t} \sum_\ell w_{m\ell} e(\ell t)\, dt,
\]
where, for each $m$ and $t$, $\phi_{m,t}$ is the complex number of modulus 1 defined by
\[
 \Big| \sum_\ell w_{\ell m} e(\ell t) \Big| = \phi_{m,t}  \sum_\ell w_{\ell m} e(\ell t).
\]
With $t$ fixed, the sum on $m$ is handled by \eqref{eq:TypeII} and part (a), with $\xi_m=\phi_{m,t} \tau(m)^B$ and $\kappa_\ell=e(\ell t)$.  Since $\int_{-\infty}^\infty K(t)\, dt \le \log x$ for $x\ge 1000$, part (b) follows.
\end{proof}

By Proposition \ref{prop:first-reduction} (a),
if $\theta+\nu\ge \frac12$, then (II) implies the same bounds with the
upper limit of $m$ increased to $(x/2)^{1-\theta}$, and if $\nu>1-2\theta$
then $1-\theta-\nu < \theta$ and so the lower limit of $m$ in \eqref{eq:TypeII}
may be lowered to $x^{1-\theta-\nu}$.  In other words,
\begin{equation}\label{eq:Type II-Cpm}
\begin{split}
C^{\pm}(\gamma,\theta,\tfrac12-\theta) &= C^{\pm}(\gamma,\theta,\nu)  \qquad\qquad \big( \tfrac12-\theta \le \nu < 1-2\theta \big), \\
C^\pm(\gamma,\theta,\nu) &= C^\pm(\gamma,\theta',\theta+\nu-\theta') 
\qquad\qquad (1-\theta-\nu < \theta' \le \theta).
\end{split}
\end{equation}

In a similar spirit, Proposition \ref{prop:first-reduction} (b) implies that
\begin{equation}\label{eq:Type-I-Cpm}
\begin{split}
C^{\pm}(\gamma,\theta,\nu) &= C^{\pm}(\theta+\nu,\theta,\nu) \qquad\qquad (\theta<\gamma\le \theta+\nu), \\
C^{\pm}(\gamma,\theta,\nu) &= C^{\pm}(\gamma',\theta,\nu) \qquad\qquad (1-\theta-\nu \le \gamma \le \gamma' < 1-\theta).
\end{split}
\end{equation}

When $\gamma<1/2$, the Type I information and Type II information is not
sufficient to detect primes; see Theorem \ref{thm:gamma<1/2} below.
In light of these reductions, in practice we need only 
consider triples $(\gamma,\theta,\nu)\in \cQ$ (recall the definition \eqref{Q2} of the set $\cQ$).

We have included the case $\theta+\nu=1/2$ in $\cQ$ even though the above reductions (almost) imply that $C^\pm(\gamma,\theta,1/2-\theta)=C^\pm(\gamma,\theta,1-2\theta)$. This is to cover all continuity cases when moving from $P$ to $P_\eps$. When moving from $P$ to $P_\eps$ for $\eps>0$, a Type II range of the form $[\theta,1-\theta]$ may shrink in one of two ways: (i) if we take $\nu=1-2\theta$ then the Type II range of $P_\eps$ is $[\theta+\eps,1-\theta-\eps]$
or (ii) if $\nu=\frac12-\theta$ then the Type II range for $P_\eps$ is $[\theta+\eps,\frac12-\eps]$, with complementary interval $[\frac12+\eps,1-\theta-\eps]$.

\medskip

\subsection{Asymptotic for primes, revisited}\label{subsec:A1A2}
As alluded to in Section \ref{sec:outline}, the proof of 
Theorem \ref{thm:asymptotic} has two parts,
an `analytic' argument showing that the asymptotic for $\sum_p a_p$
holds if and only if $\cR$ is empty, and a `combinatorial' part,
which shows that $\cR$ being empty is equivalent to both \eqref{eq:A1}
and \eqref{eq:A2} holding.  These two theorems will be proven in Section \ref{sec:asymptotic}.

\medskip

\begin{thm}\label{thm:asymptotic-R1}
Suppose that $P=(\gamma,\theta,\nu) \in \cQ$. If $\cR(P)$ is empty, then the conclusion of Theorem \ref{thm:asymptotic} (a)
holds.  If $\cR(P)$ is nonempty then the conclusion of Theorem \ref{thm:asymptotic} (b) holds. 
\end{thm}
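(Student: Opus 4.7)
The plan is to treat the two implications separately. For the analytic direction (part (a)), I would start from $\sum_p w_p = \sum_n \Lambda(n) w_n + O(x^{1/2})$ and apply a Heath-Brown identity of length $J=\lceil 1/(1-\gamma)\rceil$, writing this as a bounded linear combination of sums of the form
\[
S := \sum_{n=m_1\cdots m_{2j}} w_n\, \alpha_1(m_1)\cdots \alpha_{2j}(m_{2j}),
\]
with $\alpha_i$ either Möbius, logarithmic, or indicator weights, each bounded by a power of the divisor function. If some $m_i\ge x^{1-\gamma}$, then grouping the remaining variables allows one to invoke \eqref{eq:TypeI} to dispatch the contribution; thus one may assume $m_i<x^{1-\gamma}$ for all $i$. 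A Buchstab-style expansion with prime cutoff $x^\varepsilon$ then rewrites $S$ as a combination of sums
\[
T:=\sum_{n=p_1\cdots p_k} w_n\, \beta(p_1,\ldots,p_k), \qquad x^\varepsilon \le p_1 \le\cdots \le p_k < x^{1-\gamma}.
\]
If the prime factorization of $n$ admits any subsum of $\mathbf{v}(n)$ in $\two$, one factorizes $n=n'n''$ with $n'\in((x/2)^\theta,x^{\theta+\nu}]$, rearranges $\beta$ into a bilinear form across this split, and appeals to \eqref{eq:TypeII} to absorb the contribution into $O(x/(\log x)^{A})$. The residual terms are indexed by tuples with $\mathbf{v}(n)\in\cR(P)$; when $\cR(P)=\emptyset$ there are none, which yields the desired asymptotic.

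For the construction direction (part (b)), I would appeal directly to the machinery of Theorem \ref{thm:constructions}. Pick any interior point $\bx^\ast\in\cR(P)$, which necessarily has $\dim(\bx^\ast)\ge 2$ since $(1)\notin\cR(P)$ (as $1\notin(0,1-\gamma)$). Choose small neighborhoods $\cU_\pm$ of $\bx^\ast$ inside $\cR(P)$ and prescribe initial weights $W_n^\pm$ supported on $\{n:\mathbf{v}(n)\in\cU_\pm\}$. Setting $w_n^\pm=0$ whenever $\mathbf{v}(n)$ has a proper subsum in $\two$ and extending canonically via the construction method of Section \ref{sec:constructions} produces sequences satisfying \eqref{eq:TypeI} and \eqref{eq:TypeII}. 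Scaling $W_n^\pm$ by a small constant $\eta>0$ keeps $|w_n^\pm|\le \tfrac12$, so $a_n^\pm=1+w_n^\pm$ are bounded and non-negative. By selecting the sign pattern of $W_n^\pm$ appropriately, the induced values on primes satisfy $\sum_p w_p^\pm = \pm\delta\sum_{x/2<p\le x}1 + o(x/\log x)$ for some $\delta>0$, delivering the claimed $C^-(P)<1<C^+(P)$.

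The principal obstacle lies in the Type II step of part (a): the coefficient $\beta$ produced by iterated Buchstab decompositions is not naturally bilinear across an arbitrary split $n=n'n''$, so to invoke \eqref{eq:TypeII} one must reorganize by sorting the prime variables, use a subsum-selection argument to pick a split respecting the multiplicative structure of $\beta$, and possibly combine with a partial-summation or Fourier device as in Proposition \ref{prop:first-reduction} to uncouple the smoothly-varying factors. A secondary subtlety in part (b) is verifying that the canonical extension produces $\sum_p w_p^\pm$ with the intended magnitude rather than an accidental cancellation on prime-indexed terms; this is the substantive content of Theorem \ref{thm:constructions} and should follow from a direct analysis of the Möbius-type alternating sum defining the extension on vectors of dimension $1$.
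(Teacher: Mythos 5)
Your two-part plan runs parallel to the paper's, but there are real gaps, most acutely in part (b).

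For part (a), the paper does not reconstruct a Heath-Brown/Buchstab expansion from scratch. It instead observes that when $\cR$ is empty, the exceptional set $\cN$ from Section \ref{sec:sieving} is empty, so it can take the trivial sieve ($g(\emptyvec)=1$, $g=0$ otherwise, $\lambda(1)=1$, $\lambda=0$ otherwise, $\sigma=\nu$) in Proposition \ref{prop:general}, getting $\sum_{n\text{ composite}}w_n\ll x(\log x)^{-A}$ directly; combined with the $m=1$ case of \eqref{eq:TypeI} this gives $\sum_p w_p\ll x(\log x)^{-A}$. (The case $\nu\ge 1-\gamma$ is handled separately by Lemma \ref{lem:PrimeSplit}, since then \emph{every} factorization lands in the Type II range.) Your sketch is essentially re-deriving the content of Proposition \ref{prop:general}; the ``principal obstacle'' you flag — making the Buchstab coefficient $\beta$ bilinear across a split — is exactly what the Fourier/Perron machinery of Lemmas \ref{lem:Integration}, \ref{lem:polytope-encode} and \ref{lem:PrimeDecomposition} was built to do, so this is not a wrong route, just an unnecessarily long one whose hard steps are left unproved. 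Also, your opening step $\sum_p w_p=\sum_n\Lambda(n)w_n+O(x^{1/2})$ is not justified under the stated hypotheses: \eqref{w} only controls $\sum|w_n|\tau(n)$, not $|w_n|$ pointwise, so the prime-power contribution cannot be brushed off at $O(x^{1/2})$; the paper avoids $\Lambda$ entirely by using the prime-indicator Heath-Brown identity of Lemma \ref{lem:HeathBrown}.

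For part (b), there is a genuine gap. Picking an arbitrary interior point $\bx^\ast\in\cR(P)$ with a small neighborhood $\cU_\pm$ and extending via \eqref{fsl} does \emph{not} by itself guarantee that $f(1)\ne 0$. Since $f$ is supported on a ball of dimension $k\ge 3$, \eqref{fsl} gives
\[
f(1)=\delta\, \cyrL_{1-\gamma}(\bu_0)\ \mint{\cU}\frac{d\bu}{u_1\cdots u_k}
\]
(the Linnik factor being locally constant), and $\cyrL_{1-\gamma}$ can and does vanish on open subregions of a dimension-$k$ hyperplane; Theorem \ref{thm:constructions}(b) only converts $f(1)$ into a deviation of $\sum_p a_p$ from $\sum_p b_p$, so if $f(1)=0$ you learn nothing. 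The paper fixes this with Lemma \ref{lem:R1-nbhd-Q}: one first perturbs to a point of $\cR$ with distinct subsums none equal to $1-\gamma$, then uses Lemma \ref{lem:combine} to coagulate until every pair of components has sum $\ge 1-\gamma$; by Lemma \ref{lem:tc} this forces $\cyrL_{1-\gamma}(\bu)=(-1)^{k-1}(k-1)!\ne 0$ on a small ball $\cD\subset\cR$, whence $f(1)\ne 0$. Your ``secondary subtlety'' paragraph acknowledges the concern but incorrectly attributes the fix to Theorem \ref{thm:constructions}; that theorem does not deliver nonvanishing. Without the coagulation/genericity argument of Lemma \ref{lem:R1-nbhd-Q}, the construction could silently produce the trivial bound $C^-(P)\le 1\le C^+(P)$.
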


\begin{thm}\label{thm:asymptotic-R1-A1A2}
Let $P=(\gamma,\theta,\nu) \in \cQ$. The $\cR(P)=\emptyset$ if and only if conditions \eqref{eq:A1} and \eqref{eq:A2} in Theorem
\ref{thm:asymptotic} both hold.
\end{thm}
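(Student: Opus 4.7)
The plan is to handle the biconditional by two separate constructions for the ``only if'' direction and a contradiction argument for the ``if'' direction. For the ``only if'' direction, if \eqref{eq:A1} fails at some $n\ge M+1$, then $a/n \notin [\theta,\theta+\nu]$ for every $a\in\NN$, and the uniform vector $(1/n,\ldots,1/n)\in\RR^n$ lies in $\cR(P)$: each coordinate equals $1/n\le 1/(M+1)<1-\gamma$, the sum is $1$, and the proper subsums $a/n$ ($1\le a\le n-1$) all miss $[\theta,\theta+\nu]$. If instead \eqref{eq:A2} fails, consider the vector consisting of $M$ copies of $1-\gamma-\eps$ together with a single coordinate $r=1-M(1-\gamma-\eps)$; for sufficiently small $\eps>0$ one has $r\in(0,1-\gamma)$, and the proper subsums are exactly $k(1-\gamma-\eps)$ and $1-k(1-\gamma-\eps)$ for $1\le k\le M$. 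Since \eqref{eq:A2} fails, the limits $k(1-\gamma)$ and $1-k(1-\gamma)$ lie strictly outside $[\theta,\theta+\nu]$, so the perturbed values also avoid the interval for small $\eps$.

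For the ``if'' direction, argue by contradiction: let $\bx=(x_1\le\cdots\le x_k)\in\cR(P)$ have minimal possible dimension. Then $k\ge M+1$ because each $x_i<1-\gamma$ and $\sum x_i=1$. Minimality forces the pairwise bound $x_i+x_j\ge 1-\gamma$ for all $i\ne j$: coagulating any pair summing to less than $1-\gamma$ produces a strictly shorter vector still in $\cR(P)$, since the set of subsums of a coagulation is contained in the set of subsums of $\bx$. I would then split into two cases. When $k\ge M+2$, the pairwise inequality combined with $\sum x_i=1$ pins the coordinates tightly --- $x_1$ satisfies a strictly positive lower bound and $x_2,\ldots,x_k$ cluster near $1-\gamma$ --- and \eqref{eq:A2}, which provides $h\in\{1,\ldots,M\}$ with $h(1-\gamma)\in[\theta,\theta+\nu]\cup[1-\theta-\nu,1-\theta]$, lets one exhibit an $h$-element subset of $\bx$ (or its complement) whose sum lies in $[\theta,\theta+\nu]$, the desired contradiction. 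When $k=M+1$, apply \eqref{eq:A1} at $n=M+1$ to produce $a\in\{1,\ldots,M\}$ with $a/(M+1)\in[\theta,\theta+\nu]$, then analyze the sliding-window $a$-subset sums $U_j=x_j+x_{j+1}+\cdots+x_{j+a-1}$ for $1\le j\le M+2-a$. These traverse $[U_1,U_{M+2-a}]$ with successive differences $x_{j+a}-x_j$, and the pairwise bound together with \eqref{eq:A2} should constrain these jumps strongly enough to force at least one $U_j$ to land in $[\theta,\theta+\nu]$.

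The main obstacle will be the case $k=M+1$, where minimality provides only the pairwise constraint and the coordinates can still vary substantially. Here \eqref{eq:A1} and \eqref{eq:A2} must be interwoven carefully: the former supplies an arithmetic target $a/(M+1)$ that some $a$-subset sum ought to approach, while the latter restricts how coordinates can avoid multiples of $1-\gamma$. I anticipate that the proof will further subdivide based on the structural profile of $\bx$ --- near-uniform, one dominant coordinate, or coordinates clustered near $1-\gamma$ in a pattern mimicking the constructions used in the ``only if'' direction --- and may require invoking \eqref{eq:A1} at values of $n$ other than $M+1$ when $\bx$ exhibits repetitions, so that the arithmetic targets $a/n$ line up with the available subset sums.
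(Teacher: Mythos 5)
The ``only if'' direction of your proposal matches the paper: you exhibit $(1/n,\ldots,1/n)$ when \eqref{eq:A1} fails and the near-$(1-\gamma)$ vector when \eqref{eq:A2} fails, exactly as in Lemma \ref{lem:BDEF}. The ``if'' direction, however, has a genuine gap. Your plan takes a minimal-dimension $\bx\in\cR(P)$ and derives $x_i+x_j\ge 1-\gamma$ by coagulation (correct, this is Lemma \ref{lem:combine}), but the claim that for $k\ge M+2$ the coordinates ``cluster near $1-\gamma$'' is false. Summing the pairwise inequality gives only $k\le 2/(1-\gamma)$, so $k$ can be as large as $2M+1$, and then all coordinates can sit near $(1-\gamma)/2$ (e.g. $1-\gamma=1/3$, $\bx=(1/6,\ldots,1/6)$). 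In that regime no $h$-element subset sum needs to be near $h(1-\gamma)$, so \eqref{eq:A2} alone gives nothing. Similarly, the sliding-window bound in the $k=M+1$ case is not automatic: the jump $x_{j+a}-x_j$ is only bounded by $1-\gamma$, which is typically much larger than $\nu$, so nothing forces a window to land in $[\theta,\theta+\nu]$.

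The paper bridges this gap by introducing an intermediate Hypothesis (A3) --- vectors with all components $>\nu$, pairwise sums $\ge 1-\gamma$, and distinct components differing by more than $\nu$ --- and proving in Lemma \ref{lem:BDEF} that $\cR(P)=\emptyset$ is equivalent to \eqref{eq:A1}, \eqref{eq:A2}, and (A3) jointly. The key reduction you are missing is the \emph{averaging} step in that lemma: components within $\nu$ of each other are replaced by their common mean, an operation preserving membership in $\cR$, and only after iterating this does one reach a vector with the separation property that makes the remaining case analysis tractable. The harder half of the equivalence --- that \eqref{eq:A1} and \eqref{eq:A2} imply (A3), i.e. Proposition \ref{prop:A123} --- then turns out to require a substantial case-by-case combinatorial argument (Proposition \ref{prop:ratio}), tracking parameters $a,k$ derived from the position of $\theta$ among fractions $a/n$, with eight exceptional families that must be checked by hand. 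Your outline does not foresee the depth of this analysis, and neither the sliding windows nor the \eqref{eq:A2}-driven subset argument you sketch recovers it. You should also note the paper's preliminary reduction to $\cQ_1$ (handling the boundary cases $\gamma=1-\theta-\nu$ and $\theta+\nu=\tfrac12$) before the main argument begins.
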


Condition (A1) means that there are no points of the form $(\frac{1}{n},\ldots,\frac{1}{n})$ in $\cR(P)$, and condition \eqref{eq:A2} implies that the
point $(1-\gamma,\ldots,1-\gamma,1-M(1-\gamma))$ is not in $\cR(P)$.
The main part of the argument for Theorem \ref{thm:asymptotic-R1-A1A2}
is to show that if $\cR(P)$ is nonempty then it must contain a vector of one of these special types.
In either case, \eqref{eq:A1} failing or \eqref{eq:A2} failing, the existence 
of these special vectors can be used to produce examples
of sequences $(a_n)$ for which the asymptotic $\sum_p a_p \sim \sum_p b_p$ fails.
In the case where $(\frac1{n},\ldots,\frac1{n})\in \cR(P)$, the construction
focuses on integers whose prime factors are all close to $x^{a/n}$ for positive integers $a$,
and when $(1-\gamma,\ldots,1-\gamma,1-M(1-\gamma))\in \cR(P)$, the 
construction focuses on integers, all of whose 
prime factors but one are close to $x^{a(1-\gamma)}$ for positive integers $a$.

To get a feel for the combinatorial conditions \eqref{eq:A1} and \eqref{eq:A2}, we consider some examples.

\begin{itemize}
\item When $\gamma+\nu\ge 1$, \eqref{eq:A1} holds because all $n\ge M+1$ satisfy 
$\frac{1}{n} \le \nu$ and \eqref{eq:A2} holds since $1-\gamma \le \nu$.  Thus,
we have $C^\pm(P)=1$. This conclusion
has essentially been known to the experts.  
\item Take $\gamma =\frac79$, $\theta=\frac13$, $\nu=\frac{2}{21}$ so that $M=4$
and $\two = [\frac13, \frac37]$.  However, if $\gamma< \frac79$ with the same $\theta,\nu$ then \eqref{eq:A2} fails.
\item Take $\gamma=\frac35$, $\theta=\frac15$, $\nu=\frac2{15}$, so that $M=2$
and $\two = [\frac15,\frac13]$.
This is an example where we have a ``Type I gap'', namely we have good control of
$\sum_n w_{mn}$ when $m\le x^{3/5}$ and, using Proposition \ref{prop:first-reduction} (b), when $x^{2/3} \le m\le (x/2)^{4/5}$ as well.
However, if $\gamma< \frac35$ with the same $\theta,\nu$ then \eqref{eq:A2} fails.
\item 
Take $\gamma=\frac7{10}$, $\theta=\frac25$, $\nu=\frac15$, so that $M=3$
and $\two = [\frac25,\frac35]$.
\item By Lemma \ref{lem:GammaNu}, all such triples have $3\nu+\gamma\ge 1$,
with equality only for $(\frac35,\frac15,\frac2{15})$.
\item Take $\theta=0$, $\gamma=3/5$ and $\nu=\frac13-\frac15=\frac2{15}$.
\item If  $P=(\gamma,\theta,\nu)\in \cQ$ and $\gamma=\frac12>\theta+\nu$, then
we have $C^{-}(P)=C^+(P)=1$ if and only if $\theta=0$ and $\nu\ge \frac13$.
Indeed, since $\theta+\nu<\frac12$, \eqref{eq:A2}
implies that $\theta=0$.  But $M=2$ and hence \eqref{eq:A1} holds if and only if
$\nu\ge \frac13$.  The case $(\gamma,\theta,\nu)=(\frac12,0,\frac13)$ (actually a limiting version of it) was used by Duke-Friedlander-Iwaniec \cite{DFI}.
\end{itemize}

With the monotonicity properties of Proposition \ref{prop:C-monotonicity} we can justify the continuity claim made after the statement of Theorem \ref{thm:continuity}.

\begin{cor}\label{cor:continuity}
Let $P=(\gamma,\theta,\nu)\in \cA^*$.
If $\theta=0$ or \eqref{eq:B} holds, the functions
 $C^{\pm}$ are continuous at the point $P$, and otherwise
 both functions are discontinuous at $P$.
\end{cor}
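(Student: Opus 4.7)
The plan is to deduce the corollary from Theorem \ref{thm:continuity} by combining it with the monotonicity in Proposition \ref{prop:C-monotonicity}, which allows us to pass from the one-parameter family $P_\eps$ to arbitrary neighbourhoods of $P$. Since $P\in\cA^*\subseteq\cA$, we have $C^\pm(P)=1$, so continuity (resp.\ discontinuity) at $P$ is equivalent to showing $C^\pm(P')\to 1$ as $P'\to P$ (resp.\ failing to do so along some sequence).

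For the continuity case (when $\theta=0$ or \eqref{eq:B} holds), I would fix $\eta>0$ and invoke Theorem \ref{thm:continuity}(a) to choose $\delta>0$ small enough that both $|C^-(P_\delta)-1|\le\eta$ and $|C^+(P_\delta)-1|\le \eta$. The key point is that for any $P'=(\gamma',\theta',\nu')$ in the parameter space with $\|P'-P\|_\infty<\delta/2$, a routine check of the three inequalities $\gamma-\delta\le\gamma'$, $\theta+\delta\ge\theta'$, and $\theta+\nu-\delta\le\theta'+\nu'$ shows that $P_\delta$ sits below $P'$ in the monotonicity order, that is, $\gamma-\delta\le\gamma'$ and $[\theta+\delta,\theta+\nu-\delta]\subseteq[\theta',\theta'+\nu']$. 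Proposition \ref{prop:C-monotonicity} then yields $C^-(P_\delta)\le C^-(P')$ and $C^+(P_\delta)\ge C^+(P')$, which combined with the trivial bounds $C^-(P')\le 1\le C^+(P')$ from \eqref{eq:C-pm-trivial} gives $|C^\pm(P')-1|\le \eta$, proving continuity.

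For the discontinuity case (when $\theta>0$ and \eqref{eq:B} fails), I would take the sequence $P_\eps\to P$ as $\eps\to 0^+$ as a witness to discontinuity. Proposition \ref{prop:C-monotonicity} applied to the pair $(P_{\eps_1},P_{\eps_2})$ with $\eps_1<\eps_2$ shows that $\eps\mapsto C^-(P_\eps)$ is non-increasing and $\eps\mapsto C^+(P_\eps)$ is non-decreasing on $(0,\infty)$, hence $\lim_{\eps\to 0^+}C^-(P_\eps)=\sup_{\eps>0}C^-(P_\eps)$ and $\lim_{\eps\to 0^+}C^+(P_\eps)=\inf_{\eps>0}C^+(P_\eps)$. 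Theorem \ref{thm:continuity}(b) ensures that these limits are strictly less than $1$ and strictly greater than $1$ respectively, so $C^\pm(P_\eps)\not\to C^\pm(P)=1$ along $\eps\to 0^+$, witnessing discontinuity.

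The only non-routine step is verifying in the continuity case that $P_\delta$ sits below $P'$ in the monotonicity order once $\|P'-P\|_\infty<\delta/2$; this is a short calculation with the three inequalities above. All other ingredients are direct applications of Theorem \ref{thm:continuity} and Proposition \ref{prop:C-monotonicity}, so I do not anticipate any substantial obstacle beyond this notational bookkeeping.
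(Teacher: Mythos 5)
Your proof is correct and follows essentially the same path as the paper's: for continuity you use monotonicity (Proposition \ref{prop:C-monotonicity}) to sandwich $C^\pm(P')$ between $C^\pm(P_\delta)$ and $1$ for all $P'$ in a small box around $P$, and for discontinuity you observe that \eqref{C-discontinuous} prevents $C^\pm(P_\eps)\to 1$ along the sequence $P_\eps\to P$. Your added remark that $\eps\mapsto C^-(P_\eps)$ is non-increasing and $\eps\mapsto C^+(P_\eps)$ is non-decreasing, so the sup and inf in \eqref{C-discontinuous} are in fact the one-sided limits, is a small but welcome clarification that the paper leaves implicit.
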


\begin{proof}[Proof of Corollary \ref{cor:continuity} assuming Theorem \ref{thm:continuity}]
If \eqref{eq:B} fails and $\theta>0$, then it is clear that $C^\pm$
are discontinuous at $P$.
Assume next that (B) holds or $\theta=0$.
  By monotonicity (Proposition \ref{prop:C-monotonicity}), for any
$P'=(\gamma',\theta',\nu') \in \cQ$ with $|\gamma'-\gamma| \le \eps/2$, $|\theta'-\theta|\le \eps/2$ and $|\nu'-\nu|\le \eps/2$ we have $\gamma-\eps \le \gamma'$
and $[\theta+\eps,\theta+\nu-\eps] \subseteq [\theta',\theta'+\nu']$, thus
\[
C^-(P_\eps) \le C^-(P') \le 1 \le C^+(P') \le C^+(P_\eps).
\]
This gives the continuity of $C^{\pm}$ at $P_0$.
\end{proof}

\medskip

The failure of (B) implies
 that \eqref{eq:A2} holds for $P=P_0$ but fails for $P_\eps$ for sufficiently 
 small $\eps>0$.  In general (that is, for $P$ not necessarily
 from $\cA^*$), we believe that
  if \eqref{eq:A2} holds for $P$ but fails for $P_\eps$ when $\eps>0$
 is small, then $C^{\pm}$ are discontinuous at $P$.  This 
 stems from the fact that for such triples, $\cR(P_\eps)$ has a substantial subset, not present in $\cR(P)$, whose `mass' is independent of $\eps$.
 This will be taken up in a future work.

There is another characterization of when Hypothesis (B) holds, which 
is easy to see visually on graphs of $\cA$, such as in Figure 1.

\begin{lem}\label{lem:B-alternative}
Let $P=(\gamma,\theta,\nu)\in \cA^*$ with $\gamma> \frac12$.
Then Hypothesis (B) holds for $P$ if and only if for some $\eps>0$,
$(\gamma-\eps,\theta,\nu)\in \cA^*$.
\end{lem}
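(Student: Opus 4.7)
By Theorem \ref{thm:asymptotic-R1-A1A2}, $\cA$ is precisely the set of triples in $\cQ$ (with $\gamma>\tfrac12$) for which the combinatorial conditions \eqref{eq:A1} and \eqref{eq:A2} both hold. My strategy is to convert the claim about $\cA^*$ into one about \eqref{eq:A1} and \eqref{eq:A2} for the shifted triple $P':=(\gamma-\eps,\theta,\nu)$, and to identify \eqref{eq:B} as the threshold condition that emerges naturally from a one-sided limit.

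The first step is to show that, for each small $\eps>0$, $P'\in\cA^*$ is equivalent to $P'\in\cA$. The nontrivial inclusion uses Proposition \ref{prop:C-monotonicity} together with Theorem \ref{thm:asymptotic-R1}: the triple $P'_\delta=(\gamma-\eps-\delta,\theta+\delta,\nu-2\delta)$ has smaller $\gamma$-coordinate than $P_\delta=(\gamma-\delta,\theta+\delta,\nu-2\delta)$ and the same Type II range, so $\cR(P_\delta)\subseteq\cR(P'_\delta)$; since $P\in\cA^*$, we have $\cR(P_\delta)\neq\emptyset$ for every $\delta\in(0,\nu/2)$, forcing $P'_\delta\notin\cA$ in that range.

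Next I would verify that \eqref{eq:A1} for $P'$ is automatic when $P\in\cA$. Writing $M'=\fl{1/(1-\gamma+\eps)}$, one has $M'=M$ for all small $\eps$ whenever $1-\gamma\neq 1/k$, in which case \eqref{eq:A1} for $P'$ agrees with \eqref{eq:A1} for $P$. Otherwise $1-\gamma=1/k$ and $M'=k-1$, so the only new instance is $n=k$; here the witness $h$ from \eqref{eq:A2} for $P$ supplies either $a=h$ or $a=k-h$, so \eqref{eq:A1} holds. The heart of the argument is \eqref{eq:A2} for $P'$, where an elementary case analysis on the position of $h'(1-\gamma)$ relative to the four endpoints $\theta,\theta+\nu,1-\theta-\nu,1-\theta$ shows that for all sufficiently small $\eps>0$, \eqref{eq:A2} for $P'$ is solvable iff some $h'$ satisfies $h'(1-\gamma)\in[\theta,\theta+\nu)\cup[1-\theta-\nu,1-\theta)$, which is exactly \eqref{eq:B}. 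The intuition is that the positive perturbation $h'\eps$ shifts $h'(1-\gamma)$ slightly to the right, so values on the left endpoints of the two admissible intervals stay inside while values on the right endpoints escape.

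Both directions of the lemma then follow. If \eqref{eq:B} holds for $P$, the ingredients above give $P'\in\cA$, hence $P'\in\cA^*$, for all sufficiently small $\eps>0$. Conversely, if $P'\in\cA^*$ for some particular $\eps>0$, then monotonicity of $\cR$ in $\gamma$ (with $\theta,\nu$ fixed) gives $(\gamma-\eps',\theta,\nu)\in\cA$ for every $\eps'\in(0,\eps]$, in particular for arbitrarily small $\eps'$, which by the case analysis above forces \eqref{eq:B} for $P$. The main delicacies are the bookkeeping in the \eqref{eq:A1} check when $1-\gamma=1/k$ and the endpoint configurations in the \eqref{eq:A2} case analysis (including the degenerate cases $\theta+\nu=\tfrac12$ and $\theta+\nu=1-\theta$), but none of these requires anything beyond elementary comparisons.
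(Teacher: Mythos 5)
Your proof is correct and follows essentially the same route as the paper's: reduce $\cA^*$-membership of $P'=(\gamma-\eps,\theta,\nu)$ to $\cA$-membership via monotonicity, translate $\cA$-membership to \eqref{eq:A1} and \eqref{eq:A2} via Theorems \ref{thm:asymptotic-R1} and \ref{thm:asymptotic-R1-A1A2}, check that \eqref{eq:A1} persists (with the $1-\gamma=1/M$ case handled by the \eqref{eq:A2} witness), and observe that under the rightward shift of $h(1-\gamma)$ the condition \eqref{eq:A2} for $P'$ survives exactly when \eqref{eq:B} holds. Your treatment of the converse, using monotonicity of $\cR$ in $\gamma$ to pass from a particular $\eps$ to arbitrarily small $\eps'$, is a small but welcome sharpening of the paper's argument, which only explicitly addresses $\eps\le\eps_0$.
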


\begin{proof}
Suppose that (B) fails for $P$.  
Then there is an integer $h$ with $h(1-\gamma)\in \{\theta+\nu,
1-\theta\}$ and for all integers $h$, $h(1-\gamma)\notin [\theta,\theta+\nu)\cup
[1-\theta-\nu,1-\theta)$.  Hence there is an $\eps_0>0$ so that
for all $0<\eps \le \eps_0$, \eqref{eq:A2} fails for $(\gamma-\eps,\theta,\nu)$.
For such $\eps$, $(\gamma-\eps,\theta,\theta+\nu)\notin \cA$.

Now suppose that (B) holds for $P$ and let $M=\fl{1/(1-\gamma)}$.
 For some $\eps_0>0$, if $0 \le \eps \le \eps_0$,
\eqref{eq:A2} holds for $P' := (\gamma-\eps,\theta,\nu)$.
If $1-\gamma\ne \frac{1}{M}$, then clearly \eqref{eq:A1} holds
for $P'$ if $\eps_0$ is small enough.  Otherwise, if $1-\gamma=\frac{1}{M}$,
then \eqref{eq:A2} for $P$ implies that there is an integer $h$ with
$\frac{h}{M}\in [\theta,\theta+\nu]$, hence \eqref{eq:A1} holds for $P'$.
In both cases, we conclude that $P'\in \cA$.  But $P\in \cA^*$ and the monotonicity
of $C^{\pm}$ implies that $P'\in \cA^*$.
\end{proof}

\bigskip

\subsection{The case $\gamma<\frac12$}

In general, \eqref{eq:TypeI} with $\gamma<\frac12$ is not enough to detect primes,
provided that the Type II range in \eqref{eq:TypeII} doesn't \emph{essentially}
imply a larger Type I range via Proposition \ref{prop:first-reduction} (b).

\begin{thm}\label{thm:gamma<1/2}
Assume $\gamma<\frac12$ and $\gamma\not \in [\theta,\theta+\nu]$.
Then for any $B>0$ there are examples of sequences $(a_n),(b_n),w_n=a_n-b_n$ satisfying (I) and (II), with $b_n=1$ for all $n$ 
but with $w_p=-1$ for all primes.  In particular, $C^-(\gamma,\theta,\nu)=0$.
\end{thm}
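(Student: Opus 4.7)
I aim to construct, for every admissible $B$ and $x$ sufficiently large in terms of $(\gamma,\theta,\nu,B)$, a bounded non-negative sequence $(a_n)$ on $(x/2,x]$ with $b_n\equiv 1$ and $a_p=0$ on all primes, whose deviation $w_n=a_n-1$ satisfies \eqref{eq:TypeI} and \eqref{eq:TypeII}. Combined with Lemma~\ref{lem:bn-PNT-examples} this places $((a_n),(b_n))\in\Psi(\gamma,\theta,\nu;B,\varpi,x)$ while $\sum_p a_p=0$, forcing $C^-(\gamma,\theta,\nu)=0$ (the lower bound $C^-\ge 0$ being trivial from non-negativity of $a_n$).

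Let $\cE:=[0,\gamma]\cup[\theta,\theta+\nu]\cup[1-\theta-\nu,1-\theta]\cup[1-\gamma,1]$; by Proposition~\ref{prop:first-reduction}(b), this is the effective range of divisor log-ratios controlled linearly by the given information. The hypothesis $\gamma<\tfrac12$ together with $\gamma\notin[\theta,\theta+\nu]$, combined with the equivalences in \eqref{eq:Type II-Cpm} (which allow us to replace $(\theta,\nu)$ by its symmetrization when $\theta+\nu>\tfrac12$), produces a point $\alpha\in(0,\tfrac12)$ with $\alpha,1-\alpha\notin\cE$: take $\alpha\in(\gamma,\theta)\cap(0,1-\theta-\nu)$ when $\gamma<\theta$, or $\alpha\in(\gamma,1-\theta-\nu)$ when $\gamma>\theta+\nu$ (the latter being non-empty because $\theta+\nu<\gamma<\tfrac12$ forces $1-\theta-\nu>\tfrac12$). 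Fix $\eps>0$ small enough that $[\alpha-\eps,\alpha+\eps]\subseteq(0,1)\setminus\cE$.

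Put $I_\alpha=(x^{\alpha-\eps},x^{\alpha+\eps}]$ and
\[
N_\alpha=\bigl\{p_1p_2\in(x/2,x]:\,p_1\in I_\alpha,\ p_2\text{ prime},\ p_1\ne p_2\bigr\}.
\]
Define $w_n=c\,\one_{n\in N_\alpha}-\one_{n\text{ prime}}$ and $a_n=1+w_n$, where $c>0$ is chosen by a Mertens-type computation (giving $c\asymp\alpha(1-\alpha)/\eps$) so that $c|N_\alpha|=\pi(x)-\pi(x/2)$. Then $a_n\ge 0$ (primes are not in $N_\alpha$, so $a_p=0$; otherwise $a_n\ge 1$), and $|w_n|\le c+1$ yields \eqref{w}. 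For \eqref{eq:TypeI} with $m\le x^\gamma$ and $m\ge 2$: the prime indicator contributes zero because $m<x/2$; and $m\mid p_1p_2$ restricts $m\in\{p_1,p_2,p_1p_2\}$, each excluded by size from $\alpha-\eps>\gamma$, $1-\alpha>\tfrac12>\gamma$, and $p_1p_2\sim x$. Hence only $m=1$ contributes, leaving the requirement
\[
\sup_{\cI\subseteq(x/2,x]}\bigl|\,c|N_\alpha\cap\cI|-\pi(\cI)\,\bigr|\ll\frac{x}{\log^B x}.
\]
Very short $\cI$ (with $|\cI|\le x/\log^{3B}x$) are controlled trivially by $O_c(|\cI|)$; longer $\cI$ are handled by the short-interval prime number theorem applied directly to $\pi(\cI)$ and, via the convolution $|N_\alpha\cap\cI|=\sum_{p_1\in I_\alpha}\pi\bigl((u/p_1,(u+y)/p_1]\bigr)$ with the smoothing supplied by averaging over $p_1\in I_\alpha$, to $|N_\alpha\cap\cI|$. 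The choice of $c$ is precisely what matches the leading constants, and this short-interval equidistribution is the main technical step.

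For \eqref{eq:TypeII} with $m\in(x^\theta,x^{\theta+\nu}]$, the analysis is cleaner: the prime contribution appears only when $n=1$ and $m$ is prime, so it is bounded by $\pi(x^{\theta+\nu})\log^{O(1)}x=o(x/\log^B x)$; and the $N_\alpha$ contribution vanishes identically because $m\mid p_1p_2$ with $m$ in the Type~II range would force $m\in\{p_1,p_2\}$, contradicting $\alpha,1-\alpha\notin[\theta,\theta+\nu]$ by construction. Thus $((a_n),(b_n=1))\in\Psi(\gamma,\theta,\nu;B,\varpi,x)$ with $\sum_p a_p=0$, completing the proof.
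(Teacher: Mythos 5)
Your construction is essentially the one the paper uses: both proofs take $w_p=-1$ on primes and a positive constant weight on $n=p_1p_2$ with $p_1$ restricted to a short logarithmic window around $x^\alpha$ for a suitable $\alpha\in(\gamma,\tfrac12)$ avoiding the Type~II interval, choose the constant (your $c$, the paper's $1/K$) by a Mertens-type calculation so the two densities balance, and then verify \eqref{eq:TypeI} and \eqref{eq:TypeII} by divisor-size considerations plus the short-interval prime number theorem. You are a little more explicit than the paper in also demanding $1-\alpha\notin[\theta,\theta+\nu]$; the paper only notes $\alpha\notin\two$ and leaves the analysis of the cofactor $p_2\approx x^{1-\alpha}$ to the implicit reduction $\theta+\nu\le\tfrac12$ (via Proposition~\ref{prop:first-reduction}), under which $1-\alpha>\tfrac12\ge\theta+\nu$ is automatic. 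Your appeal to \eqref{eq:Type II-Cpm} for this step should really be replaced by Proposition~\ref{prop:first-reduction}(a): the former is a statement about the \emph{values} $C^{\pm}$, whereas what is needed is that a sequence satisfying \eqref{eq:TypeII} with $\theta+\nu'=\tfrac12$ automatically satisfies it for the original larger $\nu$, which is exactly what the proposition supplies.

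One shared subtlety worth fixing in a final write-up: with the window $p_1\in(x^{\alpha-\eps},x^{\alpha+\eps}]$ measured \emph{against $x$}, the density of $N_\alpha$ at a point $s\in(x/2,x]$ is asymptotically $\sum_{p_1\in I_\alpha}\frac{1}{p_1\log(s/p_1)}\approx\frac{\tilde K(\log s/\log x)}{\log s}$ with $\tilde K(\sigma)=\sigma\int_{\alpha-\eps}^{\alpha+\eps}\frac{du}{u(\sigma-u)}$, and $\tilde K(\sigma)$ is not constant in $\sigma$: a single constant $c$ only matches to first order and leaves a genuine discrepancy of size $\asymp x/\log^2 x$ in the $m=1$ term of \eqref{eq:TypeI} over intervals of length $\asymp x$, which is not $O(x/\log^B x)$ for large $B$. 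The clean remedy is to impose the window relative to $n$ itself, i.e.\ $p_1\in(n^{\alpha-\eps},n^{\alpha+\eps}]$, consistent with the $\bv(n)$-formalism used elsewhere in the paper; then the density at $s$ is exactly $K/\log s$ up to PNT-type error and the matching is exact. The paper's displayed computation has the same $x$-relative window and the same issue, so this is not a flaw specific to your proposal, but it should be repaired rather than reproduced.
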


\begin{proof}
There is a number $\a$ satisfying $\gamma<\a < 1/2$ and $\a \not\in  \two$.
This follows since either $\gamma<\theta$ or
$\theta+\nu < \gamma < 1/2$.  Fix $\eps>0$ small enough so that
the interval $[\a-\eps,\a+\eps]$ lies in $(\gamma,1/2)$ and has no intersection
with $\two$.  Let $x$ be large and take
\[
K = \int_{\a-\eps}^{\a+\eps} \frac{du}{u(1-u)} = \log\bigg( \frac{(\a+\eps)(1-\a+\eps)}{(\a-\eps)(1-\a-\eps)} \bigg).
\]
Now define $(w_n)_{x/2<n\le x}$ by taking $w_p=-1$ for primes $p$,
$w_{pq}=1/K$ if $p,q$ are primes with $x^{\a-\eps}\le p\le x^{\a+\eps}$,
and $w_n=1$ otherwise.  We see that (II) holds vacuously as $w_n=1$
if $n$ has a divisor in $[x^{\theta},x^{\theta+\nu}]$.
The terms in (I) with $m>1$ are also equal to zero, and the term $m=1$
equals
\[
\max_{[a,b] \subseteq [1/2,1]} \bigg| \sum_{ax <n\le bx} w_n  \bigg|
=\max_{a,b} \bigg| -(\pi(bx)-\pi(ax)) + K^{-1} \ssum{x^{\a-\eps}\le p\le x^{\a+\eps} \\ ax<pp'\le bx} 1   \bigg|
 = O_B(x/\log^B x)
\]
for any $B>1$ by the prime number theorem.
\end{proof}

\subsection{Generalizations and further arithmetic information}

Some results on primes, particularly those concerning primes in short intervals and primes in arithmetic progressions, incorporate additional arithmetic information which is \textit{not} of the form \eqref{eq:TypeI} or \eqref{eq:TypeII}, and so not covered by our setup. For example, the use of trilinear (and quadrilinear) estimates generalizing \eqref{eq:TypeII} plays an important role in the work of Bombieri-Friedlander-Iwaniec \cite{BFI} on primes in arithmetic progressions (and subsequent works such as  \cite{MaynardI,Polymath,Zhang} and many others), or in the work of Baker-Harman-Pintz \cite{BakerHarmanPintz} (and many earlier works) on prime in short intervals, or in the work of Pitt \cite{Pitt} on sums over primes with $a_p$ coming from the Fourier coefficients for holomorphic cusp forms.

Such trilinear estimates or higher estimates make use of \eqref{eq:TypeII} when the coefficients $\kappa_n$ have a particular convolution structure (either being a convolution of divisor-bounded sequences with specific support ranges, or with some coefficients being essentially constant; such estimates are referred to as `Type I/II' and `Type I$_j$' estimates by Harman \cite{Harman}, or `Type III' estimates by Zhang \cite{Zhang}). One would naturally like to have a generalization of the methods presented here to be able to incorporate such additional estimates.

Similarly, sometimes other additional assumptions have been used. In \cite{DFI} the fact that one was working with a set of positive density was vital (as seen in Theorem \ref{thm:continuity-1/2}). As mentioned in Section \ref{sec:examples-from-literature}, sometimes authors only establish a version of \eqref{eq:TypeI} or \eqref{eq:TypeII} for specific coefficient sequences (it is often only necessary to establish \eqref{eq:TypeII} when the coefficients $\xi_m,\kappa_n$ are the indicator function of certain types of prime factorization), although often the methods generalize to give \eqref{eq:TypeII}.

It would be naturally be desirable to have a theory which can incorporate such additional arithmetic information, or to generate new means to distinguish sets which contain primes from the examples produced here which do not. Given the arithmetic complexity of simply understanding the constants $C^\pm(\gamma,\theta,\nu)$, we have not attempted such a generalization. Give  a polytope $\cT$, an assumption of trilinear estimates of the form
\begin{align}\label{Type-III}
\bigg|\ssum{n=abc \\ (\frac{\log{a}}{\log{n}},\frac{\log{b}}{\log{n}},\frac{\log{c}}{\log{n}})\in \cT } \alpha_a \beta_b \zhu_c \ssc{w}{abc} \bigg| \le
\frac{x}{(\log x)^{B}},
\end{align}
for any choice of divisor-bounded sequences $\alpha_a, \beta_b, \zhu_c$ could be quite easily incorporated into the basic setup, although the subsequent analysis would naturally be more involved. Let $\cR'$ denote the set of vectors $\bx \in \cR$
which do not have a decomposition $\bx=\bx_1 \sqcup \bx_2 \sqcup \bx_3$
with $(|\bx_1|,|\bx_2|,|\bx_3|)\in \cT$. The arguments in Sections \ref{sec:constructions} and \ref{sec:sieving} show that
the analogs of Theorems \ref{thm:constructions} and \ref{thm: Main sieving}
hold with $\cR$ replaced by $\cR'$, that is, reducing further the support of
the functions $f$ and $g$ appearing there. The introduction of $k$-linear sums with arbitrary coefficients, as in
\eqref{Type-III}, with $k\ge 4$, will have a similar effect on reducing the region $\cR$.

\bigskip

%\newpage
%%%%%%%%%%%%%%%%%%%%%%%%%%%%%%%%%%%%%%%%%%%%%%%%%%%%%%%%%%%%%%%%%%%%%
%%%%%%%
%
{\Large \section{Notation and basic tools}\label{sec:Notation}}
%
%%%%%%%  
%%%%%%%%%%%%%%%%%%%%%%%%%%%%%%%%%%%%%%%%%%%%%%%%%%%%%%%%%%%%%%%%%%%%%%

\subsection{Notational conventions.}
\begin{itemize}
\item The symbol $p$, with or without subscripts, always denotes a prime.
\item $[k]$ denotes the set $\{1,\ldots,k\}$.
\item $A \sqcup B$ denotes the disjoint union; i.e., it is assumed that $A$ and $B$ 
are disjoint.  This is used frequently in summations.
\item $\omega(n)$ is the number of distinct prime factors of $n$.
\item $\Omega(n)$ is the number of prime power divisors of $n$; i.e., the number of
prime factors of $n$ counted with multiplicity.
\item $\tau_k(n)$ is the $k$-fold divisor function, the number of $k$-tuples of 
positive integers $(d_1,\ldots,d_k)$ with $d_1\cdots d_k=n$.
\item $P^-(n)$ is the smallest prime factor of $n$, with $P^-(1)$ defined to be $\infty$.
\item $P^+(n)$ is the largest prime factor of $n$, with $P^+(1)=1$.
\item $\emptyset$ is the empty set
\item $\varnothing$ is the empty vector, the unique `vector' of  dimension zero.
\item a sequence, set or function is `1-bounded' if all the elements/terms/values are complex numbers of modulus at most 1.
\item $\one(S)$ and $\one_S$ denote the indicator function of the statement $S$ being true.
\end{itemize}

\begin{defn}[Vectors of prime factors]\label{def:vn}
Let $n\in \NN$, $n\ge 2$ and $n=p_1\cdots p_k$ with $p_1\le \cdots \le p_k$.  Define
\[
\bv(n) = \bigg( \frac{\log p_1}{\log n}, \ldots, \frac{\log p_k}{\log n}\bigg),
\quad \bv(n;x) = \bigg( \frac{\log p_1}{\log x}, \ldots, \frac{\log p_k}{\log x}\bigg).
\]
\end{defn}

\medskip

%%%%%%%%%%%%%%%%%%%%%%%%%%%%%%%%%%%%%%%%%%%%%%%%%%%%%%%%%%%%%%%%%%%%%%
%%%%%%%
\subsection{Combinatorial identities}
%%%%%%%  
%%%%%%%%%%%%%%%%%%%%%%%%%%%%%%%%%%%%%%%%%%%%%%%%%%%%%%%%%%%%%%%%%%%%%%

We begin with a modification of Heath-Brown's identity (\cite{H-B}; see also
\cite[Proposition 17.2]{Opera}).

\medskip

\begin{lem}[Modified Heath-Brown identity]\label{lem:HeathBrown}
For $1 \le n \le x$ and any positive integer $h$, we have
\[
(\log n)\one_{n\text{ prime}}=\sum_{j=1}^h (-1)^{j-1}\binom{h}{j}\sum_{r\le \frac{\log{x}}{\log 2}}\mu(r)\sum_{\substack{n=(e_1\cdots e_j f_1\cdots f_j)^r\\ e_i^r\le x^{1/h} \; (1\le i\le j)}}(\log{f_1})\mu(e_1)\cdots \mu(e_j).
\]
\end{lem}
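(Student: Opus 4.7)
The plan is to obtain the identity by applying the standard Heath-Brown identity to $\Lambda(n^{1/r})$ after a M\"obius inversion that isolates the contribution of primes from that of general prime powers within $\Lambda$.

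First I would establish the elementary identity
\[
(\log n)\one_{n\text{ prime}} = \sum_{r \ge 1} \mu(r) \Lambda(n^{1/r}),
\]
where $\Lambda(n^{1/r})$ is interpreted as zero whenever $n^{1/r}$ is not a positive integer. Both sides vanish unless $n$ is a prime power, so it suffices to check the case $n = p^k$: the right-hand side collapses to $\log p \cdot \sum_{r \mid k} \mu(r)$, which equals $\log p = \log n$ when $k = 1$ and vanishes otherwise, matching the left-hand side. Since $n^{1/r} < 2$ whenever $r > \log n / \log 2$, only indices $r \le \log x / \log 2$ make a nonzero contribution, which accounts for the truncation of the $r$-sum in the lemma.

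Next I would invoke the standard Heath-Brown identity: for any integer $m$ with $1 \le m \le z^h$,
\[
\Lambda(m) = \sum_{j=1}^h (-1)^{j-1}\binom{h}{j}\sum_{\substack{m = e_1 \cdots e_j f_1 \cdots f_j \\ e_i \le z \; (1 \le i \le j)}} \mu(e_1)\cdots \mu(e_j) \log f_1.
\]
I would apply this with $m = n^{1/r}$ and $z = x^{1/(hr)}$; the hypothesis $m \le z^h$ then reduces to $n \le x$, which is given. The constraint $e_i \le x^{1/(hr)}$ rewrites as $e_i^r \le x^{1/h}$, and raising the factorization $m = e_1 \cdots e_j f_1 \cdots f_j$ to the $r$-th power produces $n = (e_1 \cdots e_j f_1 \cdots f_j)^r$, which is precisely the summation condition in the lemma.

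Substituting these Heath-Brown expansions into the M\"obius identity for $(\log n)\one_{n\text{ prime}}$, exchanging orders of summation, and truncating to $r \le \log x / \log 2$ yields the stated formula. The only substantive step is the opening M\"obius identity, which ultimately rests on the classical $\sum_{r \mid k}\mu(r) = \one_{k = 1}$ and is completely elementary. The main conceptual point is that the Heath-Brown cutoff must depend on $r$ (as $x^{1/(hr)}$ rather than $x^{1/h}$) so that the identity applies uniformly to each smaller integer $n^{1/r}$; this is exactly what produces the combined constraint $e_i^r \le x^{1/h}$ in the statement.
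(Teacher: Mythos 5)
Your proof is correct and is essentially the coefficient-level rendering of the paper's argument: the paper works with the Dirichlet series identity $-\sum_r\mu(r)\frac{\zeta'}{\zeta}(rs)=\sum_p(\log p)p^{-s}$ (the generating-function form of your opening M\"obius identity) and then applies the Heath-Brown decomposition of $\zeta'/\zeta$ with the $r$-dependent cutoff $y=x^{1/(rh)}$, extracting coefficients of $n^{-s}$ for $n\le x$. Both approaches hinge on the same observation that the cutoff must scale like $x^{1/(hr)}$ so that the remainder term $\frac{\zeta'}{\zeta}(rs)(1-\zeta(rs)M_y(rs))^h$ contributes no coefficient of $n^{-s}$ for $n\le x$ uniformly in $r$.
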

\begin{proof}
Let $\zeta(s)$ be the Riemann zeta function.
We have that for $\Re(s)>2$
\[
-\sum_{r}\mu(r)\frac{\zeta'}{\zeta}(rs)=\sum_{r,p,\ell}\mu(r)\frac{\log{p}}{p^{\ell r s}}=\sum_{j}\frac{\log{p}}{p^{js}}\sum_{r|j}\mu(r)=\sum_{p}\frac{\log{p}}{p^s}.
\]
On the other hand,
\[
\frac{\zeta'}{\zeta}(rs)=\frac{\zeta'}{\zeta}(rs)\Bigl(1-\zeta(rs)M_{y}(rs)\Big)^h+\sum_{j=1}^h(-1)^{j-1} \binom{h}{j}M_y(rs)^j\zeta(rs)^{j-1}\zeta'(rs),
\]
where $M_y(s)=\sum_{m \le y}\mu(m)m^{-s}$. The first term has no coefficient of $n^{-s}$ if $n \le y^{rh}$. Therefore, taking $y=x^{1/(rh)}$ and equating coefficients of $n^{-s}$ for $n\in [2,x]$ gives
\[
(\log n)\one_{n\text{ prime}}
=\sum_{j=1}^h(-1)^{j-1}\binom{h}{j}\sum_{r}\mu(r)\sum_{\substack{n=(e_1\cdots e_j f_1\cdots f_j)^r\\ e_i^r\le x^{1/h}}}(\log{f_1})\mu(e_1)\cdots \mu(e_j).
\]
The innermost summand is nonzero only if $e_1\cdots e_j f_1\cdots f_j \ge 2$ and hence the inner sum is 
nonempty only when $r\le \frac{\log x}{\log 2}$.
\end{proof}

We next turn to a truncated, vector version of the function appearing in
 Linnik's identity.  Recall that Linnik's identity (\cite[0.6.13]{Linnik}; see
 also \cite[\S 17.2]{Opera}) states that
 \[
 \frac{\Lambda(n)}{\log n} = \sum_{j=1}^\infty \frac{(-1)^{j+1}}{j}
 \ssum{d_1\cdots d_j=n \\ d_i>1\, \forall i} 1.
 \]

For a vector $\bx= ( \ssc{x}{1},\ldots,x_r )$ and $c>0$ (we also allow $c=\infty$) we denote
\begin{equation}\label{Linnik-fcn}
\cyrL_c(\bx) := \sum_{j=1}^\infty \frac{(-1)^{j+1}}{j} \ssum{\by_1 \sqcup \cdots \sqcup
  \by_j = \bx \\ \dim(\by_i) \ge 1 \; \forall i \\ |\by_i| < c\; \forall i} 1.
\end{equation}

In Section \ref{sec:continuity}, we
 need an evaluation of $\cyrL_c$ for special types of vectors.

\begin{lem}\label{lem:tc}
Suppose that $\ell\ge 1$, $k\ge 0$, $c>0$ and $\ssc{x}{1},\ldots,x_\ell,\xi_1,\ldots,\xi_k$ satisfy 
\begin{align*}
x_i &< c \; \forall i \\
 x_i+x_j &\ge c \;\; (i\ne j), \\
x_i + \xi_j &\ge c \;\; (\text{all } i,j),\\
\xi_1+\cdots+\xi_k &< c.
\end{align*}
Then
\[
\cyrL_c(\ssc{x}{1},\ldots,x_\ell,\xi_1,\ldots,\xi_k) = (-1)^{\ell+k+1} (\ell-1)! \ell^k.
\]
\end{lem}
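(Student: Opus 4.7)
My plan is to exploit the hypotheses to drastically restrict which ordered decompositions $\by_1 \sqcup \cdots \sqcup \by_j = (x_1,\ldots,x_\ell,\xi_1,\ldots,\xi_k)$ contribute nontrivially to $\cyrL_c$, then count them using Stirling numbers of the second kind, and finally invoke a classical polynomial identity to collapse the resulting sum.

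The crux is a part-classification: under the hypotheses, an admissible part $\by_i$ (one with $|\by_i|<c$) must be either a singleton $\{x_r\}$, or a nonempty subvector built entirely from the $\xi_j$'s. Indeed, a part containing two distinct $x_r, x_{r'}$ has sum $\ge c$, and a part containing some $x_r$ together with a $\xi_s$ has sum $\ge x_r+\xi_s \ge c$, violating $|\by_i|<c$; conversely, singletons $\{x_r\}$ have sum $x_r<c$, and any nonempty sub-collection of $\xi$'s has sum at most $\xi_1+\cdots+\xi_k<c$ (using that the $\xi_j$ are nonnegative, as is implicit in the intended application, where they arise as coordinates of vectors in $\cR(P)$). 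Consequently, a valid decomposition into $j$ ordered nonempty parts consists of exactly $\ell$ ``$x$-singleton'' parts together with $j-\ell$ ``$\xi$-only'' parts forming an ordered set-partition of $\{\xi_1,\ldots,\xi_k\}$ into $j-\ell$ nonempty blocks. Routine counting (choose which $\ell$ of the $j$ slots carry the $x$-singletons, order the labels $x_1,\ldots,x_\ell$ among them, and distribute the $\xi_j$'s into the remaining labeled nonempty blocks) gives $\binom{j}{\ell}\ell!(j-\ell)!S(k,j-\ell)=j!\,S(k,j-\ell)$ admissible decompositions, with the convention that this vanishes unless $\ell \le j \le \ell+k$.

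Substituting back into the definition of $\cyrL_c$ and setting $i=j-\ell$ gives
\[
\cyrL_c(\bx) \;=\; \sum_{j=\ell}^{\ell+k}\frac{(-1)^{j+1}}{j}\,j!\,S(k,j-\ell) \;=\; (-1)^{\ell+1}(\ell-1)!\sum_{i=0}^{k}(-1)^{i}\,\ell(\ell+1)\cdots(\ell+i-1)\,S(k,i).
\]
Recognising $(-1)^i\ell(\ell+1)\cdots(\ell+i-1) = (-\ell)(-\ell-1)\cdots(-\ell-i+1)$ as the falling factorial $(-\ell)^{\underline{i}}$, the inner sum collapses via the classical identity $x^k=\sum_{i=0}^k S(k,i)\,x^{\underline{i}}$ (specialised at $x=-\ell$) to $(-\ell)^k = (-1)^k\ell^k$. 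Multiplying out yields $\cyrL_c(\bx)=(-1)^{\ell+k+1}(\ell-1)!\,\ell^k$, as claimed. The only step requiring any thought is the part-classification; the rest is mechanical enumeration plus a standard identity.
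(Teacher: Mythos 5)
Your proof is correct, and it takes a genuinely different (and slicker) route from the paper at the crucial evaluation step. Both arguments agree on the part-classification (an admissible part must be an $x$-singleton or a nonempty sub-block of the $\xi$'s) and therefore arrive at essentially the same alternating sum: in your notation $\cyrL_c=\sum_{i=0}^k(-1)^{i+\ell+1}(i+\ell-1)!\,S(k,i)$, which is identical to the paper's $M_{\ell,k}=\sum_{j=0}^k(-1)^{j+\ell+1}\frac{(j+\ell-1)!}{j!}L_{j,k}$ once one notes $L_{j,k}=j!\,S(k,j)$. The paper then evaluates this by forming the bivariate generating function $F(y,z)=\sum_{\ell,k}\frac{M_{\ell,k}}{k!(\ell-1)!}y^{\ell-1}z^k$, simplifying it to $e^{-z}/(1+ye^{-z})-1/(y+1)$, and reading off coefficients. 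You instead recognise $(-1)^i\ell(\ell+1)\cdots(\ell+i-1)$ as the falling factorial $(-\ell)^{\underline{i}}$ and collapse the sum with the classical identity $x^k=\sum_i S(k,i)\,x^{\underline{i}}$ at $x=-\ell$. Your route avoids generating functions entirely, handles $k=0$ and $k\ge1$ uniformly, and requires no convergence considerations. One small remark: the parenthetical caveat about nonnegativity of the $\xi_j$ is unnecessary as a hypothesis borrowed from the application; since $\ell\ge1$, any $j$ has $x_1+\xi_j\ge c$ with $x_1<c$, forcing $\xi_j>0$, so nonnegativity is already a consequence of the stated hypotheses and the lemma is clean as written.
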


\begin{proof}
When partitioning the components $\ssc{x}{1},\ldots,x_\ell,\xi_1,\ldots,\xi_k$ into nonempty sets, 
each with sum $<c$, the components $x_i$ must be alone in singleton sets, and
the variables $\xi_1,\ldots,\xi_k$ may be placed arbitrarily in the other sets.
  Given that we are partitioning into $\ell+j$ sets,
the number of ways to place the variables $\ssc{x}{1},\ldots,x_\ell$ is $\frac{(\ell+j)!}{j!}$,
and so the number of ways to partition the variables equals
\[
\frac{(j+\ell)!}{j!} L_{j,k}, \quad L_{j,k} := \ssum{B_1 \sqcup \cdots \sqcup B_j=[k] \\ |B_i|\ge 1\; \forall i} 1 = \ssum{d_1+\cdots+d_j=k\\ d_i\ge 1\; \forall i} \frac{k!}
{d_1! \cdots d_j!}.
\] 
Therefore, $\cyrL_c(\bx,\bxi) = M_{\ell,k}$, where
\[
M_{\ell,k} := \sum_{j=0}^k (-1)^{j+\ell+1} \frac{(j+\ell-1)!}{j!} L_{j,k}.
\]
In particular, when $k=0$ only the term $j=0$ appears and 
we have $\cyrL_c(\bx) = M_{\ell,0}= (-1)^{\ell+1}(\ell-1)!$.

When $k\ge 1$ the term $j=0$ does not appear and we use generating functions.  Define
\[
F(y,z) = \sum_{\ell\ge 1} \sum_{k\ge 1} \frac{M_{\ell,k}}{k!(\ell-1)!} y^{\ell-1}z^k.
\]
Since $M_{\ell,k}=\cyrL_c(\bx,\bxi)\le \sum_{j\le \ell+k}(\ell+k)^j\ll (\ell+k)^{\ell+k}$, we see that this converges absolutely for $y,z$ sufficiently small. Then
\begin{align*}
F(y,z) &= \sum_{\ell\ge 1} \sum_{k\ge 1} \sum_{j=1}^k y^{\ell-1} (-1)^{j+\ell-1}
\binom{j+\ell-1}{j}  \ssum{d_1+\cdots+d_j=k\\ d_i\ge 1\; \forall i} \frac{z^{d_1+\cdots+d_j}}
{d_1! \cdots d_j!},
\end{align*}
which again converges absolutely for $y,z$ sufficiently small. Substitute $h=\ell-1$.  With $h,j$ fixed the sum over $k$ of the inner sum on 
$d_1,\ldots,d_j$ equals $(e^z-1)^j$.  Thus,
\begin{align*}
F(y,z) &= \sum_{j=1}^\infty (1-e^z)^j \sum_{h=0}^\infty (-y)^h \binom{h+j}{j}\\
&=\sum_{j=1}^\infty \frac{(1-e^z)^j}{(1+y)^{j+1}} = \frac{1-e^z}{(1+y)(y+e^z)}
=\frac{1}{y+e^z} - \frac{1}{y+1} = \frac{e^{-z}}{1+y e^{-z}} - \frac{1}{y+1}.
\end{align*}
Thus, $M_{\ell,k}$ is $(\ell-1)! k!$ times the coefficient of $y^{\ell-1} z^k$
in $e^{-z}/(1+y e^{-z})$.  Computing
\begin{align*}
\frac{e^{-z}}{1+y e^{-z}} &= e^{-z} \sum_{\ell=1}^\infty
(-y e^{-z})^{\ell-1} = \sum_{\ell=1}^\infty (-y)^{\ell-1}\sum_{k=0}^\infty \frac{(-\ell z)^k}{k!},
\end{align*}
the claimed formula for $M_{\ell,k}$ follows.
\end{proof}

%%%%%%%%%%%%%%%%%%%%%%%%%%%%%%%%%%%%%%
%
%%%%%  Fundamental identity for \cyrL_c
%
%%%%%%%%%%%%%%%%%%%%%%%%%%%%%%%%%%%%%%

The following identity will play a crucial role in the proof of our 
main construction result, Theorem \ref{thm:constructions}.

\begin{lem}\label{lem:t-ident}
Let $m,k$ be  positive integers and $0<c$.
If $\dim(\bx)=k$ and $|\bx| \ge mc$ then
\be\label{t-ident}
\sum_{r=1}^k \frac{m^r}{r!} \sum_{\bu_1 \sqcup \cdots \sqcup \bu_r = \bx} 
\cyrL_{c}(\bu_1)\cdots \cyrL_{c}(\bu_r) = 0.
\ee
\end{lem}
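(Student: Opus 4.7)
My plan is to expand $\cyrL_c$ via its definition \eqref{Linnik-fcn}, exchange the order of summation, and reorganize the resulting expression by the underlying unordered partition of $[k]$ (where $k=\dim(\bx)$) into nonempty blocks with $\bx$-sum less than $c$. Substituting the definition of $\cyrL_c(\bu_i)$ into the left-hand side of \eqref{t-ident} turns it into a weighted count of tuples $(r,\,j_1,\ldots,j_r,\,\by_{1,1},\ldots,\by_{r,j_r})$, where $r\ge 1$, each $j_i\ge 1$, and $(\by_{i,k})_{1\le i\le r,\,1\le k\le j_i}$ is an ordered decomposition of $\bx$ into nonempty subvectors with $|\by_{i,k}|<c$ for all $i,k$, weighted by $\frac{m^r}{r!}\prod_{i=1}^r \frac{(-1)^{j_i+1}}{j_i}$.

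Next I would group these tuples by the underlying unordered partition $\pi=\{B_1,\ldots,B_s\}$ of $[k]$ into nonempty blocks satisfying $|\bx_{B_t}|<c$ for all $t$. For each fixed $(r,j_1,\ldots,j_r)$ with $j_1+\cdots+j_r=s$, the number of tuples realizing $\pi$ is the number of bijections between $\pi$ and the label set $\{(i,k):1\le i\le r,\,1\le k\le j_i\}$, namely $s!$. Thus the left-hand side equals
\[
\sum_{\pi}\, s(\pi)!\, T_{s(\pi)}, \qquad T_s\;:=\;\sum_{r=1}^{s} \frac{m^r}{r!}\ssum{j_1+\cdots+j_r=s\\ j_i\ge 1}\prod_{i=1}^r \frac{(-1)^{j_i+1}}{j_i}.
\]
Recognizing $\sum_{j\ge 1}\frac{(-1)^{j+1}}{j}z^j=\log(1+z)$ identifies the ordinary generating function of $(T_s)_{s\ge 0}$ (with $T_0:=1$) as
\[
\sum_{s\ge 0} T_s z^s \;=\; \exp\bigl(m\log(1+z)\bigr) \;=\; (1+z)^m,
\]
so $T_s=\binom{m}{s}$ and $s!\,T_s=m(m-1)\cdots(m-s+1)$.

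The hypothesis $|\bx|\ge mc$ is used exactly once: any partition $\pi$ appearing in the sum has each block satisfying $|\bx_{B_t}|<c$, so $|\bx|<s(\pi)\cdot c$ and therefore $s(\pi)\ge m+1$. Then $s(\pi)!\,T_{s(\pi)}=m(m-1)\cdots(m-s(\pi)+1)$ contains the factor $(m-m)=0$, so every term in the sum over $\pi$ vanishes and \eqref{t-ident} follows. The main obstacle I anticipate is the combinatorial bookkeeping in the grouping step—verifying the $s!$ bijection count and justifying the precise passage from ordered double decompositions to unordered partitions equipped with a labeling; once that is nailed down, the generating-function identity $(1+z)^m=\exp(m\log(1+z))$ and the pigeonhole bound $s(\pi)>m$ finish the argument with no further input.
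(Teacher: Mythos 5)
Your proof is correct. The core identity in both your argument and the paper's is $\exp(m\log(\cdot))=(\cdot)^m$ combined with the hypothesis $|\bx|\ge mc$, but the organization is genuinely different. The paper works in the ring of multivariate formal power series: it sets $V(\bz)=\sum_{|\bx_A|<c}\prod_{i\in A}z_i$, identifies $\cyrL_c(\bx_B)$ as the coefficients of $\log V(\bz)$, and reads off the conclusion by noting that the coefficient of $z_1\cdots z_k$ in $V(\bz)^m$ vanishes because no $m$-tuple of sets each with $\bx$-sum $<c$ can cover $[k]$ when $|\bx|\ge mc$. You instead expand $\cyrL_c$ directly, regroup the resulting sum by the underlying unordered partition $\pi$ of $[k]$ into small blocks, verify the $s(\pi)!$ overcount, and evaluate the resulting partition-level weight via the one-variable identity $\sum_s T_s z^s=(1+z)^m$, giving $s!\,T_s=m(m-1)\cdots(m-s+1)$, which vanishes term by term since $s(\pi)>m$. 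Your route makes the partition-level coefficient $\binom{m}{s}$ explicit, which the paper's argument leaves implicit; the paper's route avoids the overcounting bookkeeping at the cost of working with multivariate generating functions. Both are valid, and the combinatorial details you flag as the main risk (the $s!$ bijection count, via choosing the group sizes $\binom{s}{j_1,\ldots,j_r}$ and then ordering within groups by $j_1!\cdots j_r!$) do indeed check out.
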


\begin{proof}
Fix $k\ge 1$ and a list $\bx= (\ssc{x}{1},\ldots,\ssc{x}{k})$.
Let $Y_k$ be the collection of all multisets with elements $1,2\ldots,k$. 
Let $\ssc{z}{1},\ldots,\ssc{z}{k}$ be complex numbers with $|z_i|  < \eps$ for a sufficiently small $\eps$ ($\eps$ may depend on $k$), and for $B \in Y_k$ define
\[
\bz^B = z_1^{\# \{ 1\in B \}} \cdots z_k^{ \# \{ k\in B \} }.
\]
Let
\[
V(\bz) = \ssum{A \subseteq [k] \\ |\ssc{\bx}{A}| < c} \prod_{i\in A} z_i.
\]
Then
\begin{equation}\label{VzB}
\log V(\bz) = \log(1 + (V(\bz)-1)) = \sum_{j=1}^\infty \frac{(-1)^{j+1}}{j}
(V(\bz)-1)^j = \sum_{B \in Y_k} \cyrL_c( \bx_i : i\in B  ) \bz^B,
\end{equation}
where $(\bx_i : i\in B )$ includes $h$ copies of $x_i$ if there are $h$ copies of 
$i$ in $B$.
Hence
\[
V(\bz)^m = \exp \bigg\{ m \sum_{B \in Y_k} \cyrL_c( \bx_i : i\in B  ) \bz^B \bigg\}.
\]
Since $|\bx| \ge mc$, the coefficient of $z_1\cdots z_k$ on the left is zero and the coefficient of $z_1\cdots z_k$ on the right side equals
\[
\sum_{r=1}^k \frac{m^r}{r!} \sum_{\bu_1 \sqcup \cdots \sqcup \bu_r = \bx} \cyrL_{c}(\bu_1)\cdots \cyrL_{c}(\bu_r). \qedhere
\]
\end{proof}

The following are more basic properties of $\cyrL_c(\bx)$.

\begin{lem}\label{lem:tc-basic}
(a) If some component $x_i$ of $\bx$ is $\ge c$, then $\cyrL_c(\bx)=0$.

(b) If $\bx=(\ssc{x}{1},\ldots,\ssc{x}{k})$ and $|\bx| < c$ then $\cyrL_c(\bx) = \one(k=1)$.
\end{lem}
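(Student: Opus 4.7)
\medskip

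\noindent\textbf{Proof plan for Lemma \ref{lem:tc-basic}.}

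Part (a) is immediate from the definition of $\cyrL_c$. In any decomposition $\by_1 \sqcup \cdots \sqcup \by_j = \bx$, the component $x_i$ with $x_i \ge c$ must appear in some $\by_\ell$, and then $|\by_\ell| \ge x_i \ge c$ violates the constraint $|\by_\ell| < c$ in \eqref{Linnik-fcn}. Hence the inner sum is empty for every $j$, so $\cyrL_c(\bx) = 0$.

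For part (b), my plan is to reuse the generating function identity \eqref{VzB} that was set up inside the proof of Lemma \ref{lem:t-ident}. Recall that for complex variables $z_1,\ldots,z_k$ of sufficiently small modulus,
\[
V(\bz) = \ssum{A \subseteq [k] \\ |\ssc{\bx}{A}| < c} \prod_{i\in A} z_i,
\qquad
\log V(\bz) = \sum_{B \in Y_k} \cyrL_c(\bx_i : i\in B)\, \bz^B.
\]
The hypothesis $|\bx| < c$ means that every subset $A \subseteq [k]$ satisfies $|\ssc{\bx}{A}| \le |\bx| < c$, so the constraint in the definition of $V(\bz)$ is vacuous and
\[
V(\bz) = \sum_{A \subseteq [k]} \prod_{i\in A} z_i = \prod_{i=1}^k (1+z_i),
\qquad
\log V(\bz) = \sum_{i=1}^k \log(1+z_i).
\]

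The final step is to extract the coefficient of $z_1 z_2 \cdots z_k$ from both sides of the formula for $\log V(\bz)$. On the right side this coefficient equals $\cyrL_c(x_1,\ldots,x_k) = \cyrL_c(\bx)$, since the unique multiset $B\in Y_k$ whose monomial $\bz^B$ equals $z_1 z_2 \cdots z_k$ is $B = \{1,2,\ldots,k\}$ (each index with multiplicity one). On the left side, each summand $\log(1+z_i)$ is a power series in the single variable $z_i$, so its contribution to the coefficient of the squarefree monomial $z_1 z_2 \cdots z_k$ vanishes unless $k=1$, in which case the coefficient is $1$. Combining gives $\cyrL_c(\bx) = \one(k=1)$, which is exactly (b).

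There is no real obstacle here: the whole argument is a short bookkeeping deduction from the identity already established during the proof of Lemma \ref{lem:t-ident}. The only point requiring a moment of care is verifying that the multiset indexing in \eqref{VzB} produces $\cyrL_c(\bx)$ as the coefficient of the squarefree monomial $z_1\cdots z_k$, which follows directly from the definition of $\bz^B$.
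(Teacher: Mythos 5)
Your proof is correct and takes essentially the same approach as the paper: part (a) is read off directly from \eqref{Linnik-fcn}, and part (b) is deduced by observing that $V(\bz)=(1+z_1)\cdots(1+z_k)$ when $|\bx|<c$ and comparing coefficients of $z_1\cdots z_k$ in the identity \eqref{VzB} from the proof of Lemma \ref{lem:t-ident}. The paper phrases part (b) slightly differently, remarking that $\cyrL_c(\bx)=\cyrL_\infty(\bx)$ in this range (so the claim is the analogue of Linnik's identity), but the actual verification is the same coefficient extraction you carry out.
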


\begin{proof}
Part (a) is immediate from \eqref{Linnik-fcn}.
Part (b) is the analog of Linnik's identity (\cite{Linnik}; see also \cite[Proposition 17.1]{Opera}), since $\cyrL_c(\bx)=\cyrL_\infty(\bx)$.
In the notation of the previous lemma, we have $V(\bz)=(1+z_1)\cdots (1+z_k)$
and part (b) follows by comparing the coefficients of $z_1\cdots z_k$ 
on the left and right sides of \eqref{VzB}.
\end{proof}

\subsection{Geometry of $\cR$ and $\cC(\cR)$}\label{sec:geometry}

Throughout this subsection, $P=(\gamma,\theta,\nu)$ is fixed and $\cR = \cR(P)$.

\begin{lem}\label{lem:R1-properties}
Suppose that $0<\gamma<1$ and $0\le \theta \le \theta+\nu \le 1$.
Suppose that $\by \in \cR$. Then 
\begin{enumerate}
\item[(i)]  $\nu < 1-\gamma$ and $y$ has a component bigger than $\nu$.
\item[(ii)] $\sum_{y_i\le \nu}y_i \le 1-\gamma-\nu$.
\end{enumerate}
\end{lem}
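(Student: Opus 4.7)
The plan is to argue both parts by contradiction, using greedy prefix-sum arguments on carefully chosen orderings of the components of $\by$. Throughout, let $k=\dim(\by)$, and note that since $|\by|=1$ while each component lies in $(0,1-\gamma)$, we automatically have $k\ge 2$.

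For part (i), I would treat the two assertions uniformly by assuming for contradiction that every component of $\by$ is at most $\nu$ (this automatically handles the hypothetical case $\nu\ge 1-\gamma$, in which every component is below $1-\gamma\le\nu$). Form the prefix sums $\sigma_0=0,\sigma_1,\ldots,\sigma_k=1$ in any order; each gap is then at most $\nu$. Taking $\ell$ to be the smallest index with $\sigma_\ell\ge\theta$ (which exists since $\sigma_k=1\ge\theta$), one finds $\sigma_\ell<\sigma_{\ell-1}+\nu\le\theta+\nu$, so $\sigma_\ell\in\two$. A short check using $\theta+\nu\le 1$ and $\nu\le 1-\theta$ confirms $0<\ell<k$, making $\sigma_\ell$ a proper subsum and contradicting $\by\in\cR$. (When $\theta=0$ one instead takes $\ell=1$ directly, since $y_1\in(0,\nu]$.)

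For part (ii), set $S=\{i:y_i\le\nu\}$, $T=\{i:y_i>\nu\}$, $s=\sum_{i\in S}y_i$, and $t=1-s$. Suppose for contradiction that $s>1-\gamma-\nu$; then $s>0$ forces $S\ne\emptyset$, and by (i), $T\ne\emptyset$. Running the greedy argument of (i) using only the components of $S$ (all with gaps $\le\nu$) shows that $s<\theta$: otherwise some prefix sum lies in $[\theta,\theta+\nu)$, which is a proper subsum since $T\ne\emptyset$. Next, enumerate $T$ in an arbitrary order as $y_{j_1},y_{j_2},\ldots$ and form the partial sums $\tau_r=y_{j_1}+\cdots+y_{j_r}$. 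Since $\tau_{|T|}=t=1-s>1-\theta\ge\theta$, let $r_0$ be the smallest index with $\tau_{r_0}\ge\theta$; properness of $\tau_{r_0}$ (guaranteed by $S\ne\emptyset$) then forces $\tau_{r_0}>\theta+\nu$.

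I would finish by splitting on the value of $r_0$. If $r_0=1$, the arbitrariness of the ordering of $T$ implies every $y_j\in T$ satisfies $y_j>\theta+\nu$; combined with $y_j<1-\gamma$ this yields $\gamma<1-\theta-\nu$, while $1-\gamma-\nu<s<\theta$ yields $\gamma>1-\theta-\nu$, a contradiction. If $r_0\ge 2$, I consider the augmented partial sums obtained by appending components of $S$ one by one to $\tau_{r_0-1}$: they start below $\theta$, have gaps $\le\nu$, and end at $\tau_{r_0-1}+s$. If this endpoint is $\ge\theta$, the greedy step produces a proper subsum in $[\theta,\theta+\nu)$, a contradiction; otherwise $\tau_{r_0-1}+s<\theta$ forces $\tau_{r_0-1}<\theta-s<\gamma+\theta+\nu-1$, whence $\tau_{r_0}=\tau_{r_0-1}+y_{j_{r_0}}<(\gamma+\theta+\nu-1)+(1-\gamma)=\theta+\nu$, contradicting $\tau_{r_0}>\theta+\nu$. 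The main obstacle throughout is tracking the properness of each produced subsum; this is precisely where $S\ne\emptyset$, $T\ne\emptyset$, and the arbitrariness of the $T$-ordering (in the base case $r_0=1$) must work together.
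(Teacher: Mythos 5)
Part (i) is correct and matches the paper's reasoning. Your approach to part (ii) is also close in spirit to the paper's: your prefix $\{j_1,\ldots,j_{r_0}\}$ of $T$, with $\tau_{r_0-1}<\theta$ and $\tau_{r_0}>\theta+\nu$, plays the role of the paper's minimal subsum $|\by_A|$ exceeding $\theta+\nu$ (with $j_{r_0}$ in the role of the distinguished $m\in A$), and the step of greedily appending $S$-components to a quantity below $\theta$, together with the final chain of inequalities, is the same in both arguments.

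There is, however, a genuine gap at the point where you write $t=1-s>1-\theta\ge\theta$ in order to guarantee that $r_0$ exists. The inequality $1-\theta\ge\theta$ is exactly $\theta\le\tfrac12$, which is not among the lemma's hypotheses (only $0\le\theta\le\theta+\nu\le1$ is assumed). When $\theta>\tfrac12$, your established bound $s<\theta$ yields only $t>1-\theta$, and $1-\theta<\theta$, so there may be no index $r_0$ with $\tau_{r_0}\ge\theta$ and the argument stalls. This is easy to repair: (a) note that complementing subsets gives $\cR(\gamma,\theta,\nu)=\cR(\gamma,1-\theta-\nu,\nu)$, so one may assume $\theta\le\tfrac{1-\nu}{2}<\tfrac12$ from the outset; or (b) run the same greedy on the $S$-components against the complementary interval $[1-\theta-\nu,1-\theta]$ to obtain $s<1-\theta-\nu$ as well, whence $t>\theta+\nu\ge\theta$. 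The paper avoids the issue entirely because its $A$ is defined via a minimal subsum exceeding $\theta+\nu$, and such a subsum exists simply because $|\by|=1\ge\theta+\nu$; no comparison of $\theta$ with $\tfrac12$ is ever needed. As a smaller point, your separate $r_0=1$ case and the appeal to "arbitrariness of the ordering of $T$" are unnecessary: a single $y_{j_1}$ with $\theta+\nu<y_{j_1}<1-\gamma$ already contradicts $\gamma>1-\theta-\nu$, and in fact your $r_0\ge2$ inequality chain applies verbatim to $r_0=1$ since $\tau_0=0<\theta-s$.
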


\begin{proof}
Suppose that no component $y_i$ is larger than $\nu$.
It is then clear that some subsum of the components $y_i$ lies in \two,
thus $\by\not\in \cR$.  Hence $\by$ has some component bigger than 
$\nu$ and so $\nu<1-\gamma$ since all components of vectors  in $\cR$
 are at most $1-\gamma$. This proves (i).

Now let $A$ be such that $|\by_A|$ is the smallest subsum of $\by$ which
is larger than $\theta+\nu$. If $m\in A$ then $|\by_A| - y_m < \theta$,  
since $\by$ has no subsum in $\two$ and the minimality of $|\by_A|$ means 
that $|\by_A|-y_m\le \theta+\nu$. In particular, we have $y_m > \nu$ for all $m\in A$.
Moreover, we must have
$|\by_A| - y_m + \sum_{y_i\le \nu}y_i < \theta$ as well, since otherwise $\by$ would have a subsum lying in \two.  It follows that
\begin{align*}
\sum_{y_i\le \nu}y_i &< \theta - (|\by_A| - y_m) \\
&< \theta + y_m - (\theta+\nu)\\
&< 1-\gamma-\nu.
\end{align*}
This completes the proof of (ii).
\end{proof}

The following results geometric lemmas concern convex polytopes.  
For this paper, we adopt the following definition.

\begin{defn}[Convex polytopes]
A \emph{convex polytope} is a bounded subset of $\RR^k$,
for some $k\ge 1$,
which is defined by a finite number of linear constraints on $(\ssc{x}{1},\dots,\ssc{x}{k})$ 
of the form
$\ssc{c}{1} \ssc{x}{1}+\cdots +\ssc{c}{k} \ssc{x}{k} < b$ or
$\ssc{c}{1} \ssc{x}{1}+\cdots +\ssc{c}{k} \ssc{x}{k} \le b$, where $\ssc{c}{1},\ldots,\ssc{c}{k},b$ are real numbers.
We also consider $\{ \emptyvec \}$ to be the ``trivial polytope'' of dimension 0.
\end{defn}

Frequently, our polytopes lie on the hyperplane $\ssc{x}{1}+\cdots+\ssc{x}{k}=1$.  This constraint
may be encoded using $\ssc{x}{1}+\cdots+\ssc{x}{k}\le 1$ and $-\ssc{x}{1}-\cdots-x_n \le -1$.

\begin{lem}\label{lem:CR1-union-polytopes}
Suppose that $\sigma \le \nu \le 1-\gamma$.
There is a constant $N\ll 1$, a collection of disjoint convex polytopes 
$\cT_1,\ldots,\cT_N$ (of variable dimension), a constant $J \ll 1$ and a collection of disjoint convex polytopes  $\cU_{1},\dots,\cU_{J}$ (depending only on $\nu,\theta,\gamma,\sigma$) such that for any $r$ and any $\by\in [0,\sigma)^r$ with $|\by|\le 1$ we have
\[
\big\{\bx : \,x_i \ge \sigma\, \forall i,\,|\bx|=1-|\by|,\,(\by,\bx)\in \cC(\cR)\big\}=
\Bigl(\bigsqcup_{j\le N}\cT_{j}\Bigr)\cap\{\bx : |\bx|=1-|\by|\}
\]
and
\[
\big\{\bx:\,x_i \ge \sigma\, \forall i,\,|\bx|=1-|\by|,\,(\by,\bx)\notin \cC(\cR)\big\}
=\Bigl(\bigsqcup_{j\le J}\cU_{j}\Bigr)\cap\{\bx: |\bx|=1-|\by|\}.
\]
\end{lem}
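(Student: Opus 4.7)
The plan is to exploit a density principle which forces the condition $(\by,\bx)\in\cC(\cR)$ to depend on $\by$ only through $|\by|=1-|\bx|$, so that membership can be encoded purely in $\bx$-space.  The key observation is that since each $y_i<\sigma\le\nu$, after sorting the subsums of $\by$ as $0=s_0<s_1<\cdots<s_L=|\by|$, the consecutive gaps satisfy $s_{\ell+1}-s_\ell\le\max_i y_i<\nu$.  The forbidden intervals $[\theta-s_\ell,\theta+\nu-s_\ell]$ each have width $\nu$ and consecutive centres differing by less than $\nu$, so they overlap pairwise and
\[
\bigcup_{A\subseteq[r]}\bigl[\theta-\textstyle\sum_{i\in A}y_i,\ \theta+\nu-\sum_{i\in A}y_i\bigr]=\bigl[\theta-|\by|,\ \theta+\nu\bigr],
\]
a single interval depending only on $|\by|$.

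Next I would translate $(\by,\bx)\in\cC(\cR)$ into a polyhedral condition.  By definition the relation asks for $\bv\in\cR$ that coagulates onto $(\by,\bx)$; standard merging arguments (using $y_i<1-\gamma$) let us assume each $y_i$ and each $x_j<1-\gamma$ appears as a singleton piece of $\bv$, while each $x_j\ge 1-\gamma$ is fragmented into pieces $f_{j,1},\ldots,f_{j,s_j}\in(0,1-\gamma)$ summing to $x_j$.  A second merging argument (combining any two pieces of combined sum $<\theta$) bounds $s_j\le C(\gamma,\theta,\nu)$, so there are only finitely many fragmentation types.  Substituting $|\by|=1-|\bx|$ into the displayed formula, the condition ``no proper subsum of $\bv$ in $[\theta,\theta+\nu]$'' becomes: for each proper subset $B$ of the combined fragmented coordinates of $\bx$, either $\sum_B(\text{pieces})>\theta+\nu$ or $\sum_B(\text{pieces})<\theta-1+|\bx|$, together with $|\bx|\notin[\theta,\theta+\nu]\cup[1-\theta-\nu,1-\theta]$ (from the ``pure'' subsums $|\bx|$ and $|\by|$).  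All of these are linear in $(\bx,\mathbf{f})$.

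Finally, since $x_i\ge\sigma$ and $|\bx|\le 1$ force $k=\dim\bx\le 1/\sigma$, there are only finitely many possible $k$.  For each $k$, each fragmentation type, and each assignment of the two-way sign alternatives above, we obtain a convex polytope in $(\bx,\mathbf{f})$-space; projecting onto the $\bx$-coordinates by Fourier--Motzkin elimination yields a convex polytope in $\RR^k$.  Taking the union over all (uniformly bounded, depending only on $\nu,\theta,\gamma,\sigma$) combinatorial choices and dimensions, and refining to a disjoint decomposition, gives $\cT_1,\ldots,\cT_N$; the set-theoretic complement inside the polyhedron $\{\bx:x_i\ge\sigma\ \forall i,\ |\bx|\le 1\}$ is then itself a finite disjoint union of convex polytopes, which we take as $\cU_1,\ldots,\cU_J$.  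The main obstacle is the first step: verifying rigorously that the density calculation really does allow one to pass between any two admissible $\by$ with the same $|\by|$ without introducing new subsums in $[\theta,\theta+\nu]$, and that this persists through fragmentation of the $x_j\ge 1-\gamma$; once that is secured, the rest is standard polytope combinatorics, and the uniform bounds on $N$ and $J$ follow from counting the combinatorial choices.
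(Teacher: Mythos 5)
Your proposal follows essentially the same route as the paper's proof: reduce $(\by,\bx)\in\cC(\cR)$ to the existence of a bounded-length fragmentation $\bz$ of $\bx$ with $(\by,\bz)\in\cR$, use the gap/density observation to show the mixed-subsum constraints depend on $\by$ only through $|\by|=1-|\bz|$, encode the result as a finite family of convex polytopes, and project; your explicit gap computation is precisely what the paper uses implicitly when it asserts that $(\by,\bz)\in\cR$ is equivalent to ``$|\bz_J|>\theta+\nu$ or $|\bz_J|<\theta-|\by|$ for all $J$''. The one slip is the merging criterion: you should combine two pieces of the same $x_j$ when their sum is $<1-\gamma$, not $<\theta$, because the merged piece must remain in the allowed component range $(0,1-\gamma)$ for $\cR$, and the paper's hypotheses permit $\theta>1-\gamma$ (e.g.\ $1-\theta\le\gamma$); with the correct criterion one still gets the bounded fragment count $\ell\le k+\lfloor 2/(1-\gamma)\rfloor$.
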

We note that since each $\cT_j,\cU_i$ only depend on $\gamma,\theta,\nu,\sigma$, the number of constraints defining them is also bounded in terms of $\gamma,\theta,\nu,\sigma$. 
We also note that the left hand side of the first display above is empty if $|\by|\ge\theta$, since then $\by$ will have a subsum in $\two$. This is encoded by the polytopes $\cT_j$ only containing vectors $\bx$ with $|\bx|>1-\theta$.

\begin{proof}
Let $\by\in[0,\sigma)^r$ with $|\by|\le 1$, and let $\bx\in\RR^k$ with $x_i \ge \sigma$ for all $i$ and 
$(\by,\bx)\in \cC(\cR)$. Clearly we have that $|\bx|=1-|\by|$ and $k\ll 1$ (we
 emphasize that implied constants may depend on $\sigma$, but not on $r$). Moreover, 
 $\bx$ is a coagulation of a vector $\bz \in \RR^\ell$ with
 $(\by,\bz)\in \cR$ and $z_i>0$ for all $i$. Here we use the fact the components of $\by$ are smaller than $\sigma$ and $\sigma\le \nu\le 1-\gamma$.

Since $\bx$ is a coagulation of $\bz$, we have that $x_j=|\bz_{\cI_j}|$ ($1\le j\le k$) for some
 partition $[\ell]=\cI_1\sqcup\dots\sqcup \cI_k$. If $z_{i_1}+z_{i_2}< 1-\gamma$ for 
 some distinct $i_1,i_2\in \cI_j$ (and some $j\in [k]$), then we may form a new vector
  $\bz'$ by replacing the components $z_{i_1},z_{i_2}$ of $\bz$ with a single component $z_{i_1}+z_{i_2}$. We clearly have $(\by,\bz')\in \cR$ and that $\bx$ is a 
  coagulation of $\bz'$. Therefore, we may assume without loss of generality that 
  $z_{i_1}+z_{i_2}\ge 1-\gamma$ for all distinct $i_1, i_2\in \cI_j$ and all $j$.
   In particular, this means 
that for each $j$, $(z_i)_{i\in \cI_j}$ can have at most one component smaller than $(1-\gamma)/2$, so that $\ell \le k + \lfloor\frac{2}{1-\gamma}\rfloor
 =: L_k$.  Also, none of the $z_i$ are equal to zero.

We may thus assume that $\ell \le L_k$ and $\bz \in (0,1-\gamma)^{\ell}$.
The condition $(\by,\bz)\in \cR$ is equivalent to the
simultaneous conditions $|\bz|+|\by|=1$, $\bz\in (0,1-\gamma)^\ell$ and that for every subset $J\subseteq[\ell]$,
\[
\text{either} \;\;\; |\ssc{\bz}{J}| > \theta+\nu\quad \text{or}\quad |\ssc{\bz}{J}|< \theta-|\by|.
\]
Since $|\bz|=1-|\by|$, $|\bz_J|<\theta-|\by|$ is equivalent to $|\ssc{\bz}{{[\ell]\setminus J}}|>1-\theta$.  Thus, the set of conditions
 $\{\bz\in (0,1-\gamma)^\ell,\,(\by,\bz)\in \cR\}$ is equivalent to
 $\bz$ lying in the union of sets 
$ \cQ_\ell(\cJ) \cap \{ \bz : |\bz| = 1-|\by| \},$
 where $\cJ$ runs over all collections of subsets of $[\ell]$ and we define the polytopes
 \[
\cQ_\ell(\cJ) :=  \big\{ \bz \in (0,1-\gamma)^\ell : |\ssc{\bz}{J}| > \theta+\nu \; (J\in \cJ);\; |\ssc{\bz}{{[\ell]\setminus J}}| > 1-\theta \; (J\not\in \cJ) \big\}.
 \]
Many of the sets $\cQ_\ell(\cJ)$ are empty, for example if $J\in \cJ$ and $J'\not\in \cJ$
for some sets $J,J'$ with $J$ a proper subset of $J'$. 
 We see that this implies
 \begin{align*}
 \big\{\bx:&\,x_i \ge \sigma\, \forall i,\,|\bx|=1-|\by|,\,(\by,\bx)\in \cC(\cR)\big\}\\
 &=
\bigcup_{1\le k\le 1/\sigma} \; \bigcup_{k\le \ell \le L_k} \cC \bigg(  \bigcup_{\cJ} \cQ_\ell(\cJ) \bigg)\cap [\sigma,1]^k\cap\{\bx:|\bx|=1-|\by|\}\\
&= \biggl[\bigcup_{1\le k\le 1/\sigma}\; \bigcup_{k\le \ell \le L_k} \bigcup_{\cJ} 
\cC(\cQ_\ell(\cJ)) \cap [\sigma,1]^k\biggr]\cap\{\bx:|\bx|=1-|\by|\},
 \end{align*}
where the set inside the large brackets is the union of $O(1)$ convex polytopes involving $O(1)$ linear constraints, since the map replacing some coordinates by their sum is a linear projection sending a convex polytope involving a bounded number of constraints to a convex polytope involving a bounded number of constraints.  This proves the first claim.

Next,
\[ 
\big\{\bx:\,x_i \ge \sigma\, \forall i,\,|\bx|=1-|\by|,\,(\by,\bx)\notin \cC(\cR)\big\}=
\{ \bx : x_i\ge \sigma\, \forall i,|\bx|=1-|\by|\} \setminus  \bigg( \bigsqcup_{i\le N} \cT_i \bigg).
\]
Since the difference of two polytopes involving a bounded number of constraints can be written as
a disjoint union of convex polytopes each involving a bounded number of linear
constraints, and the intersection of a finite number of convex polytopes is itself a
convex polytope, we can write the right side
as a finite union of disjoint convex polytopes $\cU_{1},\dots,\cU_{J}$,
each intersected with $\{\bx: x_i \ge \sigma\, \forall i,\,|\bx|=1-|\by|\}$.
This proves the second claim.
\end{proof}

\subsection{Covering the boundary of convex regions with boxes.}

\begin{lem}\label{lem:covering}
For each positive integer $k$ there is a constant $c_k$ such that
for any convex region $\cT$ in $(0,1]^k$ and
any $N\in \NN$, the number of boxes of the form $\cB(\bd):=(\frac{d_1-1}{N},\frac{d_1}{N}] \times \cdots \times (\frac{d_k-1}{N}, \frac{d_k}{N}]$ with $d_1,\ldots,d_k$ integers,
which intersect the boundary of $\cT$ is at most $c_k N^{k-1}$.
\end{lem}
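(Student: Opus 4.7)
The plan is to decompose $\partial\cT$ into $2k$ pieces, each lying on the graph of a Lipschitz function over a $(k-1)$-dimensional domain, and to bound the number of boxes meeting each piece by multiplying the number of columns by the local oscillation of the function. I would first handle the degenerate case in which $\cT$ is not full-dimensional: then $\cT$ is contained in a hyperplane $H\subset\RR^k$, and writing $H$ as the graph of a linear function whose slopes all have absolute value $\le 1$ (by choosing the coordinate corresponding to the largest entry of the unit normal to $H$), a direct count shows that at most $O_k(N^{k-1})$ boxes meet $H\cap(0,1]^k\supseteq\cT$. So we may assume henceforth that $\cT$ has nonempty interior.

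In that case each $p\in\partial\cT$ admits a supporting unit normal $\ba(p)$, and we assign to $p$ an index $i(p)\in[k]$ with $|a_{i(p)}(p)|=\max_j|a_j(p)|$; such an index satisfies $|a_{i(p)}(p)|\ge 1/\sqrt{k}$. Partitioning boundary points by $i(p)$ and the sign of $a_{i(p)}(p)$ yields a covering $\partial\cT=\bigcup_{i=1}^k(U_i^+\cup U_i^-)$ into $2k$ pieces. Each $U_i^+$ is contained in the graph of the concave function $f_i(y):=\max\{t\in\RR:(y,t)\in\cT\}$ (placing the $i$th coordinate last for notational convenience) over an appropriate subdomain $D_i^+\subseteq(0,1]^{k-1}$. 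The key analytic input is that $f_i$ is $\sqrt{k-1}$-Lipschitz on $D_i^+$: if $p,q\in U_i^+$ and $\ba$ is a supporting normal at $p$ with $a_i\ge|a_j|$ for all $j$, then the relation $\ba\cdot q\le\ba\cdot p$ combined with $a_i>0$ yields
\[
|p_i-q_i|\le\sum_{j\ne i}|p_j-q_j|\le\sqrt{k-1}\,\|p_{-i}-q_{-i}\|_2,
\]
and the reverse direction follows by swapping the roles of $p$ and $q$.

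The count is now routine. For each tuple $\bd_{-i}=(d_j)_{j\ne i}$ such that the corresponding $(k-1)$-dimensional sub-box intersects $D_i^+$, the values of $f_i$ on that sub-box span an interval of length at most $\sqrt{k-1}\cdot\sqrt{k-1}/N=(k-1)/N$, so at most $O_k(1)$ boxes $\cB(\bd)$ with $d_i$ varying can meet the graph of $f_i$ in that column. Multiplying by the at most $N^{k-1}$ admissible tuples $\bd_{-i}$ and summing over the $2k$ pieces $U_i^\pm$ bounds the total number of boxes meeting $\partial\cT$ by $O_k(N^{k-1})$, which gives the lemma. The only nontrivial step is the Lipschitz estimate on $D_i^+$, which is an elementary consequence of the choice of a dominant normal component.
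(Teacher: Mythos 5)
Your proof is correct and is essentially the same argument as the paper's: both partition $\partial\cT$ into $2k$ pieces according to the dominant coordinate and sign of a supporting outer normal, and both use the supporting-hyperplane inequality $\ba(p)\cdot(q-p)\le 0$ to show that within any fixed column of boxes, two boundary points in the same piece cannot differ by more than $(k-1)/N$ in the distinguished coordinate, so at most $O_k(1)$ boxes per column can meet that piece. The cosmetic differences (unit normals rather than $\ell_\infty$-normalized ones, phrasing as a Lipschitz bound for a concave graph function rather than as a direct contradiction, and the separate treatment of lower-dimensional $\cT$ — which the paper's argument handles without a special case, since the supporting hyperplane theorem still applies) do not change the substance.
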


This is essentially known.
We thank Denka Kutzarova for finding the first proof, which is based on
properties of certain mappings 
between the boundary $\delta \cT$ of $\cT$ and the boundary of
another convex set $\cU$ with $\cT \subseteq \cU$, see for example
 Br\'ezis \cite[Ch. 5]{Brezis}.  These arguments show that the
the number of balls of radius $\eps$ needed to cover $\delta \cT$ is
at most a constant, depending on $k$, multiple of
the number of such balls required to cover $\delta \cU$; in our case, $\cU=[0,1]^k$.
We subsequently found that Lemma \ref{lem:covering} also follows quickly from
a much more general theorem of Lassak \cite[Corollary 1]{Lassak},
which compares the number of tiles, defined by a family of hyperplanes,
intersecting $\delta \cT$ with the number of tiles  intersecting $\delta \cU$.
Both of these proofs produce a constant $c_k$ in Lemma \ref{lem:covering} that is exponential in $k$.

Here, we provide a short, self-contained proof
with a constant $c_k$ of quadratic growth.

\begin{proof}
For $\bx \in \delta \cT$,
let $\bu(\bx)$ be an outer normal vector of $\cT$ at $\bx$.
Such a vector exists by the supporting hyperplane theorem
(see, e.g., \cite[Cor. 11.6.1]{Rockafellar}), and need not be unique.
Then for any other point $\bx' \in \delta T$,
\begin{equation}\label{orthog}
\bu(\bx) \cdot (\bx'-\bx) \le 0.
\end{equation}
By scaling, we may assume without loss of generality that the maximum of the absolute values of the components of $\bu(\bx)$ equals 1. Denoting the $j-$th component of
$\bu(\bx)$ by $\bu(\bx)_j$, we define
\[
A_j= \{ \bx \in \delta \cT : \bu(\bx)_j =1 \}, \;\; A_{k+j} =  \{ \bx \in \delta \cT : \bu(\bx)_j = -1\}
\quad (1\le j\le k), 
\]
so that $\delta \cT = A_1 \cup \cdots \cup A_{2k}$.
Fix $j\in [k]$, fix $d_i \in [0,N]\cap \ZZ$ for each $i\ne j$ and assume that there
are $k+1$ integers $d_j$ so that $\cB(\bd) \cap A_j \ne \emptyset$.
Then there are two points $\bx,\bx'\in A_j$ so that their components satisfying
\[
x_j'-x_j > \frac{k-1}{N}, \qquad |x_i-x_i'| < \frac{1}{N} \;\; (i\ne j).
\]
We then have
\[
\bu(\bx) \cdot (\bx'-\bx) > \frac{k-1}{N} -\sum_{i\ne j} \frac{1}{N} = 0,
\]
a contradiction.  Therefore, the number of boxes $\cB(\bd)$ which intersect $A_j$
is at most $k\cdot N^{k-1}$.  The same argument works if $j>k$, here showing
that $\bu(\bx')\cdot (\bx-\bx') >0$ if there are $k+1$ choices for $d_{j-k}$
with the other $d_i$ fixed. Summing over all $j$ we see that the total number
of such boxes intersecting $\delta \cT$ is at most $2k^2 N^{k-1}$.
\end{proof}

\subsection{Prime number sums}
%%%%%%%%%%%%%%%%%%%%%%%%%%%%%%%%%%%%%%%%%%%%%%%%%%%%%%%
%
% PNT and PNT for short intervals
%
\begin{lem}\label{PNT}
Fix $\eps\in (0,\frac1{100})$.  Uniformly for $x\ge 100$ and $x^{7/12+\eps}\le y\le x/2$, we have
\[
\pi(x)-\pi(x-y) = \int_{x-y}^x\; \frac{dt}{\log t} + O_{\eps}\Big( y e^{-(\log x)^{1/4}} \Big).
\]
\end{lem}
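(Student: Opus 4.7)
The plan is to reduce the claim to Huxley's theorem on primes in short intervals and then convert from the Chebyshev $\psi$ function to $\pi$. The error term $O(ye^{-(\log x)^{1/4}})$ is quite a bit weaker than what the state of the art actually gives, which simplifies matters: any classical zero-free region of Hadamard--de la Vall\'ee Poussin strength, combined with Huxley's density estimate, will be more than enough.

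First, I would invoke (or, more honestly, sketch the derivation of) the short interval prime number theorem of Huxley \cite{Huxley}: for fixed $\eps\in(0,1/100)$ and $x^{7/12+\eps}\le y\le x/2$,
\[
\psi(x) - \psi(x-y) = y + O_\eps\Big( y \exp(-c_\eps \sqrt{\log x}) \Big)
\]
for some $c_\eps>0$. The standard proof uses a truncated Perron formula to express $\psi(x)-\psi(x-y)$ as a sum over nontrivial zeros of $\zeta$ plus a small error, bounds the zero sum using Huxley's density bound $N(\sigma,T)\ll T^{12(1-\sigma)/5+\eps}$ (which is where the exponent $7/12$ enters), and then uses the classical zero-free region $\sigma\ge 1-c/\log T$ to cut off the remaining contribution. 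Since $e^{-c\sqrt{\log x}}\le e^{-(\log x)^{1/4}}$ for all sufficiently large $x$, this error is comfortably of the required shape.

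Next I would pass from $\psi$ to $\pi$ by handling prime powers and performing partial summation. The contribution of proper prime powers to $\psi(x)-\psi(x-y)$ is trivially $O(\sqrt{x}\log x)$, which is absorbed. Writing
\[
\pi(x) - \pi(x-y) = \sum_{x-y<n\le x} \frac{\Lambda(n)}{\log n} + O(\sqrt{x}),
\]
partial summation against $1/\log t$ converts $\psi(x)-\psi(x-y) = y + O(ye^{-(\log x)^{1/4}})$ into
\[
\pi(x)-\pi(x-y) = \int_{x-y}^x \frac{dt}{\log t} + O_\eps\Big(y e^{-(\log x)^{1/4}}\Big),
\]
using the fact that $1/\log t$ is smooth and slowly varying on $[x-y,x]$ and that $y\le x/2$ so $\log(x-y)\asymp \log x$.

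The only step that requires any real work is Huxley's short interval theorem itself; everything else is bookkeeping. If one does not want to cite Huxley directly, a self-contained derivation via Perron and the density estimate takes several pages but is entirely standard, and the weak error term requested here lets one be cavalier about optimizing any constants. In particular no use of Vinogradov--Korobov is needed, so the reader who prefers to reprove the statement has a clear path.
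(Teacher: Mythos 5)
Your overall route---reduce to a short-interval asymptotic for $\psi$ (or $\theta$) via Perron and Huxley's density estimate, then convert to $\pi$ by partial summation---is the same one the paper takes. But there is a genuine gap in the input you invoke.

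The error term $O_\eps(y\exp(-c_\eps\sqrt{\log x}))$ you claim for Huxley's theorem, and the accompanying assertion that ``no use of Vinogradov--Korobov is needed,'' are both wrong. In the short-interval Perron argument the truncation parameter $T$ is forced to be $\gg x/y$, hence a fixed positive power of $x$, so $1/\log T \asymp 1/\log x$. Feeding the classical zero-free region of width $c/\log T$ into the density-estimate contribution
$\ll y(\log x)^{B}\max_{\sigma\le 1-c/\log T}T^{\theta(1-\sigma)}x^{\sigma-1}$
(with $\theta=12/5$) gives a savings of only $e^{-O(1)}$, not $o(1)$; the $(\log x)^B$ factor dominates and the bound is \emph{worse} than $y$. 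To obtain even the modest target $\exp(-(\log x)^{1/4})$ one needs a zero-free region of width $\gg(\log x)^{-3/4}$, whereas de la Vall\'ee Poussin gives only $\asymp(\log x)^{-1}$. The Korobov--Vinogradov region, of width $\asymp(\log T)^{-2/3}(\log\log T)^{-1/3}$, is the standard (and essentially only) way to achieve this. This is exactly why the paper cites Theorem~1.1 of Thorner--Zaman, whose error $\exp(-c(\log x)^{1/3}(\log\log x)^{-1/3})$ bears the Korobov--Vinogradov signature, and why the paper also points to Ivi\'c \S12.5, which uses that zero-free region explicitly. So the ``clear path'' for a reader who wants to reprove the lemma from scratch must pass through Vinogradov--Korobov.

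With that correction, your argument does go through, and in fact your partial-summation conversion from $\psi$ to $\pi$ is a mild structural variant of the paper's. The paper instead splits into two cases: when $y>xe^{-(\log x)^{1/3}}$ it simply uses the full-length prime number theorem, and when $y\le xe^{-(\log x)^{1/3}}$ it uses Thorner--Zaman together with $\log u=\log x+O(y/x)$ on the interval (an approximation that fails if $y$ is allowed to be $\asymp x$, which is the reason for the split). Your integration by parts handles both regimes at once, but note that it requires a \emph{pointwise} bound $R(t)\ll te^{-c\sqrt{\log t}}$ on the PNT error to control the terms $R(x-y)/\log(x-y)$ and $\int R(t)/(t\log^2 t)\,dt$, in addition to the short-interval cancellation in $R(x)-R(x-y)$; this is worth saying explicitly rather than folding it into ``bookkeeping.''
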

\begin{proof}
When $y>x e^{-(\log x)^{1/3}}$, this follows from the classical prime number theorem estimate
$\pi(x)=\int_2^x dt/\log t + O(xe^{-c\sqrt{\log x}})$ with a constant $c>0$.  
Otherwise apply
Huxley's prime number theorem for short intervals, with a more explicit
error term.  By Theorem 1.1 of \cite{TZ}, for a constant $c(\eps)>0$,
\[
\sum_{x-y<p\le x} \log p = y + O_\eps\Big( y e^{-c(\eps)(\log x)^{1/3}(\log\log x)^{-1/3}} \Big) = y+ O_\eps \Big( y e^{-(\log x)^{1/4}} \Big).
\]
For $x-y<u\le x$ we have $\log u = \log x + O(y/x)$
and the result follows for $y\le x e^{-(\log x)^{1/3}}$.
Alternatively, one may follow the argument in section 12.5 of \cite{Ivic}, 
taking $T=x^{5/12}$ and $u=\frac12$ there.
\end{proof}

 The next lemma is a standard type
result relating sums over vectors of primes to multiple integrals,
although we need a version with some of the prime factors fixed
and valid for short intervals.

\medskip

\begin{lem}\label{lem:sum_to_int}
Fix $0<\eta\le \frac14$ and integers $k$ and $t$ with $0\le t\le k-1$.
Let $\cT$ be a  convex polytope with
\[
\cT \subseteq \{ (\ssc{x}{1},\ldots,\ssc{x}{k}) : \ssc{x}{1}+\cdots+\ssc{x}{k}=1, \eta \le \ssc{x}{1} < \cdots < x_t, \eta \le x_{t+1} < \cdots < \ssc{x}{k} \}.
\]
Let $f:\cT\to\RR$ with $|f|\le 1$ on $\cT$,
and $f$ Lipschitz continuous on $\cT$ with Lipschitz constant $1$.
Let $m$ be a positive, squarefree integer with $m \le x^{1-\eta}$, 
and prime factorization $m=p_1\cdots p_t$, where $p_1 < p_2 < \cdots < p_t$.
Let $p_j=x^{u_j}$ for each $j$ and $\lambda = u_1+\cdots+u_t$, so that $m=x^{\lambda}$.

Then, uniformly for any $\frac12 \le c \le d \le 1$ and any $m$, we have
\begin{multline*}
\ssum{cx/m < r \le dx/m \\ (\bv(m;mr),\bv(r;mr)) \in \cT \\ \mu^2(r)=1,\, \omega(r)=k-t \\ (r,m)=1} 
f \big( \bv(m;mr),\bv(r;mr) \big) = 
\int_{1-\lambda+\frac{\log c}{\log x}}^{1-\lambda+\frac{\log d}{\log x}}
x^w \mint{u_{t+1}+\cdots+u_k=w \\ \bu/(\lambda+w)\in \cT} \frac{f(\frac{u_1}{\lambda+w},\ldots,\frac{u_k}{\lambda+w})}{u_{t+1}\cdots u_k}\, d \bu \, dw\, + \\
+ O_{\eta}\bigg(  \frac{x^{1-\eta/3}}{m}+\frac{x(d-c)}{m} e^{-(\log x)^{1/5}}\bigg).
\end{multline*}
When $t=k-1$ we interpret the right side as a single integral over $w$ only,
with 
\[
u_k=w-(u_1+\cdots+u_{k-1}).
\]
\end{lem}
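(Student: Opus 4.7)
The plan is to parametrize $r$ by its ordered prime factorization $r = q_{t+1} \cdots q_k$ with $q_{t+1} < q_{t+2} < \cdots < q_k$, all coprime to $m$, and set $v_j = \log q_j/\log x$ so that $(\bv(m;mr),\bv(r;mr)) = (u_1,\dots,u_k)/(\lambda+w)$ with $u_j = \log p_j/\log x$ for $j\le t$, $u_j = v_j$ for $j>t$, and $w = v_{t+1}+\cdots+v_k$. The coprimality condition $(r,m)=1$ only excludes $O(1)$ prime tuples per "slab", contributing negligibly. After this reduction, the sum is
\[
S = \ssum{q_{t+1}<\cdots<q_k \\ (q_j,m)=1,\,\bv\in\cT \\ cx/m<q_{t+1}\cdots q_k\le dx/m} f(\bv),
\]
which we wish to match against the claimed integral.

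The core strategy is to dissect the cube $[\eta,1]^{k-t}$ of possible $(v_{t+1},\ldots,v_k)$ into axis-aligned boxes of side $\delta$ with $\delta := x^{-\eta/3}$, and process each box according to whether it lies entirely inside, entirely outside, or straddles the boundary of the constraint region $\{\bv\in\cT\}\cap\{w\in[1{-}\lambda{+}\log c/\log x,\,1{-}\lambda{+}\log d/\log x]\}$. For interior boxes I would apply Lemma \ref{PNT} (prime number theorem in short intervals) iteratively, starting with the largest variable $q_k$, to convert each prime count $\#\{q_j\in(x^{\alpha_j},x^{\alpha_j+\delta}]\}$ into $\int dq_j/\log q_j$, then change variables via $dq_j = x^{v_j}\log x\,dv_j$ to produce the density $\prod_j x^{v_j}/v_j$. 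Lipschitz continuity of $f$ lets me replace $f(\bv)$ by its value at the box centre with error $O(\delta)$ per box. For boundary boxes, Lemma \ref{lem:covering} applied to the $(k-t-1)$-dimensional convex projection of $\cT\cap\{|\bv|=w\}$ bounds their count by $O(\delta^{-(k-t-2)})$ on each hyperplane slice, so their total contribution is trivially bounded.

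The error accounting should proceed as follows. Each interior box contains $\asymp (x/m)\delta^{k-t}/(N^?\prod v_j)$ prime tuples, so the Lipschitz error totals $O((x/m)\delta)$, and the boundary error totals $O((x/m)\delta)$; with $\delta = x^{-\eta/3}$ both are $O(x^{1-\eta/3}/m)$, yielding the first error term. Each application of Lemma \ref{PNT} contributes $O((\text{interval length})\cdot e^{-(\log x)^{1/4}})$, and when these are summed over the $O(\delta^{-(k-t-1)})$ boxes at the innermost level, then integrated trivially over the outer $k-t-1$ variables, the total PNT error becomes $O\bigl((d-c)(x/m)e^{-(\log x)^{1/5}}\bigr)$, matching the second error term. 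Assembling the pieces and consolidating the iterated integrals onto the hyperplane $u_{t+1}+\cdots+u_k=w$ yields the stated formula.

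The main obstacle is making the iterative application of Lemma \ref{PNT} work uniformly when some $v_j$ is close to the lower threshold $\eta$. In that regime, the short-interval PNT demands $\delta \ge (x^{v_j})^{-5/12 + O(\eps)}/x^{v_j}$ relative to scale, so $\delta = x^{-\eta/3}$ is essentially the largest box size compatible with Lemma \ref{PNT} at all scales simultaneously—this is exactly what forces the $x^{1-\eta/3}/m$ term rather than a power-saving error. A secondary subtlety is ensuring that the convexity of $\cT$ genuinely makes each fixed-outer-variable slice an interval in $v_k$ (so that partial summation against the Lipschitz weight $f$ works cleanly), and that the boxes with $d_{t+1} = \cdots = d_k$ (where prime orderings collide) contribute only a lower-order amount, since there are only $O(\delta^{-1})$ such "diagonal" boxes.
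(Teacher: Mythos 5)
Your approach matches the paper's proof in essentially all respects: removing the $(r,m)=1$ condition for a small error, tiling the $(v_{t+1},\ldots,v_k)$-space into boxes of side $x^{-\eta/3}$, applying the short-interval PNT (Lemma \ref{PNT}) to each prime factor over a product box, using Lemma \ref{lem:covering} to bound boundary boxes, and invoking Lipschitz continuity to replace $f$ by a constant per box. The only cosmetic differences are that the paper exploits the product structure of each box directly (so no ``iterative'' application or partial summation against $f$ is needed), and your worry about diagonal boxes is handled automatically since a box with repeated indices cannot lie in the open set $\{u_{t+1}<\cdots<u_k\}$ and hence never counts as an interior box.
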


\begin{proof}
 Write each $r$ as $r=p_{t+1}\cdots p_k$, $p_{t+1}< \cdots < p_k$.
We first show that we may remove the condition $(r,m)=1$ from the summation
with only a small error.  Indeed, the conditions on the sum imply
that $p_{t+1} \ge (x/2)^{\eta}$, hence the terms in the sum with 
$(r,m)>1$ total
\begin{align*}
\ll \ssum{p|m \\ p \ge (x/2)^\eta} \ssum{x/2<n\le x \\ pm|n} \ll \frac{x^{1-\eta}}{\eta m}.
\end{align*}

Let $\delta_1=\frac{-\log c}{\log x}$, $\delta_2 = \frac{-\log d}{\log x}$,
 so that $x^{1-\lambda-\delta_1}<r\le x^{1-\lambda-\delta_2}$.
Since $\ssc{x}{1}+\cdots+\ssc{x}{k}=1$, we may assume that $e_0=0$.
Let $\cU$ denote the set of vectors $(u_{t+1},\ldots,u_k)$ such that
\[
1-\lambda-\delta_1 < u_1+\cdots+u_k \le 1-\lambda-\delta_2, \qquad
\bigg( \frac{u_1}{u_1+\cdots+u_k},\dots,\frac{u_k}{u_1+\dots +u_k}\bigg) \in \cT.
\]
Since $\cT$ is a convex polytope, it follows that $\cU$ is also a convex polytope, and that
the simultaneous conditions
\[
(\bv(m;mr),\bv(r;mr))\in \cT, \quad cx/m<r\le dx/m
\] 
are equivalent to $\by := (\frac{\log p_{t+1}}{\log x},\ldots,\frac{\log p_k}{\log x}) \in \cU$. Writing $g(r)=f\big( \bv(m;mr),\bv(r;mr) \big)$,
it thus remains to show that
\begin{multline}\label{eq:sum-to-int-reduction}
\ssum{r=p_{t+1}\cdots p_k \\ \by \in \cU} g(r) = 
\mint{(u_{t+1},\ldots,u_k)\in \cU}  \frac{f\big(\frac{(u_1,\ldots,u_k)}{\lambda+u_{t+1}+\cdots+u_k}\big)x^{u_{t+1}+\cdots+u_k}}
{u_{t+1}\cdots u_k} du_{t+1}\cdots du_k + \\
+O_{\eta}\bigg( \frac{x^{1-\eta/3}}{m}+\frac{x(d-c)}{m} e^{-(\log x)^{1/5}}\bigg).
\end{multline}
If $\cU$ is empty, there is nothing to prove and thus we suppose that $\cU$ is nonempty.
  Define
\be\label{Nc1}
N := \big\lfloor x^{\eta/3} \big\rfloor.
\ee
For an $(k-t)-$tuple $\bd=(d_{t+1},\ldots,d_k)\in [N]^{k-t}$, let
\[
B(\bd) := \(\frac{d_{t+1}-1}{N},\frac{d_{t+1}}{N}\right] \times \cdots \times
\(\frac{d_k-1}{N},\frac{d_k}{N}\right].
\]
Let $\cD$ be the set of $\bd$ such that $\cB(\bd) \subseteq \cU$,
and let $\cD^*$ denote the set of $\bd$ with $\cB(\bd)$ intersecting $\cU$
but not contained in $\cU$; for such $\bd$, $\cB(\bd)$ intersects the boundary 
of $\cU$. In particular, if $\bd \in \cD$, then
$d_{t+1}<\cdots < d_k$.
Since $|g(r)|\le 1$ for all $r$, 
\begin{equation}\label{eq:sum-to-int-DD*}
\ssum{r=p_{t+1}\cdots p_k \\ \by \in \cU} g(r) = \sum_{\bd\in \cD}\;
\ssum{r=p_{t+1}\cdots p_k \\ \by \in \cB(\bd)} g(r) +
O\bigg( \sum_{\bd\in \cD^*} \;\ssum{r=p_{t+1}\cdots p_k \\ \by \in \cB(\bd)} 1 \bigg).
\end{equation}
For each fixed $\bd$, $g(r)$ is almost constant over $r$ with $\by\in \cB(\bd)$,
 and thus we are left to estimate the size of the sets
\[
Q(\bd) := \{ r=p_{t+1}\cdots p_k : \by \in \cB(\bd) \}.
\]
If $\bd\in \cD \cup \cD^*$ then necessarily
\begin{equation}\label{eq:dtdk}
\begin{split}
N(1-\lambda-\delta_1) &\le d_{t+1}+\cdots+d_k \le N(1-\lambda-\delta_2)+k-t,\\
d_j  & \ge N\eta \quad (t+1\le j\le k).
\end{split}
\end{equation}
Since $N\ge \log x$ for large enough $x$,
\begin{equation}\label{eq:y-lower-dj}
x^{d_j/N}-x^{(d_j-1)/N} \order x^{d_j/N}\cdot \frac{\log x}{N} \gg \frac{x^{d_j/N}}{N} \ge x^{(2/3)d_j/N}.
\end{equation}
Thus, Lemma \ref{PNT} with $\eps=1/20$ implies that for all $\bd\in \cD$,
\begin{align*}
|Q(\bd)| &= \prod_{j=t+1}^k \( \pi(x^{d_j/N})-\pi(x^{(d_j-1)/N}) \) \\
&= \prod_{j=t+1}^k \bigg( \int_{x^{(d_j-1)/N}}^{x^{d_j/N}}\; \frac{dt}{\log t} + 
  O\Big( \frac{x^{d_j/N}\log x}{N}  e^{-(\eta\log x)^{1/4}} \Big) \bigg) \\
&= \prod_{j=t+1}^k \bigg( \int_{(d_j-1)/N}^{d_j/N} \frac{x^u}{u}\, du + 
O_\eta \Big(\frac{x^{d_j/N}}{N}  e^{-(\log x)^{1/5}} \Big) \bigg).
\end{align*}
Crudely, by \eqref{eq:dtdk} and \eqref{eq:y-lower-dj},
\[
\int_{(d_j-1)/N}^{d_j/N} \frac{x^u}{u}\, du \ll\frac{x^{d_j/N}}{\eta N} .
\]
Again using \eqref{eq:dtdk}, we find that for $d\in \cD$,
\begin{equation}\label{Qa}
\begin{split}
|Q(\bd)| &=  \mint{B(\bd)} \frac{x^{u_{t+1}+\cdots+u_k}}{u_{t+1}\cdots u_k}\, d\bu +
O_\eta \Big(\frac{x/m}{N^{k-t}}  e^{-(\log x)^{1/5}} \Big).
\end{split}
\end{equation}
Since the right side of \eqref{Qa} is a valid
upper bound for $|Q(\bd)|$ when $\bd\in \cD^*$, we have 
\be\label{Qa-bound}
|Q(\bd)| \ll_\eta \frac{x/m}{N^{k-t}} \qquad (\bd \in \cD \cup \cD^*).
\ee
It follows from Lemma \ref{lem:covering} that $|\cD^*| \ll (k-t)^2 N^{k-t-1}
\ll \eta^{-2} N^{k-t-1}$ and hence
\be\label{Qstar-r}
\sum_{\bd\in \cD^*} |Q(\bd)|  \ll_{\eta} \frac{x}{m N}
\ee
and
\be\label{Qstar-Ba}
\sum_{\bd\in \cD^*} \text{meas} (B(\bd)) \ll_{\eta} \frac{1}{N}.
\ee
In particular, the big-$O$ term in \eqref{eq:sum-to-int-DD*} is 
$O_{\eta}(x/(mN))$.

Now fix $\bd\in \cD$ and let
\[
\bv_0 = \bv_0(\bd) := \bigg( \frac{u_1}{w},\ldots,\frac{u_t}{w}, \frac{d_{t+1}/N}{w},
\ldots, \frac{d_k/N}{w} \bigg), \quad w = u_1 + \cdots + u_t + \frac{d_{t+1}+\cdots+d_k}{N}. 
\]
Then $\bv_0\in \cT$ and  $|(\bv(m;mr),\bv(r;mr))-\bv_0| \ll 1/N$ for every $r\in Q(\bd)$.  Thus, 
\[
|f(\bv(m;mr),\bv(r;mr))-f(\bv_0)|\ll 1/N \qquad (r\in Q(\bd)).
\]
Combining this with \eqref{Qa} and \eqref{Qa-bound}, and then recalling \eqref{Nc1}, we see that for $\bd\in \cD$,
\begin{align*}
\sum_{r\in Q(\bd)} g(r) &= f(\bv_0) |Q(\bd)| + O\pfrac{|Q(\bd)|}{N} \\
&=f(\bv_0) \mint{B(\bd)} \frac{x^{u_{t+1}+\cdots+u_k}}{u_{t+1}\cdots u_k}\,du_{t+1}\cdots du_k +
O_{\eta} \( \frac{x e^{-(\log x)^{1/5}}}{m N^{k-t}} \)\\
&=\mint{B(\bd)} \frac{f\big(\frac{(u_1,\ldots,u_k)}{\lambda+u_{t+1}+\cdots+u_k}\big)
x^{u_{t+1}+\cdots+u_k}}{u_{t+1}\cdots u_k}
du_{t+1}\cdots du_k +O_{\eta} \( \frac{x e^{-(\log x)^{1/5}}}{m N^{k-t}} \).
\end{align*}
By \eqref{eq:dtdk},
\[
|\cD| \ll N^{t-k+1} + (\delta_1-\delta_2)N^{t-k} \ll N^{t-k-1}+N^{t-k}(d-c).
\]
Therefore,
\begin{multline*}
\sum_{\bd\in \cD} \; \sum_{r\in Q(\bd)} g(r) =
\sum_{\bd\in \cD}\;  \mint{B(\bd)} \frac{f\big(\frac{(u_1,\ldots,u_k)}{\lambda+u_{t+1}+\cdots+u_k}\big)x^{u_{t+1}+\cdots+u_k}}
{u_{t+1}\cdots u_k} du_{t+1}\cdots du_k + \\
+O_{\eta}\Big( \frac{x}{mN} +  (d-c) \frac{x}{m} e^{-(\log x)^{1/5}} \Big).
\end{multline*}
Inserting this into \eqref{eq:sum-to-int-DD*}, and completing the
multiple integral with the missing pieces of $\cU$ using \eqref{Qstar-Ba},
we obtain \eqref{eq:sum-to-int-reduction}, as desired.
\end{proof}

\bigskip
%\newpage

%%%%%%%%%%%%%%%%%%%%%%%%%%%%%%%%%%%%%%%%%%%%%%%%%%%%%%%%%%%%%%%%%%%%%%
%
%
{\Large \section{Constructions}\label{sec:constructions}}
%
%
%%%%%%%%%%%%%%%%%%%%%%%%%%%%%%%%%%%%%%%%%%%%%%%%%%%%%%%%%%%%%%%%%%%%%%

In this section, we present a general method of constructing sequences $w_n$
satisfying \eqref{eq:TypeI} and \eqref{eq:TypeII}, which we use to find such sequences
with small or large weight on primes.
This then gives upper bounds on $C^-(P)$ and lower bounds on $C^+(P)$.

Our sequences satisfy \eqref{eq:TypeII} trivially, as $w_n$ is supported
on integers with no divisor in $((x/2)^\theta,x^{\theta+\nu}]$,
and thus the main technical issue is engineering \eqref{eq:TypeI}.
We conjecture that this class of sequences is sufficient to determine
$C^{\pm}(P)$, and give heuristic justification for this claim at
the end on this section.
In fact, this class of functions suffices for establishing $C^\pm(P)$
in the special cases considered in Theorem 
\ref{thm:1-parm theta family} and Theorem
\ref{thm:theta=0 gamma=1/2}.

Let $\cQ_0$ be the set of parameters $(\gamma,\theta,\nu)$ satisfying \eqref{Q0}
\begin{equation}
\cQ_0:=\{(\gamma,\theta,\nu):\, 0<\gamma<1, \quad 0\le \theta < \tfrac12,  \quad 0 < \nu \le 1-\theta\}.
\label{eq:Q0Def}
\end{equation}

We recall Definition \ref{defn:R1} for $\cR(P)$, and the vector notation of Section \ref{sec:Notation}. Throughout, we consider $P=(\gamma,\theta,\nu)\in\cQ_0$ fixed, and let $\cR=\cR(P)$. 
\medskip

\begin{defn}[The set $\cS$ of symmetric functions on vectors] Let $\cS$ denote the set of all real-valued functions
$f$ on variable-length vectors, such that, for all $k > 0$, the restriction of $f$ to $\RR^k$ is symmetric in all variables.
\end{defn}

\begin{defn}[The set $\sF_{\eta}$ of Type I-compatible functions]\label{defn:FetaP}
Let $\sF_{\eta}$ be the set of functions $f \in \cS$ such that
\begin{itemize}
\item[(a)] $f$ is supported on vectors in $\cC(\cR)$ with all components $\ge \eta$, and is bounded;
\item[(b)] For each $k$, the restriction of $f$ to $\RR^k$ is a finite sum of functions $f_{k,j}$, where each $f_{k,j}$ is supported on
 a convex polytope and is Lipschitz continuous on the polytope;
\item[(c)] The following property holds:
\begin{equation}
\label{TypeI-f}
\text{for all } r\ge 0 \, \text{ and } \,  \xi_1+\cdots+\xi_r \le \gamma, \text{ we have }
\sum_{k\ge r+1}  \;\;\; \mint{\xi_{r+1} < \cdots < \xi_k \\ \xi_1+\cdots+\xi_k=1} \frac{f(\xi_1,\ldots,\xi_{k})}{\xi_{r+1}\cdots \xi_{k}} = 0.
\end{equation}
\end{itemize}
\end{defn}

The relations \eqref{TypeI-f} are integral analogs of the Type I bounds \eqref{eq:TypeI}.
We note that any function $f\in \sF_\eta$ is supported on
vectors with dimension at most $1/\eta$ (since the components are at least $\eta$ and sum to 1), and that
any real linear combination of functions in $\sF_{\eta}$ is also in $\sF_{\eta}$.
Roughly speaking, we will build sequences $(a_n),(b_n),(w_n)$ from $f\in \sF_{\eta}$ via the relation $w_n=f(\bv(n))$, $b_n=1$.  The restriction of the support of $f$ to $\cC(\cR)$
ensures that \eqref{eq:TypeII} holds, and relation \eqref{TypeI-f} implies that \eqref{eq:TypeI} holds.
Also, the singleton value $f(1)$ represents the weight $w_p$ on primes.
Our object is to find $f$ to optimize $f(1)$, subject to 
$f(\bx)\ge -1$ for all $\bx$.

%%%%%%%%%%%%%%%%%%%%%%%%%%%%%%%%%%%%%%%%%%%%
%
%  Main theorem about constructions 
%     using smooth functions supported
%     on vectors with components > delta
%
%%%%%%%%%%%%%%%%%%%%%%%%%%%%%%%%%%%%%%%%%%%%

\begin{thm}[Constructions of sequences with small/large prime counts]\label{thm:constructions}
Let $P=(\gamma,\theta,\nu)\in \cQ_0$ and $f\in \sF_\eta$ with $0 < \eta < 1-\gamma$. Then:
\begin{itemize}
\item[(a)] For any $\delta>0$, there is a constant $z=z(\delta)$ with $|z-f(1)|\le \delta$
such that for all $B>0$ and large enough $x$ (in terms of $B,\delta,\gamma,\theta,\nu,\eta$), there is a bounded sequence $(w_n)$
satisfying \eqref{eq:TypeI} and \eqref{eq:TypeII} and with:
\begin{itemize}
\item[(i)] $w_p=z$ for all primes $p\in (x/2,x]$,
\item[(ii)]  $w_n \ge \min \Big( 0, \min\limits_{\dim(\bu)\ge 2} f(\bu) - \delta \Big)$ for $n\in(x/2,x]$ with $\Omega(n)\ge 2$. 
\end{itemize}
\item[(b)] If  $f(\bu)\ge -1$ for all $\bu$, then 
we have 
\[
C^-(P) \le 1+f(1) \le C^+(P)
\]
and for sufficiently large $\varrho$ (in terms of $f,\gamma,\theta,\nu,\eta$) we have
\[
\CB^-(P;\varrho)\le 1+f(1)\le \CB^+(P;\varrho).
\] 
Moreover, the sequences $(a_n),(b_n),(w_n)$ used to prove this all
satisfy $b_n=1$ for all $n$ and have $(a_n)$ bounded.
\end{itemize}
\end{thm}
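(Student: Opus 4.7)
Given part (a), set $b_n = 1$ on $(x/2, x]$; by Lemma \ref{lem:bn-PNT-examples} this satisfies \eqref{bp-sum} and \eqref{bn-f-sum}. Define $a_n := 1 + w_n$ where $w_n$ is produced by part (a) applied to $f$. The assumption $f(\bu) \ge -1$ together with the lower bound in (a)(ii) gives $a_n \ge 0$ once $\delta$ is small. Then $\sum_p a_p = (1+z)\sum_p b_p$ with $|z - f(1)| \le \delta$, so sending $\delta \to 0$ in Definition \ref{def:Constants} yields $C^-(P) \le 1+f(1) \le C^+(P)$. Because $w_n$ is bounded, $|w_n| \le \tau(n)^\varrho$ whenever $\varrho$ is sufficiently large, giving the corresponding $\CB^\pm$ bounds from Definition \ref{def:CB}.

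\textbf{Strategy for (a).} The natural candidate is $w_n := f(\bv(n))$: since $\bv(p) = (1)$ for any prime $p$, one automatically gets $w_p = f(1)$, and boundedness of $w_n$ together with the composite lower bound come directly from $f$. Type I is verified $m$-by-$m$: for fixed $m = p_1\cdots p_t \le x^\gamma$ with $\bv(m;x) = (\xi_1,\ldots,\xi_t)$, Lemma \ref{lem:sum_to_int} (applied to each Lipschitz piece of $f$ provided by Definition \ref{defn:FetaP}(b)) turns $\sum_n w_{mn}$ into a multiple integral; summed over dimensions $k$, this integral is precisely the $r = t$ instance of \eqref{TypeI-f}, which vanishes. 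If any prime factor of $m$ is below $x^\eta$, the corresponding component of $\bv(mn)$ drops below $\eta$ so $f(\bv(mn)) = 0$, and only $m$ free of small primes contributes.

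\textbf{The Type II obstruction.} This clean picture fails at the boundary of the Type II range because the support condition $\mathrm{supp}(f) \subseteq \cC(\cR)$ forbids subsums of $\bv(mn)$ lying in $[\theta, \theta+\nu]$, whereas \eqref{eq:TypeII} controls $m$ through $\log m/\log x$ rather than through the subsum $\log m/\log(mn)$; the $O(1/\log x)$ mismatch admits an $O(x^{1+\theta})$ family of pairs $(m,n)$ on which $w_{mn}\ne 0$ yet $m \in ((x/2)^\theta, x^{\theta+\nu}]$. To cure this, I would introduce a margin $\delta_0 = \delta_0(\delta, f) > 0$, let $\cR' \subseteq \cR$ be the region obtained by strengthening the subsum-exclusion interval to $[\theta - \delta_0, \theta + \nu + \delta_0]$, and set $\tilde f := f \cdot \one_{\cC(\cR')}$. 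For $x$ larger than $e^{C/\delta_0}$, the $O(1/\log x)$ slack is absorbed by $\delta_0$, every pair in the Type II range forces $\bv(mn)\notin \cC(\cR')$, and \eqref{eq:TypeII} holds trivially with $w_n := \tilde f(\bv(n))$.

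\textbf{Main obstacle: restoring \eqref{TypeI-f} after the truncation.} The passage $f \leadsto \tilde f$ perturbs every instance of \eqref{TypeI-f} by $O(\delta_0)$; naively summed against $\tau^B(m)$ this produces a Type I error of $O(\delta_0 \, x \, (\log x)^{O_B(1)})$, which for any fixed $\delta_0$ is far larger than the required $x/\log^B x$. The resolution I anticipate is that these discrepancies are not independent across $r$: through the Linnik-type decomposition encoded by $\cyrL_c$ from \eqref{Linnik-fcn}, the Type I error admits a reorganization as a sum of products $\cyrL_c(\bu_1)\cdots \cyrL_c(\bu_r)$ to which the combinatorial identity \eqref{t-ident} of Lemma \ref{lem:t-ident} applies; the net effect is that correcting \eqref{TypeI-f} at the single prefix $r=0$ --- by the scalar shift $z := f(1) + \kappa$ with $|\kappa| = O(\delta_0) \le \delta$ --- simultaneously restores all higher-$r$ instances. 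The residual errors from the sum-to-integral step in Lemma \ref{lem:sum_to_int} are super-polynomially small and fit comfortably into the Type I budget. The delicate combinatorial step mediated by \eqref{t-ident}, together with the bookkeeping that balances $\delta_0$ against the divisor-sum factors in Type I, is where I expect the bulk of the technical work.
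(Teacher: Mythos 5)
Your reduction of (b) to (a) and your diagnosis of the Type II boundary issue (the mismatch between $\log m/\log x$ and $\log m/\log(mn)$) are both correct, and you rightly see that the natural candidate $w_n = f(\bv(n))$ satisfies Type I via Lemma \ref{lem:sum_to_int} and \eqref{TypeI-f}. But the fix you propose for the Type II obstruction has a genuine gap that your outline does not close.

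You define $\tilde f := f\cdot\one_{\cC(\cR')}$, i.e., you truncate $f$ globally, and then observe that \eqref{TypeI-f} is perturbed by $O(\delta_0)$ at every $r$. You ``anticipate'' that the discrepancies across different $r$ reorganize, via Lemma \ref{lem:t-ident}, so that a single scalar shift $z = f(1)+\kappa$ simultaneously restores every instance of \eqref{TypeI-f}. This cannot work. The value $f(1)$ occurs in \eqref{TypeI-f} only in the $r=0$ instance (as the $k=1$ term); for $r\ge 1$ the constraint $\xi_1+\cdots+\xi_r\le\gamma<1$ forces $k\ge r+1\ge 2$, so $f(1)$ never appears, and a shift of $f(1)$ leaves those relations exactly as broken as before. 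Lemma \ref{lem:t-ident} is not a tool for propagating corrections between different $r$'s; in the paper it is the engine inside the proof that the fragmentation relation \eqref{fsl} is \emph{equivalent} to \eqref{TypeI-f} (Theorem \ref{thm:construction-recursive}). You should not expect to patch \eqref{TypeI-f} by a scalar after destroying it globally.

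The idea you are missing is to truncate only at the ``leaves'' and then re-extend by the fragmentation relation, so that \eqref{TypeI-f} is restored \emph{by construction} rather than by a correction. Concretely: for vectors $\bbeta$ with all components $<1-\gamma$, set $h(\bbeta) := f(\bbeta)\cdot\one(\bbeta\text{ has no subsum in }\cI_\eps)$, and then define $h$ on vectors with some component $\ge 1-\gamma$ by plugging these leaf values into the right side of \eqref{fsl}. Theorem \ref{thm:construction-recursive} then guarantees that $h\in\sF_\eta$ with \eqref{TypeI-f} holding exactly, and the support condition on $h$ makes every term of the Type II sum vanish for $x$ large enough (the subsum produced by a putative Type II divisor propagates through the fragmentation and kills the integrand). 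The new value $z := h(1)$ is within $O(\eps)$ of $f(1)$, proved by a measure estimate: the set of fragmentation vectors with a subsum in the thin interval $\cI_\eps$ has measure $O(\eps)$, and the integrand in \eqref{fsl} is bounded. Finally, one must also include a margin near $\gamma$ in $\cI_\eps$ (the paper uses $\cI_\eps = [\theta+\nu,\theta+\nu+\eps]\cup[\gamma-\eps,\gamma+\eps]$) so that after normalizing by $\log n$ instead of $\log x$ the sum $\frac{u_1+\cdots+u_t}{\lambda+w}$ stays strictly below $\gamma$ and \eqref{TypeI-f} applies; your outline does not address this second boundary. As a side comment, it is natural (and apparently necessary, as your Type II analysis suggests) to also excise $[\theta-\eps,\theta]$ so that divisors in $((x/2)^\theta,x^\theta)$ cannot survive; this is a detail you got right that the paper's choice of $\cI_\eps$ glosses over.
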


Although \eqref{TypeI-f} has a clean formulation, some massaging leads to 
a recursive formula for $f$, whereby if some $\xi_i\ge 1-\gamma$, then $f(\bxi)$
is uniquely determined by the values of $f$ with all arguments $<1-\gamma$.
To state this result, we associate to $f$ the restrictions $f_{s,\ell}$,
where each $f_{s,\ell}$ is the restriction of $f$ to $(s+\ell)-$dimensional vectors
with $s$ components in $[\eta,1-\gamma)$ and $\ell$ components which are $ \ge 1-\gamma$.
Recall the definition of the `truncated Linnik function' $\cyrL_c(\bx)$ from \eqref{Linnik-fcn}.

%%%%%%%%%%%%%%%%%%%%
%
% recursive version 
%
%%%%%%%%%%%%%%%%%%%%

\begin{thm}\label{thm:construction-recursive}
Let $P=(\gamma,\theta,\nu)\in \cQ_0$ and suppose that $f\in \cS$ satisfies axioms (a) and (b) in the definition of $\sF_\eta$.
Then \eqref{TypeI-f} is equivalent to the following statement, which we
refer to as the ``fragmentation relation'':
for all $s\ge 0,\ell\ge 1$, all $\beta_1,\ldots,\beta_s \in [\eta,1-\gamma)$ and $\alpha_1,\ldots,\alpha_{\ell} 
\ge 1-\gamma$ with $\beta_1+\cdots+\beta_s+\alpha_1+\cdots+\alpha_\ell=1$, we have
\begin{multline}\label{fsl}
f_{s,\ell}(\beta_1,\ldots,\beta_{s},\alpha_1,\ldots,\alpha_\ell) := \alpha_1\cdots \alpha_\ell\, \times \\
 \times \, \ssum{k_1,\ldots,k_\ell\ge 2 \\ k:=k_1+\cdots+k_\ell+s} \;\;\; \mint{\alpha_j=u_{j,1}+\cdots+u_{j,k_j} \\ \eta \le u_{j,1} < \cdots < u_{j,k_j} \\ 1\le j\le \ell}
\frac{\cyrL_{1-\gamma}(\bu_1)\cdots \cyrL_{1-\gamma}(\bu_\ell) f_{k,0}(\bbeta,\bu_1,\ldots,\bu_\ell)}{\prod_{j=1}^\ell \prod_{h=1}^{k_j}\, u_{j,h}} d\bu_1 \cdots d\bu_\ell,
\end{multline}
where $\bu_j = ( u_{j,1},\ldots,u_{j,k_j} )$ for $1\le j\le \ell$.
\end{thm}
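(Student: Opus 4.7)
The plan is to prove the equivalence by establishing each implication separately, with the forward direction $\eqref{fsl}\Rightarrow\eqref{TypeI-f}$ following from a direct combinatorial reduction culminating in Lemma \ref{lem:t-ident} with $m=1$, and the reverse direction following by a uniqueness argument for which the first implication supplies the existence half. For the forward direction I will fix $r\geq 0$ and $\xi_1,\dots,\xi_r$ with $\sum_i\xi_i\leq\gamma$, partitioning them into small components $\beta_i<1-\gamma$ (say $s_0$ of them) and large components $\alpha_j\geq 1-\gamma$ (say $\ell_0$). The symmetry of $f$ lets me rewrite the LHS of \eqref{TypeI-f} as a double sum over the numbers $m$ of small and $t$ of large integration variables $\tilde\beta,\tilde\alpha$, weighted by $1/(m!t!)$; substituting \eqref{fsl} into each $f_{s_0+m,\ell_0+t}$ — so that $\prod\tilde\alpha_j$ from the formula cancels the $1/\prod\tilde\alpha_j$ in the integrand and each $\tilde\alpha_j$-integration is reparametrized by its fragmentation $\bv_j$ — and viewing each $\tilde\beta_i$ as a trivial singleton fragmentation (with $\cyrL_{1-\gamma}=1$ by Lemma \ref{lem:tc-basic}(b)), I can concatenate all pieces into one vector $\bw\in[\eta,1-\gamma)^K$ with $|\bw|=A:=1-\sum_i\xi_i\geq 1-\gamma$. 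Tracking the combinatorics, the expression collapses to
\[
\prod_i\alpha_i\sum_{\bu_i:\,|\bu_i|=\alpha_i}\frac{\prod_i\cyrL_{1-\gamma}(\bu_i)}{\prod u}\cdot\sum_K\int_{|\bw|=A}\frac{g(\beta,\bu,\bw)}{\prod w}\,S(\bw)\,d\bw,
\]
where $g:=f_{\cdot,0}$ and $S(\bw):=\sum_{T\geq 1}\tfrac{1}{T!}\sum_{\bv_1\sqcup\cdots\sqcup\bv_T=\bw,\,\bv_j\neq\varnothing}\prod_j\cyrL_{1-\gamma}(\bv_j)$. Lemma \ref{lem:t-ident} applied with $m=1$ and $c=1-\gamma$ forces $S(\bw)=0$ whenever $|\bw|\geq 1-\gamma$, so the entire LHS of \eqref{TypeI-f} vanishes.

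For the reverse direction, I will define $\tilde f$ by extending $f_{\cdot,0}$ via the RHS of \eqref{fsl}; by the forward direction $\tilde f$ satisfies \eqref{TypeI-f}, so $h:=f-\tilde f$ satisfies \eqref{TypeI-f} together with $h_{\cdot,0}\equiv 0$, and it suffices to show $h\equiv 0$. For fixed $s,\ell\geq 1$ and a point $(\beta,\alpha)$ with $\alpha_\ell$ the largest large component, I apply \eqref{TypeI-f} to $h$ with $r=s+\ell-1$ and $\xi=(\beta,\alpha_1,\dots,\alpha_{\ell-1})$ (the hypothesis $\sum\xi=1-\alpha_\ell\leq\gamma$ being automatic) and isolate the distinguished term $(m,t)=(0,1)$ to obtain
\[
\frac{h_{s,\ell}(\beta,\alpha)}{\alpha_\ell}=-\sum_{\substack{m+t\geq 1\\(m,t)\neq(0,1)}}\frac{1}{m!\,t!}\int\frac{h_{s+m,\ell-1+t}(\beta,\tilde\beta,\alpha_1,\dots,\alpha_{\ell-1},\tilde\alpha)}{\prod\tilde\beta\,\prod\tilde\alpha}\,d\tilde\beta\,d\tilde\alpha.
\]
Every term on the right evaluates $h$ at a configuration whose multiset of large components is strictly smaller than $\{\alpha_1,\dots,\alpha_\ell\}$ in the sorted-decreasing lex order on multisets (padded with zeros so that shorter is smaller at ties): for $t=0$ with $m\geq 1$ the largest component is simply removed, while for $(m,t)\neq(0,1)$ with $t\geq 1$ each inserted $\tilde\alpha_j\leq\alpha_\ell-(t-1)(1-\gamma)-m\eta<\alpha_\ell$, so at the first position where the multiplicity of $\alpha_\ell$ drops, the new vector becomes lex-smaller. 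A single induction along this well-founded order, with base case the empty multiset corresponding to $h_{\cdot,0}\equiv 0$, forces every RHS term to vanish and yields $h_{s,\ell}\equiv 0$.

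The main obstacle lies in the reverse direction: the terms with $t\geq 2$ introduce \emph{more} large components than $h_{s,\ell}$ itself, so an induction on $\ell$ alone fails, and it is only the strict inequality $\tilde\alpha_j<\alpha_\ell$ that forces a lex-decrease on the sorted-decreasing multiset — careful case analysis is needed to confirm this covers every excluded $(m,t)$, in particular the boundary cases $\alpha_\ell=1-\gamma$ and $\alpha_{\ell-1}=\alpha_\ell$. The forward direction's chief delicate step is purely combinatorial, namely recognizing the double sum $\sum_{m,t}\tfrac{1}{m!t!}$ as a binomial-weighted sum over decompositions of $\bw$, which relies crucially on the observation from Lemma \ref{lem:tc-basic}(b) that a singleton in $[\eta,1-\gamma)$ automatically contributes $\cyrL_{1-\gamma}=1$, so that trivial and non-trivial fragmentations can be treated uniformly under a single summation.
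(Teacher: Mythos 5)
Your forward direction ($\eqref{fsl}\Rightarrow\eqref{TypeI-f}$) is essentially the paper's argument: after the combinatorial reorganization (trivial singleton fragmentations contributing $\cyrL_{1-\gamma}=1$ by Lemma \ref{lem:tc-basic}(b), non-trivial fragmentations for the large pieces, and the symmetrization that converts the $1/(m!\,t!)$ weights into the $1/T!$-weighted sum over all decompositions), everything collapses to Lemma \ref{lem:t-ident} with $m=1$, exactly as in the paper. Your reverse direction, however, is a genuinely different route. The paper proves $\eqref{TypeI-f}\Rightarrow\eqref{fsl}$ directly: it starts from the identity with $\cyrL_\infty$ (Linnik's identity) and replaces each factor $\cyrL_\infty(\bu_i)$ by $\cyrL_{1-\gamma}(\bu_i)$ one at a time, showing each difference term $T(\by)$ vanishes by an application of \eqref{TypeI-f}. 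You instead argue by uniqueness: the extension $\tilde f$ of $f_{\cdot,0}$ via \eqref{fsl} satisfies \eqref{TypeI-f} by the forward direction, so $h=f-\tilde f$ satisfies \eqref{TypeI-f} with $h_{\cdot,0}\equiv 0$, and the relation obtained by isolating the $k=r+1$ term expresses $h_{s,\ell}(\bbeta,\balpha)$ in terms of values of $h$ at other configurations. This is a clean and valid strategy, and arguably more transparent about \emph{why} the fragmentation relation is forced (it exhibits \eqref{fsl} as the unique \eqref{TypeI-f}-compatible extension of the all-small data); the paper's route avoids any induction but requires the somewhat slick one-factor-at-a-time replacement.

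There is one repair needed in your reverse direction: the sorted-decreasing lexicographic order on multisets of \emph{real} numbers is not well-founded (e.g.\ $\{0.9\}>\{0.89\}>\{0.889\}>\cdots$), so "a single induction along this well-founded order" is not a valid induction principle as stated, and the careful case analysis of lex-decrease at ties is effort spent on the wrong invariant. The correct (and simpler) observation is that every term on the right-hand side of your recursion has $m+t\ge 2$, hence the argument of $h$ has dimension $s+m+(\ell-1+t)\ge s+\ell+1$; since all components are $\ge\eta$ and sum to $1$, only dimensions $\le 1/\eta$ occur, so downward induction on the dimension of the argument (base case: dimension exceeding $1/\eta$, together with $h_{\cdot,0}\equiv 0$) terminates and makes the boundary cases $\alpha_\ell=1-\gamma$ and $\alpha_{\ell-1}=\alpha_\ell$ irrelevant. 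With that substitution your argument is complete.
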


Roughly speaking, each component that is $\ge 1-\gamma$ is fragmented into components
which are smaller than $1-\gamma$ on the right side of \eqref{fsl}.
Although \eqref{TypeI-f} has a cleaner formulation than \eqref{fsl},
\eqref{fsl} can be used to easily construct functions satisfying \eqref{TypeI-f}.
Indeed, one can define $f$ arbitrarily on vectors with all components $<1-\gamma$,
use \eqref{fsl} to define $f$ for other vectors, and then $f$ will automatically
satisfy \eqref{TypeI-f}.  This is, in fact, how we shall proceed.

%
%
%
%

%
%
%%%%%%%%%%%%%%%
%
%
%
\subsection{Proof of Theorem \ref{thm:construction-recursive}}
We first show that the fragmentation relation is equivalent to
the following, more symmetric, ``alternative fragmentation relation'':
for all $m\ge 1$ and  $\bxi \in \cC(\cR(P))$ with $\xi_i\ge \eta$ for all $i$, we have
\begin{multline}
\label{fragmentation-alternative}
\frac{f(\xi_1,\ldots,\xi_m)}{\xi_1\cdots \xi_m} = 
\ssum{k_1,\ldots,k_m\ge 1} \;\;
 \mint{\xi_j=u_{j,1}+\cdots+u_{j,k_j} \\ \eta \le u_{j,1}<\cdots<u_{j,k_j} \\ 1\le j\le m}\!\!
\frac{\cyrL_{1-\gamma}(\bu_1)\cdots \cyrL_{1-\gamma}(\bu_m) f(\bu_1,\ldots,\bu_m)}{u_{1,1} \cdots u_{m,k_m}}
 d\bu_1 \cdots d\bu_m,
\end{multline}
where $\bu_j = ( u_{j,1},\ldots,u_{j,k_j} )$ for $1\le j\le m$.
Indeed, this follows quickly from Lemma \ref{lem:tc-basic} (a) and (b).
In \eqref{fragmentation-alternative},
if $\xi_i < 1-\gamma$, then $\cyrL_{1-\gamma}(\bu_i)=\one(k_i=1)$
and therefore the only nonzero integrands occur when $u_{i,1} = \xi_i$.
Also, if $\xi_i \ge 1-\gamma$ and $k_i=1$ then $\cyrL_{1-\gamma}(\xi_i)=0$,
thus the nonzero integrands on the right side require $k_i\ge 2$.
Furthermore, if some $u_{j,h} \ge 1-\gamma$ then $\cyrL_{1-\gamma}(\bu_j)=0$.

It remains to show the equivalence of \eqref{TypeI-f} and the alternative
fragmentation relation \eqref{fragmentation-alternative}.
We  first assume \eqref{TypeI-f} and deduce \eqref{fragmentation-alternative},
beginning with the observation that $\cyrL_{\infty}(\ssc{x}{1},\ldots,\ssc{x}{k}) = \one(k=1)$
for any $(\ssc{x}{1},\ldots,\ssc{x}{k})$,
which follows from Lemma \ref{lem:tc-basic} (b).
Hence, for $\bxi \in \cC(\cR(P))$ with all components $\ge \eta$ and $m\ge 1$,
\[
\frac{f(\xi_1,\ldots,\xi_m)}{\xi_1\cdots \xi_m} =  \sum_{k_1,\ldots,k_m\ge 1} 
 \;\;\; \mint{\xi_j=u_{j,1}+\cdots+u_{j,k_j} \\ \eta \le u_{j,1}<\cdots< u_{j,k_j}  \\
 (1\le j\le m)} \frac{\cyrL_\infty(\bu_1)\cdots \cyrL_\infty(\bu_m) f(\bu_1,\ldots,\bu_m)}{u_{1,1}\cdots u_{m,k_m}}\, d \bu_1 \cdots d\bu_{m}.
\]
Our strategy is to use \eqref{TypeI-f} to replace each factor $\cyrL_\infty(\bu_i)$
with $\cyrL_{1-\gamma}(\bu_i)$, which gives \eqref{fragmentation-alternative}.
As observed earlier, if $\xi_i<1-\gamma$ then $\cyrL_{1-\gamma}(\bu_i) = \one(k_i=1) = \cyrL_{\infty}(\bu_i)$, so we may immediately make this replacement for all $i$
with $\xi_i < 1-\gamma$.  For the components
with $\xi_i \ge 1-\gamma$, we shall replace $\cyrL_{\infty}(\bu_i)$
by $\cyrL_{1-\gamma}(\bu_i)$ one at a time.
It thus suffices to show, for any $\xi \in [ 1-\gamma,1]$
and any vector $\by$ with $|\by| = 1-\xi$, that
\be\label{Tzero}
T(\by) := \sum_{k\ge 1} \frac{1}{k!} \;\;\; \mint{\xi=u_1+\cdots+ u_k \\ \eta \le u_i\; \forall i}
 \;\; \frac{(\cyrL_{\infty}(\bu)-\cyrL_{1-\gamma}(\bu)) f(\by,\bu)}{u_1\cdots u_k}\, d\bu = 0.
\ee
It is convenient here to leave the variables $u_1,\ldots,u_k$ unordered,
and introduce the factor $1/k!$ to compensate.
We also observe that the terms $k> 1/\eta$ in \eqref{Tzero} are zero, as
$f$ is supported on vectors with components $\ge \eta$.
From \eqref{Linnik-fcn}, we have
\[
\cyrL_{\infty}(\bu)-\cyrL_{1-\gamma}(\bu) = \sum_{j\ge 1} \frac{(-1)^{j-1}}{j} 
\ssum{\bu_1 \sqcup \cdots \sqcup \bu_j = \bu \\ \forall i: \dim(\bu_i) \ge 1 \\ \exists\, i: |\bu_{i}|\ge 1-\gamma} \;\; 1.
\]
\newcommand{\aone}{\ssc{a}{1}}
Insert this into \eqref{Tzero}, and re-organize $T(\by)$, by first fixing $j$
and  using the substitutions
$d_i=\dim(\bu_i)$ and $\phi_i = |\bu_{i}|$, so that $\phi_1+\cdots+\phi_j=\xi$,
$\phi_i\ge 1-\gamma$ for some $i$  and $d_1+\cdots + d_j=k$.
With $k,j$ and $d_1,\ldots,d_j$ fixed,
there are $\binom{k}{d_1\; d_2\; \cdots \; d_j}$ choices for $\bu_1,\ldots,\bu_j$
such that $\bu_1 \sqcup \cdots \sqcup \bu_j = \bu$.  
Since the integrand in \eqref{Tzero} is symmetric in $u_1,\ldots,u_k$, we obtain
\begin{align*}
T(\by) &= \ssum{j\ge 1 \\ k\ge 1} \frac{(-1)^{j-1}}{j k!} \!\!
\ssum{d_1,\ldots,d_j\ge 1 \\ d_1 + \cdots+d_j=k} \!\!\! \binom{k}{d_1\; d_2\; \cdots \; d_j}\;\;
\mint{\xi=\phi_1+\cdots + \phi_j \\ \eta \le \phi_i\; \forall i \\ \exists \, i: \phi_i\ge 1-\gamma} \;\;\;\; 
\mint{\phi_i=u_{i,1}+\cdots+u_{i,d_i} \\ 1\le i\le j}
\frac{f(\by,\bu)}{u_{1,1}\cdots u_{j,d_j}}\, d\bu_1\cdots d\bu_j \, d\bphi \\
&= \sum_{j\ge 1} \frac{(-1)^{j-1}}{j}\mint{\xi=\phi_1+\cdots + \phi_j \\ \eta \le \phi_i\; \forall i \\ \exists \, i: \phi_i\ge 1-\gamma}  \;\;\;
\sum_{d_1,\ldots,d_j\ge 1}\frac{1}{d_1! \cdots d_j!} \;\;\;  \mint{\phi_i=u_{i,1}+\cdots+u_{i,d_i} \\ 1\le i\le j}
\frac{f(\by,\bu)}{u_{1,1}\cdots u_{j,d_j}}\, d\bu_1\cdots d\bu_j \, d\bphi.
\end{align*}
Fix $j$ and $(\phi_1,\ldots,\phi_j)$,
and let $m$ be an index with $\phi_m \ge 1-\gamma$.
Fix all of the $d_i$ except for $d_m$ and fix the variables
$u_{i,h}$ for $i\ne m, 1\le h\le d_i$.
Let $\by'$ be the concatenation of the vector $\by$ with all of 
$u_{i,h}$ for $i\ne m, 1\le h\le d_i$, and let $g$ be the sum of the components
of $\by'$, so that $g=1-\phi_m\le \gamma$.   What remains 
is
\[
\sum_{d_m\ge 1} \frac{1}{d_m!} \;\;\; \mint{\phi_m=u_{m,1}+\cdots+u_{m,d_m}} 
\frac{f(\by',\bu_m)}{u_{1,m}\cdots u_{m,d_m}} \, d\bu_m,
\]
which equals zero by \eqref{TypeI-f}.  Therefore, \eqref{Tzero} holds
and the proof of \eqref{fragmentation-alternative} is complete.

 \medskip
 
 Now we deduce \eqref{TypeI-f} from \eqref{fragmentation-alternative}.
 Let $\bxi=(\xi_1,\ldots,\xi_r)$ with components $\ge \eta$ and $|\bxi| = g\le \gamma$.
  Then, by \eqref{fragmentation-alternative} and the symmetry of $f$,
  \begin{align*}
  \sum_{s\ge 1} \frac{1}{s!} \mint{\psi_1+\cdots+\psi_s=1-g} \frac{f(\bxi,\bpsi)}{\psi_1\cdots \psi_h} &= \sum_{k_1,\ldots,k_r\ge 1} \frac{\xi_1\cdots \xi_r}{k_1! \cdots k_r!} \mint{\xi_j=u_{j,1}+\cdots + u_{j,k_j} \\ 1\le j\le r}
  \frac{A(\bu_1,\ldots,\bu_r)}{u_{1,1}\cdots u_{r,k_r}}\, d\bu_1\cdots d\bu_r,
  \end{align*}
  where
  \begin{multline*}
  A(\bu_1,\ldots,\bu_r) = \sum_{s\ge 1} \frac{1}{s!} \;\;\; \mint{\psi_1+\cdots+\psi_s=1-g} \;\; \sum_{h_1,\ldots,h_s\ge 1} \frac{1}{h_1! \cdots h_s!} \times \\
  \times \mint{\psi_j=v_{j,1}+\cdots + v_{j,h_j} \\ 1\le j\le s}
  \frac{\cyrL_{1-\gamma}(\bv_1) \cdots \cyrL_{1-\gamma}(\bv_s) f(\bu_1,\ldots,\bu_r,\bv_1,\ldots,\bv_s)}{v_{1,1}\cdots v_{s,h_s}}\, d\bv_1 \cdots d\bv_s.
  \end{multline*}
  Now let $m=h_1+\cdots+h_s$ and relabel the variables $v_{1,1},\ldots,v_{s,h_s}$
  as $\beta_1,\ldots,\beta_m$, where
  \[
  \beta_{h_1+\cdots+h_{j-1}+i} = v_{j,i} \qquad (1\le j\le s, 1\le i\le h_j).
  \]
  The function $\frac{f(\bu_1,\ldots,\bu_r,\bbeta)}{\beta_1\cdots \beta_m}$ is symmetric in
  $\beta_1,\ldots,\beta_m$. Thus, with $h_1,\ldots,h_s$ and $\bbeta$ fixed, we may replace the factor $\cyrL_{1-\gamma}(\bv_1) \cdots \cyrL_{1-\gamma}(\bv_s)$ by its symmetric average
  \[
  \binom{m}{h_1\; h_2\; \cdots \; h_s}^{-1} \ssum{\bz_1 \sqcup \cdots \sqcup \bz_s= \bbeta \\ |\bz_i|=h_i\; (1\le i\le s)} \cyrL_{1-\gamma}(\bz_1)\cdots \cyrL_{1-\gamma}(\bz_{s}).
  \]
It follows that
\[
 A(\bu_1,\ldots,\bu_r) = \sum_{m\ge 1} \frac{1}{m!} \mint{\beta_1+\cdots+\beta_m=1-g}
 \frac{f(\bu_1,\ldots,\bu_r,\bbeta)}{\beta_1 \cdots \beta_m} \;
 \sum_{s=1}^m \frac{1}{s!} \ssum{\bz_1 \sqcup \cdots \sqcup \bz_s=\bbeta \\ \forall i: |\bz_i|\ge 1} \!\! \cyrL_{1-\gamma}(\bz_1)\cdots \cyrL_{1-\gamma}(\bz_s)\, d\bbeta.
\]
By Lemma \ref{lem:t-ident}, the sum on $s$ equals zero, and thus
$A(\bu_1,\ldots,\bu_r) = 0$.  This completes the proof of \eqref{TypeI-f}.

\subsection{Proof of Theorem \ref{thm:constructions}}
We need to handle an annoying technicality, that $\log n$ is not equal to $\log x$
when $x/2 < n\le x$.  So, even if $\bv(n)$ avoids the Type II interval $\two$,
it may be that $n$ itself has a divisor very close to the boundary of the Type II
range (II) (recall Definition \ref{def:vn} of $\bv(n)$.) Furthermore, $n^\gamma$ is slightly smaller than $x^{\gamma}$,
another issue that must be dealt with.  Fortunately, $f$ is bounded and
so we may easily sidestep these issues because the measure of affected
vectors is very small.
Fix a small $\eps>0$.  We will define a tweak of $f$, namely $h$, as follows.
Let $\cI_\eps = [\theta+\nu,\theta+\nu+\eps] \cup [\gamma-\eps,\gamma+\eps] $.
For all $k$ and $\beta_1,\ldots,\beta_k < 1-\gamma$, define 
 \[
 h(\beta_1,\ldots,\beta_k) = f(\beta_1,\ldots,\beta_k) \cdot 
 \one \big( \bbeta \text{ has no subsum in } \cI_\eps \big),
\] 
and then define $h$ for other vectors by the analog of  \eqref{fsl} with $h$
replacing $f$ everywhere.
By Theorem \ref{thm:construction-recursive}, \eqref{TypeI-f} holds for $h$;
that is, $h\in \sF_\eta$.
Now let $\bbeta=(\beta_1,\ldots,\beta_s)$ and $\balpha=(\alpha_1,\ldots,\alpha_\ell)$
with $\ell\ge 1$, $\beta_i<1-\gamma\le \alpha_j$ for all $i,j$, and $|\bbeta|+|\balpha|=1$.  
If $(\bbeta,\balpha)$ has a subsum in $\cI_\eps$ then clearly 
$(\bbeta,\bu_1,\ldots,\bu_\ell)$ does also for any choice of $\bu_1,\ldots,\bu_\ell$
in \eqref{fsl} and therefore $h(\bbeta,\balpha)=0$.
We will show that
\begin{equation}\label{fh1}
|f(\bbeta,\balpha) - h(\bbeta,\balpha)| \ll \eps \qquad (\text{if }(\bbeta,\balpha)\text{ has no subsum in }\cI_\eps),
\end{equation}
the implied constant depending on $\sup_{\bu} |f(\bu)|$ and $\eta$ only.
In \eqref{fsl}, $s$ and $\ell$ are bounded, there
are a bounded number of vectors $(k_1,\ldots,k_\ell)$,
each factor $\cyrL_{1-\gamma}(\bu_i)$ is bounded, and we have $u_{j,h} \ge \eta$
for all $j,h$, thus the integrand is bounded.
With $k_1,\ldots,k_\ell$ all fixed, we claim that the 
$(k_1+\cdots+k_\ell-\ell)$-dimensional measure
of $\bu_1,\ldots,\bu_\ell$ for which $(\bbeta,\bu_1,\ldots,\bu_\ell)$
has a subsum in $\cI_\eps$ is $O(\eps)$.  This will clearly give \eqref{fh1}.
Such a subsum must have the form
\[
\sum_{i\in \cD} \beta_i + \sum_{j=1}^\ell \sum_{i\in U_j} u_{j,i},
\]
where $\cD \subseteq [s]$, $U_j\subseteq [k_j]$ for $1\le j\le \ell$
and there is some $j'$ for which $1 \le |U_{j'}| \le k_{j'}-1$.
Fix $\cD,U_1,\ldots,U_\ell$ and
fix $i' \in U_{j'}$ and $i''\in [k_{j'}]\setminus U_{j'}$.
For each fixed choice of $\beta_i, 1\le i\le s$ 
and all of the variables $u_{j,i}$ except for $j=j',i\in \{i',i''\}$,
the sum $u_{j',i'}+u_{j',i''}$ is fixed and the measure of the
set of $u_{j',i'}$ for which the above subsum is in $\cI_\eps$
is at most $4\eps$.  This proves the claim.

Now we define, for $x/2<n\le x$ the weights
\begin{equation}\label{wf}
w_n = h(\bv(n)) \qquad (\mu^2(n)=1).
\end{equation}
By definition, if $w_n\ne 0$ then $\bv(n)$ has no subsum in $[\theta,\theta+\nu+\eps]$.
Thus, all of the divisors of $n$ are either $< n^{\theta} \le x^{\theta}$ or
$> n^{\theta+\nu+\eps} > (x/2)^{\theta+\nu+\eps} > x^{\theta+\nu}$.  Therefore, (II) holds trivially.
For (I) we will in fact show more, that for \emph{all} $m\le x^{\gamma}$ and $\frac12 \le c\le d\le 1$, we have
\begin{equation}\label{construction-TypeI}
S_m(c,d) := \sum_{cx/m < r \le dx/m} w_{mr} \ll_{f,\eps} \frac{x e^{-(\log x)^{1/5}}}{m}.
\end{equation}
From this, (I) follows easily, for any $B$ and for $x$ large enough in terms of $B$.
Fix $m \le x^{\gamma}$, with $m=p_1\cdots p_t$, $p_1<\cdots < p_t$,
$p_i = x^{u_i}$ for $1\le i\le t$, and let $\lambda = u_1+\cdots+u_t \le \gamma$,
so that $m=x^{\lambda}$.
If $m$ has a prime factor $< (x/2)^{\eta}$ then by the support of $f$
and \eqref{wf}, $w_{mr}=0$ for all $r$ and $S_m(c,d)=0$.
If $m > x^{\gamma-\eps}$ then for any integer $r$ with $x/2<mr\le x$ 
the vector $\bv(mr)$ has a subsum in $[\gamma-\eps,\gamma+\eps]$
and we also have $S_m(c,d)=0$.
Now suppose all $p_i \ge (x/2)^{\eta}$ and that $m \le x^{\gamma-\eps}$,
that is, $\lambda \le \gamma-\eps$.
Then,
\begin{equation}\label{Smcd}
S_m(c,d) = \ssum{cx/m < r \le dx/m \\ (r,m)=1 \\ \mu^2(r)=1} h(\bv(m;mr),\bv(r;mr)).
\end{equation}
Recall that $f = \sum_{k,j} f_{k,j}$, where each $f_{k,j}$ is supported on
a convex polytope 
\[
P_{k,j} \subseteq \{ \bx\in \RR^k : \ssc{x}{1}+\cdots+\ssc{x}{k}=1, 0\le \ssc{x}{1}\le \cdots \le \ssc{x}{k} \}
\]
and is Lipschitz continuous on $P_{k,j}$. 
Removing from each $P_{k,j}$ the vectors with a subsum in $\cI_\eps$
leaves a region which is a bounded union of polytopes, and thus
we may write  $h=\sum_{k,j} h_{k,j}$, each $h_{k,j}$
supported and Lipschitz continuous on a convex polytope $Q_{k,j}$.
Now fix $k>t$ and $j$, and consider the terms in the sum in \eqref{Smcd} 
corresponding to $r=p_{t+1}\cdots p_k$ with $\omega(r)=k-t$ and $p_{t+1}<\cdots < p_k$.  
Now fix one of the $O_k(1)$ 
orderings of the primes $p_1,\ldots,p_k$, which come from possible meshings
of the two ordered vectors $(p_1,\ldots,p_t)$ and $(p_{t+1},\ldots,p_k)$.
Such an ordering has the form 
\begin{equation}\label{eq:p-sigma}
p_{\sigma(1)} < \cdots < p_{\sigma(k)},
\end{equation}
where $\sigma$ is a permutation of $[k]$, and then
\[
\bv(mr) = \bigg( \frac{\log p_{\sigma(1)}}{\log (mr)},\ldots,\frac{\log p_{\sigma(k)}}{\log (mr)} \bigg).
\]
With $k,j$ and $\sigma$ fixed, the corresponding summands in \eqref{Smcd} 
are those with $\bv(mr) \in Q_{k,j}$ and \eqref{eq:p-sigma} holding.
Since $(\bv(m;mr),\bv(r;mr)) = (\frac{\log p_1}{\log mr},\ldots,\frac{\log p_k}{\log mr})$,
this in turn is equivalent to 
\[
(\bv(m;mr),\bv(r;mr)) \in 
\cT_{k,j,\sigma} := \big\{ \bu \in \RR^k : u_{\sigma(1)}<\cdots < u_{\sigma(k)},\,
 \big( u_{\sigma(1)},\ldots,u_{\sigma(k)}\big) \in Q_{k,j}\big\}.
\]
Again, for some $\ell$ the inequality $\sum_h \ssc{e}{\ell,h} x_h \ge 0$ may be 
replaced by a corresponding strict inequality.
This is then a sum of
the type in Lemma \ref{lem:sum_to_int}.  Summing over the $O(1)$ choices
for $k,j,\sigma$, it follows from this lemma that
\begin{align*}
\ssum{cx/m < r \le dx/m \\ (r,m)=1,\mu^2(r)=1} &h(\bv(mr)) =
\sum_{k,j,\sigma} \;\;\ssum{cx/m < r \le dx/m \\ (r,m)=1,\mu^2(r)=1 \\ \bv(mr)\in Q_{k,j}} h(\bv(mr))\\
&= \int_{1-\lambda+\frac{\log c}{\log x}}^{1-\lambda+\frac{\log d}{\log x}}
x^w \;\sum_{k,j,\sigma}\;\;\; \mint{u_{t+1}+\cdots+u_k=w \\ \bu/(\lambda+w)\in \cT_{k,j,\sigma}} \frac{h_{k,j}(\frac{u_1}{\lambda+w},\ldots,\frac{u_k}{\lambda+w})}{u_{t+1}\cdots u_k}\, d \bu \, dw\, 
 +O_{f,\eps}\Big( \frac{x}{m} e^{-(\log x)^{1/5}} \Big)\\
&=
\int_{1-\lambda+\frac{\log c}{\log x}}^{1-\lambda+\frac{\log d}{\log x}}
x^w\sum_{k\ge t+1}  \;\;\; \mint{u_{t+1}+\cdots+u_k=w \\ u_{t+1}<\cdots<u_k} \frac{h(\frac{u_1}{\lambda+w},\ldots,\frac{u_k}{\lambda+w})}{u_{t+1}\cdots u_k}\, d \bu \, dw\, 
 +O_{f,\eps}\Big(  \frac{x}{m} e^{-(\log x)^{1/5}} \Big).
\end{align*}
On the right side, 
\[
\frac{u_1+\cdots + u_t}{\lambda+w} \le \frac{\gamma-\eps}{1 - \frac{\log 2}{\log x}}
< \gamma
\]
for large enough $x$.
Therefore, \eqref{TypeI-f} implies that for each $w$ we have
\[
\sum_{k\ge t+1}\;\;\; \mint{u_{t+1}+\cdots+u_k=w \\ u_{t+1} < \cdots < u_k} \frac{h(\frac{u_1}{\lambda+w},\ldots,\frac{u_k}{\lambda+w})}{u_{t+1}\cdots u_k}\, d \bu  = 0,
\]
and this proves \eqref{construction-TypeI}.
Finally, for all primes $p\in (x/2,x]$ we have $w_p = h(1)$.
By \eqref{fh1} and the fact that $h(\bu)=0$ if $\bu$ has a subsum in $\cI_\eps$, 
part (a) of the theorem follows upon
letting $\eps$ be small enough in terms of $\delta$.

\medskip

To prove part (b), assume that $f(\bu) \ge -1$ for all $\bu$.
Let $B>0$ be arbitrary and $\delta>0$ be arbitrarily small.
By (a) there is a number $z$,  depending only on $\delta$,
with $|z-f(1)|\le \delta$, and such that for sufficiently
large $x$, there is a sequence $(w_n)$ satisfying \eqref{eq:TypeI}, \eqref{eq:TypeII}, 
$w_p=z$ for all primes $p\in (x/2,x]$, and such that $w_n \ge -1-\delta$
for all $n\in (x/2,x]$.  For each $n\in (x/2,x]$ define
\[
w_n' = \frac{w_n}{1+\delta}, \qquad b_n=1, \qquad a_n=w_n'+b_n.
\]
Then $w_p'=\frac{z}{1+\delta}$ for all primes $p$,
and $w_n' \ge -1$ for all $n$, hence $a_n\ge 0$ for all $n$.
Moreover, $(w_n)$ is bounded and hence \eqref{w} holds for 
any $\varpi>1$ and $x$ large enough.
 Furthermore, \eqref{eq:TypeI} and \eqref{eq:TypeII}
trivially hold with $w_n$ replaced by $w_n'$.  Since
\[
\sum_p a_p = \Big(\frac{z}{1+\delta}+1\Big) \big( \pi(x)-\pi(x/2) \big) \sim 
\Big( \frac{z}{1+\delta}+1\Big)\cdot   \frac{x/2}{\log x} \qquad (x\to \infty),
\]
it follows that $C^-(P) \le \frac{z}{1+\delta}+1 \le C^+(P)$.  Letting $\delta\to 0$
proves the first part of (b).   Since $f$ is bounded, by $F$ say, so is $w_n$, thus \eqref{CBD} holds for large enough $\varrho$ and with $\varpi=2$.
We see that
for any $\delta>0$, $\CB^-(P;\varrho) \le \frac{z}{1+\delta}+1 \le \CB^+(P;\varrho)$
and the second claim follows.

\bigskip

\subsection{Heuristic justification for considering only special types
of functions $f$. }

As mentioned in the beginning of this section, we believe that for the purposes of calculating the constants $C^\pm(P)$ (at least when $P$ is a continuity point), it should be sufficient to only consider sequences $(a_n)$, $(b_n)$ with $w_n=0$ whenever $n$ has a divisor in $[(x/2)^\theta,x^{\theta+\nu}]$. (i.e. we can assume that $w_n$ satisfies  \eqref{eq:TypeII} trivially). As a heuristic justification, we sketch how we should be able to pass from a sequence $w_n$ satisfying \eqref{eq:TypeI} and \eqref{eq:TypeII} to a sequence $w_n^{(3)}$ satisfying a (slightly weakened version) of \eqref{eq:TypeI} and \eqref{eq:TypeII}, supported on integers with no divisor in the Type II range and with $\sum_p w_p\approx \sum_p w^{(3)}_p$. First we set
\[
w^{(1)}_n=w_n\one_{P^-(n)\ge z}
\]
for $z= x^{1/(\log\log{x})^3}$. Since $\one_{P^-(n)\ge z}$ is multiplicative, $(w_n^{(1)})$ satisfies \eqref{eq:TypeII} since $(w_n)$ does. By the fundamental lemma of sieve theory, $\one_{P^-(n)\ge z}\approx \sum_{d|n}\lambda_d$ for suitable sieve weights $\lambda_d$ supported on $d\le x^{1/\log\log{x}}$. Thus $w_n^{(1)}$ will satisfy \eqref{eq:TypeI} with $\gamma$ replaced by $\gamma-1/\log\log{x}$ by expanding the sieve and using the fact that $w_n$ satisfies \eqref{eq:TypeI}. Secondly, for $n=p_1\cdots p_k$ we set
\[
w^{(2)}_{n}=\mathbb{E}_{p_1'\in [p_1,p_1+p_1/\log^{A}{x}]}\cdots \mathbb{E}_{p_k'\in [p_k,p_k+p_k/\log^{A}{x}]}w^{(1)}_{p_1'\cdots p_k'},
\]
where $\mathbb{E}$ indicates an average over primes $p_i'$. Since $(w_n^{(1)})$ satisfies \eqref{eq:TypeII}, by swapping the order of summation we see that $w_n^{(2)}\ll \log^{-A+O(1)}{x}$ whenever $n$ has a divisor in the interval $[x^{\theta'},x^{\theta'+\nu'}]$ where $\theta'=\theta+\log^{-A}{x}$ and $\nu'=\nu-2\log^{-A}{x}$ (so $w_n^{(2)}$ satisfies a pointwise version of \eqref{eq:TypeII} if $A$ is chosen large enough). Similarly, by swapping the order of summation, we see that whenever $w_n^{(2)}$ satisfies \eqref{eq:TypeI} with $\gamma$ replaced by $\gamma'=\gamma-2/\log\log{x}$. Finally, we set
\[
w_n^{(3)}=\begin{cases}
0,\qquad &\exists d|n\text{ s.t. }d\in [x^{\theta'},x^{\theta'+\nu'}],\\
w_n^{(2)},&\text{otherwise.}
\end{cases}
\]
Since $w_n^{(2)}$ is small whenever $n$ has a divisor in $[x^{\theta'},x^{\theta'+\nu'}]$, $(w_n^{(3)})$ also satisfies \eqref{eq:TypeI} and \eqref{eq:TypeII} (with $\gamma,\theta,\nu$ replaced by $\gamma',\theta', \nu'$) but is supported on numbers with no divisor in type II interval. Moreover, it is easy to check that $\sum_p w_p\approx \sum_p w_p^{(1)}\approx \sum_p w_p^{(2)}\approx \sum_p w^{(3)}_p$, so if $w_n$ is close to extremal for $(\gamma,\theta,\nu)$ then $w_n^{(3)}$ is close to extremal for $(\gamma',\theta',\nu')$ provided $(\gamma,\theta,\nu)$ is a point of continutity of the functions $C^-$
and $C^+$.

\bigskip
%\newpage
%%%%%%%%%%%%%%%%%%%%%%%%%%%%%%%%%%%%%%%%%%%%%%%%%%%%%%%%%%%%%%%%%%%%%%
%
%                    SIEVING
%
%
{\Large \section{Sieving}\label{sec:sieving}}
%
%
%
%%%%%%%%%%%%%%%%%%%%%%%%%%%%%%%%%%%%%%%%%%%%%%%%%%%%%%%%%%%%%%%%%%%%%%

Our main goal in this section is to develop a sieve method which establishes upper 
and lower bounds on $\sum_pa_p$ for any sequence satisfying the Type I and Type 
II estimates, thereby producing a lower bound for $C^-(\gamma,\theta,\nu)$ and 
an upper bound for $C^+(\gamma,\theta,\nu)$. We do this by constructing good 
\emph{sieve weights} $\Lambda^\pm_d$ which exploit both the Type I information 
and Type II information. Throughout this section, we consider $P=(\gamma,\theta,\nu) \in \cQ_0$
 fixed (recalling the definition \eqref{eq:Q0Def} of $\cQ_0$)
 and let $\cR = \cR(P)$ (recalling the Definition \ref{defn:R1} of $\cC(\cR)$).
  All constants implied by $O$ 
and $\ll$ symbols may depend on $\gamma,\theta,\nu$. Any other dependencies 
will be indicated by subscripts to the  $O$ and $\ll$ symbols.

We begin by expanding upon the outline of the general sieve method given
in Section \ref{sec:outline}.
Let $\cN$ be the set of composite integers $n$ in $(x/2,x]$ such that $\bv(n)\in \cC(\cR)$ (recall Definition \ref{def:vn} of $\bv(n)$).
In particular, such integers have no divisor in $(n^{\theta},n^{\theta+\nu}]$.
We choose the weights so that $\Lambda^{\pm}_1=1$ and $\Lambda^\pm_d$ are
 supported on integers $d\le x^{\gamma}$. We define $H^\pm(n) := \sum_{d|n} \Lambda^{\pm}_d$.  We want for each $n\in \cN$
\[
H^-(n) \le 0 \le H^+(n).
\]
These functions $\Lambda^\pm$ resemble the sieve weights that are used when
there is no Type II information (see e.g. Chapter 5 of \cite{Opera}), but now we 
only require $H^-(n) \le 0\le H^+(n)$ for the special set $\cN$, rather than for 
all integers $n>1$ with $P^+(n)\le z$, where $z$
is the sifting limit.
Let $\cP$ be the set of primes in $(x/2,x]$.
Focusing on the lower bound, the fact that $H^-(p)=1$ for primes $p\in \cP$ and 
that $w_n\ge -b_n$, we have
\begin{align*}
\sum_{p\in \cP} w_p = \sum_{p\in \cP} (w_p+b_p)-b_p &\ge -\sum_{p\in \cP} b_p + \sum_{n\in \cP \cup \cN} (w_n+b_n)H^-(n) \\
&= \sum_{n\in \cN} b_n H^-(n) + \sum_{n\in \cP \cup \cN} w_n H^-(n)\\
&=  \sum_{n\in \cN} b_n H^-(n) + \sum_{x/2<n\le x} w_n H^-(n) -  \ssum{x/2<n\le x
\\ n\not\in \cP \cup \cN} w_n H^-(n).
\end{align*}
Using the Type I bound \eqref{eq:TypeI}, the second sum on the right satisfies
\[
\bigg| \sum_{x/2<n\le x} w_n H^-(n)\bigg|  = \bigg|\sum_{d\le x^{\gamma}} \Lambda^-_d \sum_{d|n} w_n\bigg| \le \frac{x}{\log^B x},
\]
provided that $\Lambda^-_d$ is divisor bounded (true in practice),
and we can use the Type I and Type II information together to show that the third sum
on the right is also small (this is the most complicated and longest part of the argument).  We then obtain an estimate
\[
\sum_{p\in \cP} w_b \gtrsim \sum_{n\in \cN} b_n H^-(n),
\]
and it remains to make a good choice for $\Lambda^-$.  

When $\cR$ is empty, the analysis becomes much simpler since then $\cN$
is also empty, there is no need to choose $\Lambda_d^\pm$, and we conclude that $\sum w_p \approx 0$, confirming that $\sum a_p \sim \sum b_p$.

When $\cR$ is nonempty,
it turns out to be useful to define $\Lambda_d^-$ in terms of the canonical 
factorization of $d$ as $d=d_1 d_2$ with (roughly) $P^-(d_1)\ge x^{\nu} > P^+(d_2)$.
To make all of this precise, we define the quantities
\begin{equation}
\begin{split}
\cH = \cH(P) &:= \{ \bx \in \cC(\cR) : x_i \ge  \nu\; \forall i, \text{ at least two components}\}, \\
\cZ = \cZ(P) &:= \{ \by: \by\text{ is a subvector of a vector } \bx\in \cC(\cR) \}\\
&\:= \{ \ssc{\bx}{A} : \bx\in \cC(\cR), A\subseteq [\dim(\bx)] \},\\
\psi(\bx) = \psi(\bx;P) &:= \sup \{ |\bxi| : \xi_i < \nu\; \forall i, (\bx,\bxi)\in \cZ    \}, \\
\cG_1 = \cG_1(P) &:= \{ \bx \in \cZ : \bx_i\ge \nu\; \forall i; |\bx|+ \psi(\bx) \le \gamma \}, \\
\cG_2 = \cG_2(P) &:= \{ \bx \in \cZ : \bx_i\ge \nu\; \forall i; |\bx|+ \psi(\bx) > \gamma, |\bx| \le \gamma \}.
\end{split}
\label{eq:VHZGdefs}
\end{equation}

 If $\cR$ is nonempty then $\cZ$ contains the empty vector $\emptyvec$.  If $\cR$ is nonempty then it is clear that $\psi(\emptyvec)\le \theta$,
since $\bxi$ contains a subsum in every interval of length $\nu$ that is contained
in $[0,|\bxi|]$.  It follows that if $\cR$ is nonempty and $\gamma \ge \theta$
then $\cG_1$ contains $\emptyvec$.  On the other hand, if $\gamma<\theta$
(which implies that $\gamma<\frac12$), then $\cG_1$ is empty (that is, does not
even contain the empty vector $\emptyvec$).  To see this,
for any $\bx\in \cZ$ with all components
$\ge \nu$ and $|\bx| \le \gamma$, and $y\in (\gamma-|\bx|,\theta-|\bx|)$,
$(\bx,y,1-y-|\bx|)\in \cR$.  Hence,
by breaking up $y$ into pieces which are less than $\nu$ (in an arbitrary manner) we see that $\psi(\bx)=\theta-|\bx| > \gamma-|\bx|$
and so $\bx\in \cG_2$.
Also, if $\bx \in \cZ$ then $\psi(\bx)\ge 0$ since we include the empty vector $\bxi = \emptyvec$ in the supremum.

The set $\cH$ is the vector version of $\cN$, restricted to vectors with all
components $\ge \nu$, and $\cZ$ is the set of vectors corresponding to divisors of
elements of $\cN$.  The set $\cG := \cG_1 \cup \cG_2$ is
the domain of $\bv(d_1;d)$.
Given $d_1$, $x^{\psi(\bv(d_1;x))}$ is an upper bound on the possible
values of $d_2$ such that $d_1d_2$ is a divisor of an element of $\cN$. 
Thus, $\cG_1$ corresponds to the set of $d_1$ for which $d_1d_2 \le x^{\gamma}$ for \emph{any} choice of $d_2$, and in this case a good all-purpose choice is
$\Lambda^-_d = g(d_1) \mu(d_2)$, for some function $g$ which is piecewise smooth
on $\bv(d_1;x)\in\cG_1$.  This choice makes $\cH^\pm(n)=0$ if $P^-(n) \le x^{\nu}$.
In other words, we may use the Legendre sieve on $d_2$.
When $\bx \in \cG_2$, there are possible values of $d_2$ with $d_1 d_2 > x^{\gamma}$,
and we must use a less-efficient sieve weight in place of $\mu(d_2)$,
one supported on $[1,x^{\gamma}/d_1]$.

The main theorem of this section, Theorem \ref{thm: Main sieving}, 
represents a general method of constructing sieves that utilize $\cG_1$
but not $\cG_2$, that is, utilizing a general choice of $g$.
  It is often the case that $\cG_2$ is empty
and Theorem \ref{thm: Main sieving} can produce optimal bounds, that is,
gives $C^\pm(\gamma,\theta,\nu)$ exactly with the right choice of $g$. 
By comparison, standard uses of Harman's sieve can be viewed in this language,
and would correspond to a sieve which exploits $\cG_1$ (but the corresponding choice of $g$ would typically not do this optimally), 
but does not exploit $\cG_2$.

Consider those triples $(\gamma,\theta,\nu)$ with $\gamma=1-\theta$.
As mentioned in the introduction, this is a common case that occurs when
trying to detect primes in a thin set, for example.
In this case it is easy to see that $\cG_2$ is empty.
Indeed, if $\bx\in \cZ$ with all components $\ge \nu$ and $|\bx| \le \gamma=1-\theta$,
then in fact $|\bx| < 1-\theta-\nu$.  If $\bxi$ has all  components $<\nu$ and 
$(\bx,\bxi)\in \cZ$, then $|\bx|+|\bxi| < 1-\theta-\nu$ as well (otherwise
$(\bx,\bxi)$ would have a subsum in $[1-\theta-\nu,1-\theta]$, a contradiction)
and thus $|\bx| + \psi(\bx) \le 1-\theta-\nu < \gamma$. 
In this situation, we expect that Theorem \ref{thm: Main sieving} below is
capable of producing optimal bounds for the right choice of $g$,
and we confirm this in some special cases (see Theorem \ref{thm:1-parm theta family}).

Consider now another family of parameters given by 
$\theta=2\delta, \nu=\frac12-3\delta$ and $\frac12<\gamma<\frac12+\delta$,
where $0<\delta<\frac1{10}$.  We claim that $\cG_1 = \{ \emptyvec \}$
and that $\cG_2$ is nonempty.
 Since $\nu > 2\delta$, $\cR$ consists of vectors
of the form $(\ssc{x}{1},\ssc{x}{2},\bxi)$ where $\ssc{x}{1},\ssc{x}{2}\in (\frac12-\delta,1-\gamma)$,
$\ssc{x}{1}+\ssc{x}{2}+|\bxi|=1$ and $|\bxi| < 2\delta$.  Thus, $\cC(\cR)$ has two types of vectors, those of the 
form $(\ssc{x}{1},\bxi)$ with $1-2\delta<\ssc{x}{1}\le 1$ and $|\bxi|<2\delta$ and those of
the form  $(\ssc{x}{1},\ssc{x}{2},\bxi)$ where $\ssc{x}{1},\ssc{x}{2}\in (\frac12-\delta,\frac12+\delta)$
and $|\bxi| < 2\delta$.  Thus, if $\bx\in \cZ$ with at least one component
and all components $\ge \nu$, then $\bx$ has just one component  $\ssc{x}{1}\in(\frac12-\delta,\gamma]$ and $\psi((\ssc{x}{1})) = \frac12+\delta-\ssc{x}{1}$.  Thus, $\ssc{x}{1}+\psi((\ssc{x}{1}))=\frac12+\delta > \gamma$, $\cG_1 = \{\emptyvec\}$ and $\cG_2 = \{ (\ssc{x}{1}) : \frac12-\delta < \ssc{x}{1} \le 1-\gamma\}$.  For this family, it is necessary to work with $\cG_2$ since there is no non-trivial choice of $\Lambda_d^\pm$ of the form $g(d_1)\mu(d_2)$. In this case our main sieving Theorem \ref{thm: Main sieving} does not produce any lower bounds on $C^-(\gamma,\theta,\nu)$. In a future work, we will address the situation when $\cG_2$ is nonempty
and how to choose $\Lambda^{\pm}$ in this case.

\medskip

\begin{defn}[The convolution operation $\bone\, \star$] For a function $g$ on arbitrary length vectors (including the dimension 0 vector $\emptyvec$), we define the vector convolution
\[
(\bone \star g)(\bx) = \sum_{\by \subseteq \bx} g(\by),
\]
the sum over all $2^{\dim \bx}$ subvectors $\by$ of $\bx$. 
\end{defn}

The functions $g$ under consideration will all be symmetric, i.e., in $\cS$.

\medskip

\begin{defn}[The set $\sG_1$ of functions on $\cG_1$]\label{def:sG1}
Let $\sG_1$ denote the set of all vector functions in $\cS$, supported on $\cG_1$
that are finite sums of functions which are each bounded,
 supported on a convex polytope which lies in $\{ \bx\in \RR^k : 
 \,0\le \ssc{x}{1}\le \cdots \le \ssc{x}{k}\}$ for some $k$,
and with bounded, continuous first order partial derivatives on the interior of the polytope.
\end{defn}
These are natural conditions, since $\cG_1$ is the union of polytopes
(Lemma \ref{lem:G1-polytopes} below).

\begin{thm}[Sieve bounds, $\cG_1$ only version]\label{thm: Main sieving}
Suppose that $(\gamma,\theta,\nu)\in \cQ_0$ and that $\cR$ is nonempty.
Let $g \in \sG_1$  satisfy $g(\emptyvec)=1$.

(a) If $(\bone \star g)(\bx) \le 0$ for all $\bx\in \cH(P)$, then
\[
C^-(\gamma,\theta,\nu) \ge 1 + \sum_{k=2}^{\fl{1/\nu}} \mint{\bx \in\cH(P)\cap \RR^k \\ \ssc{x}{1} \le \cdots \le \ssc{x}{k}} \frac{(\bone \star g)(\bx)}{\ssc{x}{1}\cdots \ssc{x}{k}} d \bx.
\]

(b)
If $(\bone \star g)(\bx) \ge 0$ for all $\bx\in \cH(P)$, then
\[
C^+(\gamma,\theta,\nu) \le 1 + \sum_{k=2}^{\fl{1/\nu}} \mint{\bx \in \cH(P)\cap \RR^k \\ \ssc{x}{1} \le \cdots \le \ssc{x}{k}} \frac{(\bone \star g)(\bx)}{\ssc{x}{1}\cdots \ssc{x}{k}} d \bx.
\]
\end{thm}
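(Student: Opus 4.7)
The plan is to construct explicit sieve weights $\Lambda^{\pm}_d$ from $g$, verify the sign conditions on $\cN$, and convert the resulting bounds to integrals over $\cH(P)$ via the sum-to-integral machinery of Lemma \ref{lem:sum_to_int} combined with hypothesis \eqref{bn-f-sum}. For a squarefree $d$, factor $d = d_1 d_2$ where $P^-(d_1) \ge x^\nu > P^+(d_2)$, and set $\Lambda^-_d := g(\bv(d_1;x))\mu(d_2)$ (with an analogous definition in the $+$ case). Because $g$ is supported on $\cG_1$, the inequality $|\bv(d_1;x)| + \psi(\bv(d_1;x)) \le \gamma$ forces $d_1 d_2 \le x^\gamma$, so $\Lambda^\pm_d$ is supported on $d \le x^\gamma$ and is divisor-bounded, making it admissible as input to \eqref{eq:TypeI}. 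Writing $H^-(n) = \sum_{d\mid n}\Lambda^-_d$ and splitting $n$ into its $x^\nu$-rough and $x^\nu$-smooth parts, the M\"obius sum over $d_2$ vanishes unless the smooth part of $n$ equals $1$. Thus $H^-(n) = 0$ unless $P^-(n) \ge x^\nu$, and in that case $H^-(n) = (\bone \star g)(\bv(n;x))$. For $n \in \cN$ with $P^-(n) \ge x^\nu$, the vector $\bv(n;x)$ lies in $\cH(P)$ (modulo a $\log n/\log x$ discrepancy absorbed at the end), so the hypothesis $(\bone\star g) \le 0$ on $\cH$ yields $H^-(n) \le 0$.

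Using $H^-(p) = g(\emptyvec) = 1$ on primes, $H^-(n)\le 0$ on $\cN$, and $w_n + b_n = a_n \ge 0$, the manipulations of Section \ref{sec:outline} produce
\[
\sum_p a_p \;\ge\; \sum_p b_p \;+\; \sum_{n \in \cN} b_n H^-(n) \;+\; \sum_n w_n H^-(n) \;-\; \sum_{n \notin \cP \cup \cN} w_n H^-(n).
\]
The second-to-last sum is $\ll x/(\log x)^{A}$ by \eqref{eq:TypeI} applied termwise in $d \le x^\gamma$. For the main term $\sum_{n\in \cN} b_n H^-(n)$, $H^-$ is nonzero only on $n = p_1 \cdots p_k$ with $p_i \ge x^\nu$, hence $2 \le k \le \lfloor 1/\nu\rfloor$; on each Lipschitz polytope piece in the decomposition of $g \in \sG_1$, Lemma \ref{lem:sum_to_int} (with $t=0$) and \eqref{bn-f-sum} convert the sum into the integral in the statement, with boundary effects and the $\log n/\log x$ discrepancy bounded by $O(\varepsilon)\sum_p b_p$ via Lemma \ref{lem:covering}, and sent to zero at the end by choosing $B$ large.

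The main obstacle is controlling the cross term $\sum_{n \notin \cP \cup \cN} w_n H^-(n)$. Since $H^-(n)$ vanishes unless $P^-(n)\ge x^\nu$, the surviving $n$ are composite with $\bv(n;x) \notin \cC(\cR)$, meaning $\bv(n;x)$ either has a proper subsum in $[\theta,\theta+\nu]$ or has a component exceeding $1-\gamma$. The strategy is to apply the modified Heath-Brown identity (Lemma \ref{lem:HeathBrown}) to the indicator of this set, then localize every prime variable via a smooth partition of unity at multiplicative scale $1 + (\log x)^{-C}$ so that on each fragment the log-scales of the variables are essentially fixed and the coefficient structure coming from $g$ is factorable into divisor-bounded sequences. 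Combinatorial case analysis on $\bv(n;x)$, using exactly the geometric characterization of $\cC(\cR)$, then identifies in each fragment either a variable (or product of variables) with log-scale in $[\theta,\theta+\nu]$ so that \eqref{eq:TypeII} applies, or a grouping whose product has log-scale in $[0,\gamma]$ so that \eqref{eq:TypeI} applies (using Proposition \ref{prop:first-reduction}(b) when the Type II window is needed). Summing the $O(1)$ resulting terms gives the required bound $\ll x/(\log x)^A$, completing part (a); part (b) is entirely symmetric, reversing inequalities and replacing the hypothesis by $(\bone\star g)\ge 0$ on $\cH$.
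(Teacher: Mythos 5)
Your high-level plan matches the paper's in spirit (build a divisor sieve from $g$, verify the sign conditions on $\cN$, convert the main term to an integral via \eqref{bn-f-sum}/Lemma \ref{lem:sum_to_int}, and kill the cross term by Heath-Brown plus Type I/II), but there are several genuine technical gaps, one of which affects the correctness of your construction.

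\textbf{The truncation on $d$ is not automatic.} You claim $\Lambda^-_d = g(\bv(d_1;x))\mu(d_2)$ is supported on $d\le x^\gamma$ because $|\bv(d_1;x)|+\psi(\bv(d_1;x))\le\gamma$. That inequality constrains $d_1$, not $d_2$: the quantity $\psi(\bv(d_1;x))$ bounds the size of the smooth cofactor \emph{only} for $d_2$ with $(\bv(d_1;x),\bv(d_2;x))\in\cZ$, i.e.\ for divisors of elements of $\cN$. For generic $d_2$ there is no size constraint, so $\Lambda^-_d$ is not supported on $d\le x^\gamma$ and \eqref{eq:TypeI} cannot be applied directly. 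You need to build the truncation $\one_{d\le n^\gamma}$ into the weight (this is exactly why the paper's $G(m;n)$ has it), and then prove that the truncation is \emph{vacuous} for $n\in\cN$ so that the Legendre-sieve identity $H^-(n)=(1\star g)(\bv(n))\one_{P^-(n)\ge n^\nu}$ still holds — this is the content of Lemma \ref{lem:h-onesign}(a). Without the truncation your Legendre step is correct but the Type I bound fails; with the truncation added naively, your Legendre step fails. Relatedly, using the $x$-normalized $\bv(n;x)$ (which has $|\bv(n;x)|=\log n/\log x<1$) rather than the $n$-normalized $\bv(n)$ (which has $|\bv(n)|=1$) means the sign hypothesis $(\one\star g)\le 0$ on $\cH(P)$, which is a statement about sum-$1$ vectors, does not directly imply $H^-(n)\le 0$; this is why the paper's sieve weight $G(m;n)$ is deliberately normalized relative to $n$.

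\textbf{The smooth factor $\mu(d_2)$ is the hard part of the cross-term estimate, and the proposal is silent on it.} After Heath-Brown decomposes the primes $p_1,\dots,p_k$ of the rough part into pieces $d_{h,j}$ smaller than $n/x^\gamma$, one still has the $x^\nu$-smooth cofactor carrying $\mu(\cdot)$. This factor is \emph{not} a product over whatever pieces the decomposition produces, because Heath-Brown does not decompose into coprime pieces. To factor it into divisor-bounded sequences for \eqref{eq:TypeII}, the paper introduces the canonical decomposition \eqref{eq:diprimes}--\eqref{eq:diprimes-sqfree} and the ``splittable'' machinery (Definition \ref{def:splittable}, Lemmas \ref{lem:splittable-modified-by-mult-fcn} and \ref{lem:SmoothSplit}), using that $\mu$ is multiplicative on squarefree integers. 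A ``smooth partition of unity at multiplicative scale $1+(\log x)^{-C}$'' localizes the scales of the $p_i$ so that $g$ becomes approximately constant, but it does nothing to make $\mu(d_2)$ factor; this is a separate and essential ingredient. Likewise, the error incurred by replacing $g$ by a constant on each localization box is controlled in Lemma \ref{lem:PrimeSplit} via the growth hypothesis \eqref{w}, which your proposal does not invoke. Finally, your dichotomy ``either a subsum in $[\theta,\theta+\nu]$ or a grouping of log-scale in $[0,\gamma]$'' is slightly misordered: the paper uses Type I \emph{first} (through Lemma \ref{lem:PrimeDecomposition}) to truncate all Heath-Brown fragments below $n/x^\gamma$; only then does $\bz\notin\cR$ with all components $\le 1-\gamma$ \emph{force} a subsum in the Type II interval, after which only \eqref{eq:TypeII} is used.
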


\medskip

Combining the results of Theorems \ref{thm:constructions} and \ref{thm: Main sieving}, we derive a simple sufficient condition for a choice of $f,g$ to be optimal.
The idea behind this comes from linear programming, whereby the optimal solution
of the original problem and optimal solution of the dual problem satisfy a
 `slackness' property. 

\begin{thm}[Duality between constructions and sieve bounds]\label{thm:duality}
Suppose that $(\gamma,\theta,\nu)\in \cQ_0$ and that $\cR$ is nonempty.
Let $0<\eta<1-\gamma$. Assume that $f\in \mathscr{F}_\eta$ and $f(\bx)\ge -1$ for all $\bx$.

(a) If $g$ satisfies the hypotheses of Theorem \ref{thm: Main sieving} (a), then 
\[
f(1) \ge  \sum_{\ell\ge 2} \mint{\cH \cap \RR^\ell \\ \ssc{x}{1} \le \cdots \le x_\ell }  \frac{(1\star g)(\bx)}{\ssc{x}{1}\cdots x_\ell} d \bx,
\]
with equality if and only if $(1+f(\bx))(1\star g)(\bx)=0$ for all $\bx\in \cH$
outside a set of measure zero.  In case of equality, we have $C^-(P)=1+f(1)$.

(b)  If $g$ satisfies the hypotheses of Theorem \ref{thm: Main sieving} (b), then 
\[
f(1) \le  \sum_{\ell\ge 2} \mint{\cH \cap \RR^\ell \\ \ssc{x}{1} \le \cdots \le x_\ell}  \frac{(1\star g)(\bx)}{\ssc{x}{1}\cdots x_\ell} d\bx,
\]
with equality if and only if $(1+f(\bx))(1\star g)(\bx)=0$ for all $\bx\in \cH$
outside a set of measure zero.  In case of equality, we have $C^+(P)=1+f(1)$.
\end{thm}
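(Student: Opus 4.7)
The plan is to prove Theorem~\ref{thm:duality} by combining Theorems~\ref{thm:constructions}(b) and~\ref{thm: Main sieving} and then pinning down the equality case via a slackness computation. For part~(a), Theorem~\ref{thm:constructions}(b) applied to $f$ produces sequences $(a_n),(b_n)$ with $b_n=1$ for all $n$ and
\[
\sum_p a_p = (1+f(1)+o(1))\sum_p b_p,
\]
so $C^-(P)\le 1+f(1)$. Theorem~\ref{thm: Main sieving}(a) applied to these same sequences using the given $g$ gives $\sum_p a_p \ge (1+R+o(1))\sum_p b_p$, where $R$ denotes the integral on the right-hand side of the displayed inequality in Theorem~\ref{thm:duality}(a). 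Chaining these two bounds yields $f(1)\ge R$, which is the inequality in (a); in the equality case the squeeze gives $C^-(P)=1+f(1)=1+R$. Part~(b) is entirely symmetric, pairing Theorem~\ref{thm:constructions}(b) with Theorem~\ref{thm: Main sieving}(b) and using $(1\star g)\ge 0$ on $\cH$.

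For the ``iff'' characterisation of equality, I would trace the only lossy step in the proof of Theorem~\ref{thm: Main sieving}(a), namely
\[
\sum_p a_p \ge \sum_{n\in\cP\cup\cN} a_nH^-(n),
\]
whose slack is exactly $\mathrm{S}:=-\sum_{n\in\cN}a_nH^-(n)\ge 0$, while every other step in the sieve chain is asymptotically tight. For the construction from Theorem~\ref{thm:constructions}(b) we have $a_n\approx 1+f(\bv(n))$ (after the $1/(1+\delta)$ rescaling), and the sieve weight $\Lambda^-_d=g(d_1)\mu(d_2)$ together with M\"obius inversion on $d_2$ yields $H^-(n)\approx (1\star g)(\bv(n))$ for $n\in\cN$. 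Lemma~\ref{lem:sum_to_int} combined with hypothesis~\eqref{bn-f-sum} then converts the discrete slack to an integral:
\[
\frac{\mathrm{S}}{\sum_p b_p}\;\longrightarrow\;-\sum_{\ell=2}^{\lfloor 1/\nu\rfloor}\;\mint{\cH\cap\RR^\ell\\x_1\le\cdots\le x_\ell}\frac{(1+f(\bx))(1\star g)(\bx)}{x_1\cdots x_\ell}\,d\bx
\]
as $\delta,\varepsilon\to 0$. The integrand is nonpositive on $\cH$ by the hypotheses $f\ge -1$ and $(1\star g)\le 0$, so the limit vanishes if and only if $(1+f)(1\star g)=0$ almost everywhere on $\cH$. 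Combined with the asymptotic identity $f(1)-R=\mathrm{S}/\sum_p b_p+o(1)$ that emerges from comparing the two asymptotics for $\sum_p a_p$, this yields the desired equivalence.

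The main obstacle will be justifying the sum-to-integral conversion for the specific construction, where Theorem~\ref{thm:constructions}(b) replaces $f$ by an $\varepsilon$-perturbed $h$ (supported away from the intervals $[\theta+\nu,\theta+\nu+\varepsilon]$ and $[\gamma-\varepsilon,\gamma+\varepsilon]$) and rescales by $1/(1+\delta)$, while the relation $H^-(n)\approx (1\star g)(\bv(n))$ on $\cN$ depends on the support condition $g\in\sG_1$ to ensure that the M\"obius sum on $d_2$ ranges freely over small-prime divisors. Commuting the limits $\delta,\varepsilon\to 0$ with the sum-to-integral step, so that the slackness on $\cH$ genuinely controls the discrete slack $\mathrm{S}$ up to $o(\sum_p b_p)$, is the delicate bookkeeping at the heart of the argument.
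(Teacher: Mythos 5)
Your proposal takes a genuinely different and substantially more roundabout route than the paper. The paper's proof is a single algebraic computation: weight the Type~I constraint \eqref{TypeI-f} on $f$ by $g(\bxi)/(\xi_1\cdots\xi_h)$, integrate over $|\bxi|\le\gamma$, and regroup to get the exact identity
\[
0 = \sum_{\ell\ge 1} \mint{\bx\in \cH \cap \RR^\ell \\ \ssc{x}{1} \le \cdots \le x_\ell} \frac{f(\bx)(\bone\star g)(\bx)}{\ssc{x}{1}\cdots x_\ell}\, d\bx
= f(1) - R + \sum_{\ell\ge 2}  \mint{\cH \cap \RR^\ell} \frac{(1+f(\bx))(\bone\star g)(\bx)}{\ssc{x}{1}\cdots x_\ell}\,d\bx,
\]
from which both the inequality and the equality characterisation drop out instantly, with no $o(1)$'s, no sequences, and no limits. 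You instead manufacture the identity indirectly by chaining Theorem~\ref{thm:constructions}(b) against Theorem~\ref{thm: Main sieving}(a), tracing the unique lossy step in the sieve chain (the slack $\mathrm{S}=-\sum_{n\in\cN}a_nH^-(n)$), and converting $\mathrm{S}/\sum_p b_p$ to the integral via Lemma~\ref{lem:sum_to_int} and hypothesis~\eqref{bn-f-sum}. Your slackness identification is correct, the chain $1+R\le C^-(P)\le 1+f(1)$ does give the inequality immediately, and in principle the limiting argument can be made rigorous (the construction has $b_n=1$ so \eqref{bn-f-sum} is available, and $h\cdot(\bone\star g)$ is piecewise Lipschitz with constants independent of $\varepsilon$). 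But you are paying a heavy price: you need to commute $\delta,\varepsilon\to 0$ with $x\to\infty$ and with the sum-to-integral step, track how the $\varepsilon$-support-modification of $f$ into $h$ interacts with the convergence of the slack, and verify that no other step in the sieve chain contributes a persistent positive error. All of this is avoided by noticing that \eqref{TypeI-f} itself already encodes the relationship between $f$ and $(\bone\star g)$ as an exact statement; the sieve and construction theorems are consequences of that relation, not the right tool to recover it. If you rewrite the double integral on the left of the paper's identity by swapping the order of integration (summing over the full vector $\bx=(\bxi,\bxi')\in\cH$ and recognising that the inner sum over subvectors $\bxi\subseteq\bx$ produces $(\bone\star g)(\bx)$), you will land directly on \eqref{fg-ident}.
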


\begin{proof}
Under the hypotheses of either part (a) or part (b), \eqref{TypeI-f} implies that
\begin{equation}\label{fg-ident}
\begin{split}
0 &= \mint{|\bxi|\le \gamma \\ \xi_1<\cdots<\xi_h} \frac{g(\bxi)}{\xi_1\cdots \xi_h}
\sum_{k\ge 1}\; \mint{\xi_1'<\cdots<\xi_k' \\ } \frac{f(\bxi,\bxi')}{\xi_1'\cdots \xi_k'}d\bxi' d\bxi
= \sum_{\ell\ge 1} \mint{\bx\in \cH \cap \RR^\ell \\ \ssc{x}{1} \le \cdots \le x_\ell} \frac{f(\bx)(1\star g)(\bx)}{\ssc{x}{1}\cdots x_\ell}\, d\bx\\
&= f(1) - \sum_{\ell\ge 2} \mint{\bx \in \cH \cap \RR^\ell \\ \ssc{x}{1} \le \cdots \le x_\ell}  \frac{(1\star g)(\bx)}{\ssc{x}{1}\cdots x_\ell}d\bx+\sum_{\ell\ge 2}  \mint{\bx\in \cH \cap \RR^\ell \\ \ssc{x}{1} \le \cdots \le x_\ell} \frac{(1+f(\bx))(1\star g)(\bx)}{\ssc{x}{1}\cdots x_\ell}d\bx.
\end{split}
\end{equation}
The claims follow immediately.
\end{proof}

Recalling the remarks after \eqref{eq:VHZGdefs}, if $\gamma\ge \theta$
the $\cG_1$ contains the empty vector $\emptyvec$.  Thus, taking $g(\emptyvec)=1$
and $g(\bx)=0$ for other $\bx\in \cG_1$, we obtain the following immediate
corollary.

\begin{cor}\label{cor:C+finite}
If $(\gamma,\theta,\nu)\in \cQ_0$ and $\gamma\ge \theta$,
then $C^+(\gamma,\theta,\nu)$ is finite.  In fact,
\[
C^+(\gamma,\theta,\nu) \le 1 + \sum_{k=2}^{\fl{1/\nu}} \mint{\bx \in \cH\cap \RR^k \\ \ssc{x}{1} \le \cdots \le \ssc{x}{k}} \frac{1}{\ssc{x}{1}\cdots \ssc{x}{k}} d \bx.
\]
In particular, if $(\gamma,\theta,\nu)\in\cQ_0$ with $\gamma\ge \frac12$, then $C^+(\gamma,\theta,\nu)$ is finite.
\end{cor}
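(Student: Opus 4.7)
The plan is to apply Theorem~\ref{thm: Main sieving}(b) to the trivial sieve weight $g$ defined by $g(\emptyvec) := 1$ and $g(\bx) := 0$ for every other $\bx$. This makes
\[
(\bone \star g)(\bx) = g(\emptyvec) = 1 \ge 0
\]
for every vector $\bx$, so the non-negativity hypothesis of the sieve theorem holds automatically on $\cH(P)$.

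Before invoking the sieve, I would first dispose of the degenerate case $\cR = \emptyset$: here Theorem~\ref{thm:asymptotic-R1} gives $C^\pm(P) = 1$, matching the claimed bound since $\cH(P)$ is also empty and the sum on the right is zero. Assuming therefore that $\cR$ is nonempty, I need to check that $g \in \sG_1$, which reduces to verifying $\emptyvec \in \cG_1$. As noted in the remarks immediately following the definitions \eqref{eq:VHZGdefs}, the conditions $\cR \ne \emptyset$ and $\gamma \ge \theta$ together force $\emptyvec \in \cG_1$; the hypothesis $\gamma \ge \theta$ is either assumed directly or (in the ``in particular'' statement) deduced from $\gamma \ge \tfrac12$ via $\theta < \tfrac12$ from \eqref{Q0}. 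The trivial ``polytope'' $\{\emptyvec\}$ satisfies the technical axioms of Definition~\ref{def:sG1} (boundedness and smoothness of derivatives are vacuous in dimension zero), so $g \in \sG_1$ with $g(\emptyvec)=1$.

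Theorem~\ref{thm: Main sieving}(b) then produces exactly the claimed inequality. Finiteness of the right-hand side is a short bookkeeping check: every $\bx \in \cH(P)$ has all components $\ge \nu$ and, since $\bx \in \cC(\cR)$, satisfies $|\bx|=1$, so $\dim(\bx) \le 1/\nu$ and the outer sum has only finitely many nonzero terms; each integrand $1/(\ssc{x}{1} \cdots \ssc{x}{k})$ is bounded by $\nu^{-k}$ on the bounded domain $\cH(P)\cap\RR^k$.

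The main ``obstacle'' is essentially bookkeeping --- identifying the correct trivial choice of $g$ and verifying $\emptyvec \in \cG_1$ under the given hypotheses; no substantive argument beyond the already-established Theorem~\ref{thm: Main sieving} is required, which is why the result is labelled a corollary.
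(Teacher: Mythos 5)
Your proposal is correct and takes the same route as the paper: the trivial choice $g(\emptyvec)=1$, $g(\bx)=0$ otherwise, the observation from the remarks after \eqref{eq:VHZGdefs} that $\cR\neq\emptyset$ together with $\gamma\ge\theta$ forces $\emptyvec\in\cG_1$, and an application of Theorem~\ref{thm: Main sieving}(b). The one small imprecision is your disposal of the case $\cR=\emptyset$ via Theorem~\ref{thm:asymptotic-R1}, which is stated only for $P\in\cQ$ rather than all of $\cQ_0$; for $P\in\cQ_0\setminus\cQ$ with $\cR(P)=\emptyset$ one should instead appeal directly to the argument in Section~\ref{sec:sieve_nu>1-gamma} (which does not actually use $P\in\cQ$) or to the reductions \eqref{eq:Type II-Cpm}--\eqref{eq:Type-I-Cpm}, but the paper leaves this degenerate case entirely implicit and the substance is unaffected.
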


The proof of Theorem \ref{thm: Main sieving} breaks naturally into two cases, $\nu>1-\gamma$ and $\nu\le 1-\gamma$, the former being much simpler.  
We first show that $\cG_1$ is the union of polytopes, which justifies
the restriction of $g$ to $\sG_1$.

%
%
%%%%%%%%%%%%%%%%%%%%%%%%%%%%%%%%%%%%%%%%%%%%%%%%%%%%%%%%%%%%%%%%%%%%%%
%
%
\begin{lem}\label{lem:G1-polytopes}
For any $k\ge 1$, $\cG_1 \cap \RR^k$ is either empty or the disjoint union 
of convex polytopes, each of which is determined by a bounded (in terms of $\gamma,
\theta,\nu$ only) number of linear constraints.
\end{lem}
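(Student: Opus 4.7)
The plan is to apply Lemma \ref{lem:CR1-union-polytopes} with $\sigma=\nu$ (permissible since Lemma \ref{lem:R1-properties}(i) gives $\nu<1-\gamma$ whenever $\cR$ is nonempty) to obtain a fixed collection of polytopes $\cT_1,\ldots,\cT_N$ characterizing, for each total mass of small components, the ``large-component'' extensions producing vectors in $\cC(\cR)$. A key feature is that the $\cT_j$ depend only on $\gamma,\theta,\nu$ and not on the specific small-component vector used. The case $k=0$ is handled directly ($\cG_1\cap\RR^0$ equals $\{\emptyvec\}$ or is empty), so fix $k\ge 1$ and reformulate the three defining conditions of $\cG_1\cap\RR^k$: (i) $x_i\ge\nu$ for all $i$; (ii) $\bx\in\cZ$; (iii) $|\bx|+\psi(\bx)\le\gamma$.

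First I would handle (i) and (ii). Any witness $\bz\in\cC(\cR)$ containing $\bx$ as a subvector decomposes as $\bz=(\bx,\balpha,\bxi)$ with $\balpha$ having components $\ge\nu$ and $\bxi$ having components $<\nu$. Applying Lemma \ref{lem:CR1-union-polytopes} to the small-component piece $\bxi$ translates (ii) into the condition that $(\bx,\balpha)\in\cT_j$ for some $j$ and some $\balpha$ with components $\ge\nu$, where the residual mass $1-|\bx|-|\balpha|\ge 0$ can always be realized as $|\bxi|$ by chopping into enough pieces of size $<\nu$. Hence (ii) is equivalent to
\[
\bx\in\bigcup_{j}\pi_k\bigl(\cT_j\cap\{|\bx|+|\balpha|\le 1\}\bigr),
\]
a finite union of convex polytopes, where $\pi_k$ ranges over coordinate projections onto each $k$-component sub-block of $\cT_j$; condition (i) is subsumed since $\cT_j$ already enforces ``components $\ge\nu$''.

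Second, for (iii), I would prove the identity $\psi(\bx)=1-|\bx|-\phi(\bx)$, where $\phi(\bx):=\inf\{|\balpha|:(\bx,\balpha)\in\cT_j\text{ for some }j,\,\balpha\text{ components}\ge\nu\}$. The idea is that maximizing $|\bxi|$ over valid small extensions $\bxi$ with $(\bx,\bxi)\in\cZ$ amounts to minimizing the required large extension $\balpha$, since any leftover mass can freely be folded into $\bxi$ itself. Then $|\bx|+\psi(\bx)\le\gamma$ becomes $\phi(\bx)\ge 1-\gamma$, whose negation reads
\[
\bx\in\bigcup_{j}\pi_k\bigl(\cT_j\cap\{|\balpha|<1-\gamma\}\bigr),
\]
again a finite union of convex polytopes. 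Combining the two conditions, $\cG_1\cap\RR^k$ is the intersection of a finite union of polytopes with the complement of another such union, and a standard slicing-by-defining-hyperplanes argument (as at the end of the proof of Lemma \ref{lem:CR1-union-polytopes}) rewrites this as a finite \emph{disjoint} union of convex polytopes, each defined by $O_{\gamma,\theta,\nu}(1)$ linear constraints inherited from the $\cT_j$.

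The main obstacle will be rigorously justifying the identity $\psi(\bx)=1-|\bx|-\phi(\bx)$: one must check that for every $(\bx,\balpha)\in\cT_j$ one can genuinely realize a small vector $\bxi$ with components strictly $<\nu$ and $(\bx,\balpha,\bxi)\in\cC(\cR)$, and conversely that no choice of $\bxi$ with $(\bx,\bxi)\in\cZ$ exceeds $1-|\bx|-\phi(\bx)$. Both directions rest on the observation, implicit in the proof of Lemma \ref{lem:CR1-union-polytopes}, that the $\cR$-condition for a vector split into small ($<\nu$) and large ($\ge\nu$) components depends on the small part only through its total mass, so the small part can be freely fragmented without disturbing the ``no proper subsum in $[\theta,\theta+\nu]$'' property.
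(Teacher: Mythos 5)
Your proposal is correct and follows essentially the same route as the paper: both reduce membership in $\cZ$ to membership of the ``large'' extension $(\bx,\bx')$ in the fixed polytopes $\cT_j$ from Lemma \ref{lem:CR1-union-polytopes}, and both reformulate the constraint $|\bx|+\psi(\bx)\le\gamma$ as ``every large extension $\bx'$ with $(\bx,\bx')\in\cup_j\cT_j$ satisfies $|\bx'|\ge 1-\gamma$'' (your identity $\psi(\bx)=1-|\bx|-\phi(\bx)$ is exactly the paper's observation that $|\by|\le\gamma-|\bx|$ for all small $\by$ with $(\bx,\by)\in\cZ$ is equivalent to $|\bx'|\ge 1-\gamma$ for all $\bx'$ with $(\bx,\bx')\in\cZ$). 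The final rewriting of the intersection/complement as a disjoint union of polytopes with boundedly many constraints is the same closing step.
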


\begin{proof}
Throughout the proof, we use the symbols $\bx,\bx'$ to denote vectors with all components in $[\nu,1]$ and the symbols $\by,\by'$ to denote vectors with all components in $(0,\nu)$.
By Lemma \ref{lem:CR1-union-polytopes}, there is a collection of disjoint convex polytopes
$\cT_1,\ldots,\cT_N$, each defined by $O(1)$ linear constraints, and such that
for any $\by$ we have
\[
\big\{\bx : \,(\by,\bx)\in \cC(\cR)\big\}=
\Bigl(\bigsqcup_{j\le N}\cT_{j}\Bigr)\cap\{\bx : |\bx|=1-|\by|\}.
\]
Recall from \eqref{eq:VHZGdefs} that $\cZ$ is the set of subvectors of $\cC(\cR)$. It follows that $(\by,\bx)\in \cZ$ if and only if
there is a vector $\bx'$ and a $j$ so that $|(\bx,\bx',\by)|\le 1$
and $(\bx,\bx')\in \cT_j$.
To see this, observe that $(\by,\bx)\in \cZ$ if and only if there are vectors
$\bx',\by'$  so that $(\bx,\bx',\by,\by')\in \cC(\cR)$, and this occurs if and only if
there is a choice of $(\bx',\by')$ such that
$(\bx,\bx')$ lies in one of the polytopes $\cT_j$ and $|(\bx,\bx',\by,\by')|=1$. If $|(\bx,\bx',\by)|\le 1$ then there is always a $\by'$ such that $|(\bx,\bx',\by,\by')|=1$.

Since the linear projection of a convex polytope is a convex polytope,
there is a finite set of polytopes $\cW_1,\ldots,\cW_L$ such that
any $\bx \in [\nu,1]^k$ lies in $\cZ$ if any only if $\bx$ belongs to one
of the $\cW_i$.  Then
\[
\cG_1 = \bigcup_{\ell} \big\{ \bx\in \cW_\ell: |\bx|\le \gamma, \,\forall \by\text{ with }
(\bx,\by)\in \cZ\text{ we have }|\by|\le \gamma-|\bx| \big\}.
\]
The condition that $|\by|\le \gamma-|\bx|$ for all $\by$
with $(\bx,\by)\in \cZ$ is equivalent to the condition that
for all $\bx'$ with $(\bx,\bx')\in \cZ$, we have $|\bx'|\ge 1-\gamma$.
Thus, $\cG_1$ is the union of the empty vector and
\begin{align*}
\bigcup_\ell \bigg( \big\{ \bx\in \cW_\ell: |\bx|\le \gamma \big\} \setminus
\Big( \bigcup_j \big\{ \bx\in \cW_\ell: \exists \bx' \text{ with } (\bx,\bx')\in \cT_j\text{ and } |\bx'| <1-\gamma  \big\} \Big) \bigg).
\end{align*}
For each $\ell$, $\cV_\ell :=\{\bx\in \cW_\ell : |\bx|\le \gamma \}$
is a convex polytope.
For each $\ell$ and $j$, the set
\[
\{ (\bx,\bx')\in \cT_j : \bx\in \cW_\ell, \,|\bx'|<1-\gamma \}
\]
is a convex polytope, and it follows that
\[
\cD_{\ell,j} := \big\{ \bx\in \cW_\ell: \exists \bx' \text{ with } (\bx,\bx')\in \cT_j\text{ and } |\bx'| <1-\gamma  \big\}
\]
is also a convex polytope.  It follows that for each $\ell$, 
$\cV_\ell \setminus (\cup_j \cD_{\ell,j})$ is the disjoint union of boundedly many
convex polytopes, each defined by  a bounded number of linear constraints.
\end{proof}

\medskip

\subsection{Preparatory lemmas for Theorem \ref{thm: Main sieving}.}
%
%
%%%%%%%%%%%%%%%%%%%%%%%%%%%%%%%%%%%%%%%%%%%%%%%%%%%%%%%
%
%
\begin{lem}\label{lem:tauk-tau}
For positive integers $k,n$ we have $\tau_k(n) \le \tau(n)^{k-1}$.
\end{lem}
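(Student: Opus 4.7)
The plan is to give a short, direct combinatorial argument. A $k$-tuple $(d_1,\ldots,d_k)$ of positive integers with $d_1\cdots d_k=n$ is completely determined by its first $k-1$ coordinates, since $d_k=n/(d_1\cdots d_{k-1})$. Moreover, each $d_i$ divides $n$. Hence the map $(d_1,\ldots,d_k)\mapsto(d_1,\ldots,d_{k-1})$ is an injection from the set counted by $\tau_k(n)$ into the set of $(k-1)$-tuples of divisors of $n$, which has cardinality $\tau(n)^{k-1}$. This gives $\tau_k(n)\le\tau(n)^{k-1}$ immediately, for all $k\ge 1$ (the case $k=1$ being the trivial bound $1\le 1$).

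An alternative route, essentially equivalent but perhaps conceptually cleaner, is to note that $\tau_k$ is multiplicative and to reduce to prime powers: for $n=p^a$ one has $\tau_k(p^a)=\binom{a+k-1}{k-1}$ and $\tau(p^a)=a+1$, so the claim becomes $\binom{a+k-1}{k-1}\le(a+1)^{k-1}$, which follows from
\[
\binom{a+k-1}{k-1}=\prod_{j=1}^{k-1}\frac{a+j}{j}=\prod_{j=1}^{k-1}\left(1+\frac{a}{j}\right)\le\prod_{j=1}^{k-1}(a+1)=(a+1)^{k-1}.
\]
I expect no obstacle here; the injection argument is a one-line proof and will be the one to write.
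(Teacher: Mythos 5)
Your proposal is correct. You actually present two arguments, and your second one (reduce to prime powers via multiplicativity of $\tau_k$ and $\tau$, then bound $\binom{a+k-1}{k-1}\le(a+1)^{k-1}$ by comparing the two products term by term) is precisely the proof given in the paper. Your primary argument, the injection $(d_1,\ldots,d_k)\mapsto(d_1,\ldots,d_{k-1})$ into $(k-1)$-tuples of divisors of $n$, is a genuinely different and somewhat slicker route: it avoids invoking multiplicativity entirely and gives the bound in one step for general $n$, at the mild cost of not exhibiting the exact combinatorics at a prime power. Both are valid; the injection argument is arguably the more elegant choice here, and the paper's approach is the more standard textbook reduction.
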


\begin{proof}
It suffices to check the inequality when $n=p^m$, $p$ prime.  We have
\[
\tau_k(p^m)=\binom{k+m-1}{k-1}=\pfrac{k+m-1}{k-1}\cdots \pfrac{m+1}{1} \le (m+1)^{k-1}=\tau(p^m)^{k-1}. \qedhere
\]
\end{proof}

The next lemma is needed in the proof of Lemma \ref{lem:PrimeDecomposition} below.

\begin{lem}\label{lem:Squarefull}
Assume $(w_n)$ satisfies (I), $r\in \NN$ and $C\le B-1$.  Then
\[
\sum_{d \le x^\gamma}\tau(d)^{C}\sup_{\text{Interval $\cI$}}\Biggl|
\sum_{\substack{f\in \cI \\ \text{ if } r\ge 2 \text{ then }df\le x^{\gamma}}}w_{f^rd}\Biggr|\ll \frac{x}{(\log{x})^{B}} \tag{i}
\]
and
\[
\sum_{d \le x^\gamma}\tau(d)^{C}\sup_{\text{Interval $\cI$}}\Biggl|
\sum_{\substack{f\in \cI \\\text{ if } r\ge 2 \text{ then }df\le x^{\gamma} }}w_{f^rd} \frac{\log f}{\log x} \Biggr|\ll \frac{x}{(\log{x})^{B}}. \tag{ii}
\]
\end{lem}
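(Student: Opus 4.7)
My plan is to reduce both parts to the Type I hypothesis \eqref{eq:TypeI} via the factorization $f^rd=(df)\cdot f^{r-1}$. The $r=1$ case of part (i) is immediate: the left-hand side is literally the left side of \eqref{eq:TypeI} under the identifications $m=d$, $n=f$, and $\tau(d)^C\le\tau(d)^B$ holds since $C\le B-1$, so \eqref{eq:TypeI} directly gives the claim in a single line.

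For $r\ge 2$, the added constraint $df\le x^\gamma$ is exactly what allows $m:=df$ to lie in the Type I range. For each $d$ I will select an interval $\cI(d)$ and a sign $\epsilon_d\in\{\pm 1\}$ realizing the absolute value and supremum, yielding
\[
\sum_{d\le x^\gamma}\tau(d)^C\sup_\cI\Big|\sum_{f\in\cI,\,df\le x^\gamma} w_{f^rd}\Big|=\sum_{d\le x^\gamma}\tau(d)^C\epsilon_d\sum_{f\in\cI(d),\,df\le x^\gamma} w_{(df)\cdot f^{r-1}}.
\]
Reindexing by $m=df$ (so $d\mid m$, $f=m/d$, $m\le x^\gamma$) converts this into a sum over $m$ with an inner signed divisor sum. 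To match the form of \eqref{eq:TypeI}, dyadic decomposition of $f$ should suffice: the support $x/2<f^rd\le x$ forces $f\asymp(x/d)^{1/r}$, so only $O(1)$ dyadic windows $F<f\le 2F$ contribute per $d$, and within each window $n=f^{r-1}$ lies in a fixed dyadic interval. Each dyadic contribution then takes the form $\sum_m\xi_m\sup_\cJ|\sum_{n\in\cJ} w_{mn}|$ with $|\xi_m|\le\tau(m)^C\le\tau(m)^{B-1}$, to which \eqref{eq:TypeI} applies. The $O((\log x)^{O(1)})$ loss from dyadic summation and divisor multiplicity is absorbed by the slack $C\le B-1$ (taking $B$ in \eqref{eq:TypeI} suitably large).

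Part (ii) follows from part (i) by Abel summation: setting $W_d(T):=\sum_{f\le T,\,df\le x^\gamma} w_{f^rd}$, one writes $\sum_{f\in\cI} w_{f^rd}(\log f)/\log x$ as a boundary contribution plus $-\int_\cI W_d(T)\,dT/(T\log x)$. Since $\int_1^{x^\gamma} dT/(T\log x)=\gamma=O(1)$, the entire expression is bounded, up to a constant, by $\sup_T|W_d(T)|$, so the required estimate reduces to the corresponding part (i) bound on $\sum_d\tau(d)^C\sup_T|W_d(T)|$.

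The main obstacle I expect is the dyadic step in the $r\ge 2$ case: for $r\ge 3$, the values $n=(m/d)^{r-1}$ coming from distinct divisors $d\mid m$ need not all lie in a single interval in $n$, so converting the signed divisor sum into a bona fide supremum over intervals of $n$ requires care. The natural remedy is either to apply a Fourier extension of \eqref{eq:TypeI} in the spirit of the proof of Proposition~\ref{prop:first-reduction}(b) (using the kernel $K(t)$ there), or to partition the divisor sum into $O_{r,\gamma}(1)$ pieces and apply \eqref{eq:TypeI} separately to each, with the resulting logarithmic loss absorbed by enlarging $B$.
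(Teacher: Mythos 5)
The $r=1$ case of (i) and the Abel-summation deduction of (ii) from (i) are both sound and match the paper. The difficulty is concentrated in $r\ge 2$, and here there is a gap that your proposal flags but does not resolve.

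After the reindexing $m=df$, the inner sum over divisors $d\mid m$ produces $n=(m/d)^{r-1}$ which, for the $\tau(m)$ different divisors $d$, land at $\tau(m)$ scattered locations rather than in a single interval, so the expression is not of the form $\sum_m \xi_m \sup_{\cJ}|\sum_{n\in\cJ}w_{mn}|$. Your suggested fix of ``partition the divisor sum into $O_{r,\gamma}(1)$ pieces'' is based on a miscount: the constraint $x/2<mn\le x$ restricts $n$ to a single dyadic window, hence restricts $d$ to an interval of bounded ratio, but the number of divisors $d\mid m$ inside that interval can still be as large as $\tau(m)$, so the number of ``pieces'' is not bounded. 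The Fourier remedy via a kernel as in Proposition \ref{prop:first-reduction}(b) could be made to work but is heavier machinery than needed. Moreover, the claim that the $(\log x)^{O(1)}$ losses are ``absorbed by the slack $C\le B-1$'' is not right: that slack buys exactly one extra power of $\tau$ (from $\tau^C$ up to $\tau^{B}$), not a power of $\log x$, and $B$ in the Type I hypothesis is fixed, not at your disposal to enlarge.

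The paper's argument for $r\ge 2$ sidesteps all of this by treating $\lambda_f\in\{1,\frac{\log f}{\log x}\}$ uniformly and applying the triangle inequality \emph{before} any reindexing: the supremum over $\cI$ is bounded crudely by $\sum_{x/2<f^rd\le x,\,df\le x^\gamma}|w_{f^rd}|$. Then writing $e=df\le x^\gamma$ (so $f\mid e$), each term $|w_{ef^{r-1}}|$ is bounded by $\sup_{\cI}|\sum_{g\in\cI}w_{eg}|$ by choosing $\cI$ to be the singleton $\{f^{r-1}\}$. Summing over the $\tau(e)$ divisors $f\mid e$ produces exactly one extra factor of $\tau(e)$, so the full sum is at most $\sum_{e\le x^\gamma}\tau(e)^{C+1}\sup_{\cI}|\sum_{g\in\cI}w_{eg}|$, to which \eqref{eq:TypeI} applies directly using $C+1\le B$. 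This is precisely the observation your approach is missing: a singleton is an interval, so no dyadic decomposition, sign choices, or Fourier analysis is required, and the hypothesis $C\le B-1$ is exactly the amount of slack consumed.
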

\begin{proof}
If $r=1$ then (i) is immediate from (I). 
For (ii) when $r=1$ and for any interval $\cI = (a,b]$, partial summation gives
\[
\sum_{a<f\le b} w_{fd} \log f = (\log b) \sum_{a<f\le b} w_{df} -
\int_{a}^b \frac{1}{t} \sum_{a<f\le t} w_{df}\, dt.
\]
Therefore, by the triangle inequality,
\[
\sup_{\text{Interval $\cI$}}\Biggl|\sum_{\substack{f\in \cI}}w_{fd} \log f \Biggr|
\le 2(\log x) \sup_{\text{Interval $\cI$}}\Biggl|\sum_{\substack{f\in \cI}}w_{fd} \Biggr|,
\]
and the estimate (ii) follows from (i).

If $r\ge 2$ then in case (i) let $\lambda_f=1$ for all $f$, and for (ii) let $\lambda_f=\frac{\log f}{\log x}$ for all $f$.
With $d$ fixed, we use the crude bound
\[
\sup_{\text{Interval $\cI$}}\Biggl|\sum_{\substack{f\in \cI \\ df\le x^{\gamma}}}w_{f^rd}\lambda_f\Biggr| \le \ssum{x/2 < f^rd\le x \\ df\le x^{\gamma}} |w_{f^rd}|.
\]
Thus, writing $e=d f$ so that $e\le x^{\gamma}$ and $f|e$, we have
\begin{align*}
\sum_{d \le x^\gamma}\tau(d)^{C}\sup_{\text{Interval $\cI$}}\Biggl|
\sum_{\substack{ f\in \cI \\ df\le x^{\gamma}}}w_{f^rd} \lambda_f \Biggr| &\ll 
\sum_{e\le x^{\gamma}} \tau(e)^C \sum_{f|e} |w_{ef^{r-1} }| \\
&\ll 
\sum_{e\le x^{\gamma}} \tau(e)^{C+1} \sup_{\cI} \Bigg| \sum_{g\in \cI} w_{eg} \Bigg| \\
&\ll \frac{x}{\log^{B} x}
\end{align*}
by (I).  This shows (i) and (ii).
\end{proof}

%
%%%%%%%%%%%%%%%%%%%%%%%%%%%%%%%%%%%%%%%%%%%%%%%%%%%%%%%
%
%
%
%
\begin{lem}[Separation of variables in inequalities]\label{lem:Integration}
Let $f(n),g(m)$ be positive real functions with $|f(n)|$, $|g(m)|$ and $|f(n)-g(m)|$
all lying in $[1/x^{20},x^{20}]$ for all $n,m\le x$.  Let $\alpha_{m,n}$ be a complex sequence with $|\alpha_{n,m}|\le x^2$ for all $n,m\le x$. Then we have
\[
\sum_{\substack{n,m\le x\\ f(n)> g(m)}}\alpha_{n,m}\ll (\log{x})\sup_{t\in \RR}\Bigl|\sum_{n,m\le x}\alpha_{n,m}f(n)^{it}g(m)^{-it}\Bigr|+x^{-100}.
\]
\end{lem}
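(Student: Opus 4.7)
The plan is to use a truncated Perron/Fourier inversion formula for the indicator $\mathbf{1}_{f(n)>g(m)}$ and then bound the resulting integral by the supremum on the right-hand side. Setting $u_{n,m}:=\log(f(n)/g(m))$, the constraint $f(n)>g(m)$ becomes $u_{n,m}>0$, and the hypotheses on $f,g$ give the separation $|u_{n,m}|\gg x^{-40}$ together with the crude upper bound $|u_{n,m}|\le 40\log x$. Write
\[
G(t):=\sum_{n,m\le x}\alpha_{n,m}f(n)^{it}g(m)^{-it}=\sum_{n,m\le x}\alpha_{n,m}e^{itu_{n,m}},
\]
so that the supremum appearing in the lemma equals $\sup_{t\in\mathbb R}|G(t)|$. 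A crucial point is that $G$ extends to an entire function of $t\in\mathbb C$ of exponential type at most $U:=40\log x$, since $|e^{izu_{n,m}}|\le e^{U|\mathrm{Im}\,z|}$.

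Splitting $\alpha_{n,m}$ into real and imaginary parts (at the cost of an absolute constant), I may assume $\alpha_{n,m}\in\mathbb R$. I would then apply the classical truncated formula
\[
\mathbf{1}_{u>0}=\tfrac{1}{2}+\tfrac{1}{\pi}\int_0^T\tfrac{\sin(tu)}{t}\,dt+O\bigl(\tfrac{1}{T|u|}\bigr)\qquad(u\ne 0)
\]
with $u=u_{n,m}$ and $T=x^{200}$; the per-pair error is $O(x^{-160})$, and summing with $|\alpha_{n,m}|\le x^2$ over $O(x^2)$ pairs gives total error $O(x^{-156})$, absorbed into $O(x^{-100})$. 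Exchanging sum and integral and using $\sum_{n,m}\alpha_{n,m}\sin(tu_{n,m})=\mathrm{Im}\,G(t)$ (valid since $\alpha$ is real) yields
\[
S=\tfrac{1}{2}G(0)+\tfrac{1}{\pi}\int_0^T\tfrac{\mathrm{Im}\,G(t)}{t}\,dt+O(x^{-100}).
\]

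It remains to bound the integral by $(\log x)\sup_t|G(t)|$, which I would handle by splitting the range at $t=1/U$. On $[1/U,T]$ the trivial estimate $|\mathrm{Im}\,G(t)|\le\sup_t|G(t)|$ together with $\int_{1/U}^T dt/t=\log(TU)\ll\log x$ gives a contribution of $O((\log x)\sup_t|G(t)|)$. On $[0,1/U]$ the pole of $1/t$ at the origin is neutralized by the vanishing $\mathrm{Im}\,G(0)=0$ (which is exactly why the reduction to real $\alpha$ was needed): the mean value theorem then gives $|\mathrm{Im}\,G(t)/t|\le\|G'\|_{L^\infty(\mathbb R)}$, so the contribution of this piece is at most $(1/U)\|G'\|_\infty$.

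The main obstacle is therefore to bound $\|G'\|_\infty$ by a multiple of $\sup_t|G(t)|$, rather than by the naive $\|G'\|_\infty\le U\sum_{n,m}|\alpha_{n,m}|\le Ux^4$ which would force a final bound of order $x^4$, far too weak. This is supplied precisely by Bernstein's inequality for entire functions of exponential type $U$: $\|G'\|_\infty\le U\|G\|_\infty=U\sup_t|G(t)|$. With this input the small-$t$ contribution is at most $\sup_t|G(t)|$, and combining all the pieces gives the stated bound.
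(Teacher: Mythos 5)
Your proof is correct, and it takes a genuinely different route to the same estimate. The paper applies Perron's formula off the imaginary axis: it writes $\one_{f(n)>g(m)}$ as $\frac{1}{2\pi i}\int_{c-iT}^{c+iT}(f(n)/g(m))^s\,\frac{ds}{s}$ with $c=1/T$, swaps sum and integral, bounds $\int_{c-iT}^{c+iT}|ds|/|s|\ll\log x$ (the offset $c>0$ is what keeps $1/|s|$ integrable near $s=0$), and only at the end replaces $(f(n)/g(m))^{c+it}$ by $(f(n)/g(m))^{it}$, absorbing the difference into the error term. You instead work directly on the real axis with the Dirichlet/sine kernel, which creates a genuine $1/t$ singularity at $t=0$ that has to be killed by a different mechanism: you reduce to real $\alpha$ so that $\Im G(0)=0$, use the mean value theorem on $[0,1/U]$, and then invoke Bernstein's inequality for entire functions of exponential type $U$ to convert $\|G'\|_\infty$ back into $U\sup_t|G(t)|$. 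Both arguments are sound; the paper's is shorter and uses only elementary estimates on $\int|ds|/|s|$, while yours imports Bernstein but avoids shifting off the axis. One small point you should make explicit: the reduction to real $\alpha$ requires $\sup_t|G_{\Re\alpha}(t)|\le\sup_t|G_\alpha(t)|$ (and likewise for $\Im\alpha$). This does hold, because $G_{\bar\alpha}(t)=\overline{G_\alpha(-t)}$ has the same sup-norm as $G_\alpha$ and $\Re\alpha=\tfrac12(\alpha+\bar\alpha)$, but as written you treat it as free and it deserves a sentence. With that addendum the argument is complete.
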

\begin{proof}
By Perron's formula (see \cite[Proposition 5.54]{IwaniecKowalski}) we have for $f(n)\ne g(m)>0$ and $c,T>0$
\[
\frac{1}{2\pi i}\int_{c-i T}^{c+i T} f(n)^{s}g(m)^{-s}\frac{d s}{s}=\one_{f(n)>g(m)}+O\Bigl(\frac{f(n)^c g(m)^{-c}}{T|\log(f(n)/g(m))|}\Bigr).
\]
We take $T=x^{200}$ and $c=1/T$. Our assumptions on $f,g$ mean that the big-$O$ term is $O(x^{-160})$. We now multiply by $\alpha_{n,m}$ and sum over $n,m\le x$. This gives
\begin{align*}
\sum_{\substack{n,m\le x\\ f(n)> g(m)}}\alpha_{n,m}=&\frac{1}{2\pi i}\int_{c-i T}^{c+i T} \Bigl(\sum_{n,m\le x}\alpha_{n,m} f(n)^{s}g(m)^{-s}\Bigr)\frac{ds}{s}+O(x^{-150})\\
&\ll \Bigl(\int_{c-iT}^{c+i T}\frac{|ds|}{|s|}\Bigr)\sup_{t\in [-T,T]}\Bigl|\sum_{n,m\le x}\alpha_{n,m} f(n)^{c+it}g(m)^{-c-it}\Bigr|+x^{-150}.
\end{align*}
The integral of $|ds|/|s|$ is $O(\log{x})$. Finally, noting that 
\[
(f(n)/g(m))^{c+it}=(f(n)/g(m))^{it}+O(x^{-150})
\]
for all $m,n\le x$, we  obtain the result of the lemma.
\end{proof}

Frequently, we apply Lemma \ref{lem:Integration} where
 one of $f(n),g(m)$ is integer valued, and the other function
is always $\frac12$ plus an integer, from which it follows that $|f(n)-g(m)| \ge \frac12$
for all $n,m$.

%
%
%
%%%%%%%%%%%%%%%%%%%%%%%%%%%%%%%%%%%%%%%%%%%%%%%%%%%%%%%%%%%%%%%%%%%%%%%%%%
%
%  Encoding polytope conditions
%
%%%%%%%%%%%%%%%%%%%%%%%%%%%%%%%%%%%%%%%%%%%%%%%%%%%%%%%%%%%%%%%%%%%%%%%%%%
%
We next record a variant of the previous method, useful for encoding
conditions coming from polytopes.
Recall that the notation $a \sim x$ stands for $a\in (x/2,x]$, and that the symbol $p$
always denotes a prime.

\begin{lem}[Encoding a polytope condition]\label{lem:polytope-encode-one-condition}
Assume that $(w_n)$ satisfies \eqref{w}.
Suppose that $k\ge 1$, $\cM$ is a nonempty subset of $[k]$ and that 
$\ssc{c}{1},\ldots,\ssc{c}{k}$ are real numbers
with the numbers $\ssc{c}{j}$ for $j\in \cM$ all equal, the common value 
being $\ge 1$ or $\le -1$.  Suppose also that
for a $k-$tuple of positive integers $\bn=(\ssc{n}{1},\ldots,\ssc{n}{k})$ with product in $(x/2,x]$, $\xi_{\bn}$ is a complex number with
$|\xi_{\bn}| \le 1.$  Then
\begin{align*}
\ssum{n=\ssc{n}{1}\cdots \ssc{n}{k}\sim x \\ \prod_{j\in \cM}\ssc{n}{j} \ge x^{\nu/3} 
\\ n_1^{\ssc{c}{1}}\cdots n_k^{\ssc{c}{k}} < 1} \xi_{\bn} w_n
&\ssc{\ll}{k} \; (\log x) \sup_{\Re s=x^{-2}}
\bigg| \ssum{n=\ssc{n}{1}\cdots \ssc{n}{k}\sim x \\ \prod_{j\in \cM}\ssc{n}{j} \ge x^{\nu/3}}
\xi_{\bn} w_n n_1^{-\ssc{c}{1}s}\cdots n_k^{-\ssc{c}{k}s} \bigg|\\
& +(\log x) \sup_{\Re s=x^{-2}}
\bigg| \ssum{n=\ssc{n}{1}\cdots \ssc{n}{k}\sim x \\ \prod_{j\in \cM}\ssc{n}{j} \ge x^{\nu/3}}
|\xi_{\bn}| w_n n_1^{-\ssc{c}{1}s}\cdots n_k^{-\ssc{c}{k}s} \bigg|
+x^{1-\nu/10}.
\end{align*}
The same bound holds if the condition $n_1^{\ssc{c}{1}}\cdots n_k^{\ssc{c}{k}} < 1$
is replaced by the nonstrict inequality 
 $n_1^{\ssc{c}{1}}\cdots n_k^{\ssc{c}{k}} \le 1$.
\end{lem}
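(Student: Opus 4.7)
My plan is to apply the truncated Perron formula to the indicator $\mathbf{1}(y<1)$, where $y := n_1^{c_1}\cdots n_k^{c_k}$. With $\sigma = x^{-2}$ and $T$ a large fixed power of $x$, for $y\ne 1$
\[
\mathbf{1}(y<1) \;=\; \frac{1}{2\pi i}\int_{\sigma-iT}^{\sigma+iT} y^{-s}\,\frac{ds}{s} \;+\; O\!\left(\frac{y^{-\sigma}}{T|\log y|}\right).
\]
Multiplying by $\xi_\bn w_n$, summing, and swapping the order of sum and integral in the main term, I would bound the resulting contour integral by $\int |ds|/|s| \ll \log x$ times the supremum on $\Re s=\sigma$ of $\bigl|\sum_\bn \xi_\bn w_n \prod_j n_j^{-c_js}\bigr|$. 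This yields the first supremum in the statement.

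To estimate the error, I would split the $\bn$-sum according to whether $|\log y|\ge \Delta$ or $|\log y|<\Delta$, where $\Delta$ is chosen a slightly small negative power of $x$ (roughly $\Delta = x^{-\nu/5}$). The ``far'' contribution is controlled by $T^{-1}\Delta^{-1}\sum_\bn |w_n|\ll x(\log x)^{\varpi}/(T\Delta)$ via \eqref{w}, which is $\ll x^{1-\nu/10}$ once $T$ is a sufficiently large fixed power of $x$. For the ``near'' contribution $|\log y|<\Delta$, using the pointwise inequality $|w_n|\le w_n+2x^{\nu/10}$ (an immediate consequence of $w_n\ge -x^{\nu/10}$ in \eqref{w}), I would apply Perron's formula a second time via the decomposition $\mathbf{1}(|\log y|<\Delta)=\mathbf{1}(y<e^{\Delta})-\mathbf{1}(y\le e^{-\Delta})$. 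Each of these indicators expands into a contour integral of $\sum_\bn |\xi_\bn| w_n \prod_j n_j^{-c_js}$ weighted by shifts $e^{\mp s\Delta}$ (which have absolute value $1+o(1)$ on $\Re s=\sigma$), producing $O(\log x)$ times the second supremum of the statement. The leftover $x^{\nu/10}$-piece contributes $\ll x^{\nu/10}\cdot\#\{\bn : |\log y|<\Delta\}$.

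The principal obstacle is the counting estimate $\#\{\bn : |\log y|\le \delta\}$, where the hypotheses $|c_j|\ge 1$ and $\prod_{j\in\cM} n_j\ge x^{\nu/3}$ are used crucially. Fixing all $n_j$ for $j\notin \cM$ and writing $N := \prod_{j\in\cM} n_j$, $P := \prod_{j\notin\cM} n_j$, the equality of the $c_j$ for $j\in\cM$ means that $\log y = c\log N + \mathrm{const}$, depending on $(n_j)_{j\in\cM}$ only through $N$, with $|c|\ge 1$. The constraints $N\ge x^{\nu/3}$ and $NP\sim x$ force $P\le 2x^{1-\nu/3}$, and a routine divisor-function computation then yields
\[
\#\{\bn : |\log y|\le \delta\} \;\ll\; \bigl(\delta x + x^{1-\nu/3}\bigr)(\log x)^{O(1)}.
\]
Taking $\delta=\Delta$ (with $\Delta$ chosen slightly smaller than $x^{-\nu/5}$ to absorb logarithmic losses) gives boundary count $\ll x^{1-\nu/5}$, and hence the residual boundary error is $\ll x^{\nu/10}\cdot x^{1-\nu/5}=x^{1-\nu/10}$. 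The Perron error from the second application is controlled dyadically by the same counting estimate applied to $|\log y\mp\Delta|$ in place of $|\log y|$, and the nonstrict case (with $<$ replaced by $\le$) is identical, as $\{y=1\}$ lies within the boundary region already handled. Without the hypotheses on $\cM$, the integer spacing of $N$ would fail to translate into useful spacing of $\log y$, and the boundary count would be too large for this argument to close.
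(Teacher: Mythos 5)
Your plan identifies the correct key ingredients: a transform to separate variables, giving a ``main term'' bounded by $\log x$ times a supremum, and a ``boundary'' piece controlled by a divisor count that crucially exploits that the $c_j$ are a common value of absolute value $\ge 1$ on $\cM$ and that $\prod_{j\in\cM}n_j\ge x^{\nu/3}$ (so that the complementary product $P=\prod_{j\notin\cM}n_j$ is $\le 2x^{1-\nu/3}$). This is structurally the same as the paper's argument. However, your choice of hard truncated Perron in place of a smooth cutoff creates a genuine gap.

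The issue is the Perron truncation error. After the first Perron application, the boundary contribution is bounded by $\sum_{\text{near}}|\xi_\bn||w_n|$, and you apply $|w_n|\le w_n+2x^{\nu/10}$ to split it into a counting piece (fine) and a $w_n$-piece $\sum_{\text{near}}|\xi_\bn|w_n$. For the $w_n$-piece you propose a second Perron application, but its truncation error is of the form $\sum_n|\xi_\bn|w_n\cdot\psi(y_n)$ for a weight $\psi$ supported near the new cutoff $y=e^{\pm\Delta}$, which has exactly the same structure as the quantity you started with: since $w_n$ can be negative, $\psi$ cannot be pushed through a counting bound, and applying $|w_n|\le w_n+2x^{\nu/10}$ again leaves yet another $w_n$-weighted boundary sum. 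Your remark that this is ``controlled dyadically by the same counting estimate'' does not resolve the sign problem; the recursion does not terminate. (This is precisely why the paper's Lemma~\ref{lem:Integration} for hard Perron requires the a priori separation $|f(n)-g(m)|\ge x^{-20}$, unavailable here since $y=n_1^{c_1}\cdots n_k^{c_k}$ can be arbitrarily close to $1$ for real $c_j$.) The paper sidesteps this entirely by writing $\one_{y<1}=f_\delta(y)+O(g_\delta(y))$ for smooth compactly supported $f_\delta,g_\delta$ with $\delta=x^{-2}$, and using the \emph{exact} Mellin inversion formula: there is no truncation error, $\int|F_\delta(s)|\,|ds|\ll\log(1/\delta)\ll\log x$ by the $\min(|s|^{-1},\delta^{-1}|s|^{-2})$ decay, and the $w_n$-piece of the boundary contribution is dispatched in one step via Mellin inversion of $g_\delta$. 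A secondary consequence of this choice is that $\delta=x^{-2}$ is so small that, with $n_{j}$ $(j\notin\cM)$ fixed, the window for $\log\bigl(\prod_{j\in\cM}n_j\bigr)$ has length $O(x^{-2})$ and hence contains at most one integer value of the product; the count then reduces to $\sum_{m\le x^{1-\nu/3}}\tau_{k-j}(m)\cdot\max_{s\le x}\tau_j(s)\ll x^{1-\nu/3+\nu/100}(\log x)^{O(1)}$. Your wider $\Delta\approx x^{-\nu/5}$ can also be made to close numerically, but note the $(\log x)^{O(1)}$ in your counting bound should be $x^{o(1)}$ (coming from the pointwise maximum of a divisor function on the $\lesssim 1$ integers per short window, not an average), which forces you to shrink $\Delta$ slightly below $x^{-\nu/5}$ to recover the exponent $x^{1-\nu/10}$.
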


\begin{proof}
By relabeling, we may assume that $\cM = [j]$ for some $j$ with $1\le j\le k$.
For $0<\delta<1$ let $f_\delta(x):[0,\infty)\to [0,1]$ which equals 1 for 
$0\le x\le 1-\delta$, equals 0 for $x\ge 1$ and has continuous 2nd order derivative.
We can choose such an $f$ with $f_{\delta}^{(j)}(x) \ll \delta^{-j}$ for $j=1,2$.
Define
\[
g_\delta(x) := \one_{1-\delta \le x\le 1} (1-f_\delta(x)) + \one_{x>1} f_\delta(x-\delta),
\]
so that $g$ is supported on $[1-\delta,1+\delta]$, $g_{\delta}^{(j)}(x)\ll \delta^{-j}$ for $j=1,2$ and $f_\delta(x)+g_\delta(x)=1$ for $0\le x\le 1$.
Let
\[
F_\delta(x) := \int_0^\infty f_\delta(u) u^{s-1}\, du, \qquad
G_\delta(x) := \int_0^\infty g_\delta(u) u^{s-1}\, du
\]
be the Mellin transforms of $f_\delta$ and $g_\delta$, respectively.
Using integration by parts, and the fact that $f_\delta'(x)=f_\delta''(x)=g_{\delta}'(x)=g_\delta''(x)=0$
for $x\not\in (1-\delta,1+\delta)$, we see that
\[
|F_\delta(s)| + |G_{\delta}(s)| \ll \min \bigg( \frac{1}{|s|}, \frac{1/\delta}{|s|^2}\bigg) \qquad (0<\Re s \le 1).
\]
In particular,
\begin{equation}\label{eq:FG-delta}
\int_{\delta-i\infty}^{\delta+i\infty} \Big( |F_\delta(s)|+|G_\delta(s)|\Big)\, |ds| 
\ll 1+\int_\delta^{1/\delta}\frac{dt}{t}+\int_{1/\delta}^\infty \frac{dt}{\delta t^2}
\ll 1+\log(1/\delta).
\end{equation}
Since $\one_{x<1} = f_{\delta}(x) +  O(g_\delta(x))$, we have
\[
\ssum{n=\ssc{n}{1}\cdots \ssc{n}{k}\sim x \\ \ssc{n}{1}\cdots \ssc{n}{j} \ge x^{\nu/3} 
\\ n_1^{\ssc{c}{1}}\cdots n_k^{\ssc{c}{k}} < 1} \xi_{\bn} w_n =
\ssum{n=\ssc{n}{1}\cdots \ssc{n}{k}\sim x \\  \ssc{n}{1}\cdots\ssc{n}{j} \ge x^{\nu/3}}
 \xi_{\bn} w_n f_\delta\big( n_1^{\ssc{c}{1}}\cdots n_k^{\ssc{c}{k}} \big) +
  O\bigg( \ssum{n=\ssc{n}{1}\cdots \ssc{n}{k}\sim x \\  \ssc{n}{1}\cdots\ssc{n}{j} \ge x^{\nu/3}}
  |\xi_{\bn} w_n| g_\delta\big( n_1^{\ssc{c}{1}}\cdots n_k^{\ssc{c}{k}} \big)  \bigg).
\]
Denote the right hand side by $S_1+O(S_2)$, and let $\delta = x^{-2}$.
By Mellin inversion and \eqref{eq:FG-delta},
\begin{align*}
S_1 &= \frac{1}{2\pi i}\int_{\delta-i\infty}^{\delta+i\infty} F_\delta(s)
 \ssum{n=\ssc{n}{1}\cdots \ssc{n}{k}\sim x \\  \ssc{n}{1}\cdots\ssc{n}{j} \ge x^{\nu/3}} 
 \xi_{\bn} w_n n_1^{-\ssc{c}{1}s} \cdots n_k^{-\ssc{c}{k}s} \, ds\\
 &\ll (\log x) \sup_{\Re s=\delta} \bigg| 
 \ssum{n=\ssc{n}{1}\cdots \ssc{n}{k}\sim x \\  \ssc{n}{1}\cdots\ssc{n}{j} \ge x^{\nu/3}} 
 \xi_{\bn} w_n n_1^{-\ssc{c}{1}s} \cdots n_k^{-\ssc{c}{k}s} \bigg|.
\end{align*}
By \eqref{w}, $|w_n| \le w_n + 2x^{\nu/10}$.
Hence $S_2 \le S_3 + 2 S_4$, where
\begin{align*}
S_3 &:= \ssum{n=\ssc{n}{1}\cdots \ssc{n}{k}\sim x \\  \ssc{n}{1}\cdots\ssc{n}{j} \ge x^{\nu/3}}
  |\xi_{\bn}| w_n g_\delta\big( n_1^{\ssc{c}{1}}\cdots n_k^{\ssc{c}{k}} \big), \\
  S_4 &:=  x^{\nu/10} \ssum{n=\ssc{n}{1}\cdots \ssc{n}{k}\sim x \\  \ssc{n}{1}\cdots\ssc{n}{j} \ge x^{\nu/3}} 
 g_\delta\big( n_1^{\ssc{c}{1}}\cdots n_k^{\ssc{c}{k}} \big).
\end{align*}
By \eqref{eq:FG-delta} again,
\begin{align*}
S_3&= \frac{1}{2\pi i}\int_{\delta-i\infty}^{\delta+i\infty} G_\delta(s)
 \ssum{n=\ssc{n}{1}\cdots \ssc{n}{k}\sim x \\  \ssc{n}{1}\cdots\ssc{n}{j} \ge x^{\nu/3}} 
 |\xi_{\bn}| w_n n_1^{-\ssc{c}{1}s} \cdots n_k^{-\ssc{c}{k}s}  \, ds\\
 &\ll (\log x) \sup_{\Re s=\delta} \bigg| 
 \ssum{n=\ssc{n}{1}\cdots \ssc{n}{k}\sim x \\  \ssc{n}{1}\cdots\ssc{n}{j} \ge x^{\nu/3}} 
 |\xi_{\bn}| w_n n_1^{-\ssc{c}{1}s} \cdots n_k^{-\ssc{c}{k}s} \bigg|.
\end{align*}
The $g_\delta$ factor is nonzero only when
the argument is in $[1-\delta,1+\delta]$ which implies that
\[
\sum_{h=1}^k \ssc{c}{h} \log \ssc{n}{h} = O(\delta).
\]
Recall that $\delta=x^{-2}$, $\ssc{c}{1}=\cdots=\ssc{c}{j}$ and 
$|\ssc{c}{1}|\ge 1$.
With positive integers $\ssc{n}{j+1},\ldots,\ssc{n}{k}$ all fixed, the number 
\[
\ssc{c}{1}(\log \ssc{n}{1}+\cdots+\log \ssc{n}{j}) =\ssc{c}{1} \log(\ssc{n}{1}\cdots\ssc{n}{j})
\]
 lies in an interval of length $O(\delta)$, and hence
 $\log(\ssc{n}{1}\cdots\ssc{n}{j})$ lies in an interval of length $O(\delta/|\ssc{c}{1}|)$.  Since  $n_1\cdots n_j\le x$, it follows that the product $s=\ssc{n}{1}\cdots \ssc{n}{j}$ is unique.
Hence there are at most $\max_{s\le x} \tau_j(s) = O_k(x^{\nu/100})$ choices for the tuple  $(\ssc{n}{1},\ldots,\ssc{n}{j})$.
 Therefore, writing $m=\prod_{h> j}\ssc{n}{h}$,
we have $m\le x^{1-\nu/3}$ and hence
\begin{align*}
S_4 & \ll \;\; x^{\nu/10} \sum_{m\le x^{1-\nu/3}} x^{\nu/100}\tau_{k-j}(m) \ll x^{1-\nu/10},
\end{align*}
again using the divisor bound $\tau_{k-j}(m)\ll_k x^{\nu/100}$.
Together with the earlier bounds on $S_1$ and $S_3$, this completes the proof.
Replacing the strict inequality $<1$ with $\le 1$ has no effect on the above 
argument.
\end{proof}

\begin{lem}[Encoding many polytope conditions]\label{lem:polytope-encode}
Assume that $(w_n)$ satisfies \eqref{w}.
Let $k$, $s$ and $\ell$ be  positive integers and $D>1$.
Let $\cM_1,\ldots,\cM_s$ be non-empty subsets of $[k]$
and $c_{i,j} \in [-D,-1]\cup\{0\} \cup [1,D]$ for $1\le i\le \ell$, $1\le j\le k$.
For $1\le j\le s$ let $\sM_j$ denote the condition $\prod_{h\in \cM_j} \ssc{n}{h} \ge x^{\nu/3}$.  For $1\le i\le \ell$, let $j_i\in [s]$ and let  $\sL_i$ be either the condition
that for some $j$, the numbers $c_{i,m}$ for $m\in \cM_{j_i}$ are all equal and nonzero and 
 $n_1^{c_{i,1}}\cdots n_k^{c_{i,k}}\le 1$, or the variant with
 $n_1^{c_{i,1}}\cdots n_k^{c_{i,k}} <1$ in place of $n_1^{c_{i,1}}\cdots n_k^{c_{i,k}}\le 1$.
 Suppose also that for every $k$-tuple of positive integers $\bn=(\ssc{n}{1},\ldots,\ssc{n}{k})$
with product in $(x/2,x]$, $\phi_\bn$ is a complex number with
$|\phi_{\bn}| \le 1$.  Then
\begin{align*}
\ssum{n=\ssc{n}{1}\cdots \ssc{n}{k}\sim x \\ \sM_1,\ldots,\sM_s \\ \sL_1,\ldots,\sL_\ell} \phi_\bn w_n \;&\ssc{\ll}{k,\ell,D} \; (\log x)^{\ell}
\sup_{\substack{\zhu_1,\ldots,\zhu_k \\ 1-\text{bounded}}}\; \bigg| 
\ssum{n=\ssc{n}{1}\cdots \ssc{n}{k}\sim x \\ \sM_1,\ldots,\sM_s} \phi_\bn w_n \zhu_1(\ssc{n}{1})\cdots \zhu_k(\ssc{n}{k}) \bigg|\\
&+ (\log x)^{\ell} \sup_{\substack{\zhu_1,\ldots,\zhu_k \\ 1-\text{bounded}}} \; \bigg| 
\ssum{n=\ssc{n}{1}\cdots \ssc{n}{k}\sim x \\ \sM_1,\ldots,\sM_s} |\phi_\bn|\, w_n \zhu_1(\ssc{n}{1})\cdots \zhu_k(\ssc{n}{k}) \bigg|+x^{1-\nu/20}.
\end{align*}
\end{lem}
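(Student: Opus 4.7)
The plan is to iterate Lemma~\ref{lem:polytope-encode-one-condition} once per condition $\sL_i$, removing the conditions one at a time. The hypothesis required at each application — that for some index $j$ the numbers $c_{i,m}$ for $m\in \cM_{j_i}$ are all equal and of absolute value at least $1$, together with the presence of the condition $\prod_{m\in \cM_{j_i}}n_m\ge x^{\nu/3}$ — is exactly what the definition of $\sL_i$ coupled with $\sM_{j_i}$ provides, using the constraint $c_{i,j}\in[-D,-1]\cup\{0\}\cup[1,D]$ in the hypothesis of the present lemma.

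To start, I set $\xi_{\bn}=\phi_{\bn}\prod_{i\ge 2}\one_{\sL_i}$ (which is $1$-bounded) and apply Lemma~\ref{lem:polytope-encode-one-condition} to strip off $\sL_1$, producing a bound
\[
(\log x)\sup_{\Re s_1=x^{-2}}\Big|\sum_{\sM_1,\dots,\sM_s,\sL_2,\dots,\sL_\ell} \phi_{\bn}w_n n_1^{-c_{1,1}s_1}\cdots n_k^{-c_{1,k}s_1}\Big|+(\text{same with }|\phi_{\bn}|)+x^{1-\nu/10}.
\]
For each subsequent iteration I would apply the lemma inside the inherited supremum. At the start of iteration $i$, the coefficient being carried has the shape $\phi_{\bn}^{\sharp}\prod_{h<i}\prod_{j}n_j^{-c_{h,j}t_h}$, where $\phi_{\bn}^{\sharp}\in\{\phi_{\bn},|\phi_{\bn}|\}$ and each $t_h\in\{s_h,\Re s_h\}$ (the real part arising whenever the $|\xi_{\bn}|$ branch was invoked in a previous step). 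Since $\Re s_h=x^{-2}$ and $|c_{h,j}|\le D$, each factor $n_j^{-c_{h,j}t_h}$ has modulus at most $x^{Dx^{-2}}=1+O(\log x/x^2)$, so the whole product is bounded by $2$ for $x$ large. Dividing through by this scalar enforces the $1$-bounded hypothesis of Lemma~\ref{lem:polytope-encode-one-condition}, with the scalar reinstated outside.

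After all $\ell$ iterations the original sum is bounded by at most $2^\ell=O_\ell(1)$ terms, each of the form
\[
(\log x)^\ell \sup_{s_1,\dots,s_\ell}\Big|\sum_{\sM_1,\dots,\sM_s}\phi_{\bn}^{\sharp} w_n\prod_{j=1}^{k}\zhu_j(n_j)\Big|,\qquad \zhu_j(n):=n^{-\sum_{h=1}^{\ell}c_{h,j}t_h},
\]
plus an accumulated error of size $O_{k,\ell,D}((\log x)^{\ell-1}x^{1-\nu/10})\ll x^{1-\nu/20}$. The decisive algebraic point is the factorisation $\prod_{h}\prod_{j}n_j^{-c_{h,j}t_h}=\prod_{j}\zhu_j(n_j)$, which shows that the accumulated multiplicative weight separates into a product of functions of individual $n_j$'s and so embeds the supremum over $(s_1,\dots,s_\ell)$ into the supremum over $1$-bounded $\zhu_j$ in the statement (each $\zhu_j$ being rescaled by a harmless factor $1+O(\log x/x^2)$ to achieve exact $1$-boundedness). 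The $2^\ell$ possible subterms collapse into the two shown in the conclusion, according to whether $\phi_{\bn}^{\sharp}=\phi_{\bn}$ or $\phi_{\bn}^{\sharp}=|\phi_{\bn}|$. The main obstacle I anticipate is ensuring that the absolute values taken at intermediate stages do not break this variable-factorisation; this works because $|n_j^{-cs}|=n_j^{-c\Re s}$ is itself a function of $n_j$ alone, so passing to an absolute value only substitutes $s_h\mapsto \Re s_h$ in one of the exponents rather than destroying the product structure.
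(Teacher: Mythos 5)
Your proof is correct and follows essentially the same strategy as the paper: iterate Lemma~\ref{lem:polytope-encode-one-condition} once per condition $\sL_i$, absorbing each new twist factor $n_j^{-c_{i,j}s_i}$ (bounded by a $D$-dependent constant since $\Re s_i=x^{-2}$) into the $1$-bounded functions $\zhu_j$. The paper streamlines the bookkeeping by defining
\[
Y_i := \sup_{\zhu_j\;1\text{-bdd}}\Big|\ssum{\sM_1,\dots,\sM_s,\sL_1,\dots,\sL_i}\phi_\bn w_n\prod_j\zhu_j(n_j)\Big|
+\sup_{\zhu_j\;1\text{-bdd}}\Big|\ssum{\sM_1,\dots,\sM_s,\sL_1,\dots,\sL_i}|\phi_\bn|\,w_n\prod_j\zhu_j(n_j)\Big|
\]
and recursing $Y_i\ll_{k,D}(\log x)Y_{i-1}+x^{1-\nu/10}$, which avoids ever having to track $2^\ell$ branches explicitly; your version reaches the same conclusion by observing that the branches collapse to the two types $\phi_\bn$ and $|\phi_\bn|$ once you note that the accumulated twist still factors as $\prod_j n_j^{-\sum_h c_{h,j}t_h}$. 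One notational slip: when you take $\xi_\bn=\phi_\bn\prod_{i\ge 2}\one_{\sL_i}$, the indicator $\one_{\sM_1,\ldots,\sM_s}$ must also be carried in $\xi_\bn$ (since Lemma~\ref{lem:polytope-encode-one-condition} only supplies the single condition $\cM=\cM_{j_1}$); your displayed output correctly retains $\sM_1,\ldots,\sM_s$, so this is an oversight in writing $\xi_\bn$ rather than a real gap.
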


\begin{proof}
We iterate Lemma \ref{lem:polytope-encode-one-condition}.  
For $0\le i \le \ell$ let
\begin{align*}
Y_i &:= \sup_{\substack{\zhu_1,\ldots,\zhu_k \\ 1-\text{bounded}}}\; \bigg| 
\ssum{n=\ssc{n}{1}\cdots \ssc{n}{k}\sim x \\ \sM_1,\ldots,\sM_s \\ \sL_1,\ldots,\sL_i} \phi_\bn w_n \zhu_1(\ssc{n}{1})\cdots \zhu_k(\ssc{n}{k}) \bigg|+\\
&\qquad + \sup_{\substack{\zhu_1,\ldots,\zhu_k \\ 1-\text{bounded}}} \; \bigg| 
\ssum{n=\ssc{n}{1}\cdots \ssc{n}{k}\sim x \\ \sM_1,\ldots,\sM_s \\ \sL_1,\ldots,\sL_i} |\phi_\bn|\, w_n \zhu_1(\ssc{n}{1})\cdots \zhu_k(\ssc{n}{k}) \bigg|.
\end{align*}
Now let $1\le i \le \ell$.  To estimate $Y_i$,  we apply 
Lemma \ref{lem:polytope-encode-one-condition} twice, each with
$\ssc{c}{j}=\ssc{c}{i,j}$ for $1\le j\le k$ and $\cM=\cM_{j_i}$,
one application with
\[
\xi_{\bn} = \phi_\bn \zhu_1(\ssc{n}{1})\cdots \zhu_k(\ssc{n}{k}) \one_{\sM_1,\ldots,\sM_s,\sL_1,\ldots,\sL_{i-1}}
\]
and a second application with
\[
\xi_{\bn} = \big| \phi_\bn \big| \zhu_1(\ssc{n}{1})\cdots \zhu_k(\ssc{n}{k}) \one_{\sM_1,\ldots,\sM_s,\sL_1,\ldots,\sL_{i-1}}.
\]
For all $j$ and $\Re s = x^{-2}$, $n_j^{-c_j s}$ is a bounded function
of $\ssc{n}{j}$ (the bound depends on $D$), and it quickly follows that
\[
Y_i \;\ssc{\ll}{k,D} \; (\log x) Y_{i-1} + x^{1-\nu/10}.
\]
Iterating this relation gives the claimed bound.
\end{proof}
%
%
%%%%%%%%%%%%%%%%%%%%%%%%%%%%%%%%%%%%%%%%%%%%%%%%%%%%%%%
%
%
%         Prime decomposition
%
%
%%%%%%%%%%%%%%%%%%%%%%%%%%%%%%%%%%%%%%%%%%%%%%%%%%%%%%%

%
%
\begin{lem}[Prime Decomposition]\label{lem:PrimeDecomposition}
Suppose that 
 $|\ssc{\psi}{u,v}|\le 1$ for all pairs $(u,v)$
 of positive integers,  and let
 $\ell=6\lceil 1 /(1-\gamma) \rceil$.  Let $k$ be a positive integer and for each $(u,v)$, let $\cT_{u,v}$ be a convex region in $[0,1]^k$, which may depend on $x$. Assume $(w_n)$ satisfies \eqref{w} and $\eqref{eq:TypeI}$ with $B$ sufficiently large in terms of
$k,\gamma$.  Let $\mathscr{S}$ be given by
\[
\mathscr{S}=\sum_{u,v}\ssc{\psi}{u,v}\ssum{n=uv p_1\cdots p_k\sim x\\ \big( \frac{\log p_1}{\log x},\ldots, \frac{\log p_k}{\log x}\big)\in \cT_{u,v}}   w_n\cdot \frac{\log p_1}{\log x} \cdots \frac{\log p_k}{\log x}.
\]
Then we have
\begin{align*}
\mathscr{S} \ll_k  (\log x)^{k\ell} \sup_{\substack{\beta_{1,1},\ldots,\beta_{k,\ell} \\ 1-\text{bounded}}} \;
\Biggl|\sum_{u,v}\ssc{\psi}{u,v}\ssum{n=uv m_1\cdots m_k\sim x\\ 
\big(\frac{\log m_1}{\log x},\ldots,\frac{\log m_k}{\log x}\big)\in \cT_{u,v} \\
\ssc{m}{h}=d_{h,1}\cdots \ssc{d}{h,\ell} \; (1\le h\le k)\\ 
d_{h,j}<n/x^\gamma\,\forall h,j} w_{n}  \prod_{h=1}^k \prod_{j=1}^{\ell}\beta_{h,j}(d_{h,j})\Biggr| +O\pfrac{x}{(\log x)^{B-k}}.
\end{align*}
\end{lem}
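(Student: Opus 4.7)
Here is my plan.  The main tool is the Heath-Brown identity (Lemma \ref{lem:HeathBrown}) applied individually to each factor $(\log p_h)\one_{p_h\text{ prime}}$ in $\mathscr{S}$, with parameter $H=3\lceil 1/(1-\gamma)\rceil$ so that $2H\le\ell$ and $x^{1/H}\le x^{(1-\gamma)/3}<\tfrac{1}{2}n/x^{\gamma}$ for $n\sim x$.  Writing each $p_h=m_h^{r_h}$ with $m_h=e_{h,1}\cdots e_{h,j_h}f_{h,1}\cdots f_{h,j_h}$ and $e_{h,i}^{r_h}\le x^{1/H}$, and normalising by $(\log x)^{-k}$, I obtain
\[
\mathscr{S}=\frac{1}{(\log x)^k}\sum_{\mathbf{j},\mathbf{r}}\Bigl(\prod_{h}(-1)^{j_h-1}\binom{H}{j_h}\mu(r_h)\Bigr)\mathscr{T}(\mathbf{j},\mathbf{r}),
\]
where $\mathscr{T}(\mathbf{j},\mathbf{r})$ sums $w_n$ against the weights $\prod_h(\log f_{h,1})\prod_{h,i}\mu(e_{h,i})$ subject to $n=uv\prod_h m_h^{r_h}\sim x$ and the polytope constraint.

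I would first peel off the ``higher-power'' contributions where some $r_{h_0}\ge 2$.  Grouping all the non-$h_0$ variables into a single divisor $D\le x^{\gamma}$ of $n$ and bounding the surplus $\mu$ and $\log$ weights trivially by $O((\log x)^k)$, the inner sum reduces to the shape handled by Lemma \ref{lem:Squarefull}; summed over the $O_k(\log^k x)$ choices of $(\mathbf{j},\mathbf{r})$ this contributes $O_k(x/(\log x)^{B-k})$ for $B$ large, which is absorbed into the stated error term.

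In the main range $\mathbf{r}=\mathbf{1}$ we have $p_h=m_h=e_{h,1}\cdots f_{h,j_h}$ with $2j_h\le 2H\le\ell$, and I split on whether every $f_{h,i}<n/x^{\gamma}$.  If some $f_{h_0,i_0}\ge n/x^{\gamma}$, then $D:=n/f_{h_0,i_0}\le x^{\gamma}$; absorbing all other variables into $D$ reduces the inner sum to a Type I sum handled by Lemma \ref{lem:Squarefull}(i) together with \eqref{eq:TypeI}, contributing $O_k(x/(\log x)^{B-k})$.  In the complementary range I set $d_{h,i}:=e_{h,i}$ for $i\le j_h$, $d_{h,j_h+i}:=f_{h,i}$ for $i\le j_h$, and $d_{h,j}:=1$ for $2j_h<j\le\ell$; each $d_{h,j}<n/x^{\gamma}$ (the $e$-slots by the choice of $H$, the $f$-slots by hypothesis, and the padding trivially), and the weights $\mu(e_{h,i})$, $(\log f_{h,1})/\log x$, and $\one_{d=1}$ are all $1$-bounded after the $(\log x)^{-k}$ normalisation.

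What remains is to present this residual piece as the supremum in the claim.  The polytope condition $(\log m_h/\log x)_h\in \cT_{u,v}$ and the couplings $d_{h,j}<n/x^{\gamma}$ are linear-in-logs conditions which mix several summation variables (notably through $n=uv\prod_h m_h$) and thus cannot be absorbed into any $\beta_{h,j}(d_{h,j})$ depending on a single variable.  I would decouple them via Mellin inversion using Lemma \ref{lem:polytope-encode}; each of the $O(k\ell)$ such conditions costs a factor of $\log x$, producing the claimed $(\log x)^{k\ell}$ loss.  The main technical obstacle I expect is checking the hypotheses of Lemma \ref{lem:polytope-encode} uniformly---in particular, each encoded linear condition must involve a product of variables of size at least $x^{\nu/3}$, which requires first peeling off by direct estimation those configurations whose relevant sub-product falls below this threshold before applying the Mellin machinery.
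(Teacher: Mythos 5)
Your proposal has a genuine gap in the treatment of the terms with some $r_{h_0}\ge 2$, and this gap cannot be repaired within the approach you outline.  You assert that after fixing $h_0$ with $r_{h_0}\ge 2$, ``grouping all the non-$h_0$ variables into a single divisor $D\le x^{\gamma}$'' reduces the inner sum to the form of Lemma~\ref{lem:Squarefull}.  But $D=n/m_{h_0}^{r_{h_0}}=n/a^{r_{h_0}}$ (with $a=e_{h_0,1}\cdots f_{h_0,j_{h_0}}$) need \emph{not} satisfy $D\le x^{\gamma}$: taking $r_{h_0}=2$, $j_{h_0}=1$, $e_{h_0,1}=2$, $f_{h_0,1}=1$ gives $a=2$, $m_{h_0}=4$ and $D\approx x/4\gg x^{\gamma}$.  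Lemma~\ref{lem:Squarefull} requires $d\le x^{\gamma}$ (and $df\le x^{\gamma}$ when $r\ge 2$), and none of these hold.  There is also no trivial bound that saves this: using hypothesis~\eqref{w} only yields $\sum_n |w_n|\tau(n)\le x(\log x)^{\varpi}$, far weaker than the required $O(x/(\log x)^{B-k})$.  The paper's split is deliberately finer: it peels off the case that some $\max(f_{h,j},f_{h,j}^{r_h-1})\ge n/x^{\gamma}$ (so that the complementary divisor \emph{is} $\le x^{\gamma}$, or $df\le x^{\gamma}$ when $r_h\ge 2$, matching Lemma~\ref{lem:Squarefull}), and it \emph{keeps} the remaining $r_h\ge 2$ contributions in the main term.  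This is precisely why each $m_h$ is factored into up to $3J$ pieces $f_{h,j}^{r_h-1},\,e_{h,j}^{r_h},\,f_{h,j}$ rather than your $2j_h\le 2H$ pieces, and why $\ell=6\lceil 1/(1-\gamma)\rceil=3J$ rather than $2H$; the coupling $d_{h,j}=f_{h,j}^{r_h-1}$ is then what must be encoded via Lemma~\ref{lem:Integration}.

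Separately, your final paragraph misreads the target.  The polytope constraint $\big(\frac{\log m_1}{\log x},\ldots,\frac{\log m_k}{\log x}\big)\in\cT_{u,v}$ and the inequalities $d_{h,j}<n/x^{\gamma}$ are \emph{retained} as summation conditions in the conclusion of Lemma~\ref{lem:PrimeDecomposition}; they do not need to be, and should not be, Mellin-decoupled at this stage.  Decoupling them would produce a sum of a different shape (without the conditions) and would not give the stated right-hand side.  The $(\log x)^{k\ell}$ loss in the lemma does not come from such decoupling; in the paper it arises from the maximization over the Heath-Brown parameters together with the $O(kJ)$ applications of Lemma~\ref{lem:Integration} encoding $d_{h,j}=f_{h,j}^{r_h-1}$ — applications which your (flawed) discarding of $r_h\ge 2$ was intended to avoid.
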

\begin{proof}
We begin by rewriting $\mathscr{S}$ as
\[
\mathscr{S} = \sum_{u,v} \psi_{u,v} \ssum{n=uv m_1\cdots m_k\sim x\\
\big( \frac{\log \ssc{m}{1}}{\log x}, \ldots,\frac{\log \ssc{m}{k}}{\log x}\big)\in \cT_{u,v}} w_n \prod_{h=1}^k \frac{(\log m_h)\one_{m_h\text{ prime}}}{\log x}.
\]
Let $J := 2\lceil1/(1-\gamma)\rceil$. By Lemma \ref{lem:HeathBrown}, for each $1\le h\le k$,
\begin{multline*}
(\log m_h)\one_{m_h\text{ prime}}
= \sum_{j_h=1}^{J} (-1)^{j_h-1}\binom{J}{j_h}
 \sum_{\ssc{r}{h}\le \frac{\log{x}}{\log 2}} \mu(\ssc{r}{h}) \times \\
 \times \ssum{\ssc{m}{h} = (e_{h,1}\cdots e_{h,j_h} f_{h,1}\cdots f_{h,j_h})^{\ssc{r}{h}} \\ 
 e_{h,j}^{\ssc{r}{h}}\le x^{1/J}\; (1\le j\le j_h)}
 (\log{f_{h,1}})\mu(e_{h,1})\cdots \mu(e_{h,j_h}).
\end{multline*}
We substitute this into our expression for $\mathscr{S}$, and take the maximum over all
$j_1,\ldots,j_k\le J$ and $r_1,\ldots,r_k\le (\log{x})/(\log{2})$.  Thus
\[
\mathscr{S} \ll_k (\log x)^{k} \max_{\substack{j_1,\ldots,j_k\le J \\ r_1,\ldots,r_k}}
\Bigg| \sum_{u,v} \psi_{u,v} \ssum{\eqref{eq:mj-cons}} w_n \prod_{h=1}^k \frac{\log f_{h,1}}{\log x}
 \sprod{1\le h\le k \\ 1\le j \le j_h} \mu(e_{h,j}) \Bigg|,
\]
where the summation is restricted by the conditions
\begin{equation}\label{eq:mj-cons}
\begin{split}
& \bigg( \frac{\log \ssc{m}{1}}{\log x}, \ldots, \frac{\log \ssc{m}{k}}{\log x} \bigg) \in \cT_{u,v}, \\
 n&=\ssc{m}{1}\cdots \ssc{m}{k} uv \sim x,\\
 \ssc{m}{h} &= (e_{h,1}\cdots e_{h,j_h} f_{h,1}\cdots f_{h,j_h})^{r_h}\quad (1\le h\le k), \\
  e_{h,j}^{r_h}&\le x^{1/J}\quad (1\le h\le k,\,1\le j\le j_h).
\end{split}
\end{equation}
We now split the sum according to the set $\cJ$ of indices $(h,j)$ for which $\max(f_{h,j},f_{h,j}^{r_h-1})\ge n/x^{\gamma}$.  
This gives
\begin{equation}\label{eq:S1bb}
\mathscr{S} \ll (\log x)^{k} \max_{\substack{j_1,\ldots,j_k\le J \\ r_1,\ldots,r_k }}
\sum_{\cJ}\Bigg| \sum_{u,v} \psi_{u,v} \ssum{\eqref{eq:mj-cons} \\
 \max(f_{h,j},f_{h,j}^{r_h-1})\ge n/x^{\gamma} \; \forall (h,j)\in \cJ \\
 \max(f_{h,j},f_{h,j}^{r_h-1})< n/x^{\gamma} \; \forall (h,j)\notin \cJ } 
 w_n \prod_{h=1}^k \frac{\log f_{h,1}}{\log x}
 \sprod{1\le h\le k \\ 1\le j \le j_h} \mu(e_{h,j}) \Bigg|.
\end{equation}
First we consider the contribution to the sum in \eqref{eq:S1bb} when $\cJ\ne \emptyset$, and so $\cJ$ contains
 some element $(h',j')$. We write $f=f_{h',j'}$, $r=r_{h'}$ and
 $d=n/f^r$, so that $d\le x^{\gamma}$ if $r=1$ and
 $df\le x^{\gamma}$ when $r\ge 2$. 
Also write $\lambda_{f}=1$ or $\lambda_{f}=\log{f}$, according to whether $j'>1$ or $j'=1$.
With $u,v$ and all of the variables $e_{h,j},f_{h,j}$ fixed except for $f=f_{h',j'}$,
 the conditions on the summation place $f$ in an interval which depends on the
 other variables (here we use the fact that $\cT_{u,v}$ is convex). 
  Also, given $d$ there are at most $\ssc{\tau}{2Jk+1}(d)$
 choices for the variables $u,v$ and the variables $e_{h,j}$ and $f_{h,j}$ for $(h,j)\ne (h',j')$.
Thus, by Lemma \ref{lem:tauk-tau}, the contribution to $\mathscr{S}$ from such $\cJ$ is
\[
\ll (\log{x})^k
\sum_{d \le x^{\gamma}} \tau(d)^{2Jk} \sup_{\text{Interval $\cI$}}\Biggl|
\ssum{ f\in \cI \\ \text{ if } r\ge 2 \text{ then } df\le x^{\gamma}} w_{d f^r}\lambda_{f}\Biggr|.
\]
By Lemma \ref{lem:Squarefull}, this is $O_k(x/(\log{x})^{B-k})$ provided that $B\ge 2Jk+1$.

Thus we are left to consider the sum in \eqref{eq:S1bb} with $\cJ=\emptyset$, so that
$\max(f_{h,j},f_{h,j}^{r_h-1}) < n/x^{\gamma}$ for all $h,j$.
A minor tweak is needed in order to mold our expression into the type required by the lemma.  We define
$d_{h,j} = f_{h,j}^{r_h-1}$ so that $f_{h,j}^{r_h} = f_{h,j} d_{h,j}$ and, for the purpose of using Lemma \ref{lem:Integration}, we
encode this as
\begin{equation}\label{fijdij}
f_{h,j}^{r_h-1} - \tfrac12 < d_{h,j} < f_{h,j}^{r_h-1} + \tfrac12.\end{equation}
By our choice of $J$, we have $x^{1/J} < (x/2)/x^{\gamma}$.
Let $e_{h,j}' = e_{h,j}^{\ssc{r}{h}}$ for all $h,j$ and
define the 1-bounded functions
\begin{align*}
E_r(e') &= (\one_{e'=e^r\text{ for some }e\in \NN}) \mu((e')^{1/r}) \one_{e' \le x^{1/J}}.
\end{align*}
Then the $\cJ=\emptyset$ term in \eqref{eq:S1bb} equals
\[
\sum_{u,v} \psi_{u,v} \ssum{\eqref{fijdij}, \eqref{eq:mj-cons2}}
 w_n \prod_{h=1}^k \frac{\log f_{h,1}}{\log x} \sprod{1\le h\le k\\1\le j\le j_h}
 E_{\ssc{r}{h}}(e'_{h,j}).
\]
where the summation is restricted by the conditions
\begin{equation}\label{eq:mj-cons2}
\begin{split}
& \bigg(\frac{\log \ssc{m}{1}}{\log x},\ldots,\frac{\log \ssc{m}{k}}{\log x}\bigg)\in \cT_{u,v}, \\
 n&=\ssc{m}{1}\cdots \ssc{m}{k} uv \sim x,\\
 \ssc{m}{h} &= d_{h,1} \cdots d_{h,j_h}e'_{h,1}\cdots e'_{h,j_h} f_{h,1}\cdots f_{h,j_h}\quad (1\le h\le k), \\
  d_{h,j},e'_{h,j},f_{h,j}&< n/x^\gamma \quad (1\le h\le k,\,1\le j\le j_h).
\end{split}
\end{equation}

The conditions \eqref{fijdij} may be encoded using at most
$2kJ$ applications of  Lemma \ref{lem:Integration}.  The error terms are all
$O(x^{-90})$ using the bound $|w_n| \ll x^{1.1}$ that follows from the first part of \eqref{w}.
  This introduces a factor 
$(\log x)^{2kJ}$ together with factors
\[
\prod_{h,j} d_{h,j}^{i \ssc{t}{h,j} -i t'_{h,j}} (f_{h,j}^{r_h-1}+1/2)^{i t'_{h,j}} (f_{h,j}^{r_h-1}-1/2)^{-it_{h,j}},
\]
where $\ssc{t}{h,j}, t_{h,j}'$ are real numbers. Therefore,
\begin{multline*}
\mathscr{S} \ll  (\log x)^{k+2kJ} \sup_{\substack{\alpha_{h,j},\beta_{h,j},\gamma_{h,j} \;1-\text{bounded}\\ (1\le h\le k,\,1\le j\le j_h)}} \;
\Biggl|\sum_{u,v}\ssc{\psi}{u,v}\ssum{\eqref{eq:mj-cons2}} w_{n}  \prod_{h=1}^k \prod_{j=1}^{j_h}\alpha_{h,j}(d_{h,j})\beta_{h,j}(e'_{h,j})\gamma_{h,j}(f_{h,j})\Biggr| \\
+O\pfrac{x}{(\log x)^{B-k}}.
\end{multline*}
Finally, let $\ell=3J=6\cl{1/(1-\gamma)}$. To obtain the expression of the lemma we introduce extraneous variables $e'_{h,j},f_{h,j},d_{h,j}$ for each $1\le h\le k$ and $j_h < j\le J$,
each weighted by the indicator function of the variable equalling 1. This allows us to bound $\mathscr{S}$ with the same expression where $j_h=J$ for all $h$. Finally, we relabel the variables $d_{h,1},\dots,d_{h,J},e'_{h,1},\dots,e'_{h,J},f_{h,1},\dots,f_{h,J}$ as $d_{h,1},\dots,d_{h,\ell}$. This gives
\begin{multline*}
\mathscr{S} \ll  (\log x)^{k\ell} \sup_{\substack{\beta_{1,1},\ldots,\beta_{k,\ell} \\ 1-\text{bounded}}} \;
\Biggl|\sum_{u,v}\ssc{\psi}{u,v}\ssum{n=uv m_1\cdots m_k\sim x\\ 
(\frac{\log{m_1}}{\log{x}},\dots,\frac{\log{m_k}}{\log{x}})\in \cT_{u,v} \\
\ssc{m}{h}=d_{h,1}\cdots \ssc{d}{h,\ell} \; (1\le h\le k)\\ 
d_{h,j}<n/x^\gamma\,\forall h,j} w_{n}  \prod_{h=1}^k \prod_{j=1}^{\ell}\beta_{h,j}(d_{h,j})\Biggr| +O\pfrac{x}{(\log x)^{B-k}},
\end{multline*}
as required.
\end{proof}
\begin{lem}[Removing box conditions]\label{lem:BoxConditions}
Let $r\ge 1$ and $0\le c_i<d_i\le 1$ for $1\le i\le r$.
For positive integers $\ssc{m}{1},\ldots,\ssc{m}{r}$, suppose that
$|\alpha_{\ssc{m}{1},\dots,\ssc{m}{r}}| \le x^3$.  Then
\[
\Biggl|\ssum{\ssc{m}{1},\cdots,\ssc{m}{r}\\ x^{c_i} < m_i \le x^{d_i}\; \forall i}
 \alpha_{\ssc{m}{1},\dots,\ssc{m}{k}}\Biggr| \ll_{r} \; (\log{x})^{2r} 
\sup_{\substack{t_1,\ldots,t_r\in\RR}}
\biggl|\sum_{\ssc{m}{1},\cdots,\ssc{m}{r}}\alpha_{\ssc{m}{1},\dots,\ssc{m}{r}} m_1^{it_1}\cdots m_r^{it_r} \biggr|+x^{-95}.
\]
\end{lem}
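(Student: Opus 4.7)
The plan is to iteratively apply Lemma \ref{lem:Integration} to remove each of the $2r$ box constraints $x^{c_i}<m_i$ and $m_i\le x^{d_i}$ one at a time, replacing each by a multiplicative Mellin phase $m_i^{\pm it}$ at the cost of a single factor $\log x$ and a supremum over a real parameter. To ensure the hypotheses of Lemma \ref{lem:Integration} are met (in particular $|f(n)-g(m)|\gg x^{-20}$), I first shift the thresholds, replacing $x^{c_i}$ by the half-integer $y_i := \lfloor x^{c_i}\rfloor+\tfrac12$ and $x^{d_i}$ by $z_i := \lfloor x^{d_i}\rfloor+\tfrac12$. Since the $m_i$ are positive integers, the constraints $x^{c_i}<m_i\le x^{d_i}$ are equivalent to $y_i<m_i<z_i$, and moreover $|m_i-y_i|,|m_i-z_i|\ge\tfrac12$ always, so that $|\log(m_i/y_i)|,|\log(m_i/z_i)|\gg 1/x$.

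To peel off the first constraint $m_1>y_1$, I apply Lemma \ref{lem:Integration} taking $f$ to be the identity function evaluated at $m_1$ and $g$ the constant function $y_1$ (the ``dummy'' sum appearing in the lemma is made trivial by restricting the accompanying coefficient sequence to a single value of its second index). This yields a bound of the form $(\log x)\sup_{t_1}\bigl|\sum_{m_1,\ldots,m_r} \alpha_{m_1,\ldots,m_r} m_1^{it_1}y_1^{-it_1}\bigr|+x^{-100}$; since $|y_1^{-it_1}|=1$ and $y_1$ is independent of all the $m_i$, this factor may be absorbed. Repeating the procedure for the upper-bound constraint $m_1<z_1$ introduces a second Mellin phase $m_1^{-it_1'}$, and the resulting nested supremum over $(t_1,t_1')$ reduces to a supremum over the single parameter $s_1 := t_1-t_1'$, since the inner sum depends only on this combination. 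Iterating over $i=2,\ldots,r$ produces the claimed bound with prefactor $(\log x)^{2r}$ and a joint supremum over $(t_1,\ldots,t_r)\in\RR^r$.

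The main (minor) technical point is ensuring the error $x^{-100}$ given by Lemma \ref{lem:Integration} survives the $2r$ iterations despite the coefficient bound $|\alpha_{m_1,\ldots,m_r}|\le x^3$ being larger than the $x^2$ hypothesized in Lemma \ref{lem:Integration}. I handle this by rescaling the coefficient sequence by a factor $x^{-(r+5)}$ before each application (restoring the $|\alpha|\le x^2$ hypothesis) and then rescaling back; the cumulative error after $2r$ iterations is then $\ll_r x^{r+5}\cdot x^{-100}\ll x^{-95}$, as required. Alternatively, tracking through the proof of Lemma \ref{lem:Integration} (which uses Perron's formula truncated at height $T=x^{200}$) shows the error is actually $\ll x^{C-158}$ for coefficient sequences of size $x^C$, comfortably absorbing the polynomial losses from our iteration.
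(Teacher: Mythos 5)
Your proposal takes essentially the same route as the paper: replace the thresholds $x^{c_i},x^{d_i}$ with the half-integers $\lfloor x^{c_i}\rfloor+\tfrac12$, $\lfloor x^{d_i}\rfloor+\tfrac12$ so that the separation hypothesis of Lemma \ref{lem:Integration} holds, then apply that lemma $2r$ times, absorb the constant phases $y_i^{-it}$, $z_i^{it'}$ (which have modulus one), and collapse the pair $(t_i,t_i')$ into a single parameter $s_i=t_i-t_i'$. This is exactly what the paper does in its one-paragraph proof, and your explicit treatment of the supremum-reduction and of the constant phases is a correct elaboration of the terse statement ``the result follows from at most $2r$ successive applications.''

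Your observation about the coefficient bound is also a fair one: to apply Lemma \ref{lem:Integration} with $n=m_i$ and a dummy $m$, one must absorb the remaining $r-1$ sums into $\alpha_{n,m}$, which can inflate the coefficient bound from $x^3$ to as much as $x^{r+2}$, exceeding the hypothesis $|\alpha_{n,m}|\le x^2$. The paper glosses over this. However, the rescaling fix as you state it has an arithmetic slip. Rescaling by $x^{-(r+5)}$, applying Lemma \ref{lem:Integration} to get an error of $x^{-100}$, and rescaling back gives a per-iteration error of $x^{(r+5)-100}=x^{r-95}$, and this does \emph{not} satisfy $x^{r-95}\ll x^{-95}$ for $r\ge 1$; after $2r$ iterations the accumulated error is $\ll_r(\log x)^{2r-1}x^{r-95}$, which is larger than $x^{-95}$ by a factor $\approx x^r$. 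Your ``alternative'' remedy is the correct one: the proof of Lemma \ref{lem:Integration} (Perron at height $T=x^{200}$, with $f,g,f-g\in[x^{-20},x^{20}]$) actually yields an error $\ll x^{C-156}$ when $|\alpha|\le x^C$, so with $C=r+2$ the per-iteration error is $\ll x^{r-154}$ and the accumulated error is comfortably $\ll x^{-95}$. Note that even this argument only works for $r$ bounded (roughly $r\le 50$ or so); this is harmless because every application of the lemma in the paper has $r=O_{\gamma,\nu}(1)$, but the statement as written, taken literally for arbitrary $r$, would require increasing the Perron height in Lemma \ref{lem:Integration}.
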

\begin{proof}
The set of conditions $x^{c_j} < m_j \le x^{d_j}$ for each $1\le j\le r$, is equivalent to
\[
\fl{x^{c_j}}+\tfrac12 < m_j < \lfloor x^{d_j}\rfloor +\tfrac12 \qquad (1\le j\le r),
\]
which is convenient for the purpose of using Lemma \ref{lem:Integration}.
Thus the result follows from at most $2r$ successive applications of Lemma \ref{lem:Integration},
giving an extra factor which is at most $(\log x)^{2r}$ together with factors
\[
\prod_{j=1}^k (\lfloor x^{d_j} \rfloor + \tfrac12)^{it_j}m_j^{-it_j} 
(\lfloor x^{c_j} \rfloor + \tfrac12)^{-it'_j} m_j^{it'_j},
\]
where the $t_j$ and $t_j'$ are real numbers. 
\end{proof}
%
%
%%%%%%%%%%%%%%%%%%%%%%%%%%%%%%%%%%%%%%%%%%%%%%%%%%%%%%%

\begin{lem}\label{lem:PrimeSplit}
Suppose that 
 $|\ssc{\psi}{u,v}|\le 1$ for all pairs $(u,v)$
 of positive integers with $u\le x^{\gamma}$, let $k$ be a positive integer,  and let
 $\ell=6\lceil 1 /(1-\gamma) \rceil$.
For each such pair $(u,v)$, let $\cT_{u,v}$ be a convex region in $[0,1]^k$.
Suppose that $0<\sigma <1$.
Let $g$ be a real function supported on a convex region $\cU$, Lipschitz continuous on $\cU$
with Lipschitz constant and $|g|$ bounded above by $K$. 
Suppose that for all $u,v,p_1,\ldots,p_k$ with $n=uvp_1\cdots p_k\sim x$,
$P^+(uv)\le n^{\sigma}<p_1\le \cdots \le p_k$
  and $\bv(p_1\cdots p_k;x)\in \cT_{u,v}$,
 we have $\bv(p_1\cdots p_k;n)\in \cU$.
 Let $D\ge 1$.
Assume $(w_n)$ satisfies \eqref{w} and $\eqref{eq:TypeI}$ with $B$ sufficiently large in terms of $D,k,\gamma$.
Then, 
\begin{multline*}
\sum_{u,v}\psi_{u,v} \ssum{n=uv p_1\cdots p_k\sim x\\ P^+(uv)\le n^{\sigma}< p_1\le \cdots \le p_k\\
 \bv(p_1\cdots p_k;x)\in \cT_{u,v}}
 w_n \; g \big( \bv(\ssc{p}{1}\cdots \ssc{p}{k};n) \big)
\ll \frac{x}{(\log x)^{D-\varpi}}
+(\log x)^{k\ell+(k+2)(D+2)}  \, \times \\
\times \sup_{\substack{\beta_{1,1},\ldots,\beta_{k,\ell} \\ 1-\text{bounded}}} \;\;
\sup_{t,t'\in \RR} \;
\Biggl| \sum_{u,v} \ssc{\psi}{u,v} 
\ssum{ n=\ssc{m}{1}\cdots \ssc{m}{k} uv \sim x \\
P^+(uv) \le n^\sigma <  m_1 \le \cdots \le m_k \\
  \big(\frac{\log \ssc{m}{1}}{\log x},\ldots,\frac{\log \ssc{m}{k}}{\log x}\big)\in\cT_{u,v}\\ 
  \ssc{m}{h}=d_{h,1}\cdots d_{h,\ell}\; (1\le \ell\le k)\\ 
  d_{h,j}<n/x^\gamma\,\forall h,j}
w_n u^{it} v^{it'} \prod_{h=1}^k \prod_{j=1}^{\ell} \beta_{h,j}(d_{h,j})\Biggr|,
\end{multline*}
where the constant implied by $\ll$ may depend on $D,K,k,\sigma$.
\end{lem}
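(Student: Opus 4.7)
The plan is to reduce Lemma \ref{lem:PrimeSplit} to the setup of Lemma \ref{lem:PrimeDecomposition} by first removing the Lipschitz weight $g(\bv(p_1\cdots p_k;n))$ via a double discretization in $n$-space and in $\bv$-space, then using Lemma \ref{lem:polytope-encode} to encode the resulting extra polytope conditions as multiplicative characters. The fundamental difficulty is that $g$ depends on $\log n=\log(uv)+\sum\log p_h$, which couples all the summation variables and is not separable, so it cannot be factored out of the combinatorial decompositions.

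First I would partition $(x/2,x]$ into $J=(\log x)^{D+2}$ geometric sub-intervals $I_j=(x_{j-1},x_j]$ with ratio $2^{1/J}$, so that $\log n-\log x_j=O(1/J)$ for $n\in I_j$. The Lipschitz property of $g$ then yields $g(\bv(p_1\cdots p_k;n))=g(\bv(p_1\cdots p_k;x_j))+O(K/(J\log x))$, and summation via \eqref{w} bounds the accumulated substitution error by $O(Kx/(\log x)^{D-\varpi+3})$, absorbed into the admissible $x/(\log x)^{D-\varpi}$ budget. Next I would partition the support $\cU$ of $g$ into convex boxes $B$ of side $\eta=(\log x)^{-D-2}$ with centers $c_B$ (boundary boxes handled using Lemma \ref{lem:covering}), and approximate $g$ on each box by $g(c_B)$. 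The total number of $(j,B)$ pairs is $J(1/\eta)^k=(\log x)^{(k+1)(D+2)}$.

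To invoke Lemma \ref{lem:PrimeDecomposition}, whose summand carries the factor $\prod_h\log p_h/\log x$, I would insert this factor artificially together with its reciprocal. Since $p_h>n^\sigma\ge(x/2)^\sigma$, one has $\log p_h/\log x\ge\sigma/2$ on the support, so $\prod_h(\log p_h/\log x)^{-1}$ is bounded, and on each box $B$ this reciprocal is nearly constant (close to $\prod_h c_{h,B}^{-1}$ with per-term error $O(\eta)$), with the deviation absorbed into the error budget. Writing each $(j,B)$ sub-sum in the resulting form, the constraint region $\cT_{u,v}^{(j,B)}$ becomes the intersection of the original $\cT_{u,v}$, the box condition $\{\bv(\cdots;x_j)\in B\}$, and the slab condition $\{n\in I_j\}$; the slab is linear in $\bv(p_1\cdots p_k;x)$ for fixed $u,v$, so $\cT_{u,v}^{(j,B)}$ remains a convex polytope depending on $u,v$, as permitted by the hypotheses of Lemma \ref{lem:PrimeDecomposition}. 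Invoking that lemma contributes $(\log x)^{k\ell}$ and an admissible Type I error $O(x/(\log x)^{B-k})$ per pair, decomposing each $p_h$ into $m_h=d_{h,1}\cdots d_{h,\ell}$ with $d_{h,j}<n/x^\gamma$.

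Finally I would apply Lemma \ref{lem:polytope-encode} to the output to encode the box-and-slab conditions not already present in $\cT_{u,v}$ as multiplicative inequalities of the form $\prod_j d_{h,j}^{c_{i,h,j}}\cdot u^{c_{i,u}}v^{c_{i,v}}\le 1$ (constants on the right absorbed into Mellin contour shifts). There are $O(k)$ such extra conditions, contributing a further factor $(\log x)^{O(k)}$ and producing $1$-bounded functions $\beta_{h,j}(d_{h,j})$ absorbing the characters on each $d_{h,j}$, together with explicit factors $u^{it}$ and $v^{it'}$ absorbing the characters on $u,v$ (with generically distinct $t,t'$ since the independent Mellin variables coming from the different conditions produce independent exponents on $u$ and $v$). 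The conditions $P^+(uv)\le n^\sigma<m_1\le\cdots\le m_k$ remain in the output. Aggregating over all $(j,B)$ pairs collects the stated $(\log x)^{k\ell+(k+2)(D+2)}$ prefactor in front of the supremum. The principal technical obstacle will be controlling all of these approximations with the required precision: a naive single substitution $\log n\mapsto\log x$ loses $1/\log x$ per term, inadequate for large $D$, which is precisely what forces the double discretization and the artificial insertion of $\prod_h(\log p_h/\log x)$; the latter is legitimate only because the hypothesis $p_1>n^\sigma$ keeps $\log p_h/\log x$ bounded away from zero.
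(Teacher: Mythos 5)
Your proposal follows essentially the same strategy as the paper's proof: discretize so that $g$ is locally approximately constant, artificially insert the factor $\prod_h (\log p_h/\log x)$ (and its bounded reciprocal) to match the shape required by Lemma \ref{lem:PrimeDecomposition}, absorb the discretization constraints into the polytopes $\cT_{u,v}$, apply Lemma \ref{lem:PrimeDecomposition}, and finally remove the discretization conditions via Perron/Mellin truncations. The only genuine structural difference is the geometry of the discretization: the paper uses a single partition of $(0,1]^{k+2}$ into boxes over the coordinates $\big(\tfrac{\log p_1}{\log x},\ldots,\tfrac{\log p_k}{\log x},\tfrac{\log u}{\log x},\tfrac{\log v}{\log x}\big)$, whereas you first slice $n$ into geometric slabs and then tile $\cU\subseteq\RR^k$ with boxes. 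Both are serviceable.

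Two small points. First, the final encoding step should invoke Lemma \ref{lem:BoxConditions} (or directly iterate Lemma \ref{lem:Integration}), not Lemma \ref{lem:polytope-encode}: the slab walls $x_{j-1}<n\le x_j$ and box walls $x_j^{a/M}<m_h\le x_j^{(a+1)/M}$ are constant-sided, which is exactly the form Lemma \ref{lem:BoxConditions} handles, while Lemma \ref{lem:polytope-encode} is built for purely multiplicative relations $n_1^{c_1}\cdots n_k^{c_k}\le 1$ with a structured exponent pattern. Your mention of ``Mellin contour shifts'' shows you have the right mechanism in mind. Second, your parenthetical claim of obtaining genuinely independent $t,t'$ is unjustified under your discretization: the only constraint that touches $u$ and $v$ is the slab condition, which involves them through the single product $uv$, so Perron inversion produces $u^{-it}v^{-it}$ with a common $t$. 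This causes no loss — the bound with $\sup_t$ over $u^{it}v^{it}$ is dominated by the stated $\sup_{t,t'}$ over $u^{it}v^{it'}$ — but it is worth noting that the independent parameters arise in the paper precisely because $u$ and $v$ get their own box coordinates. Finally, your exponent $(k+1)(D+2)+k\ell+O(k)$ does not exactly match $(k+2)(D+2)+k\ell$ and may exceed it for small $D$; this is harmless in applications (where $B$ is taken large in terms of $D,k$), but to reproduce the stated exponent you would want to match the paper's discretization scales more carefully, e.g. by taking box side $(\log x)^{-D}$ as in the paper rather than $(\log x)^{-(D+2)}$.
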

\begin{proof}
Let $M=\fl{(\log x)^D}$.
We will partition $(0,1]^{k+2}$ into small boxes.
For each tuple $\bd=(d_1,\ldots,d_{k+2})\in [M]^{k+2}$, let
\[
\cB(\bd) := \prod_{i=1}^{k+2} \bigg( \frac{\ssc{d}{j}-1}{M}, \frac{\ssc{d}{j}}{M} \bigg].
\]

We fix $\bd$ such that
there is at least one $(k+2)$-tuple $(p_1,\ldots,p_k,u,v)$ satisfying
\begin{equation}\label{eq:fixed-bb}
\begin{split}
\bv(\ssc{p}{1} \cdots \ssc{p}{k};x) &\in \cT_{u,v}, \\
 \bigg( \bv(\ssc{p}{1} \cdots \ssc{p}{k}; x),
 \frac{\log u}{\log x}, \frac{\log v}{\log x} \bigg)
 &\in \cB(\bd), \\
P^+(uv)\le n^\sigma <\ssc{p}{1} \le \cdots \le \ssc{p}{k}, \;\; n=\ssc{p}{1}\cdots \ssc{p}{k} uv &\sim x.
\end{split}
\end{equation}
The third line in \eqref{eq:fixed-bb} implies that $\ssc{p}{j} \ge (x/2)^\sigma$ 
for all $j$, and hence $d_j \ge (\sigma/2) M$ for all $j$.
 Also, $\ssc{p}{1}\cdots \ssc{p}{k} uv > x/2$
and thus   $\sum d_j \ge M/2$.
For one such choice of $(k+2)-$tuple 
\[
(\ssc{p}{1},\ldots,\ssc{p}{k},u,v) = (p_1',\ldots,p_k',u',v')
\]
satisfying \eqref{eq:fixed-bb} (it does not matter which tuple), let
\[
g_0(\bd) := g \bigg( \frac{\log p_1'}{\log n'},\ldots,\frac{\log p_k'}{\log n'} \bigg),
\quad n'=p_1'\cdots p_k' u'v'.
\]
For any tuple   $(p_1,\ldots,p_k,u,v)$ satisfying \eqref{eq:fixed-bb}, 
\[
\frac{\log p_j}{\log n} = \frac{\frac{d_j+O(1)}{M}}{\frac{d_1+\cdots+d_{k+2}+O(1)}{M}} = 
 \frac{d_j}{d_1+\cdots + d_{k+2}} + O\pfrac{1}{M}
\qquad (1\le j\le k),
\]
and it follows that
\begin{align*}
g \bigg( \frac{\log p_1}{\log n},\ldots,\frac{\log p_k}{\log n} \bigg) &= g_0(\bd) 
+ O_{k,K} \pfrac{1}{M}\\
&= \frac{(\log p_1)\cdots (\log p_k) g_0(\bd)}{\prod_{j=1}^k ((d_j/M)\log x)} + 
O_{k,K,\sigma} \pfrac{1}{M}.
\end{align*}
(We introduce the logarithm factors in order to apply Lemma \ref{lem:PrimeDecomposition}).
For each $\bd$, it follows that
\begin{align*}
\sum_{u,v}\ssc{\psi}{u,v} &\ssum{\eqref{eq:fixed-bb}} w_{n} g \bigg( \frac{\log \ssc{p}{1}}{\log n},\ldots,\frac{\log \ssc{p}{k}}{\log n} \bigg)
\ll_{D,K,k,\sigma} \\
& \qquad\qquad  \Bigg|\sum_{u,v}\ssc{\psi}{u,v}
\ssum{\eqref{eq:fixed-bb}}   w_n\cdot \frac{\log p_1}{\log x} \cdots \frac{\log p_k}{\log x}\Bigg| +
 \frac{1}{M} \sum_{u,v}|\ssc{\psi}{u,v}| \ssum{\eqref{eq:fixed-bb}} |w_{n}|.
\end{align*}
Write the right side as $|S_1(\bd)| + M^{-1} S_2(\bd)$.

We first estimate $S_1(\bd)$.  With $u,v$ fixed,
the conditions \eqref{eq:fixed-bb} can be written as 
\[
\bv(\ssc{p}{1} \cdots \ssc{p}{k};x) \in \cT'_{u,v}
\]
for some convex region $\cT'_{u,v}$. Since $(w_n)$ satisfies \eqref{eq:TypeI} with $B$ sufficiently large in terms of $k,\gamma$, it then follows from Lemma \ref{lem:PrimeDecomposition} that
\[
S_1(\bd) \ll  (\log x)^{k\ell} 
\sup_{\substack{\beta_{1,1},\ldots,\beta_{k,\ell} \\ 1-\text{bounded}}} \;
\Biggl| \sum_{u,v} \ssc{\psi}{u,v} \!\! \ssum{(\star) \\
\ssc{m}{h}=d_{h,1}\cdots d_{h,\ell}\\ \;(1\le \ell\le k)\\ d_{h,j}<n/x^\gamma\,\forall h,j}
 w_n \prod_{h=1}^k \prod_{j=1}^{\ell} \beta_{h,j}(d_{h,j})\Biggr|+O\pfrac{x}{(\log x)^{B-k}},
\]
where $(\star)$ is the version of \eqref{eq:fixed-bb} with each variable $p_j$
replaced by $m_j$.
We may remove the condition
$(\frac{\log m_1}{\log x},\ldots,\frac{\log m_k}{\log x},\frac{\log u}{\log x}, \frac{\log v}{\log x})\in \cB(\bd)$ 
using Lemma \ref{lem:BoxConditions}.
This introduces a factor $(\log x)^{2(k+2)}$ together with factors
$m_1^{it_1}\cdots m_k^{it_k}$ (which can be absorbed into the functions
$\beta_{h,j}$) and  $u^{it_{k+1}} v^{it_{k+2}}$, for real numbers
$t_1,\ldots,t_{k+2}$.
Summing over the $M^{k+2} \le (\log x)^{D(k+2)}$ choices for $\bd$ and recalling that $B$ is assumed to be sufficiently large in terms of $D$ gives
\begin{multline*}
\sum_{\bd} |S_1(\bd)| \ll (\log x)^{k\ell+(D+2)(k+2)} \times \\ \times
 \sup_{\substack{\beta_{1,1},\ldots,\beta_{k,\ell} \\ 1-\text{bounded}}} \;\;
\sup_{t,t'\in \RR} \;
\Biggl| \sum_{u,v} \ssc{\psi}{u,v} 
\ssum{ n=\ssc{m}{1}\cdots \ssc{m}{k} uv \sim x \\
P^+(uv) \le n^\sigma <  m_1 \le \cdots \le m_k \\
  \big(\frac{\log \ssc{m}{1}}{\log x},\ldots,\frac{\log \ssc{m}{k}}{\log x}\big)\in\cT_{u,v}\\ 
  \ssc{m}{h}=d_{h,1}\cdots d_{h,\ell}\; (1\le \ell\le k)\\ 
  d_{h,j}<n/x^\gamma\,\forall h,j}
w_n u^{it} v^{it'} \prod_{h=1}^k \prod_{j=1}^{\ell} \beta_{h,j}(d_{h,j})\Biggr|.
\end{multline*}

We now estimate the $S_2(\bd)$ terms.  Given $n$, there is at most one tuple
 $(\ssc{n}{1},p_1,\ldots,p_k,\bd)$ with $\ssc{n}{1} p_1\cdots p_k=n$ 
 and $P^+(\ssc{n}{1})\le n^{\sigma} < p_1\le \cdots \le p_k$. Since $u|\ssc{n}{1}$
 and $u\le x^{\gamma}$, then by \eqref{w}, 
\begin{align*}
\sum_{\bd} S_2(\bd) \le \sum_{n} |w_{n}|\sum_{u|n, u\le x^{\gamma}} 1 
\le \sum_n |w_n| \tau(n) \le  x (\log x)^{\varpi}.
\end{align*}

Recalling that $M=\fl{(\log x)^D}$, we conclude that 
\begin{equation}\label{eq:S2b}
\frac{1}{M} \sum_{\bd} S_2(\bd) \ll \frac{x}{(\log x)^{D-\varpi}}.
\end{equation}
This gives the result.
\end{proof}

%
%%%%%%%%%%%%%%%%%%%%%%%%%%%%%%%%%%%%%%%%%%%%%%%%%%%%%%%
%
%
%   Splitting up smooths lemma
%

In a future work we will need to analyze more general constructions than those
we use in \eqref{N-def}--\eqref{h-def} below in order to handle parameter ranges where 
$\cG_2$ becomes important.
To facilitate this we introduce a hypothesis, which we call \emph{splittable}, on a function $f(d)$. 

Given a positive integer $d$ and parameter $L\ge 2$ we may canonically decompose $d$
as  follows: if $d=1$ then set $s=1$ and $d_s=1$.  If $d>1$, the $d$ 
 can be decomposed uniquely as
\begin{equation}\label{eq:diprimes}
\begin{split}
d&=d_1\cdots d_s, \quad P^-(d_i) \ge  P^+(d_{i+1}) \quad  (1\le i\le s-1),\\
d_j &\in \big(L,P^-(d_j)L\big] \quad  (1\le j\le s-1),\qquad\qquad d_s \in \big(1, P^-(d_s)L\big],
\end{split}
\end{equation}
by successively adding the largest unused prime to 
$d_j$ until the product is $>L$ or we run out of primes.
 Specifically, if $d$ has prime factorization $d=q_1\cdots q_r$ with $q_1 \ge \cdots \ge q_r$, then either $d\le L$, in which case $s=1$ and $d_1=d$, or $d>L$, in which case
 $d_1 = q_1 \cdots q_{r_1}$ where $r_1$ is the minimum integer so that
$d_1 > L$; in particular, if $q_1>L$ then $r_1=1$. 
Then either $d/d_1\le L$, in which case $s=2$ and $d_2=d/d_1$, or
$d/d_1>L$, in which case $d_2=q_{r_1+1}\cdots q_{r_2}$, where $r_2$
is the smallest integer such that $d_2>L$, and so on. 
Only the final factor $d_s$ may be $\le L$, so certainly $s$ is finite.

When $d$ is known to be squarefree, we may similarly decompose $d>1$
uniquely  in the form 
\begin{equation}\label{eq:diprimes-sqfree}
\begin{split}
d&=d_1\cdots d_s, \quad P^-(d_i) >  P^+(d_{i+1}) \quad  (1\le i\le s-1),\\
d_j &\in \big(L,P^-(d_j)L\big] \quad  (1\le j\le s-1),\qquad\qquad d_s \in \big(1, P^-(d_s)L\big].
\end{split}
\end{equation}

\medskip

\begin{defn}[Splittable functions]\label{def:splittable}
 Given $L\ge 2$, we
say that a function $f:\NN\to \CC$ is \emph{splittable with respect
to $L$} if $|f(n)|\le 1$ for all $n\in \NN$ and there are functions $f_{i,j}$ 
with $|f_{i,j}(e)|\le 1$ for all $i,j,e$ and with either
\begin{itemize}
\item[(a)] For all $d=d_1\cdots d_s>1$ with \eqref{eq:diprimes},
then $f(d)=f_{s,1}(d_1)\cdots f_{s,s}(d_s)$; or
\item[(b)] $f$ is supported on squarefree integers and for all squarefree $d=d_1\cdots d_s>1$ with \eqref{eq:diprimes-sqfree},
then $f(d)=f_{s,1}(d_1)\cdots f_{s,s}(d_s)$.
\end{itemize}
\end{defn}

\begin{rmk}  When $s=1$ we may always take $f_{1,1}=f$, and this also covers
the case $d=1$, as then $f(1)=f_{1,1}(1)$.
\end{rmk}

\begin{lem}\label{lem:splittable-modified-by-mult-fcn}
We have 
\begin{enumerate}
\item[(i)] For any $L\ge 2$ and any completely multiplicative functions $f$ with $|f(d)|\le 1$ for all $d$, $f$ is splittable with respect to $L$;
\item[(ii)]  For any $L\ge 2$ and any multiplicative functions $f$ supported on squarefree integers and with $|f(d)|\le 1$ for all $d$, $f$ is splittable with respect to $L$;
\item[(iii)] For any $L\ge 2$, any function $f$ splittable with respect to $L$,
and any completely multiplicative function $f'$ with $|f'(d)|\le 1$ for all $d$,
$ff'$ is  splittable with respect to $L$.
\end{enumerate}
\end{lem}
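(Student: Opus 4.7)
The plan is that all three claims follow essentially by direct unwinding of Definition \ref{def:splittable}, with the only real content being to check that the canonical decompositions behave well under the multiplicativity hypotheses.

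For (i), given completely multiplicative $f$ with $|f|\le 1$, I take the canonical decomposition \eqref{eq:diprimes} of any $d\ge 2$ and simply invoke complete multiplicativity to write $f(d_1\cdots d_s)=f(d_1)\cdots f(d_s)$; this holds regardless of whether the $d_i$ are coprime, so I may set $f_{s,j}(e):=f(e)$ for all $s,j$ and verify condition (a) of Definition \ref{def:splittable}. The bound $|f_{s,j}|\le 1$ is immediate, and the case $d=1$ is handled with $s=1$ and $d_1=1$.

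For (ii), I use the squarefree canonical decomposition \eqref{eq:diprimes-sqfree}, in which the strict inequalities $P^-(d_i)>P^+(d_{i+1})$ force the $d_i$ to be pairwise coprime squarefree integers. Then ordinary multiplicativity gives $f(d)=f(d_1)\cdots f(d_s)$, and I set $f_{s,j}(e):=f(e)$ as before, verifying condition (b).

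For (iii), suppose $f$ is splittable with witnesses $f_{i,j}$ in either sense (a) or (b). I claim that $ff'$ is splittable with witnesses $g_{s,j}(e):=f_{s,j}(e)f'(e)$. The bound $|g_{s,j}|\le 1$ follows from $|f_{s,j}|\le 1$ and $|f'|\le 1$. The key identity is that, for the canonical decomposition $d=d_1\cdots d_s$ (in either the general or the squarefree sense), complete multiplicativity of $f'$ yields $f'(d)=f'(d_1)\cdots f'(d_s)$ with no coprimality needed; combining this with the splittability factorization of $f(d)$ gives
\[
(ff')(d) = \prod_{j=1}^{s} f_{s,j}(d_j)\,f'(d_j) = \prod_{j=1}^s g_{s,j}(d_j).
\]
In case (b), one also notes that since $f$ is supported on squarefree integers, so is $ff'$, which preserves the squarefree setup of the definition.

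The main (and only) obstacle to watch out for is the distinction between the two cases of Definition \ref{def:splittable}: in case (a) the factors $d_i$ in \eqref{eq:diprimes} need not be coprime, so ordinary multiplicativity would not suffice, and this is precisely why complete multiplicativity of $f'$ is hypothesized in (iii). Once this is observed, no further estimates are required.
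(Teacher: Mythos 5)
Your proof is correct and takes essentially the same approach as the paper: for (i) and (ii) set $f_{s,j}=f$ and unwind the canonical decomposition, and for (iii) multiply each witness $f_{s,j}$ by $f'$, exploiting complete multiplicativity of $f'$ so that no coprimality among the $d_i$ in \eqref{eq:diprimes} is needed. The paper states this in one line; your write-up just fills in the same verification.
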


\begin{proof}
For (i) and (ii), take $f_{i,j}=f$ for all $i,j$.
To prove (iii), suppose that
 $f_{i,j}$ are the functions associated to $f$, satisfying (a).
Then (a) holds with $f$ replaced by $f'$ and $f_{i,j}$
replaced by $f' f_{i,j}$ for all $i,j$.  The same argument works if $f$ satisfies (b).
\end{proof}

For the purposes of our argument, the only property of splittable functions which
we need is embodied in Lemma \ref{lem:SmoothSplit} below.
In a later work we will need that various sieve weights satisfy the conclusion
of Lemma \ref{lem:SmoothSplit}, which is rather easy when the sieve weights 
are in `well-factorable' form (see Chapter 12.7 in \cite{Opera}).
It is also easy to show that such sieve weights are also linear combinations of splittable functions, each with respect to a different value of $L$.
One may certainly obtain the conclusion of Lemma \ref{lem:SmoothSplit} for other
functions, for example by decomposing 
integers by a scheme different from those in \eqref{eq:diprimes} and \eqref{eq:diprimes-sqfree}.   However our restriction to splittable functions
 will suffice for the applications to prime-producing sieves.

\begin{lem}[Spitting up smooth numbers]\label{lem:SmoothSplit}
Let $x\ge 100$, $0<\sigma<1$, $x^{\sigma/3} \le L \le (x/2)^{\sigma/2}$
and suppose that $f$ is splittable with respect to $L$. 
For any complex numbers $\ssc{\alpha}{d,m}$ with $|\ssc{\alpha}{d,m}|\le x^2$ for all $d,m$, we have
\begin{multline*}
\bigg|\! \ssum{dm \sim x \\ P^+(d)\le (dm)^{\sigma}} \!\!f(d) \ssc{\alpha}{d,m} \bigg|
\ll_{\sigma} (\log{x})^{3/\sigma} \sum_{s=1}^{\fl{3/\sigma}+2} 
\sup_{\beta_1,\ldots,\beta_s} \Biggl|\sum_{\substack{d_1\cdots d_s m\sim x\\ d_j \le (d_1\cdots d_s m)^\sigma\,\forall j \le s}}\!\!\!\!\alpha_{d_1\cdots d_s,m}\prod_{j=1}^s \beta_j (d_j)\Biggr|+x^{-90},
\end{multline*}
the supremum over all 1-bounded functions $\beta_1,\dots \beta_s$ 
such that $\beta_s$ is supported on $[1,x^{\sigma/3}]$ and $\beta_j$ is supported on
 $(x^{\sigma/3},x^\sigma]$ for $j<s$. 
\end{lem}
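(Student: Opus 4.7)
The plan is to use the canonical factorization $d = d_1 \cdots d_s$ of Definition \ref{def:splittable} (with parameter $L$) to write $f(d) = \prod_{j=1}^s f_{s,j}(d_j)$, stratify by $s$, and encode the canonical constraints in a form separable over $d_1, \ldots, d_s$.

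The first step is a pair of size bounds on each piece $d_j$. If $d_j$ consists of a single prime then $d_j = P^+(d_j) \le P^+(d) \le (dm)^\sigma$. If $d_j$ has at least two primes, then the largest prime ever added to $d_j$ must be $\le L$ (otherwise the greedy procedure would stop at that single prime), so all primes of $d_j$ are $\le L$, giving $P^-(d_j) \le L$ and $d_j \le L \cdot P^-(d_j) \le L^2 \le (x/2)^\sigma \le (dm)^\sigma$. For $j < s$ the definition gives the additional lower bound $d_j > L \ge x^{\sigma/3}$. The total number of pieces is controlled by $L^{s-1} < d \le x$, yielding $s \le \fl{3/\sigma}+1$.

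For each fixed $s$, the contribution is
\[
S_s = \sum_{d_1, \ldots, d_s, m} \alpha_{d_1 \cdots d_s, m} \prod_{j=1}^s f_{s,j}(d_j) \cdot \one_{\text{canonical}}(d_1, \ldots, d_s),
\]
where the canonical indicator encodes (i) size constraints $d_j \in (L, L P^-(d_j)]$ for $j<s$ and $d_s \in (1, L P^-(d_s)]$, each a function of a single $d_j$, and (ii) prime-ordering constraints $P^-(d_j) \ge P^+(d_{j+1})$ for $j<s$, which mix two consecutive pieces. I will separate each prime-ordering constraint via Lemma \ref{lem:Integration} applied with $f(n) = P^-(d_j)+\tfrac12$ and $g(m) = P^+(d_{j+1})$; these differ by at least $\tfrac12$ in absolute value, so the hypotheses hold. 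Each of the $s-1$ applications contributes a factor $\log x$ and replaces its indicator by the separable unit-modulus kernel $(P^-(d_j)+\tfrac12)^{it_j} P^+(d_{j+1})^{-it_j}$. Collecting factors by index, the summand becomes $\alpha_{d,m} \prod_j \beta_j(d_j)$ with each $|\beta_j|\le 1$, and the aggregate Lemma \ref{lem:Integration} error is $O(x^{-90})$.

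Finally I match the supports. For $j < s$, the size indicator forces $d_j > L \ge x^{\sigma/3}$, and the upper bound established above gives $d_j \le (dm)^\sigma \le x^\sigma$, so $\beta_j$ is supported in $(x^{\sigma/3}, x^\sigma]$ as required. For $j = s$, the piece $d_s$ may lie anywhere in $(1, x^\sigma]$: when $d_s \le x^{\sigma/3}$ it fits the required support $[1, x^{\sigma/3}]$ of $\beta_s$; when $d_s > x^{\sigma/3}$, I insert a trivial last piece $d_{s+1}=1$ with $\beta_{s+1}$ being the indicator of $\{1\}$ and reclassify the old $d_s$ as a middle piece, which shifts $s \mapsto s+1$ and accounts for the range $s \le \fl{3/\sigma}+2$ in the conclusion. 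The main obstacle is the simultaneous handling of the bilinear prime-ordering constraints together with the dichotomy for $d_s$, while ensuring every introduced $\beta_j$ is 1-bounded with the prescribed support.
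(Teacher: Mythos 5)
Your proposal is correct and follows essentially the same route as the paper: apply the canonical greedy decomposition $d=d_1\cdots d_s$, observe that the sieve condition $P^+(d)\le (dm)^\sigma$ converts to the separable conditions $d_j\le (dm)^\sigma$ (automatic for composite $d_j$ since $d_j\le L^2$), encode the chained constraints $P^-(d_j)\ge P^+(d_{j+1})$ via Lemma~\ref{lem:Integration} paying a $\log x$ factor each, absorb the per-piece size windows and $f_{s,j}$ into the 1-bounded $\beta_j$, and finally split $\beta_s$ according to whether $d_s \le x^{\sigma/3}$ or not, inserting a trivial last piece in the second case. The only point you do not mention is that Definition~\ref{def:splittable} has a second variant (case (b), squarefree support) where the chain conditions are strict, so the Lemma~\ref{lem:Integration} encoding uses $P^-(d_j)-\tfrac12\ge P^+(d_{j+1})$ instead of $P^-(d_j)+\tfrac12>P^+(d_{j+1})$; this is a cosmetic change and your argument adapts verbatim.
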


\begin{proof}
We consider the case where $f$ satisfies part (a) in Definition \ref{def:splittable};
the case where part (b) holds is a nearly identical argument and we indicate where
changes must be made.
Consider a pair $(d,m)$ with $dm \sim x$ and decompose $d=d_1\cdots d_s$
according to \eqref{eq:diprimes}.
We have $L^2\le (x/2)^{\sigma} \le (d_1\cdots d_s m)^\sigma$ and observe 
that the condition $P^+(d) \le (dm)^{\sigma}$
  is equivalent to the collection of statements $P^+(d_j)\le (d_1\cdots d_s m)^\sigma$
 ($1\le j\le s$).  If $d_j$ is not prime then $P^-(d_j)\le L$ and hence $P^+(d_j)\le d_j \le L^2 \le (d_1\cdots d_s m)^\sigma$, so the stronger condition $d_j \le (d_1\cdots d_s m)^\sigma$ holds automatically.
Therefore, if \eqref{eq:diprimes} holds, then
 the condition $P^+(d)\le (d m)^\sigma$ is equivalent to 
 the collection of conditions $d_j\le (d_1\cdots d_s m)^\sigma$ ($1\le j\le s$).
 Also, since $d_i>L\ge x^{\sigma/3}$ for $i \le s-1$, $s\le 1 + 3/\sigma$.
 Thus we see that
\[
\sum_{\substack{d m\sim x\\ P^+(d)\le (d m)^\sigma}}f(d)\alpha_{d,m}=\sum_{s\le 1+\lfloor 3/\sigma\rfloor}\sum_{\substack{d m\sim x\\ d=d_1\cdots d_s\\ d_j\le (d m)^\sigma\,\forall j\\ \eqref{eq:diprimes}}}f_{s,1}(d_1)\cdots f_{s,s}(d_s) \alpha_{d,m}.
\]
With $s$ fixed, we use $s-1$ successive applications of Lemma \ref{lem:Integration} to separate the dependencies between $d_i$ and $d_{i+1}$ in the first line \eqref{eq:diprimes}, rewriting each inequality as
$P^-(d_i)+\frac12 \ge P^+(d_{i+1})$; when we use part (b)
of Definition \ref{def:splittable}, we write each condition in the first line of 
\eqref{eq:diprimes-sqfree} as  $P^-(d_i)-1/2 \ge P^+(d_{i+1})$.
  This gives
\begin{multline*}
\sum_{\substack{d m\sim x\\ d=d_1\cdots d_s\\ d_j\le (d m)^\sigma\,\forall j\\ \eqref{eq:diprimes}}}
f_{s,1}(d_1)\cdots f_{s,s}(d_s)\alpha_{d,m} \ll_\sigma \; x^{-99} + \\
\sup_{t_1,\dots,t_{s-1}\in \RR} \;\; (\log{x})^{s-1}\Biggl|\sum_{\substack{d m\sim x\\ d=d_1\cdots d_s\\ d_j\le (d m)^\sigma\,\forall j\\ (\star)}}
f_{s,1}(d_1)\cdots f_{s,s}(d_s)\alpha_{d,m}\prod_{j=1}^{s-1}\biggl(\frac{P^-(d_j) + \frac{1}{2}}{P^+(d_{j+1})}\biggr)^{it_j} \Biggr|,
\end{multline*}
where $(\star)$ is the collection of conditions in the second line of \eqref{eq:diprimes}.
With $t_1,\ldots,t_{s-1}$ fixed, for each positive integer $e\le x^{\sigma}$ and $1\le j\le s$ define
the 1-bounded functions
\[
\xi_j(e):=\begin{cases}
f_{s,j}(e)\one_{e\in (L,P^-(e)L]}(P^-(e) + 1/2)^{it_j},\qquad &j=1\text{ and }s\ge 2,\\
f_{s,j}(e)\one_{e\in (L,P^-(e)L]}(P^-(e) + 1/2)^{it_j}P^+(e)^{-it_{j-1}}, &2\le j\le s-1,\\
f_{s,j}(e)\one_{e\in [1,P^-(e)L]}P^+(e)^{-it_{j-1}},&j=s\text{ and } s\ge 2, \\
f_{s,j}(e)\one_{e\in [1,P^-(e)L]}, & j=s=1.
\end{cases}
\]
Thus, we then see that 
\[
\sum_{\substack{d m\sim x\\ d=d_1\cdots d_s\\ d_j\le (d m)^\sigma\,\forall j\\
 (\star)}} 
 f_{s,1}(d_1)\cdots f_{s,s}(d_s)\alpha_{d,m}\prod_{j=1}^{s-1}\Bigl(\frac{P^-(d_j)+\frac{1}{2}}
{P^+(d_{j+1})}\Bigr)^{it_j}=\sum_{\substack{d m\sim x\\ d=d_1\cdots d_s\\ d_j\le (d m)^\sigma\,\forall j}}\alpha_{d,m}\prod_{j=1}^{s}\xi_j(d_j).
\]
For $1\le j\le s-1$, $\xi_j$ is supported on integers greater than $L$, and thus
greater than $x^{\sigma/3}$.
Anticipating some minor future technicalities, we wish to be careful as to whether $d_s$ is bigger or smaller than $x^{\sigma/3}$.
 We introduce new functions
\begin{align*}
\xi'_s(e) &:= \xi_s(e) \one_{e\le x^{\sigma/3}}, \\
\xi''_s(e) &:= \xi_s(e) \one_{e > x^{\sigma/3}}, \\
\xi_{s+1}(e) &:= \one_{e=1}.
\end{align*}
In this way, $\xi_s(e) = \xi'_s(e) + \xi''_s(e)$, and hence
\begin{multline*}
\ssum{d m\sim x\\ d=d_1\cdots d_s\\ d_j\le (d m)^\sigma\,\forall j}\alpha_{d,m}\prod_{j=1}^{s}\xi_j(d_j) =
\ssum{d m\sim x\\ d=d_1\cdots d_s\\ d_j\le (d m)^\sigma\,\forall j}\alpha_{d,m}
 \xi_1(d_1)\cdots \xi_{s-1}(d_{s-1}) \xi'_s(d_s) + \\
+\ssum{d m\sim x\\ d=d_1\cdots d_{s+1}\\ d_j\le (d m)^\sigma\,\forall j}\alpha_{d,m}
 \xi_1(d_1)\cdots \xi_{s-1}(d_{s-1}) \xi''_s(d_s) \xi_{s+1}(d_{s+1}).
\end{multline*}
Each sum on the right side has the required form.
\end{proof}

%
%
%%%%%%%%%%%%%%%%%%%%%%%%%%%%%%%%%%%%%%%%%%%%%%%%%%%%%%%
%

%%%%%%%%%%%%%%%%%%%%%%%%%%%%%
%  
%%%%%%%%%%%%%%%%%%%%%%%%%%%%%

\subsection{Proof of Theorem \ref{thm: Main sieving}}
Since $\cR$ is nonempty, we may assume that $\nu<1-\gamma$.
We recall the vector notation of Section \ref{sec:Notation},  Definition \ref{defn:R1} of our fundamental region $\cR=\cR(\gamma,\theta,\nu)$ and the quantities defined at the beginning of Section \ref{sec:sieving}. Define
\begin{align}
\cN &:= \big\{ x/2 < n \le x : n\text{ composite, } \bv(n;n) \in \cC(\cR) \big\}, \label{N-def} \\
G(m;n) &:= \big( \one_{m \le n^{\gamma}} \big) \mu(\ssc{m}{1}) g(\bv(\ssc{m}{2};n)),\quad (m=\ssc{m}{1}\ssc{m}{2},\,P^+(\ssc{m}{1})<n^\nu\le P^-(\ssc{m}{2})), \label{g-def}\\
H(n) &:= \sum_{d|n}G(d;n). \label{h-def}
\end{align}
Note that $|\bv(n;n)|=1$. The above definition of $H$ is equivalent to $H(n)=(\bone\star g_1)(\bv(n;n))$, where $g_1(\bx)=(-1)^k g(\bx_1)\one_{|\bx| \le \gamma}$ if $\bx$ has $k$ components less that $\nu$ and $\bx_1$ is the vector formed from the components of $\bx$ which are $\ge \nu$.  The most important properties of the function $H(n)$ are given
in the next lemma.  Essentially, we need $G(m;n)$ to behave like a 
sieve weight in \eqref{h-def}, but only when restricted to $n\in \cN$.
 In particular, when $g$ is supported in $\cG_1$, the factor $\mu(m_1)$ in \eqref{g-def} effectively sifts out
 numbers with a prime factor below $n^{\nu}$, analogous to the Legendre sieve.

\begin{lem}\label{lem:h-onesign}
Let $g\in \sG_1$ and define $H$ by \eqref{g-def} and \eqref{h-def}.
We have
\begin{itemize}
\item[(a)]  For all $n\in \cN$, $H(n)=(1\star g)(\bv(n))\one_{P^-(n)\ge n^{\nu}}$.
\item[(b)]  For all $n\in \cN$, $|H(n)| \ll 1$.
\item[(c)] If $(\bone \star g)(\bx) \le 0$ for all $\bx \in \cH$ then for all 
$n\in \cN$, $H(n) \le 0$; If  $(\bone \star g)(\bx) \ge 0$ for all $\bx \in \cH$ 
then for all $n\in \cN$, $H(n) \ge 0$.
\end{itemize}
\end{lem}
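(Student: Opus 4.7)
The plan is to prove all three parts by a direct computation that separates the sum on divisors according to the dichotomy ``prime factors of $n$ below $n^\nu$'' versus ``prime factors $\ge n^\nu$'', using the crucial fact that the constraint $m\le n^\gamma$ in \eqref{g-def} becomes automatic on the support of $g$ when $n\in\cN$. Fix $n\in \cN$ and factor $n=ab$ where $a$ collects all prime factors of $n$ that are smaller than $n^\nu$ and $b$ collects all the others. Every divisor $d\mid n$ then factors uniquely as $d=d_1 d_2$ with $d_1\mid a$, $d_2\mid b$, and the side condition $P^+(d_1)<n^\nu\le P^-(d_2)$ in \eqref{g-def} is automatic, so
\[
H(n)=\sum_{d_1\mid a}\sum_{d_2\mid b} \one_{d_1 d_2\le n^\gamma}\,\mu(d_1)\,g(\bv(d_2;n)).
\]

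The key step is to show that the constraint $d_1 d_2\le n^\gamma$ is redundant on the support of $g$. Suppose $g(\bv(d_2;n))\ne 0$, so that $\bv(d_2;n)\in \cG_1$. Since $n\in\cN$, $\bv(n;n)\in\cC(\cR)$; the vector $(\bv(d_2;n),\bv(d_1;n))$ is a subvector of $\bv(n;n)$ and hence lies in $\cZ$. Because all components of $\bv(d_1;n)$ are strictly less than $\nu$, the definition of $\psi$ yields $|\bv(d_1;n)|\le \psi(\bv(d_2;n))$, and the defining inequality $|\bv(d_2;n)|+\psi(\bv(d_2;n))\le \gamma$ of $\cG_1$ gives $|\bv(d_1;n)|+|\bv(d_2;n)|\le \gamma$, i.e.\ $d_1 d_2\le n^\gamma$. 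Therefore the indicator drops out and the double sum factors:
\[
H(n)=\bigg(\sum_{d_1\mid a}\mu(d_1)\bigg)\bigg(\sum_{d_2\mid b}g(\bv(d_2;n))\bigg)
=\one(a=1)\cdot (\bone\star g)(\bv(b;n)).
\]
The condition $a=1$ is precisely $P^-(n)\ge n^\nu$, in which case $b=n$ and $\bv(n;n)=\bv(n)$, yielding part (a).

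Parts (b) and (c) follow readily from (a). For (b), when $P^-(n)\ge n^\nu$ we have $\omega(n)\le \lfloor 1/\nu\rfloor$, so the sum $(\bone\star g)(\bv(n))$ has at most $2^{\lfloor 1/\nu\rfloor}$ nonzero terms, and each is bounded since $g$ is. For (c), the case $P^-(n)<n^\nu$ gives $H(n)=0$, while if $P^-(n)\ge n^\nu$ then $\bv(n)$ has all components $\ge \nu$ and, because $n$ is composite, at least two components; combined with $\bv(n)\in \cC(\cR)$ this places $\bv(n)\in \cH$, and the sign of $(\bone\star g)(\bv(n))$ is then given by hypothesis. The only substantive point in the whole argument is the geometric observation that the definitions of $\cG_1$ and $\psi$ are precisely what is needed to make the redundancy of the $d_1 d_2\le n^\gamma$ constraint work; once this is granted, everything else is bookkeeping.
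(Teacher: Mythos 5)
Your proof is correct and follows essentially the same route as the paper's: you show the constraint $d_1 d_2\le n^{\gamma}$ is redundant on the support of $g$ via the geometry of $\cG_1$ and $\psi$, then factor $H(n)$ into a Legendre sum times a divisor sum. The only cosmetic slip is invoking $\omega(n)\le\lfloor 1/\nu\rfloor$ in part (b) when the relevant quantity is $\Omega(n)=\dim\bv(n)$; the bound $\Omega(n)\le\lfloor 1/\nu\rfloor$ holds by the same argument, so the conclusion is unaffected.
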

\begin{proof}
(a): If $n\in\cN$ and $d=\ssc{m}{1}\ssc{m}{2}$ divides $n$ with $P^+(\ssc{m}{1})<n^\nu\le P^-(\ssc{m}{2})$ then $\bv(\ssc{m}{2};n)$ has all coordinates $\ge \nu$ and is a subvector of an element of $\cC(\cR)$ (i.e. $\bv(\ssc{m}{2};n)\in\cZ$). By assumption $g$ is supported on $\cG_1$, so we have
 $g(\bv(\ssc{m}{2};n))=0$ unless 
 \[
 |\bv(\ssc{m}{2};n)|\le \gamma-\psi(\bv(\ssc{m}{2};n))\le \gamma-|\bv(\ssc{m}{1};n)|,
 \]
where we used that $\bv(m_1m_2;n)\in\cZ$. Thus we may drop the constraint $d \le n^\gamma$ in the definition of $G(d;n)$ whenever $n\in\cN$, so we have that 
\[
H(n)=\sum_{\substack{\ssc{m}{2}|n\\ n^\nu\le P^-(\ssc{m}{2})}}g(\bv(\ssc{m}{2};n))\sum_{\substack{\ssc{m}{1}|n\\ P^+(\ssc{m}{1})<n^\nu}}\mu(\ssc{m}{1}).
\]
Clearly the inner sum is 0 unless $P^-(n)\ge n^\nu$, in which case the outer
sum is $(1\star g)(\bv(n))$.

(b): If $P^-(n)\ge n^\nu$ then $n$ has $O(1)$ divisors, and the result follows from the fact $g\ll 1$.

(c): If $n\in \cN$, then $\bv(n)$ has at least two components and thus
$\bv(n)\in \cH$.  The conclusion then follows from part (a).
\end{proof}

\begin{prop}\label{prop:sieve-usingIandII}
Suppose that $(\gamma,\theta,\nu)\in \cQ_0$ and $\nu\le 1-\gamma.$
Let $g\in \sG_1$ and define $H$ by \eqref{g-def} and \eqref{h-def}.
Let $A\ge 1$, $\varpi\ge 1$ and $B$ be sufficiently large in terms of $A,\varpi,\gamma,\theta,\nu,g$.
Assume $(w_n)$ satisfies \eqref{w}, \eqref{eq:TypeI} and \eqref{eq:TypeII}.
 Then
\[
\ssum{x/2 < n \le x \\ n\not\in \cP \cup \cN} w_n H(n) \ll_g \frac{x}{(\log x)^A}.
\]
\end{prop}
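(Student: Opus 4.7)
The plan is to decompose $\{n : n\notin \cP \cup \cN\}$ by structural witnesses and bound the corresponding contributions via \eqref{eq:TypeI} and \eqref{eq:TypeII} respectively. The key combinatorial observation is that for a composite $n$ with $\bv(n;n) \notin \cC(\cR)$, either (i) $n$ has a prime factor $P \ge n^{1-\gamma}$, or (ii) all prime factors of $n$ are $< n^{1-\gamma}$, in which case the trivial coagulation $\bv(n;n)\in (0,1-\gamma)^{\Omega(n)}$ must fail to lie in $\cR$, forcing a proper subsum in $[\theta,\theta+\nu]$; equivalently, $n$ has a divisor $d$ with $n^{\theta} \le d \le n^{\theta+\nu}$, which sits inside the Type II window $((x/2)^\theta, x^{\theta+\nu}]$ since $n\sim x$. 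Accordingly, write $\sum_{n\notin \cP\cup \cN} w_n H(n) = S_a + S_b$, where $S_a$ corresponds to Case (i) and $S_b$ to Case (ii), the overlap being absorbed into either.

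For Case (i), write $n = Pm$ with $P = P^+(n) \ge n^{1-\gamma}$ and $m \le n^\gamma$. Expanding $H(n) = \sum_{d\mid n} G(d;n)$ and using $\nu < 1-\gamma$, the large prime $P$ cannot divide the smooth factor $m_1$ in $d = m_1 m_2$, so either $d \mid m$ or $P \mid m_2$ (the latter requires $\gamma \ge 1/2$). One then applies a Heath-Brown--type decomposition to the primality of $P$ via Lemma \ref{lem:PrimeSplit} (and Lemma \ref{lem:PrimeDecomposition}), combined with Lemma \ref{lem:polytope-encode} to encode the constraints $P \ge n^{1-\gamma}$, $d \le n^\gamma$, and the Lipschitz dependence of $g(\bv(m_2;n))$ on $n$ via Mellin transforms. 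This recasts $S_a$ as a linear combination of multilinear sums, each fitting the hypothesis of \eqref{eq:TypeI} (the outer variable is always $\le x^\gamma$), so $S_a \ll x/(\log x)^A$ for $B$ sufficiently large.

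For Case (ii), the witness divisor $d_0 \in [n^\theta, n^{\theta+\nu}]$ lies in the Type II window. Write $n = d_0 e_0$ and expand $H(n) = \sum_{d\mid n} G(d;n)$, then use Lemma \ref{lem:polytope-encode} to encode $d_0 \in [n^\theta, n^{\theta+\nu}]$ and $d \le n^\gamma$, together with Lemma \ref{lem:SmoothSplit} to handle the splittable $\mu(m_1)$ factor inside $G(d;n)$. This expresses $S_b$ as a linear combination of bilinear sums in $(d_0, e_0)$ with $1$-bounded coefficients, directly fitting \eqref{eq:TypeII} and giving $S_b \ll x/(\log x)^A$.

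The chief technical difficulty is \emph{decoupling} the weighting $H(n)$ from the witnessing factorization of $n$: the factor $G(d;n)$ depends on $n$ through $n^\gamma$, $n^\nu$, and $\bv(m_2;n)$, each of which must be separated from the variables capturing ``$n$ has a large prime'' or ``$n$ has a Type II-range divisor''. The Mellin-based machinery (Lemmas \ref{lem:polytope-encode-one-condition}, \ref{lem:polytope-encode}, \ref{lem:BoxConditions}) accomplishes this separation with only logarithmic losses, the splittable-function lemma (Lemma \ref{lem:SmoothSplit}) converts the divisor sum involving $\mu(m_1)$ into a clean multi-variable form, and Heath-Brown's identity (via Lemma \ref{lem:PrimeSplit}) handles the primality condition in Case (i). Assembled together, these reduce the entire error sum to applications of \eqref{eq:TypeI} and \eqref{eq:TypeII}.
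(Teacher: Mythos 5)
The dichotomy is combinatorially correct, but the proposed estimation of $S_a$ (your Case (i)) via Type I alone contains a real gap that undermines the whole plan.

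After you apply Lemma~\ref{lem:PrimeSplit} (and, inside it, Lemma~\ref{lem:PrimeDecomposition}), the Heath-Brown decomposition splits into two kinds of terms: those in which some factor exceeds $n/x^\gamma$, which are indeed removed using the Type~I input (via Lemma~\ref{lem:Squarefull}), and the remaining terms, in which \emph{every} factor is $< n/x^\gamma \le x^{1-\gamma}$.  The output of Lemma~\ref{lem:PrimeSplit} is precisely a multilinear sum of this second kind, so at that point you have a sum with many factors all below $x^{1-\gamma}$ and with no single factor large enough to play the role of the free variable in \eqref{eq:TypeI}.  The modulus-plus-interval structure of \eqref{eq:TypeI} simply does not apply.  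In fact, even with the initial constraint $P^+(n)\ge n^{1-\gamma}$, the factors coming out of Heath-Brown can all be much smaller than $n^{1-\gamma}$ (e.g.\ $P=ef$ with both $e,f<n/x^\gamma$), so the hypothesis ``has a prime factor $\ge n^{1-\gamma}$'' leaves no trace after the decomposition.  The conclusion ``$S_a\ll x/(\log x)^A$ by Type~I alone'' is therefore not justified; Type~II is needed for $S_a$ as well as for $S_b$.  The mechanism that makes it work is exactly the one you state only for Case (ii): because $\bv(n;n)\notin\cC(\cR)$, the \emph{fragmentation} $\bz$ obtained after all the decompositions (with all components $\le 1-\gamma$) cannot lie in $\cR$, so some subsum of $\bz$ falls in $[\theta,\theta+\nu]$, and then the Type~II bound applies via inclusion--exclusion.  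That observation applies uniformly, with no need to first condition on whether $n$ has a large prime factor — and this is what the paper's proof (of the more general Proposition~\ref{prop:general}) actually does: it writes $n=n_1n_2$ with $P^+(n_1)\le n^\sigma<P^-(n_2)$, decomposes \emph{all} the primes above $n^\sigma$, encodes the constraints via Lemmas~\ref{lem:polytope-encode} and \ref{lem:SmoothSplit}, then uses the ``fragmentation has a Type~II subsum'' observation plus inclusion--exclusion over subsets $E$.

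Your Case (ii) is closer to the actual argument, but is under-specified in two respects.  First, the Type~II witness $d_0$ is not unique — several subsets of prime factors can give divisors in the window — so you need the inclusion--exclusion step the paper carries out explicitly.  Second, once you expand $H(n)=\sum_{d\mid n}G(d;n)$, the divisor $d$ (and its canonical factorization $d=m_1m_2$) can intersect both $d_0$ and $e_0$, and $G(d;n)$ depends on $n$ through $n^\gamma$, $n^\nu$, and $\bv(m_2;n)$; making the final coefficients factor as $\xi_{d_0}\kappa_{e_0}$ requires first atomizing all variables (as the paper does via Lemmas~\ref{lem:SmoothSplit}, \ref{lem:PrimeSplit}, \ref{lem:polytope-encode}) and only then grouping into the two Type~II blocks.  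Also, the final Type~II application has divisor-bounded coefficients ($\le\tau(\cdot)^{O(1)}$), not $1$-bounded ones as you write; that is still fine for \eqref{eq:TypeII}, but be precise.  The shortest repair to your plan is to drop the dichotomy, decompose uniformly, and use the fragmentation observation for all $n\notin\cP\cup\cN$.
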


The proof of Proposition \ref{prop:sieve-usingIandII} is rather long, and so we defer it to the end of the section.

%
%
%%%%%%%%%%%%%%%%%%%%%%%%%%%%%%%%%%%%%%%%%%%%%%%%%%%%%%%%%%%%%%%%%%%%%%%%%%%%%%%
%
%    sum of b_n H(n)
% 
%%%%%%%%%%%%%%%%%%%%%%%%%%%%%%%%%%%%%%%%%%%%%%%%%%%%%%%%%%%%%%%%%%%%%%%%%%%%%%%%%
%

\begin{lem}\label{lem:h-sum-to-integral}
Let $(\gamma,\theta,\nu)\in \cQ_0$, $g\in \sG_1$ and define $H(n) = (1\star g)(\bv(n))$.
Let $\cU=\cU_1 \sqcup \cdots \sqcup \cU_j$, where $\cU_i$ is a convex polytope
for $1\le i\le j$.
Suppose that $\varpi>0$ and $(b_n)$ is a sequence satisfying \eqref{bn-f-sum}.
Then for sufficiently large $x$ (in terms of $\varpi,\nu$) we have
\[
\ssum{\bv(n) \in \cU \\ P^-(n) \ge n^{\nu}} b_n H(n) = 
 \bigg(  \sum_p b_p \bigg) \Bigg( \sum_{k=2}^{\fl{1/\nu}} \mint{\bx \in \cU\cap \RR^k \\ \ssc{x}{1} \le \cdots \le \ssc{x}{k}} \frac{(\bone \star g)(\bx)}{\ssc{x}{1}\cdots \ssc{x}{k}}d\bx + O_{g,j}\pfrac{1}{B}\Bigg).
 \]
\end{lem}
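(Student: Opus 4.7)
The plan is to split the sum by the number $k$ of prime factors of $n$, decompose $(\bone\star g)$ into pieces that are Lipschitz on convex polytopes, and apply hypothesis \eqref{bn-f-sum} to each piece.

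First, since $P^-(n)\ge n^\nu$ forces $n=p_1\cdots p_k$ with $p_1\le\cdots\le p_k$ and $2\le k\le \fl{1/\nu}$ (the lower bound $k\ge 2$ reflecting the standing assumption that $\cU$ consists of vectors of dimension $\ge 2$, as in the intended application $\cU=\cH$), I split the LHS as
\[
\ssum{\bv(n)\in \cU\\ P^-(n)\ge n^\nu}b_n H(n)
=\sum_{k=2}^{\fl{1/\nu}}\sum_{i\le j}\ssum{n=p_1\cdots p_k\\ p_1\le \cdots\le p_k\\ \bv(n)\in \cU_i}b_n\,(\bone\star g)(\bv(n)).
\]
Next, using $g\in\sG_1$, I write $g=\sum_\ell g_\ell$, where each $g_\ell$ is bounded, supported on a convex polytope $Q_\ell$, and Lipschitz on $Q_\ell$ (which holds since by Definition \ref{def:sG1} each piece has bounded continuous first-order partials on the interior of its polytope). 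Expanding $(\bone\star g)(\bv(n))=\sum_{A\subseteq [k]}\sum_\ell g_\ell(\bv(n)_A)$, I observe that for each $(A,\ell)$ the map $\bx\mapsto g_\ell(\bx_A)$ from $\RR^k$ to $\RR$ is supported on the convex polytope $\{\bx\in\RR^k:\bx_A\in Q_\ell\}$ and is Lipschitz there, with sup-norm and Lipschitz constant bounded by some $M_g$ depending only on $g$ (since $\bx\mapsto \bx_A$ is $1$-Lipschitz coordinatewise). Intersecting with $\cU_i$ and $\{\nu\le x_1\le\cdots\le x_k\}$ produces a convex polytope
\[
W_{i,A,\ell}\;\subseteq\;\{\bx\in \RR^k\cap\cC(\cR(P)):\nu\le x_1\le\cdots\le x_k\},
\]
using that $\cU\subseteq\cC(\cR(P))$ in the intended applications.

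With this decomposition in hand, I apply hypothesis \eqref{bn-f-sum} to each $(k,i,A,\ell)$, taking $\cT=W_{i,A,\ell}$ and $f(\bx)=g_\ell(\bx_A)/M_g$ (rescaled to fit $|f|\le 1$ and Lipschitz constant $\le 1$). Multiplying back by $M_g$ and summing over the finitely many choices of $(k,i,A,\ell)$, the main terms collapse via $\sum_{A,\ell}g_\ell(\bu_A)=(\bone\star g)(\bu)$ to
\[
\Bigl(\sum_p b_p\Bigr)\sum_{k=2}^{\fl{1/\nu}}\iint_{\bx\in \cU\cap \RR^k,\,x_1\le\cdots\le x_k}\frac{(\bone\star g)(\bx)}{x_1\cdots x_k}\,d\bx,
\]
and the errors combine to $O_{g,j}((\sum_p b_p)/B)$, since each of the $O_{g,j}(1)$ pieces contributes at most $(\sum_p b_p)\cdot M_g/B$ (here $g$ has $O_g(1)$ pieces, $j$ is given, and $2^k\le 2^{\fl{1/\nu}}$ is absorbed into the implicit constant). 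The only step requiring any care is the polytope decomposition: one must verify that $\bx\mapsto g_\ell(\bx_A)$ is genuinely Lipschitz with the correct constant on its support polytope (immediate from the $1$-Lipschitz nature of $\bx\mapsto\bx_A$) and track the uniform rescaling factor $M_g$. Beyond this bookkeeping, the lemma is essentially an immediate consequence of the generalized prime number theorem hypothesis \eqref{bn-f-sum}.
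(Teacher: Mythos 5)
Your proof is correct and takes essentially the same approach as the paper: split by the number $k$ of prime factors, expand $(\bone\star g)(\bv(n))=\sum_{A\subseteq[k]}g(\bv(n)_A)$, decompose $g$ into its Lipschitz pieces on convex polytopes, intersect the resulting support conditions with the $\cU_i$'s to get convex polytopes, apply hypothesis \eqref{bn-f-sum} to each rescaled piece, and resum. Your explicit note that the polytopes must land inside $\cC(\cR(P))$ for \eqref{bn-f-sum} to apply (true in all intended applications, with $\cU=\cH$, $\cH_1$, etc.) is a point the paper's proof leaves implicit, but otherwise the decomposition, bookkeeping, and error accounting match the paper's argument.
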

%
%
%%%%%%%%%%%%%%%%%%%%%%%%%%%%%%%%%%%%%%%%%%%%%%%%%%%%%%%%%%%%%%%%%%%%%%%%%%%%%%%%%%

\begin{proof}
If $P^-(n)\ge n^{\nu}$ then $n$ has at most $1/\nu$ prime factors, counted with multiplicity.  Thus,
\[
\ssum{\bv(n) \in \cU \\ P^-(n) \ge n^{\nu}} b_n H(n) = \sum_{2\le k \le \lfloor 1/\nu\rfloor}\; \sum_{\cJ\subseteq[k]}
\ssum{x/2<n\le x \\ n=p_1\cdots p_k \\ \bv(n)\in \cU\\ n^{\nu}\le p_1\le \dots\le p_k } b_{n}g\Big(\bv\big(\prod_{j\in \cJ}p_j;n\big)\Big).
\]
Since $g\in \sG_1$, we may write $g(\bx)=\sum_i \one_{\cT_i}(\bx)g_i(\bx)$ for a collection of $O_g(1)$ convex polytopes $\cT_i$, and Lipschitz-continuous functions $g_i$ each with Lipschitz constant $O_g(1)$ each bounded by $O_g(1)$.
For each $k,\cJ,h,i$ the collection of summation conditions 
$\bv(\prod_{j\in \cJ}p_j;n)\in \cT_h \cap \cU_i\cap \RR^k$,
$n^{\nu}\le p_1$, and $\bv(n)\in \cU_j$ is equivalent to $\bv(n)$ lying in a convex polytope $\cU_{k,\cJ,h,i}\subseteq[\nu,1]^k$.
 It follows that the triple sum above is
\[
\sum_{2\le k \le \lfloor 1/\nu\rfloor}\; \sum_{\cJ\subseteq[k]}\sum_{h,i} \;
\ssum{x/2<n=p_1\cdots p_k\le x  \\ \bv(n)\in \cU_{k,\cJ,h,i} \\ p_1\le \cdots \le p_k } b_{n}f_{k,\cJ,h,i}(\bv(n))
\]
for a Lipschitz-continuous function $f_{k,J,h,i}$ with Lipschitz constant $O_g(1)$ bounded by $O_g(1)$. Thus for each $k,\cJ,h,i$ we may apply \eqref{bn-f-sum}, which gives
\[
\ssum{x/2<n=p_1\cdots p_k\le x  \\ \bv(n)\in \cU_{k,\cJ,h,i} \\ p_1\le \cdots \le p_k } b_{n}f_{k,\cJ,h,i}(\bv(n)) = 
\bigg(  \sum_p b_p \bigg) \Bigg(\;\; \mint{\cU_{k,\cJ,h,i}} \frac{f_{k,\cJ,h,i}(u_1,\ldots,u_k)}{u_{1}\cdots u_k}\, d \bu \,+ O_g\pfrac{1}{B} \Bigg).
\]
Summing over all $h,i$ with $k$ fixed yields
\begin{align*}
\sum_{h,i}\ssum{x/2<n=p_1\cdots p_k\le x  \\ \bv(n)\in \cU_{k,\cJ,h,i} \\ p_1\le \cdots \le p_k } b_{n}f_{k,\cJ,j}(\bv(n))&=\bigg(  \sum_p b_p \bigg)\Bigg(\;\; \mint{\bu\in \cU\cap\RR^k \\ \nu\le u_1\le \dots \le u_k} \frac{g(\ssc{\bu}{\cJ})}{u_{1}\cdots u_k}\, d \bu \,+ O_{g,j}\pfrac{1}{B} \Bigg).
\end{align*}
Thus, summing over over $\cJ\subseteq [k]$ gives
\begin{align*}
 \sum_{\cJ\subseteq[k]}
\ssum{x/2<n\le x \\ n=p_1\cdots p_k \\ \bv(n)\in \cU \\ n^{\nu}\le p_1\le \cdots \le p_k } b_{n}g\Big(\bv\big(\prod_{j\in \cJ}p_j;n\big)\Big)
&= \bigg(  \sum_p b_p \bigg)\Bigg(\;\mint{\cU\cap \RR^k \\ \nu\le u_1 \le \cdots \le u_k} \frac{(\bone \star g)(\bu)}{u_{1}\cdots u_k}\, d \bu + + O_{g,j}\pfrac{1}{B} \Bigg).
\end{align*}
We get the lemma upon summing over $2\le k\le \lfloor 1/\nu\rfloor$.
\end{proof}

\begin{rmk}
Lemma \ref{lem:h-sum-to-integral} is the only part of the proof of Theorem \ref{thm: Main sieving} that requires hypothesis \eqref{bn-f-sum}.
\end{rmk}

\medskip

\begin{lem}\label{lem:htypeI}
Suppose that $(w_n)$ satisfies the Type I bound \eqref{eq:TypeI} for some $B>1$.  
Suppose that $g\in \sG_1$ and $H$ is defined by
\eqref{g-def} and \eqref{h-def}.  Then
\[
\sum_{x/2<n\le x}w_nH(n)\ll_g \frac{x}{(\log{x})^B}.
\]
\end{lem}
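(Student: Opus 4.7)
The plan is to treat $G(d;n)$ as essentially a function of $d$ alone and then apply Type I. By Definition \ref{def:sG1} and linearity, it suffices to treat a single $g$ that is bounded and Lipschitz-continuous on a convex polytope $\cT \subseteq \RR^k$ (with $k\ge 0$ and all coordinates $\ge \nu$). After swapping the order of summation in $\sum_n w_n H(n) = \sum_{d\le x^\gamma} \sum_{d|n, x/2<n\le x} w_n G(d;n)$ (the restriction $d \le x^\gamma$ following from $G(d;n)=0$ for $d>n^\gamma$), I would replace $G(d;n)$ by the reference function $\tilde G(d) := G(d;x)$, which depends only on $d$, is supported on $d \le x^\gamma$, and satisfies $|\tilde G(d)| \ll_g 1$. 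The main term $\sum_d \tilde G(d) \sum_{d|n, x/2<n\le x} w_n$ is then $\ll_g x/(\log x)^B$ directly from \eqref{eq:TypeI}, using $|\tilde G(d)| \le C_g \tau(d)^B$.

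It remains to bound the error $E := \sum_d \sum_{d|n} w_n (G(d;n) - \tilde G(d))$. The difference is nonzero only in three situations: (i) $d \in (n^\gamma, x^\gamma]$, so $G(d;n)=0$ while $\tilde G(d)\ne 0$; (ii) the canonical factorizations $d = m_1 m_2$ at the thresholds $n^\nu$ and $x^\nu$ differ, which forces $d$ to have a prime factor in $((x/2)^\nu, x^\nu]$; (iii) the factorizations agree but $g(\bv(m_2;n)) \ne g(\bv(m_2;x))$, since $\bv(m_2;n) = (\log x/\log n)\bv(m_2;x)$. Case (i) contributes $-\sum_{(x/2)^\gamma < d \le x^\gamma} \tilde G(d) \sum_{d|n, x/2<n<d^{1/\gamma}} w_n$, which is $\ll_g x/(\log x)^B$ by Type I on intervals depending on $d$. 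For case (ii), since $d$ has at most $O(1)$ primes in $((x/2)^\nu, x^\nu]$, the range $(x/2, x]$ partitions into $O(1)$ subintervals on which the factorization of $d$ is constant, and on each subinterval the contribution is of the same form as case (iii) below.

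The main obstacle will be case (iii) (and the reduction from (ii)). Here I would use that the map $n \mapsto g(\bv(m_2;n))$ is Lipschitz in $\log n$ with constant $O(1/\log x)$ and vanishes at $n=x$; partial summation then bounds the inner sum by $O(1/\log x)\cdot \sup_\cI |\sum_{d|n, n\in\cI} w_n|$, so summing over $d$ and applying Type I gives a contribution of $O_g(x/(\log x)^{B+1})$. A technical subtlety arises when $\bv(m_2;x)$ lies within $O(1/\log x)$ of $\partial \cT$, so that $\bv(m_2;n)$ may cross into or out of the support of $g$; for these $m_2$ the trivial bound $|g(\bv(m_2;n))-g(\bv(m_2;x))|\le 2C_g$ combined with Type I applied to the restricted set (with the count of relevant $m_2$ controlled via Lemma \ref{lem:covering}) again suffices. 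Summing all contributions yields $\sum_n w_n H(n) \ll_g x/(\log x)^B$.
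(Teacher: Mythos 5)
Your proof is correct and shares the same core idea as the paper's: compare $G(d;n)$ to the frozen value $G(d;x)$, bound the frozen main term by \eqref{eq:TypeI}, and show the variation is also \eqref{eq:TypeI}--controlled. The organization of the error is, however, genuinely different. The paper expands $H$ as a sum over ordered pairs $(m_1,m_2)$ with $P^+(m_1)<P^-(m_2)$ and $m_1m_2\le x^\gamma$, absorbs both the cutoff $m_1m_2\le n^\gamma$ and the threshold condition $P^+(m_1)<n^\nu\le P^-(m_2)$ into a single interval restriction $n\in\cI_{m_1,m_2}$ (harmless, since \eqref{eq:TypeI} already includes a $\max$ over intervals), and then replaces your partial summation by the Newton--Leibniz identity $g_j(\bv(m_2;n))=g_j(\bv(m_2;x))+\int_n^x \tilde{g}_j(\bv(m_2;t))(t\log t)^{-1}\,dt$, which sidesteps your explicit three-way case split. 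Two remarks on your version. First, your reduction of case (ii) to case (iii) loses the $1/\log x$ gain: when the factorization of $d$ changes, $|G(d;n)-\tilde G(d)|$ is only $O_g(1)$, not $O_g(1/\log x)$; this is fine because \eqref{eq:TypeI} restricted to the sub-family of such $d$ already gives $O_g(x/(\log x)^B)$, matching the size of the main term. For the same reason you do not actually need Lemma \ref{lem:covering} to count the boundary-crossing $m_2$: the Type I bound applies verbatim to any subset of $\{d\le x^\gamma\}$. Second, your explicit handling of the boundary crossings is slightly more careful than the paper's write-up, whose integral identity implicitly requires either that $g_j$ extend continuously across $\partial\cT_j$ along the ray $t\mapsto\bv(m_2;t)$, or that $\cI_{m_2}$ have right endpoint $x$; when $g_j$ jumps at $\partial\cT_j$ and the ray exits the polytope before $t=x$, the extra jump term has to be absorbed by \eqref{eq:TypeI} exactly as you do.
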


\begin{proof}
We recall that $g\in \sG_1$, so $g(\bx)=\sum_j g_{j}(\bx)$ for some finite collection of bounded functions $g_{j}$ each supported on a convex polytope $\cT_j$ with continuous, bounded first partial derivatives on the interior of $\cT_j$. Then 
\[
H(n)=\sum_j \sum_{\substack{\ssc{m}{1}\ssc{m}{2}|n\\ \ssc{m}{1}\ssc{m}{2} \le n^\gamma\\ P^+(\ssc{m}{1})< n^\nu\le P^-(\ssc{m}{2})}}\mu(\ssc{m}{1})g_{j}(\bv(\ssc{m}{2};n)).
\]
Given $\ssc{m}{2}$, $\bv(\ssc{m}{2};n)$ will lie in the convex polytope $\cT_j$ provided $n\in \cI_{\ssc{m}{2}}$ for some interval depending only on $\ssc{m}{2}$. Similarly the conditions $\ssc{m}{1}\ssc{m}{2} \le n^\gamma$, $P^+(\ssc{m}{1})<n^\nu\le P^-(\ssc{m}{2})$ will hold provided $n$ lies in some interval depending only on $\ssc{m}{1},\ssc{m}{2}$. Let $\cI_{\ssc{m}{1},\ssc{m}{2}}$ denote the intersection of this interval with $\cI_{\ssc{m}{2}}$. We see that since $\frac{\partial}{\partial n}\bv(m;n)=-\bv(m;n)/(n\log{n})$, we have
\[
\frac{\partial}{\partial t}g_{j}(\bv(\ssc{m}{2};t))=\frac{-1}{t(\log{t})}\bv(\ssc{m}{2};t)\cdot \nabla g_{j}(\bv(\ssc{m}{2};t)).
\]
We let $\tilde{g}_j(m;t):=\bv(m;t)\cdot \nabla g_{j}(\bv(m;t))$, which is
continuous as a function of $t$ with $m$ fixed, and bounded due to the smoothness of $g_{j}$.
Thus,
\[
g_{j}(\bv(\ssc{m}{2};n))=g_j(\bv(\ssc{m}{2};x))+\int_{n}^x\frac{1}{t\log{t}} \tilde{g}_j(\bv(\ssc{m}{2};t))dt
\]
for $n\in\cI_{\ssc{m}{2}}$. Substituting this into our definition of $H$, and summing over $x/2<n\le x$ weighted by $w_n$ gives
\begin{align*}
\sum_{x/2<n\le x}w_nH(n)&=\sum_j \sum_{\substack{\ssc{m}{1}\ssc{m}{2} \le x^\gamma\\ P^+(\ssc{m}{1})< P^-(\ssc{m}{2})}}\mu(\ssc{m}{1})g_{j}(\bv(\ssc{m}{2};x))\sum_{\substack{x/2<n\le x\\ n\in \cI_{\ssc{m}{1},\ssc{m}{2}} \\ \ssc{m}{1}\ssc{m}{2}|n}}w_n\\
&+\sum_j \sum_{\substack{\ssc{m}{1}\ssc{m}{2} \le x^\gamma\\ P^+(\ssc{m}{1})< P^-(\ssc{m}{2})}}\mu(\ssc{m}{1})\int_{x/2}^{x} \frac{\tilde{g}_{j}(\bv(\ssc{m}{2};t))}{t\log{t}}\sum_{\substack{t<n\le x\\ n\in \cI_{\ssc{m}{1},\ssc{m}{2}} \\ \ssc{m}{1}\ssc{m}{2}|n}}w_n\; dt.
\end{align*}
We see that $n$ is restricted to the interval $\cI_{\ssc{m}{1},\ssc{m}{2}} \cap (t,x]$
 and that $\tilde{g}_j(\ssc{m}{2};t)/(t\log{t})\ll_g 1/x$. Thus by \eqref{eq:TypeI} we have that
\[
\sum_{x/2<n\le x}w_nH(n)\ll_g \sum_{m \le x^\gamma}\tau(m)\sup_{\text{interval $\cI$}}\;\;\Biggl|\sum_{\substack{x/2<mn\le x\\ n\in \cI}}w_{mn}\Biggr|\ll \frac{x}{\log^B{x}},
\]
as required.
\end{proof}

With these lemmas we may complete the proof of Theorem \ref{thm: Main sieving} quickly.

\begin{proof}[Proof of Theorem \ref{thm: Main sieving}] 
Let $a_n,b_n$ be non-negative sequences and $w_n=a_n-b_n$.
By Lemma \ref{lem:h-onesign} $(c)$, the fact that $H(p)=1$ for primes $p\in (x/2,x]$, and $w_n\ge -b_n$, under the hypotheses in Theorem \ref{thm: Main sieving} (a) we have
\begin{align*}
\sum_{p\in \cP} w_p = \sum_{p\in \cP} (w_p+b_p)-b_p &\ge -\sum_{p\in \cP} b_p + \sum_{n\in \cP \cup \cN} (w_n+b_n)H(n) \\
&= \sum_{n\in \cN} b_n H(n) + \sum_{n\in \cP \cup \cN} w_n H(n),
\end{align*}
and  under the hypotheses in Theorem \ref{thm: Main sieving} (b) we have
\begin{align*}
\sum_{p\in \cP} w_p = \sum_{p\in \cP} (w_p+b_p)-b_p &\le -\sum_{p\in \cP} b_p + \sum_{n\in \cP \cup \cN} (w_n+b_n)H(n) \\
&= \sum_{n\in \cN} b_n H(n) + \sum_{n\in \cP \cup \cN} w_n H(n).
\end{align*}
Let 
\[
C := \sum_{k=2}^{\fl{1/\nu}} \mint{\bx \in \cH\cap \RR^k \\ \ssc{x}{1} \le \cdots \le \ssc{x}{k}} \frac{(\bone \star g)(\bx)}{\ssc{x}{1}\cdots \ssc{x}{k}}d\bx.
\]
By Lemma \ref{lem:CR1-union-polytopes}, $\cH$ is the union of a finite
number of convex polytopes.
  By Lemma \ref{lem:h-onesign} (a), $H(n)\ne 0$
implies that $P^-(n)\ge n^{\nu}$, and in this case $n\in \cN$ is equivalent
to $\bv(n)\in \cH$.
Lemma \ref{lem:h-sum-to-integral} implies that
\[
\sum_{n\in \cN} b_n H(n) = \Big( C+ O\Bigl(\frac{1}{B}\Bigr)
\Big) \sum_p b_p.
\]

Let $\varpi>0$ be arbitrary, assume that $A\ge 1+\varpi$ and that
$B$ is sufficiently large in terms of $A,\gamma,\theta,\nu,g$,
$x$ sufficiently large in terms of $\gamma,\theta,\nu,g,\varpi,A$,
and $((a_n),(b_n)) \in \Psi(\gamma,\theta,\nu;B,\varpi,x)$.
 Proposition \ref{prop:sieve-usingIandII} and Lemma \ref{lem:htypeI} imply that
\begin{equation}\label{eq:sieve-sum-wnHn}
\sum_{n\in \cP \cup \cN} w_n H(n) = \sum_{x/2 < n\le x} w_n H(n) - \ssum{x/2<n\le x
\\ n\not\in \cP \cup \cN} w_n H(n)\ll_g \frac{x}{(\log x)^{A}}.
\end{equation}
By \eqref{bp-sum},
\[
\sum_{n\in \cN} b_n H(n) + \sum_{n\in \cP \cup \cN} w_n H(n) =
\Bigl(C+O\Bigl(\frac{1}{B}\Bigr)
 +O\Bigl(\frac{1}{(\log{x})^{A-\varpi}\Bigr)}\Bigr) \sum_p b_p.
\]
Therefore, taking $A=1+\varpi$, the lower bound $C^-(\gamma,\theta,\nu)\ge 1+C$ in part $(a)$ and the upper bound
$C^+(\gamma,\theta,\nu)\le 1+C$ in part $(b)$ of Theorem \ref{thm: Main sieving} follow on considering $B$ arbitrarily large.

\end{proof}

\subsection{Proof of Proposition \ref{prop:sieve-usingIandII}}

Anticipating future applications, where we also use the set $\cG_2$,
we prove a more general version of Proposition \ref{prop:sieve-usingIandII}.

\begin{prop}\label{prop:general}
Suppose that
\begin{equation}\label{eq:prop-general-param-range}
(\gamma,\theta,\nu)\in \cQ_0, \qquad 0 < \sigma \le \nu \le 1-\gamma.
\end{equation}
Let $g$ be a vector function in $\cS$ supported on vectors with all components
$\ge \sigma$ and sum of components at most $\gamma$, and such that $g$ is a finite sum of
functions which are each in $\cS$, supported on a convex polytope and Lipschitz continuous on this polytope.  
Let $x^{\sigma/3} \le L \le (x/2)^{\sigma/2}$ and suppose that $\lambda(d)$
is splittable with respect to $L$.
 Define
\begin{align}
G(m;n) &= \big( \one_{m \le n^{\gamma}} \big) \lambda({\ssc{m}{1}}) g(\bv(\ssc{m}{2};n)),\quad (m=\ssc{m}{1}\ssc{m}{2},\,P^+(\ssc{m}{1})<n^\sigma\le P^-(\ssc{m}{2})) 
\label{g-def-gen}\\
H(n) &= \sum_{d|n}G(d;n). \label{h-def-gen}
\end{align}
Let $A>0$, $\varpi\ge 1$ and
assume $B$ is sufficiently large in terms of $\gamma,\theta,\nu,g,A,\varpi,\sigma$.
Assume $(w_n)$ satisfies \eqref{w}, \eqref{eq:TypeI}, \eqref{eq:TypeII}.
Define $\cN$ by \eqref{N-def}, and let $\cP$ be the set of primes in $(x/2,x]$.
Then
\[
\ssum{x/2<n\le x \\ n\not\in \cN \cup \cP} H(n) w_n = \ssc{O}{g,A,\sigma}
 \pfrac{x}{(\log x)^{A}}.
\]
\end{prop}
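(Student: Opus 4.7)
The plan is to expand $H(n)$ via its divisor-sum definition and reduce $\Sigma$ to a polylogarithmic sum of multilinear sums, each of which will be bounded by the Type II estimate \eqref{eq:TypeII}. The main tools are Lemma \ref{lem:SmoothSplit} (for the splittable weight $\lambda$), Lemmas \ref{lem:PrimeSplit} and \ref{lem:PrimeDecomposition} (for a Heath-Brown decomposition of the large prime factors), with squarefull corrections from Lemma \ref{lem:Squarefull}, and Lemma \ref{lem:polytope-encode} (for separating polytope-type size constraints from the $w_n$-dependence). The combinatorial condition $\bv(n) \notin \cC(\cR)$ for $n \notin \cN \cup \cP$ will drive the argument, by forcing a Type II subproduct to appear in every multilinear sum that arises.

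Substituting the definition of $H(n)$ and writing $n = m_1 m_2 r$ with $P^+(m_1) < n^\sigma \le P^-(m_2)$ and $m_1 m_2 \le n^\gamma$, I would split $r = r_1 r_2$ by the $n^\sigma$-threshold, combine $u := m_1 r_1$ as the smooth part, and treat $m_2 r_2$ as a product of large primes. Because $g$ is supported on vectors whose components are $\ge \sigma$ and sum to $\le \gamma$, $m_2$ has at most $1/\sigma$ prime factors, and we may fix an ordered factorization $m_2 = p_1 \cdots p_k$ at the cost of a factor $O_\sigma(1)$. The splittability of $\lambda$ (Definition \ref{def:splittable}), extended via Lemma \ref{lem:splittable-modified-by-mult-fcn} by the multiplicative indicator controlling the small-prime part of $r_1$, yields that the $u$-dependent coefficient is splittable at scale $L \in [x^{\sigma/3}, x^{\sigma/2}]$; Lemma \ref{lem:SmoothSplit} then rewrites it as a $(\log x)^{O(1)}$-sum of products $\beta_1(e_1)\cdots\beta_s(e_s)$ with $s = O(1/\sigma)$ and each $e_i \in [1, x^\sigma]$. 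Simultaneously, applying Lemma \ref{lem:PrimeSplit} to the large-prime part decomposes each of the $p_i$ (together with the large-prime factors of $r_2$) into integer factors $d_{h,j} < n/x^\gamma$ of log-ratio $<1-\gamma$. The polytopal condition $n \notin \cN \cup \cP$---expressible via Lemma \ref{lem:CR1-union-polytopes} as $\bv(n)$ lying in the complement of $\cC(\cR) \cup \{\emptyvec\}$, which is a finite disjoint union of $O(1)$ convex polytopes in log-coordinates---together with every other polytopal size constraint on the factors, is separated from the $w_n$-dependence by Lemma \ref{lem:polytope-encode}, incurring another $(\log x)^{O(1)}$ loss.

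After these reductions, $\Sigma$ is bounded by $(\log x)^C$, for some $C = O_{\gamma,\sigma,g}(1)$, times a supremum over $1$-bounded coefficients $\phi_1, \ldots, \phi_t$ of multilinear sums
\[
T \;:=\; \ssum{n = f_1 \cdots f_t \sim x \\ \text{size/polytope conditions} \\ n \notin \cN \cup \cP} \phi_1(f_1) \cdots \phi_t(f_t)\, w_n,
\]
with $t = O_{\gamma,\sigma}(1)$ factors, each of log-ratio $< 1-\gamma$. Within each dyadic-and-polytopal sub-region, the log-sizes $(s_1, \ldots, s_t)$ of the factors are nearly determined. For sub-regions where some subset $A \subsetneq [t]$ has $\sum_{i \in A} s_i \in [\theta, \theta+\nu]$, grouping $M = \prod_{i \in A} f_i$ and $R = \prod_{i \notin A} f_i$ puts $T$ into the bilinear form of \eqref{eq:TypeII} (extended by Proposition \ref{prop:first-reduction} for the complementary range $[1-\theta-\nu, 1-\theta]$), producing the bound $O(x/\log^B x)$. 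For the remaining sub-regions, no such subset exists, so $(s_1, \ldots, s_t) \in \cR$; here the condition $n \notin \cN$ forces the polytopal constraint region to have positive codimension, and after smoothing the boundary via Lemma \ref{lem:polytope-encode-one-condition} (absorbed into the \eqref{w} error) the contribution is bounded by $O(x^{1-\nu/20})$. Summing the $O(1)$ sub-region contributions and taking $B$ sufficiently large in terms of $A, C, g, \gamma, \theta, \nu, \sigma, \varpi$ then yields $\Sigma \ll x/\log^A x$.

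The principal obstacle is the combinatorial step in the second case above: ruling out that a dyadic sub-region with $(s_1, \ldots, s_t) \in \cR$ receives genuine contribution from $n \notin \cN \cup \cP$, since the decomposition into factors $f_i$ produces a coagulation of $\bv(n)$ rather than a refinement, so the relationship between $\bv(n) \notin \cC(\cR)$ and the properties of $(s_i)$ is delicate. The resolution exploits that sufficiently fine Heath-Brown fragmentation (controlled by choosing $h = 6\lceil 1/(1-\gamma) \rceil$ as in Lemma \ref{lem:PrimeDecomposition}) together with the further subdivision offered by Lemma \ref{lem:polytope-encode} forces the sub-region to either expose a Type II subsum at a finer dyadic scale, or to contribute only via the boundary. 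Secondary technicalities include: (i) treatment of $n$ with a prime factor $\ge x^{1-\gamma}$, absorbed by the Type I tail in Lemma \ref{lem:PrimeDecomposition}; (ii) smoothing of subsum boundaries within $O(1/\log x)$ of $\theta$ or $\theta + \nu$, handled by the integral kernel in Lemma \ref{lem:polytope-encode-one-condition} and absorbed into the $w_n\ge -x^{\nu/10}$ tail of \eqref{w}; and (iii) tracking the accumulated $(\log x)^{O(1)}$ losses to determine the required size of $B$.
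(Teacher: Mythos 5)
Your reduction strategy broadly mirrors the paper's: expand $H(n)$ via its divisor sum, separate the smooth part from the large primes, apply Lemma \ref{lem:SmoothSplit} to the splittable part, apply Lemma \ref{lem:PrimeSplit}/\ref{lem:PrimeDecomposition} to the large primes, use Lemma \ref{lem:polytope-encode} to separate the polytopal constraints, and finally invoke \eqref{eq:TypeII}. However, the crucial combinatorial step is handled incorrectly, and the gap is substantive rather than cosmetic.

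You split the final multilinear sums into two cases depending on whether some subset $A$ of factors has combined log-size in $[\theta,\theta+\nu]$, and in the second case you assert that the constraint region has positive codimension so that boundary-smoothing absorbs the contribution. This is wrong on two counts. First, the second case is actually \emph{empty}, not merely thin: the vector $\bz$ of log-ratios of all the final factors $u_1,\dots,u_s,v_1,\dots,v_r,d_{1,1},\dots,d_{k,\ell}$ is a \emph{fragmentation} of $\by'=\bigl(\bv(p_1\cdots p_k;n),\frac{\log u}{\log n},\frac{\log v}{\log n}\bigr)$. The polytope $\cT'$ produced by Lemma \ref{lem:CR1-union-polytopes} is selected precisely so that appending any vector with components $\le 1-\gamma$ to a point of $\cT'$ yields something outside $\cC(\cR)$, i.e., $\by'\notin\cC(\cR)$. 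Since $\by'$ is a coagulation of $\bz$, if $\bz\in\cR$ then $\by'\in\cC(\cR)$, a contradiction; hence $\bz\notin\cR$. As every component of $\bz$ is $\le 1-\gamma$ (because $d_{h,j}<n/x^\gamma$ and $u_i,v_j\le n^\sigma\le n^{1-\gamma}$) and $|\bz|=1$, the definition of $\cR$ forces a proper subsum of $\bz$ into $[\theta,\theta+\nu]$. Thus a Type II subsum always exists, and the inclusion-exclusion over subsum locations plus Lemma \ref{lem:Integration}/\ref{lem:polytope-encode} reduces cleanly to \eqref{eq:TypeII}.

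Second, your diagnosis of the obstacle --- that "the decomposition into factors $f_i$ produces a coagulation of $\bv(n)$" --- misdescribes the structure. The large primes $p_i$ are \emph{fragmented} (not coagulated) by the Heath-Brown step into pieces of log-ratio $<1-\gamma$; the relevant coagulation happens at the level of $\by'$ (combining the primes of $u$ together and of $v$ together), and the key fact is that $\by'\notin\cC(\cR)$ follows from the careful choice of $\cT'$, not directly from $\bv(n)\notin\cC(\cR)$. Without this observation the argument does not close: there is no positive-codimension mechanism that would rescue the purported second case.
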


Proposition \ref{prop:sieve-usingIandII} follows immediately upon taking
$\lambda(d) = \mu(d)$ and $\sigma=\nu$ and invoking Lemma \ref{lem:SmoothSplit} (ii).

%%%%%%%%%%%%%%%%%%%%%%%%%%%%%%%%%%%%%%%%%%%%%%%%%%%%%%%%%%
%
%
%
\begin{proof}[Proof of Proposition \ref{prop:general}]
%
%
%
%%%%%%%%%%%%%%%%%%%%%%%%%%%%%%%%%%%%%%%%%%%%%%%%%%%%%%%%%%%

Consider $n\sim x$ with $n\not \in \cP \cup \cN$.  Equivalently, $x/2<n\le x$,
$\Omega(n)\ge 2$ and $\bv(n) \not\in \cC(\cR)$.
We write $n=\ssc{n}{1}\ssc{n}{2}$ with $P^+(\ssc{n}{1})\le n^\sigma <P^-(\ssc{n}{2})$. 
Thus $\ssc{n}{2}$ has at most $1/\sigma$ prime factors, and we consider separately for 
each $k\in \{0,\dots,\lfloor1/\sigma\rfloor\}$ the contribution when $\ssc{n}{2}=p_1\cdots p_k$
with $p_1\le \cdots \le p_k$. By Lemma \ref{lem:CR1-union-polytopes}, we must have that
 $\bv(\ssc{n}{2};n)\in\cT$ for one of a bounded number of convex polytopes 
 $\cT\subseteq \RR^k$. Moreover, each $\cT$ has the property that for any 
 $\bt\in\cT$ and $\bt'\in [0,1-\gamma]^r$ (for some $r$) with $|\bt'|=1-|\bt|$,
  we have $(\bt,\bt')\notin \cC(\cR)$ and that $\cT$ only involves a bounded number of constraints.
   We also see that the condition $\Omega(n)\ge 2$ is automatically satisfied if $k\ge 2$, whereas if $k \le 1$,
  then $\Omega(n)\ge 2$ is equivalent to
   $\ssc{n}{1}>1$. Thus it suffices to show, for each such polytope $\cT$ 
   and each such $k$, that
\[
\sum_{\substack{n=\ssc{n}{1}p_1\cdots p_k\sim x\\ \ssc{n}{1}>1\text{ or } k\ge 2\\
 P^+(\ssc{n}{1})\le n^\sigma<p_1\le p_2\dots\le p_k\\ \bv(p_1\cdots p_k;n)\in \cT}}
H(n)w_n \ll_{k,A} \frac{x}{(\log x)^{A}}.
\]
We expand out the definition of 
\[
H(n)=\sum_{u|\ssc{n}{1}} \sum_{\cJ \subseteq [k]}
\one \Big( u\prod_{j\in \cJ} p_j \le n^\gamma \Big) \lambda(u) g\Big( \bv\big(\prod_{j\in \cJ} p_j;n\big) \Big).
\] 
 Since $g\in\sG_1$, when restricted to $\RR^{|\cJ|}$, 
\[
g(\bx)=\sum_{i}\big( \one_{\bx \in \cG_i} \big) \tilde{g}_i(\bx)
\]
 is a sum of $O_g(1)$ Lipschitz continuous functions $\tilde{g}_i$ bounded by $O_g(1)$, each multiplied by the indicator function of a convex polytope $\cG_i$ involving $O_g(1)$ constraints.  
 We include the `trivial polytope' $\cG_i$ of dimension zero 
 which contains the single vector $\emptyvec$ and corresponds to the term $\bx=\emptyvec$.
 Thus it suffices to show, for each $k$, $\cT$, $\cJ\subseteq[k]$, convex polytope $\cG$ involving a bounded number of constraints, and Lipschitz continuous function $\tilde{g}$ that
\[
\sum_{\substack{n=\ssc{n}{1}\ssc{p}{1}\cdots \ssc{p}{k}\sim x\\ \ssc{n}{1}>1\text{ or }k\ge 2\\ P^+(\ssc{n}{1})\le n^\sigma<\ssc{p}{1}\le \dots \le \ssc{p}{k}\\ \bv(\ssc{p}{1}\cdots \ssc{p}{k};n)\in \cT \\ \bv(\prod_{j\in \cJ} \ssc{p}{j};n)\in \cG}\;\;}\sum_{\substack{u|\ssc{n}{1}\\ u \prod_{j\in \cJ} \ssc{p}{j} \le n^\gamma}}
\tilde{g}\bigg( \bv\Big(\prod_{j\in \cJ}p_j;n\Big)\bigg) 
\lambda(u) w_n  \ll_{k,A,\tilde{g}}\; \frac{x}{(\log x)^{A}}.
\]
Here the case $\cJ=\emptyset$ corresponds only to the trivial polytope $\cG$.

The conditions $\bv(p_1\cdots p_k;n)\in \cT$, $n^\sigma < p_1\le \dots\le p_k$ and 
$\bv(\prod_{j\in \cJ}p_j;n)\in \cG$ are all systems of linear inequalities in the
components of $\bv(p_1\cdots p_k;n)$, and so may be combined into a condition 
$\bv(p_1\cdots p_k;n)\in \cT'$ for some convex polytope $\cT'\subseteq \cT$. We note that the number of linear
 inequalities defining the polytope $\cT'$ is bounded, since $\cT,\cG$ involve a bounded number of constraints and $k\le 1/\sigma$. Since $\cT'\subseteq \cT$, if  $\bt\in\cT'$ and $\bt'\in [0,1-\gamma]^r$ (for some $r$) with $|\bt'|=1-|\bt|$, we have $(\bt,\bt')\notin \cC(\cR)$. 
 Hence, it suffices to show for each such convex polytope 
 $\cT'\subseteq[\sigma,1]^k$ and Lipschitz continuous $\tilde{g}$ and $\cJ\subseteq[k]$ that
\[
\ssum{n= u v \ssc{p}{1}\cdots \ssc{p}{k}\sim x\\ 
uv>1\text{ or }k\ge 2\\ P^+(uv)\le n^\sigma \\ 
\bv(\ssc{p}{1} \cdots \ssc{p}{k};n)\in \cT' \\ 
u \prod_{j\in \cJ} \ssc{p}{j} \le n^\gamma}\;\;
\tilde{g}\bigg( \bv\Big(\prod_{j\in \cJ}p_j;n\Big)\bigg) 
\lambda(u) w_n  \ll_{k,A,\tilde{g}}\; \frac{x}{(\log x)^A}.
\]
With $(u,v)$ fixed the collection of  conditions $P^+(uv)\le n^{\sigma}$,
$\bv(\ssc{p}{1} \cdots \ssc{p}{k};n)\in \cT'$ and 
$u \prod_{j\in \cJ} \ssc{p}{j} \le n^\gamma$ is equivalent to 
$\bv(\ssc{p}{1}\cdots \ssc{p}{k};x)\in \cT_{u,v}$
for some convex polytope $\cT_{u,v}$, and that it is nonempty
only if $u\le x^{\gamma}$.
With this notation, the left side above equals the left side in Lemma \ref{lem:PrimeSplit}.  We apply this lemma with $D=A+\varpi$
 and $\ell=6\cl{1/(1-\gamma)}$.

It therefore suffices to show that for any $k\ge 0$, real $t$ and $t'$, all choices of convex 
$\cT'$, and $1$-bounded functions $\alpha_{h,j}$, we have
\begin{equation}\label{eq:after-prime-decomp}
\ssum{n=u v \ssc{m}{1}\cdots \ssc{m}{k}\sim x\\ uv>1\text{ or } k\ge 2\\
 P^+(uv)\le n^\sigma\\ 
 \ssc{m}{h}=\prod_{j=1}^\ell d_{h,j}\; (1\le h\le k) \\  
   \by\in \cT' \\
 \\ d_{h,j} < n/x^{\gamma}\,\forall h,j \\
  u \prod_{h\in \cJ} \ssc{m}{h} \le n^\gamma} 
\lambda(u) w_n u^{it} v^{it'} \prod_{h=1}^k\prod_{j=1}^\ell \alpha_{h,j}(d_{h,j})
\ll_A \; \frac{x}{(\log x)^{k\ell+(k+2)(D+2)+A}},
\end{equation}
where
\[
\by:=\Bigl(\frac{\log{\ssc{m}{1}}}{\log n},\dots,\frac{\log{\ssc{m}{k}}}{\log n}\Bigr).
\]
When $k\le 1$, the condition $uv>1$ may be encoded as $uv \ge \frac32$ and removed using Lemma \ref{lem:Integration}, which gives extra factors $(uv)^{it''}(\log x)$
for real $t''$.  It thus suffices to prove a version of \eqref{eq:after-prime-decomp}
with the condition ``$uv>1$ or $k\ge 2$'' removed from the summation and the right side multiplied by $(\log x)^{-1}$.

By hypothesis, $\lambda(u)$ is splittable with respect to $L$.
By Lemma \ref{lem:splittable-modified-by-mult-fcn} (iii), $\lambda(u) u^{it}$
is also splittable with respect to $L$.
Therefore, taking
 $m=v\ssc{m}{1}\cdots \ssc{m}{k}$ in Lemma \ref{lem:SmoothSplit}, 
 we bound the left side of \eqref{eq:after-prime-decomp} by $x^{-90}$ plus a sum where
 $u$ is replaced by a product $\ssc{u}{1}\cdots \ssc{u}{s}$ with $1\le s\le 3/\sigma+2$, we twist by 1-bounded functions $\beta_1(u_1),\ldots, \beta_s(u_s)$
 satisfying the support conditions in Lemma \ref{lem:SmoothSplit},
and we gain an extra factor $(\log x)^{3/\sigma}$.

We then apply Lemma \ref{lem:SmoothSplit} (i),  which shows that
 $v^{it'}$ is a splittable function with respect to $L$, and apply Lemma \ref{lem:SmoothSplit} again, this time with
 $m=\ssc{u}{1}\cdots \ssc{u}{s} \ssc{m}{1}\cdots \ssc{m}{k}$.  
Then we further bound the left side of \eqref{eq:after-prime-decomp} by $O(x^{-80})$
plus a sum where $v$ is 
replaced by $\ssc{v}{1}\cdots \ssc{v}{r}$, $r\le 3/\sigma+2$,
we twist by 1-bounded functions $\beta'_1(v_1),\ldots, \beta'_r(v_r)$
and we gain an extra factor $(\log x)^{3/\sigma}$.
 Also, the quantity $\alpha_{d,m}$
appearing in Definition \ref{def:splittable} is bounded above by a power of $\tau(dm)$
multiplied by $w_{dm}$,
and hence $|\alpha_{d,m}|\le x^2$ for large enough $x$ 
using the crude bound $|w_n|\ll  x^{1.1}$ which follows from \eqref{w}.
Lemma \ref{lem:SmoothSplit} also implies that the functions $\beta_1,\ldots,\beta_{s-1},\beta'_1,\ldots,\beta'_{r-1}$
are supported on integers $>x^{\sigma/3}$, and that $\beta_s,\beta'_r$
are supported on $[1,x^{\sigma/3}]$.
Thus, we may add to the summation the conditions
$u_j > x^{\sigma/3}$ for $j<s$, $v_j > x^{\sigma/3}$ for $j<r$, $u_s \le x^{\sigma/3}$
and $v_r \le x^{\sigma/3}$.
We may also add the conditions $\ssc{m}{i}>(x/2)^\sigma$ since in the polytope $\cT'$,
every coordinate is at least $\sigma$ (this comes from the encoding of $n^{\sigma}\le p_1 \le \cdots  \le p_k$ that is part of the definition of $\cT'$).

We see that it suffices to show that for some $A'$ which is sufficiently large in terms of $P=(\gamma,\theta,\nu)$, $\sigma$, $\varpi$, and $A$,
\[
\ssum{n=\ssc{u}{1}\cdots \ssc{u}{s} \ssc{v}{1}\cdots \ssc{v}{r}\ssc{m}{1}\cdots \ssc{m}{k}\sim x\\ 
 \ssc{m}{h}=\prod_{j=1}^\ell d_{h,j}\; \forall h\le k \\ \eqref{eq:collection}\\
 \\ d_{h,j} < n/x^{\gamma}\,\forall h,j \\
 }
 w_n \cI(\bu,\bv,\bmm) \sprod{1\le h\le k \\ 1\le j \le \ell} \alpha_{h,j}(d_{h,j})\prod_{j=1}^{s}\beta_j(\ssc{u}{j}) \prod_{j=1}^{r}\beta'_j(\ssc{v}{j})
 \ll \frac{x}{(\log{x})^{A'}}
\]
for any 1-bounded functions $\beta_j,\beta'_j,\alpha_{h,j}$ and 
$s,r\le 3+2/\sigma$, where
\begin{equation}\label{eq:collection}
\begin{split}
\by\in \cT', \qquad \ssc{u}{j} &\le n^{\sigma} \;\; (1\le j\le s-1),\\
 \ssc{u}{1}\cdots \ssc{u}{s}\prod_{h\in \cJ} m_h \le n^{\gamma}, \qquad 
\ssc{v}{j} &\le n^\sigma \;\; (1\le j\le r-1), 
\end{split}
\end{equation}
and $\cI(\bu,\bv,\bmm)$ is the indicator function of the simultaneous conditions
$u_j>x^{\sigma/3}$ for $j<s$, $v_j>x^{\sigma/3}$ for $j<r$, $u_s\le x^{\sigma/3}$,
$v_r\le x^{\sigma/3}$, and $m_h > (x/2)^{\sigma}$ for $1\le h\le k$.

Since $d_{h,j}< n/x^\gamma\le n^{1-\gamma}$ for all $h,j$ and also $\ssc{u}{j},\ssc{v}{j}\le n^\sigma\le n^{1-\gamma}$ for all $j$ (using $\sigma \le \nu \le 1-\gamma$), we have that
\[
\bz:=\Bigl(\frac{\log{\ssc{u}{1}}}{\log{n}},\dots,\frac{\log{\ssc{u}{s}}}{\log{n}},\frac{\log{\ssc{v}{1}}}{\log{n}},\dots,\frac{\log{\ssc{v}{r}}}{\log{n}},\frac{\log{d_{1,1}}}{\log{n}},\dots,\frac{\log{d_{k,\ell}}}{\log{n}}\Bigr)\in [0,1-\gamma]^{s+r+k\ell}
\]
is a fragmentation of the vector $\by'=(\by,\frac{\log u}{\log{n}},\frac{\log v}{\log{n}})$. Moreover, since $\by\in\cT'$ and $\frac{\log \ssc{u}{i}}{\log{n}},\frac{\log \ssc{v}{i}}{\log{n}}\le \sigma \le \nu \le 1-\gamma$ with $|\by|=1-\frac{\log u}{\log{n}}-\frac{\log v}{\log{n}}$ we have $\by'\notin \cC(\cR)$, and so $\bz\notin \cR$. Since all components of $\bz$ are bounded by $1-\gamma$, this implies there is a subsum of $\bz$ which lies in the interval $[\theta,\theta+\nu]$, and so certainly lies in the slightly larger interval
\[
\cK_n:=\Bigl[\theta\frac{\log (x/2)}{\log{n}},(\theta+\nu)\frac{\log{x}}{\log{n}}\Bigr].
\]
Write $\bz = (z_1,\ldots,z_{k\ell+s+r})$.
  By inclusion-exclusion on the set of subsums of $\bz$ that lie in $\cK_n$, it suffices to show for any 
  non-empty collection $\cE$ of non-empty subsets $E\subseteq[k\ell+s+r]$, we have
  \[
\sum_{\substack{n=\ssc{u}{1}\cdots \ssc{u}{s} \ssc{v}{1}\cdots \ssc{v}{r}\ssc{m}{1}\cdots \ssc{m}{k}\sim x\\ 
 \ssc{m}{h}=\prod_{j=1}^\ell d_{h,j}\; \forall h\le k \\ \eqref{eq:collection}\\
 d_{h,j} < n/x^{\gamma}\,\forall h,j \\
  |\ssc{\bz}{E}| \in \cK_n\,\forall E\in \cE
 }}
 w_n \cI(\bu,\bv,\mathbf{m}) \sprod{1\le h\le k \\ 1\le j \le \ell} \alpha_{h,j}(d_{h,j})\prod_{j=1}^{s}\beta_j(\ssc{u}{j}) \prod_{j=1}^{r}\beta'_j(\ssc{v}{j})
 \ll \frac{x}{(\log{x})^{A'}}.
\]
The constraint $d_{h,j} < n/x^{\gamma}$ is equivalent to 
$\lfloor x^\gamma\rfloor + 1/2 < n/d_{h,j}$. Therefore we can use Lemma \ref{lem:Integration} to remove each such constraint at the cost of twisting our sum by
a factor $(n/d_{h,j})^{it}=\prod_{(h',j')\ne (h,j)} d_{h',j'}^{it}$ with $t\in \RR$ (which can be absorbed into the 1-bounded functions), an additional factor $\log x$, and a negligible $O(x^{-90})$ error term.  After applying this for each $h,j$, we see that
it suffices to prove, for any 1-bounded functions $\alpha_{h,j},\beta_j,\beta_j'$, that
 \begin{equation}\label{eq:reduction-before-encoding-polytope}
\ssum{n=\ssc{u}{1}\cdots \ssc{u}{s}\ssc{v}{1}\cdots \ssc{v}{r} 
\ssc{m}{1}\cdots \ssc{m}{k}\sim x\\
 \ssc{m}{h}=\prod_{j=1}^\ell d_{h,j}\; \forall h\le k \\ \eqref{eq:collection}\\
   |\ssc{\bz}{E}| \in \cK_n\,\forall E\in \cE
 }
 w_n \cI(\bu,\bv,\mathbf{m}) \sprod{1\le h\le k \\ 1\le j \le \ell} \alpha_{h,j}(d_{h,j})\prod_{j=1}^{s}\beta_j(\ssc{u}{j}) \prod_{j=1}^{r}\beta'_j(\ssc{v}{j})
 \ll \frac{x}{(\log{x})^{A'+k\ell}}.
\end{equation}

We next remove the conditions in \eqref{eq:collection} from the summation in 
\eqref{eq:reduction-before-encoding-polytope}.
There is one special 
case we must dispense with first, and that is the case where one of the linear
constrains defining $\cT'$ is $\ssc{x}{1}+\cdots+\ssc{x}{k}\ge 1$.
Since $\ssc{\by}{\cJ} \in \cG$, and in any nontrivial $\cG$ the sum of coordinates is $\le \gamma<1$, $\cG$ must be the trivial polytope and $\cJ=\emptyset$.
Moreover, $\by\in \cT'$ implies that $m_1\cdots m_k=n$, which can only happen
if $r=s=1$ and $\ssc{u}{1}=\ssc{v}{1}=1$.  Thus, \eqref{eq:collection}
is either never satisfied or equivalent to $r=s=1$ and  $\ssc{u}{1}=\ssc{v}{1}=1$.  We encode the latter
as $\ssc{u}{1} < \frac32$ and $\ssc{v}{1} < \frac32$, and then two applications
of Lemma \ref{lem:Integration} encodes these conditions with extra factors
which are absorbed into the functions $\beta_1(\ssc{u}{1})$ and $\xi_1(\ssc{v}{1})$.
In this special case, we see that it suffices to prove a version of \eqref{eq:reduction-before-encoding-polytope} with the condition \eqref{eq:collection} removed,
and an extra factor $(\log x)^{-2}$ on the right side.

When  $\ssc{x}{1}+\cdots+\ssc{x}{k}\ge 1$ is not one of the linear constraints defining $\cT'$, we use Lemma \ref{lem:polytope-encode}.  Let $N=k\ell+r+s$ and relabel the
variables $\ssc{u}{1},\ldots,\ssc{u}{s},\ssc{v}{1},\ldots,\ssc{v}{r}$ and all of the $d_{h,j}$ as
$\ssc{n}{1},\ldots,\ssc{n}{N}$:
\begin{align*}
\ssc{n}{i} &= \ssc{u}{i} \qquad (1\le i\le s),\\
\ssc{n}{i+s} &= \ssc{v}{i} \qquad (1\le i\le r), \\
\ssc{n}{r+s+(h-1)\ell+j} &= d_{h,j} \qquad (1\le h\le k,1\le j\le \ell).
\end{align*}
For $1\le h\le k$, let $\cM_h = \{r+s+(h-1)\ell+1,\ldots,r+s+h\ell\}$
(this corresponds to the variables $\ssc{n}{j}$ whose product is $\ssc{m}{h}$),
for $k+1\le h\le k+s-1$ 
 let $\cM_h = \{h-k\}$ (this corresponds to the variable $\ssc{u}{h-k}$) and for $k+s \le h\le k+s+r-2$ let $\cM_h = \{ h-k+1 \}$ (this corresponds to the variable $\ssc{v}{h-k-s+1}$).  
 In this way, for each $h$, $\prod_{j\in \cM_h} \ssc{n}{j} > x^{\sigma/3}$
 whenever $\cI(\bn)=1$, where $\cI(\bn)=\cI(\bu,\bv,\mathbf{m})$.

 Also, for
$\bn=(\ssc{n}{1},\ldots,\ssc{n}{N})$ let
\[
\zhu(\bn) := \sprod{1\le h\le k \\ 1\le j \le \ell} \alpha_{h,j}(d_{h,j})\prod_{j=1}^{s}\beta_j(\ssc{u}{j}) \prod_{j=1}^{r}\beta'_j(\ssc{v}{j})
\]
so that the summand in \eqref{eq:reduction-before-encoding-polytope} equals
$w_n \cI(\bn) \zhu(\bn).$

We claim that for some bounded $D$,
the conditions \eqref{eq:collection} may be written as
the intersection of conditions, each of the form
\[
n_1^{c_1}\cdots n_{N}^{c_N} \le 1,
\]
where the inequality may be strict, 
every $c_i \in [-D,-1]\cup \{0\} \cup [1,D]$, and
there is some set $\cM_{h}$ so that the numbers $c_{i}$ for $j\in \cM_{h}$
are equal and nonzero.
Indeed, each condition $u_j\le n^{\sigma}$, for $j<s$, is equivalent to
$n_j^{1-\sigma}\prod_{i\ne h} n_i^{-\sigma}\le 1$. 
Raising both sides to a bounded power yields all exponents $\le -1$ or $\ge 1$,
then we may take $h=k+j$.
The same analysis holds for the conditions $v_j\le n^{\sigma}$, for $j<r$.
The condition $\ssc{u}{1}\cdots \ssc{u}{s} \prod_{h\in \cJ} m_h \le n^{\gamma}$
is equivalent to
\[
\prod_{j\in \cL} n_j^{1-\gamma} \prod_{j\in [N]\setminus \cL} n_j^{-\gamma}\le 1 
\]
where $\cL=\{1,\ldots,s\} \cup (\cup_{h\in \cJ} \cM_h)$.
If $s\ge 2$ then $\cL$ contains $\cM_{k+1}$ (corresponding to $\ssc{u}{1}$), if $r\ge 2$ then $[N]\setminus \cL$ contains $\cM_{k+s}$ (corresponding to $\ssc{v}{1}$), and if
$k>0$ then either $\cL$ or $[N]\setminus \cL$ contains $\cM_1$.
The case $r=s=1$ and $k=0$ is not possible since then $n=\ssc{u}{1}\ssc{v}{1}\le x^{2\sigma/3}<x^{2/3}$.
Now consider one of the linear inequalities defining $\cT'$, in the case $k\ge 1$,
which we write as
\begin{equation}\label{eq:U'-condition}
\ssc{e}{1} \ssc{x}{1} + \cdots + \ssc{e}{k} \ssc{x}{k} \le \ssc{e}{0},
\end{equation}
where at least one of $\ssc{e}{1},\ldots,\ssc{e}{k}$ is nonzero.
This corresponds to $m_1^{e_1}\cdots m_k^{e_k} \le n^{e_0}$, equivalently
\[
\big( \ssc{n}{1}\cdots \ssc{n}{r+s} \big)^{-\ssc{e}{0}} \prod_{h=1}^k \bigg( \prod_{j\in \cM_h} \ssc{n}{j} \bigg)^{\ssc{e}{h}-\ssc{e}{0}} \le 1.
\]
If $\ssc{e}{h}\ne \ssc{e}{0}$ for some $h$, then raising both sides to some bounded power
gives the desired conclusion.  
Now suppose that $\ssc{e}{h}=\ssc{e}{0}\ne 0$ for all $h$.
If $\ssc{e}{0}\ge 0$ then the above is always satisfied and this condition may be omitted from $\cT'$.  If $\ssc{e}{0}<0$ then the condition in \eqref{eq:U'-condition} is
equivalent to $\ssc{x}{1}+\cdots+\ssc{x}{k}\ge 1$, which we assumed is not one of
the constrains defining $\cT'$.
The analysis for a version of \eqref{eq:U'-condition} with strict inequality
is similar, but now if  $\ssc{x}{1}+\cdots+\ssc{x}{k} > 1$ is one such constraint,
it corresponds to $\ssc{m}{1}\cdots \ssc{m}{k}>n$, which is impossible
and hence this constraint may be omitted from $\cT'$.
This completes the proof of the claim.

By the claim, we may use Lemma \ref{lem:polytope-encode} to effectively remove
the conditions \eqref{eq:collection} from the sum on the left side of \eqref{eq:reduction-before-encoding-polytope}. We take
\[
\phi_\bn = \cI(\bn) \zhu(\bn) \one\big(|\ssc{\bz}{E}|\in \cK_n\, \forall E\in \cE\big),
\]
and the number $\ell'$ of constraints we remove is equal to $r+s-1$
plus the number of linear constraints defining $\cT'$ (which is $O_g(1)$). It thus suffices to 
prove a version of \eqref{eq:reduction-before-encoding-polytope}
with the conditions \eqref{eq:collection} removed
and $A'$ replaced by $A'+\ell'+2$.

If we order the components $z_j$ of $\bz$ so that $z_j = \frac{\log n_j}{\log n}$
for all $j$, then each condition $E\in \cE$ is equivalent to $\prod_{e\in E} n_e \in ((x/2)^\theta,x^{\theta+\nu}]$.  It therefore
suffice to prove that if $A'$ is sufficiently large (in terms of $P$, $A$, and $\sigma$), for any collection $\cE$ of nonempty subsets of $[N]$,
1-bounded functions $\ssc{\zhu}{1},\ldots,\ssc{\zhu}{N}$, 
we have
\[
\ssum{n=\ssc{n}{1}\cdots \ssc{n}{N}\sim x\\
\prod_{e\in E} n_e \in ((x/2)^\theta,x^{\theta+\nu}]\; \forall E\in \cE}
 w_n \cI(\bn) \ssc{\zhu}{1}(\ssc{n}{1})\cdots \ssc{\zhu}{N}(\ssc{n}{N})
 \ll \frac{x}{(\log{x})^{A'}}.
\]
Each of the $r+s+k$ constraints in $\cI(\bn)$ states that a particular product
$n(J):= \prod_{j\in J} \ssc{n}{j}$ is either $>y$ or $\le y$ for some $y$ which depends
only on $x,\sigma$.  Any condition $n(J) >y$ is equivalent to $n(J) > \fl{y}+\frac12$
and the condition $n(J) \le y$ is equivalent to $n(J) \le \fl{y}+\frac12$.
Thus, applying Lemma \ref{lem:Integration} to each constraint, we introduce
a factor $(\log x)^{r+s+k}$ and additional factors which may be absorbed into
the functions $\ssc{\zhu}{j}$.  In addition, if we fix one $E\in \cE$,
we may encode all of the conditions
\[
\prod_{e\in E'} n_e \in ((x/2)^\theta,x^{\theta+\nu}] \qquad (E'\in \cE, E'\ne E)
\]
by similar applications of Lemma \ref{lem:Integration}.  It then suffices to
prove that for some sufficiently large $A'$  (in terms of $P$, $A$, and $\sigma$),
for any nonempty $E\subseteq [N]$, 1-bounded functions $\ssc{\zhu}{1},\ldots,\ssc{\zhu}{N}$, we have
\begin{equation}\label{eq:sieve-final}
\ssum{n=\ssc{n}{1}\cdots \ssc{n}{N}\sim x\\
\prod_{e\in E} n_e \in ((x/2)^\theta,x^\theta+\nu]}
 w_n \ssc{\zhu}{1}(\ssc{n}{1})\cdots \ssc{\zhu}{N}(\ssc{n}{N})
 \ll\, \frac{x}{(\log{x})^{A'}}.
\end{equation}
Let
\[
Y_1(n') := \ssum{n'=\prod_{e\in E} n_e} \; \prod_{e\in E}
\ssc{\zhu}{e}(n_e), \qquad
Y_2(n'') := \ssum{n''=\prod_{e\notin E} n_e} \; \prod_{e\notin E}
\ssc{\zhu}{e}(n_e).
\]
Thus, the left side of \eqref{eq:sieve-final} equals
\[
\ssum{n= n'n'' \sim x \\ n'\in  ((x/2)^\theta,x^{\theta+\nu}]}
Y_1(n') Y_2(n'') w_{n'n''}.
\]
For any 1-bounded functions $\ssc{\zhu}{j}$, Lemma \ref{lem:tauk-tau} implies that
$|Y_1(n')| \le \ssc{\tau}{|E|}(n') \le \tau(n')^{|E|}$
and
$|Y_2(n'')| \le \ssc{\tau}{N-|E|}(n'') \le \tau(n'')^{N-|E|}$.
Inequality \eqref{eq:sieve-final} then follows from the Type II bound \eqref{eq:TypeII}
if $B$ is sufficiently large. This completes the proof of Proposition \ref{prop:general} (and hence Proposition \ref{prop:sieve-usingIandII}).
\end{proof}

\bigskip

%\newpage

%%%%%%%%%%%%%%%%%%%%%%%%%%%%%%%%%%%%%%%%%%%%%%%%%%%%%%%%%%%%%%%
%
%   
%
{\Large \section{Analysis of two special families}\label{sec:families}}
%
%
%
%%%%%%%%%%%%%%%%%%%%%%%%%%%%%%%%%%%%%%%%%%%%%%%%%%%%%%%%%%%%%%%

In this section we prove Theorems \ref{thm:continuity-1/2}, \ref{thm:continuity-less-1/2}
and \ref{thm:theta=0 gamma=1/2}, which are about the special family with 
$\theta=0$ and either $\gamma=\frac12$ or $\gamma$ just below $\frac12$,
and prove Theorem \ref{thm:1-parm theta family} about another 1-parameter
family with $\gamma=1-\theta$ and $\nu=1-3\theta$.
We are able to prove exact formulas for $C^{\pm}(P)$, $\CB^{\pm}(P;\varrho)$
and $\lim_{\eps\to 0^+} \CB^{\pm}(P_\eps;\varrho)$
in some ranges, in particular by applying Theorem \ref{thm:duality} (a).
We begin with the latter family as the details are simpler and give a flavor
of the main ideas.
\medskip

%%%%%%%%%%%%%%%%%%%%%%%%%%%%%%%%%%%%%%%%%%%%%%%%%%%%%%%%%%%%%%%%%%%%%%%%%%%%
%
%
\subsection{The special family $\gamma=1-\theta$, $\nu=1-3\theta$}
%
%
%%%%%%%%%%%%%%%%%%%%%%%%%%%%%%%%%%%%%%%%%%%%%%%%%%%%%%%%%%%%%%%%%%%%%%%%%%%%%

The proof of Theorem \ref{thm:1-parm theta family} uses the theory
of integral equations.  The integral equation
\be\label{Volterra}
\int_a^t H(t,s) u(s)\, ds = w(t),
\ee
with given functions $w,H$ and unknown function $u$,
is called a Volterra integral equation of the first kind.
The following is Theorem 2.2.1 in \cite{Brunner}.

\begin{thm}\label{thm:Volterra}
Let $I=[a,b]$ be a finite interval,
$D=\{ (s,t)\in \RR^2 : a \le s\le t\le b \}$ and $m\ge 0$.
Suppose further that
\begin{itemize}
\item[(a)] $w\in C^{m+1}(I)$; $w(a)=0$;
\item[(b)] $H \in C^{m+1}(D)$; $H(t,t)\ne 0$ for $t\in I$.
\end{itemize}
Then the equation \eqref{Volterra} has a unique solution
$u : I \to \RR$, and moreover $u\in C^m(I)$.
\end{thm}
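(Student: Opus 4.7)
The plan is to reduce the Volterra equation of the first kind \eqref{Volterra} to one of the second kind, to which the standard Neumann-series (Picard iteration) theory applies. First I would differentiate both sides of \eqref{Volterra} with respect to $t$; since $H\in C^{m+1}(D)$ and $w\in C^{m+1}(I)$ with $m\ge 0$, Leibniz's rule is justified and yields
\[
H(t,t)\,u(t) + \int_a^t \frac{\partial H}{\partial t}(t,s)\,u(s)\,ds = w'(t).
\]
Because $H(t,t)\ne 0$ on the compact interval $I$, the function $H(t,t)$ is bounded away from $0$, and dividing through produces the second-kind Volterra equation
\[
u(t) = \frac{w'(t)}{H(t,t)} - \int_a^t \frac{1}{H(t,t)}\frac{\partial H}{\partial t}(t,s)\,u(s)\,ds =: f(t) + (Tu)(t),
\]
with $f\in C^m(I)$ and integral operator $T$ whose kernel lies in $C^m(D)$.

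Next I would establish existence and uniqueness of a continuous solution of $u=f+Tu$. The iterated kernel of $T^n$ is supported on the simplex $\{a\le s_1\le \cdots\le s_n\le t\}$, whose measure is $(t-a)^n/n!$, while the kernel of $T$ is bounded by some constant $C$ on $D$; hence $\|T^n\|_{C(I)\to C(I)}\le C^n(b-a)^n/n!$, and the Neumann series $\sum_{n\ge 0}T^n$ converges in operator norm, giving a bounded inverse of $I-T$. Applying this to $f$ produces a unique $u\in C(I)$ solving the second-kind equation.

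To transfer this solution back to the original first-kind equation, I would set
\[
\Phi(t) := \int_a^t H(t,s)u(s)\,ds - w(t).
\]
By construction $\Phi'\equiv 0$ on $I$, so $\Phi$ is constant; and $\Phi(a)=-w(a)=0$ by hypothesis (a), so $\Phi\equiv 0$ and $u$ solves \eqref{Volterra}. Conversely, any solution of \eqref{Volterra} differentiates to a solution of the second-kind equation, so uniqueness transfers as well. This is where the compatibility condition $w(a)=0$ is essential; without it, no continuous solution exists.

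The last item, and the step I expect to require the most care, is the $C^m$ regularity of $u$. I would proceed by induction on $j\le m$: assuming $u\in C^{j-1}(I)$, differentiate the second-kind equation $j$ times under the integral sign and observe that $u^{(j)}$ itself satisfies a second-kind Volterra equation whose forcing term is a continuous combination of $u,u',\dots,u^{(j-1)}$ together with derivatives of $f$ and of the kernel, all of which are continuous on $I$ by hypothesis (b). A further application of the Neumann-series argument then produces $u^{(j)}\in C(I)$, completing the induction and giving $u\in C^m(I)$ as claimed.
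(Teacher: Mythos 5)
The paper does not prove this result; it cites it directly from Brunner's book (Theorem 2.2.1 of \cite{Brunner}). Your proof reproduces the standard textbook argument that is, in fact, what Brunner gives: differentiate to convert the first-kind Volterra equation into one of the second kind, use the factorial decay of iterated Volterra kernels to sum the Neumann series, and pass back to the first-kind equation via the compatibility condition $w(a)=0$. The reduction, the contraction estimate $\|T^n\|\le C^n(b-a)^n/n!$, and the observation that $\Phi(t)=\int_a^t H(t,s)u(s)\,ds-w(t)$ has vanishing derivative and vanishes at $a$ are all correct.

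One small inaccuracy in the regularity bootstrap: after $u\in C(I)$ is in hand, you do not need another Neumann-series argument. Differentiating the second-kind equation $u(t)=f(t)+\int_a^t k(t,s)u(s)\,ds$ once gives $u'(t)=f'(t)+k(t,t)u(t)+\int_a^t k_t(t,s)u(s)\,ds$, which expresses $u'$ \emph{explicitly} in terms of $u$ and data already known to be continuous; it is not a new integral equation for $u'$. The same pattern persists at every order: $u^{(j)}$ is given directly by a formula in $u,\dots,u^{(j-1)}$ and derivatives of $f$ and $k$ up to order $j$, all continuous by hypothesis since $f\in C^m$ and $k\in C^m$. So the induction closes by inspection rather than by a further fixed-point argument. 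This is a presentational simplification, not a gap; your conclusion is correct.
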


\medskip

Recall the definitions of $\cH$ and $\cG_1$ in
\eqref{eq:VHZGdefs}.

\begin{proof}[Proof of Theorem \ref{thm:1-parm theta family}]
For $P=P_\theta=(1-\theta,\theta,1-3\theta)$ and $\frac14 < \theta \le \frac27$,
$\cR$ consists of all vectors with components in $(0,\theta)$, sum of components 1
 and with no subsum in $[\theta,1-2\theta]$. If $\bx\in \cH$ then all components are $\ge 1-3\theta$ and $\bx\ne (1)$, and so
 all components $\ssc{x}i$ lie in $J_1 \cup J_2 \cup J_3$, where
 $J_1=[1-3\theta,\theta)$, $J_2=(1-2\theta,2\theta)$ and $J_3=(1-\theta,3\theta)$. Moreover, all subsums of the $\ssc{x}i$ lie in $\{0\}\cup J_1\cup J_2\cup J_3\cup\{1\}$.
 As $\theta \le \frac27$, the sum of any two components from $J_1$ must lie in $J_2$, and so $\dim \bx \le 4$. Moreover, any $\bx\in \cH$ has one of four forms:
 \begin{itemize}
 \item[(a)] one component in $J_1$, one component in $J_3$;
 \item[(b)] two components in $J_2$;
 \item[(c)] two components in $J_1$ and one component in $J_2$;
 \item[(d)] four components in $J_1$.
 \end{itemize}
In fact, all vectors of these forms (a)--(d) lie in $\cH$.
This follows from the fact that for all $\beta_1,\ldots,\beta_4\in J_1$
 with sum 1, $\bbeta\in \cR$ since $\beta_i+\beta_j < 2\theta$ for all $i,j$.
 Thus, for $\bx$ of one of the forms (a)--(d) above, replacing each
 component $\alpha \in J_2$ with two copies of $\alpha/2$, and replacing
 any component $\alpha \in J_3$ with three copies of $\alpha/3$, produces
 a vector in $\cR$.
 
 We see that $\cG_1$ consists of vectors with components in $J_1\cup J_2$
 and sum of components $<2\theta$, since $\psi(\bx) \le 2\theta-|\bx|$.
 Define $g:\cG_1\to \RR$ by $g(\emptyvec)=1$ and
 \begin{align*}
 g(x) &= -\one(x\in J_1\cup J_2 : x\le 1/2), \\
 g(\ssc{x}{1},\ssc{x}{2}) &= \one(\ssc{x}{1}\in J_1, \ssc{x}{2}\in J_1, \ssc{x}{1}+\ssc{x}{2} < 1/2).
 \end{align*}
 Consider $\bx\in \cH$.  If $\bx$ has type (a), then $(\bone\star g)(\bx)=0$.
 If $\bx$ has type (b), then $(\bone\star g)(\bx)=0$ unless $\bx=(\frac12,\frac12)$
 in which case $(\bone\star g)(\bx)=-1$.  If $\bx$ has type (d), then $(\bone\star g)(\bx)=0$ unless $\ssc{x}i+\ssc{x}j=\frac12$ for some $i,j$ and in this case $(\bone\star g)(\bx)<0$.
 If $\bx$ has type (c) then  $(\bone\star g)(\bx)=0$ if the two components in $J_1$
 have sum $<\frac12$, and otherwise $(\bone\star g)(\bx)=-2$.
In all cases, $(\bone\star g)(\bx)\le 0$, and aside from a set of measure zero,
 $(1\star g)(\bx)=0$ unless $\bx$ has type (c) with the two components in $J_1$
 having sum at least $\frac12$.
 
We next construct a function $f$ by first
finding a function $f_{4,0}(\bx)$ 
and defining $f$ on the remainder of $\cH \cup \{1\}$ via \eqref{fsl}.
In light of Theorem \ref{thm:duality} (a), we need $f_{2,1}(\beta_1,\beta_2,\alpha)=-1$
when $\beta_1+\beta_2 \ge \frac12$ and $\beta_1,\beta_2\in J_1$ and $f(\bx)\ge -1$ for all $\bx\in\cH\cup\{1\}$.

\medskip

\textbf{Claim.}
There is a non-negative function $f_{4,0}$ on $\cH \cap \RR^4$ so that
when $\beta_1,\beta_2\in J_1$, $\alpha\in J_2$ and $(\beta_1,\beta_2,\alpha)\in \cH$,
we have
$f_{2,1}(\b_1,\b_2,\a)=-1$ when $\b_1+\b_2 \ge 1/2$ and
$f_{2,1}(\b_1,\b_2,\a)\ge -1$ otherwise.

\medskip

For now we assume the claim, and construct $f$ assuming it. By Lemma \ref{lem:tc} (or direct calculation from the definition \eqref{Linnik-fcn}), we have
\begin{align*}
\cyrL_{\theta}(\ssc{x}{1},\ssc{x}{2}) &= -1 \qquad (\ssc{x}{1}\in J_1, \ssc{x}{2}\in J_1, \ssc{x}{1}+\ssc{x}{2}\in J_2),\\
\cyrL_\theta(\ssc{x}{1},\ssc{x}{2},\ssc{x}{3}) &= 2 \qquad (\ssc{x}{1}\in J_1, \ssc{x}{2}\in J_1, \ssc{x}{3}\in J_1, \ssc{x}{1}+\ssc{x}{2}+\ssc{x}{3}\in J_3).
\end{align*}
Since $f_{4,0}$ is non-negative, plugging these into \eqref{fsl} 
implies that $f_{0,2}(\a_1,\a_2) \ge 0$ and $f_{1,1}(\b,\a)\ge 0$.  The Claim
then implies that $f(\bx) \ge -1$ for all $\bx\in \cH\cup \{1\}$.
By Theorem \ref{thm:duality} (a), 
\[
C^-(P) = 1+f(1) = 1 + \mint{\cH \cap \RR^3} \frac{(1\star g)(\bx)}{\ssc{x}{1}\ssc{x}{2}\ssc{x}{3}}d\bx = 
1 - 2 \mint{1-3\theta \le \beta_1 \le \beta_2 \le \theta \\ \beta_1+\beta_2\ge 1/2}
\frac{d\beta_1 d\beta_2}{\beta_1 \beta_2(1-\beta_1-\beta_2)}.
\]

\textbf{Proof of the Claim.}
Suppose $\frac14 < \theta \le \frac27$, $\b_1,\b_2\in J_1$, $\alpha\in J_2$, with
$(\beta_1,\beta_2,\alpha)\in \cH$.  By \eqref{fsl},
\begin{align*}
f_{2,1}(\b_1,\b_2,\alpha) &= -\a \int_{\a-\theta}^{\a/2} \; \frac{f_{4,0}(\b_1,\b_2,\b_3,\a-\b_3)}{\b_3(\a-\b_3)}\, d\b_3 \\
&= -\int_{\a-\theta}^{\a/2} \(\frac{1}{u}+ \frac{1}{\a-u}\) f_{4,0}(\b_1,\b_2,u,\a-u)\, du.
\end{align*}
For $\bbeta\in \cH \cap \RR^4$, we take
\[
 f_{4,0}(\bbeta) = \begin{cases}
K, & \text{ if } 1/2-\theta \le \b_1,\beta_2,\beta_3,\beta_4 \le \theta, \\
h(u), &
\text{ if some } \b_i= u < 1/2-\theta,
\end{cases}
\]
where $K$ is a constant to be determined, and
$h: [1-3\theta,1/2-\theta] \to \RR$  is a smooth function to be determined.
This $f_{4,0}$ is well-defined and symmetric, as there can be at most one
$\b_i < 1/2-\theta$ because $\b_i+\b_j\in J_2$ 
for any $i\ne j$.

Firstly, assume that $\alpha \le \frac12 \le \beta_1+\beta_2$.
Then $\b_1,\b_2 \ge 1/2-\theta$, thus $f_{4,0}(\bbeta)=K$ unless $\b_3 < 1/2-\theta$.
The desired equation $f_{2,1}(\b_1,\b_2,\a) = -1$ is equivalent to
\be\label{eq:h-eq}
1 =  \int_{\a-\theta}^{1/2-\theta} \(\frac{1}{u} +
\frac{1}{\a-u}\) h(u)\, du + K \log\pfrac{\a-1/2+\theta}{1/2-\theta}.
\ee
Setting $\a=1/2$, we see that
\be\label{eq:K-def}
K = \frac{1}{\log\big(\frac{\theta}{1/2-\theta}\big)}.
\ee
Equation \eqref{eq:h-eq} is a Volterra integral equation of the first kind.
By Theorem \ref{thm:Volterra}, there is a unique
solution $h\in C^\infty([1-3\theta,1/2-\theta])$.
Differentiating \eqref{eq:h-eq} with respect to $\a$ gives
\[
h(\a-\theta) \(\frac{1}{\a-\theta} + \frac{1}{\theta}\) + \int_{\a-\theta}^{1/2-\theta} \frac{h(v)}{(\a-v)^2}\, dv =  \frac{K}{\a+\theta-1/2}.
\]
Set $u=\a-\theta$ and solve for $h(u)$.  This gives
\be\label{th27-recurs}
h(u) = \frac{\theta u}{u+\theta} \left[ \frac{K}{u+2\theta-1/2} - 
\int_u^{1/2-\theta} \frac{h(v)}{(u+\theta-v)^2}\, dv \right].
\ee
Now let
\[
C:= \min_{1-3\theta\le u\le 1/2-\theta} h(u), \qquad
D:=\max_{1-3\theta \le u\le 1/2-\theta} h(u).
\]
Setting $u=1/2-\theta$ we see that $h(1/2-\theta)=(1-2\theta)K$, and in particular
$D > 0$.
For all $1/4<\theta\le 2/7$ and $1-3\theta\le u\le 1/2-\theta$ we have
\begin{align*}
\frac37 &\le \frac{\theta u}{(u+\theta)(u+2\theta-1/2)} \le  \frac12, \\
 0 &\le \frac{\theta u}{u+\theta} \(\frac{1}{u+2\theta-1/2}-\frac{1}{\theta}\) \le \frac19.
\end{align*}
Thus, from \eqref{th27-recurs} we have
\[
C \ge \frac37 K - \frac19 D, \qquad D \le \frac12K + \frac19 \max(0,-C).
\]
If $C<0$ then 
\[
C \ge \frac37K - \frac19 \(\frac12 K - \frac19 C \) = \frac{47}{126} K + \frac{C}{81},
\]
a contradiction.  Thus, $C\ge 0$ and hence $D\le K/2$.  That is,
\[
0 \le h(u) \le K/2 \qquad (1-3\theta \le u \le 1/2-\theta).
\]
In particular, $f_{4,0}(\bbeta)\ge 0$ for all $\bbeta\in \cH \cap \RR^4$, as required.
Now assume that $\a > 1/2$.  Since $f_{4,0}(\bbeta)\le K$ for all $\bbeta$,
\begin{align*}
f_{2,1}(\b_1,\b_2,\a) &\ge  -K \int_{\a-\theta}^{\a/2}
\(\frac{1}{u} + \frac1{\a-u}\) \, du 
= -K \log \pfrac{\theta}{\a-\theta}  > -1
\end{align*}
on account of \eqref{eq:K-def}.  This completes the proof of the Claim.
\end{proof}

\bigskip

%%%%%%%%%%%%%%%%%%%%%%%%%%%%%%%%%%%%%%%%%%%%%%%%%%%%%%%%%%%%%%%%%%
%
\subsection{The special family with $\theta=0$ and $\gamma$ near $1/2$.}
%
%%%%%%%%%%%%%%%%%%%%%%%%%%%%%%%%%%%%%%%%%%%%%%%%%%%%%%%%%%%%%%%%%%

\begin{proof}[Proof of Theorem \ref{thm:continuity-1/2}]
The claim $C^-(P_\eps)=0$ follows from Theorem \ref{thm:gamma<1/2}.

When $P=(\frac12,0,\nu)$ with $\frac13 \le \nu < \frac12$,
the claim $C^+(P_\eps)=1+O(\eps)$ follows from the special case $\nu=\frac13$.
Let $\eps$ be very small, $P_\eps=(\frac12-\eps,\eps,\frac13-2\eps)$.
Adopt the notation from Section \ref{sec:sieving}.
We see that $\cR(P_\eps)$ consists of two types of vectors.  
One type has two components in $(\frac12-2\eps,\frac12+\eps)$ with the remaining components having total $<\eps$, and the
other type has three component in $(\frac13-\eps,\frac13+2\eps)$ and the remaining components
having sum $<\eps$.  It follows that $\cH(P_\eps)$ consists of three types of vectors,
one type has two components $\frac12+O(\eps)$, a second type  with two components, one $\frac13+O(\eps)$ and the other $\frac23+O(\eps)$, and a third type with three 
components all $\frac13+O(\eps)$.  Define the function $g\in \sG_1$ by
$g(\emptyvec)=1$ and $g(\bx)=0$ otherwise.  Then $(\bone \star g)(\bx)=1$
identically on $\cH$.  By Theorem \ref{thm: Main sieving} (b),
\[
C^+(P_\eps) \le 1 + \sum_{k=2}^3 \;\; \mint{\bx\in \cH \cap \RR^k \\ \ssc{x}{1}\le \cdots \le \ssc{x}{k}}
\frac{d\bx}{\ssc{x}{1}\cdots \ssc{x}{k}} = 1 + O(\eps).\qedhere
\]
\end{proof}

\medskip

When $\gamma=\frac12$ and $\theta=0$, we need a version of Theorem 
\ref{thm: Main sieving} (a) which applies to $\CB^{-}(P;\varrho)$,
and which has a weaker hypothesis on $g$.

\begin{thm}\label{thm:CB-lower}
Let $P=(\gamma,\theta,\nu)\in \cQ_0$ with $\cR=\cR(P)$ nonempty.
Partition $\cH(P)$ into two sets $\cH_1,\cH_2$, each of which is a finite
union of convex polytopes.  Let $g\in \sG_1$ with $g(\emptyvec)=1$
and $(1\star g)(\bx)\le 0$ for $\bx\in \cH_1$.  Define $H$ by
\eqref{g-def} and \eqref{h-def}.  Fix $\varrho\ge 1$.  Then
\[
\CB^{-}(P;\varrho) \ge 1+\sum_k \mint{\cH_1 \cap \RR^k} 
\frac{(1\star g)(\bx)}{\ssc{x}{1}\cdots \ssc{x}{k}}\,d\bx + O_{g,\varrho,\nu}\bigg(\sum_k \mint{\cH_2\cap \RR^k} \frac{d\bx}{\ssc{x}{1}\cdots \ssc{x}{k}}\bigg).
\]
\end{thm}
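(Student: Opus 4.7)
My plan is to mirror the proof of Theorem \ref{thm: Main sieving}(a), but to exploit the sieve inequality $H(n)\le 0$ only on the subset $\cN_1 := \{n\in\cN : \bv(n)\in\cH_1\}$ and to estimate the new error coming from $\cN_2 := \{n\in\cN : \bv(n)\in\cH_2\}$ using the hypothesis $|w_n|\le\tau(n)^\varrho$ built into the definition of $\CB^-(P;\varrho)$ via \eqref{CBD}. Define $H(n)$ via \eqref{g-def} and \eqref{h-def}. By Lemma \ref{lem:h-onesign}(a) we have $H(n)=(\bone\star g)(\bv(n))\one_{P^-(n)\ge n^\nu}$ on $\cN$, so the hypothesis $(\bone\star g)\le 0$ on $\cH_1$ yields $H(n)\le 0$ on $\cN_1$ (while $H(n)$ may take either sign on $\cN_2$). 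Using $a_n\ge 0$, $H(p)=1$ for primes $p$, and $H(n)\le 0$ on $\cN_1$,
\[
\sum_{p\in\cP} w_p \ge -\sum_{p\in\cP} b_p + \sum_{n\in\cP\cup\cN_1} a_n H(n) = \sum_{n\in\cN_1} b_n H(n) + \sum_{n\in\cP\cup\cN_1} w_n H(n),
\]
and the final sum equals $\sum_n w_n H(n) - \sum_{n\in\cN_2} w_n H(n) - \sum_{n\notin\cP\cup\cN} w_n H(n)$.

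The main term $\sum_{n\in\cN_1} b_n H(n)$ is evaluated via Lemma \ref{lem:h-sum-to-integral} with $\cU=\cH_1$ (which, after refining to a disjoint union, is a finite union of convex polytopes by hypothesis), giving the contribution $(\sum_p b_p)\sum_k \mint{\cH_1 \cap\RR^k} (\bone\star g)(\bx)/(\ssc{x}{1}\cdots \ssc{x}{k})\, d\bx + O_g((\sum_p b_p)/B)$. Next, $\sum_n w_n H(n)$ is $O_g(x/(\log x)^B)$ by Lemma \ref{lem:htypeI} (which uses only Type I), and $\sum_{n\notin\cP\cup\cN} w_n H(n)$ is $O_{g,A,\varpi}(x/(\log x)^A)$ for arbitrary $A$ by Proposition \ref{prop:sieve-usingIandII} (which uses both Type I and Type II). Both are negligible once divided by $\sum_p b_p \ge x/(\varrho\log x)$.

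The new ingredient is the control of $\sum_{n\in\cN_2} w_n H(n)$. For $n\in\cN_2$ we have $P^-(n)\ge n^\nu$ by Lemma \ref{lem:h-onesign}(a), hence $\Omega(n)\le\fl{1/\nu}$ and $\tau(n)\le 2^{1/\nu}$. Combining with $|w_n|\le\tau(n)^\varrho$ from \eqref{CBD} and $|H(n)|\ll_g 1$ from Lemma \ref{lem:h-onesign}(b), we obtain $|w_nH(n)|\ll_{g,\varrho,\nu} 1$ on $\cN_2$, so $|\sum_{n\in\cN_2} w_n H(n)| \ll_{g,\varrho,\nu} \#\cN_2$. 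To estimate $\#\cN_2$, apply Lemma \ref{lem:h-sum-to-integral} with the constant sequence $b_n=1$ (valid by Lemma \ref{lem:bn-PNT-examples}) and the choice $g'(\emptyvec)=1$, $g'\equiv 0$ elsewhere (so $\bone\star g'\equiv 1$), and with $\cU=\cH_2$. This yields
\[
\#\cN_2 = (\pi(x)-\pi(x/2))\Bigl(\sum_k \mint{\cH_2\cap\RR^k}\frac{d\bx}{\ssc{x}{1}\cdots\ssc{x}{k}}+O(1/B)\Bigr).
\]
Since \eqref{CBD} gives $\pi(x)-\pi(x/2)\le 2\varrho\sum_p b_p$ for large $x$, we conclude
\[
\Bigl|\sum_{n\in\cN_2}w_nH(n)\Bigr|\ll_{g,\varrho,\nu}\Bigl(\sum_p b_p\Bigr)\sum_k\mint{\cH_2\cap\RR^k}\frac{d\bx}{\ssc{x}{1}\cdots\ssc{x}{k}}.
\]
Assembling everything, dividing through by $\sum_p b_p$, and letting $B$ and $x$ tend to infinity produces the claimed lower bound on $\CB^-(P;\varrho)$.

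The main obstacle lies precisely in controlling the $\cN_2$ contribution: without the divisor-bounded hypothesis from \eqref{CBD}, the sum $\sum_{n\in\cN_2} w_n H(n)$ could be as large as a constant times $\sum_p b_p$ and could not be controlled by the integral $\int_{\cH_2} d\bx/(\ssc{x}{1}\cdots\ssc{x}{k})$. The combination of $|w_n|\le\tau(n)^\varrho$ with the support constraint $P^-(n)\ge n^\nu$ (which forces $\tau(n)$ to be bounded in terms of $\nu$) is exactly what reduces the $\cN_2$ contribution to a bounded multiple of $\#\cN_2$, and hence to the stated integral over $\cH_2$. This is also the reason the weakened sieve hypothesis (requiring $(\bone\star g)\le 0$ only on $\cH_1$) can only be exploited in the setting of $\CB^-$ rather than for $C^-$.
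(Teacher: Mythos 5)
Your proof is correct and follows essentially the same route as the paper: the same restriction of the sieve inequality to $\cP\cup\cN_1$, the same evaluation of $\sum_{n\in\cN_1}b_nH(n)$ by Lemma \ref{lem:h-sum-to-integral}, the same invocation of Lemma \ref{lem:htypeI} and Proposition \ref{prop:sieve-usingIandII} for $\sum_{n\in\cP\cup\cN}w_nH(n)$, and the same use of $|w_n|\le\tau(n)^\varrho$ together with $P^-(n)\ge n^\nu$ to bound the $\cN_2$ contribution by $\#\cN_2$, again evaluated via Lemma \ref{lem:h-sum-to-integral}. The only (harmless) difference is cosmetic: you take $\cN_2=\{n\in\cN:\bv(n)\in\cH_2\}$ where the paper takes $\cN_2=\cN\setminus\cN_1$, but since $H(n)=0$ when $P^-(n)<n^\nu$ the two choices yield identical sums.
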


\begin{proof}
Fix $\varpi\ge 1$ and let $B$ be sufficiently large in terms of $P,\varpi$,
and $x$ sufficiently large in terms of $P,\varpi,B$.
Let $\cN_1$ be the set of $n\in \cN$ with $P^-(n)\ge n^{\nu}$ and $\bv(n)\in \cH_1$
and let $\cN_2 = \cN \setminus \cN_1$. Let $\cP$ be the set of primes in $(x/2,x]$.
Suppose $((a_n),(b_n))\in \Psi(P;B,\varpi,x)$ with
\eqref{CBD} holding.  By hypothesis, we have
\begin{align*}
\sum_p w_p &\ge - \sum_p b_p + \sum_{n\in \cP \cup \cN_1} (w_n+b_n)H(n)\\
&=\sum_{n\in \cN_1} b_n H(n) + \sum_{n\in \cP \cup \cN} w_n H(n) - 
\sum_{n\in \cN_2} w_n H(n).
\end{align*}
By Lemma \ref{lem:h-sum-to-integral}, 
\[
\sum_{n\in \cN_1} b_n H(n) = \bigg(\sum_p b_p \bigg)
\Bigg[\sum_k \mint{\cH_1 \cap \RR^k} 
\frac{(1\star g)(\bx)}{\ssc{x}{1}\cdots \ssc{x}{k}}\,d\bx +  O_{g}\pfrac{1}{B} \Bigg] .
\]
Combining Proposition \ref{prop:sieve-usingIandII} with Lemma \ref{lem:htypeI}, we get
\[
\sum_{n\in \cP \cup \cN} w_n H(n) = \sum_n w_n H(n) - \sum_{n\not\in \cP \cup \cN} w_n H(n) \ll \frac{x}{\log^2 x}.
\]
Finally, for $n\in \cN_2$, $|w_n|\le \tau(n)^{\varrho} \ll_{\varrho,\nu} 1$, thus
by Lemma \ref{lem:h-onesign} (a) and Lemma \ref{lem:h-sum-to-integral},
\[
\sum_{n\in \cN_2} w_n H(n) \ll_{g,\varrho,\nu} \ssum{n\in \cN_2\\P^-(n)\ge n^{\nu}} 1
= \sum_{\bv(n)\in\cH_2} 1 \ll \frac{x}{\log x} \sum_k \mint{\cH_2\cap \RR^k} \frac{d\bx}{\ssc{x}{1}\cdots \ssc{x}{k}}.
\]
Combining these estimates, taking $B$ arbitrarily large and recalling \eqref{CBD}, the proof is 
complete.
\end{proof}

\begin{thm}\label{thm:CB-lower-theta=0}
Let $P=(\frac12,0,\nu)$ where $0<\nu \le \frac13$, and let $0\le \eps\le \nu/100$.
  Let $g\in \sG_1$ with $g(\emptyvec)=1$
and $(1\star g)(\bx)\le 0$ for $\bx\in \cH(P)$.  Define $H$ by
\eqref{g-def} and \eqref{h-def}.    Then, for any $\varrho\ge 1$,
\[
\CB^-(P_\eps;\varrho) \ge 1 + \sum_k \mint{\cH(P) \cap \RR^k} 
\frac{(1\star g)(\bx)}{\ssc{x}{1}\cdots \ssc{x}{k}}\,d\bx + O_{g,\varrho,\nu}(\eps).
\]
\end{thm}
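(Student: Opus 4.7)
Our plan is to invoke Theorem~\ref{thm:CB-lower} at the perturbed parameter $P_\eps=(\tfrac12-\eps,\eps,\nu-2\eps)$, transporting the sieve function $g$ from $P$ to $P_\eps$ and paying an $O(\eps)$-cost in the process. Since $\theta=0$ in $P$, every vector in $\cR(P)$ has all components strictly greater than $\nu$, so $\psi(\cdot;P)\equiv 0$ on $\cZ(P)$ and $\cG_1(P)=\{\bx\in\cZ(P):x_i\ge\nu,\,|\bx|\le\tfrac12\}$. For $P_\eps$, however, $\cZ(P_\eps)$ admits small components and $\psi(\cdot;P_\eps)$ may be positive, so $\cG_1(P_\eps)$ is strictly smaller near the boundary $|\bx|=\tfrac12$. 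I lift $g$ by setting $g_\eps(\emptyvec):=1$ and $g_\eps(\by):=g(\by)\,\one_{\by\in\cG_1(P)\cap\cG_1(P_\eps)}$ for $\by\ne\emptyvec$; by Lemma~\ref{lem:G1-polytopes} applied to $P$ and $P_\eps$, this intersection is a finite union of convex polytopes, so $g_\eps\in\sG_1(P_\eps)$.

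Next, I partition $\cH(P_\eps)=\cH_1\sqcup\cH_2$, where
\[
\cH_1:=\bigl\{\bx\in\cH(P_\eps)\cap\cH(P)\,:\,\text{for every}\ \by\subseteq\bx,\ \by\in\cG_1(P)\iff\by\in\cG_1(P_\eps)\bigr\},
\]
and $\cH_2$ is the complement in $\cH(P_\eps)$. Both pieces are finite unions of convex polytopes by Lemmas~\ref{lem:CR1-union-polytopes} and~\ref{lem:G1-polytopes}. The key point is that on $\cH_1$ every subvector $\by\subseteq\bx$ satisfies $g_\eps(\by)=g(\by)$, so $(1\star g_\eps)(\bx)=(1\star g)(\bx)\le 0$ since $\bx\in\cH(P)$ by construction. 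Theorem~\ref{thm:CB-lower} applied at $P_\eps$ with this partition and $g_\eps$ then yields
\[
\CB^-(P_\eps;\varrho)\ge 1+\sum_k\mint{\cH_1\cap\RR^k}\frac{(1\star g_\eps)(\bx)}{x_1\cdots x_k}\,d\bx\,+\,O_{g,\varrho,\nu}\!\Bigl(\sum_k\mint{\cH_2\cap\RR^k}\frac{d\bx}{x_1\cdots x_k}\Bigr).
\]

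To finish I must show that both the measure of $\cH_2$ and the measure of the symmetric difference $\cH_1\triangle\cH(P)$ (against the density $1/(x_1\cdots x_k)$, which is bounded because $x_i\ge\nu-2\eps$) are $O_{g,\nu}(\eps)$. The exceptional sets are contained in the union of three loci: (a) vectors with a component in $[\nu-2\eps,\nu)$, which restrict one coordinate to a length-$2\eps$ interval; (b) vectors in $\cC(\cR(P_\eps))\setminus\cC(\cR(P))$, which forces a subsum to lie in $[0,\eps)\cup(\nu-\eps,\nu]$, and once a component is $\ge\nu$ this again restricts one coordinate to an $O(\eps)$-slab; (c) vectors with some subvector $\by$ having $|\by|+\psi(\by;P_\eps)$ within $\eps$ of $\tfrac12$. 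In each case the failure locus is an $\eps$-neighborhood of a codimension-one affine condition, so integrating against the bounded density and summing over the boundedly many polytope pieces produces the $O(\eps)$ estimate.

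The main obstacle is case~(c): $\psi(\by;P_\eps)$ is not globally linear, so the threshold $|\by|+\psi(\by;P_\eps)=\tfrac12-\eps$ is not a single hyperplane. The resolution is to decompose $\cZ(P_\eps)$ into the finitely many convex polytope pieces produced by Lemma~\ref{lem:CR1-union-polytopes}; on each piece the supremum defining $\psi(\by;P_\eps)$ is attained at a linear functional of $\by$, so the threshold becomes a genuine affine hyperplane and its $\eps$-tube has $(k-1)$-dimensional measure $O(\eps)$ on that piece. Lifting this bound from $\by$ to $\bx$ through the subvector relation, and combining with (a) and (b), completes the verification of the $O(\eps)$ loss in the application of Theorem~\ref{thm:CB-lower} and hence proves the claim.
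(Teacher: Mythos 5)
Your overall strategy mirrors the paper's: apply Theorem~\ref{thm:CB-lower} at $P_\eps$ with a truncated weight $g_\eps$ and a partition $\cH(P_\eps)=\cH_1\sqcup\cH_2$, then argue the $\cH_1$-integral is within $O(\eps)$ of the target and $\cH_2$ has measure $O(\eps)$. Your definition of $\cH_1$ (via the bi-implication on subvectors) is more careful than the paper's --- the paper defines $\cH_1$ by ``no \emph{component} of $\bx$ in $(\tfrac12-2\eps,\tfrac12]$,'' which is morally the right idea but does not directly guarantee $(1\star g_\eps)(\bx)=(1\star g)(\bx)$, since for $\dim\bx\ge3$ a two-element \emph{subsum} could land in that window. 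Your construction ensures the equality $g_\eps(\by)=g(\by)$ on all subvectors of $\bx\in\cH_1$ by fiat, which cleanly justifies the step $(1\star g_\eps)(\bx)=(1\star g)(\bx)\le 0$.

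However, there is a real gap in your resolution of case~(c). You observe that $\psi(\cdot;P_\eps)$ is only piecewise linear and propose to decompose $\cZ(P_\eps)$ into polytope pieces on which $\psi$ is linear, so that the threshold becomes an affine hyperplane whose $\eps$-tube has small measure. But the locus you actually need to bound is not an $\eps$-tube of a single hyperplane: it is
\[
\big\{\by\in\cG_1(P)\setminus\cG_1(P_\eps)\big\}\cap(\text{piece})
\;=\;\big\{\,|\by|\le\tfrac12,\ |\by|+\lambda(\by)>\tfrac12-\eps\,\big\}\cap(\text{piece}),
\]
where $\lambda$ is the linear piece of $\psi(\cdot;P_\eps)$. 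This is a wedge between two (in general non-parallel) hyperplanes, $|\by|=\tfrac12$ and $|\by|+\lambda(\by)=\tfrac12-\eps$, and its width is governed by the \emph{size} of $\lambda$, not by $\eps$ alone. Polytope decomposition by itself does not tell you these two hyperplanes are within $O(\eps)$ of each other. The indispensable quantitative input --- which the paper derives directly from the explicit structure of $\cR(P_\eps)$ and which you do not invoke --- is the uniform bound $\psi(\by;P_\eps)\le\eps$ for every $\by\in\cZ(P_\eps)$ with all components $>\nu-\eps$. (The paper observes that elements of $\cR(P_\eps)$ have all their sub-$(\nu-\eps)$ components summing to less than $\eps$, which forces $\psi\le\eps$.) With this bound in hand, the failure locus is contained in the slab $\tfrac12-2\eps<|\by|\le\tfrac12$, giving measure $O(\eps)$ and rendering your polytope decomposition unnecessary for case~(c). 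Without it, your ``$\eps$-tube'' claim is unjustified: a priori $\lambda$ could be of size $\gg\eps$ on some piece, making the wedge fat. The same quantitative bound $\psi\le\eps$ is also what the paper uses to check that $g_\eps$ is genuinely supported in $\cG_1(P_\eps)$, a point your proof glosses over when asserting $g_\eps\in\sG_1$.
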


We begin with an explicit description of $\cH(P)$.

\begin{lem}\label{lem:gamma=1/2-H}
Let $0<\nu \le \frac13$ and $P=(\frac12,0,\nu)$.  Then
\[
\cH(P) = \big\{\bx : \dim\, \bx \ge 2, |\bx|=1, x_i\in (\nu,1-2\nu)\cup(2\nu,1-\nu)\; \forall i \big\}. 
\]
\end{lem}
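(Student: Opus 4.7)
The plan is to prove both inclusions directly from the definitions of $\cR = \cR(P)$, $\cC(\cR)$, and $\cH(P)$, handling the boundary case $\nu = 1/3$ separately at the end.

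For the inclusion $\cH(P) \subseteq \{\bx : \dots\}$, suppose $\bx \in \cH(P)$, so $\bx$ arises as a coagulation of some $\by \in \cR$ via a decomposition $\by = \by_1 \sqcup \cdots \sqcup \by_h$ with $h = \dim \bx \ge 2$. The defining conditions on $\cR(P)$ yield components $y_j \in (0, 1/2)$, $|\by| = 1$, and no proper subsum in $[0,\nu]$. In particular $\dim \by \ge 3$, since $\dim \by \in \{1,2\}$ is incompatible with each component being strictly less than $1/2$ and the total being $1$. Each single component of $\by$ is then a proper subsum, forcing $y_j > \nu$, and the remaining $\dim \by - 1 \ge 2$ components together sum to $1 - y_j > 2\nu$, giving $y_j < 1-2\nu$. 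Now each component of $\bx$ equals $|\by_i|$ for some nonempty $\by_i$: if $\dim \by_i = 1$, then $x_i$ is a single component of $\by$, so $x_i \in (\nu, 1-2\nu)$; if $\dim \by_i \ge 2$, then $x_i > 2\nu$, while $h \ge 2$ guarantees some other piece $|\by_{i'}| > \nu$, forcing $x_i < 1-\nu$, so $x_i \in (2\nu, 1-\nu)$.

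For the reverse inclusion, given $\bx$ with $\dim \bx \ge 2$, $|\bx| = 1$, and each $x_i \in (\nu, 1-2\nu) \cup (2\nu, 1-\nu)$, I will build $\by \in \cR$ of which $\bx$ is a coagulation by independently fragmenting each $x_i$ into pieces in $(\nu, 1/2)$. If $x_i < 1/2$, then $x_i \in (\nu, 1/2)$ and I take the single fragment $x_i$. If $x_i \ge 1/2$, then $2\nu < x_i < 1$: the upper bound follows from $x_i < 1-\nu$ or $x_i < 1-2\nu$, and for the lower bound, if $x_i \in (2\nu, 1-\nu)$ it is immediate, while if $x_i \in (\nu, 1-2\nu)$ with $x_i \ge 1/2$ we need $1 - 2\nu > 1/2$, i.e.\ $\nu < 1/4$, whence $2\nu < 1/2 \le x_i$. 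Under $2\nu < x_i < 1$ the interval $\bigl(\max(\nu, x_i - 1/2),\,\min(1/2, x_i - \nu)\bigr)$ is nonempty, so I may pick $u$ in it and take fragments $u$ and $x_i - u$, both in $(\nu, 1/2)$. The concatenation $\by$ then has all components in $(\nu, 1/2) \subseteq (0, 1/2)$ and sum $1$; since every component exceeds $\nu$, every nonempty subsum exceeds $\nu$, so no proper subsum lies in $[0, \nu]$. Hence $\by \in \cR$ and $\bx$ is a coagulation of $\by$ by construction.

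The case $\nu = 1/3$ is trivial: both intervals $(\nu, 1-2\nu)$ and $(2\nu, 1-\nu)$ are empty, so the right-hand side of the claim is empty; and $\cR$ is also empty, because any element would need at least three components each strictly greater than $1/3$ and summing to $1$, which is impossible. Thus $\cH(P)$ is empty as well and both sides agree. There is no serious obstacle in the argument; the only place requiring care is the two-piece fragmentation when $x_i \ge 1/2$, which reduces to the routine inequality chase $2\nu < x_i < 1$ ensuring that the admissible interval for $u$ is nonempty.
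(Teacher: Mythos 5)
Your proof is correct and follows essentially the same route as the paper: characterize $\cR$ as the set of vectors with components in $(\nu,\tfrac12)$ summing to $1$, analyze coagulation pieces for the forward inclusion, and build a fragmentation for the reverse. The only differences are cosmetic: you case-split on whether the coagulation piece $\by_i$ has one or several components, whereas the paper splits on $h=2$ versus $h\ge 3$; and for the reverse inclusion the paper simply splits a large $x_i$ into $x_i/2+x_i/2$ (which always lands in $(\nu,\tfrac12)$ once $2\nu<x_i<1$), which is a bit cleaner than your general choice of $u$ from $\bigl(\max(\nu,x_i-\tfrac12),\min(\tfrac12,x_i-\nu)\bigr)$.
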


\begin{proof}
$\cR=\cR(P)$ is the set of vectors with components in $(\nu,\frac12)$
and sum 1.
Consider $\bx\in \cH(P)$ with $\dim(\bx)=h \ge 2$.
If $h\ge 3$, then all components of $\bx$ are $<1-2\nu$.  If $h=2$ then some component 
$x$ of $\bx$ is at least $\frac12$, and therefore equal to the sum of two or more
numbers in $(\nu,\frac12)$ (since $\cH(P)\subseteq \cC(\cR)$).  
In this case all other components are $<1-2\nu$.
Conversely, suppose that $|\bx|\ge 2$ and has all components in
$(\nu,1-2\nu)\cup (2\nu,1-\nu)$.  It suffices to show that every
component $x$ which is $\ge \frac12$ is the sum of numbers in $(\nu,\frac12)$.
If $x>2\nu$, $x=\frac12x+\frac12x$ has the required form.
If $x\le 2\nu$ then $x<1-2\nu$ as well, contradicting $x\ge \frac12$.
\end{proof}

\begin{proof}[Proof of Theorem \ref{thm:CB-lower-theta=0}]
Let $g_\eps(\bx) = g(\bx) \one(|\bx|\le \frac12-2\eps)$ and let
\begin{align*}
\cH_1 &= \{ \bx\in \cH(P): \text{ no component in } (\tfrac12-2\eps,\tfrac12] \},\\
\cH_2 &= \cH(P_\eps) \setminus \cH_1.
\end{align*}
The Type II range for $P_\eps$ is $[\eps,\nu-\eps]$ and hence
$\cR(P_\eps)$ is the set of vectors $\by$ of the form
$(\bxi,\by)$ with $|\bxi|+|\by|=1$, $|\bxi|<\eps$ and $\nu-\eps<y_i<\frac12+\eps$
for each component $y_i$ of $\by$.  Clearly $\by$ has at least two components.
Therefore, the elements of $\cZ(P_\eps)$ have the form $(\bxi,\bz)$, where
$|\bxi|<\eps$ and the components of $\bz$ are $>\nu-\eps$.  Consequently,
$\psi(\bx;P_\eps)\le \eps$ for any $\bx\in \cZ$ with all components $>\nu-\eps$.
  As $\cZ(P)\subseteq \cZ(P_\eps)$, $\cG_1(P_\eps)$ includes all vectors
  $\bz\in \cZ(P)$ with $|\bz| \le \frac12-2\eps$, and it follows that $g_\eps$
  is supported on $\cG_1(P_\eps)$.
Also, if $\bx\in \cH_1$ then $\bx\in \cH(P)$ and
hence  $(1\star g_\eps)(\bx) = (1\star g)(\bx)\le 0$.

We claim that any vector $\bx\in \cH_2$ has a component
equal to one of $\frac12+u$, $\nu+u$, $2\nu+u$, $1-\nu+u$
or $1-2\nu+u$ for some $|u|\le 6\eps$.  It follows that
\[
\sum_k \mint{\cH_2 \cap \RR^k} \frac{d\bx}{\ssc{x}{1}\cdots \ssc{x}{k}} \ll \eps.
\]
Therefore, by Theorem \ref{thm:CB-lower},
\begin{align*}
\CB^-(P_\eps;\varrho) &\ge 1 + \sum_k \mint{\cH_1\cap \RR^k} \frac{(1\star g_\eps)(\bx)}{\ssc{x}{1}\cdots \ssc{x}{k}}\, d\bx + O_{g,\varrho,\nu}(\eps)\\
&=1 + \sum_k \mint{\cH\cap \RR^k} \frac{(1\star g)(\bx)}{\ssc{x}{1}\cdots \ssc{x}{k}}\, d\bx + O_{g,\varrho,\nu}(\eps),
\end{align*}
as desired.

To prove the claim, suppose $\bx\in \cH_2$ has no such component of the claimed type. 
It suffices to show that $\bx\in \cH(P)$ in order
to reach a contradiction, for then $\bx \in \cH_1$.
Since $\bx\in \cH(P_\eps)$, all components of $\bx$
are $>\nu+6\eps$ and avoid $[\frac12-6\eps,\frac12+6\eps]$.
Thus, if $\nu<\frac14+3\eps$ then $\bx\in \cH(P)$ by Lemma \ref{lem:gamma=1/2-H}.
If $\nu \ge \frac14+3\eps$, then $\bx$ has 2 or 3
components.  If $\dim(\bx)=3$ then each component is at most $1-2(\nu+6\eps)<1-2\nu$.  
Lemma \ref{lem:gamma=1/2-H} again implies that $\bx\in \cH(P)$.  
Now suppose $\bx=(\ssc{x}1,\ssc{x}2)$ with $\ssc{x}1>\frac12+6\eps$. Since $\bx\in \cC(\cR(P_\eps))$ and by the above characterization of $\cR(P_\eps)$, $\ssc{x}1> 2(\nu-2\eps)$ and thus
$\ssc{x}1 > 2\nu+6\eps > 2\nu$, and it also follows that $\bx\in \cH(P)$.
\end{proof}

\begin{proof}[Proof of Theorem \ref{thm:theta=0 gamma=1/2} (a)]
When $\frac15 \le \nu < \frac13$ and $P=(\frac12,0,\nu)$, $\cR=\cR(P)$
consists of vectors with three or four components, each in $(\nu,1-\gamma)$
and with sum 1.  For $\bbeta \in \cR$, we define
\[
f(\beta_1,\beta_2,\beta_3)=-1,\qquad f(\beta_1,\beta_2,\beta_3,\beta_4)=0,
\]
and define $f(\bx)$ for the remainder of $\cC(\cR)=\cH \cup \{1\}$ using \eqref{fsl}.
In particular, writing $\a>\frac12 \ge \b_i$ for each $i$,
we have $f_{1,1}(\beta_1,\alpha) \ge 0$ since if $\alpha = \beta_2+\beta_3$
and $(\beta_1,\beta_2,\beta_3)\in \cR$ then $\cyrL_{1/2}(\beta_1,\beta_2)=-1$, and also $f_{2,1}(\beta_1,\beta_2,\alpha)=0$.
Also, for $(\beta_1,\beta_2,\beta_3)\in \cR$, $\cyrL_{1/2}(\beta_1,\beta_2,\beta_3)=2$ and thus
\[
f(1) =  -2 \;\mint{\nu \le \beta_1 \le \beta_2 \le \beta_3 \le \frac12 \\
\beta_1+\beta_2+\beta_3=1} \frac{d\bbeta}{\beta_1\beta_2\beta_3},
\]
where we used that $\cH \cap \RR^3 = \{ (\beta_1,\beta_2,\beta_3) : |\bbeta|=1, \nu<\beta_1 \le \beta_2 \le \beta_3 \}$ from
Lemma \ref{lem:gamma=1/2-H}.

We next construct a function $g$ on $\cG_1$ such that
$(\bone \star g)(\bx)\le 0$ for $\bx \in \cH$. With Theorem \ref{thm:duality} in mind, 
we wish to choose $g$ such that $(1\ast g)(\bx)=0$ whenever $\bx\in\cH$ is such that $f(\bx)>-1$.
By Lemma \ref{lem:gamma=1/2-H} and the fact that $\psi(\bx)=0$ for all $\bx\in \cZ$,
\[
\cG_1 = \emptyvec \cup \{ (x):x\in (\nu,\tfrac12]\cap (\nu,1-2\nu) \} \cup \{ (\ssc{x}{1},\ssc{x}{2}) : \nu<\ssc{x}{1}, \nu<\ssc{x}{2}, \ssc{x}{1}+\ssc{x}{2} \le \tfrac12 \}.
\]
Set $g(\emptyvec)=1$, $g(x)=-1$ for all $x\in (\nu,\tfrac12]\cap (\nu,1-2\nu)$, 
$g(\ssc{x}{1},\ssc{x}{2})=1$ if $(\ssc{x}{1},\ssc{x}{2})\in \cG_1$ and $\ssc{x}{1}+\ssc{x}{2}<\frac12$, and let $g(\bx)=0$ otherwise.
Now consider $(\bbeta,\balpha)\in \cH$ with $\beta_i < \frac12 \le \alpha_j$
for all $i,j$.  In particular, $|\bbeta|+|\balpha|=1$. 
By Lemma \ref{lem:gamma=1/2-H}, $\b_i < 1-2\nu$ and
$\alpha_j > 2\nu$ as well.
 We have the following convolution identities:
\begin{itemize}
\item $(\bone\star g)(\beta_1,\beta_2,\beta_3) = -2$.
\item $(\bone\star g)(\beta_1,\beta_2,\beta_3,\beta_4) = 0$ unless
$\beta_i+\beta_j=\frac12$ for some $i,j$, in which case $(\bone\star g)(\bbeta)<0$. 
\item $(\bone\star g)(\frac12,\frac12)=-1$.  (This only occurs if $\nu < \tfrac14$.)
\item $(\bone\star g)(\beta_1,\beta_2,\alpha_1)=0$ unless $\beta_1+\beta_2=\frac12=\alpha_1$, in which case $(\bone\star g)(\beta_1,\beta_2,\alpha_1)=-2$.
\item $(\bone\star g)(\beta_1,\alpha_1)=0$ since $\beta_1<\frac12<\alpha_1$.
\end{itemize}
It follows that $(\bone\star g)(\bbeta,\balpha) \le 0$ in all cases.
In addition, $(\bone\star g)(\bx)=0$ whenever $f(\bx)>-1$ ,
aside from a set of $\bx$ of measure zero.  Thus, by Theorem \ref{thm:duality} (a),
$C^-(P)=1+f(1)$.  Moreover, by Theorem \ref{thm:constructions} (b), 
$\CB^-(P) \le 1+f(1)$, and Theorem \ref{thm:CB-lower-theta=0} (a) implies that
$\lim_{\eps\to 0^+} \CB^-(P_\eps) \ge 1+f(1)$, where we used Theorem \ref{thm:duality} (a) again (the criterion for equality).  Thus, $\lim_{\eps\to 0^+} \CB^-(P_\eps)=1+f(1)$ as well. 
\end{proof}

%%%%%%%%%%%%%%%%%%%%%%%%%%%
%
% 
%
%%%%%%%%%%%%%%%%%%%%%%%%%%%

\begin{proof}[Proof of Theorem \ref{thm:theta=0 gamma=1/2} (b)]
Suppose that $\nu=0.16623$.
 By Lemma \ref{lem:gamma=1/2-H}, $\cH=\cH(P)$ is the set of vectors with sum 1 and 
 all components in $(\nu,1-\nu)$; such vectors have at most 6 components.  As $\psi(\bx)=0$ for all $\bx\in \cZ$,
$\cG_1(P)$ is the set of vectors with components $>\nu$ and sum of components at most $\frac12$; there can be at most three components.   Set $g(\emptyvec)=1$ and
\begin{equation}\label{eq:theta0g}
\begin{split}
g(x) &= -\one(x\le \tfrac12), \\
g(\ssc{x}{1},\ssc{x}{2}) &= \one \big( \ssc{x}{1}+\ssc{x}{2}<\tfrac12 \big) - \one \big(\ssc{x}{1}+ \ssc{x}{2} \le \tfrac{1-\nu}2\big) = \one\big( \tfrac{1-\nu}{2}<\ssc{x}{1}+\ssc{x}{2}<\tfrac12  \big), \\
g(\ssc{x}{1},\ssc{x}{2},\ssc{x}{3}) &= - \one \big( \ssc{x}{1}+\ssc{x}{2}+\ssc{x}{3} \le \tfrac12 \big).
\end{split}
\end{equation}
Now consider $\bx=(\ssc{x}{1},\ldots,\ssc{x}{k})\in \cH$ with $\ssc{x}{1}\le \cdots \le \ssc{x}{k}$.  For brevity, define $\cH_k := \cH \cap \RR^k$.
 When $k=2$, $(1\star g)(\bx)=0$ if $\ssc{x}{1}<\ssc{x}{2}$ and $(1\star g)(\frac12,\frac12)=-1$.  
 When $k=3$, at most one coordinate is $>\frac12$, thus
 \begin{align*}
 (1\star g)(\bx) &= 1-2-\one(\ssc{x}{3}\le \tfrac12) + \one(\ssc{x}{1}+\ssc{x}{2}<\tfrac12)-\one(\ssc{x}{1}+\ssc{x}{2} \le \tfrac{1-\nu}2) \\
 &= -2\cdot \one(\ssc{x}{1}+\ssc{x}{2}\ge \tfrac12) - \one(\ssc{x}{1}+\ssc{x}{2} \le 
 \tfrac{1-\nu}2) \le 0 \qquad\qquad (\bx\in \cH_3).
 \end{align*}
 
When $k=4$, there are exactly three pairs of coordinates from $\bx$
 with sum less than $\frac12$, except for a set of measure zero (this occurs when $x_i+x_j=\frac12$ for some $i\ne j$) in which case there are fewer than three such pairs.  Also, it is not possible to have $\ssc{x}{1}+\ssc{x}{4} \le \frac{1-\nu}2$, for then
$\ssc{x}{2}\le \ssc{x}{3} \le \ssc{x}{4} \le \frac{1-3\nu}{2}$ and 
$1=\ssc{x}{1}+\cdots+\ssc{x}{4} \le \frac{3-7\nu}{2}<1$, a contradiction.
Therefore, since for every $i$ either $\ssc{x}{i}\le \frac12$ or $|\bx|-\ssc{x}{i} \le \frac12$ (with both occuring only if $i=4$ and $x_4=\tfrac12$),
\begin{align*}
(1\star g)(\bx) =
- \sum_{1\le i < j\le 3} \one \big(\ssc{x}{i}+\ssc{x}{j} \le \tfrac{1-\nu}2\big) - \one(\ssc{x}{4}=\tfrac12)
- \tfrac12 \sum_{1\le i<j\le 4} \one(\ssc{x}{i}+\ssc{x}{j}=\tfrac12) \quad  (\bx \in \cH_4).
\end{align*}
In particular, $(1\star g)(\bx)\le 0$ for $\bx\in \cH_4$.
Also, for a set $\cH_4'\subseteq \cH_4$ of full measure,
\begin{align*}
(1\star g)(\bx) = - \sum_{1\le i < j\le 3} \one \big(\ssc{x}{i}+\ssc{x}{j} \le \tfrac{1-\nu}2 \big) \qquad (\bx\in \cH_4').
\end{align*}

Now let $k=5$ and $\bx\in \cH_5$. We have that $\ssc{x}{i}<1-4\nu<1/2$ for all $i$, and if $A\subseteq[5]$ with $|A|=3$, then $|\ssc{\bx}{A}|\le 1/2$ is equivalent to $|\ssc{\bx}{{[5]\setminus A}}|\ge 1/2$. 
Therefore, we find
\begin{align*}
(1\star g)(\bx) &=  1 - 5 +\sum_{i<j} \one(\ssc{x}{i}+\ssc{x}{j}<\tfrac12) - \sum_{i<j} \one\big(\ssc{x}{i}+\ssc{x}{j} \le \tfrac{1-\nu}{2}\big)
-\sum_{i<j}\one(\ssc{x}{i}+\ssc{x}{j} \ge \tfrac12)\\
&= 6 - \ssum{i<j} \one(\ssc{x}{i}+\ssc{x}{j} \le \tfrac{1-\nu}2)
-2 \ssum{i<j} \one(\ssc{x}{i}+\ssc{x}{j}\ge \tfrac12)\qquad\qquad (\bx\in \cH_5).
\end{align*}
We have either $\ssc{x}{2}+\ssc{x}{5} \le \frac{1-\nu}2$ or
$\ssc{x}{3}+\ssc{x}{4} \le \frac{1-\nu}2$, for otherwise
$1-\nu<\ssc{x}{2}+\cdots+\ssc{x}{5}=1-\ssc{x}{1}$, a contradiction.
 It follows that there are at least 6 pairs $(i,j)$ with $i<j$
 and $\ssc{x}{i}+\ssc{x}{j} \le \frac{1-\nu}{2}$, hence
  $(1\star g)(\bx)\le 0$ when $k=5$.
  
Finally, when $k=6$, all pairs have sum $\le 1-4\nu < \frac{1-\nu}{2}$,
and there are exactly 10 triples $(\ssc{x}{i},\ssc{x}{j},\ssc{x}{k})$ with sum $\le \frac12$, except on a set of
measure zero where there are more than 10 such triples (this occurs when $|\ssc{\bx}{A}|=\tfrac12$ for some 3-tuple $A$).  Thus,
$(1\star g)(\bx)=1-6-10=-15$ except on a set of measure zero where $(1\star g)(\bx) < -15$.
  
Thus $(\bone\star g)(\bx) \le 0$ for all $\bx\in \cH$.

It follows from Theorem \ref{thm: Main sieving} (a)
and Theorem \ref{thm:CB-lower-theta=0} that $C^-(P)\ge C$ and $\CB^-(P_\eps;\varrho)\ge C-O(\eps)$, where
\[
C = 1 + I_3 + I_4 + I_5 + I_5'+I_6,
\]
\begin{align*}
I_3 &= -2\;\;\mint{\nu\le \ssc{x}1\le \ssc{x}2\le \ssc{x}3\le 1/2 \\ \ssc{x}1+\ssc{x}2+\ssc{x}3=1} \frac{d\bx}{\ssc{x}{1}\ssc{x}{2}\ssc{x}{3}} -  \;\; \mint{\nu\le \ssc{x}1\le \ssc{x}2\le \ssc{x}3 \\ \ssc{x}1+\ssc{x}2+\ssc{x}3=1 \\ \ssc{x}1+\ssc{x}2\le (1-\nu)/2} \frac{d\bx}{\ssc{x}{1}\ssc{x}{2}\ssc{x}{3}},\\
I_4 &=- \sum_{1\le i<j\le 3} \;\;\mint{\nu\le \ssc{x}1\le \cdots\le \ssc{x}4 \\ \ssc{x}1+\ssc{x}2+\ssc{x}3+\ssc{x}4=1 \\ \ssc{x}i+\ssc{x}j\le (1-\nu)/2}
\frac{d\bx}{\ssc{x}{1}\ssc{x}{2}\ssc{x}{3}\ssc{x}{4}},
\end{align*}
and, setting $S=\{ (3,4),(1,5),(2,5),(3,5),(4,5) \}$,
\begin{align*}
I_5 &= \mint{\nu\le \ssc{x}1\le \cdots \le \ssc{x}5 \\ \ssc{x}1+\cdots+\ssc{x}5=1}
\frac{h(\bx)}{\ssc{x}{1}\cdots \ssc{x}{5}}\, d\bx, \quad h(\bx)=1- \sum_{(i,j)\in S} \one \big(\ssc{x}i+ \ssc{x}j\le \tfrac{1-\nu}2 \big), \\
I_5' &= -2 \sum_{1\le i<j\le 5} \;\;\; \mint{\nu\le \ssc{x}1\le \cdots \le \ssc{x}5 \\ \ssc{x}1+\cdots+\ssc{x}5=1
\\ \ssc{x}i+\ssc{x}j\ge 1/2} \frac{d\bx}{\ssc{x}{1}\cdots \ssc{x}{5}},\\
I_6&=-15\mint{\nu\le \ssc{x}1\le \cdots \le \ssc{x}6 \\ \ssc{x}1+\cdots+\ssc{x}6=1
} \frac{d\bx}{\ssc{x}{1}\cdots \ssc{x}{6}}.
\end{align*}
The integral $I_5'$ is tiny, since every $\ssc{x}{\ell}$ is close to $\frac16$
 for $\ell\not\in \{i,j\}$.  Also, the multiple integral is zero for pairs $(i,j)$ with
 $i\le 2$ and $j\le 4$, and  for all pairs $(i,j)$ we have
 $\nu \le \ssc{x}{i} \le \frac{x_i+x_j}{2} \le \frac{1-3\nu}{2}$. Thus,
\begin{align*}
|I_5'| &\le 10 \mint{\nu\le \ssc{x}1\le \cdots \le \ssc{x}5 \\ \ssc{x}1+\cdots+\ssc{x}5=1
\\ \ssc{x}4+\ssc{x}5\ge 1/2} \frac{d\bx}{\ssc{x}{1}\cdots \ssc{x}{5}} \\
&\le \frac{10}{\nu^4(1-4\nu)} \Big( \frac{1-3\nu}{2}-\nu \Big) \text{Vol} \big\{
\nu\le \ssc{x}1 \le \ssc{x}2 \le \ssc{x}3: \ssc{x}1+\ssc{x}2 + \ssc{x}3 \le \tfrac12 \big\}\\
&= \frac{5(1-5\nu)}{\nu^4(1-4\nu)}\cdot \frac{(1/2-3\nu)^3}{36} < 3\cdot 10^{-7}.
\end{align*}
Similarly, the integral $I_6$ is tiny as all $\ssc{x}{i}$
 are close to $\tfrac16$.  We have
\[
|I_6|\le \frac{15}{\nu^6}\mint{\nu\le \ssc{x}1\le \cdots \le \ssc{x}6 \\ \ssc{x}1+\cdots+\ssc{x}6=1
} d\bx=\frac{15(1-6\nu)^5}{5!6!\nu^6}<10^{-12}.
\]
The integrals $I_3,I_4,I_5$ were computed with Mathematica and produce\footnote{We find $I_3=-0.92205199\dots, I_4=-0.07714894\dots, I_5=-0.00079222\dots$ and so $I_3+I_4+I_5=-0.99999316\dots$.} $C\ge 0.000006$
when $\nu=0.16623$.  This proves Theorem \ref{thm:theta=0 gamma=1/2} (b). 
\end{proof}

%%
%% upper bound on C(P) via constructions
%%

\begin{proof}[Proof of Theorem \ref{thm:theta=0 gamma=1/2} (c)]
Let $\nu=0.16169$.
Motivated by Theorem \ref{thm:duality}, and recalling
the notation $f_{s,\ell}(\bx)$ from \eqref{fsl}, 
for $\bx=(\ssc{x}{1},\ldots,\ssc{x}{k})$ with $\nu<\ssc{x}{1}\le \cdots \le \ssc{x}{k}$ 
and $|\bx|=1$ we take
\begin{align*}
f_{3,0}(\bx) &= -1, \qquad f_{4,0}(\bx) = -1, \qquad f_{6,0}(\bx)=0, \\
f_{5,0}(\bx) &= \ssc{x}{1}\cdots \ssc{x}{5}\one(\ssc{x}{4}+\ssc{x}{5}<\tfrac12)
h(\ssc{x}{3}+\ssc{x}{4}+\ssc{x}{5}),
\end{align*}
for some decreasing, non-negative function $h$ on $[\frac35,1-2\nu]$, to be chosen later.
The main purpose of $f_{5,0}$ is to make $f_{2,1}(\b_1,\b_2,\a)$ close to $-1$
when $\b_1+\b_2 \le \frac{1-\nu}{2}$, which is the region where
$(1\star g)(\b_1,\b_2,\a)<0$.

Our goal is to choose $h$ so that $f_{s,\ell}(\bbeta,\a)\ge -1$ for all $s\ge 1$
and $\a\ge \frac12$, where $f_{s,\ell}$ is defined by \eqref{fsl}, and also
that $f(1)<-1$.
Then replacing $f(\bx)$ by $(-1/f(1)) f(\bx)$ for all $\bx$ gives
$f(1)=-1$ and $f(\bx)\ge -1$ for all $\bx$.  By Theorem \ref{thm:constructions} (b),
we obtain $C^-(\frac12,0,\nu)=0=\CB^-(\frac12,0,\nu)$.

 We have $f_{3,1}(\bbeta,\a)=0$, since in the support of
$f_{5,0}$, the sum of any two variables is $<\frac12$.  Thus, it remains to
show $f_{1,1}(1-\a,\a)\ge -1$ and $f_{2,1}(\b_1,\b_2,\a)\ge -1$.  The former is easy,
in fact it holds for any choice of $h$, which we now show.
In \eqref{fsl}, let $k\ge 2$ and $\ssc{u}1+\cdots+\ssc{u}k=\a$.  
Using \eqref{Linnik-fcn}, if $k=2$ then
$\cyrL_{1/2}(\bu)=-1$, if $k=3$ then $\cyrL_{1/2}(\bu)=2-\#\{(i,j):\,i<j,\ssc{u}i+\ssc{u}j< 1/2\} \le 2$, and if $k=4$ and $f_{5,0}(1-\a,\bu)\ne 0$ then $\cyrL_{1/2}(\bu)=3$.
Thus, $f_{5,0}(1-\a,\bu)\cyrL_{1/2}(\bu) \ge 0$ for all $\bu$.
By \eqref{fsl}, $f_{1,1}(1-\a,\a) \ge F_3(\a)+F_4(\a)$, where
\[
F_3(\a)=\a \mint{\nu \le \ssc{u}1\le \ssc{u}2 \le 1/2 \\ \ssc{u}1+\ssc{u}2=\a} \frac{1}{\ssc{u}1\ssc{u}2} = \log \bigg(
\frac{\a}{\max(\nu,\a-1/2)}-1\bigg).
\]
where we considered separately the cases $\a<\frac12+\nu$
and $\a\ge \frac12+\nu$, and
\begin{align*}
F_4(\a) \ge -2\a \mint{\nu\le \ssc{u}1\le \ssc{u}2\le \ssc{u}3 \\ \ssc{u}1+\ssc{u}2+\ssc{u}3=\a} \frac{1}{\ssc{u}1 \ssc{u}2 \ssc{u}3}
&\ge -\frac{2\a}{\nu^2(\a-2\nu)} \text{meas} \big\{ \nu\le \ssc{u}1\le \ssc{u}2\le \ssc{u}3, |\bu|=\a \big\}\\
&= -\frac{2\a(\a-3\nu)^2}{12 \nu^2 (\a-2\nu)}.
\end{align*}
Therefore, for any $\a\in [\frac12,1-\nu]$,
\[
f_{1,1}(1-\a,\a) \ge \log \bigg(
\frac{\a}{\max(\nu,\a-1/2)}-1\bigg) -\frac{\a(\a-3\nu)^2}{6 \nu^2 (\a-2\nu)}
\ge -0.911.
\]

We now bound $f_{2,1}(\b_1,\b_2,\a)$. By \eqref{fsl}, for $\a\ge \frac12$ we have 
$f_{2,1}(\b_1,\b_2,\a)=F_4(\a)-F_5(\b_1,\b_2)$, where
\begin{align*}
F_4(\a) = \a \mint{\nu\le \b_3\le \b_4\le 1/2 \\ \b_3+\b_4=\a} \frac{1}{\b_3 \b_4} =
\log\bigg( \frac{\a}{\max(\nu,\a-1/2)}-1\bigg)
\end{align*}
and
\begin{align*}
F_5(\b_1,\b_2) &=  \a \b_1 \b_2  \mint{\nu\le \b_3\le \b_4 \le \b_5  \\ \b_3+\b_4+\b_5=\a} 
h(\ssc{x}{3}+\ssc{x}{4}+\ssc{x}{5})\one(x_4+x_5<1/2)\, d\bbeta,
\end{align*}
where $(\ssc{x}{3},\ssc{x}{4},\ssc{x}{5})$ are the three largest components of $\bbeta$.
Now $\ssc{x}{3}+\ssc{x}{4}+\ssc{x}{5} \ge \max(\frac35,\a)$, $\b_1\b_2 \le (\frac{1-\a}{2})^2$. The condition $\ssc{x}{4}+\ssc{x}{5}<\frac12$ implies $\b_4+\b_5<\frac12$, which 
is equivalent to $\b_3 > \a-1/2$.
Therefore, $h(\ssc{x}{3}+\ssc{x}{4}+\ssc{x}{5})\le h\big(\max(\frac35,\a)\big)$ and
\begin{align*}
F_5(\b_1,\b_2) &\le \frac{\a(1-\a)^2}{4} h\big( \max(\tfrac35,\a) \big)
\cdot \text{Vol} \big\{ \max(\nu,\a-\tfrac12)\le \b_3\le \b_4\le \b_5: \a=\b_3+\b_4+\b_5 \big\} \\
&=\frac{\a(1-\a)^2}{48} h\big( \max(\tfrac35,\a) \big) \big(\a-3\max(\nu,\a-\tfrac12)\big)^2.
\end{align*}
Let $\ell(\a) = \frac{1}{48} \a(1-\a)^2 \big(\a-3\max(\nu,\a-\tfrac12)\big)^2$.
One can check that $(1+F_4(\a))/\ell(\a)$ is decreasing on $[\frac12,\frac12+\nu]$
and increasing on $[\frac12+\nu,1-2\nu]$.  Therefore, taking
\begin{equation}\label{eq:theta0-h}
h(u) = \begin{cases}
\frac{1+F_4(u)}{\ell(u)} & \text{ if } \tfrac12 \le u\le \tfrac12+\nu, \\
\frac{1+F_4(1/2+\nu)}{\ell(1/2+\nu)} & \text{ if } \tfrac12+\nu <u\le 1-2\nu,
\end{cases}
\end{equation}
we see that $h(u)$ is decreasing, and $h(u) \le (1+F_4(u))/\ell(u)$ for all
$u\ge \frac35$.  It follows immediately that $F_4(\a)-F_5(\b_1,\b_2) \ge -1$
for $\frac35 \le \a\le 1-2\nu$.  For $\frac12 \le \a < \frac35$, we have
\[
F_4(\a)-F_5(\b_1,\b_2) \ge F_4(\a)-\ell(\a)h(3/5) \ge F_4(\a)-\ell(\a)h(\a)=-1.
\]

Finally, with the choice of $h$ given by \eqref{eq:theta0-h}, we show that $f(1)<-1$.
By \eqref{Linnik-fcn}, if $\bx \in \cH_3$ with all components $<\frac12$ then $\cyrL_{1/2}(\bx)=2$, if $\bx\in\cH_4$ with all components $<\frac12$ and no pair of components has sum equal to $\frac12$ then $\cyrL_{1/2}(\bx)=0$,
and if $\bx\in \cH_5$ with every pair of components having sum less than $\frac12$
then $\cyrL_{1/2}(\bx) = -6$.  Thus, $f(1)= I_3 + I_5$, where
\[
I_3=-2 \mint{\nu\le \ssc{x}1\le \ssc{x}2\le \ssc{x}3 \le 1/2 \\ |\bx|=1} 
\frac{d\bx}{\ssc{x}1 \ssc{x}2 \ssc{x}3} = -2 \int_{\nu}^{1/3} \;\frac{\log\big( \frac{1-x}{\max(x,1/2-x)}-1\big)}{x(1-x)}\, dx
\]
and
\[
I_5 = -6 \mint{\nu \le \ssc{x}1\le \cdots \le \ssc{x}5 \\ |\bx|=1} \frac{f_{5,0}(\bx)}{\ssc{x}{1}\cdots \ssc{x}{5}} = -6 
\mint{\nu \le \ssc{x}1\le \cdots \le \ssc{x}5 \\ \ssc{x}{4}+\ssc{x}{5} < 1/2 \\|\bx|=1}
h(\ssc{x}{3}+\ssc{x}{4}+\ssc{x}{5})\, d\bx.
\]
Computations with Mathematica give $I_3+I_5=-1.000015\ldots$.
\end{proof}

\medskip
%\newpage

%%%%%%%%%%%%%%%%%%%%%%%%%%%%%%%%%%%%%%%%%%%%%%%%%%%%%%%%%%%%%%%%%%%%%%
%
%  constructions for minimal value of nu needed to detect primes
%
%
%
{\Large \section{Minimal Type II range needed to detect primes}
\label{sec:TypeII-minimum}}
%
%
%
%
%%%%%%%%%%%%%%%%%%%%%%%%%%%%%%%%%%%%%%%%%%%%%%%%%%%%%%%%%%%%%%%%%%%%%%

In this section we prove Theorem \ref{thm:TypeII-minimum}.
Recall the definition of the sets $\sF_\eta(P)$ from Definition \ref{defn:FetaP}.
Our basic strategy is to
construct a weight $w_n$ satisfying the Type I condition
\eqref{eq:TypeI}, ignoring the Type II condition \eqref{eq:TypeII},
 with $w_p$ a constant less than $-1$
at primes $p$ and $w_n\ge -1$ for other $n$, using
Theorem \ref{thm:constructions} (a), and then tweak the function
to give a desired construction satisfying \eqref{eq:TypeII}
 when $0<\nu \le \ssc{\nu}{0}$ for sufficiently small $\ssc{\nu}{0}$.

We define analogs of $\cR$ and $\sF_\eta$ in the `$\nu=0$' case.
Let $\cR^*(\gamma)$ be the set of vectors, of arbitrary dimension,
with components in $(0,1-\gamma)$ and with sum 1,
and let $\sF_\eta^*(\gamma)$ be the set of bounded functions $f\in \cS$
supported on the subset of $\cC(\cR^*(\gamma))$ with all components
$\ge \eta$, and satisfying conditions (b) and (c) in Definition \ref{defn:FetaP}.

\medskip

\begin{thm}\label{thm:fsl}
Fix $\gamma \in [\frac12,1]$.
Suppose that there is a $\eta>0$ and function $f \in \sF^*_{\eta}(\gamma)$  so that
\begin{itemize}
\item $f(1) < -1$ \; (contribution to primes);
\item for all $\bbeta=(\beta_1,\ldots,\beta_k)$ with $k\ge 2$, we have
$f(\bbeta) \ge -1$.
\end{itemize}
Then Theorem \ref{thm:TypeII-minimum} holds for this $\gamma$, that is, there
is a $\ssc{\nu}{0}$, depending only on $\gamma$, so that whenever
$P=(\gamma,\theta,\nu)\in \cQ_0$, $\nu\le \nu_0$, and $B>0$, if $x$ is large enough
then there is a non-negative sequence $(a_n)$
such that if $b_n=1$ and $w_n=a_n-1$ for all $n\in (x/2,x]$
then $w_n$
satisfies \eqref{eq:TypeI} and \eqref{eq:TypeII}, and also
and $a_p=0$ for all primes $p$.  In particular, we have $C^-(P)=0$.
\end{thm}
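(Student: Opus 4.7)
The plan is to modify $f$ to a function $\tilde f \in \sF_\eta(P)$ which still satisfies $\tilde f(1)<-1$ and $\tilde f(\bbeta)\ge -1$ for every $\bbeta$ with $\dim(\bbeta)\ge 2$. Once such a $\tilde f$ is in hand, Theorem \ref{thm:constructions} (a) applied to $\tilde f$ produces, for each large $x$, a bounded sequence $(w_n)$ satisfying \eqref{eq:TypeI}, \eqref{eq:TypeII}, \eqref{w}, with $w_p$ equal to some fixed constant $z<-1$ on primes and $w_n \ge -1-\delta$ on composite $n$; an affine rescaling then converts $(w_n)$ into the required $(a_n)$.

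To build $\tilde f$, first replace $f$ by $(1-\kappa)f$ for a fixed small $\kappa>0$, creating slack so that $f(\bbeta)\ge -(1-\kappa)$ for $\dim(\bbeta)\ge 2$ while $f(1)$ remains $<-1$. Now define
\[
\tilde f_{k,0}(\bxi) := f_{k,0}(\bxi) \cdot \one\big(\bxi \text{ has no proper subsum in } [\theta,\theta+\nu]\big)
\]
on vectors $\bxi$ of sum $1$ with all components in $[\eta,1-\gamma)$, and extend $\tilde f$ to all other vectors via the fragmentation relation \eqref{fsl}. By Theorem \ref{thm:construction-recursive}, $\tilde f$ then satisfies \eqref{TypeI-f}. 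The key structural fact securing the support condition is this: for any $(\bbeta,\balpha)$ with $\beta_i<1-\gamma\le \alpha_j$ and any admissible fragmentation $(\bbeta,\bu_1,\dots,\bu_\ell)$ with $|\bu_j|=\alpha_j$, every subsum of $(\bbeta,\balpha)$ arises as an identical subsum of the fragmentation by including each $\bu_j$ wholly or wholly not; thus if $(\bbeta,\balpha)$ has a subsum in $[\theta,\theta+\nu]$, so does every fragmentation, forcing $\tilde f_{k,0}$ and hence $\tilde f_{s,\ell}(\bbeta,\balpha)$ to vanish. Combined with the observation that every vector with sum $1$ avoiding subsums in $[\theta,\theta+\nu]$ admits a fragmentation lying in $\cR(P)$ when $\nu<1-\gamma$ (split each component $\ge 1-\gamma$ into one piece just above $\nu$ plus arbitrarily many very small pieces), this places $\tilde f$ in $\sF_\eta(P)$.

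Quantitatively, $\tilde f_{s,\ell}(\bbeta,\balpha)$ differs from $f_{s,\ell}(\bbeta,\balpha)$ only through fragmentations which admit some subsum in $[\theta,\theta+\nu]$. Since $\dim \bu_j$ and $\ell$ are bounded by $O(1/\eta)$ and all components are $\ge \eta$, the ``bad locus'' of fragmentations is a bounded union of slabs of thickness $\nu$ inside a fixed-dimensional polytope with coordinates bounded below by $\eta$, of total measure $O_{f,\gamma,\eta}(\nu)$. The remaining factors in the integrand of \eqref{fsl} are bounded on the integration domain, so $\|\tilde f-f\|_\infty \le C(f,\gamma,\eta)\,\nu$. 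Since $f$ and $\eta$ can be chosen depending only on $\gamma$, there is a threshold $\nu_0=\nu_0(\gamma)>0$ such that for all $\nu \le \nu_0$ one has both $\tilde f(1)<-1$ and $\tilde f(\bbeta)\ge -1$ for $\dim(\bbeta)\ge 2$.

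Applying Theorem \ref{thm:constructions} (a) with this $\tilde f$ and a choice of $\delta$ much smaller than $|\tilde f(1)|-1$, we obtain a bounded $(w_n)$ with $w_p=z$ for some fixed $z<-1$ and $w_n\ge -1-\delta$ for composite $n$. Setting $\lambda := -1/z \in (0,1)$ and $w_n' := \lambda w_n$ gives $w_p'=-1$ and $w_n' \ge -\lambda(1+\delta)\ge -1$ (for $\delta$ small); linearity of \eqref{eq:TypeI}, \eqref{eq:TypeII}, \eqref{w} in $w$ preserves these. Taking $b_n=1$ and $a_n := w_n'+1$ yields the required non-negative sequence with $a_p=0$ for every prime $p$, and hence $C^-(P)=0$. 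The main obstacle is the uniform estimate $\|\tilde f - f\|_\infty = O_{f,\gamma,\eta}(\nu)$, which demands a careful measure-theoretic accounting showing that the bad locus (where some subsum hits $[\theta,\theta+\nu]$) contributes only $O(\nu)$ to each bounded-dimensional fragmentation integral in \eqref{fsl}, uniformly over all relevant input vectors; the remaining steps (validity of the extension, preservation of the piecewise-polytope Lipschitz structure, and the final rescaling) are routine.
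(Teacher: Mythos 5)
Your proof is correct and takes essentially the same route as the paper: truncate the restrictions $f_{k,0}$ by the indicator of having no proper subsum in $[\theta,\theta+\nu]$, extend via the fragmentation relation \eqref{fsl} (justified by Theorem \ref{thm:construction-recursive}), show $\|\tilde f - f\|_\infty = O_{f,\gamma,\eta}(\nu)$, and then rescale the output of Theorem \ref{thm:constructions}(a). The only cosmetic difference is that you manufacture slack by first multiplying $f$ by $(1-\kappa)$, whereas the paper works directly with the slack $\delta=|f(1)|-1$ already present in the hypothesis; and your ``slab of thickness $\nu$'' measure bound, which you correctly flag as the main point needing care, is exactly the transversality argument the paper carries out by fixing all fragmentation variables except two from the same block $\bu_{j'}$ whose sum is constrained.
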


\begin{proof}
Define $\delta>0$ by
\[
f(1) = -1 - \delta.
\]
Let $\ssc{\nu}{0}$ be a small enough positive constant, $0 \le \nu \le \ssc{\nu}{0}$,
$0\le \theta \le 1/2$ and $P=(\gamma,\theta,\nu)$.
We now construct a function $\ft \in \sF_{\eta}(P)$ from $f$ that takes into account the Type II restrictions \eqref{eq:TypeII}.

For each $k\in \NN$ let $\cD_{k,\eta}(P)$ be the set of vectors in $\cC(\cR^*(\gamma))\cap \RR^k$ with all components $\ge \eta$ and 
no proper subsum (that is, a subsum which is not zero and not one) in 
$[\theta,\theta+\nu]$.  Define, for each $k$ the restriction
\be\label{ftilde}
\tilde{f}_{k,0}(\bxi) = f_{k,0}(\bxi) \one(\bxi \in \cD_{k,\eta}(P)).
\ee
Complete the definition of $\tf$ by applying \eqref{fsl} with the functions 
$\tf_{k,0}$ on the right side in place of $f_{k,0}$.
Then $\tf \in \sF_{\eta}(P)$ since $\cD_{k,\eta}(P)$ is a finite union of 
convex polytopes (by Lemma \ref{lem:CR1-union-polytopes}) and by assumption $f\in \sF_\eta(\gamma)$.
We claim that for all $k\ge 2$ and $\bxi\in \cD_{k,\eta}(P)$ we have
\be\label{fbetalower}
\ft(\bxi) \ge -1 - \frac{\delta}{3},
\ee
and furthermore, we have 
\be\label{fone}
\ft(1) \le -1-\frac{2\delta}{3}.
\ee
From \eqref{fbetalower} and \eqref{fone}, we quickly deduce the desired conclusion.
Indeed, by Theorem \ref{thm:constructions} (a), for any $B>0$ and large enough $x$,
there is a constant $z$ with $|z-\tf(1)| \le \delta/3$ and a sequence $(v_n)_{x/2<n\le x}$ such that
\[
v_p=z \;\; \text{ for prime }p\in (x/2,x], \qquad
v_n \ge \min_{\dim{\bu}\ge 2} \tf(\bu) - \delta/3 \;\; \text{ for } x/2<n\le x, n\text{ composite}.
\]
By \eqref{fone}, $z \le -1 - \delta/3$. 
Then, by \eqref{fbetalower}, the sequence $(w_n)$ given by $w_n = -v_n/z$ for all $n$, satisfies
\[
w_p=-1 \;\; (\text{prime }p\in (x/2,x]), \qquad
w_n \ge (-1/z) (-1-\delta/3) \ge -1 \;\; (x/2<n\le x, n\text{ composite}).
\]

It remains to show \eqref{fbetalower} and \eqref{fone}.
Fix $r\ge 1$ and $\bxi \in \cC(\cR^*(\gamma)) \cap \RR^r$ with all components
$\ge \eta$.
If $\bxi \not \in \cD_{r,\eta}(P)$ (that is, $\bxi$ has a proper subsum in $[\theta,\theta+\nu]$), then clearly $\tf(\bxi)=0$.
Otherwise, we claim that
\be\label{f-ftilde}
|\tf(\bxi) - f(\bxi)| \le \delta/3.
\ee
If $\nu_0$ is small enough, and this will establish \eqref{fbetalower} and \eqref{fone}.
Let $\bxi\in \cD_{r,\eta}$ and write $\bxi=(\bbeta,\balpha)$, where 
\[
\eta \le \beta_1,\ldots,\beta_s<1-\gamma \le \alpha_1,\ldots,\alpha_\ell, \quad  r=s+\ell.
\] 
  If $\ell=0$ then $\tf(\bxi)=f(\bxi)$ by \eqref{ftilde}.
If $\ell \ge 1$,
observe that on the right side of \eqref{fsl}, $k_1,\ldots,k_\ell$ are bounded,
the functions $\cyrL_{1-\gamma}(\bu_i)$ are bounded and hence the integrands are bounded
(these bounds depend on $\eta$).
Fix $k_1,\ldots,k_\ell\le 1/\eta$, set $k=k_1+\cdots+k_\ell+s$
and consider one of the $2^k-2$
proper subsums of  $(\bbeta,\bu_1,\ldots,\bu_\ell)$, 
\[
A = \sum_{i\in I} \beta_i + \sum_{j=1}^\ell \sum_{i\in L_j} u_{j,i},
\]
where $I\subseteq [s]$ and $L_j \subseteq [k_j]$ for each $j$.
To show \eqref{f-ftilde} for sufficiently small $\nu_0$, it suffices to show that 
for each choice of $I,L_1,\ldots,L_\ell$,
the  $(k_1+\cdots+k_\ell-\ell)$-dimensional measure
of the set of $(\bu_1,\ldots,\bu_\ell)$ for which $A\in [\theta,\theta+\nu]$
is at most $\nu$.

If, for all $j$, $L_j$ is empty or $L_j=[k_j]$, then $A$ is a proper subsum
of $\bxi$ and hence always avoids $[\theta,\theta+\nu]$.
Now assume there is some $j'$ for which $L_{j'}$ is nonempty and not the whole of $[k_{j'}]$.
Pick $i',i''$ so that $i'\in L_{j'}$ and $i''\in [k_{j'}] \setminus L_{j'}$, and let
$A' = A - u_{j',i'}$.  With all of the variables $u_{j,i}$ fixed except for 
$u_{j',i'}$ and $u_{j',i''}$, $A'$ is fixed and  $u_{j',i'}+u_{j',i''}$ is fixed (since $\alpha_j = u_{j,1} + \cdots + u_{j,k_j}$).  Hence the measure of $u_{j',i'}$ with $A\in [\theta,\theta+\nu]$ is $\le \nu$, and the proof is complete.
\end{proof}

It remains to find a function satisfying the conditions of Theorem \ref{thm:fsl}.

\subsection{Modified Liouville functions}
Our construction depends properties of `modified Liouville functions' which are completely multiplicative, supported
on $x^{\eta}-$rough numbers and satisfy the Type I bounds \eqref{eq:TypeI} (that is, has 
`level of distribution' $x^{\gamma}$); in the case
$\eta=0$ the ordinary Liouville function has these properties, and 
this is the basis
for the famous Selberg examples which show that $C^-(\gamma,\theta,0)=0$ 
for all $\gamma<1$.
We define our functions in the vector setting, which neatly sidesteps
various messy issues in the integer setting.

Throughout, we assume that $\eta,c$ are fixed and satisfy 
\[
0 < \eta < c\le \frac12.
\]
Define, for $\a> 0$ the function 
\be\label{M-def}
M^{(c,\eta)}(\a) := \a \sum_{k\ge 1}\frac{(-1)^k}{k!} 
\mint{\a=\beta_1+\cdots+\beta_k \\ \eta \le \beta_i \; \forall i}
\frac{\cyrL_{c}(\bbeta)}{\beta_1\cdots \beta_k}\, d\bbeta.
\ee
(Recall the definition \eqref{Linnik-fcn} of $\cyrL_c$.) This formula comes from the fragmentation identity in \eqref{fsl}.
We now define a function $\tl^{(c,\eta)}$ on vectors of positive real numbers (of arbitrary length) by
\be\label{lambda-def}
\tl^{(c,\eta)}(\xi_1,\ldots,\xi_k) = M^{(c,\eta)}(\xi_1) \cdots M^{(c,\eta)}(\xi_k).
\ee
Like the Liouville function, this vector function is ``completely multiplicative'' in
the sense that
\[
\tl^{(c,\eta)}(\balpha,\bbeta) = \tl^{(c,\eta)}(\balpha)\tl^{(c,\eta)}(\bbeta) \qquad (\text{all }\balpha,\bbeta).
\]
Suppose that $\a \ge c$.  By Lemma \ref{lem:tc-basic} (a), if any $\beta_i\ge c$
then $\cyrL_c(\bbeta)=0$.  In particular, the $k=1$ term in \eqref{M-def} is zero
and hence
\be\label{M-s}
M^{(c,\eta)}(\a) = \a \sum_{k\ge 2}\frac{(-1)^k}{k!} 
\mint{\a=\beta_1+\cdots+\beta_k \\ \eta \le \beta_i < c \; \forall i}
\frac{\cyrL_{c}(\bbeta)}{\beta_1\cdots \beta_k}\, d\bbeta \qquad (\a \ge c).
\ee
If $\a < c$ then Lemma \ref{lem:tc-basic} (b) implies that
 $\cyrL_{c}(\bbeta)=\one_{k=1}$
and thus
\be\label{M-tiny}
M^{(c,\eta)}(\a) = \begin{cases}
-1 \qquad &\eta\le \alpha < c,\\
0,&\alpha<\eta.
\end{cases}
\ee
Thus, if $\eta \le \beta_1 \le \cdots \le \beta_s < c \le \alpha_1 \le \cdots \le \alpha_\ell$, then
\[
\tl^{(c,\eta)}(\bbeta,\balpha) = (-1)^s M^{(c,\eta)}(\alpha_1) \cdots M^{(c,\eta)}(\alpha_\ell).
\]

In the integer setting, this corresponds to a completely multiplicative function
$\tl$ with $\tl(p)=0$ for $p<x^\eta$,
 $\tl(p)=-1$ if $x^{\eta} \le p <x^{c}$ and if $p\approx x^{\a}$ for $\a\ge c$ then
$\tl(p)=M^{(c,\eta)}(\a)$.  We will show below that $M^{(c,\eta)}(\a)$ is very close to $-1$ and thus $\tl^{(c,\eta)}$ behaves similarly to the Liouville function.

Using \eqref{M-s}, these functions satisfy the analog of
\eqref{fsl}.  In fact, the formula \eqref{M-def} was derived from \eqref{fsl}
by setting $f(\ssc{x}{1},\ldots,\ssc{x}{k})=(-1)^k$ when all components are $<c$.
 We give another proof below, which applies to a more general type
 of function, those with $f(x)=-m$ for small $x$, where $m\in\NN$.
\medskip

\begin{lem}\label{lem:TypeI}
For a positive integer $m$, real $w \ge mc$ and real $c > \eta >0$, 
\[
\sum_r \frac{m^r}{r!} \;\; \mint{\xi_1+\cdots+\xi_r=w \\ \eta \le \xi_i\; \forall i} 
\frac{\tl^{(c,\eta)}(\bxi)}{\xi_1\cdots\xi_r}\, d\bxi = 0.
\]
\end{lem}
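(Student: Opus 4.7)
The plan is to unfold each factor $M^{(c,\eta)}(\xi_i)$ inside $\tl^{(c,\eta)}(\bxi)$ using its defining formula \eqref{M-def}, combine the resulting iterated surface integrals into a single surface integral over $\bbeta\in\RR^k$ with $|\bbeta|=w$, and then apply Lemma~\ref{lem:t-ident} to conclude that the integrand vanishes pointwise.

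Denote the left side of the claimed identity by $S$. Since $\xi_i\ge\eta$ forces $r\le w/\eta$, and similarly each $k_i\le\xi_i/\eta\le w/\eta$, every sum below is finite and any interchange of sum and integral is trivially legal. By \eqref{lambda-def} and \eqref{M-def},
\[
\frac{\tl^{(c,\eta)}(\bxi)}{\xi_1\cdots\xi_r}\;=\;\prod_{i=1}^{r}\sum_{k_i\ge 1}\frac{(-1)^{k_i}}{k_i!}\,\mint{|\bbeta^{(i)}|=\xi_i\\ \beta^{(i)}_j\ge\eta\,\forall j}\,\frac{\cyrL_c(\bbeta^{(i)})}{\beta^{(i)}_1\cdots\beta^{(i)}_{k_i}}\,d\bbeta^{(i)}.
\]
Let $k=k_1+\cdots+k_r$ and write $\bbeta=(\bbeta^{(1)},\ldots,\bbeta^{(r)})\in\RR^k$. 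The iterated projection-measure integral $\int d\bxi\prod_i d\bbeta^{(i)}$ over the nested simplices $|\bxi|=w$, $|\bbeta^{(i)}|=\xi_i$ is, via the Jacobian-$1$ change of variables $\xi_i=\beta^{(i)}_1+\cdots+\beta^{(i)}_{k_i}$, identical to the projection-measure integral on the single hyperplane $|\bbeta|=w$. Grouping by $k$ therefore gives
\[
S\;=\;\sum_{k\ge 1}(-1)^k\sum_{r=1}^{k}\frac{m^r}{r!}\sum_{\substack{k_1+\cdots+k_r=k\\ k_i\ge 1}}\frac{1}{k_1!\cdots k_r!}\mint{|\bbeta|=w\\ \beta_j\ge\eta\\ \bbeta\in\RR^k}\frac{\cyrL_c(\bbeta^{(1)})\cdots\cyrL_c(\bbeta^{(r)})}{\beta_1\cdots\beta_k}\,d\bbeta,
\]
where $\bbeta^{(i)}$ is the subvector of $\bbeta$ in positions $k_1+\cdots+k_{i-1}+1,\ldots,k_1+\cdots+k_i$.

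The key combinatorial step is to symmetrize over $S_k$. Because the factor $1/(\beta_1\cdots\beta_k)$ and the integration region are invariant under coordinate permutations of $\bbeta$, and each $\cyrL_c(\bbeta^{(i)})$ is symmetric in its arguments, every ordered set-partition $A_1\sqcup\cdots\sqcup A_r=[k]$ with $|A_i|=k_i$ yields the same integral as the canonical block decomposition. There are $k!/(k_1!\cdots k_r!)$ such partitions, so after relabeling
\[
S\;=\;\sum_{k\ge 1}\frac{(-1)^k}{k!}\mint{|\bbeta|=w\\ \beta_j\ge\eta\\ \bbeta\in\RR^k}\frac{T_k(\bbeta)}{\beta_1\cdots\beta_k}\,d\bbeta,\qquad T_k(\bbeta):=\sum_{r=1}^{k}\frac{m^r}{r!}\sum_{\substack{A_1\sqcup\cdots\sqcup A_r=[k]\\ A_i\ne\emptyset}}\cyrL_c(\bbeta_{A_1})\cdots\cyrL_c(\bbeta_{A_r}).
\]
But $T_k(\bbeta)$ is precisely the left side of Lemma~\ref{lem:t-ident} applied to $\bx=\bbeta$, and since $|\bbeta|=w\ge mc$ on the entire integration domain, that lemma gives $T_k(\bbeta)\equiv 0$. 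Hence $S=0$, as required.

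The main obstacle is the bookkeeping in the symmetrization step, namely trading the rigid block decomposition $\bbeta=(\bbeta^{(1)},\ldots,\bbeta^{(r)})$ for an unordered sum over set-partitions of $[k]$ — this is what aligns the expression with the hypothesis of Lemma~\ref{lem:t-ident} so that it vanishes pointwise. Everything else is mechanical, contingent on the standard identification of the iterated projection measure on nested simplices with the projection measure on the total hyperplane $|\bbeta|=w$.
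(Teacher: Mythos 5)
Your proof is correct and takes essentially the same approach as the paper: expand each $M^{(c,\eta)}(\xi_i)$ via \eqref{M-def}, collapse the nested simplex integrals onto the hyperplane $|\bbeta|=w$, symmetrize the block decomposition into a sum over ordered set-partitions of $[k]$, and invoke Lemma~\ref{lem:t-ident} with $w\ge mc$ to kill the integrand pointwise. The only cosmetic difference is that the paper carries out the symmetrization by replacing $\cyrL_c(\bphi_1)\cdots\cyrL_c(\bphi_r)$ with its symmetric average over block-size-respecting permutations (introducing and cancelling the multinomial coefficient explicitly), whereas you argue directly that each ordered partition contributes the same integral by coordinate permutation invariance; the two are the same calculation.
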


\begin{proof}
Denote by $L$ the left side in the lemma.
 By \eqref{lambda-def} and \eqref{M-def}, 
\[
L=\sum_r \frac{m^r}{r!} \mint{w=\xi_1+\cdots+\xi_r} \sum_{k_1,\ldots,k_r}
\prod_{i=1}^r \Bigg[ \frac{(-1)^{k_i}}{k_i!} \mint{\xi_i=\beta_{i,1}+\cdots+\beta_{i,k_i} \\
\eta \le \beta_{i,j}\; \forall j} \frac{\cyrL_{c}(\beta_{i,1},\ldots,\beta_{i,k_i})}{\beta_{i,1}\cdots \beta_{i,k_i}}\, d\bbeta_i  \Bigg] d\bxi.
\]
Now reorganize this, firstly fixing $k=k_1+\cdots+k_r$ and the vector of $k$
components $\beta_{i,j}$.  Also relabel these components as $\phi_1,\ldots,\phi_k$, where
\[
\phi_{k_1+\cdots+k_{i-1}+j} = \beta_{i,j} \qquad (1\le i\le r, 1\le j\le k_i)
\]
and let $\bphi_i = (\phi_{k_1+\cdots+k_{i-1}+1},\ldots,\phi_{k_1+\cdots+k_i})$
for $1\le i\le r$.  Then
\[
L = \sum_{k\ge 1} (-1)^k \mint{w=\phi_1+\cdots+\phi_k \\ \eta \le \phi_i\; \forall i} \frac{1}{\phi_1\cdots \phi_k} \sum_r \frac{m^r}{r!} \ssum{k_1+\cdots+k_r=k \\ k_i\ge 1\; \forall i} \frac{1}{k_1!\cdots k_r!} \cyrL_{c}(\bphi_1)\cdots \cyrL_{c}(\bphi_r)\; d\bphi.
\]
Since the region of integration is symmetric in all variables $\phi_i$ and
$\phi_1\cdots \phi_k$ is symmetric in the $\phi_i$, we may replace
$\cyrL_{c}(\bphi_1)\cdots \cyrL_{c}(\bphi_r)$ by its symmetric average
\[
\binom{k}{k_1\; k_2\; \cdots k_r}^{-1} \ssum{A_1 \sqcup \cdots \sqcup A_r=[k] \\ |A_i|=k_i\; \forall i} \cyrL_{c}(\bphi_{A_1})\cdots \cyrL_{c}(\bphi_{A_r}).
\]
This gives
\[
L = \sum_{k\ge 1} \frac{(-1)^k}{k!} \mint{w=\phi_1+\cdots+\phi_k \\ \eta \le \phi_i\; \forall i} \frac{1}{\phi_1\cdots \phi_k} \sum_{r\ge 1} \frac{m^r}{r!}
\sum_{A_1 \sqcup \cdots \sqcup A_r = [k]} \cyrL_{c}(\bphi_{A_1})\cdots \cyrL_{c}(\bphi_{A_r})
\,d \bphi,
\]
where we have dropped the condition $|A_i| \ge 1$ since $\cyrL_c(\emptyvec)=0$ by 
the definition \eqref{Linnik-fcn}.
Since $w\ge mc$, Lemma \ref{lem:t-ident} implies that the sum 
on $r$ equals zero.  Hence, $L=0$ as desired.
\end{proof}

\begin{lem}\label{lem:M-bounds}
For $\a>0$ we have
\be\label{M-eq}
M^{(c,\eta)}(\a) = - \a \sum_{j=1}^{\fl{\a/\eta}} \frac{1}{j} \;\;\;\; \mint{\a=\phi_1+\cdots+\phi_j \\ \eta \le \phi_i < c\; \forall i}\;\;
\prod_{m=1}^j \pfrac{-\rho'(\phi_m/\eta)}{\eta}\, d \bphi
\ee
Moreover, for $\alpha\ge \eta$
\be\label{M-ineq}
-1 \le M^{(c,\eta)}(\a) \le -1 + \frac{\a^2}{c\eta}\; \rho\( \frac{c}{\eta} - 1 \),
\ee
where $\rho$ is the Dickman function.
\end{lem}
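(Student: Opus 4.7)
The plan is to first prove the closed-form expression \eqref{M-eq} by direct manipulation of the defining series in \eqref{M-def}, and then deduce the bounds \eqref{M-ineq} by comparing $M^{(c,\eta)}$ with its ``$c=\infty$'' analog, for which $M^{(\infty,\eta)}(\alpha)=-1$ holds identically on $\alpha\ge\eta$.

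For \eqref{M-eq}, I would substitute the definition \eqref{Linnik-fcn} of $\cyrL_c(\bbeta)$ into \eqref{M-def} and swap the order of summation, grouping the variables $\beta_i$ according to the partition $\by_1\sqcup\cdots\sqcup\by_j=\bbeta$ dictated by $\cyrL_c$. Writing $\phi_m=|\by_m|$ and absorbing multinomial factors into $(-1)^k/k!$, the expression factorizes as
\[
M^{(c,\eta)}(\alpha)=\alpha\sum_{j\ge 1}\frac{(-1)^{j+1}}{j}\int_{\substack{\phi_1+\cdots+\phi_j=\alpha\\ \eta\le\phi_m<c}}\prod_{m=1}^{j}S(\phi_m)\,d\bphi,
\qquad
S(\phi):=\sum_{d\ge 1}\frac{(-1)^d}{d!}\int_{\substack{\beta_1+\cdots+\beta_d=\phi\\ \beta_i\ge\eta}}\frac{d\beta_1\cdots d\beta_{d-1}}{\beta_1\cdots\beta_d}.
\]
The crux is the identification $S(\phi)=\rho'(\phi/\eta)/\eta$ on $\phi\ge\eta$. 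Using $\phi=\sum\beta_i$ and symmetry, peeling one variable off each inner integral yields the integral equation $\phi S(\phi)=-1-\int_\eta^{\phi-\eta}S(t)\,dt$, equivalently $(\phi S(\phi))'=-S(\phi-\eta)$. The function $\rho'(\phi/\eta)/\eta$ satisfies the same relation by differentiating the Dickman equation $u\rho'(u)=-\rho(u-1)$, and both sides equal $-1/\phi$ on $[\eta,2\eta)$ (only $d=1$ contributes, and $\rho'(u)=-1/u$ for $u\in[1,2]$), so they coincide throughout $[\eta,\infty)$ by induction on the intervals $[m\eta,(m+1)\eta)$. Substituting $S=-P$ with $P(\phi):=-\rho'(\phi/\eta)/\eta\ge 0$ converts the alternating-sign product into a nonnegative one, producing \eqref{M-eq}; the sum truncates at $j=\lfloor\alpha/\eta\rfloor$ since each $\phi_m\ge\eta$.

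For \eqref{M-ineq}, applying the same derivation with $c$ replaced by $\infty$ (equivalently, using $\cyrL_\infty(\bbeta)=\one(\dim\bbeta=1)$ from Lemma \ref{lem:tc-basic}(b)) gives $M^{(\infty,\eta)}(\alpha)=-1$ and hence
\[
M^{(c,\eta)}(\alpha)+1=\alpha\sum_{j\ge 1}\frac{1}{j}\int_{\substack{\sum\phi_m=\alpha,\,\phi_m\ge\eta\\ \exists\,m:\,\phi_m\ge c}}\prod_m P(\phi_m)\,d\bphi\ge 0,
\]
which is the lower bound. For the upper bound, I apply a union bound on ``$\exists m:\phi_m\ge c$'' and use symmetry to reduce to $\alpha\sum_{j\ge 1}\int_{\phi_1\ge c,\,\phi_k\ge\eta\,(k\ge 2),\,\sum\phi_m=\alpha}\prod_m P(\phi_m)\,d\bphi$. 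The $j=1$ contribution is $\alpha P(\alpha)\one(\alpha\ge c)=\rho(\alpha/\eta-1)\le\rho(c/\eta-1)$ by monotonicity of $\rho$. For $j\ge 2$ I use the pointwise bound $P(\phi_1)\le\rho(c/\eta-1)/c$ on $[c,\infty)$ (from $P(\phi)=\rho(\phi/\eta-1)/\phi$) together with the elementary fact $\int_0^\infty V_k(s)\,ds=1$ for the $k$-fold convolution $V_k$ of $P$ on $[\eta,\infty)$ (since $\int_\eta^\infty P=1$); as $V_k$ is supported in $[k\eta,\infty)$, at most $\lfloor(\alpha-c)/\eta\rfloor$ indices $k$ contribute to $\int_0^{\alpha-c}\sum_k V_k(s)\,ds$. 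Combining yields
\[
M^{(c,\eta)}(\alpha)+1\le\rho(c/\eta-1)\Bigl(1+\tfrac{\alpha(\alpha-c)}{c\eta}\Bigr)\le\frac{\alpha^2\rho(c/\eta-1)}{c\eta},
\]
the last step using $\alpha\ge c\ge\eta$, which rearranges to $c\eta+\alpha^2-\alpha c\le\alpha^2$. The main obstacle is the delay-differential identification $S=\rho'(\cdot/\eta)/\eta$; the rest is careful bookkeeping with convolutions.
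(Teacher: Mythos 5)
Your proof is correct and follows essentially the same strategy as the paper's: for \eqref{M-eq} you substitute \eqref{Linnik-fcn} into \eqref{M-def}, regroup the $\beta$-variables by the partition dictated by $\cyrL_c$, and identify the inner factor with $-\rho'(\phi/\eta)/\eta$ via a delay-differential/integral equation (your equation $\phi S(\phi)=-1-\int_\eta^{\phi-\eta}S(t)\,dt$ is a rescaled form of the relation the paper establishes in Lemma \ref{Phi-star}), while for \eqref{M-ineq} you compare with the $c=\infty$ case where $M^{(\infty,\eta)}\equiv -1$, exactly as the paper does. The only minor departure is in removing the $1/j$ factor: you use a union bound on the event that some $\phi_m\ge c$, whereas the paper forces $\phi_j$ to be the largest component; both routes lead to the same final bound $\frac{\alpha^2}{c\eta}\,\rho(c/\eta-1)$.
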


The Dickman function is the unique function which satisfies (i) $\rho(u)=1$ for
$0\le u\le 1$, (ii) $\rho$ is continuous on $[0,\infty]$, (iii) $\rho$ is differentiable
 for $u>1$ and (iv) $\rho$ satisfies the differential-delay equation
  $u \rho'(u)=-\rho(u-1)$ for $u>1$.
It is known that $\rho$ is positive, decreasing and
has decay $\rho(u) \ll e^{-u\log u}$
(see, e.g. \cite[Chapter III.5]{Tenenbaum-book}),
thus the upper bound tends very rapidly to
-1 as $\eta\to 0$.

We also observe that the $j=1$ term in \eqref{M-eq} is 1 for $\eta\le \a<c$
and $0$ for $\a\ge c$ or $0<\a<\eta$, and for each
 $j\ge 2$, the multiple integral has bounded derivative on the interior
 of its support, namely for  $\alpha \in (j\eta,jc)$.  As $j$ is bounded, we see that
 $M(\a)$ is piecewise differentiable with a bounded derivative.

To prove Lemma \ref{lem:M-bounds}, we need an integral version of a result about summing the
Liouville function over rough integers (see, e.g. \cite[Lemma 12.2]{Opera}).

\begin{lem}\label{Phi-star}
For positive $\a,v$ satisfying $\a v \ge 1$, we have
\[
\sum_{k=1}^{\fl{\a v}} (-1)^k \;\;\; \mint{\a=\beta_1+\cdots+\beta_k \\ \frac1{v} \le \beta_1 \le \cdots \le \beta_k} \frac{d\bbeta}{\beta_1\cdots \beta_k}
= v\, \rho'(\a v) = - \frac{\rho(\a v-1)}{\a}.
\]
\end{lem}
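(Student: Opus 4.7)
The plan is to rescale to a normalized integral and then identify both sides with the derivative of the Dickman function through its classical integral representation.

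First I would perform the substitution $\phi_i = v\beta_i$ in each multiple integral on the left. Since the projection measure on the hyperplane $\beta_1+\cdots+\beta_k=\alpha$ transforms as $d\beta_1\cdots d\beta_{k-1} = v^{1-k}\,d\phi_1\cdots d\phi_{k-1}$, while the integrand $1/(\beta_1\cdots\beta_k)$ scales by $v^k$, each integral acquires an overall factor of $v$. Writing $u := \alpha v \ge 1$, this reduces the claim to the $v=1$ identity
\[
\sum_{k=1}^{\fl{u}} (-1)^k \mint{\phi_1+\cdots+\phi_k=u \\ 1 \le \phi_1 \le \cdots \le \phi_k} \frac{d\bphi}{\phi_1\cdots\phi_k} \;=\; \rho'(u).
\]

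Next I would invoke the classical simplex representation of the Dickman function,
\[
\rho(u) \;=\; \sum_{k=0}^{\infty} \frac{(-1)^k}{k!} \int_{\substack{t_i \ge 1 \\ t_1+\cdots+t_k \le u}} \frac{dt_1\cdots dt_k}{t_1\cdots t_k},
\]
which is standard (see, e.g., Tenenbaum, Ch.~III.5). For a fixed $u$ only the terms with $k \le \fl{u}$ contribute, so differentiation in $u$ may be carried out termwise. Differentiating the $k$-th simplex integral in $u$ replaces the inequality $t_1+\cdots+t_k\le u$ by the hyperplane $t_1+\cdots+t_k = u$, giving the unordered hyperplane integral of $1/(t_1\cdots t_k)$; by symmetry this equals $k!$ times its ordered version. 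The $k!$ then cancels the $1/k!$ from the representation of $\rho$, yielding precisely the reduced identity above.

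Finally, the second equality $v\rho'(\alpha v) = -\rho(\alpha v - 1)/\alpha$ is immediate from the delay-differential equation $u\rho'(u) = -\rho(u-1)$ after substituting $u = \alpha v$.

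The main obstacle here is really just notational bookkeeping: tracking the overall factor of $v$ from the Jacobian, and matching the $k!$-factor from symmetrization with the $1/k!$ coming from the Taylor-type representation of $\rho$. There is no genuine analytic difficulty, since the series for $\rho(u)$ has only finitely many nonzero terms at any given $u$, and the termwise differentiation amounts to differentiating a smooth, compactly supported volume integral in the upper endpoint.
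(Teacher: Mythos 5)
Your proof is correct, and it takes a genuinely different route from the paper. The paper works entirely from first principles: it defines the normalized quantity $F(u)$ (your left side at $v=1$, $\alpha=1$, with a sign), establishes the initial condition $F(u)=1$ for $1\le u\le 2$, and then, by peeling off the smallest variable $\beta_1$ and rescaling the remaining ones, derives the delay-differential equation $F'(u)=-F(u-1)/(u-1)$ directly; uniqueness then gives $F(u)=\rho(u-1)$. You instead begin from the known simplex representation $\rho(u)=\sum_{k\ge 0}\frac{(-1)^k}{k!}\int_{t_i\ge 1,\,\sum t_i\le u}\frac{dt}{t_1\cdots t_k}$ and differentiate it termwise, so that each volume integral becomes a hyperplane integral and the symmetrization factor $k!$ cancels the $1/k!$. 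This is shorter but imports the simplex formula as an external input, whereas the paper's argument is self-contained and in effect \emph{reproves} that representation along the way. Both approaches are sound; the two pieces that deserve explicit mention in a full write-up are (i) the termwise differentiation is justified because the $k$-th simplex integral vanishes to order $k$ at $u=k$ (so the partial sums are $C^1$ across integers and only finitely many terms contribute for any fixed $u$), and (ii) the Jacobian bookkeeping under $\phi_i=v\beta_i$, which you have already done correctly: the $v^{1-k}$ from the projection measure against the $v^k$ from the integrand leaves exactly one factor of $v$.
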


\begin{proof}
Let
\[
F(u) = - \sum_{k\ge 1} (-1)^k \mint{1=\beta_1+\cdots+\beta_k \\ \frac{1}{u} \le \beta_1 \le \cdots \le \beta_k} \frac{d\bbeta}{\beta_1\cdots \beta_k}.
\]
The sum is finite, as the multiple integral is zero for $k>u$.  Also, it is clear
that $F(u)$ is continuous for $u\ge 1$, differentiable for $u>a$ and that
$F(u)=1$ for $1\le u\le 2$.  When $u> 2$, 
\begin{align*}
F(u) &= 1 - \sum_{2\le k\le u} (-1)^k \int_{1/u}^{1/k} \frac{1}{\beta_1} \;\;
\mint{1-\beta_1=\beta_2+\cdots+\beta_k \\ \beta_1\le \beta_2\le \cdots \le \beta_k} \frac{1}{\beta_2\cdots \beta_k}\, d(\beta_2,\ldots,\beta_k)\, d\beta_1.
\end{align*}
Making the change of variables $\beta_j=(1-\beta_1)\beta_j'$ for $j\ge 2$, we obtain
\begin{align*}
F(u)&=1 - \sum_{2\le k\le u} (-1)^k \int_{1/u}^{1/k} \frac{1}{\beta_1(1-\beta_1)} 
\mint{1=\beta_2'+\cdots+\beta_k' \\ \frac{\beta_1}{1-\beta_1}\le \beta_2'\le \cdots \le \b'_k} \frac{1}{\b'_2\cdots \b'_k}\, d\bbeta'\, d\beta_1 \\
&= 1 - \int_{1/u}^{1/2} \frac{1}{\beta_1(1-\beta_1)} F\pfrac{1-\beta_1}{\beta_1}\, d\beta_1.
\end{align*}
Differentiating this gives
\[
F'(u) = \frac{-1/u^2}{(1/u)(1-1/u)} F\pfrac{1-1/u}{1/u} = -\frac{F(u-1)}{u-1}.
\]
Together with the initial conditions $F(u)=1$ for $1\le u\le 2$ and the fact
that $F$ is continuous, this differential-delay equation uniquely determines $F$.
Comparing with the differential-delay equation for the Dickman function, we
conclude that $F(u)=\rho(u-1)$ for $u\ge 1$.  This proves the desired formula
when $\a=1$.  The formula for general $\a$ follows by making the change 
of variables $\beta_j = \a \beta_j'$ for $1\le j\le k$.
\end{proof}

\begin{proof}[Proof of Lemma \ref{lem:M-bounds}]
Recall the definition \eqref{M-def} of $M^{(c,\eta)}(\a)$.  Inserting
the definition \eqref{Linnik-fcn} of $\cyrL_{c}(\bbeta)$, we get
\begin{align*}
M^{(c,\eta)}(\a) &= \alpha \sum_{j=1}^{\fl{\alpha/\eta}}  \frac{(-1)^{j+1}}{j} \sum_{k=j}^{\fl{\alpha/\eta}} \frac{(-1)^k}{k!}
\ssum{A_1 \sqcup \cdots \sqcup A_j = [k] \\ k_m := |A_m| \ge 1 \; \forall m} \;\;\; \mint{\a=\beta_1+\cdots + \beta_k \\ \eta \le 
\beta_i < c\; \forall i \\ \eta \le |\bbeta_{A_i}| < c\; \forall i} \frac{1}{\beta_1\cdots \beta_k} \, d\bbeta.
\end{align*}
The multiple integral depends only on $k_1,\ldots,k_j$, and with $k_1,\ldots,k_j$ fixed there are $\binom{k}{k_1\; k_2 \; \cdots k_j}$
choices for $A_1,\ldots,A_j$.  Also
\[
\frac{(-1)^{k+j}}{k!} \binom{k}{k_1\; k_2 \; \cdots k_m} = \prod_{m=1}^j \frac{(-1)^{k_m+1}}{k_m!}.
\]
Let $\phi_j = |\bbeta_{A_j}|$, so that $\a = \phi_1+\cdots + \phi_j$.  Then
\[
M^{(c,\eta)}(\a) = - \alpha \sum_{j=1}^{\fl{\alpha/\eta}}   \frac{1}{j}  \;\;\; \mint{\a=\phi_1+\cdots+\phi_j \\ \eta \le \phi_i < c\; \forall i}
\;\;\; \prod_{m=1}^j \Bigg[ \sum_{k_m\ge 1} (-1)^{k_m+1} \;\; \mint{\phi_m=\beta_{m,1}+\cdots+\beta_{m,k_m} \\ \eta \le \beta_{m,1}\le \cdots \le \beta_{m,k_m}}
\;\; \frac{d\bbeta_m}{\beta_{m,1}\cdots \beta_{m,k_m}} \Bigg]\, d\bphi.
\]
Here the factor $1/k_m!$ has been removed by imposing the ordering $\beta_{m,1}\le \cdots \le \beta_{m,k_m}$.
By Lemma \ref{Phi-star}, the expression in brackets equals $-\rho'(\phi_m/\eta)/\eta$ and this completes the proof of \eqref{M-eq}.

To show \eqref{M-ineq},
 we may assume that $\a \ge c$ by \eqref{M-tiny}.
 We will first show that relaxing the conditions $\phi_i < c$ makes only a small change to 
right side of \eqref{M-eq}.  Define
\[
M^*(\a) =  \a \sum_{k\ge 1}\frac{(-1)^k}{k!} 
\mint{\a=\beta_1+\cdots+\beta_k \\ \eta \le \beta_i \; \forall i}
\frac{\cyrL_{\infty}(\bbeta)}{\beta_1\cdots \beta_k}\, d\bbeta.
\]
On the one hand, by Lemma \ref{lem:tc-basic} (b), $M^*(\a)=-1$.  Following the
above proof leading to \eqref{M-eq}, we also see that
\[
M^*(\a) = - \a \sum_{j=1}^{\fl{\a/\eta}} \frac{1}{j} \;\;\;\; \mint{\a=\phi_1+\cdots+\phi_j \\ \eta \le \phi_i \; \forall i}\;\;
\prod_{m=1}^j \pfrac{-\rho'(\phi_m/\eta)}{\eta}\, d\bphi.
\]
Moreover, by forcing $\phi_j$ to be the largest, we remove the factor $1/j$ appearing
above and in \eqref{M-eq}, and therefore
\be\label{MM}
\begin{split}
1+M^{(c,\eta)}(\a) &= M^{(c,\eta)}(\a)-M^*(\a)\\
&= \a \sum_{j=1}^{\fl{\a/\eta}}  \;\;\;\; \mint{\a=\phi_1+\cdots+\phi_j \\ \eta \le \phi_i \le \phi_j \; \forall i \\ \phi_j \ge c}\;\;
\prod_{m=1}^j \pfrac{-\rho'(\phi_m/\eta)}{\eta}\, d\bphi.
\end{split}
\end{equation}
Since $\rho(u)$ is decreasing,  the right side of \eqref{MM} is non-negative
and also
\[
\frac{-\rho'(\phi_j/\eta)}{\eta} = \frac{\rho(\phi_j/\eta-1)}{\phi_j}
\le \frac{\rho(c/\eta-1)}{c}. 
\]
Therefore,
\[
0 \le M^{(c,\eta)}(\a) +1 \le   \frac{\a \rho(\frac{c}{\eta}-1)}{c}  \sum_{j=1}^{\fl{\a/\eta}}
\Bigg[ \int_\eta^\infty \frac{-\rho'(\phi/\eta)}{\eta}\, d\phi \Bigg]^{j-1}.
\]
The integral on the right side equals $\rho(1)=1$ and this completes the proof
of \eqref{M-ineq}.
\end{proof}

\begin{lem}\label{lem:lambda-Fdelgam}
Suppose $m\in \NN$, $0<\eta < 1-\gamma$, and $0<c \le \frac{1-\gamma}{m}$.
For $\ssc{x}{1},\ldots,\ssc{x}{k}\ge \eta$ with sum 1, define 
\[
f(\bx) = m^k \tl^{(c,\eta)}(\bx).
\]
Then $f\in \sF_{\eta}^*(\gamma)$.
\end{lem}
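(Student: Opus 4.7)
The plan is to verify that $f$ satisfies each of the three defining properties of $\sF_\eta^*(\gamma)$: the support/boundedness axiom (a), the piecewise Lipschitz structure (b), and the integral-vanishing relation \eqref{TypeI-f}. Symmetry is immediate from $\tl^{(c,\eta)}(\bxi) = \prod_i M^{(c,\eta)}(\xi_i)$, and the support axiom follows because \eqref{M-tiny} gives $M^{(c,\eta)}(\alpha)=0$ for $\alpha<\eta$; any vector with positive components summing to $1$ lies in $\cC(\cR^*(\gamma))$ since each component can be fragmented into pieces of size less than $1-\gamma$. For boundedness, Lemma \ref{lem:M-bounds} (or the formula \eqref{M-eq}) gives $|M^{(c,\eta)}(\alpha)|\le C_0(c,\eta)$ on $[\eta,1]$, and since the support forces $k\le 1/\eta$, one has $|f(\bx)|\le (mC_0)^{1/\eta}$.

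For condition (b), I would read off from \eqref{M-eq} that $M^{(c,\eta)}(\alpha)$, for each fixed $j\in\{1,\dots,\lfloor 1/\eta\rfloor\}$, is a smooth function of $\alpha$ away from the breakpoints $\alpha=j\eta,\,(j+1)\eta,\,jc$ coming from the boundary of the simplex $\{\bphi:\phi_i\in[\eta,c),\,\sum\phi_i=\alpha\}$ and from the jumps of $\rho'$. Since each factor $M^{(c,\eta)}(x_i)$ depends on a single coordinate, the product $\tl^{(c,\eta)}(\bx)$ is Lipschitz on each cell of the partition of the simplex $\{\bx:x_i\ge\eta,\sum x_i=1\}$ obtained by slicing with the hyperplanes $x_i=j\eta$ and $x_i=jc$ for $j\in\NN$; these cells are convex polytopes. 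Since $k\le 1/\eta$, there are only finitely many such polytopes across all $k$, giving the required finite-sum representation.

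The heart of the proof is condition (c), which I would obtain by exploiting the complete multiplicativity of $\tl^{(c,\eta)}$ and invoking Lemma \ref{lem:TypeI}. Fix $r\ge 0$ and $\xi_1,\dots,\xi_r$ with $\xi_1+\cdots+\xi_r\le \gamma$, and set $w=1-\xi_1-\cdots-\xi_r\ge 1-\gamma$. Using $\tl^{(c,\eta)}(\bxi_1,\bxi_2)=\tl^{(c,\eta)}(\bxi_1)\tl^{(c,\eta)}(\bxi_2)$ and symmetrizing (removing the ordering $\xi_{r+1}<\cdots<\xi_k$ at the cost of a factor $1/s!$ with $s=k-r$), the integrand in \eqref{TypeI-f} factors as
\[
m^r\tl^{(c,\eta)}(\xi_1,\dots,\xi_r)\sum_{s\ge 1}\frac{m^s}{s!}\mint{\beta_1+\cdots+\beta_s=w\\ \eta\le\beta_i\,\forall i}\frac{\tl^{(c,\eta)}(\bbeta)}{\beta_1\cdots\beta_s}\,d\bbeta,
\]
where the constraint $\eta\le\beta_i$ is automatic from the support of $\tl^{(c,\eta)}$. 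By hypothesis $c\le (1-\gamma)/m$, so $w\ge 1-\gamma\ge mc$, and Lemma \ref{lem:TypeI} forces the inner sum to vanish.

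The main obstacle is (b): making precise that $M^{(c,\eta)}$ is indeed piecewise Lipschitz with only finitely many pieces and that the product structure transfers cleanly to the hyperplane $\sum x_i=1$. This is essentially a bookkeeping exercise using \eqref{M-eq} and the smoothness of $\rho'$ away from integers, but it requires carefully enumerating the breakpoints. The rest (symmetry, support, boundedness, and the crucial identity \eqref{TypeI-f}) is immediate from the preceding lemmas.
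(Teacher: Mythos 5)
Your proposal is correct and follows the same route as the paper: verify symmetry, support, and boundedness directly from the product structure and Lemma \ref{lem:M-bounds}, then establish the key relation \eqref{TypeI-f} by factoring out $m^r\tl^{(c,\eta)}(\xi_1,\ldots,\xi_r)$ via complete multiplicativity and invoking Lemma \ref{lem:TypeI} with $w=1-g\ge 1-\gamma\ge mc$. You give a bit more explicit detail on condition (b) than the paper, which simply cites the piecewise differentiability of $M^{(c,\eta)}$ noted after Lemma \ref{lem:M-bounds}; this is the same bookkeeping the paper leaves implicit.
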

We recall the set $\sF_\eta^*(\gamma)$ is defined at the beginning of the section a the `$\nu=0$' analolg of $\sF_\eta$ from Definition \ref{defn:FetaP}.
\begin{proof}
By \eqref{lambda-def}, $f(\ssc{x}{1},\ldots,\ssc{x}{k})$ is symmetric in $\ssc{x}{1},\ldots,\ssc{x}{k}$
and supported on vectors with $\ssc{x}i\ge \eta$ for all $i$.
Also, since $M^{(c,\eta)}$ is piecewise differentiable, so is $f$.
By \eqref{M-ineq}, $f$ is bounded.
If we fix $\xi_1,\ldots,\xi_r \ge \eta$ with sum $g\le \gamma \le 1-mc$,
then
\[
\sum_{k\ge 1} \frac{1}{k!} \;\;\; \mint{\xi_{r+1}+\cdots+\xi_{r+k}=1-g \\ \eta \le \xi_i\; \forall i} \frac{m^{k+r} \tl^{(c,\eta)}(\bxi)}{\xi_{r+1}\cdots \xi_{r+k}}\, 
d (\xi_{r+1},\ldots, d\xi_{r+k}) = m^r \tl^{(c,\eta)}(\xi_1,\ldots,\xi_r) Z,
\]
where, by Lemma \ref{lem:TypeI} and $1-g \ge mc$ we have
\[
Z := \sum_{k\ge 1} \frac{m^k}{k!} \mint{\xi_{r+1}+\cdots+\xi_{r+k}=1-g \\
\eta \le \xi_i\, \forall i} \frac{\tl^{(c,\eta)}(\xi_{r+1},\ldots,\xi_{r+k})}{\xi_{r+1}\cdots \xi_{r+k}}\,d (\xi_{r+1},\ldots, d\xi_{r+k})  = 0.
\]
Thus, \eqref{TypeI-f} holds and therefore, $f\in \sF_{\eta}^*(\gamma)$.
\end{proof}

%%%%%%%%%%%%%%%%%%%%%%%%%%%%%%%%%%%%%%%%%%%%%%%%%%%%%%%%%
%
%
\subsection{The proof of Theorem \ref{thm:fsl}}
%
%
%%%%%%%%%%%%%%%%%%%%%%%%%%%%%%%%%%%%%%%%%%%%%%%%%%%%%%%%%

Fix $1/2<\gamma<1$ and suppose that
\[
c = \frac{1-\gamma}{2}, \qquad 0 < \eps < c,
\]
with $\eps$ small enough so that
\[
\frac{1}{c\eps} \rho\( \frac{c}{\eps} -1 \) \le \frac14.
\]
Such $\eps$ exists by the rapid decay $\rho(u) \ll e^{-u\log u}$.
 By \eqref{M-ineq}, for all $\a>0$,
\be\label{Mac-ineq}
-1 \le M^{(c,\eps)}(\a) \le -\frac{3}{4}\one(\a\ge \epsilon).
\ee
For $\bx=(\ssc{x}{1},\ldots,\ssc{x}{k})$ with sum 1 and each $\ssc{x}i\ge \eps$, define
\[
g(\bx) = \big( 1 - 2^{k-3} \big) \tl^{(c,\eps)}(\bx).
\]
Two applications of Lemma \ref{lem:lambda-Fdelgam}, one with $m=1$ and the other with $m=2$, shows that $g\in \sF_{\eps}^*(\gamma)$.
Note also that $g$ is supported on vectors of dimension $\le 1/\eps$.
Let
\[
\ssc{g}{0} = 2^{1/\eps}
\]
so that for all $\bx$, $|g(\bx)|\le \ssc{g}{0}$.
Using \eqref{Mac-ineq}, we have
\be\label{g1}
g(1) = (3/4)M^{(c,\eps)}(1) \le -\frac9{16} < -\frac12.
\ee
Furthermore, when $\ell\ge 3$ is odd, \eqref{Mac-ineq} implies 
\begin{equation}\label{gl}
\begin{split}
g(\ssc{x}{1},\ldots,\ssc{x}\ell) &= \big(1 - 2^{\ell-3} \big) M^{(c,\eps)}(\ssc{x}{1})\cdots M^{(c,\eps)}(\ssc{x}\ell)\\
&\ge \big(2^{\ell-3}-1\big) (3/4)^{\ell}\one(\ssc{x}{1},\dots,\ssc{x}\ell\ge \epsilon) \ge 0.
\end{split}
\end{equation}

For $0<\eta < \eps$,  define
\[
\kappa(\eta) := \frac{1}{(1-\gamma) \eta} \;\rho\( \frac{1-\gamma}{\eta}-1 \).
\]
Since $\kappa(\eta)\to 0$ as $\eta\to 0$, there is a choice of $\eta\in(0,\epsilon)$ so that
$\kappa(\eta) \le 1/(4\ssc{g}{0}).$
In this way, by Lemma \ref{lem:M-bounds}, for any $\eta\le \a \le 1$,
\be\label{deleps}
-1 \le M^{(1-\gamma,\eta)}(\a) \le -1 + \kappa(\eta) \le -1 + \frac{1}{4\ssc{g}{0}}.
\ee

Now define
\[
f(\bbeta) = \tl^{(1-\gamma,\eta)}(\bbeta) + \frac{g(\bbeta)}{\ssc{g}{0}}.
\]
Since $\eta<\eps$, $\sF^*_\eps(\gamma)\subseteq \sF^*_\eta(\gamma)$.
Thus, by Lemma \ref{lem:lambda-Fdelgam} with $m=1$, $f\in \sF^*_{\eta}(\gamma)$.
By \eqref{g1} and \eqref{deleps},
\[
f(1) = M^{(1-\gamma,\eta)}(1) + \frac{g(1)}{\ssc{g}{0}} \le -1 + \frac{1}{4\ssc{g}{0}} - \frac{1}{2\ssc{g}{0}} = -1 - \frac{1}{4\ssc{g}{0}} < -1.
\]
Suppose $k\ge 2$ and $\bbeta=(\beta_1,\ldots,\beta_k)$.  If $\beta_i<\eta$ for some $i$ then $\tilde{\lambda}^{(1-\gamma,\eta)}(\bbeta)=g(\bbeta)=0$, so we may assume that $\beta_i\ge \eta$ for all $i$. If $k$ is even,
\eqref{deleps} implies that
 \[
 f(\bbeta) \ge 0 + \frac{g(\bbeta)}{\ssc{g}{0}} \ge -1,
 \]
 while if $k$ is odd, \eqref{gl} and \eqref{deleps} imply that
 \[
 f(\bbeta) \ge -1 + \frac{g(\bbeta)}{\ssc{g}{0}} \ge  -1.
 \]
Therefore, the hypotheses of Theorem \ref{thm:fsl} are satisfied, and this 
completes the proof. \qed

\bigskip
%\newpage
%%%%%%%%%%%%%%%%%%%%%%%%%%%%%%%%%%%%%%%%%%%%%%%%%%%%%%%%%%%%%%%
%
%
%
%
%
%    Asymptotic
{\Large \section{Asymptotic for primes}\label{sec:asymptotic}}
%
%
%
%
%
%
%%%%%%%%%%%%%%%%%%%%%%%%%%%%%%%%%%%%%%%%%%%%%%%%%%%%%%%%%%%%%%%

In this section we prove Theorems \ref{thm:asymptotic-R1}
and \ref{thm:asymptotic-R1-A1A2}, the former being straightforward and
the latter requiring lengthy case-by-case analysis.
Together, these establish Theorem \ref{thm:asymptotic}.

Recall that $\cR$ is the set of vectors, of arbitrary dimension, which
have sum of components 1, all components in $(0,1-\gamma)$ and no subsum
in $[\theta,\theta+\nu]$.
  Recall also that $M=\fl{1/(1-\gamma)}$, so that
\begin{equation}\label{Mdef}
\frac{1}{M+1} < 1-\gamma \le \frac{1}{M}, \qquad M\ge 2.
\end{equation}

\medskip

\subsection{Proof of Theorem \ref{thm:asymptotic-R1} when $\cR$ is empty}
\label{sec:sieve_nu>1-gamma}

We will show that for any $A > \varpi$, if $B$ is large enough as a function
of $A,\gamma,\theta,\nu$ and $(w_n)$ satisfies \eqref{w},
\eqref{eq:TypeI} and \eqref{eq:TypeII}, then
\begin{equation}\label{wp-goal-large-nu}
\sum_p w_p \ll \frac{x}{(\log x)^{A}}.
\end{equation}

First, assume that $\nu\ge 1-\gamma$ ($\cR$ is always empty with this condition).
We begin with an application of Lemma \ref{lem:PrimeSplit} with
$\psi_{u,v} = \one_{u=v=1}$, $k=1$, $\cU=\{1\}$, $g(1)=1$, $\sigma=1/2$, $\cT_{1,1}=[\frac12,1]$, 
$D=\varpi+A$, $M=\fl{(\log x)^D}$ and $\ell=6\cl{ 1/(1-\gamma) }$.  This gives
\[
\sum_p w_p \ll \frac{x}{(\log x)^A} + (\log x)^{\ell+3D+6}
\sup_{\substack{\beta_1,\ldots,\beta_\ell \\ \text{1-bounded}}}
|S_{\bbeta}|, 
\]
where
\[
S_{\bbeta}:= \ssum{n=d_1\cdots d_{\ell}\sim x \\
 d_i < n/x^{\gamma}\, \forall i} w_n
\beta_1(d_1)\cdots \beta_{\ell}(d_{\ell}).
\]

Now $d_i\le x^{1-\gamma} \le x^{\nu}$ for all $i$, and it follows that
for a unique $k\le \ell$ we have $d_1\cdots d_{k-1} \le (x/2)^{\theta}$ and  $d_1\cdots d_k \in ((x/2)^\theta,x^{\theta+\nu}]$.  Then
\[
S_{\bbeta} =  \sum_{k=1}^{\ell}  \ssum{n = d_1\cdots,d_{\ell} \sim x \\ 
d_i < n/x^{\gamma} \; (1\le i\le \ell) \\ 
d_1\cdots d_{k-1} \le (x/2)^{\theta} \\ 
d_1\cdots d_k \in ((x/2)^\theta,x^{\theta+\nu}]} w_{n} \beta_1(d_1)\cdots \beta_{\ell}(d_{\ell}).
\]
The condition $d_i<n/x^{\gamma}$ is equivalent to $n/d_i > \fl{x^{\gamma}}+\frac12$,
and $d_1\cdots d_{k-1}\le (x/2)^\theta$ is equivalent to $d_1\cdots d_{k-1} < \fl{(x/2)^\theta}+\frac12$.
Thus, after $\ell+1$ successive applications of Lemma \ref{lem:Integration},
we see that
\begin{equation}\label{eq:nu>1-gamma-1}
\sup_{\substack{\beta_1,\ldots,\beta_\ell \\ \text{1-bounded}}}
|S_{\bbeta}| \ll (\log x)^{\ell+1} \sup_{\substack{\beta_1,\ldots,\beta_\ell \\ \text{1-bounded}}} \bigg|
\sum_{k=1}^{\ell}  \ssum{n = d_1\cdots,d_{\ell} \sim x \\ 
d_1\cdots d_k \in ((x/2)^\theta,x^{\theta+\nu}]} w_{n} \beta_1(d_1)\cdots \beta_{\ell}(d_{\ell})\bigg|.
\end{equation}
Writing $\ssc{e}{1}=d_1\cdots d_k$, $\ssc{e}{2}=d_{k+1}\cdots d_\ell$ and
\[
E_1(\ssc{e}{1}) := \sum_{\ssc{e}{1}=d_1\cdots d_k} \beta_1(d_1)\cdots \beta_k(d_k),
\qquad E_2(\ssc{e}{2}) := \sum_{\ssc{e}{2}=d_{k+1}\cdots d_\ell} \beta_{k+1}(d_{k+1})\cdots \beta_\ell(d_\ell),
\]
we have that the inner sum in the right side of \eqref{eq:nu>1-gamma-1} is
\[
\ssum{n=\ssc{e}{1}\ssc{e}{2}\sim x \\ \ssc{e}{1}\in((x/2)^\theta,x^{\theta+\nu}]}
w_n E_1(\ssc{e}{1})E_2(\ssc{e}{2}).
\]
By Lemma \ref{lem:tauk-tau}, for $j=1$ and $j=2$, $|E_j(\ssc{e}{j})|\le \tau_k(\ssc{e}{j})\le \tau(\ssc{e}{j})^{k-1}$.  Hence, by the Type II bound \eqref{eq:TypeII},
if $B\ge \max(k-1,2\ell+3D+7+A)$ we have
\[
S_{\bbeta} \ll \frac{x}{(\log x)^{\ell+3D+6+A}}.
\]
This establishes \eqref{wp-goal-large-nu} when $\nu\ge 1-\gamma$.

Next, assume that $\nu<1-\gamma$ and $\cR$ is empty.
We apply Proposition \ref{prop:general} with
$g(\emptyvec)=1$, $g(\bx)=0$ for $\bx\ne \emptyvec$, $\lambda(1)=1$ and $\lambda(d)=0$ for $d\ne 1$,
and with $\sigma=\nu$.  Then $H(n)=1$ for all $n$
and $\lambda(n)$ is splittable with respect to $L$ for any $L$.  Then
\[
\sum_{n\text{ not prime}} w_n \ll \frac{x}{(\log x)^A}.
\]
But \eqref{eq:TypeI} implies that $|\sum_n w_n| \le x(\log x)^{-B}$, so that
if $B\ge A$ we conclude \eqref{wp-goal-large-nu} when $\nu<1-\gamma$ too. This completes the proof of Theorem  \ref{thm:asymptotic-R1} when $\cR$ is empty.

\medskip

\subsection{Proof of Theorem \ref{thm:asymptotic-R1} when $\cR$ is nonempty}\label{sec:asymptotic-R1}
Let $P=(\gamma,\theta,\nu) \in \cQ$ and $\cR = \cR(P)$.
We must show that if $\cR\ne \emptyset$ then $C^-(P)<1<C^+(P)$.
We first need a positive measure subset of $\cR$ 
with $\cyrL_{1-\gamma}(\bx)\ne 0$.  This is accomplished with the next two lemmas.

%\medskip

\begin{lem}\label{lem:combine}
Fix $\rho>0$.
Given any $\bx$ with $0\le \ssc{x}i < \rho$ for all $i$, 
there is a coagulation $\by$ of $\bx$ which satisfies
$y_i<\rho$ for all $i$ and $y_i+y_j \ge \rho$ for all $i\ne j$.
\end{lem}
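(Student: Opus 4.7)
My plan is to prove this by a greedy merging algorithm. Starting from $\by = \bx$, I would repeatedly combine two components whose sum is still below $\rho$, and argue the process terminates in a state with the desired property.

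More precisely: initialize $\by := \bx$. At each step, if there exist indices $i \ne j$ with $y_i + y_j < \rho$, replace the two components $y_i, y_j$ by the single component $y_i + y_j$. This is a legal step in two senses. First, it produces a vector which is itself a coagulation of the previous $\by$; since the composition of coagulations is a coagulation (any decomposition of a coagulation of $\bx$ lifts to a decomposition of $\bx$ by refining each part), the new vector remains a coagulation of the original $\bx$. Second, the merged entry $y_i+y_j$ still lies in $[0,\rho)$ by the very hypothesis $y_i+y_j<\rho$ that triggered the merge, so the invariant ``all components of $\by$ are in $[0,\rho)$'' is preserved.

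Termination is immediate because each step strictly decreases $\dim(\by)$, and $\dim(\by)\ge 0$. When the procedure halts, no two distinct components sum to less than $\rho$, so $y_i + y_j \ge \rho$ for all $i\ne j$, which together with the preserved upper bound $y_i<\rho$ gives exactly the conclusion. The degenerate cases $\dim(\bx)\le 1$ are handled automatically, since then no pair $(i,j)$ with $i\ne j$ exists and $\by := \bx$ works vacuously.

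I don't anticipate any serious obstacle here: the whole content is the observation that merging a pair with sum $<\rho$ does not break the strict upper bound $<\rho$, so the greedy process is monotone and finite. The only point that deserves a sentence of care is that ``coagulation'' is transitive, so iterated merges still produce a coagulation of the original $\bx$ rather than just of the intermediate vector.
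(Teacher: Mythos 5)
Your proof is correct and is essentially identical to the paper's: both run the same greedy merge (combine any pair with sum below $\rho$, repeat until no such pair remains), with the paper stating it in one sentence and you supplying the routine but correct bookkeeping about termination, preservation of the $[0,\rho)$ bound, and transitivity of coagulation.
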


\begin{proof}
If  $i\ne j$ and $\ssc{x}i+\ssc{x}j < \rho$,
replace the two components $\ssc{x}i,\ssc{x}j$ with the singleton $\ssc{x}i+\ssc{x}j$, and repeat this process
until there are no more pairs $(i,j)$ with $\ssc{x}i+\ssc{x}j<\rho$.
\end{proof}

\begin{lem}\label{lem:R1-nbhd-Q}
Assume that $\cR$ is nonempty.
Define $\cT_k = \{ \bx\in \RR^k : |\bx|=1 \}$ for each $k$.
There is a $k\ge 3$, $\bz\in \cR\cap \cT_k$ and $\eta>0$ so that
the set $\cD =\{ \bu\in \cT_k : |u_i-z_i| \le \eta\; (1\le i\le k) \}$
lies in $\cR$ and every element $\bu \in \cD$ 
satisfies the following:
\begin{enumerate}
\item $\eta < u_1<\cdots<u_k < 1-\gamma-\eta$;
\item for all $i\ne j$, $u_i+u_j > 1-\gamma+\eta$;
\item all subsums of $\bu$ avoid $[\theta-\eta,\theta+\nu+\eta]$;
\item $\cyrL_{1-\gamma}(\bu)=(-1)^{k-1}(k-1)!$.
\end{enumerate}
\end{lem}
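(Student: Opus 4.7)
The plan is to start from an arbitrary $\bx \in \cR$ and massage it into a well-behaved vector $\bz$ for which Lemma \ref{lem:tc} applies throughout a neighborhood. First I would apply Lemma \ref{lem:combine} with $\rho = 1-\gamma$ to produce a coagulation $\by$ of $\bx$ satisfying $y_i < 1-\gamma$ and $y_i + y_j \ge 1-\gamma$ for all $i \ne j$. Since every subsum of $\by$ is a subsum of $\bx$, the vector $\by$ inherits the no-proper-subsum-in-$[\theta,\theta+\nu]$ condition and hence lies in $\cR$. The ambient hypothesis $P \in \cQ$ forces $\gamma \ge 1/2$, so each $y_i < 1/2$ and $k := \dim \by \ge 3$.

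Next I would modify $\by$ to obtain $\bz \in \cR$ with distinct components and a \emph{strictly} positive pair-sum excess $z_i + z_j - (1-\gamma) > 0$. Because $\cR \cap \RR^k$ is relatively open in the affine hyperplane $|\bu|=1$ (by Lemma \ref{lem:CR1-union-polytopes}), a small generic perturbation of $\by$ stays in $\cR$, separates the components, and preserves any pair sum already strictly $>1-\gamma$. The subtle case arises when $\by$ has tight pair sums $y_i + y_j = 1-\gamma$: one then needs a perturbation direction $\bdelta$ with $\sum \delta_i = 0$ and $\delta_i + \delta_j > 0$ on every tight pair. A linear-programming/graph-theoretic analysis of the tight-pair graph $E$ shows such a direction exists unless $E$ is regular (for instance the complete graph), which together with $|\by|=1$ and $y_i + y_j \ge 1-\gamma$ for all pairs forces the rigid uniform configuration $y_i \equiv (1-\gamma)/2$ and $k = 2/(1-\gamma)$. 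In that rigid case the pigeonhole identity $k-1 = \sum_{i<j}(u_i + u_j) > \binom{k}{2}(1-\gamma)$ forces strict pair sums to require $\dim \bz < 2/(1-\gamma)$, so one constructs $\bz$ in $\cR$ at the lower dimension by a direct perturbation of an appropriate near-uniform vector, relying on the positive gap between the subsums of the uniform vector and the Type II interval.

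Having secured $\bz$, I would choose $\eta > 0$ smaller than, say, one quarter of the minimum of $z_1$, $1-\gamma-z_k$, the consecutive gaps $z_{i+1}-z_i$, the margins $(z_i+z_j)-(1-\gamma)$, and the distances from each of the $2^k-2$ proper subsums of $\bz$ to the closed interval $[\theta,\theta+\nu]$. Each of these quantities is strictly positive (a finite collection of strict inequalities), so such an $\eta$ exists. Conditions (1), (2), (3) then hold with the stated slack uniformly for every $\bu$ in the box neighborhood $\cD \subset \cT_k$, and in particular $\cD \subset \cR$. Condition (4) is a direct consequence of Lemma \ref{lem:tc} applied with $c = 1-\gamma$, $\ell = k$, and no $\xi$-variables (i.e.\ the parameter called $k$ in Lemma \ref{lem:tc} set to $0$): the output $(-1)^{\ell+1}(\ell-1)! = (-1)^{k-1}(k-1)!$ holds for each $\bu \in \cD$ because conditions (1) and (2) ensure the hypotheses of Lemma \ref{lem:tc} are met everywhere on $\cD$.

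\textbf{Main obstacle.} The principal difficulty is producing $\bz$ with strictly positive pair-sum excess, because the constraint $u_i + u_j \ge 1-\gamma$ is closed while the box neighborhood is open: a tight pair at the center admits perturbations that violate the Lemma \ref{lem:tc} hypothesis. Generic perturbation inside $\cR \cap \RR^k$ handles the common case, but the rigid uniform configuration requires an explicit dimension-drop construction informed by the pigeonhole bound $\dim \bz < 2/(1-\gamma)$ and the flexibility left by $\cR\ne\emptyset$.
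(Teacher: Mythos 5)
Your approach is genuinely different from the paper's, and the difference matters. You coagulate first (via Lemma \ref{lem:combine}) and then try to perturb the coagulated vector to break ties. The paper does the two operations in the opposite order: it first uses openness of $\cR\cap\cT_n$ to perturb $\by$ to a point whose $2^n-2$ proper subsums are pairwise distinct and none of which equals $1-\gamma$ (each of these being a codimension-one condition in the hyperplane $|\bu|=1$), and \emph{then} applies Lemma \ref{lem:combine}. Because $z_i$, $z_j$ and $z_i+z_j$ are all subsums of the perturbed $\by$, the coagulation $\bz$ automatically inherits $z_i\ne z_j$ and $z_i+z_j\ne 1-\gamma$, so together with Lemma \ref{lem:combine}'s guarantee $z_i+z_j\ge 1-\gamma$ the strict inequality $z_i+z_j>1-\gamma$ is immediate. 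This sidesteps the entire tight-pair problem you identify.

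The gap in your proposal is precisely in the resolution of that tight-pair problem. You observe correctly that if a tight pair $y_i+y_j=1-\gamma$ is present, any admissible perturbation direction $\bdelta$ must satisfy $\sum_i\delta_i=0$ and $\delta_i+\delta_j>0$ on every tight edge, and you appeal to an unspecified ``linear-programming/graph-theoretic analysis'' asserting such a $\bdelta$ exists except when the tight-pair graph is regular, together with a further ``direct perturbation of an appropriate near-uniform vector'' to handle the rigid case. Neither step is carried out: the Farkas-type dual condition for nonexistence of $\bdelta$ is that the tight-pair graph admits a nonzero nonnegative edge weighting with constant vertex sums, which is not the same as regularity, and the rigid/uniform case (where you need to descend in dimension and re-verify the subsum-avoidance condition for the new vector) is stated without a construction. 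As written this is a real gap, not merely a sketch of a routine verification, because the coagulate-then-perturb order genuinely creates boundary cases that the perturb-then-coagulate order never sees. Your use of Lemma \ref{lem:tc} (with the $\xi$-parameter zero), the choice of $\eta$ as a minimum over finitely many strict margins, and the deduction of condition (4) are all fine once the strict $\bz$ is secured.
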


\begin{proof}
For some $n\ge 3$, $S=\cR \cap \cT_n$ is nonempty.  As $S$ is open relative
to $\cT_n$, there is some point $\by \in S$ avoiding all of the hyperplanes
 $\sum_{i\in I} y_i=1-\gamma$ for $I\subseteq [n]$ with $0<|I|<n$, 
and also avoiding the hyperplanes $\sum_{i\in I} y_i = \sum_{j\in J} y_j$ for distinct $I,J\subseteq [n]$, since each such hyperplane intersects $\cT_n$ in a set of dimension $n-2$ (or has no intersection, e.g. if $I=\emptyset$ and $J=[n]$).  This means
that  $\by$ has distinct subsums, none of which are equal to $1-\gamma$. 
By Lemma \ref{lem:combine}, some coagulation $\bz\in \cT_k$ of $\by$ is in $\cR$
and satisfies $z_i+z_j \ge 1-\gamma$ for all $i\ne j$.  This means
that $z_i+z_j > 1-\gamma$ for all $i\ne j$, and also that $z_i \ne z_j$ for all $i\ne j$.
By reordering the components, we may suppose that $z_1<\cdots<z_k$. 
Therefore, for some $\eta$ the set  
$\cD=\{ \bu\in \cT_k : |u_i-z_i| \le \eta\; (1\le i\le k) \}$
will satisfy $\cD \subseteq \cR$
and for all $\bu \in \cD$, $\eta < u_1<\cdots<u_k< 1-\gamma-\eta$
and $u_i+u_j > 1-\gamma+\eta$ for all $i\ne j$.  It follows from Lemma \ref{lem:tc}
(with $k=0$ in that lemma)
that $\cyrL_{1-\gamma}(\bu) = (-1)^{k-1}(k-1)!$ for all $\bu \in \cD$.
Furthermore, since all subsums of $\bz$ avoid $\two$,
 if $\eta$ is small enough then all subsums of any $\bu\in \cD$ avoid
$[\theta-\eta,\theta+\nu+\eta]$ as well.
\end{proof}

Let $\cD \subseteq \cR$ be the set guaranteed by Lemma \ref{lem:R1-nbhd-Q},
and $\eta,k$ the associated parameters, and where
where $\cyrL_{1-\gamma}(\bu) = (-1)^{k-1}(k-1)!$ for all $\bu\in \cD$.

For $\bu\in \cD$ define $f(\bu)=\delta$ for a sufficiently small real number $\delta$ (which can be positive or negative),
and let $f(\bu)=0$ for all other $k-$dimensional vectors $\bu$
with $u_1 \le \cdots \le u_k$.  Extend $f$ to a function symmetric in all variables,
and then extend the definition of $f$ to vectors of smaller dimension using
\eqref{fsl}.  Then it is clear that $f$ is supported on the subset of $\cC(\cR)$
consisting of vectors with all components $>\eta$.  Therefore, $f\in \sF_\eta$
(recall Definition \ref{defn:FetaP} for the definition
of $\sF_\eta$).

Moreover, on the right side of \eqref{fsl}, $u_{j,b}>\eta$ 
for all $j,b$ and hence $\ell$ is bounded, the functions $\cyrL_{1-\gamma}(\bu_j)$
are bounded and also $\prod_{j,b} u_{j,b}^{-1}$ is bounded.  
Hence, for $\delta$ small enough (in terms of $\eta,k,\theta,\nu,\gamma$), we have $f(\bu) \ge -1$ for all $\bu$.
Furthermore, by \eqref{fsl}, we have
\[
f(1) = f_{0,1}(1) = \delta (-1)^{k-1}(k-1)! \mint{\cD} \frac{d\bu}{u_1\cdots u_k}.
\]
Now we apply Theorem \ref{thm:constructions} (b).
Taking $\delta$ so that $\delta(-1)^{k-1}<0$ we get $C^-(P)<1$
and taking $\delta$ so that $\delta (-1)^{k-1} >0$ we get  $C^+(P)>1$. This completes the proof of Theorem \ref{thm:asymptotic-R1}.

\bigskip

\subsection{Proof of Theorem \ref{thm:asymptotic-R1-A1A2}}\label{sec:asymptotic-R1-A1A2}
Recall the definitions of $\cQ$ and $\cR$, given in \eqref{Q2} and Definition \ref{defn:R1}, respectively.
Consider the subset $\cQ_1$ of $\cQ$, defined by
\begin{equation}\label{eq:Q1}
\cQ_1 := \big\{ (\gamma,\theta,\nu) : \tfrac12 \le \gamma < 1-\theta-\nu\text{ or } 1-\theta \le \gamma < 1,\, 0\le \theta < \tfrac12, \,
0\le \nu < \tfrac12-\theta\text{ or }\nu=1-2\theta \big\}.
\end{equation}
This captures the fact that by Proposition \ref{prop:first-reduction},
having $\nu=\frac12-\theta$ is essentially equivalent to having $\nu=1-2\theta$
and having $\gamma=1-\theta-\nu$ is essentially equivalent to
having $\gamma=1-\theta$.  With respect to the analysis of $\cR$,
these reductions are exact as we shall now see.

Let $P=(\gamma,\theta,\nu) \in \cQ$ and $\cR = \cR(P)$.
If $P\in \cQ \setminus \cQ_1$, then  one of the following holds:
\begin{itemize}
\item[(i)]  $1-\gamma=\theta+\nu$,
\item[(ii)]$\theta+\nu=\frac12$.
\end{itemize}
 In case (i), let $P'=(\gamma',\theta,\nu)$ with $\gamma'=1-\theta$,
so that $P'\in \cQ_1$. Then \eqref{eq:A2} holds for both $P$ and $P'$ (since $1-\gamma,1-\gamma'\in\{\theta,\theta+\nu\}$), \eqref{eq:A1} for $P$
holds  if and only if \eqref{eq:A1} holds for $P'$ (since $(n-1)/n\in [\theta,\theta+\nu]$ for $\lceil 1/(1-\gamma)\rceil< n\le \lceil 1/(1-\gamma')\rceil$), and $\cR(P')=\cR(P)$.
That is, the claim in Theorem \ref{thm:asymptotic-R1-A1A2} holds for $P$ if and only
if it holds for $P'$.  Similarly, in case (ii) let $P'=(\gamma,\theta,1-2\theta)$,
so that $P'\in \cQ_1$, $\cR(P)=\cR(P')$ and each of the conditions 
\eqref{eq:A1} and \eqref{eq:A2} holds for $P$ if and only if it holds for $P'$.
Again,  the claim in Theorem \ref{thm:asymptotic-R1-A1A2} holds for $P$ if and only
if it holds for $P'$.  It therefore suffices to prove  the claim in Theorem \ref{thm:asymptotic-R1-A1A2} when $P\in \cQ_1$, which we henceforth assume.

Our first task is to prove a weaker version of Theorem \ref{thm:asymptotic-R1-A1A2},
where we suppose that, in addition to \eqref{eq:A1} and \eqref{eq:A2}, we have the following.

\medskip

\noindent
\textbf{Hypothesis (A3).}  For every vector $\bx$ of non-negative real 
numbers summing to 1 and also satisfying
\begin{enumerate}
\item[(i)]  $\ssc{x}i\in (\nu,1-\gamma)$ for all $i$;
\item[(ii)] for all $i\ne j$ with have $\ssc{x}i+\ssc{x}j \ge 1-\gamma$; 
\item[(iii)] for all $i\ne j$, either $\ssc{x}i=\ssc{x}j$ or $|\ssc{x}i-\ssc{x}j|>\nu$; and
\item[(iv)] there are $i\ne j$ with $\ssc{x}i\ne \ssc{x}j$,
\end{enumerate}
some proper subsum of $\bx$ lies in \two.

\medskip

\begin{lem}\label{lem:BDEF}
Let $P\in \cQ_1$.
Then $\cR(P)$ is empty if and only if all of \eqref{eq:A1}, \eqref{eq:A2} and \emph{(A3)} hold.
\end{lem}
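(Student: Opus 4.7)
The plan is to prove the two directions of the equivalence separately.

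For the implication ``some condition fails $\Longrightarrow \cR \ne \emptyset$,'' I will exhibit an explicit witness in $\cR$ in each case. If \eqref{eq:A1} fails at some $n \ge M+1$, then the equidistributed vector $(1/n,\ldots,1/n) \in \RR^n$ lies in $\cR$: its components are $1/n \le 1/(M+1) < 1-\gamma$, and its proper subsums $a/n$ ($1 \le a \le n-1$) avoid $\two$ by hypothesis. If \eqref{eq:A2} fails, I will show that the perturbation $\bx_\eps := (\underbrace{1-\gamma-\eps,\ldots,1-\gamma-\eps}_{M\text{ copies}},\,1-M(1-\gamma)+M\eps)$ lies in $\cR$ for all sufficiently small $\eps > 0$. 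At $\eps=0$ the proper subsums are $\{j(1-\gamma):1\le j\le M\} \cup \{1-j(1-\gamma):1\le j\le M\}$; using the reflection $x\leftrightarrow 1-x$ which interchanges $\two$ and $[1-\theta-\nu,1-\theta]$, the failure of \eqref{eq:A2} says these subsums avoid the \emph{closed} interval $\two$, hence lie a fixed distance from it, so small perturbations remain outside. The component and positivity constraints are routine to check. Finally, if (A3) fails, the offending vector is by definition an element of $\cR$.

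For the converse, I begin with any $\bx \in \cR$ and apply \emph{maximal coagulation}: iteratively replace a pair $(x_i,x_j)$ with $x_i+x_j<1-\gamma$ by the single component $x_i+x_j$. Each such step preserves $\cR$ (it only shrinks the set of proper subsums) and terminates, producing $\bx' \in \cR$ with $x'_i+x'_j \ge 1-\gamma$ for all $i \ne j$. If all components of $\bx'$ are equal, then $\bx'=(1/n,\ldots,1/n)$ with $n\ge M+1$ (since $1/n<1-\gamma$) and its subsums avoid $\two$, contradicting \eqref{eq:A1}. Otherwise, I introduce the \emph{averaging move}: whenever $x'_i \ne x'_j$ satisfy $|x'_i-x'_j|\le \nu$, replace both by their mean $(x'_i+x'_j)/2$. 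This preserves $\cR$ by a convexity argument: any affected subsum is the average of two old subsums $\sigma+x'_i$ and $\sigma+x'_j$, and since their difference is at most $\nu$ (the width of $\two$), they lie on the same side of $\two$, so the average does too. Iterating, I obtain a final vector $\bx''\in \cR$ whose distinct component-values pairwise differ by more than $\nu$.

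At this point I split by whether $\bx''$ has any component $\le \nu$. If not, and since $\bx''$ is not all-equal (the averaging preserves this), $\bx''$ satisfies exactly the hypotheses of (A3), giving its failure. If some component $x''_i \le \nu$, the pairwise-sum condition forces every other component into the narrow window $[1-\gamma-\nu,1-\gamma)$; by further adjustment (transferring mass among these nearly-$1-\gamma$ components, and shrinking $x''_i$ to a suitable limit) I aim to arrive precisely at the $\eps$-perturbation of $(1-\gamma,\ldots,1-\gamma,1-M(1-\gamma))$ encountered in the first half of the proof, so that its subsum structure witnesses failure of \eqref{eq:A2}.

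The main obstacle will be making the second half of the previous paragraph rigorous. There are two technical pain points: first, showing that the averaging iteration actually terminates, since averaging one pair can bring previously distant components into the close-but-unequal regime and appears to cascade — this I plan to handle by a monotone quantity argument (e.g., the number of distinct component values, combined with a careful treatment of the spread of each cluster). Second, in the small-component subcase, one must argue that the adjustment driving $\bx''$ toward the $(1-\gamma,\ldots,1-\gamma,\cdot)$ pattern can be carried out entirely within $\cR$; this requires tracking how subsums evolve under each incremental mass transfer and invoking the strict avoidance of the closed interval $\two$ to guarantee the adjustments are admissible up to the limit configuration.
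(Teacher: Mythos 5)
Your overall architecture matches the paper's: exhibit explicit witnesses in $\cR$ when each condition fails, then in the converse direction coagulate to enforce pairwise sums $\ge 1-\gamma$, average to separate distinct values by more than $\nu$, and read off which hypothesis is violated. The coagulation step and the convexity argument for $\cR$-preservation under averaging are correct. However, there are two genuine gaps.

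First, your averaging move is pairwise, and this does not terminate in finitely many steps. Consider $(3,3,5)$ with $\nu$ large enough: averaging $(3,5)$ gives $(3,4,4)$, still two distinct values; averaging $(3,4)$ gives $(3.5,3.5,4)$, again two distinct values; and so on indefinitely, converging to $(11/3,11/3,11/3)$ only in the limit. Your proposed monotone quantity (number of distinct component values) does not strictly decrease under pairwise averaging. The fix — and what the paper does — is to average an \emph{entire cluster} at once: take a maximal set of components lying in a window of length $\le\nu$ and replace all of them by their common mean. That collapses at least two distinct values to one in a single step, so the number of distinct values strictly decreases and termination is immediate. The $\cR$-preservation argument still goes through, but one must argue it for cluster averaging (the paper does this by observing that, for any fixed complementary subsum $s$ and fixed number $\ell$ of elements drawn from the cluster, the values $s+t$ over all $\ell$-element subsums $t$ of the cluster form a chain with consecutive gaps $\le\nu$, so all lie on the same side of $\two$, and the averaged value sits in their convex hull).

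Second, in the small-component subcase your endpoint analysis is incomplete. After establishing $0<x_1\le\nu<x_2=\cdots=x_k<1-\gamma$ (which forces $\theta>0$ and $k\ge M+1$), the correct deformation transfers mass from $x_1$ to the large coordinates uniformly, and the limiting configuration depends on $k$. When $k\ge M+2$ the mass runs out first and one lands on $(0,\tfrac1{k-1},\ldots,\tfrac1{k-1})$, contradicting \eqref{eq:A1}; only when $k=M+1$ does one saturate the upper bound $1-\gamma$ and land on $(1-M(1-\gamma),1-\gamma,\ldots,1-\gamma)$, contradicting \eqref{eq:A2}. Your write-up funnels everything toward the latter and so misses the $k\ge M+2$ branch. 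Also, a minor slip: averaging does not preserve the property of being not-all-equal, so at the end you must check the all-equal possibility again (in which case \eqref{eq:A1} fails, same conclusion as the earlier all-equal case).
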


\begin{proof}
Assume $\cR = \cR(P)$ is empty.
For all integers $n\ge M+1$,  $(\frac{1}{n},\cdots,\frac{1}{n})\not\in \cR$, which implies \eqref{eq:A1}.  If (A3) fails, then there is a vector $\bx$ of non-negative real components summing to 1 satisfying conditions (i)--(iv) in the definition
of Hypothesis (A3) and with no proper subsum in $[\theta,\theta+\nu]$, and evidently $\bx\in \cR$.  Thus, (A3) holds.
Now $1-\gamma > \frac{1}{M+1}$, thus for sufficiently small $\eps>0$,
\[
\bx_\eps := (1-M(1-\gamma-\eps), 1-\gamma-\eps,\ldots,1-\gamma-\eps)
\]
 has all components in $(0,1-\gamma)$ and is not in $\cR$.
Then some proper subsum of $\bx_\eps$ lies in \two.  Equivalently, there is
a positive integer $h\le M$ such that $h(1-\gamma-\eps)\in \two \cup [1-\theta-\nu,1-\theta]$.
As this holds for every $\eps>0$, it is also true for $\eps=0$ and thus \eqref{eq:A2} holds.

Now suppose that $\cR$ is nonempty, and let $\bx$ be an element of $\cR$.
By Lemma \ref{lem:combine}, there is a coagulation of $\bx$ satisfying (ii)
in the definition of Hypothesis (A3) and with all components in $(0,1-\gamma)$,
thus this coagulation is in $\cR$ as well.
Now suppose $\bx\in \cR$ satisfies (ii).
Suppose further that $x_{i_1},\ldots,x_{i_k}$ 
 all lie in an interval of length $\le \nu$ and are not all equal.
For each $\ell\le k$, let $T_\ell$ be the set of all nonzero subsums of $\ell$ elements of $(x_{i_1},\ldots,x_{i_k})$.
If $s$ is any subsum of the remaining components of $\bx$,
then the numbers
$\{ t+s : t\in T_\ell \}$ must be either all  $<\theta$ or all $>\theta+\nu$.
To see this, recall that the proper subsums of $\bx$ avoid $\two$ and observe that
the consecutive terms in the sequence $s+t$  for $t\in T_\ell$ differ by at most $\nu$, and so must all be $<\theta$ or $>\theta+\nu$.
If we then create a new vector $\bx'$ by replacing 
each component $x_{i_j}$ with the average $(x_{i_1}+\cdots+x_{i_k})/k$,
it is clear that $\bx'$ has the same property (for all $\ell\le k$).
Thus, $\bx' \in \cR$, while preserving (ii). 
We repeat this until we have a vector
satisfying (iii).  At each step the number of distinct components $x_i$
decreases, thus the process will terminate after a finite number of steps.
Thus, there is a $\bx\in \cR$ satisfying (ii) and (iii).

Suppose now that $\bx=(\ssc{x}{1},\cdots,\ssc{x}{k})\in \cR$ satisfies parts (ii) and (iii).  In particular, all subsums of $\bx$ avoid $\two$.
If all of the $x_i$ are equal, then they equal $1/k$ where $1/k < 1-\gamma \le 1/M$, thus $k\ge M+1$ and \eqref{eq:A1} fails.
Now assume that at least two of the $x_i$ are different, so part (iv) in Hypothesis (A3) holds.  If $x_i > \nu$ for all $i$ then part (i) in the definition of (A3) also
holds and hence (A3) fails.
Otherwise, if $x_i\le \nu$ 
for some $i$, by reordering we may suppose that $\ssc{x}{1}=\min x_i \le \nu$.  For $j > 1$, (ii) implies
\[
x_j \ge 1-\gamma - \ssc{x}{1} \ge 1-\gamma - \nu.
\]
Hence, all of the numbers $x_j$ for $j>1$ lie in $[1-\gamma-\nu,1-\gamma)$, an interval of length $\nu$.  By (iii), $\ssc{x}{2}=\cdots=\ssc{x}{k}$ and hence $\ssc{x}{1}<\ssc{x}{2}$ as well.
If $\ssc{x}{2} \le \nu$ then $|\ssc{x}{1}-\ssc{x}{2}|<\nu$, 
contradicting (iii).  Hence,
\[
0<\ssc{x}{1} \le \nu < \ssc{x}{2}=\cdots = \ssc{x}{k} < 1-\gamma.
\]
In particular, we must have $\theta > 0$ (else $\ssc{x}{1}\in\two$).
This implies, by \eqref{Mdef}, that
\[
1 = \ssc{x}{1}+\cdots + \ssc{x}{k} < k(1-\gamma) \le k/M,
\]
and hence $k \ge M+1$.  Since $\ssc{x}{1}\le \nu$ and all subsums of $\bx$ avoid $\two$ (using $\theta>0$),
for any integer $h\in [0,k-1]$, the points $h\ssc{x}{2}$ and $h\ssc{x}{2}+\ssc{x}{1}$ lie on the
same side of $\two$.  That is, either $h\ssc{x}{2} + \ssc{x}{1} < \theta$ or $h\ssc{x}{2} > \theta+\nu$.
This same property is preserved if we deform $\bx$ by increasing $\ssc{x}{2},\ldots,\ssc{x}{k}$
by $\delta>0$ and decreasing $\ssc{x}{1}$ by $(k-1)\delta$, which keep the sum of all components equal to 1.  So long as
  $\ssc{x}{2}+\delta < 1-\gamma$ and $(k-1)\delta \le \ssc{x}{1}$, the components remain in $[0,1-\gamma]$.
If $k\ge M+2$ we may choose $\delta = \ssc{x}{1}/(k-1)$ and our new vector is
$(0,\frac{1}{k-1},\ldots,\frac{1}{k-1})$.  As $\theta>0$ and this vector has no
subsum in $\two$, \eqref{eq:A1} fails.
If $k=M+1$ we choose $\delta = 1-\gamma-\ssc{x}{2}$, so that
$(1-M(1-\gamma),1-\gamma,\ldots,1-\gamma)$ has no subsum in \two.  Thus, \eqref{eq:A2} fails.
\end{proof}

In light of Lemma \ref{lem:BDEF}, Theorem \ref{thm:asymptotic-R1-A1A2}
 will follow from the next result, which has a long proof
especially in the case $1-\gamma \le \theta$.

\begin{prop}\label{prop:A123}
Fix $(\gamma,\theta,\nu)\in \cQ_1$.
If \eqref{eq:A1} and \eqref{eq:A2} hold, then \emph{(A3)} holds.
\end{prop}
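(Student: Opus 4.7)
The plan is to argue by contrapositive: assume that the conclusion of Hypothesis (A3) fails and deduce that one of \eqref{eq:A1} and \eqref{eq:A2} must also fail. Fix a witnessing vector $\bx=(x_1,\ldots,x_k)$ with $x_1\le\cdots\le x_k$, $\sum x_i=1$, satisfying conditions (i)--(iv) and with no proper subsum in $\two$. Let $v_1<\cdots<v_r$ be the distinct values of the $x_i$ with multiplicities $m_1,\ldots,m_r$; by (iv) we have $r\ge 2$, by (iii) we have $v_{j+1}-v_j>\nu$, and by (i) we have $v_j\in(\nu,1-\gamma)$. The set of proper subsums is $\Sigma=\{\sum_{j=1}^r a_j v_j:0\le a_j\le m_j\}\setminus\{0,1\}$, and by assumption $\Sigma\cap\two=\emptyset$.

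The core of the argument will be a rigidity lemma: any ``legal transition'' in $\Sigma$ (either incrementing some $a_j$, or swapping a copy of $v_i$ for a copy of $v_j$ with $i\ne j$) changes the subsum by an amount whose absolute value is either $0$ or strictly greater than $\nu$, and any single increment by $v_j$ moves by a quantity in $(\nu,1-\gamma)$. Consequently, if $S\in\Sigma$ is the largest subsum below $\theta$, then every legal transition from $S$ either remains below $\theta$ or jumps strictly above $\theta+\nu$, and a dual statement holds for the smallest subsum above $\theta+\nu$. Exploiting these boundary constraints yields sharp Diophantine equalities between the $v_j$, which I would use to deform $\bx$ continuously, preserving the rigidity and conditions (i)--(iii), until it collapses to one of two canonical shapes: $(\tfrac{1}{n},\ldots,\tfrac{1}{n})$ for some $n\ge M+1$, or $(1-\gamma,\ldots,1-\gamma,1-M(1-\gamma))$. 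In the first case, preservation of ``no proper subsum in $\two$'' directly forbids any multiple $a/n$ from landing in $\two$, contradicting \eqref{eq:A1}; in the second case the analogous obstruction for some $h(1-\gamma)$ contradicts \eqref{eq:A2}.

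The analysis naturally splits along the two regimes of $\cQ_1$. The narrow-Type-I case $\tfrac12\le\gamma<1-\theta-\nu$ is relatively mild because $v_j+v_{j'}\ge 1-\gamma>\theta+\nu$ for $j\ne j'$, so individual singletons $v_j$ must straddle $\theta$, and a short argument along the lines of the end of Lemma \ref{lem:BDEF} reduces to \eqref{eq:A1}. The main obstacle is the wide-Type-I regime $1-\theta\le\gamma<1$, where $1-\gamma\le\theta$ forces every singleton $v_j$ strictly below $\two$, so only multi-element subsums can cross the gap. Here the combinatorics of which $\ba$ produces the first subsum above $\theta+\nu$ becomes entangled with the arithmetic of the multiples $h(1-\gamma)$ for $1\le h\le M$, and I would split further according to $h_0=\lfloor\theta/(1-\gamma)\rfloor$, treating separately the subsums of size $h_0$ and $h_0+1$ and using \eqref{eq:A2} at each relevant $h$ to eliminate non-crossing configurations while using \eqref{eq:A1} applied at $n=k$ (or at subsum-sizes close to $k$) to eliminate near-equal-components configurations. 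The delicate bookkeeping throughout is ensuring that the deformations preserve (i), i.e.\ that no coordinate drifts down to $\nu$ or up to $1-\gamma$ during the reduction, so that the final canonical vector faithfully represents the original obstruction and yields a genuine contradiction.
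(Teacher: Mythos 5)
Your high-level plan — assume a witness $\bx$ to the failure of (A3), extract rigidity from condition (iii), and then contradict \eqref{eq:A1} or \eqref{eq:A2} — starts in the right direction, but the central deformation step does not hold together, and it is in fact quite different from what the paper does.

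The key problem is the claim that you can ``deform $\bx$ continuously, preserving the rigidity and conditions (i)--(iii), until it collapses to one of two canonical shapes.'' Neither canonical shape satisfies the constraints you are asking the deformation to preserve. The vector $(1-\gamma,\ldots,1-\gamma,\,1-M(1-\gamma))$ has components equal to $1-\gamma$, which lies outside the open interval $(\nu,1-\gamma)$ required by (i); and collapsing a witness with two distinct values (guaranteed by (iv)) down to $(\tfrac1n,\ldots,\tfrac1n)$ forces the gap between distinct components to pass through $(0,\nu]$, which violates (iii). You cannot simultaneously hold (i) and (iii) and arrive at either target. If you instead drop (i)--(iii) and preserve only ``sum $=1$, components in $[0,1-\gamma]$, no proper subsum in $\two$,'' you are deforming inside the closure of $\cR(P)$, and now the argument becomes circular: under \eqref{eq:A1} and \eqref{eq:A2} no such path to either canonical shape exists, and proving that a deformation always reaches one of them is essentially equivalent to the statement you are trying to establish. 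The two special shapes you name are precisely the ones used in the paper's Lemma~\ref{lem:BDEF}, which relates nonemptiness of $\cR(P)$ to \emph{at least one of} \eqref{eq:A1}, \eqref{eq:A2}, (A3) failing; that lemma does not isolate (A3), which is the whole content of Proposition~\ref{prop:A123}.

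You also leave the genuine content of the proof — the step from the ``rigidity'' observations to ``sharp Diophantine equalities'' — entirely unspecified. This is where all the work is. The paper's actual route is quite different and worth contrasting. It begins with a one-line observation you do not record: if (A3) fails, then by (iii) the two smallest distinct values differ by more than $\nu$ and by (i) both lie in $(\nu,1-\gamma)$, hence $\nu<\frac{1-\gamma}{2}$ (Lemma~\ref{lem:A3-ratio-less-one-half}). The burden then shifts entirely to showing that \eqref{eq:A1} and \eqref{eq:A2} force $\nu>\frac{1-\gamma}{2}$, which in the regime $1-\gamma\le\theta<\theta+\nu<\frac12$ is Proposition~\ref{prop:ratio}: a lengthy Diophantine case analysis in the auxiliary integers $M=\fl{1/(1-\gamma)}$, $a$ (the least $a$ with $\frac{a}{M+1}\in\two$) and $k$ (with $\frac{a}{M+k+1}<\theta\le\frac{a}{M+k}$). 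The cases where the ratio bound is not strict form a short explicit list, each member of which is eliminated directly in Proposition~\ref{prop:A123-epsilon-tweak}. This ``bound the ratio $\nu/(1-\gamma)$ from two sides'' strategy is what converts the soft rigidity of (iii) into an actual proof, and nothing like it appears in your outline; your Case~2 sketch (splitting on $h_0=\lfloor\theta/(1-\gamma)\rfloor$) is not the same parametrization, and without the ratio bound it is unclear how to close the argument.
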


The proof occupies the next two subsections.
A few of our results are stated for the slightly larger set $\cQ$ as we
will need them for the proof of Theorem \ref{thm:continuity}.

\begin{defn}[The parameters $a$ and $k$]\label{def:ak}
Assuming \eqref{eq:A1} holds, we define two further parameters $a$ and $k$.  
Let $a$ be the smallest positive 
integer so that $\frac{a}{M+1} \in \two$, and let $k\ge 1$ be the unique integer
such that $\frac{a}{M+k+1}<\theta \le\frac{a}{M+k}$.  
\end{defn}
Assuming \eqref{eq:A1}, we see $\frac{a+1}{M+k+1} \in \two.$
It follows immediately that
\be\label{nu-lower}
\begin{split}
\nu &\ge \max \Big( \frac{a}{M+1}-\frac{a}{M+k}, \frac{a+1}{M+k+1}-\frac{a}{M+k}\Big)
\\ &= \max\Big( \frac{(k-1)a}{(M+1)(M+k)}, \frac{M+k-a}{(M+k+1)(M+k)} \Big).
\end{split}
\ee

\subsection{Proof of Proposition \ref{prop:A123} when $1-\gamma > \theta+\nu$.}
We begin with a preliminary observation.

\begin{lem}\label{lem:smallgamma}
Suppose that $(\gamma,\theta,\nu)\in \cQ$ and
 $1-\gamma \ge  \theta+\nu$.  Then \eqref{eq:A1} is equivalent to the interval
\two containing  $[\frac{1}{2M+1},\frac{1}{M+1}]$, $[\frac{1}{2M},\frac{2}{2M+1}]$
or $[\frac{1}{2M-1},\frac1{M}]$ (the last case only if $1-\gamma=\theta+\nu=\frac{1}{M}$).  Furthermore, \eqref{eq:A2} is equivalent to 
the assertion that $1-\gamma=\theta+\nu$ or $1-M(1-\gamma)\in \two$.
\end{lem}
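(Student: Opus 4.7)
The proof decomposes into independent characterizations of \eqref{eq:A1} and \eqref{eq:A2}, each by direct case analysis. For \eqref{eq:A1}, the key observation is that $\theta+\nu \le 1-\gamma \le 1/M$ severely restricts the possible values of $a$ in $a/n \in [\theta,\theta+\nu]$: for $M+1 \le n \le 2M$ one has $a \le n(\theta+\nu) \le n/M \le 2$, and $2/n \le 1/M$ requires $n \ge 2M$, so $a = 1$ is forced unless $n = 2M$ and $\theta+\nu = 1/M$. Assuming \eqref{eq:A1}, I split on whether $\theta+\nu < 1/M$ or $\theta+\nu = 1/M$ (the latter, combined with $\theta+\nu \le 1-\gamma \le 1/M$, forcing $1-\gamma = 1/M$). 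In the first subcase, $a = 1$ works for every $n \in \{M+1,\dots,2M\}$, so $[\tfrac{1}{2M}, \tfrac{1}{M+1}] \subseteq [\theta,\theta+\nu]$; then at $n = 2M+1$ one of $a \in \{1,2\}$ works, and the two cases extend the containment to $[\tfrac{1}{2M+1}, \tfrac{1}{M+1}]$ or $[\tfrac{1}{2M}, \tfrac{2}{2M+1}]$ respectively. In the second subcase, $a = 1$ is still forced for $n = M+1,\dots,2M-1$ (since then $2/n > 1/M = \theta+\nu$), which combined with $1/M = \theta+\nu \in [\theta,\theta+\nu]$ yields $[\tfrac{1}{2M-1}, \tfrac{1}{M}] \subseteq [\theta,\theta+\nu]$.

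The converse direction for \eqref{eq:A1} requires verifying that each of the three minimal intervals satisfies \eqref{eq:A1}. For $[\tfrac{1}{2M+1}, \tfrac{1}{M+1}]$, the task reduces to showing that $[n/(2M+1), n/(M+1)]$ contains an integer for every $n \ge M+1$: for $M+1 \le n \le 2M+1$ the choice $a = 1$ works, and for $n \ge 2M+2$ the choice $a = \lfloor n/(M+1)\rfloor$ satisfies $a(2M+1) \ge n$ via the inequality $a \ge (n-M)/(M+1)$. A cleaner packaging is that the sets $\{n : a(M+1) \le n \le a(2M+1)\}$ for $a = 1, 2, 3, \dots$ cover $[M+1, \infty)$, since the gap-closure condition $a(2M+1) \ge (a+1)(M+1) - 1$ simplifies to $M(a-1) \ge 0$. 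Analogous verifications handle the other two intervals.

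For \eqref{eq:A2}, I narrow the admissible range of $h$. Since $h(1-\gamma) \le 1-\theta \le 1$ and $(M+1)(1-\gamma) > 1$, only $h \in \{1,\dots,M\}$ matters. For $2 \le h \le M-1$, $h(1-\gamma) > h/(M+1) \ge 2/(M+1) > 1/M \ge \theta+\nu$ rules out $[\theta,\theta+\nu]$, and $h(1-\gamma) \le (M-1)/M < 1-\theta-\nu$ rules out $[1-\theta-\nu, 1-\theta]$. For $h = 1$: membership in $[\theta,\theta+\nu]$ combined with $\theta+\nu \le 1-\gamma$ forces $1-\gamma = \theta+\nu$; membership in $[1-\theta-\nu,1-\theta]$ (i.e.\ $\gamma \in [\theta,\theta+\nu]$) combined with $\gamma \ge 1/2$ and the $\cQ$ constraints also collapses to $1-\gamma = \theta+\nu$. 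For $h = M$: $M(1-\gamma) > M/(M+1) > 1/M \ge \theta+\nu$ (using $M \ge 2$) excludes $[\theta,\theta+\nu]$, leaving $M(1-\gamma) \in [1-\theta-\nu,1-\theta]$, which is equivalent to $1 - M(1-\gamma) \in [\theta,\theta+\nu]$.

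The main technical obstacle is the boundary-case bookkeeping in the ``only if'' direction of \eqref{eq:A1}: the subcase $\theta+\nu = 1/M$ must be separated (since it forces $1-\gamma = 1/M$ and produces the third minimal interval), and at $n = 2M+1$ the two possibilities $a \in \{1, 2\}$ must be distinguished to identify which of the first two minimal intervals arises. Once these boundary cases are correctly organized, the remaining arguments are elementary arithmetic.
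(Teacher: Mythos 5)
Your proof is essentially correct and closely parallels the paper's argument, with the same basic strategy: bound the admissible numerators $a$ via $\theta+\nu\le 1/M$, separate the boundary case $\theta+\nu=1/M$, and restrict $h$ to $\{1,\dots,M\}$ in the (A2) analysis. You actually make explicit something the paper leaves tacit, namely the converse direction of (A1): your gap-closure check that the sets $\{n:a(M+1)\le n\le a(2M+1)\}$ cover $[M+1,\infty)$ (and the analogues for the other two minimal intervals) is a clean way to verify that each minimal interval really does imply (A1), which the paper merely asserts.

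There is one small slip in the (A2) half. You claim that for $2\le h\le M-1$ one has $h(1-\gamma)\le (M-1)/M < 1-\theta-\nu$, but the second inequality is not strict: since $\theta+\nu\le 1/M$ one only gets $1-\theta-\nu\ge (M-1)/M$, with equality when $\theta+\nu=1/M$. If additionally $h=M-1$ and $1-\gamma=1/M$, then $h(1-\gamma)=1-\theta-\nu$, so $h=M-1$ is in fact a valid witness and your "rules out $[1-\theta-\nu,1-\theta]$" does not hold there. The conclusion of the lemma survives, because this boundary configuration forces $1-\gamma=\theta+\nu$, which is one of the two allowed outcomes; but as written the argument has a gap. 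The paper sidesteps this entirely by case-splitting on $1-\gamma=\theta+\nu$ versus $1-\gamma>\theta+\nu$ \emph{before} touching $h$: when $1-\gamma>\theta+\nu$ the case $h(1-\gamma)\in\two$ is impossible for all $h\ge 1$ (since $h(1-\gamma)\ge 1-\gamma>\theta+\nu$), and then $1-(M-1)(1-\gamma)\ge 1/M\ge 1-\gamma>\theta+\nu$ cleanly excludes $h\le M-1$ from witnessing via $[1-\theta-\nu,1-\theta]$ with strict inequalities throughout. If you prefer to keep your $h$-first organization, the fix is to observe that equality in your chain forces $h=M-1$, $1-\gamma=1/M$, $\theta+\nu=1/M$, hence $1-\gamma=\theta+\nu$, so the conclusion still holds in that corner case.
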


\begin{proof}
By \eqref{Mdef},
\[
\theta + \nu \le 1-\gamma \le \frac1{M}.
\]
If $\theta+\nu= \frac{1}{M}$ then $1-\gamma=\frac{1}{M}$,
and we see that \eqref{eq:A1} holds if and only if $\frac{1}{2M-1}\in \two$
since $\frac{2}{2M-1} > \frac{1}{M} > \theta+\nu$.
If $\theta+\nu \in [\frac{2}{2M+1}, \frac{1}{M})$,  
then \eqref{eq:A1} holds if and only if $\frac{1}{2M} \in \two$,
since $\frac{1}{M+1} < \frac{2}{2M+1}$.
Finally, if $\theta+\nu < \frac{2}{2M+1}$, then \eqref{eq:A1} holds if and only if
$\two$ contains $\frac{1}{2M+1}$ and $\frac{1}{M+1}$.

 Hypothesis \eqref{eq:A2} asserts that there is a positive integer $h$ such that
 (a) $h(1-\gamma)\in \two$ or (b) $1-h(1-\gamma)\in \two$. 
If $1-\gamma=\theta+\nu$ then (a) holds with $h=1$. 
If $1-\gamma > \theta+\nu$, (a) is impossible and (b) is only possible 
 for $h=M$ since $1-(M+1)(1-\gamma) <0$ and $1 - (M-1)(1-\gamma)  \ge 1/M \ge 1-\gamma > \theta+\nu$.
 \end{proof}

\begin{lem}\label{DFE:small-gamma}
Assume that $(\gamma,\theta,\nu)\in \cQ$ with 
$1-\gamma \ge \theta+\nu$.  If \eqref{eq:A1} and \eqref{eq:A2} hold, then (A3) holds.
In particular, Proposition \ref{prop:A123} holds in the case $1-\gamma>\theta+\nu$.
\end{lem}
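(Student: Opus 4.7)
The plan is to argue by contradiction. Suppose $\bx=(x_1,\ldots,x_k)$ with $x_1\le\cdots\le x_k$ satisfies conditions (i)--(iv) of Hypothesis (A3) but no proper subsum of $\bx$ lies in $\two$, and derive a contradiction from \eqref{eq:A1} and \eqref{eq:A2}. Throughout I will invoke Lemma \ref{lem:smallgamma}: in the present setting \eqref{eq:A2} is equivalent to $1-\gamma=\theta+\nu$ or $1-M(1-\gamma)\in\two$, while \eqref{eq:A1} forces $\two$ to contain at least one of $[\tfrac{1}{2M+1},\tfrac{1}{M+1}]$, $[\tfrac{1}{2M},\tfrac{2}{2M+1}]$, or (only if $1-\gamma=\tfrac{1}{M}$) $[\tfrac{1}{2M-1},\tfrac{1}{M}]$.

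First I would pin down $\dim\bx=k$. Since each $x_i<1-\gamma$, $1=\sum x_i<k(1-\gamma)$ gives $k\ge M+1$. Summing (ii) over the $\binom{k}{2}$ pairs yields $\binom{k}{2}(1-\gamma)\le (k-1)|\bx|=k-1$, so $k\le 2/(1-\gamma)<2(M+1)$ and hence $k\le 2M+1$.

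Next I would treat the principal case $k=M+1$. The sum condition combined with $x_j<1-\gamma$ gives $x_1>1-M(1-\gamma)$, which by \eqref{eq:A2} is at least $\theta$; since the singleton $\{x_1\}$ must avoid $\two$, we obtain $x_1>\theta+\nu$, and consequently every $x_i>\theta+\nu$. The subsums $x_1+\cdots+x_j$ for $2\le j\le M-1$ satisfy
\[
x_1+\cdots+x_j>1-(M+1-j)(1-\gamma)\ge \theta+(j-1)(1-\gamma)>\theta+\nu,
\]
so they automatically lie above $\two$; complementary subsums at $j\in\{M,M-1\}$ are handled symmetrically. This confines all critical proper subsums to a thin range near $1/(M+1)$, and using (iii) (no two components may differ by an amount in $(0,\nu]$) together with (iv) (not all components equal) and the lower bound \eqref{nu-lower} on $\nu$, I would force some $x_i$ or pair-sum into $\two$, contradicting the hypothesis.

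For $k\ge M+2$, I would show that (ii) pins the components into an interval of length $<\nu$. Indeed $x_1+x_2\ge 1-\gamma$ and $x_2\le x_k$ give $x_i\ge (1-\gamma)/2$ for all $i$, while $\sum x_i=1$ and $k\ge M+2$ give $x_k\le 1-(k-1)(1-\gamma)/2$; a direct comparison with \eqref{nu-lower} shows this range has length at most $\nu$, so (iii) collapses all components to a common value. Then $x_i=1/k$ for some $k\in\{M+2,\ldots,2M+1\}$, and the case analysis from Lemma \ref{lem:smallgamma} shows $1/k\in\two$ or $2/k\in\two$, contradicting the no-subsum assumption applied to a singleton or a pair. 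The case $k=M+2$ is marginal and may require examining two subsums simultaneously.

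The main obstacle will be the sub-case $k=M+1$ when $\two$ is narrow and sits close to $1/(M+1)$: the slack between $\theta+\nu$ and $1-\gamma$ can be small, so the contradiction has to be extracted not from a crude counting argument but from the combined force of the location $1-M(1-\gamma)\in\two$ supplied by \eqref{eq:A2}, the separation condition (iii), and the non-constancy (iv). The remaining (much easier) case $1-\gamma=\theta+\nu$ included in the hypothesis of the lemma is handled by noting that then the singleton $\{x_1\}$ is already forced into $\two$ since $x_1<1-\gamma=\theta+\nu$ and $x_1>\nu\ge\theta$ whenever $\theta\le\nu$, and by direct inspection otherwise.
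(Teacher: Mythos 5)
Your proof has the right skeleton (invoke Lemma \ref{lem:smallgamma}, split by $\dim\bx$, use conditions (ii)--(iv) to force contradictions), but there are several genuine gaps.

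In the $k=M+1$ case you observe correctly that $x_1 > 1-M(1-\gamma)\ge\theta$ and hence (since the singleton avoids $\two$) $x_1 > \theta+\nu$, so every component exceeds $\theta+\nu$. But you then drift into an inconclusive discussion of subsums and never close the argument --- ``I would force some $x_i$ or pair-sum into $\two$'' is a statement of intent, not a proof. In fact your own observation finishes immediately: Lemma \ref{lem:smallgamma} gives $\theta+\nu\ge \tfrac1{M+1}$ in all three cases, so $|\bx| > (M+1)(\theta+\nu)\ge 1$, contradiction. You should state this. (The paper reaches the same conclusion by splitting on the number $\ell$ of components in $(\nu,\theta)$; your route shows $\ell=0$ directly, which is tidier if carried through.)

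The $k\ge M+2$ case is where the real error lies. The claim ``$x_1+x_2\ge 1-\gamma$ and $x_2\le x_k$ give $x_i\ge(1-\gamma)/2$ for all $i$'' is false: only $x_2\ge(1-\gamma)/2$ follows, and the smallest component $x_1$ may well be below $(1-\gamma)/2$ (it only needs to exceed $\nu$, and condition (ii) constrains $x_1+x_j$ for $j\ge 2$, not $x_1$ alone). Consequently the upper bound $x_k\le 1-(k-1)(1-\gamma)/2$ and the ensuing ``range has length at most $\nu$'' do not follow. Your appeal to \eqref{nu-lower} is also out of place: that bound is derived in Definition \ref{def:ak} and \eqref{aMk} under the hypothesis $1-\gamma\le\theta$, which is the complementary regime to the present lemma. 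The paper instead shows that there must be at least two components in $(\nu,\theta)$, that these are forced equal and $\ge(1-\gamma)/2$ by (ii) and (iii), and then splits on which of the two intervals from Lemma \ref{lem:smallgamma} is contained in $\two$, deriving a contradiction from the total sum in each case. That case split on $\two$ is the content your sketch is missing.

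Finally, in the $1-\gamma=\theta+\nu$ case you handle $\theta\le\nu$ cleanly, but ``by direct inspection otherwise'' is a gap, and $\theta>\nu$ genuinely occurs: for $M=2$ we have $\two\supseteq[\tfrac13,\tfrac12]$, so $\theta\in[\tfrac14,\tfrac13]$ and $\nu=\tfrac12-\theta\le\tfrac14\le\theta$. In that regime the singleton is \emph{not} automatically in $\two$; the paper instead notes that all $x_i$ lie in $(\nu,\theta)$, uses Lemma \ref{lem:smallgamma} to get $\theta\le\tfrac1{2M-1}$ and hence $\nu\ge\theta/2\ge\theta-\nu$, so the whole interval has length $\le\nu$ and (iii) forces all components equal, contradicting (iv). You need an argument of this type, not a singleton argument.
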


\begin{proof}
Assume \eqref{eq:A1} and \eqref{eq:A2}, and suppose that (A3) fails.
Then there is a vector $\bx=(\ssc{x}{1},\ldots,\ssc{x}{k})$ with $|\bx|=1$ and with no proper subsum in $\two$ satisfying (i), (ii), (iii) and (iv) from the definition of (A3).
In particular, for each $i$ we have $x_i \in (\nu,\theta) \cup (\theta+\nu,1-\gamma)$
and furthermore for all $i\ne j$, $x_i+x_j \ge 1-\gamma$ and either 
$x_i=x_j$ or $|x_i-x_j|>\nu$. (If $\theta<\nu$ this simply means all components lie in $(\theta+\nu,1-\gamma)$.)

In the case $1-\gamma=\theta+\nu=\frac{1}{M}$, all of the $x_i$ are in $(\nu,\theta)$. Also, Lemma \ref{lem:smallgamma}
implies that $\theta \le \frac{1}{2M-1}$ and thus, recalling that $M\ge 2$,
we have $\nu \ge \frac{1}{M}-\frac{1}{2M-1}
\ge \theta/2$.  By (iii), all of the  $x_i$ are equal, violating condition (iv),
a contradiction.

Assume now that $\theta+\nu < \frac{1}{M}$.
By Lemma \ref{lem:smallgamma},
$\two$ contains $\frac{1}{2M}$ and $\frac{1}{M+1}$, hence
\be\label{smallgamma-two}
\theta \le \frac{1}{2M} \le \frac{1}{M+1} \le \theta+\nu \le 1-\gamma \le \frac{1}{M}.
\ee
 If $k \le M$ then $\sum x_i < M(1-\gamma) \le 1$,
a contradiction.
If $k=M+1$ then suppose that there are $\ell$ variables $x_i$ lying in $(\nu,\theta)$.
If $\ell=0$, then, utilizing \eqref{smallgamma-two}, all components $x_i$ 
lie in $(\theta+\nu,1-\gamma)$ and hence $|\bx| > 1$. If $\ell\ge 2$ then 
$|\bx| < \frac{\ell}{2M} + \frac{M+1-\ell}{M} < 1$.  Hence $\ell=1$
and without loss of generality $\nu < \ssc{x}{1} < \theta$.  By Lemma \ref{lem:smallgamma}, 
\eqref{eq:A2} implies that $1-M(1-\gamma) \ge \theta > \ssc{x}{1}$
and thus
\[
|\bx| < \theta + M(1-\gamma) \le 1-M(1-\gamma) + M(1-\gamma) = 1,
\]
a contradiction.  Therefore, $k \ge M+2$.

Using \eqref{smallgamma-two} again, we see that $(M+1)(\theta+\nu)>1$, so there
are at most $M$ components $x_i$ in $(\theta+\nu,1-\gamma)$ and, consequently,
there are at least two components $x_i$ in $(\nu,\theta)$.
 Furthermore, by Lemma \ref{lem:smallgamma}, $\nu > \frac{1}{4M} \ge \theta/2$,
hence by (iii), all of the $x_i$ which are in $(\nu,\theta)$ are equal, and furthermore the common value is
$\ge \frac{1-\gamma}{2}$ by (ii).  By relabeling the components, we may suppose that $\ssc{x}{1}=\cdots=x_b \in [\frac{1-\gamma}{2},\theta)$, where $b\ge 2$.

By Lemma \ref{lem:smallgamma}, $\two$ contains either $[\frac{1}{2M},\frac{2}{2M+1}]$ or $[\frac{1}{2M+1},\frac{1}{M+1}]$ (recall that we have already handled the case $\theta+\nu=\frac{1}{M}$).  If
 $\two$ contains $[\frac{1}{2M},\frac{2}{2M+1}]$,
then $\ssc{x}{1} \ge \frac{1-\gamma}{2} > \frac{1}{2M+1}$ and thus all 
components $x_i$ lie in
\[
\Big(\frac{1}{2M+1},\frac{1}{2M}\Big) \cup \Big(\frac{2}{2M+1},\frac{2}{2M}\Big).
\]
Hence the sum of the $x_i$, namely 1, lies in $(\frac{h}{2M+1},\frac{h}{2M})$ for some integer
$h$, which is clearly impossible.

In the second case, suppose that $\two$ contains $[\frac{1}{2M+1},\frac{1}{M+1}]$.
Here we have $\ssc{x}{1} \ge \frac{1-\gamma}{2} > \frac{1}{2M+2}.$
Let $c=k-b$ be the number of components $x_i$ lying in $(\theta+\nu,1-\gamma)$.  Then we have
\[
1  = |\bx| > \frac{b}{2M+2} + \frac{c}{M+1} = \frac{b+2c}{2M+2},
\]
whence $b+2c \le 2M+1$.  Recall that \eqref{eq:A2} implies
$1-(1-\gamma)M \ge \theta$ (using Lemma \ref{lem:smallgamma} again), whence 
\[
1-\gamma \le \frac{1-\theta}{M}.
\]
Therefore,
\begin{align*}
1 &= |\bx| < b\theta + c(1-\gamma) \le b\theta + c\pfrac{1-\theta}{M}\\
&\le (2M+1-2c)\theta + c\pfrac{1-\theta}{M} = \frac{c}{M}+\theta \big(2M+1-2c-c/M\big).
\end{align*}
Since $b\ge 2$, $2c\le 2M-1$ and thus $2M+1-2c-c/M > 0$.  Therefore,
\[
1 < \frac{c}{M} + \frac{2M+1-2c-c/M}{2M+1} =1,
\]
a contradiction.  This completes the proof.
\end{proof}

\bigskip

\subsection{Proof of Proposition \ref{prop:A123} in the case $1-\gamma \le \theta$.}

\begin{lem}\label{lem:A3-ratio-less-one-half}
If \emph{(A3)} fails then $\nu < \frac{1-\gamma}{2} \le  \frac{1}{2M}$.
\end{lem}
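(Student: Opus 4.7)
The plan is to argue directly by contrapositive, showing that if $\nu \ge (1-\gamma)/2$, then no vector $\bx$ satisfying conditions (i)--(iv) of Hypothesis (A3) can exist at all, so in particular (A3) holds vacuously on such vectors. The key observation is that condition (i) places every component of $\bx$ in the open interval $(\nu, 1-\gamma)$, whose length is exactly $1-\gamma-\nu$. If $\nu \ge (1-\gamma)/2$, then this length satisfies $1-\gamma-\nu \le (1-\gamma)/2 \le \nu$, so any two components $x_i, x_j$ of $\bx$ automatically satisfy $|x_i - x_j| < \nu$.

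The second step is to combine this with condition (iii), which forces either $x_i = x_j$ or $|x_i - x_j| > \nu$. The strict inequality $|x_i - x_j| < \nu$ rules out the second alternative, so every pair of components must be equal. But this contradicts condition (iv), which requires that at least two components differ. Thus no vector satisfying (i)--(iv) exists when $\nu \ge (1-\gamma)/2$, and so (A3) cannot fail; contrapositively, if (A3) fails then $\nu < (1-\gamma)/2$.

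For the second bound $\tfrac{1-\gamma}{2} \le \tfrac{1}{2M}$, I will just cite \eqref{Mdef}, which gives $1-\gamma \le 1/M$ by the definition $M = \fl{1/(1-\gamma)}$. Dividing by $2$ yields the desired inequality.

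There is no real obstacle here: the argument is essentially a one-line pigeonhole using the lengths of the intervals, and the case $1-\gamma \le \theta$ assumed in this subsection plays no role in the proof itself (it is part of the context in which the lemma is used, since the opposite regime has already been handled by Lemma \ref{DFE:small-gamma}). The main work lies in the subsequent lemmas of this subsection, where one must actually construct a contradiction under the more delicate assumption $\nu < (1-\gamma)/2$; the present lemma is simply the initial reduction that rules out the ``large $\nu$'' regime.
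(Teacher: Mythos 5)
Your proof is correct and is essentially the same argument as the paper's, just phrased contrapositively: the paper fixes a vector witnessing the failure of (A3), picks two distinct components via (iv), and notes that (i) bounds their gap above by $1-\gamma-\nu$ while (iii) bounds it below by $\nu$, giving $2\nu<1-\gamma$ directly. Your rearrangement into ``assume $\nu\ge(1-\gamma)/2$ and deduce that no witness exists'' is the same pigeonhole dressed differently, and the appeal to \eqref{Mdef} for the second inequality matches the paper as well.
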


\begin{proof}
By assumption, there is a vector $\bx$ with non-negative components satisfying 
$|\bx|=1$ and (i),(ii),(iii),(iv) of Hypothesis (A3), but with
no subsum in $\two$. In particular, by (iv)
there are at least two distinct values $x_i$.  By (i) and (iii), $1-\gamma>2\nu$.
In particular, $\nu < \frac{1-\gamma}{2} \le \frac{1}{2M}$. 
\end{proof}

\begin{lem}\label{lem:nu=1-2theta}
Assume that $(\gamma,\theta,\nu)\in \cQ_1$ and $\theta+\nu=1-\theta$.
Then \eqref{eq:A1} implies \emph{(A3)}.
\end{lem}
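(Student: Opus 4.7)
The plan is to prove the contrapositive: assume (A3) fails, then produce an integer $n\ge M+1$ for which no multiple $a/n$ lies in $[\theta,\theta+\nu]=[\theta,1-\theta]$. This will refute \eqref{eq:A1}.

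First I will invoke Lemma \ref{lem:A3-ratio-less-one-half}: since (A3) fails we obtain $\nu<\frac{1-\gamma}{2}\le \frac{1}{2M}$, and combining this with $\nu=1-2\theta$ forces $\theta>\tfrac12-\tfrac{1}{4M}$, so $\theta$ is very close to $\tfrac12$. Next I will unpack the hypothesis $(\gamma,\theta,\nu)\in\cQ_1$ under $\nu=1-2\theta$: since $1-\theta-\nu=\theta<\tfrac12$, the first clause ($\tfrac12\le\gamma<1-\theta-\nu$) in the definition of $\cQ_1$ is vacuous, so we must have $\gamma\ge 1-\theta>\tfrac12$. Hence $M=\fl{1/(1-\gamma)}\ge 2$, which allows me to take $n=2M-1\ge M+1$.

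With this choice of $n$ I will estimate $n\theta$ and $n(1-\theta)$ using $\theta>\tfrac12-\tfrac{1}{4M}$: a direct computation gives $(2M-1)\theta>M-1+\tfrac{1}{4M}>M-1$ and $(2M-1)(1-\theta)<M-\tfrac{1}{4M}<M$. Any integer $a$ with $a/(2M-1)\in[\theta,1-\theta]$ would therefore satisfy $a\ge M$ and $a\le M-1$ simultaneously, which is impossible. Hence \eqref{eq:A1} fails at $n=2M-1$, the desired contradiction.

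There is essentially no obstacle here: the failure of (A3) immediately produces (via Lemma \ref{lem:A3-ratio-less-one-half}) a quantitative squeeze that makes $[\theta,1-\theta]$, an interval of length $\nu<1/(2M)$, too short to contain any multiple of $1/(2M-1)$, while the membership $(\gamma,\theta,\nu)\in\cQ_1$ guarantees that $n=2M-1$ is a legitimate test value for \eqref{eq:A1}. The entire argument is essentially pigeonhole and bypasses any combinatorial analysis of the hypothetical vector $\bx$ witnessing the failure of (A3).
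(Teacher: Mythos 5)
Your proof is correct, and it takes essentially the same approach as the paper: both proofs reduce the problem to Lemma \ref{lem:A3-ratio-less-one-half} and then exploit the symmetry of $[\theta,1-\theta]$ about $\tfrac12$, producing an odd $n>M$ with $\tfrac1n>\tfrac1{2M}$ for which $[\theta,1-\theta]$ contains no multiple of $\tfrac1n$. The only cosmetic differences are that you argue by contraposition and pick $n=2M-1$, whereas the paper argues forward and picks $n$ to be the smallest odd integer exceeding $M$ (so $n\in\{M+1,M+2\}$); either choice works because all that is needed is an odd $n$ with $M<n<2M$, and both exist once $M\ge2$ (which, as you correctly observe, follows from $(\gamma,\theta,\nu)\in\cQ_1$ and $\nu=1-2\theta$ forcing $\gamma\ge1-\theta>\tfrac12$).
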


\begin{proof}
Let $n$ be the smallest odd integer
larger than $M$, so that $n\in \{M+1,M+2\}$.  Assuming \eqref{eq:A1}, and exploiting the symmetry of $\two$, we see that $\two$ contains both $\frac{(n-1)/2}{n}$ and $\frac{(n+1)/2}{n}$.  Thus, 
\[
\nu \ge \frac{1}{n} > \frac{1}{2M},
\]
which implies (A3) by Lemma \ref{lem:A3-ratio-less-one-half}.
\end{proof}

\medskip

It remains to prove Proposition \ref{prop:A123} in the case where $1-\gamma\le \theta$
and $\theta+\nu<\frac12$.  We note that the statements ``$P\in \cQ_1$ and
$1-\gamma\le \theta<\theta+\nu<\frac12$'' and ``$P\in \cQ$ and
$1-\gamma\le \theta<\theta+\nu<\frac12$'' are equivalent.  Although our main result,
Proposition \ref{prop:ratio} is stated for $P\in \cQ_1$, it follows that it holds
for $P\in \cQ$ as well.

Recall the definition of $a$ and $k$ from Definition \ref{def:ak}.
Since $\frac{1}{M+1} < 1-\gamma \le \theta \le \frac{a}{M+1} \le \theta+\nu < \frac12$ we have
\be\label{aMk}
2 \le a \le M/2, \qquad M\ge 4, \qquad k\ge 1.
\ee

\begin{prop}\label{prop:ratio}
Suppose that $(\gamma,\theta,\nu)\in \cQ_1$ and that
$1-\gamma \le \theta < \theta+\nu < \frac12$.  
Assume \eqref{eq:A1} and \eqref{eq:A2}.  Then $\nu > \frac{1-\gamma}{2}$ except
in the following cases:
\begin{enumerate}
\item[(a)] $1-\gamma=\frac4{33}$, $[\theta,\theta+\nu] = [\frac3{11},\frac13]$, so that
$\nu = \frac{1-\gamma}{2}$;
\item[(b)] $1-\gamma=\frac2{15}$, $\two = [\frac13, \frac25]$,  so that
$\nu = \frac{1-\gamma}{2}$;
\item[(c)] $1-\gamma=\frac3{22}$, $[\theta,\theta+\nu] = [\frac2{11},\frac14]$, so that
$\nu = \frac{1-\gamma}{2}$;
\item[(d)] $1-\gamma=\frac7{45}$, $\two = [\frac29, \frac3{10}]$,  so that
$\nu = \frac{1-\gamma}{2}$;
\item[(e)] $\frac16 \le 2\nu \le 1-\gamma \le \frac{4}{21}$, $\two$ contains
 $[\frac14, \frac13]$, and $4(1-\gamma)\le 1-\theta$; 
\item[(f)] $\frac{5}{28} \le 2\nu \le 1-\gamma \le \frac{4}{21}$, $\two$ contains $[\frac27,\frac38]$ and $2(1-\gamma)\le \theta+\nu$;
\item[(g)] $\frac4{21} \le 2\nu \le 1-\gamma \le \frac{8}{35}$, $\two$ contains
$[\frac13,\frac37]$ and $3(1-\gamma)\le 1-\theta$. 
\item[(h)] $\frac{8}{35} \le 2\nu \le (1-\gamma) \le \frac{6}{25}$,
$\two$ contains $[\frac27,\frac25]$ and $3(1-\gamma) \le 1-\theta$.
\end{enumerate}
Furthermore, in all cases (a)--(h) we have $\nu \ge \frac5{12}(1-\gamma)$,
and that cases (e)--(h) occur only when $a=2$ and $4\le M\le 6$.
\end{prop}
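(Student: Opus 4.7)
The plan is to argue contrapositively: assume $\nu \le \tfrac{1-\gamma}{2}$ and show that $(\gamma,\theta,\nu)$ must fit one of the eight listed configurations. The key tool is the pair of lower bounds for $\nu$ in \eqref{nu-lower}, combined with $\tfrac{1-\gamma}{2} \le \tfrac{1}{2M}$ (from \eqref{Mdef}) and $a \ge 2$ (from \eqref{aMk}). Forcing the first bound in \eqref{nu-lower} to be at most $\tfrac{1}{2M}$ yields
\begin{equation*}
(3M-1)k \le M^2 + 5M,
\end{equation*}
so $k$ is bounded roughly by $M/3$; forcing the second bound to be at most $\tfrac{1}{2M}$ gives a complementary constraint relating $a$, $M$, and $k$.

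Next I would split on $M$. For $M$ sufficiently large (I expect $M \ge 9$ suffices), the sharper inequality $1-\gamma > \tfrac{1}{M+1}$ from \eqref{Mdef}, fed back into \eqref{nu-lower}, forces $\nu > \tfrac{1-\gamma}{2}$ strictly, contradicting the assumption. For each of the finitely many remaining $M \in \{4,\dots,8\}$, I would enumerate all admissible $(a,k)$ with $2 \le a \le M/2$ and $1 \le k \le (M^2+5M)/(3M-1)$. For each surviving triple, Definition \ref{def:ak} confines $\theta$ to $[\tfrac{a}{M+k+1}, \tfrac{a}{M+k}]$ and, together with $\nu \le \tfrac{1-\gamma}{2}$, confines $\theta+\nu$ to a narrow range as well; then \eqref{eq:A2} forces $h(1-\gamma) \in \two \cup [1-\theta-\nu,1-\theta]$ for some positive integer $h \le M$, severely restricting the admissible values of $\gamma$.

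I would then check each surviving configuration matches one of (a)--(h). Cases (a)--(d) arise as isolated triples $(M,a,k)$, specifically $(8,3,3)$, $(7,3,2)$, $(7,2,4)$, $(6,2,3)$, where the constraints are tight enough to force $\nu = \tfrac{1-\gamma}{2}$ exactly; cases (e)--(h) correspond to $a = 2$ with $M \in \{4,5,6\}$, with $\two$ forced to contain one of $[\tfrac14,\tfrac13]$, $[\tfrac27,\tfrac38]$, $[\tfrac13,\tfrac37]$, $[\tfrac27,\tfrac25]$ respectively. The bound $\nu \ge \tfrac{5}{12}(1-\gamma)$ is then immediate in (a)--(d), and in (e)--(h) it follows from the explicit intervals forced into $\two$ combined with the upper bound $2\nu \le 1-\gamma$ by a short direct calculation.

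The main obstacle I anticipate is the enumeration step: each $(M,a,k)$ triple may be compatible with several choices of $h$ in \eqref{eq:A2}, and each branch must be tracked to completion. Additional care is needed at boundary values (where $\theta = \tfrac{a}{M+k}$ or $\theta+\nu = \tfrac{a+1}{M+k+1}$ exactly), where the strict versus non-strict inequalities in Definition \ref{def:ak} become delicate. The bulk of the work will lie in ensuring completeness of the case analysis --- that no admissible configuration is missed and each matches exactly one of (a)--(h).
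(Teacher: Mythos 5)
Your overall strategy — argue contrapositively from $\nu \le \tfrac{1-\gamma}{2}$, exploit the lower bound \eqref{nu-lower} on $\nu$, then do a finite case analysis driven by \eqref{eq:A1} and \eqref{eq:A2} — is the same strategy the paper uses, but the paper organizes the cases by $k$ first (handling $k\ge 5$, then $k=4,3,2,1$ separately), whereas you propose to organize by $M$ first. That difference is not in itself a problem, but your reduction to finitely many $M$ is underspecified and, as stated, incorrect.

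The specific gap is your claim that for $M\ge 9$, the inequality $1-\gamma > \tfrac{1}{M+1}$ fed into \eqref{nu-lower} forces $\nu > \tfrac{1-\gamma}{2}$ strictly. This is not true as a pure consequence of \eqref{nu-lower} and \eqref{Mdef}. Consider $k=2$, $M=10$, $a=5=M/2$. There \eqref{nu-lower} gives only $\nu \ge \max\bigl(\tfrac{5}{132},\tfrac{7}{156}\bigr)\approx 0.045$, while $\tfrac{1-\gamma}{2}$ can be as large as $\tfrac{1}{20}=0.05$, so no contradiction follows from \eqref{nu-lower} alone. The paper's treatment of this subcase does not use \eqref{nu-lower} at all: it invokes \eqref{eq:A1} directly with the fractions $\tfrac{a+1}{M+5}$, $\tfrac{a+2}{M+5}$ (which are not captured by \eqref{nu-lower}) to get $\nu\ge\tfrac{1}{20}$, and then rules out the equality case by showing \eqref{eq:A2} fails. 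Similar phenomena occur for $M=8$, $k=2$, $a=4$. So the boundary between ``large $M$ handled by inequality'' and ``small $M$ handled by enumeration'' is not $M=9$, and even where a general-$M$ argument works it needs \eqref{eq:A1} applied to fractions other than those appearing in \eqref{nu-lower}, plus \eqref{eq:A2}. A secondary issue: your displayed inequality $(3M-1)k \le M^2+5M$ is derived under $a=2$, but you would need separate bounds for $a=3$ up to $a=\lfloor M/2\rfloor$, which changes which $(M,k)$ survive. Until these are filled in, the reduction to a finite enumeration has a hole precisely where the hard work lies.
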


\begin{proof}
Throughout the proof, we assume that $\nu \le \frac{1-\gamma}{2}$.
In particular, $\nu \le \frac{1}{2M}$.
By \eqref{nu-lower},
\begin{align*}
\nu &\ge \frac12 \( \frac{(k-1)a}{(M+1)(M+k)} + \frac{M+k-a}{(M+k)(M+k+1)} \)
\\ &= \frac{1}{2(M+k+1)} \(1 - \frac{a}{M+k} + \frac{(k-1)a(M+k+1)}{(M+1)(M+k)} \).
\end{align*}
The right side is $> \frac{1}{2M}$ if and only if
\[
-a + \frac{(k-1)a (M+k+1)}{M+1} > \frac{(k+1)(M+k)}{M},
\]
equivalently
\be\label{nu**}
a \( k-2 + \frac{k(k-1)}{M+1}\) > k+1+\frac{k(k+1)}M.
\ee
Hence, if \eqref{nu**} holds then we reach a contradiction.

\textbf{The case $k\ge 5$.}
By \eqref{aMk}, $M\ge 4$ and $a\ge 2$, thus 
\begin{align*}
\frac{k(k+1)}{M}-\frac{k(k-1)a}{M+1} &= \frac{k(k+1)}{M}\(1- a\cdot \frac{k-1}{k+1}\cdot \frac{M}{M+1} \) \\
&\le \frac{k(k+1)}{M}\( 1 - 2 \cdot \frac{4}{6} \cdot \frac{4}{5} \) <0
\end{align*}
and $(k-2)a \ge 2k-4 \ge k+1$.  Thus, \eqref{nu**} holds.

\medskip

\textbf{The case $k=4$.}
The inequality \eqref{nu**} becomes
\[
a \(2+ \frac{12}{M+1}\)  > 5 + \frac{20}{M}.
\]
When $a\ge 3$ the left side above is at least $6+ \frac{36}{M+1} > 5 + \frac{20}{M}$
since $M\ge 4$ and thus \eqref{nu**} holds.
If $a=2$, \eqref{nu-lower} implies that
\[
\nu \ge \frac{6}{(M+1)(M+4)} > \frac{1}{2M} \qquad (4\le M\le 6)
\]
and 
\[
\nu \ge \frac{M+2}{(M+4)(M+5)} > \frac{1}{2M} \qquad (M\ge 8).
\]
Finally, when $a=2$, $M=7$, \eqref{eq:A1} implies that
$\two$ contains $[\frac2{11},\frac14]$
and hence $\nu \ge \frac{3}{44}$. Then
\[
\frac{3}{22} \le 2\nu \le 1-\gamma \le \frac17, 
\qquad \two \subseteq \Big[ \frac14-\frac1{14}, \frac{2}{11}+\frac{1}{14}  \Big]=\Big[\frac{5}{28},\frac{39}{154}  \Big].
\]
We thus have $1-\gamma < \theta$, $2(1-\gamma)>\theta+\nu$,
$5(1-\gamma) < 1-\theta-\nu$ and $7(1-\gamma) > 1-\theta$.
Hence, \eqref{eq:A2} implies that $6(1-\gamma) \in [1-\theta-\nu, 1-\theta]$.
Since $\nu \ge \frac14-\theta$, we get
\[
12 \nu \le 6(1-\gamma) \le 1-\theta \le \frac34 + \nu,
\]
which is only possible if every inequality is an equality, i.e., $\nu=\frac3{44}$,
$1-\gamma = \frac{3}{22}$, $\theta = \frac{2}{11}$.  This is item (c).

\medskip

\textbf{The case $k=3$.}
When $a\ge 4$ the left side of \eqref{nu**}
is at least $a(1+\frac{6}{M+1}) > 4 + 12/M$, so \eqref{nu**} holds.
When $a=3$, \eqref{nu-lower} gives
\begin{align*}
\nu &\ge \frac{6}{(M+1)(M+3)} > \frac{1}{2M} \qquad (4\le M\le 7),\\
\nu &\ge \frac{M}{(M+3)(M+4)} > \frac{1}{2M} \qquad (M\ge 9).
\end{align*}
When $a=3$ and $M=8$, \eqref{eq:A1} implies that
$\two \supseteq [\frac3{11},\frac13]$ and
$\nu \ge \frac{2}{33}$.  Then
\[
\frac{4}{33} \le 2\nu \le 1-\gamma \le \frac18, 
\qquad \two \subseteq \Big[ \frac13-\frac1{16}, \frac{3}{11}+\frac{1}{16} \Big]=\Big[\frac{13}{48},\frac{59}{176}\Big].
\]
We thus have $2(1-\gamma) < \theta$, $3(1-\gamma)>\theta+\nu$,
$5(1-\gamma) < 1-\theta-\nu$ and $7(1-\gamma) > 1-\theta$.
Hence, \eqref{eq:A2} implies that $6(1-\gamma) \in [1-\theta-\nu, 1-\theta]$.
Since $\nu \ge \frac13-\theta$, we get
\[
12 \nu \le 6(1-\gamma) \le 1-\theta \le \frac23 + \nu,
\]
which is only possible if every inequality is an equality, i.e., 
 $\nu=\frac2{33}$, $\theta=\frac{3}{11}$, $1-\gamma=\frac{4}{33}$.  This is item (a).

When $a=2$ and $M\ge 7$ then \eqref{nu-lower} implies
\[
\nu \ge \frac{M+1}{(M+3)(M+4)} > \frac{1}{2M}.
\]

If $M=6$ then $\two$ contains $[\frac29,\frac3{10}]$ and hence $\nu\ge \frac{7}{90}$.
Thus, $\frac{7}{45} \le 2\nu \le 1-\gamma\le \frac16$.  As $\nu\le \frac1{12}$,
we have $\two \subseteq [0.216,0.306]$.  It follows that $1-\gamma < \theta$,
$2(1-\gamma) > \theta+\nu$, $4(1-\gamma)\le \frac23 < 1-\theta-\nu$,
 and $6(1-\gamma) > 1-\theta$.  By \eqref{eq:A2},
we must have $5(1-\gamma) \in [1-\theta-\nu, 1-\theta]$.  Thus,
\[
10\nu \le 5(1-\gamma) \le 1-\theta \le \frac7{10}+\nu,
\]
whence $\nu \le \frac{7}{90}$.  This implies that $\nu=\frac{7}{90}$,
$1-\gamma = 2\nu$ and $\two=[\frac29,\frac3{10}]$.  This is item (d).

If $M=5$ then $\two$ contains $[\frac14,\frac13]$, $\frac16 \le 2\nu \le 1-\gamma \le \frac15$ and we also have $1-\gamma>\frac16$.  
Consequently, $\two \subseteq [0.233,0.35]$.
We have $1-\gamma < \theta$.  Also $2(1-\gamma)> \theta+\nu$, since otherwise
we have $4\nu \le \theta+\nu \le \frac14 + \nu$, then $\nu\le \frac{1}{12}$,
then  $\theta+\nu=\frac13\ge 2(1-\gamma)$, a contradiction.  Now $\frac12 < 3(1-\gamma) \le \frac35 < 1-\theta-\nu$, $5(1-\gamma)>1-\theta$ and $4(1-\gamma) > \frac23 \ge 1-\theta-\nu$,
thus \eqref{eq:A2} is equivalent to  $4(1-\gamma) \le 1-\theta$.  This implies that
$8\nu \le 4(1-\gamma) \le 1-\theta \le \frac23+\nu$, so $\nu \le \frac{2}{21}$ and
consequently
\[
1-\gamma \le \frac{1-\theta}{4}=\frac{1-(\theta+\nu)+\nu}{4} \le \frac{2/3+2/21}{4}=\frac{4}{21}.
\]
 This gives most of item (e), excluding only the case $1-\gamma=\frac16$.

When $M=4$, $\two$ contains $[\frac27,\frac25]$ and thus
$\frac8{35} \le 2\nu \le 1-\gamma \le \frac14$.  Since $\two \subseteq [0.275,0.4108]$, we have $1-\gamma < \theta$, $\theta+\nu< 2(1-\gamma) \le \frac12$ and $4(1-\gamma)>1-\theta$.  By \eqref{eq:A2}, we must have $3(1-\gamma)\le 1-\theta$.
This implies that $6\nu \le 1-\theta \le \frac35+\nu$, so that $\nu \le \frac{3}{25}$
and $1-\gamma \le \frac{1-\theta}{3} \le  \frac{3/5+3/25}{3}=\frac{6}{25}$. 
 This is item (h).

\medskip

\textbf{The case $k=2$.}
If $a\le \frac{M-2}{2}$ then \eqref{aMk} implies that $M\ge 6$ and hence \eqref{nu-lower} gives
\[
\nu \ge  \frac{M/2+3}{(M+2)(M+3)} = \frac{1}{2M} \cdot \frac{1+6/M}{(1+5/M+6/M^2)} \ge \frac{1}{2M}.
\]
Hence, $\nu > \frac{1-\gamma}{2}$ except in the case $M=6$, $a=2$, $1-\gamma=\frac16$,
and $\two=[\frac14,\frac13]$, which is the final part of item (e).

Thus we may assume that $a\in\{\frac{M-1}{2},\frac{M}{2}\}$ according to the parity of $M$. First suppose $M$ is odd, so $M\ge 5$ and $a=\frac{M-1}{2}$. Then
$\two$ contains $\frac{a}{M+k}=\frac{(M-1)/2}{M+2}$ and $\frac{a+1}{M+k+1}=\frac{(M+1)/2}{M+3}$.  When $M\ge 9$,
\[
\frac{(M+1)/2}{M+5} < \frac{(M-1)/2}{M+2}, \qquad \frac{(M+3)/2}{M+5} > \frac{(M+1)/2}{M+3},
\]
and thus, by \eqref{eq:A1}, $\two$ also contains $\frac{(M+1)/2}{M+5}$ or $\frac{(M+3)/2}{M+5}$.  Hence,
\begin{align*}
\nu &\ge \min \bigg( \frac{(M+1)/2}{M+3} - \frac{(M+1)/2}{M+5}, \frac{(M+3)/2}{M+5} - \frac{(M-1)/2}{M+2} \bigg)\\
&= \min \bigg( \frac{M+1}{(M+3)(M+5)}, \frac{M+11}{2(M+2)(M+5)} \bigg)\\
&> \frac{1}{2M},
\end{align*}
a contradiction.

When $M=7$ and $a=3$, $\two$ contains $[\frac13,\frac25]$ and $\nu \ge \frac{1}{15}$.
Then $\frac{2}{15} \le 2\nu \le 1-\gamma \le \frac17$.
Consequently, $\theta \ge \frac25-\nu \ge 0.328$ and $\theta+\nu\le \frac13+\nu\le 0.405$.
With these restrictions, $2(1-\gamma)<\theta$, $\theta+\nu < 4(1-\gamma) < 1-\theta-\nu$ and $6(1-\gamma) > 1-\theta$.  By \eqref{eq:A2}, we have either $3(1-\gamma) \le \theta+\nu$ 
or $5(1-\gamma) \le 1-\theta$.  In the former case,
\[
6\nu \le 3(1-\gamma) \le \theta+\nu \le \frac13+\nu
\]
and in the latter case,
\[
10\nu \le 5(1-\gamma) \le 1-\theta \le \frac35+\nu.
\]
In either case, we have $\nu \le \frac{1}{15}$, and thus $\nu=\frac{1}{15}$,
 $\two=[\frac13,\frac25]$ and $1-\gamma=\frac{2}{15}$.  This is item (b).

Consider the case $M=5$, $a=2$.  Then $\two$ contains $[\frac27,\frac38]$ and
$\frac5{28} \le 2\nu\le 1-\gamma\le \frac15$.
If $1-\gamma \le \frac{3}{16}$ then we have $2(1-\gamma) \in \two$ (in particular, \eqref{eq:A2} always holds) and this is part of item (f).  Now suppose that $\frac3{16} < 1-\gamma \le \frac15$.
Since $\nu \le \frac{1}{10}$ we have $\two \subseteq [0.275,\frac{27}{70}]$.
Thus, $1-\gamma < \theta$, $\frac12 < 3(1-\gamma) < 1-\theta-\nu$ and $4(1-\gamma) \ge \frac34 > 1-\theta$.  Also, $2(1-\gamma) > \frac38$, hence by \eqref{eq:A2}, we must have
$2(1-\gamma) \le \theta+\nu$.  Also,
$4\nu \le 2(1-\gamma) \le \theta+\nu\le \frac27+\nu$ implies $\nu \le \frac{2}{21}$
and $2(1-\gamma) \le \frac27+\frac2{21} = \frac{8}{21}$.
 This is the other part of item (f).

Now assume that $M$ is even, so $M\ge 4$ and $a=\frac{M}{2}$.
Here, $\two$ contains $\frac{a}{M+2}=\frac12-\frac{1}{M+2}$ and
$\frac{a+1}{M+3}=\frac12 -\frac1{2M+6}$.  By \eqref{eq:A1}, $\two$
contains either $\frac{a+1}{M+5} = \frac12-\frac{3}{2M+10}$ or 
$\frac{a+2}{M+5}=\frac12-\frac1{2M+10}$ since 
$\frac{3}{2M+10} > \frac{1}{M+2}$ and $\frac{1}{2M+10} < \frac{1}{2M+6}$.
Hence
\begin{align*}
\nu &\ge \min \Big( \frac{3}{2M+10}-\frac{1}{2M+6}, \frac{1}{M+2} - \frac{1}{2M+10} \Big) \\
&= \min \Big( \frac{M+2}{(M+5)(M+3)}, \frac{M+8}{2(M+2)(M+5)} \Big).
\end{align*}
When $M\ge 12$ this implies $\nu > \frac{1}{2M}$.
When $M=10$ this gives $\nu \ge \frac{1}{20}$, and so $\nu> \frac{1-\gamma}{2}$
unless $\nu=\frac{1}{20}$, $1-\gamma=\frac{1}{10}$ and $\two=[\frac5{12},\frac{7}{15}]$.  But then $h(1-\gamma)\not\in \two \cup [1-\theta-\nu,1-\theta]$ for all $h\in \NN$, so that \eqref{eq:A2} fails.
  When $M=8$, $\two$ contains $[\frac{5}{13},\frac5{11}]$ or $[\frac25,\frac{6}{13}]$.  In the former case, $\nu > \frac{1}{16}=\frac{1}{2M}$ and in the latter, $\nu \ge \frac{4}{65}$ and $\two$ contains $[\frac25,\frac6{13}]$.  Since $\nu \le \frac{1-\gamma}{2}$ then $\frac{8}{65} \le 2\nu \le 1-\gamma \le \frac18$.  But then $h(1-\gamma)\not\in \two \cup [1-\theta-\nu,1-\theta]$ for all $h\in \NN$, so that \eqref{eq:A2} again fails.
 When $M=6$, $\two$ contains either $[\frac4{11},\frac49]$ or $[\frac38,\frac{5}{11}]$,
 and thus $\frac7{44} \le 2\nu \le 1-\gamma \le \frac16$.  Again, \eqref{eq:A2} fails.

 This leaves the case $M=4$.  Here, $\two$ contains $[\frac13,\frac37]$,
  $\frac4{21} \le 2\nu \le 1-\gamma$
 and $\frac15 < 1-\gamma \le \frac14$.  If $1-\gamma \le \frac29$,
 then $3(1-\gamma) \le 1-\theta$ and so we are in case (g). 
 If $\frac29 < 1-\gamma \le \frac14$, then \eqref{eq:A2} requires
 either $2(1-\gamma) \le \theta+\nu$ or $3(1-\gamma) \le 1-\theta$.
 In the first case, $4\nu \le \theta+\nu$, so $\nu \le \theta/3 \le \frac19$
 and then $2(1-\gamma) \le \frac43 \theta  \le \frac49$, a contradiction.
 Hence, $3(1-\gamma) \le 1-\theta$,
 which implies that $6\nu\le 3(1-\gamma)\le 1-(\theta+\nu)+\nu\le \frac47+\nu$,
 and hence $\nu\le \frac{4}{35}$ and $1-\gamma\le \frac{4/7+\nu}{3} \le \frac{8}{35}$.
 Thus we are also in case (g).
 
 \medskip
 
\textbf{The case $k=1$.}
By \eqref{nu-lower}, 
\[
\nu \ge \frac{M+1-a}{(M+1)(M+2)}.
\]
If $a\le \frac{M}{2}-1$ then the numerator is at least $\frac{M+4}{2}$ and we get
\[
\nu \ge \frac{1+4/M}{2M(1+1/M)(1+2/M)} > \frac{1}{2M},
\]
as desired.
If $a=\frac{M}{2}$ then $\two$ contains $\frac{a+1}{M+2}=\frac12$, contradicting
that we are in the case $\theta+\nu < 1/2$.

Finally, suppose that $a=\frac{M-1}{2}$, so that $M\ge 5$ and $M$ is odd.
Then $\two$ contains $[\frac{(M-1)/2}{M+1},\frac{(M+1)/2}{M+2}]$.
Since $\frac{(M+1)/2}{M+4} \le \frac{(M-1)/2}{M+1}$ (with equality  when $M=5$)
and $\frac{(M+3)/2}{M+4} > \frac{(M+1)/2}{M+2}$, by \eqref{eq:A1} one of the numbers
$\frac{(M+1)/2}{M+4}, \frac{(M+3)/2}{M+4}$ also lies in $\two$.
Therefore,
\begin{align*}
\nu &\ge \min\bigg( \frac{(M+1)/2}{M+2} - \frac{(M+1)/2}{M+4}, \frac{(M+3)/2}{M+4}-
\frac{(M-1)/2}{M+1} \bigg)\\
&= \min \bigg( \frac{M+1}{(M+2)(M+4)}, \frac{M+7}{2(M+1)(M+4)} \bigg).
\end{align*}
When $M\ge 7$, this shows that $\nu > \frac{1}{2M}$.
When $M=5$, $\two$ contains $[\frac13,\frac37]$
and we have
$\frac4{21} \le 2\nu \le 1-\gamma \le \frac15$.  Since $2(1-\gamma)\in \two$,
\eqref{eq:A2} holds for all such choices of parameters. 
Furthermore, $3(1-\gamma) \le \frac35 \le 1-\theta$.
 Thus we are also in case (g).

The claim that cases (e)--(h) occur only when $a=2$ and $4\le M\le 6$ 
follows from the above case-by-case analysis. We also have in each case (e)--(h) the following bounds:
\begin{itemize}
\item[(e)] here $\frac{\nu}{1-\gamma} \ge \frac{1/12}{4/21}=\frac{7}{16}>\frac5{12}$;
\item[(f)] here $\frac{\nu}{1-\gamma} \ge \frac{5/56}{4/21}>\frac5{12}$;
\item[(g)] here $\frac{\nu}{1-\gamma} \ge \frac{2/21}{8/35} = \frac5{12}$; 
\item[(h)] here $\frac{\nu}{1-\gamma} \ge \frac{4/35}{6/25} > \frac5{12}$.
\end{itemize}
This completes the proof of Proposition \ref{prop:ratio}.
\end{proof}

Since we will need it later in the proof of Theorem \ref{thm:continuity}, we prove a slightly stronger form of the
statement ``\eqref{eq:A1} and \eqref{eq:A2} implies (A3)''.
As remarked earlier, although the next Proposition is stated for $P\in \cQ_1$
it also holds for $P\in \cQ$.

\begin{prop}\label{prop:A123-epsilon-tweak}
Fix $P_0=(\gamma,\theta,\nu)\in \cQ_1$ with $1-\gamma \le \theta < \theta+\nu < \frac12$.
\begin{itemize}
\item[($\alpha$)] If \eqref{eq:A1} and \eqref{eq:A2} hold for $P_0$ then \emph{(A3)} also holds for $P_0$.

\item[($\beta$)] Suppose that \eqref{eq:A1} and \eqref{eq:B} hold for $P_0$.
For $\eps>0$, let $P_\eps = (\gamma-\eps,\theta+\eps,\nu-2\eps).$
Then for some $\epszero > 0$ and all
$0\le \eps \le \epszero$, \emph{(A3)} is true for $P_\eps$,
except when $P_0=(\frac56,\frac14,\frac1{12})$.
\end{itemize}
\end{prop}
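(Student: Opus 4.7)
The plan for part $(\alpha)$ is to combine the already-established reductions (Lemma \ref{DFE:small-gamma} for $1-\gamma > \theta+\nu$ and Lemma \ref{lem:nu=1-2theta} for $\theta+\nu = 1-\theta$) with a case-by-case analysis of the remaining regime $1-\gamma \le \theta < \theta+\nu < \tfrac12$. In this regime, Lemma \ref{lem:A3-ratio-less-one-half} immediately yields (A3) whenever $\nu > (1-\gamma)/2$, so by Proposition \ref{prop:ratio} the remaining task is to verify (A3) in each of the eight exceptional cases (a)--(h). In each such case, assuming a hypothetical (A3)-violating vector $\bx$, conditions (i)--(iv) combined with the tight arithmetic constraints of that case (explicit values of $\theta, \theta+\nu, 1-\gamma$, together with the bound $\nu/(1-\gamma) \ge 5/12$ guaranteed by Proposition \ref{prop:ratio}) bound the number of distinct component values and force $\bx$ to have specific rational structure; enumerating the few possibilities will always yield a proper subsum landing in $[\theta, \theta+\nu]$, a contradiction.

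For part $(\beta)$, the strategy is to reduce to part $(\alpha)$ applied to $P_\eps$ whenever possible. First observe that (B) for $P_0$ implies \eqref{eq:A2} for $P_\eps$ for all sufficiently small $\eps$: the half-open intervals in (B) are designed precisely so that a witness $h(1-\gamma) \in [\theta, \theta+\nu)$ shifts to a witness $h(1-\gamma+\eps)\in [\theta+\eps, \theta+\nu-\eps]$ for small $\eps$ (and similarly for the interval $[1-\theta-\nu, 1-\theta)$). Next, \eqref{eq:A1} for $P_\eps$ follows from \eqref{eq:A1} for $P_0$ whenever the witnesses $a/n$ for $P_0$ lie strictly inside $[\theta,\theta+\nu]$; this can fail only at those $n$ for which $n\theta\in\ZZ$ or $n(\theta+\nu)\in\ZZ$ and no interior integer exists in $(n\theta, n(\theta+\nu))$. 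For triples $P_0$ where \eqref{eq:A1} for $P_\eps$ succeeds, part $(\alpha)$ applied to $P_\eps$ gives (A3) for $P_\eps$.

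The delicate remaining step is to handle those triples $P_0$ for which \eqref{eq:A1} for $P_\eps$ fails at some boundary $n$. A direct check using Proposition \ref{prop:ratio} shows that (B) excludes cases (a)--(d) entirely (in each of those, the (A2)-witness sits at the \emph{right} boundary $\theta+\nu$ or $1-\theta$, which is excluded by the half-open intervals of (B)), so the problem reduces to a handful of sub-families within cases (e)--(h). For each such sub-family I would attempt to construct the simplest possible candidate (A3)-violating vector for $P_\eps$, one with two distinct values and controlled multiplicities; the key calculation is whether condition (ii) (pairwise sums $\ge 1-\gamma'$) and condition (iii) (distinct values differ by more than $\nu'$) can be simultaneously satisfied. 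The arithmetic shows that these conditions are jointly compatible with avoiding a subsum in $[\theta+\eps,\theta+\nu-\eps]$ only in the knife-edge case $1-\gamma = 2\nu$ together with $\theta=\tfrac14$ and $\theta+\nu=\tfrac13$, which uniquely pins down $P_0 = (\tfrac56,\tfrac14,\tfrac1{12})$; in this case the explicit vector
\[
\bx^{(\eps)} = \bigl(\tfrac{1}{12}+\tfrac{\eps}{2},\, \tfrac{1}{12}+\tfrac{\eps}{2},\, \tfrac{1}{6}-\tfrac{\eps}{5},\, \tfrac{1}{6}-\tfrac{\eps}{5},\, \tfrac{1}{6}-\tfrac{\eps}{5},\, \tfrac{1}{6}-\tfrac{\eps}{5},\, \tfrac{1}{6}-\tfrac{\eps}{5}\bigr)
\]
witnesses the failure of (A3) for $P_\eps$. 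In every other sub-family, either the gap $1-\gamma - 2\nu > 0$ or the strict separation of $\theta,\theta+\nu$ from the boundaries $\tfrac14,\tfrac13$ forces condition (ii) or (iii) to become incompatible with the required subsum-avoidance, so no violating vector exists and (A3) holds for $P_\eps$.

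The main obstacle is the final detailed case analysis: enumerating all parameter sub-families within cases (e)--(h) where (B) holds and \eqref{eq:A1} for $P_\eps$ fails at some specific $n$, and then for each verifying the compatibility or incompatibility of conditions (ii) and (iii) with the required subsum-avoidance. The delicacy reflects the fact that the failure of (A3) requires a precise alignment of several rational conditions ($\theta$ and $\theta+\nu$ exactly at $\tfrac14$ and $\tfrac13$, and $1-\gamma = 2\nu$) which only occurs at the single exceptional triple $(\tfrac56,\tfrac14,\tfrac1{12})$.
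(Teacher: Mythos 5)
Your framework matches the paper's: Lemma~\ref{lem:A3-ratio-less-one-half} disposes of $\nu\ge(1-\gamma)/2$ (covering part~$(\alpha)$ in cases (a)--(d), where $\nu=(1-\gamma)/2$ exactly), and Proposition~\ref{prop:ratio} restricts the remaining analysis to cases (a)--(h); the references to Lemmas~\ref{DFE:small-gamma} and~\ref{lem:nu=1-2theta} are unnecessary since this Proposition is stated only in the regime $1-\gamma\le\theta<\theta+\nu<\tfrac12$. For part~$(\beta)$, cases (a)--(d), your argument is a genuine simplification of the paper's: in each of those four triples the only multiples $h(1-\gamma)$ landing in $\two\cup[1-\theta-\nu,1-\theta]$ do so at a right endpoint ($\theta+\nu$ or $1-\theta$), so \eqref{eq:B} fails and $(\beta)$ is vacuous. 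The paper instead uses a parity obstruction $d\nmid 2e$ (with $1-\gamma=d/e$ in lowest terms) for (a), (c), (d) and notes (B)-failure only for (b); your uniform argument is shorter, and I have checked it is correct.

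The gap is that the case analysis for (e)--(h), which is the bulk of the proof of both parts, is asserted rather than carried out. Your sentence "the arithmetic shows that these conditions are jointly compatible \dots only in the knife-edge case $1-\gamma=2\nu$, $\theta=\tfrac14$, $\theta+\nu=\tfrac13$" replaces the entire substantive argument, and it is not a routine enumeration. The reduction you need --- that a putative (A3)-violator has \emph{exactly} two distinct component values --- follows from $\nu>(1-\gamma)/3$, itself a consequence of the $\nu/(1-\gamma)\ge\tfrac5{12}$ bound you cite, but it needs to be stated since it determines the shape of the whole analysis. One then studies $bx+cy=1$ under the constraints $\nu-2\eps<x<y-(\nu-2\eps)$, $y<1-\gamma+\eps$, and $x\ge(1-\gamma+\eps)/2$ when $b\ge 2$. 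In cases (f) and (h) the outcome is that no admissible $(x,y,b,c)$ exists at all; case (g) splits further on the multiplicities $b,c$ and uses the strict inequality $3(1-\gamma)<1-\theta$ that \eqref{eq:B} provides. Your closing reference to "the boundaries $\tfrac14,\tfrac13$" is specific to case (e); cases (f), (g), (h) involve the fractions $\tfrac27,\tfrac38$, then $\tfrac13,\tfrac37$, then $\tfrac27,\tfrac25$ and require separate numerical checks. Your identification of the exceptional triple $(\tfrac56,\tfrac14,\tfrac1{12})$ and its witness vector is correct and matches the paper.
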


Recall that hypothesis \eqref{eq:B} holding for $P_0$ implies that there is an $\epszero>0$ so that \eqref{eq:A2} holds for $0\le \eps\le \epszero$.

\begin{proof}
Assume that \eqref{eq:A1} and \eqref{eq:A2} hold for $P_0$ 
(recall that (B) implies \eqref{eq:A2}).
If $\nu > \frac{1-\gamma}{2}$, then for small enough $\epszero>0$,
for all $0\le \eps\le \epszero$ we have
$\nu -2\eps > \frac{1-\gamma+\eps}{2}$ and parts ($\alpha$) and ($\beta$) follow from Lemma 
\ref{lem:A3-ratio-less-one-half}.
Part ($\alpha$) also follows when $\nu = \frac{1-\gamma}{2}$, again using
Lemma \ref{lem:A3-ratio-less-one-half}.
By Proposition \ref{prop:ratio}, it thus suffices to prove ($\alpha$)
in cases (e)--(h) and ($\beta$) in cases (a)--(h).  Note that in all
of these cases,
\[
\frac5{12}(1-\gamma) \le \nu \le \frac{1-\gamma}{2}.
\]
Let $\epszero$ he sufficiently small, $0\le \eps \le \epszero$,
and suppose that $\bu$ is a vector satisfying the four conditions (i)--(iv) 
 of Hypothesis (A3) for $P_\eps$.
Since $\nu > \frac{1-\gamma}{3}$,
if $\eps_0$ is small enough then $\bu$ has exactly
two distinct components, $x$ and $y$ in $(\nu-2\eps,1-\gamma+\eps)$
with $y-x>\nu-2\eps$. Suppose $\bu$ has $b$ components equal to $x$ and $c$ components equal to $y$,
so that $bx+cy=1$, $b\ge 1$ and $c\ge 1$.
 Then
\be\label{xy}
\nu-2\eps < x < y < 1-\gamma+\eps, \qquad y-x>\nu-2\eps, \qquad 
\text{ if } b\ge 2 \text{ then } x \ge \frac{1-\gamma+\eps}{2}.
\ee
To prove ($\alpha$), we will show that if $\eps=0$ and $\bu$ exists,
then $\bu$ has a proper subsum in $\two$.
To prove ($\beta$), we will show that if $\eps_0$ is small enough, $0\le \eps\le \epszero$,
 $\bu$ exists and (B) holds for $P_0$
 then $\bu$ has a proper subsum in $[\theta+\eps,\theta+\nu-\eps]$.

In cases (a)--(d) of Proposition \ref{prop:ratio}, write
 $1-\gamma = \frac{d}{e}$, where $d,e$ are integers
with  $(d,e)=1$.  Since $1-\gamma=2\nu$, \eqref{xy} implies that $x= \frac{d}{2e}+O(\eps)$ and
$y = \frac{d}{e}+O(\eps)$, and thus $bd+2cd=2e$ if $\eps$  is small enough.  
In particular, $d|2e$, which
does not occur in cases (a),(c) and (d), hence $\bu$ does not exist.
  In case (b), (B) fails for $P_0$.
Thus, part ($\beta$) holds in cases (a)--(d).

Next, suppose we are in case (e).  
By \eqref{xy}, $x+y< \frac8{21}-\frac1{12}+4\eps<0.3$ for small enough $\eps$.
Thus, if $x+y \ge \frac14+\eps$ then $x+y\in [\theta+\eps,\theta+\nu-\eps]$,  as required.  Now assume that $x+y < \frac14+\eps$.  By \eqref{xy}, 
$x+y>3(\nu-2\eps)$, so this is impossible 
if $\eps=0$, thus proving ($\alpha$).  Also, $x+y<\frac14+\eps$ is impossible if
$\nu>\frac1{12}$ and $\eps$ is small enough, hence $\nu=\frac1{12}$ and 
$\two=[\frac14,\frac13]$.  
If $1-\gamma>\frac16$, \eqref{xy} implies
 we have
 \[
 \frac14+\eps > x+y > 1-\gamma + \frac1{12} - \eps,
 \]
 which is false for small enough $\eps$.  Hence, $1-\gamma = \frac16$.
 In this special case $\two=[\frac14,\frac13]$, $1-\gamma=\frac16$, (B) holds
 and (A3) fails with the choice $x=\frac1{12}+\frac{\eps}{2},y=\frac16-\frac{\eps}{5},b=2,c=5$.
This proves ($\beta$).

In case (f), \eqref{xy} implies, for small enough $\eps$, that
\[
0.0892 < x < 0.1012, \qquad 0.1785 < y < 0.1905.
\]
Since $bx+cy=1$, either $c\ge 3$ or $b\ge 7$.
If $c\ge 3$ and $y\le 0.1874$ then $2y\in [\frac27+\eps,\frac38-\eps]$.  Hence, $c\ge 3$
implies $0.1874 < y < 0.1905$.  However, for all $c\in\{3,4,5\}$, the range of $1-cy$
does not contain an integer multiple of $x$.  Therefore, 
$c\le 2$ and $b\ge 7$.  If $x\le 0.0937$ then $4x \in [\frac27+\eps,\frac38-\eps]$,
and if $x\ge 0.0953$ then $3x \in [\frac27+\eps,\frac38-\eps]$, hence
$0.0937<x<0.0953$.  In both cases $c\in \{1,2\}$, the range of $1-cy$
does not contain an integer multiple of $x$.
Thus, $\bu$ does not exist and hence ($\alpha$) and ($\beta$) hold in case (f).

Suppose we are in case (h).  By \eqref{xy} we have
\[
0.1142 < x < 0.1258, \qquad 0.2285 < y < 0.2401.
\]
In all possible cases $1\le c\le 4$, we see that $1-cy$ cannot equal any
integer multiple of $x$.  So $\bu$ does not exist,
and hence ($\alpha$) and ($\beta$) hold in case (h).

Finally, consider case (g).  By \eqref{xy},
\[
0.0953 <x < 0.1334, \quad 0.1904 <y < 0.2286
\]
and hence
$x+y < 0.362$, that is, well below $\frac37$. Thus,
 if $x+y \ge \theta+\eps$ then we have a contradiction.  Therefore, 
\[
x+y < \theta+\eps \le \frac13+\eps.
\]
We cannot have $c\ge 5$ since $5y \ge 0.952$.  Hence we have one
of the following:
\begin{itemize}
\item[(i)] $b=c=3$, $x+y=\frac13$, $\theta=\frac13$;
\item[(ii)] $b\ge 4$; or
\item[(iii)] $c=4$.
\end{itemize}

In case (i), since $3x\notin [\theta+\eps,\theta+\nu-\eps]$, $x<\frac19+\frac{\eps}{3}$ and $y> \frac29-\frac{\eps}{3}$.
Since $y< 1-\gamma + \eps$ and $3(1-\gamma)\le 1-\theta$ (the latter is part of 
condition (g)), we have
$3(1-\gamma)=1-\theta$ and consequently $1-\gamma=\frac29$,
$y=\frac29+O(\eps)$ and $x=\frac19+O(\eps)$.
As $\nu \le \frac{1-\gamma}{2}=\frac19$, $\theta+\nu \le \frac49$.
Thus, (B) fails.
When $\eps=0$, no such $x$ exists, so $\bu$ doesn't exist.
This is sufficient for both parts ($\alpha$) and ($\beta$).

For case (ii),  if $x\ge \frac19 + \frac{\eps}{3}$ then 
$3x \in  [\theta+\eps,\theta+\nu-\eps]$ and if $x \le \frac{3}{28}-\frac{\eps}{4}$
then $4x \in  [\theta+\eps,\theta+\nu-\eps]$.
Thus, $x\in (\frac{3}{28}-\frac{\eps}{4},\frac19+\frac{\eps}{3})$.
Since $y<\frac13+\eps-x$ and $y>x+\nu-2\eps$ we have
$0.2023<y<0.2262$.  Then
\[
1 = bx+cy \in (0.101(b+2c),0.114(b+2c))
\]
which implies that $b+2c=9$.
Thus, $b=5,c=2$ or $b=7,c=1$ and in both cases, $y > \frac29 - 3\eps$
and hence $x>0.1095$.
When $\eps=0$ it follows that $x<\frac19$ and $y>\frac29$, so
$b/c > 1$ gives 
\[
x+y = x + \frac{1-bx}{c} > \frac19 \bigg( 1 - \frac{b}{c} \bigg) + \frac{1}{c} = \frac13,
\]
which is impossible.  Thus, $\bu$ doesn't exist.
Now assume (B) holds and $0<\eps \le \epszero$.  Since $y> \frac29-3\eps$,
$1-\gamma \ge \frac29$.  As $1-\gamma < \theta < \theta+\nu < 2(1-\gamma)$,
(B) implies that $3(1-\gamma) < 1-\theta$.  So, for some $\delta>0$,
$3(1-\gamma) = 1-\theta-\delta$, so that $1-\theta \ge \frac23+\delta$.
We then have $x+y=x+\frac{1-bx}{c}=x(3-\frac{9}{c})+\frac1{c} < \theta+\eps$ and hence
\begin{align*}
x > \frac{1/c-\theta-\eps}{9/c-3} \ge \frac{1/c-1/3+\delta-\eps}{9/c-3} = \frac19 +
\frac{\delta-\eps}{9/c-3}.
\end{align*}
This contradicts $x<\frac19+\frac{\eps}{3}$ for $\eps< \delta/10$, say.  
Thus, $\bu$ doesn't exist in case (ii).

For case (iii), since $4y > \frac{16}{21}-O(\eps)$ and $x\ge \frac2{21}-\eps$,
we have $b=1$ or $b=2$.  If $b=2$ then $y=\frac{1-2x}{4} \in [0.183,0.203]$
and we get $2y\in [\theta+\eps,\theta+\nu-\eps]$.
Now suppose that $b=1$.  The relation $1-3y=x+y < \theta+\eps \le \frac13+\eps$
implies that $y > \frac29 - \frac{\eps}{3}$, and hence $1-\gamma \ge \frac29$,
$x< \frac19+ \frac{4\eps}{3}$ and $\nu \le \frac19$.
Since $1-\gamma \le \frac{8}{35} < \theta$ and $2(1-\gamma) \ge \frac49$
and $3(1-\gamma) \ge \frac23$, \eqref{eq:A2} implies that 
$h(1-\gamma)\in [\theta,\theta+\nu]\cup[1-\theta-\nu,1-\theta]$ for 
 $h=2$ or $h=3$.
If $h=2$ then $1-\gamma=\frac29$ and $\nu=\frac19$.
If $\eps=0$ then $y>\frac29$ is not possible, 
and we also note that (B) fails.
Thus, $h=3$.  If $\eps=0$ then
\[
1-3y = x+y < \theta \le 1-3(1-\gamma),
\]
and so $y > 1-\gamma$, a contradiction.
If $\eps>0$, (B) implies that
 $3(1-\gamma) = 1-\theta-\delta$, where $\delta > 0$.  Then
\[
1-3y = x+y < \theta+\eps \le 1-3(1-\gamma)+\eps-\delta,
\]
whence $y\ge 1-\gamma + \frac{\delta-\eps}{3}$, again a contradiction
if $\eps < \delta/10$, say.  This suffices for ($\alpha$) and ($\beta$).
\end{proof}

\begin{proof}[Proof of Proposition \ref{prop:A123} in the case $1-\gamma\le 1-\theta$]
When $\theta+\nu=1-\theta$, the desired result follows from Lemma \ref{lem:nu=1-2theta}.
If $1-\gamma \le \theta < \theta+\nu < \frac12$,
the result follows from Proposition \ref{prop:A123-epsilon-tweak}, part ($\alpha$).
\end{proof}

\bigskip

%%%%%%%%%%%%%%%%%%%%%%%%%%%%%%%%%%%%%%%%%%%%%%%%%%%%%%%%%%%%%%%
%
%   
%
{\Large \section{Continuities and discontinuities}\label{sec:continuity}}
%
%
%
%%%%%%%%%%%%%%%%%%%%%%%%%%%%%%%%%%%%%%%%%%%%%%%%%%%%%%%%%%%%%%%

In this section we prove Theorems \ref{thm:continuity}
and \ref{thm:continuity-1/2}.  Recall that for $P=(\gamma,\theta,\nu)$ fixed, 
\[
P_\eps := (\gamma-\eps, \theta+\eps, \nu-2\eps).
\]

\subsection{Discontinuities}\label{sec:discontinuities}

In this subsection we prove Theorem \ref{thm:continuity}
in the case where (B) fails and $\theta>0$.  Our goal is to prove that
\begin{equation}\label{eq:dicontinuity-1}
\sup_{\eps>0} C^-(P_\eps) < 1 < \inf_{\eps>0} C^+(P_\eps).
\end{equation}

We begin with some lemmas needed in the proof.
The first lemma is closely related to the problem of counting the number of integers 
$\le x$ with exactly $k$ prime factors.

\begin{lem}\label{lem:mk}
For $k\ge 1$, $\eps>0$ and $y\in \RR$ define
\[
m_k(y,\eps) := \mint{\eps \le \xi_1 \le \cdots \le \xi_k \\ \xi_1+\cdots+\xi_k=y}
\frac{d\bxi}{\xi_1\cdots \xi_k}.
\]
(a) For all $k\ge 1$ and $y \ge \eps >0$ we have
\[
m_k(y,\eps) \le \frac{(\log \frac{y}{\eps})^{k-1}}{(k-1)! y}.
\]
(b) For any fixed $A\ge 1$ and uniformly for $y\ge 100A^2 \eps>0$ and $1\le k\le A \log(y/\eps)$, we have
\[
m_k(y,\eps) \gg_A \frac{(\log \frac{y}{\eps})^{k-1}}{(k-1)! y}.
\]
\end{lem}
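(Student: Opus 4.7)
My plan is to prove part (a) by a short direct calculation and part (b) by induction on $k$ using the recursion obtained by integrating out one coordinate.

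For part (a), I would first remove the ordering constraint $\xi_1 \le \cdots \le \xi_k$ (dividing by $k!$), then use that $\xi_k \ge y/k$ since it is the maximum of $k$ positive numbers summing to $y$. Replacing $1/\xi_k$ by $k/y$, dropping the constraint $\sum \xi_i = y$ to enlarge the region of integration, and bounding each of the remaining $k-1$ free integrals by $\int_\eps^y d\xi/\xi = \log(y/\eps)$ gives the desired upper bound $T^{k-1}/((k-1)!y)$, where $T = \log(y/\eps)$.

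For part (b), I would set $I_k(y,\eps) := k!\, m_k(y,\eps)$, so that projecting out one coordinate yields the recursion
\[
I_k(y,\eps) = \int_\eps^{y-(k-1)\eps} \frac{I_{k-1}(y-\xi,\eps)}{\xi}\, d\xi
\]
with base case $I_1(y,\eps) = 1/y$. By induction on $k$, I aim to establish $I_k(y,\eps) \gg_A k T^{k-1}/y$ throughout the stated range, which upon dividing by $k!$ gives the claim. For the inductive step, the hypothesis $I_{k-1}(u,\eps) \ge c_A(k-1) \log(u/\eps)^{k-2}/u$ is inserted into the recursion and restricted to the subrange $\xi \in [\eps, y(1-\kappa_A)]$ where it is applicable; a suitable choice is $\kappa_A = 1 - e^{-1/A}$, which ensures both $\log((y-\xi)/\eps) \ge \log(100A^2)$ and $k-1 \le A\log((y-\xi)/\eps)$. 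Applying the partial fraction decomposition $\tfrac{1}{\xi(y-\xi)} = \tfrac{1}{y}\bigl(\tfrac{1}{\xi} + \tfrac{1}{y-\xi}\bigr)$ then produces two one-dimensional integrals: the integral against $d\xi/(y-\xi)$, via the substitution $u = y-\xi$, evaluates to approximately $T^{k-1}/(k-1)$, while the integral against $d\xi/\xi$ evaluates to approximately $T^{k-1}$ (the main contribution coming from $\xi$ small, where $\log((y-\xi)/\eps) \approx T$). Adding and multiplying by the prefactor $(k-1)/y$ gives $((k-1)+1)T^{k-1}/y = kT^{k-1}/y$, as desired.

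The main obstacle will be ensuring that the constant $c_A$ does not degrade across the $k$ inductive iterations. Each step introduces multiplicative error factors of the form $(1 + O_A(1/T))$, coming both from the restriction $\xi \le y(1-\kappa_A)$ and from $\log((y-\xi)/\eps)$ differing from $T$. The hypotheses $T \ge \log(100A^2)$ and $k \le AT$ are chosen precisely so that the accumulated product of these error factors, of the form $(1 + O_A(1/T))^{k}$, remains bounded below by a positive constant depending only on $A$, rather than decaying as a negative power of $T$. Tracking these errors carefully is what makes the induction close at the rate needed for the lemma.
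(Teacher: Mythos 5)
Your argument for part (a) does not give the stated bound. You cannot simultaneously remove the ordering (picking up the factor $1/k!$) and also use $\xi_k \ge y/k$, since after symmetrizing $\xi_k$ is no longer the maximum. Either keep the ordering, in which case $\xi_k\ge y/k$ but there is no $1/k!$ and the remaining $(k-1)$-dimensional ordered integral is bounded by $(\log(y/\eps))^{k-1}/(k-1)!$, or symmetrize with $1/k!$, in which case the pointwise bound on the maximum variable is not available without a further case analysis. Either way the output is $m_k \le k\,(\log(y/\eps))^{k-1}/((k-1)!\,y)$, i.e.\ a factor of $k$ too large. The clean way to cancel this factor is the algebraic identity
\[
\frac{\xi_1+\cdots+\xi_k}{\xi_1\cdots\xi_k}=\sum_{j=1}^{k}\frac{1}{\prod_{i\ne j}\xi_i},
\]
which the paper uses: insert $1=(\xi_1+\cdots+\xi_k)/y$, expand, and exploit symmetry so the resulting $k$ equal terms combine with the $1/k!$ to give exactly $1/(k-1)!$, yielding a $(k-1)$-variable integral and hence the sharp constant. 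Without that step, the factor of $k$ is genuinely present.

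For part (b) you take a different route from the paper: you iterate the one-variable fragmentation recursion $I_k(y,\eps)=\int_\eps^{y-(k-1)\eps} I_{k-1}(y-\xi,\eps)\,\xi^{-1}\,d\xi$ and induct on $k$, whereas the paper avoids recursion, starting from the identity proved in (a) and lower-bounding by restricting to a box $[\eps,y/B]^{k-1}$ with $B=10A$ while inserting the nonnegative factor $1-\tfrac{\xi_1+\cdots+\xi_{k-1}}{y-\eps}$; this reduces the estimate to two explicit one-shot integrals and uses $k\le A\log(y/\eps)$ just once, at the end. Your inductive outline is heuristically sound (the accumulated error $(1+O_A(1/T))^{k}$ with $k\le AT$ indeed stays bounded), but as written it does not close: the inductive hypothesis at $(k-1,y-\xi,\eps)$ requires $y-\xi\ge 100A^2\eps$, and with $\xi$ ranging up to $y(1-\kappa_A)$ and $\kappa_A\asymp 1/A$, this does not follow from $y\ge 100A^2\eps$ alone; moreover, carrying a fixed constant $c_A$ through each step leaves a multiplicative loss $1-O_A(1/T)$ that must be tracked as a $k$-dependent product rather than asserted to cancel. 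Re-normalizing the induction in terms of $\log\frac{y}{B\eps}$ for a fixed $B=B(A)$ (which is essentially what makes the paper's non-recursive version painless) would repair both issues, but the proposal as stated leaves the induction open.
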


\begin{proof}
We begin with
\begin{align}
m_k(y,\eps) &= \frac{1}{k!} \mint{\xi_1,\ldots,\xi_k \ge \eps \\ \xi_1+\cdots+\xi_k=y}
\frac{d\bxi}{\xi_1\cdots \xi_k} 
= \frac{1}{yk!} \mint{\xi_1,\ldots,\xi_k \ge \eps \\ \xi_1+\cdots+\xi_k=y}
\frac{\xi_1+\cdots+\xi_k}{\xi_1\cdots \xi_k}d\bxi \notag \\
&= \frac{1}{y(k-1)!}  \mint{\xi_1,\ldots,\xi_k \ge \eps \\ \xi_1+\cdots+\xi_k=y}
\frac{d\bxi}{\xi_1\cdots \xi_{k-1}}. \label{mky1}
\end{align}
The multiple integral on the right side of \eqref{mky1} is at most
$(\int_\eps^y d\xi/\xi)^{k-1} = (\log \frac{y}{\eps})^{k-1}$
and this proves (a).

When $k=1$ and $y\ge \eps$ we have $m_1(y,\eps) = 1/y$.  
Now suppose that $k\ge 2$ and let $B=10A$, so that $y \ge B^2 \eps$.  
On the right side of \eqref{mky1}, we have $\xi_1+\cdots+\xi_{k-1}\le y-\eps$.
Thus, we obtain a lower bound for $m_k(y,\eps)$ by integrating over $(\xi_1,\ldots,\xi_{k-1})$ such that $\xi_1+\cdots+\xi_{k-1}\le y-\eps$ and $\eps \le \xi_i \le y/B$ for $1\le i\le k-1$.  This implies that
\begin{align*}
m_k(y,\eps)
&\ge \frac{1}{y(k-1)!}  \mint{\eps \le \xi_1,\ldots,\xi_{k-1} \le y/B} \frac{1}{\xi_1\cdots \xi_{k-1}} \bigg( 1- \frac{\xi_1+\cdots+\xi_{k-1}}{y-\eps}\bigg)d\xi_1 \cdots d\xi_{k-1} \\
&= \frac{I_1 - I_2}{y(k-1)!},
\end{align*}
say.  Here 
\[
I_1 =  \mint{\eps \le \xi_1,\ldots,\xi_{k-1} \le y/B} \frac{d\xi_1\cdots d\xi_{k-1}}{\xi_1\cdots \xi_{k-1}} 
= \big( \log \tfrac{y}{B\eps} \big)^{k-1}
\]
and 
\begin{align*}
I_2 &= \frac{k-1}{y-\eps} \mint{\eps \le \xi_1,\ldots,\xi_{k-1} \le y/B} \frac{d\xi_1\cdots d\xi_{k-1}}{\xi_1\cdots \xi_{k-2}} \\
&= (k-1) \Big( \frac{y/B-\eps}{y-\eps} \Big) \Big( \log \frac{y}{B\eps} \Big)^{k-2}\\
&\le \Big(\frac{k-1}{B}\Big)  \Big( \log \frac{y}{B\eps} \Big)^{k-2}.
\end{align*}
It follows that
\[
m_k(y,\eps) \ge \frac{\big(\log \frac{y}{B\eps}\big)^{k-1}}{y(k-1)!}
\bigg[ 1 - \frac{k-1}{B \log (\frac{y}{B\eps})} \bigg].
\]
By the given range of $y$ and $k$,
\begin{align*}
k &\le A\log(y/\eps) = 2A \log \Big( \tfrac{y}{\eps} \sqrt{\tfrac{\eps}{y}}\Big)
\le 2A \log \big( \tfrac{y}{B\eps} \big) = \tfrac{B}{5}  \log \big( \tfrac{y}{B\eps} \big)
\end{align*}
and we get that
\begin{equation}\label{mky2}
m_k(y,\eps) \ge \frac45\,\cdot \, \frac{\big(\log \frac{y}{B\eps}\big)^{k-1}}{y(k-1)!}.
\end{equation}
since $y/\eps \ge B^2$ and $k\le A \log(y/\eps)$, we have
\begin{align*}
\frac{\big(\log \frac{y}{\eps} \big)^{k-1}}{\big( \log \frac{y}{B\eps} \big)^{k-1}}
&= \bigg(1 + \frac{\log B}{\log\frac{y}{\eps} - \log B} \bigg)^{k-1}\\
&\le \exp \bigg\{ (k-1) \frac{2\log B}{\log \frac{y}{\eps}} \bigg\} \le e^{2A\log B}=B^{2A}.
\end{align*}
Combined with \eqref{mky2}, this proves (b).
\end{proof}

Next, we make precise the notion that $\cR(P_\eps)$ has a large `mass'
 which is independent of $\eps$.
 
 \begin{lem}\label{lem:discontinuity-bigmass}
 Suppose that $P=(\gamma,\theta,\nu)\in \cA^*$, $M=\fl{1/(1-\gamma)}$, (B) fails,
 $\theta>0$, $\eps>0$  and 
 \[
\cV_\eps = \Bigg\{ (u_1,\ldots,u_M^{\phantom{a}},\xi_1,\ldots,\xi_k) : \;\;
\begin{matrix} |\bu|+|\bxi|=1,\;  1-\gamma < u_i < 1-\gamma + \eps\; (1\le i\le M),\\\xi_i>\eps\; (1\le i\le k),\; 1\le k\le 2M\log\tfrac{1}{\eps}\end{matrix} \Bigg\}.
\]
Then $\cV_\eps \subseteq \cR(P_\eps)$.
 \end{lem}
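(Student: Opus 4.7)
The plan is to verify the three defining conditions of $\cR(P_\eps)$-membership for each $\bx\in\cV_\eps$: every component lies in $(0,1-\gamma+\eps)$, the sum equals $1$, and no proper subsum lies in the shrunken Type II interval $[\theta+\eps,\theta+\nu-\eps]$. The component and sum conditions are immediate: the $u_i$ satisfy the required bound by definition, while each $\xi_j\le|\bxi|=1-|\bu|<1-M(1-\gamma)<1-\gamma$ (the last inequality using $(M+1)(1-\gamma)>1$), and the total sum is $1$ by construction. All the real work lies in the subsum condition.

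The key structural claim is the following: letting $a_{\max}=\lfloor\theta/(1-\gamma)\rfloor$ denote the largest integer with $a_{\max}(1-\gamma)<\theta$, one has $(M-a_{\max})(1-\gamma)\ge 1-\theta$. I would prove this by combining $(A2)$ with failure of $(B)$, which together produce some $h\le M$ with $h(1-\gamma)\in\{\theta+\nu,1-\theta\}$. Considering the complementary index $M+1-h$, the value $(M+1-h)(1-\gamma)$ strictly exceeds $\theta$ (since $(M+1)(1-\gamma)>1$) but, by failure of $(B)$, cannot lie in $[\theta,\theta+\nu)$ or $[1-\theta-\nu,1-\theta)$. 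A short case analysis then identifies $a_{\max}+1$ with either $h$ or $M+1-h$, yielding the claim; the essential ingredient is the inequality $(M+1)(1-\gamma)\ge 1+\nu$, which forces the lattice of multiples of $1-\gamma$ to thread between the two forbidden intervals by hitting their right endpoints.

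Granting the structural claim, I would analyze a proper subsum $\sigma=\sum_{i\in A}u_i+\sum_{j\in B}\xi_j$ with $a=|A|$ using the crucial estimate
\[
\sigma\le\sum_{i\in A}u_i+|\bxi|=1-\sum_{i\notin A}u_i<1-(M-a)(1-\gamma),
\]
which exploits the normalization $|\bu|+|\bxi|=1$ to re-express the subsum via its complement. For $0\le a\le a_{\max}$ with $A\subsetneq[M]$, the structural claim gives $1-(M-a)(1-\gamma)\le\theta$, so $\sigma<\theta<\theta+\eps$. For $a_{\max}+1\le a\le M-1$, failure of $(B)$ upgrades the strict inequality $a(1-\gamma)>\theta$ (which holds by definition of $a_{\max}$) to $a(1-\gamma)\ge\theta+\nu$, yielding $\sigma\ge\theta+\nu$. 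The case $a=M$ (forcing $B\subsetneq[k]$ for properness) is immediate: $\sigma\ge|\bu|>M(1-\gamma)\ge 1-\theta>\theta+\nu$.

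The main obstacle is the structural claim of the second step. Although the final statement is clean, extracting it requires carefully tracking the arithmetic progression $\{h(1-\gamma):h\ge 1\}$ against the two half-open forbidden intervals of $(B)$, and using failure of $(B)$ both directly (to eliminate lattice points from the forbidden intervals) and indirectly (to identify the critical complementary index $M+1-h$ with $a_{\max}+1$). The clever re-expression $\sigma\le 1-\sum_{i\notin A}u_i$ in the subsum analysis is what allows the argument to close uniformly across all values of $a$.
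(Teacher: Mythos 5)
Your proof is correct, and it follows essentially the same route as the paper's: the same use of (A2) together with failure of (B) to locate a critical integer $h$ (your $M-a_{\max}$) with $h(1-\gamma)\ge 1-\theta$ and $(M-h+1)(1-\gamma)\ge\theta+\nu$, and the same complement trick $\sigma\le 1-\sum_{i\notin A}u_i$ to handle subsums with few $u_i$'s. The only difference is notational (parameterizing by $a_{\max}=\lfloor\theta/(1-\gamma)\rfloor$ instead of by $h$); note also that the inequality $(M+1)(1-\gamma)\ge 1+\nu$ you flag as an "essential ingredient" is actually a byproduct of the argument rather than something used in it --- what drives the case analysis is simply $(M+1)(1-\gamma)>1$ together with the failure of (B).
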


\begin{proof}
Since \eqref{eq:A2} holds, for some $h\in \NN$, $h(1-\gamma)\in [\theta,\theta+\nu] \cup [1-\theta-\nu,1-\theta]$.  But \eqref{eq:B} fails, thus some integer multiple of $(1-\gamma)$ lies in $\{\theta+\nu,1-\theta\}$ and no integer multiple of $1-\gamma$ lies in 
$[\theta,\theta+\nu) \cup [1-\theta-\nu,1-\theta).$
In particular, we have $1-\gamma < \frac{1}{M}$, since if $1-\gamma=\frac{1}{M}$
then $k(1-\gamma) \in \{\theta+\nu,1-\theta\}$ implies that
$(M-k)(1-\gamma) \in \{ \theta, 1-\theta-\nu \}$.

There is a positive integer $h$ so that either (i) $h(1-\gamma)=1-\theta$
or (ii) $(M-h+1)(1-\gamma) = \theta+\nu$.  In case (i),
\[
(M-h+1)(1-\gamma) = (M+1)(1-\gamma) - (1-\theta) > \theta,
\]
and hence  $(M-h+1)(1-\gamma) \ge \theta+\nu$.
In case (ii),
\[
h(1-\gamma) = (M+1-(M+1-h))(1-\gamma) = (M+1)(1-\gamma) -(\theta+\nu) > 1-\theta-\nu,
\]
and hence $h(1-\gamma) \ge 1-\theta$.  In either case, we have both of the inequalities
\[
h(1-\gamma) \ge 1-\theta, \qquad (M-h+1)(1-\gamma) \ge \theta+\nu.
\]
Thus, if $(\bu,\bxi)\in \cV_\eps$ and $A\subseteq [M]$ with $|A| \ge M-h+1$, then
\[
|\ssc{\bu}{A}| > (M-h+1)(1-\gamma) \ge \theta+\nu.
\]
If  $A\subseteq [M]$ with $|A| \le M-h$ then
\[
|\ssc{\bu}{A}| + |\bxi| = 1- |\ssc{\bu}{[M]\setminus A} |
< 1 - h(1-\gamma) \le \theta.
\]
This proves that $(\bu,\bxi)\in \cR(P_\eps)$ for all $\eps \ge 0$.
\end{proof}

We now conclude the proof of \eqref{eq:dicontinuity-1}.
Fix $P=(\gamma,\theta,\nu)$ in $\cA^*$, such that (B) holds, and let $\epszero$
be a sufficiently small positive number and $0\le \eps \le \epszero$.
Let $K \in (0,1]$, depending on $P$ but not on $\eps$,
and for all $k\in\NN$, all $\bu\in\RR^M$, $\bxi\in \RR^k$ with $(\bu,\bxi)\in \cV_\eps$ (where $\cV_{\eps}$ is defined in Lemma \ref{lem:discontinuity-bigmass}), set
\begin{equation}\label{f-def}
f(\bu,\bxi) = K(-1)^{k+M}.
\end{equation}
Note that, for each fixed $k$,
 $f$ is symmetric in $u_1,\ldots,u_M$ and in $\xi_1,\ldots,\xi_k$.
For each $k$, extend the definition of $f$ to a function symmetric in all $k+M$ variables. 
(This is well-defined since $(M+1)(1-\gamma)>1$ so $\xi_i<1-\gamma$ for all $i$.)
 Set $f(\bx)=0$ for vectors $\bx$ whose components are all $<1-\gamma+\eps$,
but no permutation of the components lies in $\cV_\eps$.
 Define $f$ for vectors that have at least one component $\ge 1-\gamma+\eps$ using \eqref{fsl}, with $\gamma$ replaced
 by $\gamma+\eps$ and with $\eta=\eps$.
 The function $f$ is now supported on $\cC(\cV_\eps)$, which is a subset of
 $\cC(\cR(P_\eps))$ by Lemma \ref{lem:discontinuity-bigmass},
  and also any element has all components are $>\eps$.
  In the notation of Theorem \ref{thm:constructions}, $f\in \sF_\eps$.

We first estimate $f(1)$.
Let $w=1-M(1-\gamma)$ so that $w< 1-\gamma$.
By Lemma \ref{lem:tc}, for every $(\bu,\bxi)\in \cV_\eps \cap \RR^{M+k}$,
\[
\cyrL_{1-\gamma+\eps}(\bu,\bxi) = (-1)^{k+1+M} (M-1)! M^k.
\]
Then, by \eqref{fsl},
\begin{align*}
-f(1) &= K(M-1)! \sum_{1\le k\le 2M\log\frac{1}{\eps}} M^k \mint{(\bu,\bxi)\in \cV_\eps\cap \RR^{M+k} \\ 1-\gamma<u_1<\cdots < \ssc{u}{M} < 1-\gamma+\eps \\
\eps < \xi_1 < \ldots < \xi_k} \frac{d (\bu,\bxi)}{u_1\cdots u_M \xi_1\cdots \xi_k}\\
&= K(M-1)! \sum_{1\le k\le 2M\log\frac{1}{\eps}} M^k \mint{1-\gamma<u_1 < \cdots < u_M^{\phantom{a}}<1-\gamma+\eps} \frac{m_k(1-|\bu|,\eps)}{u_1\cdots u_M} d\bu ,
\end{align*}
where $m_k(\bx,\eps)$ is defined in Lemma \ref{lem:mk}.
For $\eps>0$, sufficiently small in terms of $M$ and $w$,
if $m_k(1-|\bu|,\eps)\ne 0$ then $1-|\bu| \ge w-M\eps \ge w/2$,
$k\le 2M \log\frac{1}{\eps} \le 4M \log( \frac{w/2}{\eps} )$
and $w/2 \ge 100(4M)^2 \eps$.
Thus, by Lemma \ref{lem:mk} (b),
\[
m_k(1-|\bu|,\eps) \ssc{\gg}{M} \frac{(\log (\frac{w}{2\eps}))^{k-1}}{t(k-1)!}.
\]
Also, $u_1 \cdots \ssc{u}{M} > (1-\gamma)^M$ and the measure of the set of possible vectors $(u_1,\cdots,u_M^{\phantom{a}})$ equals $\eps^M/M!$.   Hence, for suffiently small $\eps$,
\begin{align*}
-f(1) &\gg_M^{\phantom{a}} K \eps^M   \sum_{1\le k\le 2 M\log(\frac{w}{2\eps})}
 \frac{(M\log (\frac{w}{2\eps}))^{k-1}}{(k-1)!} \\
&\gg_M  K \eps^M \cdot \tfrac12\big(\tfrac{w}{2\eps}\big)^M
\\ &\gg_{w,M^{\phantom{a}}} K.
\end{align*}
It is crucial that
the right side is independent of $\eps$.

Next, we verify that $f(\bx)\ge -1$ for any $\bx$.
Once we have accomplished this, Theorem \ref{thm:continuity} will
follow from Theorem \ref{thm:constructions} (b) with $\eta=\eps$.

Suppose $\by \in \cC(\cV_\eps)$
and suppose $\by$ is a coagulation of $(\bx,\bxi)\in \cV_\eps$.
Since $1-\gamma < x_i < 1-\gamma+\eps$ for $1\le i\le M$,
 $|\bxi| < w$.  Therefore, all components of $\by$ which
are $<1-\gamma+\eps$ lie in $(\eps,w) \cup (1-\gamma,1-\gamma+\eps)$ 
and the larger components lie in $J_1 \cup \cdots \cup J_M$, where
\begin{align*}
J_1 &= [1-\gamma+\eps,1-\gamma+w), \\ 
J_j &= (j(1-\gamma),j(1-\gamma)+w) \;\;\;\; (2\le j\le M-1),\\
J_M &= (M(1-\gamma),1].
\end{align*}
These are disjoint since $w<1-\gamma$.
Moreover, a component of $\by$ that is in $J_i$, for $i\ge 1$, is the sum of exactly
$i$ of the variables $\ssc{x}{1},\ldots,x_M$ plus a subset of the variables $\xi_i$.

Now write $\by = (\xi_1,\ldots,\xi_\ell,\beta_1,\ldots,\beta_r,\alpha_1,\ldots,\alpha_s)$,
where $\xi_i \in (\eps,w)$ for all $i$, $\beta_i\in (1-\gamma,1-\gamma+\eps)$
for each $i$, $s\ge 1$ and $\alpha_i \in J_{j_i}$ for each $i$, where $j_i\in \{1,\ldots,M\}$
for each $i$ and
\begin{equation}\label{rjsM}
r + j_1 + \cdots + j_s = M.
\end{equation}
Suppose that in \eqref{fsl}, 
for $1\le i\le s$, $\alpha_i$ fragments into $j_i$ variables 
from $(1-\gamma,1-\gamma+\eps)$, call them $x_{i,1},\ldots,x_{i,j_i}$, and $k_i$ variables in $(\eps,w)$, call them $\xi_{i,1},\ldots,\xi_{i,k_i}$ (here $k_i$
is not fixed).
By Lemma \ref{lem:tc},
\[
\cyrL_{1-\gamma+\eps}(x_{i,1},\ldots,x_{i,j_i},\xi_{i,1},\ldots,\xi_{i,k_i}) = 
(-1)^{1+j_i+k_i} (j_i-1)! j_i^{k_i}.
\]
Using \eqref{rjsM}, the product of these equals
\[
(-1)^{s+j_1+\cdots+j_s + k_1+\cdots k_s} \prod_{i=1}^s (j_i-1)! j_i^{k_i} = 
(-1)^{s+M-r+k_1 + \cdots + k_s}  \prod_{i=1}^s (j_i-1)! j_i^{k_i}.
\]
Let $\bz$ be the vector consisting of $\xi_1,\ldots,\xi_\ell,\beta_1,\ldots\beta_r$
and all of the variables $x_{i,j}$ and $\xi_{i,j}$.  There are 
$\ell+k_1+\cdots+k_s$ total variables in $(\eps,w)$, and thus, by \eqref{f-def},
\[
f(\bz) = K (-1)^{M+\ell + k_1+ \cdots + k_s}.
\]
By \eqref{fsl},
\begin{align*}
f(\by) &=
\alpha_1 \cdots \alpha_s \sum_{k_1,\ldots,k_s\ge 0} \;\;\;  \mint{\alpha_i=\sum_{j=1}^{j_i} x_{i,j} + \sum_{j=1}^{k_i} \xi_{i,j} \\ (1\le i\le s) \\ 1-\gamma < x_{i,1} 
< \cdots < x_{i,j_i} < 1-\gamma+\eps\;\; \forall i  \\
\eps < \xi_{i,1} < \cdots < \xi_{i,k_i} \;\; \forall i}
\frac{K(-1)^{s+\ell-r} \prod_{i=1}^s (j_i-1)! j_i^{k_i} }{\prod_{i=1}^s (\prod_{j=1}^{j_i} x_{i,j} \prod_{j=1}^{k_i} \xi_{i,j})}\\
&= (-1)^{\ell+s+r} K \alpha_1 \cdots \alpha_s \Big( \prod_{i=1}^s (j_i-1)! \Big) \, W_1 \cdots W_s,
\end{align*}
where, for $1\le i\le s$,
\begin{align*}
W_i &= \mint{1-\gamma < x_{i,1} < \cdots < x_{i,j_i} < 1-\gamma+\eps \\
x_{i,1}+\cdots+x_{i,j_i} = \alpha_i} \frac{d\bx_i}{x_{i,1}\cdots x_{i,j_i}}   \\
&\qquad + 
 \mint{1-\gamma < x_{i,1} < \cdots < x_{i,j_i} < 1-\gamma+\eps \\
x_{i,1}+\cdots+x_{i,j_i} < \alpha_i-\eps}  \frac{1}{x_{i,1}\cdots x_{i,j_i}}  
\sum_{k_i\ge 1} j_i^{k_i} m_{k_i}(\alpha_i-(x_{i,1}+\cdots+x_{i,j_i}),\eps)\, d\bx_i.
\end{align*}
The first term above corresponds to $k_i=0$, and we note that this is zero if $j_i=1$.

We have $x_{i,1} \cdots x_{i,j_i} > (1-\gamma)^{j_i}$,
and the measure of the set of vectors $(x_{i,1},\ldots,x_{i,j_i})$ is at most
 $\eps^{j_i-1}/j_i!$ when
$k_i=0$ and is otherwise is at most $\eps^{j_i}/j_i!$.
Using Lemma \ref{lem:mk} (a), for any $\eps \le y\le 1$ and $j\ge 1$ we have
\[
\sum_{k\ge 1} j^k m_k(y;\eps) \le \sum_{k=1}^\infty \frac{(\log \frac{y}{\eps})^{k-1} j^k}{y(k-1)!} = \frac{j}{y} \pfrac{y}{\eps}^j \le \frac{j}{\eps^j}.
\]
Therefore,
\begin{align*}
j_i! W_i \le \frac{\eps^{j_i-1}}{(1-\gamma)^{j_i}} + \frac{\eps^{j_i}}{(1-\gamma)^{j_i}} \cdot 
\frac{j_i}{\eps^{j_i}} = \frac{\eps^{j_i-1}+j_i}{(1-\gamma)^{j_i}} \le \frac{2j_i}{(1-\gamma)^{j_i}}.
\end{align*}
In conclusion, we obtain from \eqref{rjsM},
\[
|f(\by)| \le \frac{K\cdot  2^s}{(1-\gamma)^{j_1+\cdots+j_s}} =
\frac{K\cdot 2^s}{(1-\gamma)^{M-r}} \le K\cdot 2^M (M+1)^M.
\]
Taking $K = 2^{-M} (M+1)^{-M}$ gives $|f(\by)|\le 1$ for all $\by$, which
suffices.

%%%%%%
%
%
\subsection{Continuities}\label{sec:continuities}
%
%
%%%%%%

\subsubsection{Initial steps for the continuity case}
Define the invariants for each triple $(\gamma,\theta,\nu)$:
\begin{align*}
M(\gamma) &:= \fl{\frac{1}{1-\gamma}}, \text{ so that } \frac{1}{M(\gamma)+1} < 1-\gamma \le \frac{1}{M(\gamma)},\\
\sM(\gamma,\theta,\nu) &:= \{ m \ge M(\gamma)+1 : [\theta,\theta+\nu] \text{ contains no rational }a/m \text{ with } a\ge 1 \}.
\end{align*}

In this way, condition \eqref{eq:A1} in Theorem \ref{thm:asymptotic} is equivalent to
$\sM(\gamma,\theta,\nu) = \emptyset$.

In the case where $\theta=0$ or \eqref{eq:B} holds,
it will suffice to prove Theorem \ref{thm:continuity}
when $\theta+\nu \ne 1-\theta$. Indeed, if $(\gamma,\theta,1-2\theta)\in \cA^*$ then certainly $(\gamma,\theta,1/2-\theta)\in \cA^*$. By monotonicity (Proposition \ref{prop:C-monotonicity}), for $\eps>0$ we have
\begin{align*}
C^-(\gamma-\eps,\theta+\eps,\tfrac12-\theta-2\eps) &\le C^-(\gamma-\eps,\theta+\eps,1-2\theta-2\eps), \\
C^+(\gamma-\eps,\theta+\eps,\tfrac12-\theta-2\eps) &\ge C^+(\gamma-\eps,\theta+\eps,1-2\theta-2\eps).
\end{align*}
Thus, if $(\gamma,\theta,1-2\theta)\in \cA^*$ we have $(\gamma,\theta,1/2-\theta)\in\cA^*$ and if \eqref{C-continuous} holds for $\nu=\frac12-\theta$ then \eqref{C-continuous}
holds for $\nu=1-2\theta$. So the claim of Theorem \ref{thm:continuity} for $(\gamma,\theta,1-2\theta)$ follows from the claim for $(\gamma,\theta,1/2-\theta)$.

Our arguments naturally break into three cases: $1-\gamma \ge \theta+\nu$, $1-\gamma\le \theta < \theta+\nu < \frac12$ and $1-\gamma\le \theta < \theta+\nu = \frac12$.

Suppose that $P=(\gamma,\theta,\nu)\in \cA^*$, $\eqref{eq:B}$ holds and hence that 
$\gamma>\frac12$.
Since $\eqref{eq:B}$ holds for $P$, we see that there is a
sufficiently small $\epszero>0$ (depending on $P$) such that $\eqref{eq:A2}$ holds for $P_\eps$ whenever $0<\eps \le \epszero$. Since $P\in \cA^*$, we have that $P_\eps\notin \cA$ for $\eps>0$, so by Theorem \ref{thm:asymptotic}, $\eqref{eq:A1}$ fails for $P_\eps$ for $\eps>0$ sufficiently small but $\eqref{eq:A1}$ holds for $P_0=P$. (This final claim is automatic from Theorem \ref{thm:asymptotic} if $P\in\cQ_1$; if $P\in\cA\setminus\cQ_1$ then $\theta+\nu=1/2$ or $\gamma=1-\theta-\nu$, so 
\eqref{eq:A1} holds for $(\gamma,\theta,\nu)$ since it holds for $(\gamma,\theta,1-2\theta)$ or $(1-\theta,\theta,\nu)$ by Theorem \ref{thm:asymptotic}).
We also note that
\[
\gamma + \nu \le 1,
\]
for otherwise \eqref{eq:A1} holds for $P_\eps$ for all sufficiently small $\eps$.

Analogous to $\cR(P)$, define $\cR_{\text{cl}}(P)$ to be the set of vectors, of
arbitrary dimension, all of whose components are in $[0,1-\gamma]$,
the sum of components is 1, and where no subsum of components lies
in $(\theta,\theta+\nu)$.  This will be a more convenient set to work with
here.
If $\cW$ is a collection of subsets of $[k]$, and
\begin{equation}\label{eq:T2kPW}
T_{2,k}(P;\cW):=\big\{\bx\in \cR_{\text{cl}}(P) \cap \RR^k: |\ssc{\bx}{W}| \le \theta \; (\forall W\in \cW), |\ssc{\bx}{W}| \ge \theta+\nu \; (\forall W \not\in \cW)\big\},
\end{equation}
then $\cR_{\text{cl}}(P)\cap\RR^k$ is the union of $T_{2,k}(P;\cW)$ over all choices of $\cW$. 
The sets $T_{2,k}(P;\cW)$ are disjoint since if $W\in \cW_1\triangle \cW_2$ then 
the sets $T_{2,k}(P;\cW_1)$ and $T_{2,k}(P;\cW_2)$ lie on opposite sides of
the region $\{ \bx \in \RR^k: \theta < |\ssc{\bx}{W}| < \theta+\nu \}$.

For brevity, write
\[
\cR^\eps := \cR(P_\eps), \quad \cR_{\text{cl}}^\eps  := \cR_{\text{cl}}(P_\eps),
\]
so that $\cR^0 = \emptyset$ since $P\in \cA$ (and using Theorem \ref{thm:asymptotic}). Clearly we have the inclusions
\begin{equation}\label{eq:R1R2}
\cR_{\text{cl}}^{\eps} , \cR^{\eps'} \subseteq \cR_{\text{cl}}^{\eps'}\quad\text{ and }\quad \cR_{\text{cl}}^\eps\cap(0,1]^k\subseteq \cR^{\eps'} \qquad (0\le \eps < \eps',\,k\ge 1).
\end{equation}

When $1-\gamma < 1/M(\gamma)$, we have $M(\gamma-\eps)=M(\gamma)$ for $\eps>0$ sufficiently small.  Also, the elements of $\sM(\gamma,\theta,\nu)$ are all $<1/\nu$.
Hence, for small enough $\eps>0$ we have
\[
\sM(P_\eps) = \big\{ m\ge M(\gamma)+1 : \text{ there is no rational } a/m \in 
(\theta,\theta+\nu) \big\}.
\]
When $1-\gamma = 1/M(\gamma)$,  we have $M(\gamma-\eps)=M(\gamma)-1$ for $\eps>0$ sufficiently small, and thus for $\eps>0$ small enough we have
\[
\sM(P_\eps) = \big\{ m\ge M(\gamma) : \text{ there is no rational } a/m \in 
(\theta,\theta+\nu) \big\}.
\]
Thus, regardless of whether $1-\gamma=1/M(\gamma)$ or not, for sufficiently small $\epszero>0$ the set $\sM(P_\eps)$ is constant for $0<\eps\le \epszero$ and in this range of $\eps$ we abbreviate
\[
\sM = \sM(P_\eps).
\]
For each $m\in \sM$, and $0<\eps\le \epszero$,
both $\cR_{\text{cl}}^0$ and $\cR^\eps$ contain the point $(1/m,\dots,1/m)$,
and these sets contain no other points  of the form $(1/n,\dots,1/n)$.  Therefore,
\begin{equation}\label{eq:sM-alt}
\sM = \Bigg\{ m \ge \cl{\frac{1}{1-\gamma}} :  \, \Big( \frac{1}{m},\ldots,\frac{1}{m} \Big) \in \cR_{\text{cl}}^0 \Bigg\}.
\end{equation}
For example, if $[\theta,\theta+\nu]=[\frac14,\frac13]$ and $\frac16 \le 1-\gamma < \frac3{16}$ then $\sM = \{ 6,8,9,12 \}$.

For the remainder of this subsection, $\epszero$ will be assumed to be sufficiently small
in terms of $P=P_0$, and $0<\eps \le \epszero$.

\begin{lem}\label{lem:sM}
Let $(\gamma,\theta,\nu)\in \cA^*$.  For every $m\in \sM$,
there is an integer $a$ with $a/m\in\{\theta,\theta+\nu\}$.
\end{lem}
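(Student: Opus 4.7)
The plan is to exploit the characterization \eqref{eq:sM-alt}, which says $m\in\sM$ iff $\bv:=(1/m,\ldots,1/m)\in \cR_{\text{cl}}^0$, together with the fact that $\cR^0=\cR(P)=\emptyset$ (which holds by Theorem \ref{thm:asymptotic-R1-A1A2}, since $P\in\cA$ means both \eqref{eq:A1} and \eqref{eq:A2} hold for $P$). The only two ways $\bv$ can sit in $\cR_{\text{cl}}^0\setminus\cR^0$ are that either some component of $\bv$ equals $1-\gamma$ — which forces $1/m=1-\gamma$ and hence $m=M(\gamma)$ with $1-\gamma=1/M(\gamma)$ — or some proper subsum $a/m$ of $\bv$ (for $a\in\{1,\ldots,m-1\}$) equals exactly $\theta$ or $\theta+\nu$. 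If the second possibility occurs we are immediately done.

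Thus the whole argument splits into two cases. In the generic case $m\ge M(\gamma)+1$, the first possibility cannot occur, hence the second must, yielding the required $a$. In the boundary case $m=M(\gamma)$, I would invoke \eqref{eq:A2} for $P$ to furnish a positive integer $h\le m$ with $h/m=h(1-\gamma)\in[\theta,\theta+\nu]\cup[1-\theta-\nu,1-\theta]$. If $h/m\in[\theta,\theta+\nu]$, then since $m\in\sM$ rules out $h/m\in(\theta,\theta+\nu)$, we must have $h/m\in\{\theta,\theta+\nu\}$ and take $a=h$. If $h/m\in[1-\theta-\nu,1-\theta]$, I would instead look at $(m-h)/m\in[\theta,\theta+\nu]$: when $m-h\ge 1$ the same $\sM$-argument applied to $a=m-h$ works, and when $m-h=0$ the relation $1\in[1-\theta-\nu,1-\theta]$ forces $\theta=0$, after which $a=0$ satisfies $a/m=0=\theta$.

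The main obstacle, such as it is, is the boundary case $m=M(\gamma)$: there the strict componentwise inequality $\ssc{x}{i}<1-\gamma$ defining $\cR^0$ genuinely fails for $\bv$, and not because any subsum lands at a boundary point of the Type II interval. This is precisely why \eqref{eq:A2} must be brought in as a separate input in that case, and why the degenerate sub-case $h=m$ requires the small extra observation that $1\in[1-\theta-\nu,1-\theta]$ forces $\theta=0$. Everything else is an immediate consequence of the $m\in\sM$ condition combined with the emptiness of $\cR^0$.
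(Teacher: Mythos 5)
Your proof is correct and parallels the paper's: both split on $m \ge M(\gamma)+1$ versus $m = M(\gamma)$, and both use the fact that $m\in\sM$ excludes $(\theta,\theta+\nu)$ to force the relevant rational onto an endpoint. The one substantive difference is the $m = M(\gamma)$ case, where the paper invokes hypothesis \eqref{eq:B} (the standing assumption of the subsection, not stated in the lemma itself) to get $h/M \in [\theta,\theta+\nu)\cup[1-\theta-\nu,1-\theta)$, then uses $m\in\sM$ to conclude $h/M\in\{\theta,1-\theta-\nu\}$. You instead invoke \eqref{eq:A2}, which follows for free from $P\in\cA^*$ via $\cR(P)=\emptyset$; you then need an extra step (passing to $m-h$ and handling the degenerate $m-h=0$ case, which you correctly note forces $\theta=0$ and is then trivial). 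Your route therefore proves the lemma in exactly the generality stated, without leaning on the unstated hypothesis \eqref{eq:B}; the paper's route is slightly shorter because the half-open intervals in \eqref{eq:B} make the reduction to an endpoint immediate. Your $m\ge M(\gamma)+1$ case, phrased through the characterization $\sM = \{m : (1/m,\ldots,1/m)\in\cR_{\text{cl}}^0\}$ together with $\cR^0=\emptyset$, is an equivalent restatement of the paper's direct appeal to \eqref{eq:A1}.
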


\begin{proof}
Let $M=M(\gamma)$.
If $m\ge M+1$, the assertion holds since  \eqref{eq:A1} holds for $P_0$ but fails for $P_\eps$ when $0<\eps\le \epszero$.  If $m=M$, then $1-\gamma=\frac{1}{M}$.
By \eqref{eq:B}, there is an integer $h$ with $\frac{h}{m} \in [\theta,\theta+\nu) \cup
[1-\theta-\nu,1-\theta)$.  But $M=m\in \sM$ implies that for every integer $h$,
$\frac{h}{M} \not\in (\theta,\theta+\nu)$.  Hence, for all $h$,
 $\frac{h}{M} \not\in (1-\theta-\nu,1-\theta)$ also.  Therefore, there
 is an integer $h$ with $\frac{h}{M} \in \{ \theta, 1-\theta-\nu \}$.
 Finally, if $\frac{h}{M}=1-\theta-\nu$ then $\frac{M-h}{M}=\theta+\nu$.
\end{proof}

\begin{lem}\label{lem:nu-lower-for 1/2-theta}
Suppose that $P=(\gamma,\theta,\nu)\in \cQ$ with $\nu=\frac12-\theta$
and \eqref{eq:A1} holds. Then
\[
\nu \ge \begin{cases}
\frac{1}{2M+2} & \text{ if } M \text{ is even},\\
\frac{1}{2M+4} & \text{ if } M \text{ is odd}.
\end{cases}
\]
\end{lem}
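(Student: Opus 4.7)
The plan is to exploit the fact that $\nu = \frac12 - \theta$ forces the right endpoint of the Type II interval to equal $\frac12$, which reduces \eqref{eq:A1} to a purely elementary question about rationals in $[\theta,\frac12]$. In particular, for each $n \ge M+1$, \eqref{eq:A1} asks for an integer $a \ge 1$ with $a/n \in [\theta,\frac12]$, equivalently for the interval $[\theta n,\, n/2]$ to contain an integer. Since $n/2$ controls the upper end, this is equivalent to the single condition $\lfloor n/2\rfloor / n \ge \theta$.

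Next I would split on the parity of $n$. When $n$ is even, $\lfloor n/2\rfloor/n = \frac12 \ge \theta$ is automatic and \eqref{eq:A1} imposes no constraint for that $n$. When $n$ is odd, $\lfloor n/2 \rfloor / n = \frac12 - \frac{1}{2n}$, so \eqref{eq:A1} for that $n$ is equivalent to $\theta \le \frac12 - \frac1{2n}$, i.e., to $\nu \ge \frac{1}{2n}$. Thus, over all $n \ge M+1$, the hypothesis \eqref{eq:A1} is equivalent to the family of inequalities $\nu \ge \frac{1}{2n}$ indexed by odd integers $n \ge M+1$.

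Finally, I would take the strongest of these constraints by choosing the smallest odd $n \ge M+1$. This is $n = M+1$ when $M$ is even and $n = M+2$ when $M$ is odd, producing $\nu \ge \frac{1}{2M+2}$ and $\nu \ge \frac{1}{2M+4}$ respectively, as claimed. There is no genuine obstacle here: the only care required is to correctly identify the smallest odd integer $\ge M+1$ depending on the parity of $M$, and to verify that $a = n/2$ (resp.\ $a=(n-1)/2$) lies in $\NN$, which holds since $M \ge 2$ forces $n \ge 3$.
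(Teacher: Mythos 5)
Your proposal is correct and follows essentially the same route as the paper: both reduce \eqref{eq:A1} (with $\theta+\nu=\tfrac12$) to the observation that even $n$ impose no constraint, while each odd $n\ge M+1$ gives $\nu \ge \tfrac{1}{2n}$, so the binding constraint comes from the smallest odd $n \ge M+1$, which is $M+1$ or $M+2$ according to the parity of $M$. The paper states this a bit more tersely by naming the single critical fraction (e.g.\ $\tfrac{M/2}{M+1}$ or $\tfrac{(M+1)/2}{M+2}$) rather than first writing out the whole family of constraints, but the argument is the same.
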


\begin{proof}
Let $I=[\theta,\tfrac12]$ and $M=M(\gamma)$.  If $M$ is even, then \eqref{eq:A1} holds  if and only if $\frac{M/2}{M+1} \in I$. In this case $\nu=\tfrac12-\theta\ge \tfrac12-\frac{M/2}{M+1}=\frac{1}{2M+2}$. If instead $M$ is odd then \eqref{eq:A1} holds if and only if $\frac{(M+1)/2}{M+2}\in I$ (note that for any even $m$, $\frac{m/2}{m}\in I$, so we only need to consider odd $m\ge M+1$ in \eqref{eq:A1}). In this case $\nu=\tfrac12-\theta\ge \tfrac12-\frac{(M+1)/2}{M+2}=\frac{1}{2M+4}$.
\end{proof}

The principal tool in proving \eqref{C-continuous} of Theorem \ref{thm:continuity} is that the vectors on $\cR_{\text{cl}}^\eps$ must have special forms.

\begin{prop}\label{prop:ClaimA}
Let $(\gamma,\theta,\nu)\in \cA^*$ be such that $\theta=0$ or \eqref{eq:B} holds, and let $k\ge M(\gamma)+1$.  Suppose that $0<\eps\le \epszero$ and
 $\bx \in \cR_{\text{cl}}^\eps \cap \RR^k$ with
$S = \{ i: x_i < \nu-2\eps \}$ and $L=\{ i : x_i \ge \nu-2\eps \}$.
Then $|\ssc{\bx}{S}| \le m^2 \eps$ 
and there is an integer $m$ such that one of the following holds:
\begin{enumerate}
\item $m\in \sM$, and $|x_i - \frac{1}{m}| \le 2m^2\eps$ for all $i\in L$;
\item $2m\in \sM$, for all $i\in L$  either  $|x_i - \frac{1}{m}| \le 2m^2\eps$ or $|x_i - \frac{1}{2m}| \le 2m^2\eps$;
\item $m\in \sM$, and there exists distinct $j_1,j_2 \in L$  such that all of the following hold:
\begin{enumerate}
\item $|x_{j_1}+x_{j_2} - \frac{1}{m}| \le m^2\eps$; 
\item For all $i\notin\{j_1,j_2\}$ with $i\in L$, we have $|x_i - \frac{1}{m}| \le m^2\eps$;
\item Either $\nu-2\eps \le x_{j_1} < \frac{1}{2m}-2m^2\eps$ or $x_{j_2}>\frac{1}{2m}+2m^2\eps$.
\end{enumerate}
\end{enumerate}
\end{prop}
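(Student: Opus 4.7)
The plan is to first restrict the allowed subsums of $\by:=\ssc{\bx}{L}$ via a greedy increment argument, then qualitatively classify $\by$ using Hypothesis (A3), and finally upgrade to the quantitative bounds. For the restriction, I claim that for every $T\subseteq L$, either $|\ssc{\bx}{T}|+|\ssc{\bx}{S}|\le\theta+\eps$ or $|\ssc{\bx}{T}|\ge\theta+\nu-\eps$. If not, then $|\ssc{\bx}{T}|<\theta+\nu-\eps$ and $|\ssc{\bx}{T}|+|\ssc{\bx}{S}|>\theta+\eps$; adding the components of $S$ to $|\ssc{\bx}{T}|$ one at a time, and using that each component of $S$ is strictly smaller than the gap length $\nu-2\eps$, the first partial sum exceeding $\theta+\eps$ lands strictly below $\theta+\eps+(\nu-2\eps)=\theta+\nu-\eps$ and hence inside the forbidden open interval, contradicting $\bx\in\cR_{\mathrm{cl}}^\eps$. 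Consequently the discrete set of subsums of $\by$ avoids the open interval $(\theta+\eps-|\ssc{\bx}{S}|,\theta+\nu-\eps)$; taking $T=\emptyset$ already gives the crude bound $|\ssc{\bx}{S}|\le\theta+\eps$.

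Next, I classify $\by$ qualitatively. Partition the components of $\by$ into clusters so that within each cluster the components lie within $\nu-2\eps$ of each other, and between clusters the components differ by more than $\nu-2\eps$. Let $v_1<\cdots<v_\ell$ be the cluster means with multiplicities $n_1,\ldots,n_\ell$, and form a snapped vector $\by^*$ by replacing each component by its cluster mean; for small enough $\eps$, a suitable rescaling of $\by^*$ satisfies conditions (i)--(iii) of Hypothesis (A3). Part $(\beta)$ of Proposition \ref{prop:A123-epsilon-tweak} shows that (A3) holds for $P_\eps$ (except for $P_0=(\tfrac{5}{6},\tfrac{1}{4},\tfrac{1}{12})$), so either all components of $\by^*$ are equal (violating (iv)) or $\by^*$ has a proper subsum in $(\theta+\eps,\theta+\nu-\eps)$. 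The latter, combined with Step~1 and the fact that the snapping perturbs subsums only by $O(\eps)$, is a contradiction unless the cluster structure is very rigid. A case analysis on the number and spacing of clusters, refined using the characterization of $\sM$ in Lemma \ref{lem:sM}, leaves exactly three admissible structural forms: (1) a single cluster $v_1\approx 1/m$ with $m\in\sM$; (2) two clusters $v_1\approx 1/(2m)$, $v_2\approx 1/m$ with $2m\in\sM$; and (3) a main cluster near $1/m$ with $m\in\sM$ together with an exceptional pair of components summing to $1/m$.

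Finally, I upgrade the qualitative statement to the quantitative bounds. Write each large component as $y_i=a_i/m+\delta_i$ with $a_i\in\{1,2\}$ (with $(y_{j_1},y_{j_2})$ treated as an exceptional pair summing to $1/m$ in Case~(3)). Applying Step~1 with $T=\{i\}$ and with $T=L\setminus\{i\}$ shows that $|\delta_i|\ll m\eps$, except possibly for the exceptional pair; combining with the identities $\sum a_i/m=1$ (valid for each structural form) and $\sum y_i+|\ssc{\bx}{S}|=1$ then yields $|\ssc{\bx}{S}|\le m^2\eps$ and $|y_i-a_i/m|\le 2m^2\eps$, and in Case~(3), $|y_{j_1}+y_{j_2}-1/m|\le m^2\eps$ together with the required asymmetry condition. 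The principal obstacle will be Step~2: ruling out hybrid configurations with three or more distinct clusters, and ensuring the snapping $\by\mapsto\by^*$ preserves condition (ii) of (A3) (pair sums $\ge 1-\gamma$), are delicate combinatorial tasks in which one must carefully track how subsum constraints propagate under small perturbations; the exceptional triple $P_0=(\tfrac{5}{6},\tfrac{1}{4},\tfrac{1}{12})$ is handled by hand, checking directly that the explicit (A3)-violating vector exhibited in the proof of Proposition \ref{prop:A123-epsilon-tweak} falls under Case~(3) with $m=6$.
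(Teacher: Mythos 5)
Your proposed proof takes a direct clustering approach rather than the paper's route (which proves Proposition~\ref{prop:ClaimB} by induction within the polytopes $T_{2,k}(P_\eps;\cW)$ and then transfers rigidity back via Lemma~\ref{lem:BtoA}). The direct approach faces several obstacles that you flag as ``delicate'' but in fact break the argument in its present form.

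First, the snapped vector $\by^*$ need not satisfy condition~(ii) of Hypothesis (A3). The components of $\by$ are only constrained to lie in $[\nu-2\eps,\,1-\gamma+\eps]$, and Lemma~\ref{lem:GammaNu} gives only $1-\gamma<3\nu$; nothing forces $2\nu\ge 1-\gamma$. So two small cluster means may sum to less than $1-\gamma+\eps$, and (A3)(ii) fails. The paper faces the same issue in the proof of Lemma~\ref{lem:BDEF} and resolves it by coagulating such pairs (Lemma~\ref{lem:combine}); but coagulation destroys the correspondence between components of $\by^*$ and components of the original $\bx$, so one can no longer ``read off'' the clusters of $\bx$ from those of the coagulated vector. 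The paper sidesteps this entirely by doing induction on $k$ inside prop:ClaimB: if $v_2<(1-\gamma+\eps)/2$ and $n_2\ge 2$, two copies of $v_2$ are merged into $2v_2$, the induction hypothesis applies to the shorter vector, and the conclusion is pulled back via Lemma~\ref{lem:BtoA}'s polytope constraints. Your clustering argument has no analogue of this inductive fallback.

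Second, the rescaling needed to make $\by^*$ sum to $1$ before applying (A3) is circular. Step~1 only yields $|\ssc{\bx}{S}|\le\theta+\eps$, which is a constant, not $O(\eps)$; rescaling by $1/(1-|\ssc{\bx}{S}|)$ can then push the largest component past $1-\gamma+\eps$ and/or distort subsums out of the forbidden interval by an amount comparable to $\theta$, so the fact that $\by$ avoids $(\theta+\eps,\theta+\nu-\eps)$ gives no contradiction against (A3) for the rescaled vector. You need $|\ssc{\bx}{S}|\ll\eps$ to make this step go through, but that bound is exactly one of the things being proved. The paper's proof of Lemma~\ref{lem:BtoA} does not rescale; instead it exploits the fact that $\bx$ and the special vector lie in the \emph{same} polytope $T_{2,k}(P_\eps;\cW)$, then runs an averaging argument over all size-$a$ subsets $I$ of $[m]$ to deduce $z_0\le m^2\eps$ and $|z_\ell-1/m|\le m^2\eps$ simultaneously. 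Your Step~3 (``apply Step~1 with $T=\{i\}$ and $T=L\setminus\{i\}$'') does not reproduce this averaging: knowing one subsum is $\le\theta+\eps-|\ssc{\bx}{S}|$ or $\ge\theta+\nu-\eps$ does not pin down $y_i$ to within $O(m\eps)$ of $a_i/m$ without knowing the other components are already rigid, which again is circular.

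In short, the direct (A3)-on-the-snapped-vector argument cannot establish the quantitative bounds without first knowing $|\ssc{\bx}{S}|=O(\eps)$, and it cannot get (A3)(ii) without coagulation that loses the needed information. The paper's induction-plus-polytope-rigidity structure is not an optional technicality; it is the mechanism that breaks the circularity.
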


Case (3) is possible, for example when $[\theta,\theta+\nu]=[\frac15,\frac13]$
and $\frac13 < 1-\gamma < \frac25$, $\cR_{\text{cl}}^\eps$ contains points
of the shape $\big(\frac13+O(\eps),\frac13+O(\eps),x,y\big)$, with $x+y=\frac13+O(\eps)$ and
$\frac2{15} \le x \le \frac16$.

\begin{lem}\label{lem:ClaimA-3}
Let $P=(\gamma,\theta,\nu)\in \cA^*$, \eqref{eq:B} holds, $0 < \eps \le \epszero$. For some $m^*\in \{M(\gamma),M(\gamma)+1\}$, depending only on $P$,
any vector in $\cR_{\text{cl}}^\eps$ satisfying case (3) of 
Proposition \ref{prop:ClaimA} has $m=m^*$.
\end{lem}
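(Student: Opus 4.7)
First I would extract the structure: letting $\bx\in\cR_{\text{cl}}^{\eps}$ satisfy case (3), set $\alpha=x_{j_1}$ and $\beta=x_{j_2}$. Items (a) and (b) of case (3), combined with $|\bx_S|\le m^2\eps$ and $|\bx|=1$, give
\[
\frac{|L|-1}{m}\;=\;1+O(km^2\eps).
\]
Since $m$ and $k$ are bounded in terms of $P$, for $\eps$ small this forces $|L|=m+1$ exactly, and in particular the exceptional pair $\{j_1,j_2\}$ together with $m-1$ near-$1/m$ components accounts for all of $L$.

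The next step is to prove $\alpha,\beta\ge \nu-O(m^3\eps)$. By Lemma \ref{lem:sM}, pick an integer $a^{\ast}$ with $a^{\ast}/m\in\{\theta,\theta+\nu\}$. For any $A\subseteq L\setminus\{j_1,j_2\}$ of size $a^{\ast}$, the proper subsum over $A\cup\{j_1\}$ evaluates to $a^{\ast}/m+\alpha+O(m^3\eps)$. Since $\bx\in\cR_{\text{cl}}^{\eps}$, this must avoid $(\theta+\eps,\theta+\nu-\eps)$; the "$\le\theta+\eps$" branch is excluded by the lower bound $\alpha\ge \nu-2\eps$ built into (3)(c), forcing $\alpha\ge\nu-O(m^3\eps)$. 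A symmetric argument with $\{j_2\}$ in place of $\{j_1\}$ (or a complementary subsum) yields $\beta\ge\nu-O(m^3\eps)$. Adding these bounds and using $\alpha+\beta=1/m+O(m^2\eps)$,
\[
2\nu\le \frac{1}{m}+O(m^3\eps),\qquad\text{so}\qquad m\le \frac{1}{2\nu}+O(\eps).
\]
On the other hand, $m\in\sM$ forces $m\ge M(\gamma)$, with equality possible only when $1-\gamma=1/M(\gamma)$; otherwise $m\ge M(\gamma)+1$.

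To show that $m\in\{M(\gamma),M(\gamma)+1\}$ with a value depending only on $P$, I would split on the size of $\nu$. If $\nu>(1-\gamma)/2$ then $1/(2\nu)<1/(1-\gamma)\le M(\gamma)+1$, so $m\le M(\gamma)$; combined with the lower bound this forces $m=M(\gamma)$ and $1-\gamma=1/M(\gamma)$. If instead $\nu\le(1-\gamma)/2$, Proposition \ref{prop:ratio} places $P$ in one of the explicit families (a)--(h). A short numerical check using the tabulated values of $1-\gamma$ and $\nu$ in each family shows: in (a),(b),(c),(d),(f),(h) the lower bound $m\ge M(\gamma)+1$ (which holds because $1-\gamma<1/M(\gamma)$ in each of these) already exceeds $1/(2\nu)$, so no case (3) vector exists and the conclusion is vacuous; in (e) and (g) exactly one integer in $\{M(\gamma),M(\gamma)+1\}\cap\sM$ lies at most $1/(2\nu)$, and that integer is the required $m^{\ast}$.

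The most delicate part will be the subsum argument of paragraph two, where one must carry the $O(m^3\eps)$ errors carefully so that the dichotomy "$\alpha\ge\nu$ or $\alpha$ negligible" lands on the correct branch; the key leverage is that $\alpha\ge\nu-2\eps$ from (3)(c) is of fixed positive order rather than of order $\eps$. The case-by-case check in the final step is routine but essential, since the bounds $M(\gamma)\le m\le 1/(2\nu)$ alone do not single out a unique $m$; the collapse to a unique admissible value relies on the specific numerics of Proposition \ref{prop:ratio}.
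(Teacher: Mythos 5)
The proposal misses the parameter regimes where the lemma actually has content. Proposition \ref{prop:ratio} only applies when $1-\gamma\le\theta<\theta+\nu<\tfrac12$, so invoking it to enumerate cases (a)--(h) leaves open the two other possibilities for $P\in\cA^*$, namely $\theta+\nu=\tfrac12$ and $1-\gamma\ge\theta+\nu$. These are exactly the situations in which type-(3) vectors can survive and where $m^*$ must be pinned down to $M(\gamma)$ or $M(\gamma)+1$: in the first one uses Lemma \ref{lem:nu-lower-for 1/2-theta} (and the parity of $M$ decides the value of $m^*$), and in the second one uses Lemma \ref{lem:smallgamma} and Lemma \ref{lem:sM} to show that $\theta+\nu\in\{\tfrac1M,\tfrac1{M+1}\}$ determines $m^*$. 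Your proof does not touch either branch, so it is incomplete precisely where the lemma is non-trivial.

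Within the branch you do handle, the numerical conclusions are also off. Starting from the strict inequality built into case (3)(c) — either $\nu-2\eps\le x_{j_1}<\tfrac1{2m}-2m^2\eps$ directly, or the same for $x_{j_1}$ after transferring $x_{j_2}>\tfrac1{2m}+2m^2\eps$ through (3)(a) — one gets the clean strict bound $\nu<\tfrac1{2m}$ with no $O(\eps)$ slack, and hence $\nu<\tfrac{1-\gamma}{2}$, so among cases (a)--(h) only (e)--(h) are in play. (Your route via the subsum $a^*/m+\alpha$ only reproduces $\alpha\ge\nu-O(\eps)$, which you already know from $j_1\in L$, and delivers the weaker non-strict inequality.) In all four remaining families (e)--(h), one checks that $\nu\ge\tfrac1{12}$ forces $m\le5$, while the listed bounds on $1-\gamma$ force $m\ge6$ in (e)--(f), $m\ge5$ in (h) (against $m\le4$ there), and $m=5$ in (g) — but in (g) the point $\tfrac25$ lies in the interior of $[\tfrac13,\tfrac37]\subseteq\two$, so $5\notin\sM$. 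So the conclusion in this branch is vacuous in \emph{every} subcase, not just six of eight; your claim that (e) and (g) produce a non-vacuous $m^*$ is incorrect, and the numerology needs to be redone.

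Finally, the opening reduction $|L|=m+1$, while correct, is never used, and the elaborate subsum argument of your second paragraph proves only what condition (3) already gives you for free (recall $j_1,j_2\in L$ means $x_{j_1},x_{j_2}\ge\nu-2\eps$ by definition). The load-bearing information is instead the two-sided bound on $x_{j_1}$ from (3)(c), which you did not exploit.
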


\begin{proof}
Let $M=M(\gamma)$.
Assume that there is a vector $\bx\in \cR_{\text{cl}}^\eps$
satisfying case (3) in Proposition \ref{prop:ClaimA}.
%By the argument in Section \ref{sec:theta=0} below, $\theta=0$ is not possible.
We have either $x_{j_1} < \frac{1}{2m} - 2m^2 \eps$ or 
$x_{j_2} > \frac{1}{2m} + 2m^2\eps$.  Since $x_{j_1}+x_{j_2}\le \frac1{m}+m^2\eps$,
in either case we have $\nu-2\eps \le x_{j_1} \le \frac{1}{2m} - m^2\eps$, and hence
\begin{equation}\label{nu-2m}
\nu < \frac{1}{2m} \le \frac{1-\gamma}{2}.
\end{equation}

\textbf{Case I. $1-\gamma \le \theta < \theta+\nu < \frac12$}. 
By \eqref{nu-2m} and Proposition \ref{prop:ratio},
we are in one of the cases (e)--(h) in that Proposition.
In all cases, $\nu \ge \frac{1}{12}$, and thus $m\le 5$.
For cases (e) and (f), this is not possible since $1-\gamma < \frac15$.
For case (h), $\nu \ge \frac{4}{35} > \frac{1}{10}$, so $m\le 4$, but 
$1-\gamma < \frac14$, so this is also impossible.
In case (g), $1-\gamma < \frac14$, so $m=5$.  But $[\theta,\theta+\nu]$ contains
$[\frac13,\frac37]$ with the point $\frac25$ in the interior, and thus $5\not\in \sM$.
Therefore, there are no such vectors $\bx$ in this case.

\textbf{Case II.  $1-\gamma \le \theta < \theta+\nu = \frac12$}. 
Recall that $m\ge M$.  If $M$ is even, Lemma \ref{lem:nu-lower-for 1/2-theta}
and \eqref{nu-2m} imply that $\frac{1}{2M+2} \le\nu < \frac{1}{2m}$, hence $m=M$.
If $M$ is odd, then by Lemma \ref{lem:nu-lower-for 1/2-theta}
and \eqref{nu-2m}, $\frac{1}{2M+4} \le \nu < \frac{1}{2m}$,
so that $m=M$ or $m=M+1$.  If $m=M$ then $1-\gamma=\frac{1}{M}$.
As $m$ is odd, by Lemma \ref{lem:sM}, $\theta = \frac{a}{M}$ for some integer
$a$.  Thus, $\theta \le \frac{M-1}{2M}$, so $\nu\ge \frac{1}{2M}$.  But this
contradicts \eqref{nu-2m}.  Thus, $m=M+1$.

\textbf{Case III. $1-\gamma\ge \theta+\nu$.}
By Lemma \ref{lem:smallgamma}, $[\theta,\theta+\nu]$ contains
one of the intervals $[\frac{1}{2M+1},\frac{1}{M+1}]$, $[\frac{1}{2M},\frac{2}{2M+1}]$ or $[\frac{1}{2M-1},\frac{1}{M}]$.  Since $M\ge 2$, $\nu >\frac{1}{2M+4}$. 
By \eqref{nu-2m}, $m\le M+1$.
If $m=M$ then $1-\gamma=\frac{1}{M}$. By Lemma \ref{lem:sM}, this forces $\theta+\nu=1/M$. 
If $m=M+1$, then Lemma \ref{lem:sM} plus the inequality $1-\gamma \le \frac{1}{M}<
\frac{2}{M+1}$ implies that $\frac{1}{M+1} \in \{ \theta, \theta+\nu \}$.
If $\theta=\frac{1}{M+1}$, then $M=2$, $\theta+\nu=\frac12 =1-\gamma$,
which is not possible as $\gamma>\frac12$ if $P\in \cA^*$.
Thus, $\theta+\nu=\frac{1}{M+1}$. Hence $m^*$ can be taken to be $M$ or $M+1$ depending on whether $\theta+\nu=\frac{1}{M}$ or $\theta+\nu=\frac{1}{M+1}$.
\end{proof}

In the next subsection, we deduce the continuity case of Theorem \ref{thm:continuity}
from Proposition \ref{prop:ClaimA} and Lemma \ref{lem:ClaimA-3}.
The following four subsections are devoted to the proof of Proposition \ref{prop:ClaimA}.

\subsubsection{The sieve argument}

%%%%%%%%%%%%%%%%%%%%%%%%%%%%%%%%%%%%%%%%%%%%%%%%%%%%%%%%%%%%%%%%%%%%%%%%%%%%
%%%%%%%%%%%%%%%%%%%%%%%%%%%%%%%%%%%%%%%%%%%%%%%%%%%%%%%%%%%%%%%%%%%%%%%%%%%%
%%%%%%%%%%%%%%%%%%%%%%%%%%%%%%%%%%%%%%%%%%%%%%%%%%%%%%%%%%%%%%%%%%%%%%%%%%%%
%%%%%%%%%%%%%%%%%%%%%%%%%%%%%%%%%%%%%%%%%%%%%%%%%%%%%%%%%%%%%%%%%%%%%%%%%%%%

Recall the definitions \eqref{eq:VHZGdefs}, and 
for $0<\eps\le \epszero$ define $\cH^{\eps}:=\cH(P_\eps)$,
$\cZ^\eps:=\cZ(P_\eps)$, and $\cG_1^\eps:=\cG_1(P_\eps)$.
We remark that $\cZ^\eps$ and $\cG_1^\eps$ both contain the empty vector $\varnothing$.

Let $d=100m^3$.
It follows from Proposition \ref{prop:ClaimA} that the vectors in $\cH^\eps$
have two possible forms:
\begin{itemize}
\item[(i)] for some $m\in \sM$, each component $x_i$ lies in 
\[
\cI_m := \bigcup_{a=1}^{m-1} \bigg[ \frac{a}{m} - d\eps, \frac{a}{m} + d\eps \bigg];
\]
\item[(ii)]  for some $\ell \in \sM$, all components $x_i$ but two lie in $\cI_\ell$,
while the other two components have sum in $\cI_\ell \cup \{1\}$ and each lie in
\[
\cK_\ell := \bigcup_{a=0}^{\ell-1} \bigg[ \frac{a}{\ell} + \nu - d\eps, \frac{a+1}{\ell} - \nu + d\eps \bigg];
\]
\end{itemize}
Vectors of type (ii) only occur as coagulations of some vector satisfying type (3)
in Proposition \ref{prop:ClaimA}.
It is possible for a vector to be of type (ii) for some $\ell\in \sM$ and also of type (i) with $m=2\ell$ if $2\ell\in \sM$, $\nu\le \frac{1}{2\ell}+O(\eps)$
and the two components which are in $\cK_\ell$ are in $\cI_{2\ell}$.
Also, a vector of type (i) need not be associated to a unique $m$, e.g. if
(i) holds for $m=6$ and $12\in \sM$ then (i) holds also for $m=12$.

For brevity, let
\[
\cH_k^\eps = \cH^\eps \cap \RR^k.
\]

The claim that $|\ssc{\bx}{S}|\le m^2 \eps$ in Proposition \ref{prop:ClaimA} implies that
components of a vector in $\cR^\eps$ which are $\le \nu-2\eps$ have sum at most $m^2 \eps$.  Consequently, $\psi(\bx) \le m^2 \eps$ for all 
$\bx \in \cZ^\eps$
with $x_i > \nu-2\eps$ for all $i$. 
Therefore, $\cG_1^\eps$ contains all those
vectors in $\cZ^{\eps}$ with components $>\nu-\eps$ and
 sum of components $\le \gamma-(m^2+1)\eps$.
In particular, since $\gamma>\frac12$, $\cG_1^\eps$ contains
all such vectors with sum of components $\le \frac12$.

We will not make use of $\cG_2(P_\eps)$ for these constructions.
By Theorem \ref{thm: Main sieving}, it suffices to find functions $g_1^+$ and $g_1^-$,
each supported in $\cG_1^\eps$, and satisfying
\begin{enumerate}
\item[(a)] $g_1^+(\emptyset) = g_1^-(\emptyset)=1$;
\item[(b)] for all $\bx\in \cH^\eps$, $(\bone\star g_1^-)(\bx) \le 0\le (\bone\star g_1^+)(\bx)$;
\item[(c)] for each $\ssc{g}{1} \in \{ g_1^-, g_1^+ \}$,
\[
\sum_{k\ge 2} \;\int_{\cH_k^\eps} \frac{(\bone\star \ssc{g}{1})(\bx)}{\ssc{x}{1}\cdots \ssc{x}{k}} = O(\eps).
\]
\end{enumerate}

\textbf{Lower bound sieve.}  We take, for a singleton $x$,
$g_1^-(x) = - \one(x\le 1/2)$,
$g_1^-(\emptyvec)=1$ and $g_1^-(\bx)=0$ otherwise.
As every vector in $\cH^\eps$ has at least one component $\le \frac12$, (b) follows
for $g_1^-$.
For all $k\ge 2$, the $(k-1)-$dimensional volume of 
the part of $\cH_k^\eps$ of type (i) is $O(\eps^{k-1})$ (note $\cH_k^\eps=\emptyset$ if $k>\fl{1/(\nu-2\eps)}$),
and the $(k-1)-$dimensional measure of 
the part of $\cH_k^\eps$ of type (ii) is $O(\eps^{k-2})$.
Since $k \le k_0=\fl{1/(\nu-2\eps)}$, $|(\bone\star g_1^-)(\bx)| \le k_0$ 
for all $\bx\in \cH^\eps$.
When $k=2$ and $\bx=(\ssc{x}{1},\ssc{x}{2})\in \cH^\eps_2$, $(\bone\star g_1^-)(\bx)=0$ except when
$\ssc{x}{1}=\ssc{x}{2}=\frac12$, a set of measure zero.  It follows that (c) holds for $\ssc{g}{1}=g_1^-$.

\medskip

\textbf{Upper bound sieve.}  If all of the vectors in $\cH$ are of type (i),
the simple choice of $g_1^+(\emptyvec)=1$, $g_1(\bx)=0$ otherwise, clearly satisfies
(a) and (b).  The measure of $\cH_2^\eps$ is also $O(\eps)$, since 
all vectors are of type (i).  Part (c) then follows for $\ssc{g}{1}=g_1^+$.

When there are vectors of type (ii) in $\cH_2$, which are not of type (i),
the measure of $\cH_2^\eps$ is potentially $\gg 1$
and we must be more careful in choosing $g_1^+$.   Our goal is to find $g_1^+$
 so that $(\bone\star g_1^+)(\bx)=0$ on almost all of $\cH_2^\eps$, while ensuring
 that (b) also holds.
 By Lemma \ref{lem:ClaimA-3}, there is a only a single number $\ell$ 
 so that there are vectors $\bx\in \cH^\eps$ of type (ii).
 Thus, for  all vectors in $\cH^\eps$, all of its components are in $\cK_{\ell} \cup (\cup_{m\in \sM}\cI_m)$.
 Define
 \[
 \cK^* = \cK_\ell \setminus \bigcup_{m\in \sM} \cI_m.
 \]
 Let $g_1^+(\emptyvec)=1$ and
 \begin{align*}
 g_1^+(x) &= -\one(x\in \cK^*, x<1/2) + \one(x\in \cI_\ell, x < 1/2), \\
 g_1^+(\ssc{x}{1},\ssc{x}{2}) &= \one(\ssc{x}{1}\in \cK^*, \ssc{x}{2}\in \cK^*, \ssc{x}{1}+\ssc{x}{2}\le 1/2),
 \end{align*}
 with $g_1^+(\bx)=0$ if $\bx$ has three or more components.
 
 We now verify (b) for $\bx=(\ssc{x}{1},\ldots,\ssc{x}{k})\in \cH_k^\eps$.  
 If at most one component is in $\cK^*$ and $<1/2$
  then clearly $(\bone\star g_1^+)(\bx) \ge 0$.
 Otherwise, by condition (ii), there are exactly two components in $\cK^*$,
 both $<1/2$ (call them $y,z$) and the remaining components are in $\cI_\ell$.  In particular, $k\ge 3$.
 If $y+z\le 1/2$ then we have
 $(\bone\star g_1^+)(\bx) \ge 1 - 2 + g_1^+(y,z) \ge 0$,
 and if $y+z>1/2$ then there is a component in $\cI_\ell$ which is $<1/2$
 we likewise get $(\bone\star g_1^+)(\bx) \ge 0$.  This verifies (b).
 
 Now we verify (c).
 It is clear that  $(\bone\star g_1^+)(\bx)$ is uniformly bounded over all $k$, thus
 \[
 \sum_{k\ge 3} \int_{\cH_k^*}  \frac{(\bone\star g_1)(\bx)}{\ssc{x}{1}\cdots \ssc{x}{k}} = O(\eps).
 \]
Furthermore, when $\bx=(\ssc{x}{1},\ssc{x}{2})\in \cH_2^{\eps}$ with  neither component
 in $\{1/2\} \cup \cup_{m\in \sM} \cI_m$, then both components are in $\cK^*$
 and hence $(\bone\star g_1^+)(\bx)=0$.  The set of other pairs $(\ssc{x}{1},\ssc{x}{2})$, with some
 component in $\{1/2\} \cup \cup_{m\in \sM} \cI_m$, has measure $O(\eps)$
 and we deduce that
 \[
 \int_{\cH_2^*}  \frac{(\bone\star g_1)(\ssc{x}{1},\ssc{x}{2})}{\ssc{x}{1} \ssc{x}{2}} = O(\eps)
 \]
 as well, and this establishes (c).

%%%%%%%%%%%%%%%%%%%%%%%%%%%%%
%
%
\subsubsection{Proof of Proposition \ref{prop:ClaimA} when $\theta=0$}\label{sec:theta=0}
%
%
%%%%%%%%%%%%%%%%%%%%%%%%%%%%%

Suppose that $P_0=(\gamma,0,\nu) \in \cA^*$ and let $M=M(\gamma)$ so that $M\ge 2$.
If $\nu > 1-\gamma$ then both \eqref{eq:A1} and \eqref{eq:A2} hold for
$P_\eps$ when $0\le \eps < \frac{\nu-(1-\gamma)}{3}$, since for such $\eps$
we have $\nu-2\eps > 1-\gamma+\eps$.  Thus, $\nu \le 1-\gamma$.
Also, $\nu \ge \frac{1}{M+1}$ since \eqref{eq:A1} holds for $P_0$.
Suppose that $\bx \in \cR_{\text{cl}}^\eps$. Since $\bx$ has no subsums in $(\eps,\nu-\eps)$, we see that after rearranging the coordinates $\bx$ is of the form $(u_1,\ldots,u_k,\xi_1,\ldots,\xi_r)$ with $\nu - \eps \le u_i \le 1-\gamma+\eps$ and $0\le \xi_i \le \eps$ for all $i$. For $\eps<\nu/3$, $|\bxi| \le \eps$ as well,
for otherwise $\bxi$ has a subsum in $(\eps,2\eps]$.
Since $u_i \ge \nu-\eps \ge \frac{1}{M+1}-\eps$, $u_i\le 1-\gamma+\eps\le \frac1{M}+\eps$ for all $i$
and $|\bu|=1-|\bxi| \in[ 1-\eps,1]$, it follows that $k\in \{M,M+1\}$. 
If $k=M$ then $\frac{1}{M}-M\eps \le u_i \le \frac{1}{M}+\eps$ for all $i$,
and if $k=M+1$ then $u_i\in \big[\frac{1}{M+1}-\eps,\frac{1}{M+1}+M\eps\big]$ for all $i$.  In particular, $k\in \sM$ and $|u_i-1/k|\le k\eps$ for all $i$, so case $(1)$ of Proposition \ref{prop:ClaimA} holds.

%%%%%%%%%%%%%%%%%%%%%%%%%%%%%%%%%%%%%%%%%%%%%%%%%%%%%%%%%%%%%%%%%%%%%%%%%%%%%%%%%%%%%%
%
%
\subsubsection{First reduction when $\theta>0$}
%
%
%%%%%%%%%%%%%%%%%%%%%%%%%%%%%%%%%%%%%%%%%%%%%%%%%%%%%%%%%%%%%%%%%%%%%%%%%%%%%%%%%%%%%%

In this section, we deduce Proposition \ref{prop:ClaimA}  from the following.
Recall the definition \eqref{eq:T2kPW} of the polytopes $T_{2,k}(P_\eps;\cW)$.

\begin{prop}\label{prop:ClaimB}
Let $(\gamma,\theta,\nu)\in \cA^*$ with $\theta>0$ be such that \eqref{eq:B} holds, and let $k\ge M(\gamma)+1$.
Then for any $\bx\in \cR_{\text{cl}}^\eps\cap \mathbb{R}^k$, there is some $m\in \sM$ and $\by\in\cR_{\text{cl}}^\eps\cap \mathbb{R}^k$ in the same polytope $T_{2,k}(P_\eps;\cW)$ as $\bx$, such that $y_i\in \{0,1/(2m),1/m\}$ for all $i\in[k]$. Furthermore, if $y_i=\frac{1}{2m}$ for some $i$ then $2m \in \sM$ as well.
\end{prop}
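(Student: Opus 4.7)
The plan is to construct $\by$ by rounding the coordinates of $\bx$ to values in $\{0,1/(2m),1/m\}$ for an appropriate $m \in \sM$, and then verify that $\by$ lies in the same polytope $T_{2,k}(P_\eps;\cW)$. The structural input comes from combining Proposition~\ref{prop:A123-epsilon-tweak}($\alpha$), Lemma~\ref{lem:BDEF}, and Theorem~\ref{thm:asymptotic-R1-A1A2}: conditions \eqref{eq:A1}, \eqref{eq:A2}, and (A3) all hold at $P_0$ and $\cR(P_0)=\emptyset$. Together with hypothesis~\eqref{eq:B}, these facts force highly rigid behaviour of the polytope $T_{2,k}(P_0;\cW)$ and severely limit the shape of vectors in $\cR_{\text{cl}}^\eps\cap\mathbb{R}^k$ for small $\eps$.

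First, I would identify the correct $m$ using the characterization~\eqref{eq:sM-alt} of $\sM$ together with the overall scale of the non-zero coordinates of $\bx$. Next I would partition $[k]=Z\sqcup H\sqcup F$ with $y_i=0$ on $Z$, $y_i=1/(2m)$ on $H$, and $y_i=1/m$ on $F$, forcing $|H|+2|F|=2m$, and chosen so that $\sum_{i\in W} y_i$ lies on the correct side of $[\theta,\theta+\nu]$ for every $W\subseteq [k]$ according to the prescription encoded by $\cW$. The subsums $\sum_{i\in W} y_i$ are automatically integer multiples of $1/(2m)$; since $m\in\sM$ (and $2m\in\sM$ whenever $H$ is non-empty) forces that no integer multiple of $1/(2m)$ lies in the open interval $(\theta,\theta+\nu)$, each such subsum is automatically at most $\theta$ or at least $\theta+\nu$. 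Once this is in place, $\by\in T_{2,k}(P_\eps;\cW)$ reduces to a purely combinatorial matching condition between the partition and $\cW$.

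The main obstacle is this combinatorial matching, particularly the choice of $m$ and the partition in the exceptional configurations identified in cases~(e)--(h) of Proposition~\ref{prop:ratio} (the prime example being $P_0=(\tfrac56,\tfrac14,\tfrac1{12})$), where one genuinely has $H\ne\emptyset$. Here I would mirror the case-by-case analysis of Proposition~\ref{prop:A123-epsilon-tweak}($\beta$) to pin down $m$ and verify $2m\in\sM$ by direct inspection; the rigidity of $[\theta,\theta+\nu]$ under hypothesis~\eqref{eq:B} makes this a finite but substantial check. In the generic case $H=\emptyset$, the subsum structure of $\bx$ itself essentially dictates which coordinates belong to $F$ versus $Z$, and the matching is immediate from the fact that $m\in\sM$ leaves $\sum_{i\in W}y_i=f_W/m$ outside $(\theta,\theta+\nu)$ for every partition.
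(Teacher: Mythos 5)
Your plan identifies the right target — snap the coordinates to $\{0,1/(2m),1/m\}$ — but it is missing the mechanism the paper uses to make the rounding possible, and the gaps are real, not just presentational.

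The paper proves Proposition~\ref{prop:ClaimB} by \emph{induction on $k$}, and the induction is not an ornament. It is how $m$ gets determined. You say you would ``identify the correct $m$ using~\eqref{eq:sM-alt} together with the overall scale of the non-zero coordinates,'' but there is no way to read $m$ directly off $\bx$: the coordinates are only $O(\eps)$ from various rationals, and several $m\in\sM$ could in principle be compatible with that data. The paper first uses Lemma~\ref{lem:move-by-less-than-nu} to merge coordinates that are within $\nu-2\eps$ of one another \emph{without leaving the polytope} $T_{2,k}(P_\eps;\cW)$; then Lemma~\ref{lem:GammaNu} (which gives $1-\gamma<3\nu$) forces the resulting vector to have at most three distinct coordinate values; then Lemma~\ref{lmm:Sizes} constrains a pair of distinct non-zero values to be near $\frac{1}{2m},\frac1m$ or to admit a further coagulation. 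In the remaining hard case (two distinct non-zero values $v_1>v_2$ with $v_2<(1-\gamma+\eps)/2$ and multiplicity $\ge 2$), the paper coagulates two copies of $v_2$ into one copy of $2v_2$ to get a vector of length $k-1$, applies the \emph{induction hypothesis} through Lemma~\ref{lem:BtoA}, and only then learns what $m$ is. None of this machinery appears in your outline, and without it the claim ``$m$ is determined by the overall scale'' does not stand.

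Second, you give no argument that the snapped vector $\by$ lands in the \emph{same} polytope $T_{2,k}(P_\eps;\cW)$, which is the whole point of the statement. Your remark that in the generic ($H=\emptyset$) case ``the matching is immediate'' is not true as stated: one still needs to know that the rounding does not flip any subsum across the forbidden interval. The paper gets this from the fact that after reduction the coordinates are within $O(\eps)$ of the target rationals and the polytopes $T_{2,k}(P_\eps;\cW)$ are pairwise disjoint, so for $\eps$ small the snapped vector is forced into the same piece. You instead propose a case-by-case check modeled on Proposition~\ref{prop:A123-epsilon-tweak}($\beta$), but that proposition and its cases~(e)--(h) analysis are about verifying Hypothesis~(A3), not about producing a partition $Z\sqcup H\sqcup F$ compatible with an arbitrary $\cW$. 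So the hardest part of your plan (your ``combinatorial matching'') is left open, and the tool you point to for closing it is not the right one.

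In short, the proposal is a plausible statement of the goal, but both of the load-bearing ideas of the paper's proof — the induction on $k$ that identifies $m$, and the collapse-then-snap argument (via Lemmas~\ref{lem:move-by-less-than-nu}, \ref{lem:GammaNu}, \ref{lmm:Sizes}) that keeps you inside the polytope — are absent, and they are not easily replaceable by a direct rounding argument.
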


\begin{lem}\label{lem:BtoA}
Let $(\gamma,\theta,\nu)\in \cA^*$ with $\theta > 0$.
If Proposition \ref{prop:ClaimB} holds for a given value of $k$
then Proposition \ref{prop:ClaimA} is true for the same $k$.
\end{lem}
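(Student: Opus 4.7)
Given $\bx \in \cR_{\text{cl}}^\eps \cap \RR^k$, the plan is to apply Proposition \ref{prop:ClaimB} to obtain an integer $m \in \sM$ and a vector $\by \in \cR_{\text{cl}}^\eps \cap \RR^k$ with $y_i \in \{0, \tfrac{1}{2m}, \tfrac{1}{m}\}$ for all $i$, lying in a common polytope $T_{2,k}(P_\eps;\cW)$ with $\bx$. Partition $[k]$ as $I_0 \sqcup I_1 \sqcup I_2$ according to whether $y_i$ equals $0$, $\tfrac{1}{m}$, or $\tfrac{1}{2m}$; Proposition \ref{prop:ClaimB} supplies that $I_2 \ne \emptyset$ forces $2m \in \sM$. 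From polytope membership I would extract quantitative closeness bounds on the deviations $x_i - y_i$, then match them to the three cases of Proposition \ref{prop:ClaimA}.

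The key mechanism is Lemma \ref{lem:sM}: for $m \in \sM$ some $a/m$ equals $\theta$ or $\theta+\nu$, so certain subsums of $\by$ lie exactly on the boundary of the forbidden zone $[\theta+\eps, \theta+\nu-\eps]$. Fixing such an $a$ and $A \subseteq I_1$ with $|A|=a$ in the case $a/m = \theta$, polytope membership gives $|\ssc{\bx}{A}| \le \theta+\eps$; augmenting $A$ by any subset of $I_0$ leaves $|\ssc{\by}{\cdot}|=\theta$, so $|\ssc{\bx}{A}| + |\ssc{\bx}{I_0}| \le \theta+\eps$. The complementary subsum $|\ssc{\by}{([k]\setminus A)\setminus I_0}|=1-\theta$ lies on the upper side of the gap, providing a matching lower bound $|\ssc{\bx}{A}| \ge \theta - O(\eps)$, and hence $|\ssc{\bx}{I_0}| = O(m^2\eps)$. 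Swapping one element between $A$ and $I_1\setminus A$ changes $|\ssc{\by}{\cdot}|$ by $\tfrac{1}{m}$, which exceeds the gap $\nu-2\eps$, so the resulting linear system of inequalities pins down $|x_i - \tfrac{1}{m}| = O(m^2\eps)$ for each $i \in I_1$. For $I_2$ the same machinery applies with $2m$ playing the role of $m$.

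With these bounds, the three cases of Proposition \ref{prop:ClaimA} are straightforward to read off. Since $y_i \in \{\tfrac{1}{m}, \tfrac{1}{2m}\}$ lies on the upper side of the forbidden zone (using $\theta>0$), so does $x_i$, giving $x_i \ge \theta+\nu-\eps > \nu-2\eps$; therefore $S \subseteq I_0$ and $|\ssc{\bx}{S}| \le |\ssc{\bx}{I_0}| = O(m^2\eps)$. If $I_2 = \emptyset$, we are in case (1). If $I_2 \ne \emptyset$ and every $i \in I_2$ satisfies $|x_i - \tfrac{1}{2m}| \le 2m^2\eps$, we are in case (2). Otherwise, the pair-sum constraint $x_{j_1}+x_{j_2} \approx \tfrac{1}{m}$ on some $\{j_1,j_2\} \subseteq I_2$ yields case (3), with Lemma \ref{lem:ClaimA-3} forcing $m = m^* \in \{M(\gamma), M(\gamma)+1\}$.

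The main obstacle is the simultaneous extraction of the $O(m^2\eps)$ bounds across all components. The constraints one actually uses are only those where $|\ssc{\by}{W}|$ is on the boundary of the forbidden zone, which arise precisely from the integer $a$ in Lemma \ref{lem:sM}; one must verify that these boundary constraints span enough degrees of freedom to control every component of $\bx$ at once. The delicate step is the distinction between cases (2) and (3) for $I_2$, since both describe feasible configurations within the polytope and the decision must be made by inspecting the actual splitting of $\bx$ on $I_2$. Organizing this cleanly, while checking that no component escapes through a missing controlling subsum, is where most of the work lies.
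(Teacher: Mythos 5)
The overall strategy is right — apply Proposition \ref{prop:ClaimB}, invoke Lemma \ref{lem:sM} to get an integer $a$ with $a/m$ on the boundary of the forbidden zone, and extract $O(m^2\eps)$ bounds from the polytope constraints — but the central technical step is not carried out correctly, and the treatment of $I_2$ departs from the paper's argument without being worked out.

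The claimed mechanism for the lower bound is wrong. You say the complementary subsum $|\ssc{\by}{([k]\setminus A)\setminus I_0}|=1-\theta$ "provides a matching lower bound $|\ssc{\bx}{A}|\ge\theta-O(\eps)$." It does not. From $|\ssc{\by}{([k]\setminus A)\setminus I_0}|=1-\theta\ge\theta+\nu-\eps$ you get $([k]\setminus A)\setminus I_0\notin\cW$, hence $|\ssc{\bx}{([k]\setminus A)\setminus I_0}|\ge\theta+\nu-\eps$, which gives the \emph{upper} bound $|\ssc{\bx}{A}|+|\ssc{\bx}{I_0}|\le 1-\theta-\nu+\eps$. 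Since $1-\theta-\nu\ge\theta$, this is weaker than the upper bound you already had; it provides no lower bound. The actual mechanism in the paper is an \emph{averaging} argument: defining coagulated variables $z_0,z_1,\ldots,z_m$ and summing the inequality $z_0+|\ssc{\bz}{I}|\le\frac{a}{m}+\eps$ over all $\binom{m-1}{a}$ sets $I\subseteq[m]\setminus\{\ell\}$ with $|I|=a$, then using $\sum_i z_i=1$, yields $z_\ell\ge\frac1m-m\eps$ for each $\ell$, and in turn $z_0\le m^2\eps$. This averaging step is the heart of the lemma and is entirely absent from your sketch — "the resulting linear system of inequalities pins down $|x_i-\frac1m|$" is an assertion, not an argument.

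A second concrete error: you claim "$y_i\in\{\frac1m,\frac1{2m}\}$ lies on the upper side of the forbidden zone (using $\theta>0$), so does $x_i$, giving $x_i\ge\theta+\nu-\eps$." This is false in general; e.g.\ $\frac1{2m}$ is typically below $\theta$, and even $\frac1m$ can be at or below $\theta$. The correct route to $S\subseteq I_0$ is the paper's "transfer" argument: if $x_\ell<\nu-2\eps$ then for every $J\not\ni\ell$ the quantities $|\ssc{\bx}{J}|$ and $x_\ell+|\ssc{\bx}{J}|$ lie on the same side of the gap, and since $\bx,\by$ share a polytope, the same holds for $\by$; choosing $J$ with $|\ssc{\by}{J}|=\frac{a}{m}$ forces $y_\ell=0$.

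Finally, your treatment of $I_2$ ("the same machinery with $2m$ playing the role of $m$") is not what the paper does and is not coherent as stated: if the $2m$-machinery really pinned down $|x_i-\frac1{2m}|$ for every $i\in I_2$, case (3) of Proposition \ref{prop:ClaimA} would never occur, which contradicts your own next sentence. The paper instead notes that the number of $\frac1{2m}$-components is $2n$ (even), groups them into $n$ pairs to form additional $z$-variables of nominal size $\frac1m$, runs the averaging, and then \emph{re-pairs} arbitrarily to bound every pair-sum; when $n\ge2$ a triangle-inequality identity then isolates each $x_i$, while $n=1$ leaves only the pair-sum bound, which is exactly case (3). This coagulation-and-re-pairing step is what cleanly separates cases (2) and (3), and it has no counterpart in your proposal.
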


\begin{proof}
Assume that Proposition \ref{prop:ClaimB} holds for $k$. Let $\bx$ lie in  a polytope 
$T=T_{2,k}(P_\eps;\cW)$, and let $\by=(y_1,\ldots,y_k)$ be a vector in the same polytope $T$ that is guaranteed by Proposition \ref{prop:ClaimB}, corresponding to $m\in \sM$. After possibly permuting the coordinates, we may assume that $y_i=0$ for $1\le i\le j$, $y_i = \frac{1}{2m}$ for $j+1\le i\le j+2n$, and $y_i = \frac{1}{m}$ for $j+2n+1 \le i\le k$ for some $j,n$ with $j+m+n=k$ and $2m\in \sM$ if $n>0$.

By Lemma \ref{lem:sM} and the fact that $\theta>0$, there is some integer $a$ with $1\le a\le m-1$ and either $\frac{a}{m}=\theta$ or $\frac{a}{m}=1-\theta-\nu$. Since $\bx,\by\in T$,  for any $J\subseteq [k]$ we have
 $|\ssc{\bx}{J}|\le \frac{a}{m}+\eps$ 
if and only if $|\ssc{\by}{J}|\le \frac{a}{m}+\eps$.  Let
\[
z_0=\ssc{x}{1}+\cdots+x_j, \qquad
z_i = x_{j+2i-1} + x_{j+2i} \;\; (1\le i\le n), \qquad z_i=x_{j+n+i} \;\; (n+1\le i\le m).
\]
Since the subsums of $\bx$ avoid the interval $(\frac{a}{m}+\eps,\frac{a}{m} + \nu-\eps)$, if $x_\ell < \nu-2\eps$ then
for any $J\subseteq [k] \setminus \{\ell\}$,  $|\ssc{\bx}{J}|\le \frac{a}{m}+\eps$ 
if and only if $x_\ell + |\ssc{\bx}{J}|\le \frac{a}{m}+\eps$.
 Then $\by$
 has the same property, namely  $|\ssc{\by}{J}|\le \frac{a}{m}+\eps$ 
if and only if $y_\ell + |\ssc{\by}{J}|\le \frac{a}{m}+\eps$.
Since $1\le a\le m-1$, there is a set $J$, not containing $\ell$, with $|\ssc{\by}{J}| = \frac{a}{m}$, and this implies that $y_\ell=0$ and $\ell\le j$.
Thus, in particular, 
\begin{equation}\label{z0}
z_0\ge\sum_{i:x_i < \nu-2\eps}x_i. 
\end{equation}

For any sets $I \subseteq [m]$ with $|I|=a$  we have
\[
y_1+\cdots+y_j+
\sum_{i\in I, i\le n} \big( y_{j+2i-1}+y_{j+2i} \big) + \sum_{i\in I, i>n} y_{j+n+i} = \frac{a}{m},
\]
and consequently
\begin{equation}
z_0 + |\ssc{\bz}{I}| \le \frac{a}{m}+\eps.
\label{eq:TypeIICondition}
\end{equation}
Fix $1\le \ell\le m$. Since $0<a<m$, we can sum \eqref{eq:TypeIICondition} over all such choices $I$ not containing $\ell$.  This gives
\[
\binom{m-2}{a-1}\sum_{i\notin \{0,\ell\}}z_i\le \binom{m-1}{a}\Bigl(\frac{a}{m}+\eps-z_0\Bigr).
\]
Recalling that $|\bz|=1$ we see that
\[
z_\ell\ge \frac{1}{m}-\frac{m-1}{a}(\eps-z_0)-z_0 \ge \frac1{m}-m\eps \qquad (1\le \ell\le m).
\]
This implies that $z_0 \le m^2 \eps$.
By \eqref{z0}, 
$\sum_{i:x_i < \nu-2\eps}x_i\le z_0 \le m^2\eps$, giving the first claim.
Since $z_1+\cdots + z_m \le 1$ it follows that $z_\ell \le \frac{1}{m}+m^2\eps$ for $1\le \ell\le m$, and so $|z_\ell-\frac{1}{m}|\le m^2\eps$ for $1\le \ell\le m$.

The same argument works after
permuting the variables $x_{j+1},\ldots,x_{j+2n}$,
that is, grouping them arbitrarily into pairs to form $z_1,\ldots,z_n$.
 Thus we have
\begin{align*}
|x_{j+i} + x_{j+h} - 1/m| &\le m^2\eps \;\; (1\le i<h\le 2n), \\
\qquad |x_i-1/m| &\le m^2\eps \;\; (j+2n+1 \le i\le k), \\
\qquad x_{1}+\cdots + x_j &\le m^2 \eps.
\end{align*}
If $n\ge 2$ then for distinct $i_1,i_2,i_3\in \{1,\dots,2n\}$
\[
2\bigg|x_{j+i_1}-\frac{1}{2m}\bigg|\le \bigg|x_{j+i_1}+x_{j+i_2}-\frac{1}{m}\bigg|+\bigg|x_{j+i_1}+x_{j+i_3}-\frac{1}{m}\bigg|+\bigg|x_{j+i_2}+x_{j+i_3}-\frac{1}{m}\bigg|\le 3m^2\eps.
\]
Thus if either $n=0$ or $n\ge 2$, we have that $|x_{j+i}-\frac{1}{2m}|\le 2m^2 \eps$
for all $1\le i\le 2n$, and we conclude that $\bx$ satisfies condition (1)
or condition (2) in Proposition \ref{prop:ClaimA}. If instead $n=1$, we have that $\bx$ satisfies condition (2) 
in Proposition \ref{prop:ClaimA} if $|x_i-\frac{1}{2m}|\le 2m^2 \eps$ 
for $i\in \{j+1,j+2\}$ and condition (3) otherwise. In particular, we see that Proposition \ref{prop:ClaimA} holds for $k$.
\end{proof}

%%%%%%%%%%%%%%%%%%%%%%%%%%%%%%%%%%%%%%%%%%%%%%%%%%%%%%%%%
%
%
\subsubsection{Proof of Proposition \ref{prop:ClaimB}}
%
%
%%%%%%%%%%%%%%%%%%%%%%%%%%%%%

\begin{lem}\label{lmm:Sizes}
Let $(\gamma,\theta,\nu)\in \cA^*$ with $\theta>0$ be such that \eqref{eq:B} holds,
and $0<\eps \le \epszero$.  Suppose that numbers $x,y$ satisfy 
\[
\nu-2\eps \le x<y\le 1-\gamma+\eps, \qquad y-x\ge \nu+2\eps.
\]
If $\bx \in \cR_{\text{cl}}^\eps$ has all components equal to $x$ or $y$, with at least one of each, then one of the following holds:
\begin{enumerate}
\item[(i)] for some integer $m\ge M$, 
$x=\frac{1}{2m}+O(\eps)$ and $y=\frac{1}{m}+O(\eps)$, or
\item[(ii)] there are two components of $\bx$ with sum $< 1-\gamma+\eps$.
\end{enumerate}
\end{lem}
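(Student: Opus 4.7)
I would first dispose of the degenerate cases by arguing that $b+c \ge 3$, where $b$ and $c$ are the number of components equal to $x$ and $y$ respectively. The case $b=c=1$ gives $x+y=1$, but $y \le 1-\gamma+\eps < \tfrac12$ (since $\gamma>\tfrac12$ and $\eps$ is small) forces $x \ge \gamma-\eps > \tfrac12 > y$, contradicting $x<y$. So $(b,c)$ satisfies $b,c \ge 1$ and $b+c \ge 3$.

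Assuming (ii) fails, every pair of components has sum $\ge 1-\gamma+\eps$. In particular $x+y \ge 1-\gamma+\eps$ (one of each), so $2y \ge 1-\gamma+\eps$, and if $b \ge 2$ then $2x \ge 1-\gamma+\eps$. Combined with $x,y \in [\nu-2\eps,1-\gamma+\eps]$ and $bx+cy=1$, this bounds $b+c$ by a constant depending only on $P$.

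The heart of the proof is to show $y = 2x + O(\eps)$ by examining subsums. Every proper subsum $S_{i,j}:=ix+jy$ with $(i,j) \in [0,b]\times[0,c] \setminus \{(0,0),(b,c)\}$ must avoid the interval $I:=(\theta+\eps,\theta+\nu-\eps)$ of length $\nu-2\eps$. Writing $k=i+2j$ and $\delta:=y-2x$, we have $S_{i,j}=kx+j\delta$, so for each fixed $k$ the subsums form an arithmetic progression in $j$ with common difference $\delta$. Since $x \ge \nu-2\eps = |I|$, consecutive values of $kx$ are already separated by at least the length of $I$, giving a grid of ``reference subsums''. I would then argue that if $|\delta| > C\eps$ for a suitably large constant $C=C(P)$, one can locate an index $k_0$ with $k_0 x$ adjacent to $I$ (say in $[\theta+\eps-x,\theta+\eps]$) and then translate it by a small multiple of $\delta$ via increasing $j$ at the cost of decreasing $i$, landing some $S_{i,j}$ strictly inside $I$, contradicting $\bx \in \cR_{\text{cl}}^\eps$. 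This forces $|\delta|=O(\eps)$, i.e., $y=2x+O(\eps)$.

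Once we have $y = 2x + O(\eps)$, the relation $bx+cy=1$ gives $x = 1/n + O(\eps)$ and $y = 2/n + O(\eps)$ with $n:=b+2c$. The constraint $y - x \ge \nu+2\eps$ forces $1/n \ge \nu-O(\eps)$, so $n$ is bounded. Since every proper subsum is approximately $k/n$ and avoids $[\theta,\theta+\nu]$ in the limit $\eps\to 0$, we get $n \in \sM$. Setting $m:=n/2$, we obtain case (i), provided $n$ is even, and the bound $m \ge M$ follows from $1/m \approx y \le 1-\gamma+\eps$. The parity of $n$ requires combining the constraints from hypothesis (B) with Lemma \ref{lem:sM}: if $n$ were odd, then the tight boundary $y = 1-\gamma+\eps - O(\eps)$ together with (B) would force an integer multiple of $1-\gamma$ to lie in the half-open intervals excluded by membership of $n$ in $\sM$.

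The main obstacle is the rigidity step showing $|\delta| = O(\eps)$. Making the pigeonhole argument quantitative requires carefully enumerating the reference subsums $kx$ adjacent to $I$ and tracking which admissible shifts by multiples of $\delta$ land inside $I$; this depends on the parity of $b$ and on whether the reference subsums accumulate just below $\theta+\eps$ or just above $\theta+\nu-\eps$. A subsidiary but delicate point is the parity argument ruling out odd $n$, where one has to use the hypothesis $P_0 \in \cA^*$ together with (B) to exclude what would otherwise appear to be admissible odd-$n$ configurations on the boundary of the component range.
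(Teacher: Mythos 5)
Your outline identifies the right intermediate target in the direct cases --- showing $y=2x+O(\eps)$ --- and your framing of the subsums as $S_{i,j}=(i+2j)x+j\delta$ with $\delta=y-2x$ is the same change of variables the paper implicitly uses (for instance in Claim (c) of the $\nu=\frac12-\theta$ case). However, there are two genuine gaps.

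First, the rigidity step. You acknowledge this is the "main obstacle," and the acknowledgment is warranted: the sketch ("locate an index $k_0$ adjacent to $I$... translate by a small multiple of $\delta$") does not actually close. The issue is that for a given $k_0=i+2j$ the admissible $j$-range is $\max(0,\lceil(k_0-b)/2\rceil) \le j \le \min(c,\lfloor k_0/2\rfloor)$, which can be a single point when $b$ is small (e.g.\ $b=1$ forces $j=(k_0-1)/2$ when $k_0$ is odd), so no shifting is available at all. Moreover $\delta$ can be comparable to the length $\nu-2\eps$ of $I$, so a single shift can jump entirely over $I$; you need a preliminary bound like $|\delta|<\nu-3\eps$ before pigeonholing has teeth, and establishing that bound already requires structural inputs (the paper's Claim (b), in turn using $1-\gamma<3\nu$ from Lemma~\ref{lem:GammaNu}). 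The paper does not run a single pigeonhole; it splits into three regimes depending on whether $1-\gamma\ge\theta+\nu$, $\theta+\nu=\frac12$, or $1-\gamma\le\theta<\theta+\nu<\frac12$, and in the last --- generically hardest --- regime it does not prove $y=2x+O(\eps)$ at all. Instead it invokes Proposition~\ref{prop:A123-epsilon-tweak}($\beta$), which shows (except for the single triple $(\frac56,\frac14,\frac1{12})$) that (A3) holds for $P_\eps$, so the vector $\bx$ simply cannot exist when your alternative (ii) fails. Your outline has no analogue of this escape, and would instead have to carry out the delicate numerical case analysis of Proposition~\ref{prop:ratio} cases (e)--(h) inside the pigeonhole argument.

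Second, the parity/membership step is also only gestured at. Once one knows $x\approx 1/n$, $y\approx 2/n$ with $n=b+2c$, you need both $n\in\sM$ (so that no rational $a/n$ lies in $(\theta,\theta+\nu)$) and $n$ even. The paper derives these quite differently in each regime: when $\theta+\nu=\frac12$ it uses Lemma~\ref{lem:nu-lower-for 1/2-theta} and hypothesis (B) via a bound of the form $hy\le\theta+\eps$; when $1-\gamma\ge\theta+\nu$ it uses Lemma~\ref{lem:smallgamma} to pin down $\theta,\theta+\nu$ to within a few specific rationals and then counts. Saying the parity "requires combining the constraints from hypothesis (B) with Lemma~\ref{lem:sM}" names the ingredients without giving the argument; in the proof these ingredients are applied in non-obvious ways (e.g.\ choosing $h=\frac{M+1}{2}$ versus $h=\frac{M-1}{2}$ depending on whether $1-\gamma<\frac1M$). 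As written, your proposal is a plausible plan of attack but leaves the two hardest steps --- and all of the case analysis that makes them true --- unfilled.
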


The proof  will be given in the next section.

\begin{lem}\label{lem:GammaNu}
Given $(\gamma,\theta,\nu) \in \cA$ with $\theta>0$, we have
$1-\gamma \le 3\nu$, with equality only for the triples
\[
\bigg( \frac35, \frac15, \frac2{15} \bigg), \qquad \bigg( \frac7{10}, \frac25, \frac1{10} \bigg).
\]
In particular, if \eqref{eq:B} holds for $(\gamma,\theta,\nu)\in\cA$ with $\theta>0$, then $1-\gamma<3\nu$.
\end{lem}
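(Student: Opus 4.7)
The plan is to prove the contrapositive by detailed case analysis, exploiting the reductions and combinatorial facts established in Sections~\ref{sec:asymptotic-R1} and \ref{sec:asymptotic-R1-A1A2}. Since $(\gamma,\theta,\nu)\in \cA$ if and only if both \eqref{eq:A1} and \eqref{eq:A2} hold (Theorems~\ref{thm:asymptotic-R1} and \ref{thm:asymptotic-R1-A1A2}), I first dispense with the easy range $\nu>(1-\gamma)/2$, where the claim $1-\gamma\le 3\nu$ is trivially strict. Henceforth I assume $\nu \le (1-\gamma)/2$; using the reductions in Section~\ref{sec:asymptotic-R1-A1A2} I may also assume $P \in \cQ_1$, handling the boundary cases $\gamma=1-\theta-\nu$ and $\theta+\nu=\tfrac12$ separately on the original $P$.

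The bulk of the work is three subcases. When $1-\gamma \le \theta < \theta+\nu < \tfrac12$, Proposition~\ref{prop:ratio} classifies the admissible triples into cases (a)--(h), each giving $\nu \ge \tfrac{5}{12}(1-\gamma)$, so $3\nu > 1-\gamma$ strictly. When $\theta+\nu=\tfrac12$, Lemma~\ref{lem:nu-lower-for 1/2-theta} gives $\nu \ge \tfrac{1}{2M+2}$ ($M$ even) or $\nu \ge \tfrac{1}{2M+4}$ ($M$ odd), which combined with $1-\gamma\le \tfrac{1}{M}$ yields $3\nu > 1-\gamma$ strictly except for $M=3$, $\theta=\tfrac{2}{5}$, $\nu=\tfrac{1}{10}$; in this exceptional case \eqref{eq:A2} forces $h(1-\gamma)\in[\tfrac{2}{5},\tfrac{1}{2}]\cup[\tfrac{1}{2},\tfrac{3}{5}]$, only $h=2$ can work, pinning $1-\gamma \in (\tfrac{1}{4},\tfrac{3}{10}]$ and yielding equality $1-\gamma=\tfrac{3}{10}=3\nu$ exactly at $(\tfrac{7}{10},\tfrac{2}{5},\tfrac{1}{10})$. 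When $\theta+\nu \le 1-\gamma$, Lemma~\ref{lem:smallgamma} forces $[\theta,\theta+\nu]$ to contain one of $[\tfrac{1}{2M+1},\tfrac{1}{M+1}]$, $[\tfrac{1}{2M},\tfrac{2}{2M+1}]$, or $[\tfrac{1}{2M-1},\tfrac{1}{M}]$, and \eqref{eq:A2} forces either $1-\gamma=\theta+\nu$ (option A) or $1-M(1-\gamma)\in[\theta,\theta+\nu]$ (option B). Option A combined with the interval inclusions gives $\theta<2\nu$ strictly for $M\ge 2$; in option B, writing $\theta \le 1-M(1-\gamma)$ and $\nu \ge \tfrac{1}{M+1}-\theta \ge M[(1-\gamma)-\tfrac{1}{M+1}]$, the inequality $3\nu \ge 1-\gamma$ reduces to $(1-\gamma)(3M-1)\ge \tfrac{3M}{M+1}$. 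Substituting the lower bound $1-\gamma\ge \tfrac{2}{2M+1}$ forced by the first interval option yields $3\nu \ge 1-\gamma$ with equality only at $M=2$, unpacking to the triple $(\tfrac{3}{5},\tfrac{1}{5},\tfrac{2}{15})$; parallel calculations for the other two interval options give strict inequality for all $M\ge 2$. The boundary reduction $\gamma=1-\theta-\nu$ passes to $P'=(1-\theta,\theta,\nu)\in\cA\cap\cQ_1$ with $1-\gamma'=\theta$; since Proposition~\ref{prop:ratio}'s cases (a)--(h) are incompatible with $1-\gamma'=\theta$ whenever $\nu\le(1-\gamma')/2$, one must have $\nu>\theta/2$, whence $1-\gamma=\theta+\nu<3\nu$.

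For the second claim, observe that both equality triples satisfy $2(1-\gamma)=1-\theta$ (namely $\tfrac{4}{5}=1-\tfrac{1}{5}$ and $\tfrac{3}{5}=1-\tfrac{2}{5}$). Hence the witness $h=2$ for \eqref{eq:A2} sits precisely at the right endpoint $1-\theta$ of its interval, so the half-open form \eqref{eq:B} fails in both triples; consequently whenever \eqref{eq:B} holds neither equality triple can occur, and $1-\gamma<3\nu$ strictly. The chief obstacle will be the third subcase: the detailed check that the tight configuration in option B (first interval option, endpoint $\theta=1-M(1-\gamma)$, $M=2$) is the \emph{only} path to $1-\gamma=3\nu$ requires simultaneously ruling out interior positions of $1-M(1-\gamma)$ within $[\theta,\theta+\nu]$ and the other two interval options of Lemma~\ref{lem:smallgamma}, a routine but tedious piece of bookkeeping.
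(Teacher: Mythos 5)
Your proof is correct and follows essentially the same route as the paper's own argument: the same auxiliary results (Lemma~\ref{lem:smallgamma}, Lemma~\ref{lem:nu-lower-for 1/2-theta}, Proposition~\ref{prop:ratio}) carry the load, and the equality triples emerge from the same places (the $M=2$ Option-B boundary in the small-$\gamma$ regime, and the $M=3$, $h=2$ boundary when $\theta+\nu=\tfrac12$). The main organizational difference is in Case~1 ($\theta+\nu\le 1-\gamma$): where the paper argues $M\ge4$, $M=3$, $M=2$ separately, you give a unified bound in $M$ by substituting $1-\gamma\ge\tfrac{2}{2M+1}$ (from the first interval option of Lemma~\ref{lem:smallgamma}) into $(3M-1)(1-\gamma)\ge\tfrac{3M}{M+1}$ and observing equality forces $M=2$; this is tighter bookkeeping but the ingredients are identical. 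You also explicitly verify the ``in particular'' clause by checking that the sole witness $h=2$ for~\eqref{eq:A2} sits at the closed endpoint $1-\theta$ in both equality triples, so~\eqref{eq:B} fails there --- the paper leaves this step implicit.

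One small point worth flagging: after reducing to $P\in\cQ_1$, your three subcases cover $\theta+\nu<\tfrac12$, $\theta+\nu=\tfrac12$, and $\theta+\nu\le 1-\gamma$, but not $\nu=1-2\theta$ with $1-\gamma\le\theta$ (which is in $\cQ_1$ and has $\theta+\nu=1-\theta>\tfrac12$). This is not actually a gap, because under your standing assumption $\nu\le(1-\gamma)/2$ one has $\tfrac12-\theta=\tfrac{\nu}{2}\le\tfrac{1-\gamma}{4}\le\tfrac{1}{4M}$, which contradicts the lower bound $\tfrac12-\theta\ge\tfrac{1}{2M+4}$ from Lemma~\ref{lem:nu-lower-for 1/2-theta} for all $M\ge2$ --- so the case is vacuous. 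But you should say so; as written the reader may think a case was overlooked.
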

\begin{proof}
Here we write $M=M(\gamma)$.

\medskip

\textbf{Case 1. $1-\gamma \ge \theta+\nu$.}
By \eqref{eq:A1} and Lemma \ref{lem:smallgamma}, $[\theta,\theta+\nu]$ contains $[\frac{1}{2M+1},\frac{1}{M+1}]$,
$[\frac{1}{2M}, \frac{2}{2M+1}]$ or $[\frac{1}{2M-1},\frac{1}{M}]$.  If $M\ge 4$ then all intervals have length $> \frac{1}{3M} \ge \frac{1-\gamma}{3}$.    

  If $M=3$, then $\frac14 < 1-\gamma\le \frac13$ and $[\theta,\theta+\nu]$
  contains $[\frac17,\frac14]$, $[\frac16,\frac27]$ or $[\frac15,\frac13]$.
If  $[\theta,\theta+\nu]$ contains $[\frac16,\frac27]$ or $[\frac15,\frac13]$
then $\nu > \frac19 \ge \frac{1-\gamma}{3}$.
If   $[\theta,\theta+\nu]$ contains $[\frac17,\frac14]$ then
 $\nu \ge \frac14-\frac17 = \frac3{28}$, so if
  $1-\gamma < \frac{9}{28}$ then $\nu > \frac{1-\gamma}{3}$.  
  If $\frac{9}{28} \le 1-\gamma \le \frac13$,
  \eqref{eq:A2} implies that either $\theta+\nu\ge \frac{9}{28}$,
  $1-\theta-\nu \le \frac23$ or $1-\theta \ge \frac{27}{28}$, and in all cases
  $\nu \ge 0.17 > \frac{1-\gamma}{3}$.
  
  If $M=2$, then  $\frac13 < 1-\gamma < \frac12$ so $\theta+\nu<1/2$ and $[\theta,\theta+\nu]$ contains either $[\frac15,\frac13]$
  or $[\frac14,\frac25]$ (recall that elements of $\cA$ have $\gamma>\frac12$).  
  Thus $\nu \ge \frac{2}{15}$ and  $\theta\le \frac{1}{4}$, so we may assume that $1-\gamma\ge \frac{2}{5}$ (since otherwise $1-\gamma<3\nu$). We now invoke \eqref{eq:A2}.  As $3(1-\gamma)>1$, we have $h\in \{1,2\}$.  If $h=1$ then
  $\theta+\nu \ge 1-\gamma$, so $\nu \ge 1-\gamma-\frac{1}{4}> \frac{1-\gamma}{3}$ since $1-\gamma\ge 2/5$.  If $h=2$ then $\theta\le 1-2(1-\gamma)$ so $\nu\ge \frac{1}{3}-\theta\ge 2(1-\gamma)-\frac{2}{3}\ge \frac{1-\gamma}{3}$, with equality if and only if $(\gamma,\theta,\nu)=(\frac{3}{5},\frac{1}{5},\frac{2}{15})$.
\medskip

\textbf{Case 2. $1-\gamma \le \theta, \theta+\nu=\frac12$.}
Let $n$ be the smallest odd integer larger than $M$.  By \eqref{eq:A1}, $[\theta,\frac12]$
contains $\frac{(n-1)/2}{n}$ and thus $\nu \ge \frac{1}{2n}$.  
If $M\ge 5$ then 
\[
\nu \ge \frac{1}{2(M+2)} \ge \frac{5}{14M} > \frac{1}{3M}.
\]
If $M=4$ then $\nu \ge \frac{1}{10} > \frac{1}{3M}$.  When $M=3$ we have
$\nu \ge \frac{1}{10}$. By \eqref{eq:A2}, for some $h\in \NN$, 
$h(1-\gamma)\in[\theta,1-\theta]$.
  If $h=1$ then $\theta=1-\gamma \le \frac13$, so $\nu \ge \frac16 \ge (1-\gamma)/2$.  If  $h\ge 2$, then $1-\theta\ge 2(1-\gamma)$, so $\nu\ge 2(1-\gamma)-\frac12\ge \frac{1-\gamma}{3}$ with equality if and only if $(\gamma,\theta,\nu)=(\frac7{10},\frac35,\frac1{10})$.
$M=2$ is impossible, since then \eqref{eq:A1} implies $\theta \le \frac13$ but $1-\gamma > \frac13$.

\medskip

\textbf{Case 3. $1-\gamma \le \theta, \theta+\nu=1-\theta$.}
Here, $\nu=1-2\theta$ is double that in Case 2.

\medskip

\textbf{Case 4.  $1-\gamma \le \theta, \theta+\nu < \frac12$.}
Use Proposition \ref{prop:ratio}.  This case may also be proven
more quickly by directly applying \eqref{nu-lower} and \eqref{aMk}.
\end{proof}

\begin{lem}\label{lem:small-components}
Let $P=(\gamma,\theta,\nu)\in \cQ_0$ and $T = T_{2,k}(P;\cW)$ for some $\cW$.
If $\bx \in T$, with 
\begin{equation}
\label{small-large}
x_i < \nu \;\; (i\in I), \quad  x_i \ge \nu\;\; (i\in [k]\setminus I).
\end{equation}
Then, for all $W\in \cW$ with $W\subseteq [k]\setminus I$, $W \cup I \in \cW$.
\end{lem}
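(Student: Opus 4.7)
The plan is to derive a contradiction by assuming $W\cup I\notin\cW$ and exhibiting a forbidden subsum of $\bx$. By the definition \eqref{eq:T2kPW} of $T_{2,k}(P;\cW)$, membership in $T$ forces $|\bx_W|\le \theta$ for $W\in\cW$ and $|\bx_{W'}|\ge \theta+\nu$ for every $W'\subseteq[k]$ with $W'\notin\cW$, and membership in $\cR_{\text{cl}}(P)$ forbids any subsum of $\bx$ lying in the open interval $(\theta,\theta+\nu)$. So if $W\cup I\notin\cW$ we would have $|\bx_{W\cup I}|\ge \theta+\nu$, while by hypothesis $|\bx_W|\le \theta$.

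To produce the contradiction I would introduce the discrete chain obtained by adjoining the small components one at a time. Enumerate $I=\{i_1,\ldots,i_\ell\}$ and set $W_j:=W\cup\{i_1,\ldots,i_j\}$ for $0\le j\le\ell$, together with $s_j:=|\bx_{W_j}|$. Then $s_0=|\bx_W|\le\theta$ and $s_\ell=|\bx_{W\cup I}|\ge\theta+\nu$, and the successive increments $s_j-s_{j-1}=x_{i_j}$ are strictly less than $\nu$ by \eqref{small-large}, since each $i_j\in I$. Since the sequence $(s_j)$ is nondecreasing and exceeds $\theta$ by the end, there is a minimal index $j\in[1,\ell]$ with $s_j>\theta$, and for this $j$ we have
\[
\theta<s_j=s_{j-1}+x_{i_j}<\theta+\nu,
\]
so $|\bx_{W_j}|\in(\theta,\theta+\nu)$. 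This exhibits a subsum of $\bx$ lying in the forbidden open interval, contradicting $\bx\in\cR_{\text{cl}}(P)$.

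There is essentially no technical obstacle; the argument is a one-step ``discrete intermediate value'' observation, and the two features being used are exactly (i) the strict inequality $x_{i_j}<\nu$ (so the increment cannot jump across the full gap), and (ii) the fact that $\cR_{\text{cl}}(P)$ is defined using the open interval $(\theta,\theta+\nu)$. Both are built into the hypotheses, so the proof should occupy only a few lines.
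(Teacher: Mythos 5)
Your proof is correct and takes essentially the same approach as the paper: the paper's one-line observation ``for some $I'\subseteq I$, the subsum $|\ssc{\bx}{W\cup I'}|$ lies in $(\theta,\theta+\nu)$'' is precisely your discrete intermediate-value chain, just left implicit. You have simply spelled out the chain $W_0\subseteq W_1\subseteq\cdots\subseteq W_\ell$ and located the crossing index explicitly.
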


\begin{proof}
If $W\subseteq [k]\setminus I$, $W\in \cW$ and $W \cup I \not\in \cW$, then 
$|\ssc{\bx}{W}| \le \theta$ and
$|\ssc{\bx}{W\cup I}| \ge \theta+\nu$.  Hence, for some
$I'\subseteq I$, the subsum $|\ssc{\bx}{W \cup I'}|$ lies in $(\theta,\theta+\nu)$,
a contradiction.
\end{proof}

\begin{lem}\label{lem:move-by-less-than-nu}
Let $T=T_{2,k}(P_\eps;\cW)$, $\bx\in T$. Let $J$ be a nonempty subset of $[k]$  such that the numbers $x_j$ for $j\in J$ lie in an open interval of length $\nu-2\eps$. Then $\bx'\in T$, where 
\[
x_i'=\begin{cases}
x_i,\qquad &i\notin J,\\
\tfrac{1}{|J|} |\ssc{\bx}{J}|, &i\in J.
\end{cases}
\]
\end{lem}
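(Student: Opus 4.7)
The plan is to verify directly that $\bx'$ satisfies each defining condition of $T = T_{2,k}(P_\eps;\cW)$. The conditions $|\bx'| = 1$ and $0 \le x_i' \le 1-\gamma+\eps$ for all $i$ will be immediate, since the redistribution preserves $|\ssc{\bx}{J}|$ and $\bar x := \tfrac{1}{|J|}|\ssc{\bx}{J}|$ lies in the same open interval as the $x_j$ with $j \in J$. The main task will be to show that for every $W \subseteq [k]$ the subsum $|\ssc{\bx'}{W}|$ is at most $\theta+\eps$ when $W \in \cW$ and at least $\theta+\nu-\eps$ when $W \notin \cW$ (so in particular avoids the open gap $(\theta+\eps,\theta+\nu-\eps)$, which will imply $\bx' \in \cR_{\text{cl}}(P_\eps)$).

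For this, I would set $W_1 = W \cap J$, $W_2 = W \setminus J$ and $s = |W_1|$. The cases $s \in \{0,|J|\}$ give $|\ssc{\bx'}{W}| = |\ssc{\bx}{W}|$ and are trivial, so I would assume $1 \le s\le |J|-1$. A double-counting argument (each $x_j$ with $j\in J$ appears in an $s/|J|$ fraction of the $s$-element subsets of $J$) yields the key identity
\[
|\ssc{\bx'}{W}| \;=\; |\ssc{\bx}{W_2}| + s\bar x \;=\; \binom{|J|}{s}^{-1} \sum_{\substack{W_1' \subseteq J \\ |W_1'| = s}} |\ssc{\bx}{W_2 \cup W_1'}|.
\]
Each summand on the right is a subsum of $\bx \in \cR_{\text{cl}}(P_\eps)$, so lies in $(-\infty,\theta+\eps] \cup [\theta+\nu-\eps,\infty)$.

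I would then claim that \emph{all} of these summands lie on the same side of the gap. For this, view the $s$-element subsets of $J$ as vertices of the Johnson graph, whose edges connect pairs $W_1', W_1'' = (W_1' \setminus \{i\}) \cup \{j\}$ differing by a single swap; this graph is well known to be connected for $1 \le s \le |J|-1$. The difference between adjacent subsums is $x_j - x_i$, of absolute value strictly less than $\nu - 2\eps$ because $x_i$ and $x_j$ both lie in an open interval of length $\nu-2\eps$. Since two subsums on opposite sides of the gap would differ by at least $\nu-2\eps$, no edge of the graph can cross the gap, and connectedness then propagates the same-side property to all pairs.

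Because $|\ssc{\bx}{W_2 \cup W_1}| = |\ssc{\bx}{W}|$ is itself one of the summands, the common side is determined by $|\ssc{\bx}{W}|$: if $W \in \cW$ then every summand is $\le \theta+\eps$ and hence so is the average $|\ssc{\bx'}{W}|$, while if $W \notin \cW$ then every summand is $\ge \theta+\nu-\eps$ and hence so is $|\ssc{\bx'}{W}|$. This will establish $\bx' \in T$. The proof is essentially averaging plus the combinatorial observation that single-swap moves cannot straddle the Type II gap; I do not anticipate any substantive obstacle.
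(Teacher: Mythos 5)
Your proof is correct and rests on the same essential mechanism as the paper's: the observation that a single swap $W_1' \mapsto (W_1'\setminus\{i\})\cup\{j\}$ with $i,j\in J$ changes the subsum by $|x_j-x_i|<\nu-2\eps$, so the subsum cannot jump across the gap $(\theta+\eps,\theta+\nu-\eps)$, and hence (by connectedness under such swaps, which you phrase as Johnson graph connectivity and the paper phrases as ``repeating this argument'') all $s$-element subsets of $J$ give subsums on the same side once $W_2$ is fixed. The only place you diverge is in the final step: the paper compares $|\ssc{\bx'}{W}|$ with the subsum obtained from the extremal (largest or smallest) $\ell$ indices in $J$, using the monotonicity $\ell\alpha \le x_{j_{r-\ell+1}}+\cdots+x_{j_r}$, whereas you write $|\ssc{\bx'}{W}|$ exactly as a convex combination of the $s$-subset subsums and conclude directly. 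Your averaging identity buys a slightly cleaner, more symmetric conclusion (no case-split between extremes), at the minor cost of one extra double-counting computation; both are equally rigorous and the difference is cosmetic.
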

\begin{proof}
Let $J=\{j_1,\ldots,j_r\}$ with $x_{j_1} \le \cdots \le x_{j_r}$ and let
$\a = (x_{j_1}+\cdots+x_{j_r})/r$.
If $j,j' \in J$ then $|x_{j} - x_{j'}| < \nu-2\eps$,
 thus if $W$ contains $j$ but not $j'$, then $W\in \cW$ if and only if $W \cup \{j'\} \setminus \{j\} \in \cW$. 
 Repeating this argument, we see that for any subsets $I,I' \subseteq J$
 of the same size, and for $V \subseteq [k] \setminus J$, 
 $V \cup I \in \cW$ if and only if $V \cup I' \in \cW$.
Now let $W \subseteq [k]$, with $L = W \cap J$ of size $\ell$.
If $L$ is empty, then $\ssc{\bx}{W} = \bx'_W$.  If $L$ is nonempty
and $W\in \cW$ (that is, $|\ssc{\bx}{W}| \le \theta+\eps$) then $(W\setminus L) \cup \{ x_{j_{r-\ell+1}},\ldots,x_{j_r} \} \in \cW$
as well.  Since $x_{j_{r-\ell+1}}+\cdots+x_{j_r} \ge \alpha \ell$,
we have $|\bx'_W| \le \theta+\eps$ as well.
Likewise, if $W\not \in \cW$, so that $|\ssc{\bx}{W}| \ge \theta+\nu-\eps$,
then $(W\setminus L) \cup \{ x_{j_1},\ldots,x_{j_\ell} \} \in \cW$,
$x_{j_1}+\cdots+x_{j_\ell} \le \ell \a$ and hence $|\bx'_W| \ge \theta+\nu-\eps$
as well.  Therefore, $\bx' \in T$.
\end{proof}

\subsubsection{Proof of Proposition \ref{prop:ClaimB}}
We prove Proposition \ref{prop:ClaimB} by induction on $k$.
Any $\bx\in \mathbb{R}^k\cap R_2^\eps$ has $x_i \le 1-\gamma+\eps$ for all $i$ and $|\bx|=1$, so certainly the claim is trivially true for $k < 1/(1-\gamma)$
and for $\epszero$ sufficiently small. For the purposes of induction, we now assume that Proposition \ref{prop:ClaimB} holds whenever $\bx\in \mathbb{R}^k\cap \cR_{\text{cl}}^\eps$ for $k<k_0$, and we wish to establish it for $\bx\in \mathbb{R}^{k_0}\cap R_2^\eps$. 

Let $\bx \in T=T_{2,k_0}(P_\eps;\cW)$.
By repeatedly applying Lemma \ref{lem:move-by-less-than-nu},
we produce a vector $\by \in T$ that satisfies
$|y_i-y_j| \ge \nu-2\eps$ whenever $y_i\ne y_j$
(each invocation of Lemma \ref{lem:move-by-less-than-nu} produces a 
vector with fewer distinct components, so the number of iterations is finite).
By Lemma \ref{lem:GammaNu}, we have $1-\gamma < 3\nu$. For small enough $\eps$ we have $1-\gamma+ \eps < 3(\nu-2\eps)$, and so $\by$ has at most 3 distinct coordinate values. If all nonzero coordinates have the same value then Proposition \ref{prop:ClaimB} holds,
 since then \eqref{eq:sM-alt} implies that all nonzero components must be of the form $\frac{1}{m}$ for some $m\in \sM$. Therefore we may assume that
there are at least two distinct nonzero coordinates in $\by$. 
Since $\by\in \cR_{\text{cl}}^\eps$, all subsums of $\by$ lie in $J_1\cup J_2 \cup J_3$
where
\[
J_1 =[0,\theta+\eps], \quad J_2 = [\theta+\nu-\eps,1-\theta-\nu+\eps], \quad
J_3=[1-\theta-\eps,1].
\]

First we consider the case when there are three distinct nonzero coordinates $v_1,v_2,v_3$ in $\by$, where
 $0< v_3 < v_2 < v_1 \le 1-\gamma+\eps$, occurring 
  $n_1,n_2$ and $n_3$ times, respectively.  
Then $v_1\in (2(\nu-2\eps),1-\gamma+\eps]$,
$v_2\in [\nu-2\eps,1-\gamma-\nu+3\eps)$ and $v_3\in[0,\nu-2\eps)$.
 Clearly
 \begin{equation}\label{n1n2n3}
 n_1+n_2+n_3 = k_0, \qquad n_1 v_1 + n_2 v_2 + n_3 v_3=1.
 \end{equation}
 In light of Lemma \ref{lem:small-components}, and since $v_3 < \nu-2\eps$,
 for any $0\le h_1 \le n_1$ and $0\le h_2 \le n_2$, $h_1 v_1 + h_2 v_2$
 and $h_1 v_1 + h_2 v_2 + n_3 v_3$ lie in the same interval $J_i$. 
 Thus, if
 \begin{equation}\label{adjust}
 n_3 v_3 \le n_1(1-\gamma+\eps-v_1) + n_2 (1-\gamma+\eps-v_2),
 \end{equation}
 we may replace $v_3$ by 0, and $v_2,v_3$ by larger values, each $\le 1-\gamma+\eps$,
 while retaining \eqref{n1n2n3} and staying in the same polytope $T$.
 If \eqref{adjust} is false, we may replace $v_1$ and $v_2$ by $1-\gamma+\eps$
 and $v_3$ by 
 \[
 v_3 - \frac{n_1(1-\gamma+\eps-v_1) + n_2 (1-\gamma+\eps-v_2)}{n_3},
 \]
  which is positive, again retaining \eqref{n1n2n3} and staying in the same polytope.
  In either case, the new vector has at most two distinct, non-zero coordinates.
   The new vector can be adjusted further using Lemma \ref{lem:move-by-less-than-nu}
 to make the nonequal components $\nu-2\eps$ separated, without increasing the 
 number of distinct nonzero coordinates. As shown earlier, we are done if all nonzero coordinates in the final vector are equal.
Therefore, it suffices to prove the Proposition
for vectors $\bx$ that have exactly two nonzero components, $v_1$ and $v_2$,
occurring $n_1$ and $n_2$ times, respectively, and with
 $0<v_2 < v_1\le 1-\gamma+\eps$ and $v_1 - v_2 \ge \nu-2\eps$.
 
If $v_2 < \nu-2\eps$, we may similarly decrease $v_2$ and increase $v_1$
until either $v_2=0$ or $v_1=1-\gamma+\eps$.  In the former case,
the new vector has only one distinct nonzero component and we are done.
In the latter case, Lemma \ref{lem:R1-properties} implies
that $v_2 n_2 \le 1-\gamma-\nu+3\eps$ (that lemma is written for $\cR^\eps$,
but applies to $\cR_{\text{cl}}^\eps$ as well by \eqref{eq:R1R2}).
This implies that $1 = n_1 v_1 + n_2 v_2 < (n_1+1)(1-\gamma) \le \frac{n_1+1}{M}$,
thus $n_1 \ge M$.  We also have that $1 \ge n_1 (1-\gamma+\eps) > \frac{n_1}{M+1}$,
so $n_1<M+1$.   Therefore, $n_1=M$.
As the new vector has all subsums avoiding $[\theta+\eps,\theta+\nu-\eps]$, 
this vector violates condition \eqref{eq:A2}, which holds for $P_\eps$ for sufficiently small $\eps>0$, since \eqref{eq:B} holds for $P$. 

We may therefore assume that $v_2 \ge \nu-2\eps$.
Using Lemma \ref{lem:move-by-less-than-nu} again, we may assume that 
$v_1 \ge v_2 + (\nu-2\eps)$ as well.  Thus, we have
\[
\nu-2\eps \le v_2 \le v_1 - (\nu-2\eps), \;\;\; v_1 \le 1-\gamma+\eps.
\]
Let $\bx'$ denote the corresponding vector, and recall that $\bx,\bx'$
lie in the same polytope $T$.
Apply Lemma \ref{lmm:Sizes} to the subvector $\by\in R_2^\eps$ with 
$n_1+n_2$ components, $n_1$ components equal to $v_1$ and $n_2$
components equal to $v_2$. 
The vector $\by$ evidently satisfies the conditions in the hypothesis of
Lemma \ref{lmm:Sizes}.  Since $v_1+v_2 \ge 3(\nu-2\eps) > 1-\gamma+\eps$,
if conclusion (ii) does not hold than we must have 
$v_2 < (1-\gamma+\eps)/2$ and $n_2 \ge 2$.  Thus, we either have

\begin{enumerate}
\item[(i)] $v_2 < (1-\gamma+\eps)/2$ and $n_2 \ge 2$, or
\item[(ii)] $v_2 = \frac{1}{2M}+O(\eps)$ and $v_1=\frac{1}{M}+O(\eps)$.
\end{enumerate}

We claim that
\begin{equation}\label{v1v2-claim}
\exists \, m\in \NN\, : \; v_2=\frac{1}{2m}+O(\eps), \;\; 
v_1 = \frac{1}{m}+O(\eps).
\end{equation}

In case (ii), the claim \eqref{v1v2-claim} follows with $m=M$.
Now suppose that (i) holds. We now consider the point $\bx''$ where two copies of $v_2$ in $\bx'$ are replaced by one copy of $2v_2$. 
That is, $\bx''$ has $n_1$ copies of $v_1$,
one copy of $2v_2$ and $n_2-2$ copies of $v_2$.
 This is clearly still in $\cR_{\text{cl}}^\eps$, but now $\bx''$ has $k_0-1$ components. By the induction hypothesis, Proposition \ref{prop:ClaimB}
holds for $\bx''$.  By Lemma \ref{lem:BtoA},
Proposition \ref{prop:ClaimA} holds for $\bx''$.
  As $v_1-v_2 \ge \nu-2\eps$,
if condition (1) in Proposition \ref{prop:ClaimA} holds then $n_2=2$ and $v_1=1/m+O(\eps)$ and $2v_2=1/m+O(\eps)$, giving the claim
\eqref{v1v2-claim}. 
If condition (2) holds, we note that $4v_2$ is much larger than $v_1$,
hence we must have $n_2-2>0$, $v_2 = \frac{1}{2m}+O(\eps)$ and
hence $2v_2$ and $v_1$ are each $\frac{1}{m}+O(\eps)$.  This gives the claim
\eqref{v1v2-claim}. 
If condition (3) holds, then, since $x_{j+1},x_{j+2}$ and $x_{j+3}$
are distinct, $\bx''$ has three distinct components, and hence $n_2-2>0$
and those three components are $v_2,2v_2$ and $v_1$.  Furthermore, $v_2$
is the smallest of the three.
Thus, $v_2 + \min(2v_2,v_1) = \max(2v_2,v_1)+O(\eps)$ and hence either
$3v_2=v_1+O(\eps)$ or $v_2+v_1=2v_2+O(\eps)$.  The former
is impossible for small $\eps>0$ by $1-\gamma<3\nu$, and the latter implies
$v_1=v_2+O(\eps)$, which is false. Thus condition (3) cannot hold, and so in all cases we have proven the claim \eqref{v1v2-claim}.

Let $m$ be the constant guaranteed by \eqref{v1v2-claim}. For every $1\le a\le 2m-1$, 
$\bx'$ has a subsum of the form $\frac{a}{2m}+O(\eps)$.
If $2m\not\in \sM$ then for some $a\in \NN$, $\frac{a}{2m}\in (\theta,\nu)$
and for small enough $\eps$ this is a contradiction since $\bx$ has no subsum in $(\theta,\nu)$. Thus $2m\in \sM$. Let $\bz$ be the vector formed by replacing each component $v_2$ of $\bx'$ with $\frac{1}{2m}$
and replacing each component $v_1$ of $\bx'$ with $\frac{1}{m}$. Since $2m\in \sM$, 
there is no rational of the form $\frac{a}{2m}$ in $(\theta,\theta+\nu)$ and hence
for $\eps$ small enough, we have $\bz\in \cR_{\text{cl}}^\eps$. Since $\bz=\bx'+O(\eps)$ and the  polytopes are disconnected from one another, for $\eps$ small enough this vector lies in the same polytope as $\bx'$. This concludes the proof of Proposition \ref{prop:ClaimB} when $k=k_0$.

\subsubsection{Proof of Lemma \ref{lmm:Sizes}}

When $1-\gamma \le \theta < \theta+\nu < \frac12$
 and $P=P_0\ne (\frac56,\frac14,\frac1{12})$, the lemma follows
from Proposition \ref{prop:A123-epsilon-tweak}, part ($\beta$),
which states that (A3) holds for $P_\eps$ and $0<\eps\le \epszero$.
Recalling the definition of Hypothesis (A3), we see that 
there are no vectors satisfying the hypotheses 
of Proposition \ref{prop:A123-epsilon-tweak} which fail conclusion (ii).

When $P=(\frac56,\frac14,\frac1{12})$, $M=6$ and if $x,y$ satisfy the hypotheses
of Proposition \ref{prop:A123-epsilon-tweak}, then we have $y-x \ge \nu-2\eps=\frac{1}{12}-2\eps$ and hence $x=\frac{1}{12}+O(\eps)$ and $y=\frac16+O(\eps)$, as required.

\begin{proof}[Proof of Lemma \ref{lmm:Sizes} for the case $\nu=\frac12-\theta$
and $1-\gamma\le \theta$]
We will prove more, that conclusion (i) in Lemma \ref{lmm:Sizes} must always hold.
If $\nu > \frac{1-\gamma}{2}$, then for small enough $\eps$, the lemma holds
vacuously. 
Lemma \ref{lem:nu-lower-for 1/2-theta} furnishes a lower bound on $\nu$,
 Thus, we may assume that
\begin{equation}
\label{nu-upper}
\frac{1}{2M} \ge \frac{1-\gamma}{2} \ge \nu \ge
\begin{cases}
\frac{1}{2M+4} & M\text{ odd } \\
\frac{1}{2M+2} & M\text{ even. } 
\end{cases}
\end{equation}
In particular, we cannot have $M=2$ since then $\nu \ge \frac16$ and hence $\theta \le \frac13<1-\gamma$.
Thus, $M\ge 3$.
Now suppose $\bx$ is a vector satisfying the hypotheses of Lemma \ref{lmm:Sizes}.
By Lemma \ref{lem:GammaNu}, for small enough $\eps>0$ we have
\begin{equation}\label{gap}
1-\gamma < 3\nu - 100\eps,
\end{equation}
say.  Suppose $\bx$ takes exactly two distinct values, $x$ and $y$, where
\begin{equation}
\label{xy-2}
\nu - 2\eps \le x \le y- (\nu-2\eps), \qquad x + \nu-2\eps \le y \le 1-\gamma+\eps.
\end{equation}
Suppose that $\bx$ contains $b$ components equal to $x$ and $c$ components equal to
$y$, so that
\[
bx+cy =1 ,\quad b\ge 1, \quad c\ge 1.
\]
We will establish the lemma by successively proving a number of claims:
\begin{enumerate}
\item[(a)] $\bx$ has a subsum in $[\frac12-\eps,\frac12+\eps]$.
\item[(b)] $x \le \frac29$ and $|2x-y| < \nu - 3\eps$.
\item[(c)] $|2x-y| \le  10\eps$.
\end{enumerate}

\textbf{Proof of claim (a).}
Suppose the claim is false, so all subsums of $\bx$ are $ \le \theta+\eps$
or $\ge 1-\theta-\eps$.  By \eqref{gap}, the excluded interval has length
 $2\nu - 4\eps > 1-\gamma-\nu + 4\eps > x$.
Thus, if $d$ is the largest integer $\le c$ such that $dy \le \theta+\eps$,
then  $dy + bx \le \theta+\eps$ and $(d+1)y \ge 1-\theta-\eps$ (in particular, $d<c$).
Hence, $y-bx > 1-\gamma-\nu+4\eps$, and since $x\ge \nu-2\eps$ we  get 
$y>1-\gamma+2\eps$, a contradiction.

\medskip

\textbf{Proof of claim (b).}
Since $M\ge 3$,
by \eqref{xy-2} and \eqref{gap},
\[
x\le 1-\gamma-\nu + 3\eps < \frac{2(1-\gamma)}{3} \le \frac{2}{3M} \le \frac29.
\]
 Next, using \eqref{xy-2} and \eqref{gap} again,
\[
2x-y \le y-2\nu + 4\eps \le 1-\gamma-2\nu+5\eps \le \nu - 3\eps
\]
and 
\[
y-2x \le 1-\gamma+\eps - 2x \le 1-\gamma-2\nu+5\eps \le \nu-3\eps.
\]
This proves the second part.

\medskip

\textbf{Proof of claim (c).}
This has a longer proof.
By Claim (a), there are non-negative integers $b',b'',c',c''$ with
$b'+b''=b,c'+c''=c$ and with $b'x+c'y$ and $b'' x + c'' y$ both in 
$\cK := [\frac12-\eps,\frac12+\eps]$.

\textbf{Case 1. $\max(c',c'')=c$.}  Without loss of generality $c'=c$ and $c''=0$.  By the first part of Claim (b) and $b'' x\in \cK$, we have $b'' \ge 3$.  Hence, by the second part of Claim (b),
$(b'+2)x+(c-1)y$ is also in $\cK$, and it follows that
$|2x-y|=|(b'+2)x+(c-1)y-(b'x+cy)| \le 2\eps$.

\textbf{Case 2. $c'<c,c''<c,\max(b',b'')\ge 2$.}  Without loss of generality $b'=2$.
Again by the second part of Claim (b), $(b'-2)x+(c'+1)y \in \cK$, and it 
follows that $|2x-y| \le 2\eps$.

\textbf{Case 3.  $c'<c,c''<c,b=1$.}  Without loss of generality $b'=1,b''=0$.  Then both
$x+c'y$ and $c'' y$ are in $\cK$, thus $|x+(c'-c'')y|  \le 2\eps$.
If $c'=c''$ then $|x|\le 2\eps$, contradicting \eqref{xy-2}, and if $c'\ne c''$
then $2\eps \ge y-x$, again contradicting \eqref{xy-2} and \eqref{nu-lower}.

\textbf{Case 4.  $b=2,b'=b''=1$, $c'=c''$.}  Here $x+ (c/2)y=\frac12$.
By Claim (b), $2x < \frac49 < \frac12-\eps$, thus $2x \le \theta+\eps$
and $cy \ge 1-\theta-\eps$. 
Now (B) implies that $h(1-\gamma)\in [\theta,1-\theta)$ for some positive integer $h$.
For small enough $\eps$ we have
\[
hy \le h(1-\gamma+\eps) < 1-\theta-\eps \le cy,
\]
so that $h\le c-1$.  It follows that $hy$ is a subsum of $\bx$, and thus
$hy \in \cK$ or $hy\le \theta+\eps$.   Also, $(c/2)y=\frac12-x\le \theta+\eps$,
thus if $hy\in \cK$ then $h \ge \frac{c}{2}+1$.  This implies that $(c+2)y \le 1+2\eps$.  By \eqref{xy-2}
we then have $1=cy+2x=(c+2)y - 2(y-x) \le 1-2\nu +6\eps < 1$, a contradiction.
Therefore, $hy \le \theta+\eps$ and $y\le \frac{\theta+\eps}{h}$.

If $M$ is even, let $h=\frac{M}{2}$.  This works since 
$(M/2)(1-\gamma)\in (\frac{M/2}{M+1},\frac12] \subseteq I$
using \eqref{nu-upper}.  By \eqref{nu-upper} again,
\[
y \le \frac{2}{M}(\theta+\eps) = \frac{1-2\nu+2\eps}{M} \le \frac1{M+1} + \frac{2\eps}{M} \le 2\nu + \frac{2\eps}{M} \le 2\nu + \eps.
\]
Using \eqref{xy-2} again, we have $2\nu - 4\eps \le y \le 2\nu + \eps$.
This implies that
 $\nu-2\eps \le x \le y-(\nu-2\eps) \le \nu+3\eps$, 
and, consequently, $|2x-y| \le 10\eps$.

Now suppose that $M$ is odd.
If  $1-\gamma=2\nu$, \eqref{xy-2}
likewise implies that $2\nu-4\eps \le y\le 2\nu+\eps$ and again $|2x-y| \le 10\eps$.
Now assume $1-\gamma>2\nu$.  We claim that we may take $h=\frac{M+1}{2}$.
Now $\frac{1}{M+1}<1-\gamma\le \frac{1}{M}$ implies that
$\frac{M+1}{2}(1-\gamma) > \frac12 > \frac{M-1}{2}(1-\gamma)$,\
hence (B) must hold with $h=\frac{M-1}{2}$ or $\frac{M+1}{2}$.
 Furthermore,
$\frac{M+1}{2}(1-\gamma) - \frac12 \le \frac12 - \frac{M-1}{2}(1-\gamma)$,
with equality in the last expression if and only if $1-\gamma=\frac{1}{M}$.
Thus, if $1-\gamma<\frac{1}{M}$ then $\frac{M+1}2(1-\gamma)$
is closer to $\frac12$ than $\frac{M-1}{2}(1-\gamma)$ is, and so
$h=\frac{M+1}{2}$ works for (B).
If $1-\gamma=\frac{1}{M}$ then $h=\frac{M+1}{2}$ works unless
$\theta = \frac{M-1}{2}(1-\gamma) = \frac12 - \frac1{2M}$,
which implies that $\nu=\frac{1}{2M}=\frac{1-\gamma}{2}$.

  By Lemma \ref{lem:nu-lower-for 1/2-theta},
\[
y \le \frac{2}{M+1}(\theta+\eps) = \frac{1-2\nu+2\eps}{M+1} \le \frac{1}{M+2} + 
\frac{2\eps}{M+1} \le 2\nu+\eps,
\]
which, once again, implies that $|2x-y|\le 10\eps$.
This concludes the proof of Claim (c).

\medskip

Now we conclude the argument, using
Claim (c).  We have $1=bx+cy=(b+2c)x+O(\eps)$, which implies that
\[
x = \frac{1}{b+2c}+O(\eps), \qquad y = \frac{2}{b+2c}+O(\eps).
\]
The fact that $y\le 1-\gamma+\eps \le \frac{1}{M}+\eps$ implies that 
$b+2c \ge 2M$, with equality only possible if $1-\gamma=\frac{1}{M}$.
If $b+2c=2\ell+1$ is odd, then there is a subsum of $\bx$ equal to
$\frac{\ell}{2\ell+1} + O(\eps) = \frac12 - \frac{1}{4\ell+2}+O(\eps)$.
This must be $\le \frac12-\nu+\eps$, and thus, by \eqref{nu-upper},
we have $4\ell+2\le 2M+4$, so that $b+2c\le M+2$.  As $M\ge 3$, this
contradicts $b+2c\ge 2M$ just established.
Therefore, $b+2c$ is even, hence $b$ is even and
(i) holds with $m=b/2+c$.
\end{proof}

\begin{proof}[Proof of Lemma \ref{lmm:Sizes} when $1-\gamma \ge \theta+\nu$]
We may assume that $\nu \le \frac{1-\gamma}{2}$, for otherwise Lemma \ref{lmm:Sizes}
follows vacuously if $\epszero$ is sufficiently small (there are no vectors satisfying the conditions).

By Lemma \ref{lem:smallgamma}, we have that $[\theta,\theta+\nu]$
contains one of the intervals $[\frac{1}{2M+1},\frac{1}{M+1}]$, 
 $[\frac{1}{2M},\frac{2}{2M+1}]$ or $[\frac{1}{2M-1},\frac{1}{M}]$.
In particular,
\[
\nu > \frac{1}{2M+4}.
\]

By \eqref{eq:B}, for some positive integer $h$ we have $h(1-\gamma)\in [1-\theta-\nu,1-\theta)$.  But $(M+1)(1-\gamma)>1$ and $(M-1)(1-\gamma)\le 1 - \frac{1}{M} \le 1-\theta-\nu$,
with equality if and only if $\theta+\nu=1-\gamma=\frac{1}{M}$.
Also, $M(1-\gamma) \ge M(\theta+\nu) \ge 1-(\theta+\nu)$.
Hence, \eqref{eq:B} is equivalent to the statement that
 either $1-\gamma=\theta+\nu=\frac{1}{M}$ or $M(1-\gamma)<1-\theta$.

Now suppose that $\bx$ is a vector satisfying the conditions of Lemma \ref{lmm:Sizes},
with two distinct components $x<y$ with $y-x\ge \nu-2\eps$.
  Also assume that conclusion (ii)
fails, that is, every pair of components of $\bx$ has sum $\ge 1-\gamma+\eps$.
As the subsums of $\bx$ avoid $(\theta+\eps,\theta+\nu-\eps)$,
$x$ and $y$ lie in $[\nu-2\eps,\theta+\eps] \cup [\theta+\nu-\eps,1-\gamma+\eps]$.

\textbf{Case 1: $1-\gamma=\frac{1}{M}=\theta+\nu$.}
In this case $M\ge 3$, $\theta\le \frac{1}{2M-1}$ and thus
$\nu \ge \frac{M-1}{M(2M-1)} > \frac23 \theta$. Hence, for $\epszero$ small enough,
$x$ and $y$ cannot both lie in $[\nu-2\eps,\theta+\eps]$. 
As $[\theta+\nu-\eps,1-\gamma+\eps]$ has length $2\eps$, $x$ and $y$ cannot both lie
in this interval. This implies that
\[
y\in\Big[\frac{1}{M}-\eps,\frac{1}{M}+\eps\Big],\quad x\in \Big[\frac{1}{M}-\frac{1}{2M-1}-2\eps,\frac{1}{2M-1}+\eps\Big].
\]
Let $\bx$ have $b$ copies of $x$ and $c$ copies of $y$. Since $b\ge 1$ and $1=bx+cy$, we must have $c\le M-1$, which in turn implies that $b\ge 2$. By our assumption that (ii) fails, this means that $x\ge \frac{1-\gamma+\eps}{2}\ge \frac{1}{2M}$. We wish to show that $x=\frac{1}{2M}+O(\eps)$. If $x>\frac{1}{2M}+M\eps$ then 
\[
1=bx+cy> \frac{b}{2M} + \frac{c}{M} + (2Mb-c)\eps > \frac{2c+b}{2M}
\]
so $b+2c\le 2M-1$. But then
\[
1=bx+cy\le \frac{c}{M}+\frac{b}{2M-1}+2M\eps\le 1-\frac{c}{M(2M-1)}+2M\eps
\]
which is a contradiction if $\eps$ is small enough. Thus we must have that $\frac{1}{2M} \le x < \frac{1}{2M}+M\eps$, as desired.  Hence, conclusion (i)
in Lemma \ref{lmm:Sizes} holds with $m=M$.

\textbf{Case 2: $1-\gamma < \frac{1-\theta}{M}$.}
Here $M\ge 2$, and for $\eps$ small enough,
\[
1-\gamma \le \frac{1-\theta}{M} - 3M\eps.
\]
If $\theta> \frac{1}{2M+1}$ then  $[\theta,\theta+\nu]$ must contain $[\frac{1}{2M},\frac{2}{2M+1}]$ or $[\frac{1}{2M-1},\frac{1}{M}]$, thus we have $\theta+\nu\ge \frac{2}{2M+1}$. This implies that
\[
\frac{2}{2M+1} \le \theta+\nu \le 1-\gamma < \frac{1}{M} \bigg( 1 - \frac{1}{2M+1} \bigg) = \frac{2}{2M+1},
\]
a contradiction.  Therefore 
$\theta\le \frac{1}{2M+1}$ and $\theta+\nu \ge \frac{1}{M+1}$.
In particular, $\nu \ge \frac{M}{(2M+1)(M+1)} \ge \frac23 \theta$,
and again this implies that $x,y$ cannot both be in $[\nu-2\eps,\theta+\eps]$
and hence that $y\ge \theta+\nu-\eps\ge \frac{1}{M+1}-\eps$.
Also, $[\theta+\nu-\eps,1-\gamma+\eps]$ has length at most
\[
\frac{1-\theta}{M} - \frac1{M+1} \le \frac{1+\nu-\frac{1}{M+1}}{M}-\frac{1}{M+1}=\frac{\nu}{M},
\]
which implies that $x,y$ cannot both be in $[\theta+\nu-\eps,1-\gamma+\eps]$
and hence $x\le \theta+\eps$.

Let $\bx$ have $b$ copies of $x$ and $c$ copies of $y$.  
If $c\ge M+1$ then $bx+cy \ge \nu-2\eps + (M+1)(\frac{1}{M+1}-\eps)>1$, a 
contradiction.  Therefore, $c\le M$ and 
\begin{align*}
1=bx+cy &\le b (\theta+\eps) + c \Big( \frac{1-\theta}{M}-3M\eps \Big) \\ 
&< \Big(b- \frac{c}{M}\Big)\theta + \frac{c}{M} \\ &\le 
\Big(b- \frac{c}{M}\Big)\frac{1}{2M+1} + \frac{c}{M} =\frac{b+2c}{2M+1},
\end{align*}
thus $b+2c\ge 2M+2$.  Since $c\le M$, we have $b\ge 2$
and, by assumption,
this implies that $x \ge \frac{1-\gamma+\eps}{2}>\frac{1}{2M+2}$ for small enough $\eps$ (since $1-\gamma>\frac{1}{M+1}$). We also have $y\ge \theta+\nu\ge 1/(M+1)$, hence
\[
1=bx+cy > \frac{b+2c}{2M+2} \ge 1,
\]
a contradiction.  This completes the proof of Lemma \ref{lmm:Sizes} in the case
$1-\gamma \ge \theta+\nu$.
\end{proof}

%%%%%%%%%%%%%%%%%%%%%%%%%%%%%%%%%%%%%%%%%%%%%%%%%%%%%%%%%%%%%%%%%%%%%%%%%%%%
%%%%%%%%%%%%%%%%%%%%%%%%%%%%%%%%%%%%%%%%%%%%%%%%%%%%%%%%%%%%%%%%%%%%%%%%%%%%
%%%%%%%%%%%%%%%%%%%%%%%%%%%%%%%%%%%%%%%%%%%%%%%%%%%%%%%%%%%%%%%%%%%%%%%%%%%%
%%%%%%%%%%%%%%%%%%%%%%%%%%%%%%%%%%%%%%%%%%%%%%%%%%%%%%%%%%%%%%%%%%%%%%%%%%%%

%

\bigskip

\noindent
\textbf{Acknowledgements.}
This project started in 2017 while the second author was a Member of the Institute for Advanced Study and visited the University of Illinois.  Special thanks go to Enrico Bombieri
for helpful discussions and his subtle encouragement for the two authors to work together.
Part of this work was accomplished when both authors visited the University of Montreal 
in 2018 and the Institute for Advance Study in 2022, and while the first author was
a Visiting Fellow of Magdalen College, Oxford, in 2019.  The authors thank Andrew
Granville, Ben Green and  Peter Sarnak for these invitations.
The first author also enjoyed the hospitality of the Mathematical Institute of Oxford University
and the Institute of Mathematics and Informatics of the Bulgarian Academy of Sciences.
The authors thank Denka Kutzarova for assistance with the proof of the
geometric lemmas \ref{lem:covering} and \ref{lem:R1-nbhd-Q}.

The first author was supported by National Science Foundation grants
DMS-1501982, DMS-1802139, and DMS-2301264, and the second author was
was supported  by the European Research Council (ERC) under the European Union’s
 Horizon 2020 research and innovation programme (grant agreement No 851318).

%%%%%%%%%%%%%%%%%%%%%%%%%%%%%%%%%%%%%%%%%%%%%%%%%%%%%%%%%%%%%

\end{document}